\newcounter{mnotecount}[section]
\newtheorem{thm}{Theorem}[section]
\newtheorem{definition}{Definition}[section]
\newtheorem{Lemma}[thm]{Lemma}
\newtheorem{remark}{Remark}[section]
\newtheorem{theorem}[thm]{Theorem}
\newtheorem{proposition}[thm]{Proposition}
\newtheorem{corollary}[thm]{Corollary}
\numberwithin{equation}{section}
\newcommand{\beq}{\begin{equation}}
\newcommand{\eeq}{\end{equation}}
\newcommand{\ben}{\begin{eqnarray}}
\newcommand{\een}{\end{eqnarray}}
\newcommand{\beno}{\begin{eqnarray*}}
\newcommand{\eeno}{\end{eqnarray*}}
\newcommand{\bv}{\mathbf{v}}
\newcommand{\bw}{\mathbf{w}}
\newcommand{\by}{\mathbf{y}}
\newcommand{\bx}{\mathbf{x}}
\newcommand{\bB}{\mathbf{B}}
\newcommand{\bF}{\mathbf{F}}
\newcommand{\bH}{\mathbf{H}}
\newcommand{\bW}{\mathbf{W}}
\newcommand{\bU}{\mathbf{U}}
\newcommand{\bQ}{\mathbf{Q}}
\newcommand{\bK}{\mathbf{K}}
\newcommand{\bJ}{\mathbf{J}}
\newcommand{\bY}{\mathbf{Y}}
\newcommand{\bV}{\mathbf{V}}
\newcommand{\sstar}{s}
\numberwithin{equation}{section}
\begin{document}
\title[Three-dimensional compressible Euler equations]{Well-posedness for rough solutions of the 3D compressible Euler equations}

\subjclass[2010]{Primary 35Q35, 35R05, 35L60, 76N10}

\author{Lars Andersson}
\address{Department of Mathematics, University of Potsdam, Karl-Liebknecht-Str. 24-25, 14476 Potsdam, Germany}
\email{lars.andersson@uni-potsdam.de}

\author{Huali  Zhang}
\address{School of Mathematics, Hunan University, Lushan South road, 410082 Changsha, China.}
\email{hualizhang@hnu.edu.cn}

\date{\today}

\keywords{compressible Euler equations, low regularity solutions, local well-posedness, Strichartz estimates, wave-packets.}

\begin{abstract}
This paper considers solutions of the compressible 3D Euler equation for low-regularity initial data. An important inspiration for this work is the paper by Qian Wang \cite{WQEuler}, where she proved, using the vector-field approach, local existence and uniqueness of solutions for 3D isentropic compressible Euler equations for initial velocity $\bv_0$, logarithmic density $\rho_0$ and specific vorticity $\bw_0$ satisfy $(\bv_0, \rho_0, \bw_0) \in H^s\times H^s \times H^{s_0} (2<s_0<s)$, and Wang proposed a conjecture that the local solutions of 3D isentropic, compressible Euler equations is well posed if $(\bv_0, \rho_0,\bw_0) \in H^{2+}\times H^{2+}\times H^{\frac{3}{2}+}$.

The main new result in this paper is local well-posedness of solutions of 3D isentropic compressible Euler equations if $(\bv_0, \rho_0, \bw_0) \in H^{2+} \times H^{2+} \times H^{2}$, where in particular the regularity condition on the initial vorticity has been reduced to $\bw_0 \in H^2$. The fact that the compressible Euler equation implies a coupled system of quasi-linear wave equations, and transport equations for the vorticity \cite{LS2} plays an essential role in the work of Wang and the present paper. In particular the wave nature of the system allows one to prove Strichartz estimates. Instead of the vector-field approach used by Wang, we here use the approach of Smith and Tataru \cite{ST}, which is well adapted to the structure of the system, in order to prove Strichartz estimates.

As a first step, we prove the local well-posedness of solutions for initial velocity $\bv_0$, logarithmic density $\rho_0$, entropy $h_0$, and specific vorticity $\bw_0$ satisfying $(\bv_0, \rho_0,h_0,\bw_0) \in H^s \times H^s \times H^{s_0+1}\times H^{s_0} (2<s_0<s)$, giving an alternate proof of the result of Wang. The proof has three distinct features: \emph{(i)} a Strichartz estimate of linear wave equations endowed with a acoustic metric, \emph{(ii)} a continuous dependence on initial data with low regularity, \emph{(iii)} a type of Strichartz estimate for solutions with a regularity of the velocity $2+$, density $2+$, entropy $\frac{5}{2}+$ and specific vorticity $\frac{3}{2}+$. For $(\bv_0, \rho_0, \bw_0) \in H^{2+} \times H^{2+} \times H^{2}$, then a stronger Strichartz estimates on a short time-interval hold. By summing up these short time intervals to a regular time-interval, a Strichartz estimate with loss of derivatives can be obtained. This argument is inspired by the work of \cite{AIT}. This allows us to prove the local well-posedness of solutions of 3D isentropic compressible Euler equations for initial data $(\bv_0, \rho_0, \bw_0) \in H^{2+} \times H^{2+} \times H^{2}$. Using a similar idea, we are also able to prove the local well-posedness of solutions of 3D compressible Euler equations with the initial data $(\bv_0, \rho_0, h_0, \bw_0) \in H^{\frac52} \times H^{\frac52} \times H^{\frac52+}\times H^{\frac32+}$.
\end{abstract}

\maketitle
\newpage

\tableofcontents

\newpage

\section{Introduction and Results}
We consider the Cauchy problem for the
compressible Euler equations (cf. \cite{M}) in three dimensions, 
\begin{equation}\label{CEE}
	\begin{cases}
	\varrho_t+\text{div}\left(\varrho \bv \right)=0,  \quad t>0,\ x\in \mathbb{R}^3,
	\\
	\bv_t + \left(\bv\cdot \nabla \right)\bv+\frac{1}{\varrho}\nabla p=0,
\\
\partial_t h + \left(\bv\cdot \nabla \right)h=0,
\end{cases}
\end{equation}
with a state function given by
\begin{equation}\label{py1}
  p=p(\varrho,h)=\varrho^\gamma \mathrm{e}^{h}\quad (\gamma>1 \ \mathrm{or} \ \gamma=-1),
\end{equation}
and initial data
\begin{equation}\label{id}
	(\varrho, \bv, h)|_{t=0}=(\varrho_0, \bv_0, h_0),
\end{equation}
where $\bv=(v^1,v^2,v^3)^{\text{T}}, \varrho$,  $h$, and $p$ respectively, denote the fluid velocity, density, entropy and pressure, $v^1,v^2, v^3, \varrho, h$ and $p$ are scalar functions denoted from $(t,\bx)\in \mathbb{R}^+ \times \mathbb{R}^3$ to $\mathbb{R}^3$. The compressible Euler equation describes the motion of an ideal fluid. A fluid with constant entropy $h$ is called isentropic. The phenomena displayed in the interior of a fluid fall into two broad classes, the phenomena of waves,  and of vortex motion.

In order to display the wave and vortex structure in the Euler equation, and to fix notations,
we state the following definitions and lemmas, which can also be found in \cite{LS2,S2}.
\subsection{Reductions and definitions}
Let us first introduce some variables---logarithmic density, specific vorticity and the acoustic metric.
\begin{definition}\label{pw}
Let $\bar{\rho}> 0$ be a constant background density. The logarithmic density ${\rho}$ and specific vorticity $\bw$ are defined by
\begin{equation}\label{pw1}
\rho :=\ln \left( \frac{\varrho}{{\bar{\rho}}}\right),
\end{equation}
\begin{equation}\label{pw11}
  \bw :={\mathrm{e}^{-\rho}}{\mathrm{curl}\bv},
\end{equation}
and we write $\bw=(w^1,w^2,w^3)^{\mathrm{T}}$.
\end{definition}

\begin{definition}\label{pwd}
Let ${\rho}$ be the logarithmic density. We define the vector $\bH$ as
\begin{equation}\label{pwh}
  \bH:=\mathrm{e}^{-\rho}\nabla h.
\end{equation}
\end{definition}
\begin{definition}\label{shengsu}
We denote the speed of sound\footnote{If $h\neq \mathrm{constant}$, then $c_s$ is determined by $\rho, \bv$ and $h$. If $h= \mathrm{constant}$, $c_s$ only depends on $\rho$ and $\bv$.}
\begin{equation}\label{ss}
c_s:=\sqrt{\frac{\text{d}p}{\text{d}\varrho}}.
\end{equation}
\end{definition}
Based on above definitions, let us introduce a reduction of \eqref{CEE} under the above variables.
\begin{Lemma}\label{wte} \cite{LS2}
Let $(\bv,\rho,h)$ be a solution of \eqref{CEE}. Let $\bw$ and $\rho$ are defined in \eqref{pw1}-\eqref{pw11}. The 3D compressible Euler equations \eqref{CEE} can be written in the form
\begin{equation}\label{fc0}
\begin{cases}
\mathbf{T} v^i=-c^2_s \partial^i \rho-\frac{1}{\gamma}c^2_s \partial^i h,
\\
\mathbf{T} \rho=-\mathrm{div} \bv,
\\
\mathbf{T} h=0,
\end{cases}
\end{equation}
where
\begin{equation} \label{eq:Tdef}
\mathbf{T}:=\partial_t + \bv \cdot \nabla .
\end{equation}
\end{Lemma}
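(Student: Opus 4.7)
The plan is a direct algebraic reformulation of \eqref{CEE} using the definition \eqref{pw1} of $\rho$, the pressure law \eqref{py1}, and the sound-speed formula \eqref{ss}. Each of the three lines of \eqref{fc0} is obtained from the corresponding line of \eqref{CEE}, so I would simply treat them one at a time.

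The entropy equation $\mathbf{T} h = 0$ is immediate: the third equation in \eqref{CEE} is literally $\partial_t h + \bv \cdot \nabla h = 0$, which is exactly $\mathbf{T} h = 0$ by the definition \eqref{eq:Tdef}. For the $\rho$-equation I would expand $\mathrm{div}(\varrho \bv) = \bv \cdot \nabla \varrho + \varrho\, \mathrm{div}\, \bv$ in the continuity equation, divide by $\varrho > 0$, and use $\nabla\rho = \varrho^{-1}\nabla\varrho$ and $\partial_t \rho = \varrho^{-1}\varrho_t$ (both consequences of \eqref{pw1}) to obtain $\mathbf{T}\rho = -\mathrm{div}\, \bv$.

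The velocity equation is where the equation of state enters. I would apply the product rule to $p = \varrho^\gamma e^h$, yielding $\nabla p = \gamma \varrho^{\gamma-1} e^h \nabla \varrho + \varrho^\gamma e^h \nabla h$, and then divide by $\varrho$. Using $c_s^2 = dp/d\varrho = \gamma\varrho^{\gamma-1} e^h$ (with $h$ held fixed, per the footnote to \eqref{ss}) together with $\varrho^{-1}\nabla \varrho = \nabla\rho$ transforms this into $\varrho^{-1}\nabla p = c_s^2\, \nabla\rho + \gamma^{-1} c_s^2\, \nabla h$. Substituting into the momentum equation of \eqref{CEE} and moving the pressure gradient to the right-hand side then gives the first line of \eqref{fc0} componentwise.

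Because the argument is just a change of variables, no genuine obstacle appears. The only point worth checking is that the computation is valid in both regimes allowed by \eqref{py1}: for $\gamma > 1$ and for $\gamma = -1$. In the latter case $c_s^2 = -\varrho^{-2} e^h$ is formally negative, but the derivation above uses only the algebraic identity $c_s^2 = \gamma \varrho^{\gamma-1} e^h$ and not its sign, so the reformulation goes through verbatim.
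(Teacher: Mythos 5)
Your proof is correct and follows the standard change-of-variables computation; the paper itself gives no proof (it cites [LS2]), so there is nothing to compare against. The three lines of \eqref{fc0} follow exactly as you describe: $\mathbf{T}h=0$ is a restatement; dividing the continuity equation by $\varrho$ and using $d\rho = \varrho^{-1}d\varrho$ gives $\mathbf{T}\rho=-\mathrm{div}\,\bv$; and for the momentum equation, $\nabla p = \gamma\varrho^{\gamma-1}e^h\nabla\varrho + \varrho^{\gamma}e^h\nabla h$ together with $c_s^2 = \gamma\varrho^{\gamma-1}e^h$ (cf.\ \eqref{ssf}) and $\varrho^{\gamma-1}e^h=\gamma^{-1}c_s^2$ yields $\varrho^{-1}\nabla p = c_s^2\nabla\rho + \gamma^{-1}c_s^2\nabla h$. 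Your remark about $\gamma=-1$ is also apt: the identities are purely algebraic and do not depend on the sign of $c_s^2$, which is what matters for the reformulation (the separate hyperbolicity assumption \eqref{HEw} is what constrains the sign in the actual analysis).
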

We now introduce the acoustic metric, which will be used to reduce \eqref{fc0} to a wave-transport system.
\begin{definition}\label{metricd}
We define the acoustic metric $g$ and the inverse acoustic metric $g^{-1}$ relative to the Cartesian coordinates as follows:
\begin{equation}\label{Met}
	\begin{split}
&g:=-dt\otimes dt+c_s^{-2}\sum_{a=1}^{3}\left( dx^a-v^adt\right)\otimes\left( dx^a-v^adt\right),
\\
&g^{-1}:=-(\partial_t+v^a \partial_a)\otimes (\partial_t+v^b \partial_b)+c_s^{2}\sum_{i=1}^{3}\partial_i \otimes \partial_i.
\end{split}
\end{equation}	
\end{definition}
Following \cite{WQEuler}, we introduce a decomposition for the velocity
\begin{equation}\label{dvc}
  v^i=v_{+}^i+ v_{-}^i,
\end{equation}
where the vector $\bv_{-}=( v_{-}^1,  v_{-}^2,  v_{-}^3)^{\mathrm{T}}$ is determined by
\begin{equation}\label{etad}
  -\Delta v_{-}^{i}:=\mathrm{e}^{\rho}\mathrm{curl} \bw^i .
\end{equation}
\begin{Lemma}\label{Wte} \cite{S2}
Let $(\bv,\rho,h)$ be a solution of \eqref{fc0}. Let ${\rho}$ and $\bw$ be given by \eqref{pw1}-\eqref{pw11}. Then the system \eqref{fc0} can be written as
\begin{equation}\label{fc1}
\begin{cases}
\square_g v^i=-\mathrm{e}^{\rho}c_s^2 \mathrm{curl} \bw^i+Q^i,
\\
\square_g {\rho}=-\frac{1}{\gamma}c_s^2\Delta h+ D,
\\
\square_g h=c_s^2\Delta h + E,
\\
\mathbf{T} h =0,
\\
 \mathbf{T}w^i=w^a \partial_a v^i+\bar{\rho}^{\gamma-1} \mathrm{e}^{h+(\gamma-2)\rho}\epsilon^{iab}\partial_a \rho \partial_b h,
\end{cases}
\end{equation}
where $Q^i$, $D$ and $E$ are quadratic forms, given by
\begin{equation}\label{DDi}
\begin{split}
 {Q^i}:=& -\frac{2}{\gamma}e^{\rho} c^2_s \epsilon^i_{ab}  w^b  \partial^a h-( 1+c_s^{-1}\frac{\partial c_s}{\partial \rho})g^{\alpha \beta} \partial_\alpha {\rho} \partial_\beta v^i
 -\frac{1}{\gamma}c^2_s \epsilon^{iab}w_a \partial_b h
 \\
 & - \frac12 c^2_s \partial^a h \partial_a v^i+ \frac{2c_s}{\gamma} \frac{\partial c_s}{\partial \rho} \mathrm{div}\bv \partial^i h+c^3_s \partial_\beta v^i \partial_{\alpha} (c^{-3}_s g^{\alpha \beta}),
\\
{D:}=& -3c_s^{-1}\frac{\partial c_s}{\partial \rho} g^{\alpha \beta} \partial_\alpha \rho \partial_\beta \rho+2 \textstyle{\sum_{1 \leq a < b \leq 3} }\big\{ \partial_a v^a \partial_b v^b-\partial_a v^b \partial_b v^a \big\}
\\
& - \frac{1}{\gamma}c^2_s \partial^a h \partial_a \rho- \frac{1}{\gamma}c^2_s \partial^a h \partial_a h+c^3_s \partial_\beta \rho \partial_{\alpha} (c^{-3}_s g^{\alpha \beta}),
\\
E:=&c^3_s \partial_\beta h \partial_{\alpha} (c^{-3}_s g^{\alpha \beta}),
\end{split}
\end{equation}
and where the operator $\square_g$ is given by
\begin{equation}\label{Box}
  \square_g := g^{\alpha\beta}\partial^2_{\alpha\beta}=-\mathbf{T}\mathbf{T}+c^2_s \Delta, \quad \Delta=\partial^2_1+ \partial^2_2+\partial^2_3,
\end{equation}
with $\mathbf{T}$ as in \eqref{eq:Tdef}.
\end{Lemma}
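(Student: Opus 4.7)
My plan is to derive each of the five equations in \eqref{fc1} by direct computation from the reduced system \eqref{fc0}, using three main tools: the identity $\square_g u = -\mathbf{T}\mathbf{T} u + c_s^2\Delta u + (\mathbf{T} v^b)\partial_b u$ obtained by expanding $g^{\alpha\beta}\partial^2_{\alpha\beta}$ against $g^{-1} = -\mathbf{T}\otimes\mathbf{T} + c_s^2\delta^{ij}\partial_i\otimes\partial_j$; the Helmholtz decomposition $\Delta v^i = \partial^i\mathrm{div}\,\bv - (\mathrm{curl}\,\mathrm{curl}\,\bv)^i$; and the algebraic identities $\mathrm{curl}\,\bv = e^\rho\bw$ and $c_s^2 = \gamma\bar{\rho}^{\gamma-1}e^{(\gamma-1)\rho+h}$, which together yield $\nabla c_s^2 = c_s^2[(\gamma-1)\nabla\rho + \nabla h]$ and $\mathbf{T}(c_s^2) = -(\partial c_s^2/\partial\rho)\mathrm{div}\,\bv$ (using $\mathbf{T} h = 0$).

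For the wave equation for $v^i$, I first apply $\mathbf{T}$ to the momentum equation, commuting via $[\mathbf{T},\partial^i] = -(\partial^i v^a)\partial_a$, and then add $c_s^2\Delta v^i$ through the Helmholtz identity. The $c_s^2\partial^i\mathrm{div}\,\bv$ piece from $c_s^2\Delta$ cancels against $-\mathbf{T}(c_s^2\partial^i\rho)$ once I substitute $\mathbf{T}\rho = -\mathrm{div}\,\bv$, leaving the principal source $-c_s^2 e^\rho\mathrm{curl}\,\bw^i$ coming from $-c_s^2(\mathrm{curl}\,\mathrm{curl}\,\bv)^i$; the remaining quadratic terms collect into $Q^i$, with $c_s^3\partial_\beta v^i\partial_\alpha(c_s^{-3}g^{\alpha\beta})$ being precisely the correction between $-\mathbf{T}\mathbf{T} + c_s^2\Delta$ and $\square_g$. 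For the wave equation for $\rho$, I differentiate $\mathbf{T}\rho = -\mathrm{div}\,\bv$ by $\mathbf{T}$ and use the commutator $[\mathbf{T},\mathrm{div}]\bv = -(\partial_a v^b)(\partial_b v^a)$; together with the identity $(\mathrm{div}\,\bv)^2 - (\partial_a v^b)(\partial_b v^a) = 2\sum_{a<b}\{\partial_a v^a\partial_b v^b - \partial_a v^b\partial_b v^a\}$, this produces the null-form piece of $D$. Inserting the momentum equation into $\mathrm{div}(\mathbf{T}\bv)$ yields the leading $-c_s^2\Delta\rho - \frac{1}{\gamma}c_s^2\Delta h$; after adding $c_s^2\Delta\rho$ the $\rho$-wave part cancels and the stated equation emerges. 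For $h$, since $\mathbf{T} h = 0$, one immediately has $\mathbf{T}\mathbf{T} h = 0$, so $\square_g h = c_s^2\Delta h + E$ with $E$ arising purely from the covariant correction.

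For the vorticity transport equation, I take $\mathrm{curl}$ of the momentum equation. On the right-hand side, $\mathrm{curl}(c_s^2\nabla\rho + \frac{1}{\gamma}c_s^2\nabla h)^i$ reduces to $-\frac{1}{\gamma}c_s^2\epsilon^{iab}\partial_a\rho\partial_b h$ (the $\nabla\rho\times\nabla\rho$ and $\nabla h\times\nabla h$ contributions vanish, and $\nabla c_s^2$ is eliminated via $\nabla c_s^2 = c_s^2[(\gamma-1)\nabla\rho+\nabla h]$). On the left, the classical identity $\mathrm{curl}((\bv\cdot\nabla)\bv)^i = (\bv\cdot\nabla)(\mathrm{curl}\,\bv)^i - (\mathrm{curl}\,\bv)^a\partial_a v^i + (\mathrm{curl}\,\bv)^i\mathrm{div}\,\bv$, combined with the Leibniz rule applied to $\mathrm{curl}\,\bv = e^\rho\bw$ and $\mathbf{T}\rho = -\mathrm{div}\,\bv$, makes the $\mathrm{div}\,\bv$ contributions cancel and produces $e^\rho\mathbf{T} w^i = e^\rho w^a\partial_a v^i + \frac{1}{\gamma}c_s^2\epsilon^{iab}\partial_a\rho\partial_b h$; dividing by $e^\rho$ and using $\frac{1}{\gamma}c_s^2 e^{-\rho} = \bar{\rho}^{\gamma-1}e^{h+(\gamma-2)\rho}$ gives the stated equation. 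I expect the main obstacle to be purely bookkeeping: each quadratic term in \eqref{DDi} must be matched precisely against contributions from the commutators $[\mathbf{T},\partial^i]$, $[\mathbf{T},\mathrm{div}]$, the chain rule for $\mathbf{T}(c_s^2)$, and the correction $(\mathbf{T} v^b)\partial_b\Phi$ for $\Phi\in\{v^i,\rho,h\}$ arising in passing from $-\mathbf{T}\mathbf{T} + c_s^2\Delta$ to the covariant $\square_g = g^{\alpha\beta}\partial^2_{\alpha\beta}$.
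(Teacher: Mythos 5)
The paper does not supply its own proof of Lemma~\ref{Wte} but instead cites \cite{S2} (Speck's reformulation of the entropy Euler system). Your plan reproduces the standard derivation one would find there: apply $\mathbf{T}$ to the transport laws in \eqref{fc0}, use the commutator $[\mathbf{T},\partial^i]=-(\partial^i v^a)\partial_a$, the Helmholtz decomposition $\Delta\bv=\nabla\,\mathrm{div}\,\bv-\mathrm{curl}\,\mathrm{curl}\,\bv$ together with $\mathrm{curl}\,\bv=e^\rho\bw$, and the chain rules $\nabla c_s^2=c_s^2[(\gamma-1)\nabla\rho+\nabla h]$, $\mathbf{T}c_s^2=-(\gamma-1)c_s^2\,\mathrm{div}\,\bv$. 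Your observation that the entropy wave equation follows essentially for free from $\mathbf{T}\mathbf{T}h=0$, and your null-form identity $(\mathrm{div}\,\bv)^2-(\partial_a v^b)(\partial_b v^a)=2\sum_{a<b}\{\partial_a v^a\partial_b v^b-\partial_a v^b\partial_b v^a\}$ feeding into $D$ (after $g^{\alpha\beta}\partial_\alpha\rho\partial_\beta\rho$ is expanded to supply the $(\mathrm{div}\,\bv)^2$), are both on target, as is your account of the vorticity transport and the identity $\tfrac{1}{\gamma}c_s^2 e^{-\rho}=\bar\rho^{\gamma-1}e^{h+(\gamma-2)\rho}$.

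One imprecision you should repair before the bookkeeping will close: you state that $c_s^3\partial_\beta v^i\partial_\alpha(c_s^{-3}g^{\alpha\beta})$ ``is precisely the correction between $-\mathbf{T}\mathbf{T}+c_s^2\Delta$ and $\square_g$.'' That is not the case. Expanding $\mathbf{T}\mathbf{T}$ gives $g^{\alpha\beta}\partial^2_{\alpha\beta}u=-\mathbf{T}\mathbf{T}u+c_s^2\Delta u+(\mathbf{T}v^b)\partial_b u$, so the discrepancy between the paper's reduced operator $\square_g:=g^{\alpha\beta}\partial^2_{\alpha\beta}$ and $-\mathbf{T}\mathbf{T}+c_s^2\Delta$ is exactly $(\mathbf{T}v^b)\partial_b$ (and, incidentally, \eqref{Box} as written suppresses this first-order term). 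The expression $c_s^3\partial_\alpha(c_s^{-3}g^{\alpha\beta})\partial_\beta$ is instead the difference between $\square_g$ and the geometric d'Alembertian $\hat\square_g=\frac{1}{\sqrt{|\det g|}}\partial_\alpha(\sqrt{|\det g|}\,g^{\alpha\beta}\partial_\beta)$, since $\sqrt{|\det g|}=c_s^{-3}$ for the acoustic metric \eqref{Met}. These two first-order operators are genuinely different: a short computation gives
\[
c_s^3\partial_\alpha(c_s^{-3}g^{\alpha\beta})\partial_\beta u
=-\tfrac{3\gamma-1}{2}\,\mathrm{div}\,\bv\;\mathbf{T}u-(\mathbf{T}v^j)\partial_j u-\tfrac12\,\nabla c_s^2\cdot\nabla u,
\]
which is not $(\mathbf{T}v^b)\partial_b u$. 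In the actual term-by-term collection you must therefore keep the $(\mathbf{T}v^b)\partial_b$ piece (rewritten via the momentum equation as a quadratic form in $\partial\rho,\partial h,\partial\bv$) separate from the ``geometric'' piece $c_s^3\partial_\beta(\cdot)\partial_\alpha(c_s^{-3}g^{\alpha\beta})$; only after combining both with the remaining commutator and chain-rule terms do you recover $Q^i$, $D$, $E$ in the stated form. The plan is otherwise sound and, as you say, reduces to careful bookkeeping.
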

For brevity, we set $\bQ=(Q^1,Q^2,Q^3)^\mathrm{T}$.
\begin{remark}
We can also write $Q^i=Q^{i\alpha\beta} \partial_\alpha \rho \partial_\beta h+Q^{i\alpha\beta }_{1j} \partial_\alpha \rho \partial_\beta v^j+Q^{i\alpha\beta}_{2j} \partial_\alpha h \partial_\beta v^j +Q^{\alpha\beta}_{3j} \partial_\alpha v^i \partial_\beta v^j$, $D=D_1^{\alpha\beta} \partial_\alpha \rho \partial_\beta h + D^{\alpha\beta}_{2} \partial_\alpha h \partial_\beta h+D^{\alpha\beta}_{3} \partial_\alpha \rho \partial_\beta \rho+D^{\alpha\beta }_{1j} \partial_\alpha \rho \partial_\beta v^j+D^{\alpha\beta}_{2j} \partial_\alpha h \partial_\beta v^j +D^{i\alpha\beta}_{3j} \partial_\alpha v_i \partial_\beta v^j$, and $E=E_1^{\alpha\beta} \partial_\alpha h \partial_\beta \rho+E^{\alpha\beta }_{2} \partial_\alpha h \partial_\beta h+E^{\alpha\beta}_{3j} \partial_\alpha h \partial_\beta v^j $. Here $Q^{i\alpha\beta}$, $Q^{i\alpha\beta }_{1j}$, $Q^{i\alpha\beta}_{2j}$,  $Q^{\alpha\beta}_{3j}$, $D_1^{\alpha\beta}$, $D_2^{\alpha\beta}$, $D_3^{\alpha\beta}$, $D^{\alpha\beta }_{1j}$, $D^{\alpha\beta}_{2j}$, $D^{i\alpha\beta}_{3j}$, $E_1^{\alpha\beta}$, $E^{\alpha\beta }_{2}$, and $E^{\alpha\beta}_{3j}$ are smooth functions of $\bv$, $\rho$ and $h$, and they are uniquely determined by \eqref{DDi}.
\end{remark}
\begin{remark}
The system \eqref{fc1} is derived from \eqref{CEE}, with \eqref{pw1}-\eqref{pw11}, and using this convention, the  systems \eqref{CEE}, \eqref{fc0}, and \eqref{fc1}, as well as \eqref{sq} below, are equivalent.
\end{remark}
\begin{remark}
Using \eqref{fc1}, \eqref{dvc} and \eqref{etad}, we have
\begin{equation}\label{wrtu}
\begin{split}
&\square_g v^i_{+}=\mathbf{T}\mathbf{T} v_{-}^i+Q^i.
\end{split}
\end{equation}
This will be proved in Lemma \ref{wte1} below.
\end{remark}

\begin{Lemma}
Taking $h=$constant in \eqref{fc0}, we then have
\begin{equation}\label{fc0s}
	\begin{cases}
		&\mathbf{T} v^i=-c^2_s \partial^i \rho,
		\\
		&\mathbf{T} \rho=-\mathrm{div} \bv,
	\end{cases}
\end{equation}
Taking $h=$constant in System \eqref{fc1}, then the system \eqref{fc1} is reduced to
\begin{equation}\label{fc1s}
\begin{cases}
\square_g v^i=-\mathrm{e}^{\rho}c_s^2 \mathrm{curl} \bw^i+\mathcal{Q}^i,
\\
\square_g {\rho}= \mathcal{D},
\\
 \mathbf{T}w^i=w^a \partial_a v^i.
\end{cases}
\end{equation}
Above, $\mathcal{Q}^i$ and $\mathcal{D}$ are quadratic forms, which are defined by
\begin{equation}\label{DDI}
\begin{split}
 {\mathcal{Q}^i}:=& -( 1+c_s^{-1}c'_s)g^{\alpha \beta} \partial_\alpha {\rho} \partial_\beta v^i+c^3_s \partial_\beta v^i \partial_{\alpha} (c^{-3}_s g^{\alpha \beta}),
\\
{\mathcal{D}:}=& -3c_s^{-1}c'_s g^{\alpha \beta} \partial_\alpha \rho \partial_\beta \rho+2 \textstyle{\sum_{1 \leq a < b \leq 3} }\big\{ \partial_a v^a \partial_b v^b-\partial_a v^b \partial_b v^a \big\}+c^3_s \partial_\beta \rho \partial_{\alpha} (c^{-3}_s g^{\alpha \beta}).
\end{split}
\end{equation}
\end{Lemma}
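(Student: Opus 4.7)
The statement is essentially a specialization of Lemmas \ref{wte} and \ref{Wte} to the isentropic case, so the plan is not to introduce any new machinery but to substitute $h=\mathrm{constant}$ into \eqref{fc0} and \eqref{fc1} and track which terms vanish. A preliminary observation is useful: by \eqref{py1} and Definition \ref{shengsu}, $c_s^2 = \gamma \varrho^{\gamma-1}\mathrm{e}^h$, so when $h$ is constant, $c_s$ depends only on $\rho$ and the partial derivative $\frac{\partial c_s}{\partial \rho}$ in \eqref{DDi} coincides with the ordinary derivative $c'_s$ appearing in \eqref{DDI}. Moreover, $\partial_\mu h \equiv 0$ and $\Delta h \equiv 0$, and the transport equation $\mathbf{T}h = 0$ holds automatically.

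First I would derive \eqref{fc0s}. In the first equation of \eqref{fc0} the term $-\frac{1}{\gamma}c_s^2 \partial^i h$ drops, the second equation is unchanged, and the third equation becomes the trivial identity $0=0$, leaving exactly \eqref{fc0s}. For \eqref{fc1s}, I would then substitute $h=\mathrm{constant}$ into each of the five equations of \eqref{fc1}: the wave equation for $h$ reduces to $0=0$; $\mathbf{T}h=0$ is automatic; the inhomogeneous term $-\frac{1}{\gamma}c_s^2 \Delta h$ in the equation for $\rho$ vanishes; in the vorticity equation the factor $\bar{\rho}^{\gamma-1}\mathrm{e}^{h+(\gamma-2)\rho}\epsilon^{iab}\partial_a \rho \partial_b h$ vanishes; and the wave equation for $v^i$ retains only its $\bw$-source plus the surviving part of $Q^i$.

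To obtain the explicit forms in \eqref{DDI} I would simply go term by term through the definitions of $Q^i$ and $D$ in \eqref{DDi}. In $Q^i$ the first, third, fourth, and fifth terms each contain an explicit $\partial h$ factor and drop out, leaving precisely the second and sixth terms, which (after replacing $\frac{\partial c_s}{\partial \rho}$ by $c'_s$) match $\mathcal{Q}^i$. Similarly in $D$ the third and fourth terms contain $\partial h$ and disappear, leaving the first, second, and fifth terms, which are exactly $\mathcal{D}$. No genuine obstacle arises — the argument is direct algebraic substitution; the only point worth flagging is the identification $\frac{\partial c_s}{\partial \rho}=c'_s$ in the isentropic regime, and the fact that each surviving quadratic expression is of the classical null form type $g^{\alpha\beta}\partial_\alpha\phi\,\partial_\beta\psi$ or a curl-type combination, justifying the terminology \emph{null forms} for $\mathcal{Q}^i$ and $\mathcal{D}$.
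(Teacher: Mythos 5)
Your proposal is correct and takes the same approach the paper implicitly relies on: direct substitution of $h=\text{constant}$ into \eqref{fc0} and \eqref{fc1}, observing that every term carrying a factor of $\partial h$ or $\Delta h$ vanishes, and that $\frac{\partial c_s}{\partial \rho}$ reduces to the ordinary derivative $c'_s$ once $c_s$ depends on $\rho$ alone. The paper states this lemma without an explicit proof, treating it as an immediate specialization of Lemmas \ref{wte} and \ref{Wte}, so your term-by-term bookkeeping fills that in correctly.
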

\begin{remark}
We can also write $\mathcal{Q}^i=\mathcal{Q}^{i\alpha\beta }_{j} \partial_\alpha \rho \partial_\beta v^j+ \mathcal{Q}^{\alpha\beta}_{2j} \partial_\alpha v^i \partial_\beta v^j$, $\mathcal{D}=\mathcal{D}^{\alpha\beta }_{j} \partial_\alpha \rho \partial_\beta v^j+ \mathcal{D}^{\alpha\beta}_{2} \partial_\alpha \rho \partial_\beta \rho+\mathcal{D}^{i\alpha\beta}_{3j} \partial_\alpha v_i \partial_\beta v^j$. Here $\mathcal{Q}^{i\alpha\beta }_{j}$, $\mathcal{Q}^{\alpha\beta}_{2j}$, $\mathcal{D}^{\alpha\beta }_{j}$, $\mathcal{D}^{\alpha\beta}_{2}$, and $\mathcal{D}^{i\alpha\beta}_{3j}$ are smooth functions of $\bv$, $\rho$, and they are uniquely determined by \eqref{DDI}.
\end{remark}
\begin{remark}
The system \eqref{fc1s} can also be derived from \eqref{fc0s} under \eqref{pw1}.
\end{remark}
\subsection{Previous results} Let us recall the historical results starting from the incompressible fluid. For the Cauchy problem of $n$-dimensional incompressible Euler equations:
\begin{equation}\label{IEE}
	\begin{cases}
	\bv_t + \left(\bv\cdot \nabla \right)\bv+\nabla p=0,   \quad t>0,\ x\in \mathbb{R}^3,
	\\
	\mathrm{div} \bv=0,
\\
\bv|_{t=0}=\bv_0,
\end{cases}
\end{equation}
Kato and Ponce in \cite{KP} proved the local well-posedness of \eqref{IEE} if $\bv_0 \in W^{s,p}(\mathbb{R}^n), s>1+\frac{n}{p}$. Chae in \cite{Chae} proved the local existence of solutions by setting $\bv_0$ in Triebel-Lizorkin spaces. In the opposite direction, Bourgain and Li \cite{BL, BL2} proved that the Cauchy problem is ill-posed for $\bv_0 \in W^{1+\frac{n}{p}}(\mathbb{R}^n), 1\leq p< \infty, n=2,3$.  Very recently, Andersson and Kapitanski \cite{AK} proved the well-posedness of low regularity solutions of incompressible Neo-Hookean materials for $s>\frac74 (n=2)$ or $s>2 (n=3)$ in Lagrangian coordinates, with some additional regularity conditions on the vorticity.

In the irrotational and isentropic case, the compressible Euler equations can be reduced to a system of quasilinear wave equations (see Lemma \ref{Wte} and taking $\mathrm{curl}\bv=0$). Consider the Cauchy problem for a quasilinear wave equation
\begin{equation}\label{qwe}
\begin{cases}
  &\square_{h(\phi)} \phi=q(d \phi, d \phi),  \quad t>0,\ x\in \mathbb{R}^3,
  \\
  & \phi|_{t=0}=\phi_0, \partial_t \phi|_{t=0}=\phi_1,
  \end{cases}
\end{equation}
where $\phi$ is a scalar function, $h(\phi)$ is a Lorentzian metric depending on $\phi$, $d=(\partial_t, \partial_1, \partial_2, \cdots, \partial_n)$, and $q$ is quadratic in $d \phi$. Set the initial data $(\phi_0, \phi_1) \in H^s(\mathbb{R}^n) \times H^{s-1}(\mathbb{R}^n)$. By using classical energy methods, Hughes-Kato-Marsden in \cite{HKM} proved the local well-posedness of the problem \eqref{qwe} for $ s>\frac{n}{2}+1$. On the other hand, Lindblad in \cite{L} constructed some counterexamples for \eqref{qwe} when $s=2,n=3$. For $s={\frac{7}{4}}, n=2$, a 2D counterexample was constructed by Ohlman \cite{Oman}. There is a gap between the results \cite{HKM} and \cite{L,Oman}. To lower the regularity required for well-posedness, 
one may use estimates of Strichartz type, that  provide integrated space-time estimates for $d \phi$. Work on Strichartz estimates has evolved from linear analysis, to the study of bilinear wave intersections, and next to the study of nonlinear wave intersections. The first stage is to study the Strichartz estimates of a wave equation with variable coefficients
\begin{equation}\label{qw0}
  \square_{h(t,x)}\phi=0,
\end{equation}
and then exploit it to obtain the low regularity solutions of \eqref{qwe}.

Kapitanskii \cite{Kap} and Mockenhaupt-Seeger-Sogge \cite{MSS} proved Strichartz estimates of \eqref{qw0} with smooth coefficients $h$. For a rough metric $h \in C^2$, the study of Strichartz estimates of \eqref{qw0} in two or three dimensions began with Smith's result \cite{Sm}. This was extended to all dimensions by Tataru \cite{T2}. In the opposite direction, Smith and Sogge \cite{SS} showed that the Strichartz estimates fail for $h \in C^\alpha, \alpha<2$. The first improvement in study of nonlinear wave equations was independently achieved by Bahouri-Chemin \cite{BC2} and Tataru \cite{T1}, who proved the local well-posedness of \eqref{qwe} with $s > \frac{n}{2} + \frac{7}{8}, n=2$ or $s > \frac{n}{2} + \frac{3}{4}, n\geq3$. This corresponding 3D result was also obtained by Klainerman in \cite{K} using the vector-field approach. Shortly afterward, Tataru \cite{T3} relaxed the Sobolev indices to $s>\frac{n+1}{2}+\frac{1}{6}, n \geq 3$. Also, Smith-Tataru \cite{ST0} showed that the $\frac{1}{6}$ loss is sharp for variable coefficients. Thus, to improve the above results of \eqref{qwe}, one needs to exploit a new way or structure of \eqref{qwe}.

By exploiting additional geometrical structure of the metric and introducing a decomposition of curvature, the 3D result was improved by Klainerman and Rodnianski \cite{KR2}, with regularity exponent satisfying $s>2+\frac{2-\sqrt{3}}{2}$. Inspired by \cite{KR2}, Geba in \cite{Geba} studied the local existence and uniqueness of 2D quasilinear wave equations for $s > \frac{7}{4} + \frac{5-\sqrt{22}}{4}$. In 2005, 
a sharp result was established by Smith and Tataru in \cite{ST}, and they proved the local existence and uniqueness of solution of \eqref{qwe} if the regularity of initial data satisfies $s>\frac{7}{4}, n=2$ or $s>2, n=3$ or $s>\frac{n+1}{2}, 4 \leq n \leq 5$. An alternative proof of the 3D result in \cite{ST} was also obtained by Q. Wang \cite{WQSharp} using the vector-field approach .

We should also mention the important progress in Einstein vacuum equations. In Yang-Mills gauge, the $L^2$ conjecture is solved by Klainerman-Rodnianski-Szeftel \cite{KR1}. In wave gauge, the system is well-posed in $H^{2+}$ and ill-posed in $H^{2}$, cf. Klainerman-Rodnianski \cite{KR} and Ettinger-Lindblad \cite{EL}. For the time-like minimal surface equations, which satisfy null conditions, Ai-Ifrim-Tataru \cite{AIT} obtained a substantial improvement, namely by $\frac18$ derivatives in two space dimensions and by $\frac14$ derivatives in higher dimensions. For other local well-posedness results of \eqref{qwe} in various settings, we refer the readers to \cite{AAR,AM,IT1,Sog,WCB,WQRough,WQ1, WQ2,ZH,Zh,ZL}.

For the compressible Euler system, local well-posedness of \eqref{CEE}-\eqref{id} was initially obtained by Majda \cite{M}, with $(\bv_0, \varrho_0, h_0) \in H^{s}, s>1+\frac{n}{2}$ and the density bounded away from vacuum. Following this result, there is a huge literature that deals with blow up problems, the incompressible limit, and free boundary problems, cf.  \cite{C,CLS,JM,IT,KM,KM2,LS1,LS2,MR,MR1,S,S1} and reference therein. In the last several years, much progress has been made on low regularity problems of \eqref{CEE}-\eqref{id}. The first improvement of rough solutions was independently  obtained by Q. Wang \cite{WQEuler} and Disconzi-Luo-Mazzone-Speck \cite{DLS}. Q. Wang \cite{WQEuler} established the existence and uniqueness of solutions of \eqref{fc1s} when $(\bv_0,\rho_0,\bw_0) \in H^{s}(\mathbb{R}^3) \times H^{s}(\mathbb{R}^3) \times H^{s_0}(\mathbb{R}^3), \ 2<s_0<s$. Disconzi-Luo-Mazzone-Speck \cite{DLS} proved the existence and uniqueness of solutions of \eqref{fc1} if $(\bv_0,\rho_0,\bw_0,h_0)\in H^{2+}(\mathbb{R}^3) \times H^{2+}(\mathbb{R}^3) \times H^{2+}(\mathbb{R}^3) \times H^{3+}(\mathbb{R}^3)$ with an additional H\"older condition on the initial data for $\mathrm{curl} \bw$. In the reverse direction, An-Chen-Yin \cite{ACY} proved ill-posedness of \eqref{CEE}-\eqref{id} for initial data $(\bv_0, \rho_0, h_0) \in \dot{H}^{2}(\mathbb{R}^3)$ with smooth $\bw_0$.  In two dimensions, Zhang \cite{Z1,Z2} proved the existence, uniqueness and continuous dependence of solutions of 2D isentropic compressible Euler equations if $(\bv_0, \rho_0, \bw_0) \in H^{\frac74+}(\mathbb{R}^2), \partial \bw_0 \in L^\infty $ or $(\bv_0, \rho_0, \bw_0) \in H^{\frac74+}(\mathbb{R}^2) \times H^{\frac74+}(\mathbb{R}^2) \times H^2(\mathbb{R}^2)$. Recently, An-Chen-Yin \cite{ACY1} showed that the Cauchy problem for 2D compressible Euler equations is ill-posed if $(\bv_0, \rho_0, h_0) \in \dot{H}^{\frac74}(\mathbb{R}^2)$.

In this paper, we improve on the above mentioned results on the Cauchy problem for the compressible Euler system in several different ways. In particular, we prove local well-posedness including continuous dependence on initial data, for data  $(\bv_0, \rho_0) \in H^{2+}(\mathbb{R}^3) \times H^{2+}(\mathbb{R}^3) $ with  $\bw_0 \in H^2(\mathbb{R}^3)$, improving on the result of Q. Wang. We also prove local well-posedness for data $(\bv_0, \rho_0, h_0, \bw_0) \in H^{5/2} (\mathbb{R}^3) \times H^{5/2} (\mathbb{R}^3) \times H^{5/2+} (\mathbb{R}^3)\times H^{3/2+} (\mathbb{R}^3)$.

\subsection{Motivation}
Let us go back to the 3D results \cite{WQEuler} and \cite{DLS}. In \cite{WQEuler, DLS}, to establish Strichartz estimates of the velocity and density, the regularity of the vorticity is assumed to be strictly greater than $2$. This turns out to be a very crucial condition, due to the role played in the Strichartz estimates of the regularity of the acoustic metric, and therefore of the vorticity.
As discussed in \cite{WQEuler,DLS}, both  the vector field approach and the method of Smith and Tataru  fail to establish Strichartz estimates when the Sobolev regularity of the vorticity and the corresponding foliations of space-time is less than or equal to $2$.
It is known that the classical Strichartz estimates fail to hold for spacetime metrics with regularity $C^\alpha$, $\alpha < 2$, cf. \cite{SS}. See also \cite{T1,T2,T3} for estimates of Strichartz type under weaker regularity assumptions.

In view of the just mentioned facts, well-posedness for \eqref{CEE}-\eqref{id} with $(\bv_0,\rho_0,h_0, \bw_0) \in H^{2+}(\mathbb{R}^3) \times H^{2+}(\mathbb{R}^3) \times H^{3+}(\mathbb{R}^3) \times H^{s'}(\mathbb{R}^3),\ s' \leq 2 $,  remains a challenging open problem, since the regularity of foliations will be less than or equal to $2$. However, further considering the role of the term $\mathrm{curl}\bw$ in the Strichartz estimates leads to new insights.

Let us consider a simplified model of \eqref{fc1s}. We also make note of the better variable $\bv_{+}$ in \eqref{wrtu}. Thus, let $\bV$ be a vector function satisfying the Cauchy problem
\begin{equation}\label{model}
\begin{cases}
& \square_{\mathbf{m}} \bV= -\text{curl} \bW,  \quad t>0,\ x\in \mathbb{R}^3,
\\
&\bV=\bV_{+}+\bV_{-}, \quad -\Delta \bV_{-}=\text{curl} \bW, \quad \bW=\text{curl} \bV,
\\
& \square_{\mathbf{m}} \bV_{+}= \partial_t(\partial_t \bV_{-}),
\\
&\partial_t \bW=\bW \cdot \partial \bV,
\\
&\partial_t \text{curl}\text{curl} \bW=\text{curl}\text{curl} \bW \cdot \partial \bV,
\end{cases}
\end{equation}
with initial data
\begin{equation*}
	(\bV, \partial_t \bV, \bW)_{t=0}=(\bV_0,\bV_1,\bW_0)  \in H_x^{2+} \times H_x^{1+} \times H_x^{2}.
\end{equation*}
Above, $\mathbf{m}$ is a standard Minkovski metric. If $\bW_0 \in H_x^{2}$, one would at most hope that $\text{curl} \bW \in H_x^1$. So we cannot obtain the Strichartz estimates of $\|d\bV\|_{L^2_t L^\infty_x}$ directly via the first equation $\square_{\mathbf{m}} \bV= -\text{curl} \bW$ in \eqref{model}. Fortunately, by using Duhamel's principle, Strichartz estimates for $\square_{\mathbf{m}} f=0$ (cf. \cite{Tao2}), and Sobolev's imbeddings for $d \bV_{-}$, it is possible to prove that there is a Strichartz estimate for $d \bV$ and that \eqref{model} is locally well-posed. Moreover, there exists a finite time-interval $[0,t]$ such that
\begin{equation}\label{ms}
  \| d \bV \|_{L^2_{[0,t]} L^\infty_x} \leq \| \bV_0 \|_{H^{2+}_x}+\| \bV_1 \|_{H^{1+}_x}+\| \bW_0 \|_{H^{2}_x}.
\end{equation}
The wave-transport system \eqref{fc1s} coupled with \eqref{wrtu} is very similar to \eqref{model} if we replace $\mathbf{m}$ by a Lorentz metric depending on $\bV$. Using finite speed of propagation and restricting to a small time interval, we shall reduce \eqref{fc1s} to small data problem. Then the acoustic metric $g$ can be viewed as a perturbation of $\mathbf{m}$. So we may expect there exists a local solution of \eqref{fc1s} for $(\bv_0,\rho_0,\bw_0)\in H^{2+}\times H^{2+} \times H^{2}$ or $(\bv_0,\rho_0,\bw_0)\in H^{\frac52}\times H^{\frac52} \times H^{\frac32+}$. Our approach differs from that of \cite{WQEuler} and \cite{DLS}, for they prove the Strichartz estimates of solutions of \eqref{fc1s} by reducing the problem to conformal energy estimates of linear wave equations, whereas our method is to reduce to a small data problem, and then prove the Strichartz estimates of linear wave equations endowed with the acoustic metric $g$ by constructing approximate solutions.

On the other hand, for the time-like minimal surface equations, a quasilinear model, the work of Ai-Ifrim-Tataru \cite{AIT} shows that a significant improvement can be achieved by exploiting the structure of Strichartz estimates and
of equations. Ai-Ifrim-Tataru were able to lower the regularity requirements for well-posedness by $\frac14$ derivatives in three spatial dimensions compared to the sharp results of Smith-Tataru for the generic case \eqref{qwe}. Indeed, in our former preprint \cite{ZA}, in the reduction of small solutions, if $\bw_0 \in H^{2+}$, we found that there is a Strichartz estimate of solutions of \eqref{CEE}-\eqref{id} only depending on the quantities $\|\bv\|_{H_x^{2+}}$, $\|\rho\|_{H_x^{2+}}$, $\|\bw\|_{H_x^{3/2+}}$, and $\|h\|_{H_x^{5/2+}}$. This allows us to study Strichartz estimates and low regularity solutions of \eqref{CEE}-\eqref{id} when $\bw_0 \in H^{s'} (s' \leq 2)$ based on the pioneering works \cite{WQEuler,DLS}, and our former preprint \cite{ZA}.

\subsection{Statement of  results} \label{sec:statement}
Before stating our main theorem, we introduce some notation and function spaces.
Let $\partial=(\partial_{1}, \partial_{2}, \partial_{3})^\mathrm{T}$, $d=(\partial_t, \partial_{1}, \partial_{2}, \partial_{3})^\mathrm{T}$, $\left< \xi \right>=(1+|\xi|^2)^{\frac{1}{2}}, \ \xi \in \mathbb{R}^3$. Denote by $\left< \partial \right>$ the corresponding Bessel potential multiplier. Denote the fractional operator $\Lambda_x=(-\Delta)^{\frac{1}{2}}$.  The symbol $\epsilon^{ijk}(i, j, k=1, 2, 3)$ denotes the standard volume form on $\mathbb{R}^3$.
Let ${\Delta}_j$ be the homogeneous frequency localized operator with frequency $2^j, j \in \mathbb{Z}$,  cf. \cite{BCD}.

For $f \in H^s(\mathbb{R}^3)$, we let
\begin{align*}
\|f\|_{H^s}:= \|f\|_{L^2(\mathbb{R}^3)}+\|f\|_{\dot{H}^s(\mathbb{R}^3)},
\end{align*}
with the homogeneous norm $\|f\|^2_{\dot{H}^s} := {\sum_{j \in \mathbb{Z}}} 2^{2js}\|{\Delta}_j f\|^2_{L^2(\mathbb{R}^3)} $. We shall also make use of the homogenous Besov norm
\begin{align*}
\|f\|^r_{\dot{B}^s_{p,r}} := {\sum_{j \in \mathbb{Z} }}2^{jsr}\|{\Delta}_j f\|^r_{L^p(\mathbb{R}^3)}.
\end{align*}
To avoid confusion, when the function $f$ is related to both time and space variables, we use the notation $\|f\|_{H_x^s}:=\|f(t,\cdot)\|_{H^s(\mathbb{R}^3)}$.

Assume there holds
\begin{equation}\label{HEw}
	|\bv_0, \rho_0, h_0| \leq C_0, \qquad c_s|_{t=0}>c_0>0,
\end{equation}
where $C_0, c_0 > 0 $ are constants. In the following, constants $C$ depending only on $C_0, c_0$ shall be called universal. Unless otherwise stated, all constants that appear are universal in this sense.
The notation $X \lesssim Y$ means $X \leq CY$, where $C$ is a universal constant, possibly depending on $C_0, c_0$. Similarly, we write  $X \simeq Y$ when $C_1 Y \leq X \leq C_2Y$, with $C_1$ and $C_2$ universal constants, and $X \ll Y$ when $X \leq CY$ for a sufficiently large constant $C$.

Let $s_0, s$ satisfy $2< s_0 < s\leq \frac{5}{2}$.
We also set
\begin{equation}\label{a1}
	\begin{split}
		& \delta_0=s_0-2, \quad \delta\in (0, \delta_0),
		\\
		& \delta_1=\frac{\sstar-2}{10}.
	\end{split}
\end{equation}
We use four small parameters
\begin{equation}\label{a0}
0 < 	\epsilon_3 \ll \epsilon_2 \ll \epsilon_1 \ll \epsilon_0 \ll 1.
\end{equation}

We are now ready to state the main result of this paper.
\begin{theorem}\label{dingli}
	Consider the Cauchy problem \eqref{CEE}-\eqref{id}. Let $\rho$ and $\bw$ be defined in \eqref{pw1}-\eqref{pw11}.
Let $2< s_0 < s\leq \frac{5}{2}$. For given initial data $(\bv_0,\rho_0,h_0, \bw_0)$ satisfying \eqref{HEw} and for any $M_0>0$ such that
	\begin{equation}\label{chuzhi1}
	\| \bv_0\|_{H^{s}} +
	\| \rho_0\|_{H^{s}} + \| \bw_0\|_{H^{s_0}}+\| h_0\|_{H^{1+s_0}}
 \leq M_0,
	\end{equation}
there exist positive constants $T>0$
and $M_1>0$ ($T$ and ${M}_1$ depends on $C_0, c_0, s, s_0, {M}_0$) such that \eqref{CEE}-\eqref{id} has a unique solution $(\bv,\rho,\bw, h)$ satisfying
\begin{align*}
(\bv,\rho) \in C([0,T],H_x^s)\cap C^1([0,T],H_x^{s-1}), \\
\bw\in C([0,T],H_x^{s_0})\cap C^1([0,T],H_x^{s_0-1}), \\
h\in C([0,T],H_x^{1+s_0})\cap C^1([0,T],H_x^{s_0}) .
\end{align*}
The following statements hold.
\begin{enumerate}
\item \label{point:1}
The solutions  $\bv, \rho, h$ and $\bw$ satisfy the energy estimate
\begin{equation*}
\begin{split}
  &\|(\bv, \rho)\|_{L^{\infty}_{[0,T]}H_x^s}+ \|\bw\|_{L^\infty_{[0,T]}H_x^{s_0}} + \|h\|_{L^\infty_{[0,T]}H_x^{1+s_0}}\leq M_1,
\\
& \|(\partial_t\bv, \partial_t\rho)\|_{L^\infty_{[0,T]}H_x^{s-1}}+\|\partial_t\bw\|_{L^\infty_{[0,T]}H_x^{s_0-1}} + \|\partial_t h\|_{L^\infty_{[0,T]}H_x^{s_0}} \leq M_1,
\end{split}
\end{equation*}
and
\begin{equation*}
\|\bv, \rho,h\|_{L^\infty_{ [0,{T}] \times \mathbb{R}^3}} \leq 1+C_0.
\end{equation*}
\item \label{point:2}
The solution $\bv, \rho,h$ and $\bv_+$ satisfy the Strichartz estimate
\begin{equation}\label{SSr}
  \|(d\bv, d\rho, d\bv_{+})\|_{L^2_{[0,T]}L_x^\infty}+ \|d\bv, d\rho,dh\|_{L^2_{[0,T]} \dot{B}^{s_0-2}_{\infty,2}}+ \|\partial \bv_{+}\|_{L^2_{[0,T]} \dot{B}^{s_0-2}_{\infty,2}} \leq M_1.
\end{equation}
\item \label{point:3}
For any $1 \leq r \leq s+1$, and for each $t_0 \in [0,T)$, the linear equation
	\begin{equation}\label{linear}
	\begin{cases}
	& \square_g f=0, \qquad (t,x) \in (t_0,T]\times \mathbb{R}^3,
	\\
	&f(t_0,\cdot)=f_0 \in H^r(\mathbb{R}^3), \quad \partial_t f(t_0,\cdot)=f_1 \in H^{r-1}(\mathbb{R}^3),
	\end{cases}
	\end{equation}
admits a solution $f \in C([0,T],H_x^r) \times C^1([0,T],H_x^{r-1})$ and the following estimates hold:
\begin{equation*}
\| f\|_{L_{[0,T]}^\infty H_x^r}+ \|\partial_t f\|_{L_{[0,T]}t^\infty H_x^{r-1}} \leq  C_{M_0}( \|f_0\|_{H_x^r}+ \|f_1\|_{H_x^{r-1}} ).
\end{equation*}
Additionally, the following estimates hold, provided $k<r-1$,
\begin{equation}\label{SE1}
\| \left<\partial \right>^k f\|_{L^2_{{[0,T]}}L^\infty_x} \leq  C( \|f_0\|_{H_x^r}+ \|f_1\|_{H_x^{r-1}} ),
\end{equation}
and the same estimates hold with $\left< \partial \right>^k$ replaced by $\left< \partial \right>^{k-1}d$. Here, $C_{M_0}$ is a constant depending on $C_0, c_0, s, s_0$ and $M_0$.
\item \label{point:4}
the map from $(\bv_0,\rho_0,h_0,\bw_0) \in H^s \times H^s \times H^{s_0+1} \times H^{s_0}$  to $(\bv,\rho,h,\bw)(t,\cdot) \in C([0,T];H_x^s \times H_x^s \times H_x^{s_0+1} \times H_x^{s_0})$ is continuous.
\end{enumerate}
\end{theorem}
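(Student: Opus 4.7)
The plan is to prove the four claims simultaneously through a bootstrap argument built on top of a sequence of approximate solutions and a parametrix-based Strichartz estimate for $\square_g$. First, using finite propagation speed together with a rescaling argument (as in the reductions of \cite{WQEuler, ST}), I would reduce the problem to data of small size in $H^s \times H^s \times H^{s_0+1} \times H^{s_0}$ and supported in a set of diameter of order one; under this reduction the acoustic metric $g$ becomes a small perturbation of the Minkowski metric $\mathbf{m}$, which is what allows the wave-packet construction to close. Approximate solutions $(\bv^{(n)}, \rho^{(n)}, h^{(n)}, \bw^{(n)})$ would be built by mollifying the initial data to frequencies $\leq 2^n$ and solving the resulting smooth Cauchy problem on a common time interval by Majda's classical theory \cite{M}. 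The goal is to show the bounds below persist uniformly in $n$.

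The core of the scheme is the bootstrap assumption
\begin{equation*}
 \|d\bv\|_{L^2_t L^\infty_x} + \|d\rho\|_{L^2_t L^\infty_x} + \|d\bv_{+}\|_{L^2_t L^\infty_x} \leq A,
\end{equation*}
with $A$ a constant depending only on $M_0$. Under this assumption, standard wave and transport energy estimates applied to \eqref{fc1}, with the $L^2_t L^\infty_x$ bounds absorbing the quasilinear coefficients and the quadratic structure of $Q^i, D, E$ in \eqref{DDi}, deliver claim \ref{point:1} on a short time interval $T=T(A)$. The non-trivial step is then to improve the constant $A$. For this and for claim \ref{point:3}, I would establish Strichartz estimates for $\square_g f = 0$ with data in $H^r$, $1\leq r \leq s+1$, by constructing a parametrix using wave packets adapted to the null cones of $g$, following the Smith--Tataru strategy as adapted in the Euler setting by \cite{WQEuler, DLS}. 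The key structural observation, already motivated by the model \eqref{model}, is that the Biot--Savart splitting \eqref{dvc}--\eqref{etad} separates the velocity into a part $\bv_-$ that is controlled by Sobolev embedding from $\bw\in H^{s_0}$, and a part $\bv_+$ which satisfies the improved wave equation \eqref{wrtu}. Applying Duhamel to \eqref{wrtu} with the linear Strichartz estimate, and controlling $\|\partial_t^2 \bv_-\|_{L^1_t L^2_x}$ by $\|d\bv\|_{L^2_t L^\infty_x} \|\bw\|_{L^\infty_t H^{s_0}_x}$ plus lower order contributions, yields the Strichartz estimates \eqref{SSr} of claim \ref{point:2} and improves the bootstrap, closing the argument and passing to the limit in $n$.

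For claim \ref{point:4}, the continuous dependence, I would employ a frequency-envelope version of the Bona--Smith argument. For data $(\bv_0^{(k)},\rho_0^{(k)},h_0^{(k)},\bw_0^{(k)}) \to (\bv_0,\rho_0,h_0,\bw_0)$ in the stated topology, the difference of the corresponding solutions solves a linearized wave-transport system whose coefficients are controlled in the Strichartz and energy norms just established; an energy estimate at a lower regularity $H^{s'}$ with $s'<s$, $s_0'<s_0$, produces strong convergence in that weaker space, while the high-frequency tails are dominated uniformly by the admissible frequency envelope of the data, giving continuity of the solution map in the strong topology of the statement.

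The main obstacle, in accordance with the motivation section, is the Strichartz estimate for $\square_g$ when $\bw_0$ lies at the borderline regularity $H^{s_0}$ with $s_0$ barely above $2$: then the acoustic metric sits exactly at the threshold where Smith--Tataru type estimates for generic metrics fail \cite{SS}. The cure is to run the wave-packet parametrix using only the portion of the metric determined by $\bv_+$ and $\rho$ (which has the necessary regularity via \eqref{wrtu}) and to treat the contribution of $\bv_-$ through Duhamel using Sobolev embedding. A secondary technical hurdle is the bookkeeping of the many quadratic error terms $Q^i, D, E$ from \eqref{DDi} throughout the energy--Strichartz iteration, which must be carried out carefully so as not to pick up an extra derivative on $\bw$.
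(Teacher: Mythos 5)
The broad architecture of your plan (localize to small data, build approximate solutions, bootstrap on a Strichartz norm, wave-packet parametrix for $\square_g$, frequency-envelope convergence for continuous dependence) aligns with the paper's. However, your proposed \emph{cure} for the borderline vorticity regularity is a genuine gap, and it is not the paper's mechanism.

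You propose to ``run the wave-packet parametrix using only the portion of the metric determined by $\bv_{+}$ and $\rho$'' and absorb the contribution of $\bv_{-}$ through Duhamel. This does not close: $\bv_{-}$ enters the acoustic metric $g$ and hence the \emph{principal} symbol of $\square_g$, so moving it to the forcing side costs a full derivative on the solution at top order, and Sobolev embedding from $\bw\in H^{s_0}$ is not enough to pay for it. The paper never splits the metric. Instead, it keeps the full metric $g=g(\bv,\rho,h)$ and shows that the \emph{characteristic hypersurfaces} of $g$ are nonetheless regular enough for a Smith--Tataru wave-packet construction. This is organized around the functional $G(\bv,\rho,h)=\sup_{\theta,r}\vert\kern-0.25ex\vert\kern-0.25ex\vert d\phi_{\theta,r}-dt\vert\kern-0.25ex\vert\kern-0.25ex\vert_{s_0,2,\Sigma_{\theta,r}}$ in \eqref{500} and the characteristic energy estimates of Section \ref{sec6}. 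The heart of the matter is Lemma \ref{te20} and Lemma \ref{te21}: one must bound $\vert\kern-0.25ex\vert\kern-0.25ex\vert\mathrm{curl}\bw\vert\kern-0.25ex\vert\kern-0.25ex\vert_{s_0-1,2,\Sigma}$ and $\vert\kern-0.25ex\vert\kern-0.25ex\vert\Delta h\vert\kern-0.25ex\vert\kern-0.25ex\vert_{s_0-1,2,\Sigma}$ along the null hypersurfaces, which in turn requires the modified transport equations \eqref{W2}, \eqref{Hh2} and the Hodge/Riesz-operator commutator estimates (Lemmas \ref{ceR}, \ref{te3}). None of this appears in your plan; without it, the overlap and square-function bounds needed for the parametrix (Proposition \ref{AA1}) cannot be verified at the vorticity regularity $H^{s_0}$.

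A secondary inaccuracy: the forcing in the improved wave equation is $\mathbf{T}\mathbf{T}\bv_{-}$ (material derivative), not $\partial_t^2\bv_{-}$, and the paper handles it via the specific Duhamel identity of Lemma \ref{LD}, which represents the solution of $\square_g V=\mathbf{T}F+B$ as a superposition $V=\int_0^t f(\cdot;\tau)\,d\tau$ of solutions to $\square_g f=0$ with data $(f,\mathbf{T}f)|_{t=\tau}=-(F,B)$. This structure is precisely what lets the Strichartz estimate for $\square_g$ be applied with no derivative loss on $F$ (Proposition \ref{r3}); your sketch of controlling ``$\|\partial_t^2\bv_-\|_{L^1_tL^2_x}$'' would not by itself yield \eqref{SSr}. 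Similarly, for continuous dependence the paper's key ingredient is that the estimate \eqref{SE1} for the \emph{linearized} wave equation with the acoustic metric is available at the regularity of $\bv_{+}^{l+1}-\bv_{+}^l$; a Bona--Smith argument without this tool incurs a loss in the difference estimate \eqref{ff2}.
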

\begin{remark}
\begin{enumerate}
\item
Theorem \ref{dingli}  provides the first proof of strong well-posedness for the compressible Euler system with rough data.
\item
Points \eqref{point:1}, \eqref{point:2} of	Theorem \ref{dingli} recovers the result of Q. Wang \cite[Theorem 1.1]{WQEuler}.
In addition, Point \eqref{point:3} of Theorem \ref{dingli} above
provides a Strichartz estimate \eqref{SE1} for a linear wave equation \eqref{linear} endowed with the acoustic metric, and Point \eqref{point:4} gives the continuous dependence of the solution on the initial data, which is lacking in \cite{WQEuler}.
\item
We emphasize that the estimate \eqref{SE1} is much stronger than \eqref{SSr}, and that the estimate \eqref{SE1} is a key observation to prove the continuous dependence of solutions, as well as  Theorem \ref{dingli2} and Theorem \ref{dingli3}.
\end{enumerate}
\end{remark}
\begin{remark}
\begin{enumerate}
\item
To ensure that \eqref{CEE} is hyperbolic in time, we assume that \eqref{HEw} holds. Equation \eqref{linear} is devised for adapting to the structure of the acoustic metric.
\item
The regularity exponent of the vorticity $s_0>2$  is essential to constructing wave-packets, which implies a sufficient regularity of characteristic hypersurfaces. See Corollary \ref{Rfenjie} and Section \ref{Ap} for details.
\end{enumerate}
\end{remark}

Our second result is as follows.
\begin{theorem}\label{dingli2}
	Let $0<\epsilon<\frac19$.	Consider the Cauchy problem \eqref{CEE}-\eqref{id}. Let $\rho$ and $\bw$ be defined in \eqref{pw1}-\eqref{pw11}. For given initial data $(\bv_0,\rho_0,h_0, \bw_0)$ satisfying \eqref{HEw} and for any $\bar{M}_0>0$ such that
	\begin{equation}\label{chuzhi2}
		\| \bv_0\|_{H^{\frac52}} +
		\| \rho_0\|_{H^{\frac52}} + \| \bw_0\|_{H^{\frac32+\epsilon}}+ \|h_0\|_{H^{\frac52+\epsilon}}
		\leq \bar{M}_0,
	\end{equation}
	there exist positive constants $\bar{T}>0$ and $\bar{M}_1>0$ ($\bar{T}$ and $\bar{M}_1$ depends on $C_0, c_0, \epsilon, \bar{M}_0$) such that \eqref{CEE}-\eqref{id} has a unique solution $(\bv,\rho) \in C([0,\bar{T}],H_x^{\frac52})$, $\bw \in C([0,\bar{T}],H_x^{\frac32+\epsilon})$, $h \in C([0,\bar{T}],H_x^{\frac52+\epsilon})$. To be precise,
	
\begin{enumerate}
	\item \label{point:d2:1}	
the solution $\bv, \rho, h$ and $\bw$ satisfy the energy estimates
	\begin{equation*}
		\begin{split}
& \|\bv, \rho\|_{L^\infty_{[0,\bar{T}]}H_x^{\frac52}}+ \|\bw\|_{L^\infty_{[0,\bar{T}]}H_x^{\frac32+\epsilon}}+ \|h\|_{L^\infty_{[0,\bar{T}]}H_x^{\frac52+\epsilon}}+ \|\bv, \rho,h\|_{L^\infty_{ [0,\bar{T}] \times \mathbb{R}^3}} \leq \bar{M}_1,
\\
& \|\partial_t \bv, \partial_t\rho\|_{L^\infty_{[0,\bar{T}]}H_x^{\frac32}}+ \|\partial_t \bw\|_{L^\infty_{[0,\bar{T}]}H_x^{\frac12+\epsilon}}+ \|\partial_t h\|_{L^\infty_{[0,\bar{T}]}H_x^{\frac32+\epsilon}} \leq \bar{M}_1,
\end{split}
\end{equation*}
	\item \label{point:d2:2}	
	the solution $\bv, \rho$ and $h$ satisfy the Strichartz estimate
	\begin{equation*}
		\|d\bv, d\rho, dh\|_{L^2_{[0,\bar{T}]}L_x^\infty} \leq \bar{M}_1,
	\end{equation*}
\item \label{point:d2:3}
for any $\frac{7}{6} \leq r < \frac72$, the linear wave equation
	\begin{equation}\label{linearj}
	\begin{cases}
		& \square_g f=0,
		\\
		&(f, \partial_t f)|_{t=0}=(f_0, f_1) \in H^r(\mathbb{R}^3) \times H^{r-1}(\mathbb{R}^3),
	\end{cases}
\end{equation}
has a unique solution on $[0,\bar{T}]$. Moreover, for $a\leq r-\frac{7}{6}$, the solution satisfies
\begin{equation}\label{xu02}
	\begin{split}
		&\|\left< \partial \right>^{a-1} d{f}\|_{L^2_{[0,\bar{T}]} L^\infty_x}
		\leq  {\bar{M}_3}(\|{f}_0\|_{{H}_x^r}+ \|{f}_1 \|_{{H}_x^{r-1}}),
		\\
		&\|{f}\|_{L^\infty_{[0,\bar{T}]} H^{r}_x}+ \|\partial_t {f}\|_{L^\infty_{[0,\bar{T}]} H^{r-1}_x} \leq  {\bar{M}_3}(\| {f}_0\|_{H_x^r}+ \| {f}_1\|_{H_x^{r-1}}).
	\end{split}
\end{equation}
Here ${\bar{M}_3}>0$ depends on $C_0, c_0, \epsilon, \bar{M}_0$.
\item \label{point:d2:4}
the map from $(\bv_0,\rho_0,h_0,\bw_0) \in H^{\frac52} \times H^{\frac52} \times H^{\frac32+\epsilon} \times H^{\frac52+\epsilon}$  to $(\bv,\rho,h,\bw)(t,\cdot) \in C([0,\bar{T}];H_x^{\frac52} \times H_x^{\frac52} \times H_x^{\frac32+\epsilon} \times H_x^{\frac52+\epsilon})$ is continuous.
\end{enumerate}
\end{theorem}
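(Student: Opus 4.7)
My approach parallels that of Theorem~\ref{dingli}: reduce via finite speed of propagation and scaling to a small, compactly supported data problem; run a bootstrap coupling energy estimates with Strichartz bounds for the nonlinear system and for the linear wave equation $\square_g f = 0$; then extract continuous dependence by applying the linear estimate to differences of solutions. The new challenge is that $\bw_0$ only lies in $H^{3/2+\epsilon}$, strictly below the threshold $s_0 > 2$ used in Theorem~\ref{dingli}. This loss is compensated by the higher entropy regularity $h_0 \in H^{5/2+\epsilon}$ and by systematic use of the better variable $\bv_{+}$ in the decomposition \eqref{dvc}-\eqref{etad}.

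First I would install the decomposition $\bv = \bv_{+} + \bv_{-}$ with $-\Delta v_{-}^i = \mathrm{e}^{\rho}\mathrm{curl}\,\bw^i$; the elliptic gain yields $\bv_{-}|_{t=0} \in H^{5/2+\epsilon}$, and $\bv_{+}$ satisfies \eqref{wrtu}, namely $\square_g v_{+}^i = \mathbf{T}\mathbf{T} v_{-}^i + Q^i$, in which the rough curl source has been traded for a Duhamel-compatible lower-order term. With the bootstrap assumption $\|d\bv, d\rho, dh\|_{L^2_{[0,\bar T]} L^\infty_x} \leq \bar M_1$ in hand, energy estimates applied to the wave-transport system \eqref{fc1}, together with the transport equation $\mathbf{T} h = 0$ (propagating $H^{5/2+\epsilon}$ for $h$) and the vorticity equation (propagating $H^{3/2+\epsilon}$ for $\bw$ via a Beale-Kato-Majda-type criterion), yield the bounds of point~\eqref{point:d2:1}.

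The main obstacle is establishing the linear Strichartz estimate \eqref{xu02} for $\square_g f = 0$ with derivative loss $7/6$, where $g$ is built from the weaker-regularity solution. In Theorem~\ref{dingli} point~\eqref{point:3} the loss is only ${<}1$ because the null hypersurfaces of $g$ inherit regularity from $\bw \in H^{s_0}$ with $s_0>2$; here that geometric regularity is not available, and one must absorb the extra $7/6 - 1 = 1/6$ loss, which matches the sharp Smith-Tataru threshold for generic rough metrics. My plan is to adapt the wave-packet construction of Section~\ref{Ap} to this weaker geometric setting: build the characteristic foliations from the rays of the metric defined by $\bv_{+}$ (which enjoys superior Strichartz control via \eqref{wrtu} and the Duhamel reformulation) rather than from $\bv$ directly, so that the rougher contribution of $\bv_{-}$ to the acoustic metric is absorbed via Sobolev embedding and the curl gain. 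Closing the bootstrap via this linear estimate and a standard contraction argument delivers point~\eqref{point:d2:2}, while the estimate itself gives point~\eqref{point:d2:3}.

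For continuous dependence (point~\eqref{point:d2:4}), I would take two solutions with nearby data, set $\delta\bv, \delta\rho, \delta h, \delta\bw$ to be their differences, and observe that $(\delta\bv, \delta\rho)$ satisfies a linear wave equation with $\square_g$ perturbed by commutator-type source terms, coupled to transport equations for $\delta h$ and $\delta\bw$. Applying \eqref{xu02} at regularity $r = 5/2$ (so $r - 7/6 = 4/3 > 1$, which accommodates the loss) together with energy estimates for the transport pieces yields continuity of the data-to-solution map. Existence of the rough solution itself is obtained via a frequency-truncation approximation scheme: smooth the data to land in the class covered by Theorem~\ref{dingli}, use the uniform Strichartz and energy estimates derived above to pass to the limit, and identify the limit as a solution at the prescribed regularity. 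The hardest single step is the wave-packet construction under the weakened vorticity regularity, since every other ingredient is adapted from Theorem~\ref{dingli} with modified exponents.
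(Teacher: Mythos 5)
Your proposal correctly identifies the hardest step, but the plan for that step does not match the paper and would not work. You propose to directly adapt the wave-packet construction of Section~\ref{Ap} to the rough metric, using ``characteristic foliations from the rays of the metric defined by $\bv_{+}$'' and claiming that the rougher contribution of $\bv_-$ ``is absorbed via Sobolev embedding.'' This cannot work, for two reasons. First, the wave operator whose Strichartz estimate you must prove is $\square_g$ with $g = g(\bv,\rho,h)$, not $g(\bv_+,\rho,h)$; one cannot switch the metric, and treating $g - g(\bv_+)$ as a perturbation reintroduces the rough source into the energy. Second, and more fundamentally, the regularity of the null hypersurfaces needed by the wave-packet construction is governed by $R_{ll}$, which via Corollary~\ref{Rfenjie} requires $\mathrm{curl}\,\bw \in L^2_t H^{s_0-1}_{x'}(\Sigma)$ with $s_0>2$; with $\bw\in H^{3/2+\epsilon}$ you have $\mathrm{curl}\,\bw \in H^{1/2+\epsilon}$ only, far below that threshold. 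Indeed the paper says as much in Remark~\ref{rem:difficult}: neither Smith--Tataru nor Wang's vector-field method can be run directly when $s_0\le 2$. The derivative loss $r - \tfrac{7}{6}$ in \eqref{xu02} is \emph{not} the output of a rough-metric wave-packet argument.

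What the paper actually does (Section~\ref{Sec9}) is entirely different. Mollify the data to $(\bv_{0j},\rho_{0j},h_{0j},\bw_{0j})$ with frequency $\lesssim 2^j$, so that $\|\bw_{0j}\|_{H^{2+\epsilon}}\lesssim 2^{j/2}$, $\|h_{0j}\|_{H^{5/2+\epsilon}}\lesssim 2^{j/2}$ grow with $j$. Rescale by the \emph{frequency-dependent} time $T_j=2^{-j/(3+\epsilon)}[\mathbb{E}(0)]^{-1}$ so that the rescaled data becomes small in the spaces of Proposition~\ref{DL3} (where $s_0=2+\epsilon>2$, so wave packets are available). This yields a solution on the shrinking interval $[0,T_j]$ with good Strichartz estimates. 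The key is then the ``semiclassical'' extension argument: decompose $d\bv_j$ into dyadic pieces, use the decay in $j$ and $k$ of the high-frequency and difference estimates \eqref{qu22}, \eqref{qu44}, and iteratively extend from $[0,T_j]$ to a fixed interval $[0,T_{N_1}]$ by accumulating $L^1_tL^\infty_x$-bounds over $\approx T_j^{-1}$ overlapping subintervals. Summing these short-time bounds is precisely what produces the extra loss $\tfrac{1}{6}$ in \eqref{xu02}. Your frequency-truncation outline omits the crucial observation that direct application of Theorem~\ref{dingli} to the mollified data only yields existence times $T_j\to 0$ (since the required $H^{s_0}$ norm of $\bw_{0j}$ diverges), so no uniform-in-$j$ existence interval is available without the iterative extension. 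This is the missing idea.
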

\begin{remark}
By the work of Majda \cite{M}, the Cauchy problem \eqref{CEE}-\eqref{id} is well-posed if $(\bv_0, \rho_0, h_0) \in H^{\frac52+}$(which implies $\bw_0 \in H^{\frac32+}$). Essentially, compared to the classical result \cite{M}, Theorem \ref{dingli2}
lowers the regularity requirement for the velocity and density from $H^{\frac{5}{2}+}$ to $H^\frac{5}{2}$.
\end{remark}

\begin{remark} \label{rem:difficult}
The main difficulty in proving Theorem \ref{dingli2}
is that neither the vector-field approach in \cite{WQEuler} nor the method of  Smith-Tataru can be used directly if the  vorticity has Sobolev's regularity $s_0 \leq 2$.
\end{remark}

\begin{remark}
Theorems \ref{dingli} and \ref{dingli2} hold if we replace $(\bv_0, \rho_0, h_0)$ with $(\bv_0, \rho_0, h_0-c_1)$, with $c_1$ is a constant in $\mathbb{R}$.
\end{remark}

\begin{remark}
For the isentropic and irrotational compressible Euler equations, the best known well-posedness result holds for $(\bv_0,\rho_0)\in H^{2+}(\mathbb{R}^3)$, c.f. Smith and Tataru \cite{ST}. For the incompressible Euler equations, the corresponding sharp
well-posedness result holds if $\bw_0 \in H^{\frac32+}(\mathbb{R}^3)$, as shown by Kato-Ponce \cite{KP} and Bourgain-Li \cite{BL}; here, the absence of dispersion estimate for vorticity is the key factor. By analogy, entropy in the full compressible Euler equations also lacks dispersive estimate--it is transported by the flow much like vorticity. Therefore, we expect that the optimal local well-posedness to hold for initial data $(\bv_0,\rho_0,h_0,\bw_0)\in H^{2+}(\mathbb{R}^3)\times H^{2+}(\mathbb{R}^3)\times H^{\frac52+}(\mathbb{R}^3) \times H^{\frac32+}(\mathbb{R}^3)$. 
\end{remark}
Our third result is about the isentropic  compressible Euler equations.
\begin{theorem}\label{dingli3}
Let $2<\sstar<\frac52$. Let $\rho$ and $\bw$ be defined in \eqref{pw1}-\eqref{pw11}. Consider the Cauchy problem \eqref{fc0s} with the initial data
\begin{equation}\label{ids}
	(\bv,\rho,\bw)|_{t=0}=(\bv_0,\rho_0,\bw_0).
\end{equation}
Assume that \eqref{HEw} holds. For any $M_*>0$ and initial data $(\bv_0, \rho_0,\bw_0)$ satisfying
	\begin{equation}\label{chuzhi3}
		\|\bv_0\|_{H_x^{\sstar}} + \|\rho_0\|_{H_x^{\sstar}}+ \|\bw_0\|_{H_x^{2}}
		\leq M_*,
	\end{equation}
	there exist positive constants $T^*>0$ and $\bar{M}_{2}>0$ ($T^{*}$ and $\bar{M}_{2}$ depends on $C_0, c_0, s, M_{*}$) such that \eqref{fc0s}-\eqref{ids} has a unique solution $(\bv,\rho)$ satisfying $(\bv,\rho) \in C([0,T^*],H_x^s)\cap C^1([0,T^*],H_x^{s-1})$, $\bw\in C([0,T^*],H_x^{2})\cap C^1([0,T^*],H_x^{1})$.
	To be precise, 
\begin{enumerate}
\item \label{point:d3:1}		
the solution $(\bv, \rho,\bw)$ satisfies the energy estimate
	\begin{equation*}
		\begin{split}
			&\|(\bv, \rho)\|_{L^\infty_tH_x^s}+ \|\bw\|_{L^\infty_tH_x^{2}}  \leq \bar{M}_{2},
\\
& \|(\partial_t\bv, \partial_t\rho)\|_{L^\infty_tH_x^{s-1}}+\|\partial_t\bw\|_{L^\infty_tH_x^{1}} \leq \bar{M}_{2},
		\end{split}
	\end{equation*}
and
\begin{equation*}
\|\bv, \rho \|_{L^\infty_{ [0,{T}^*] \times \mathbb{R}^3}} \leq 2+C_0.
\end{equation*}
\item \label{point:d3:2}	
the solution $(\bv, \rho)$
satisfies the Strichartz estimate
	\begin{equation*}
		\|(d\bv, d\rho)\|_{L^2_{ [0,{T}^*]} L_x^\infty} \leq \bar{M}_{2}.
	\end{equation*}	
\item \label{point:d3:3}
for any $\frac{s}{2} \leq r \leq 3$, and for each $t_0 \in [0,T^*]$, the linear wave equation \eqref{linearj} has a unique solution on $[0,T^*]$. Moreover, for $a\leq r-\frac{s}{2}$, the solution satisfies
\begin{equation}\label{tu02}
	\begin{split}
		&\|\left< \partial \right>^{a-1} d{f}\|_{L^2_{[0,T^*]} L^\infty_x}
		\leq  \bar{M}_4 (\|{f}_0\|_{{H}_x^r}+ \|{f}_1 \|_{{H}_x^{r-1}}),
		\\
		&\|{f}\|_{L^\infty_{[0,T^*]} H^{r}_x}+ \|\partial_t {f}\|_{L^\infty_{[0,T^*]} H^{r-1}_x} \leq \bar{M}_4 (\| {f}_0\|_{H_x^r}+ \| {f}_1\|_{H_x^{r-1}}).
	\end{split}
\end{equation}
Here ${\bar{M}_4}>0$ depends on $C_0, c_0, s, {M}_*$.
\item \label{point:d3:4}
the map from $(\bv_0,\rho_0,\bw_0) \in H^s \times H^s \times H^{2}$  to $(\bv,\rho,\bw)(t,\cdot) \in C([0,T^*];H_x^s \times H_x^s \times H_x^{2})$ is continuous.
\end{enumerate}
\end{theorem}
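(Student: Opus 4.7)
The plan is to follow the same architecture used for Theorem \ref{dingli}, specialized to the isentropic case (system \eqref{fc1s}) in which the entropy drops out and the vorticity lies exactly in $H^2$. The proof has four coupled stages: a reduction to small data, a bootstrap for energy and Strichartz estimates of the genuine solution, a Strichartz theory for the linear wave equation \eqref{linearj} with the rough acoustic metric $g$, and finally uniqueness/continuous dependence. Throughout I assume the setup $2<s<\tfrac52$ and retain the decomposition $\bv=\bv_++\bv_-$ with $-\Delta\bv_-^i = e^\rho\,\mathrm{curl}\,\bw^i$ from \eqref{dvc}--\eqref{etad}, and the identity $\square_g v^i_+=\mathbf{T}\mathbf{T}v_-^i+\mathcal{Q}^i$ from \eqref{wrtu}.

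First I would reduce to a small-data, order-one-time problem by rescaling $(\bv,\rho,\bw)(t,x)\mapsto (\bv,\rho,\bw)(\lambda t,\lambda x)$ and invoking finite speed of propagation; this shrinks the homogeneous norms $\dot{H}^s$ and $\dot{H}^2$ and turns $g$ into a small perturbation of the Minkowski metric $\mathbf{m}$, matching the heuristic in Section 1.3. On smooth data I then run an energy estimate in $H^s\times H^s\times H^2$. The wave equations $\square_g v^i=-e^\rho c_s^2\,\mathrm{curl}\,\bw^i+\mathcal{Q}^i$ and $\square_g\rho=\mathcal{D}$ give the natural $H^s$ control of $(\bv,\rho)$ with forcing term $\mathrm{curl}\,\bw$, while the transport equation $\mathbf{T}w^i=w^a\partial_a v^i$ is propagated at the $H^2$ level through the commutator $[\mathbf{T},\Lambda_x^2]$; closing both requires the $L^1_t L^\infty_x$ bound on $d\bv$, so this stage is run as a bootstrap coupled to the Strichartz estimate produced below.

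The main step is establishing the linear Strichartz estimate \eqref{tu02} for $\square_g f=0$ on $[0,T^*]$, in particular with $a\le r-\tfrac{s}{2}$. Because $\bw\in H^2$ is the borderline regularity, the metric $g$ is only at the $C^{1/2}_x$ level, outside the reach of the classical Smith--Tataru wave-packet parametrix which requires $C^{1,1}$ coefficients. The resolution, following the motivation in the excerpt, is to observe that the roughness of $g$ enters only through $\bv_+$, which carries two more derivatives than $\bw$, while the defect $\bv_-$ satisfies $-\Delta\bv_-^i = e^\rho\,\mathrm{curl}\,\bw^i$ and is therefore controlled pointwise by Sobolev embedding from the $H^2$ bound on $\bw$. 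Thus I would frequency-localize $g$ at dyadic scale $2^k$, regularize the ``bad" piece by truncating $\mathrm{curl}\,\bw$ at a suitably chosen slower scale $2^{\theta k}$, construct wave packets along the null geodesics of the regularized metric, and sum them via an FBI/square-function argument. The error from the regularization is absorbed using the transport evolution for $\bw$ and the Duhamel identity \eqref{wrtu}, which transfers Strichartz control from $\bv_+$ to $d\bv$ by adding only the harmless Sobolev-controlled piece $\partial\bv_-$. Once \eqref{tu02} is in place, the bootstrap in the previous paragraph closes and yields points \eqref{point:d3:1}--\eqref{point:d3:3}.

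Finally, uniqueness follows by subtracting two solutions, writing the difference as a linear wave-transport system with $L^1_t L^\infty_x$ coefficients, and running an $L^2$ energy estimate. For continuous dependence (point \eqref{point:d3:4}) I would mollify the data at frequency $2^k$, solve to obtain $(\bv^{(k)},\rho^{(k)},\bw^{(k)})$ at high regularity, and propagate a frequency envelope for the differences; the strong linear estimate \eqref{tu02} at the level $r>s/2$ (as opposed to only the $L^2_tL^\infty_x$ bound) is exactly what allows the tail of the envelope to be summed without derivative loss, giving convergence in $C([0,T^*];H_x^s\times H_x^s\times H_x^2)$. The hard part is unquestionably the Strichartz construction in step three: controlling a wave-packet parametrix for a metric whose time regularity is only $C^{1/2}$ in an honest sense, and showing that the $\bv_-$-splitting isolates this irregularity into a term handled by elliptic regularity rather than by the parametrix itself. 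Every other step is a technical but now-standard adaptation of the Smith--Tataru / Wang framework to the present system.
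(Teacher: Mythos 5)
Your overall architecture (small-data reduction by scaling and finite speed of propagation, coupled energy/Strichartz bootstrap using the $\bv=\bv_++\bv_-$ split, frequency-envelope continuous dependence) matches the paper's. But the key Strichartz step is where the two diverge, and your version has a gap that the paper goes out of its way to avoid.

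You propose to establish \eqref{tu02} by directly constructing a wave-packet parametrix for the acoustic metric $g$ built from the actual solution with $\bw\in H^2$, after dyadically localizing $g$ and regularizing $\mathrm{curl}\,\bw$ at a correlated slower scale $2^{\theta k}$ inside the parametrix. The paper states explicitly (Remark \ref{rem:difficult-2}) that neither the vector-field approach nor the Smith--Tataru parametrix can be applied directly when the vorticity lies in $H^{s'}$ with $s'\le 2$: the characteristic hypersurfaces are then too rough, and the Smith--Sogge counterexamples put this regularity at the boundary of validity. Your sketch asserts that a regularized parametrix will go through but does not explain how the regularization error is controlled over the full time interval, nor why the curvature/second-fundamental-form estimates of Section \ref{sec6} (Corollary \ref{Rfenjie}, Lemma \ref{chi}), which need $s_0>2$, would survive. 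The proposal effectively trades the hard part for an unproved assertion.

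What the paper actually does is different in a structurally important way: it never builds a parametrix for a metric at $\bw\in H^2$ regularity. Instead, it mollifies the data to $(\bv_{0j},\rho_{0j})=P_{\le j}(\bv_0,\rho_0)$, notes $\|\bw_{0j}\|_{H^{2+\delta_1}}\lesssim 2^{\delta_1 j}$, rescales by $T_j^*=2^{-\delta_1 j}[E(0)]^{-1}$ to get \emph{small} data in the subcritical class $H^s\times H^s\times H^{2+\delta_1}$, applies the already-proved Proposition \ref{DDL3} (which is just Proposition \ref{p1} with $s_0=2+\delta_1>2$) on the short time interval $[0,T_j^*]$, obtains high-order Strichartz bounds \eqref{prec} with a controlled loss, and then concatenates these frequency-dependent short-time bounds across $\sim(2^{\delta_1}-1)C_*E(0)^{-1}$ many intervals by an induction à la Bahouri--Chemin/Tataru (Subsection \ref{finalk}) to cover a uniform interval $[0,T^*]=[0,T^*_{N_0}]$. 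The linear estimate \eqref{tu02} is obtained the same way, via \eqref{prec2}--\eqref{prec4} and Proposition \ref{rut}, not via a parametrix at critical regularity. Replacing your proposed regularized-parametrix construction by this frequency-truncation/semiclassical-summation scheme is not a cosmetic change; without it the proposal does not close.
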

\begin{remark}
	Controlling the optimal regularity of velocity and density, some good structure of \eqref{CEE} may allow us lowering the regularity of the vorticity. In Theorem \ref{dingli3}, the regularity of vorticity is $2$. So it provides us $(s_0-2)$-order ($s_0>2$) regularity improvement of the vorticity compared with \cite{WQEuler}.
\end{remark}

\begin{remark} \label{rem:difficult-2}
Similar to Theorem \ref{dingli2}, the main difficulty in proving Theorem \ref{dingli3}
is that neither the vector-field approach in \cite{WQEuler} nor the method of  Smith-Tataru can be used directly if the  vorticity has regularity $s_0 \leq 2$.
\end{remark}
\begin{remark}
Comparing \eqref{SE1} with \eqref{tu02}, \eqref{xu02}, \eqref{SE1} is better than \eqref{tu02} and \eqref{xu02}. There is a Strichartz estimate with a loss of derivatives  in \eqref{SE1}, which is proved by extending the short (semi-classical) time-interval to a regular time-interval by summing up estimates on these short time-intervals.
\end{remark}

\subsection{Overview of the proofs}

\subsubsection{A sketch of proof for Theorem \ref{dingli}}
The main idea is as follows.

$\bullet \ \textit{Step 1: energy estimates}$. The energy estimate relies on the combination of the hyperbolic structure and a transport structure of \eqref{fc0}. Using the hyperbolic system, we have
\begin{equation*}
	\|(\bv, \rho, h)\|_{H_x^a} \leq \|(\bv_0, \rho_0, h_0)\|_{H_x^a}\exp(C\int^t_0 {\| (d\bv, d\rho, dh)\|_{L^\infty_x}}d\tau), \quad a \geq 0.
\end{equation*}
This implies that the energy estimates are independent of
vorticity. In fact, the key role of the vorticty is
in the Ricci curvature $R_{ll}$ (here $l$ is a null vector for the characteristic surfaces), not in the energy estimates, cf. Corollary \ref{Rfenjie} and Lemma \ref{chi}. Thus we also need to study the vorticity, especially with regard to its role in Strichartz estimates for velocity and density. Motivated by \cite{WQEuler,LS2}, we derive some modified transport equations:
\begin{equation}\label{pre0}
		 \mathbf{T}\bw=(\bw \cdot \nabla)\bv+ \partial \rho \partial h,
\end{equation}
and
\begin{equation}\label{pre1}
	\begin{split}
	 &\mathbf{T} ( \mathrm{curl} \mathrm{curl} \bw^i-\mathrm{e}^{-\rho} \epsilon^{ijk} \partial_k h \Delta v_j + \partial \rho \partial \bw+\partial \bv \partial^2 h )
	 \\
	=& \partial^i \big( 2 \partial_n v_a \partial^n w^a \big) + (\partial \bv, \partial \rho, \partial h) \cdot (\partial^2 \bw, \partial^3 h)+ (\partial \bv, \partial \rho, \partial h)^2 \cdot \partial^2 h 
	\\
	& + \partial^2 h \cdot \partial^2 h+ \partial \bw \cdot \partial \bw + (\partial \bv,\partial \rho,\partial h)\cdot (\partial^2 \bv,\partial^2 \rho,\partial^2 h) \cdot \partial h
	\\
	&   +   (\partial \bv, \partial \rho, \partial h)^2 \cdot \partial \bw +    (\partial \bv, \partial \rho, \partial h)^3 \cdot \partial h.	
\end{split}
\end{equation}
We use the equation \eqref{pre0} to obtain an $L^2_x$ estimate. To the estimate in the norm $\dot{H}^{s_0}_x$, we use the equation \eqref{pre1} to handle it. So the most difficult term is
\begin{equation}\label{eq01}
  \int^t_0 \int_{\mathbb{R}^3} \Lambda_x^{s_0-2} \left\{ \mathrm{curl} \mathrm{curl} \bw^i-\mathrm{e}^{-\rho} \epsilon^{ijk} \partial_k h \Delta v_j + \partial \rho \partial \bw+\partial \bv \partial^2 h \right\} \cdot \partial^i \big( 2 \partial_n v_a \partial^n w^a \big) dxd\tau,
\end{equation}
for the other terms can be estimated by H\"older equalities and commutator estimates. Integrating \eqref{eq01} by parts and using $\partial^i \mathrm{curl} \mathrm{curl} \bw^i=0$, so we have
\begin{equation}\label{eq02}
 \eqref{eq01}= \int^t_0 \int_{\mathbb{R}^3} \Lambda_x^{s_0-2} \left\{ \partial \rho \partial \bw+\partial \bv \partial^2 h -\mathrm{e}^{-\rho} \epsilon^{ijk} \partial_k h \Delta v_j  \right\} \cdot \partial^i \big( 2 \partial_n v_a \partial^n w^a \big) dxd\tau,
\end{equation}
In contrast to \cite{WQEuler}, we utilize the Plancherel formula to handle the following term
\begin{equation*}
	\int^t_0 \int_{\mathbb{R}^3}\Lambda_x^{s_0-2}\partial_i \big(    \partial_n v^a \partial^n w_a \big)\cdot \Lambda_x^{s_0-2}\big(   \partial {\rho}  \partial\bw \big)dx d\tau.
\end{equation*}
which after a partial integration becomes
\begin{equation*}
	\int^t_0 \int_{\mathbb{R}^3} \Lambda_x^{s_0-\frac{5}{2}}\partial_i \big(   \partial_n v^a \partial^n \bw_a \big)\cdot \Lambda_x^{s_0-\frac{3}{2}}\big(  \partial {\rho}  \partial \bw \big)dx d\tau.
\end{equation*}
Similarly, we can handle the second difficult term
\begin{equation*}
	\int^t_0 \int_{\mathbb{R}^3}\Lambda_x^{s_0-2}\partial_i \big(    \partial_n v^a \partial^n \bw_a \big)\cdot \Lambda_x^{s_0-2}\big(   \partial {\bv}  \partial^2 h\big)dx d\tau.
\end{equation*}
in the form
\begin{equation*}
	\int^t_0 \int_{\mathbb{R}^3} \Lambda_x^{s_0-\frac{5}{2}}\partial_i \big(   \partial_n v^a \partial^n \bw_a \big)\cdot \Lambda_x^{s_0-\frac{3}{2}}\big(  \partial {\bv}  \partial^2 h \big)dx d\tau.
\end{equation*}
We also integrate the third difficult term by parts
\begin{equation}\label{pre2}
	\int^t_0 \int_{\mathbb{R}^3}\Lambda_x^{s_0-2}\partial_i \big(    \partial_n v^a \partial^n w_a \big)\cdot \Lambda_x^{s_0-2}\big( \mathrm{e}^{-\rho} \epsilon^{ijk} \partial_k h \Delta v_j \big)dx d\tau.
\end{equation}
Using $\epsilon^{ijk} \partial_i \Delta v_j= \Delta \bw^k $ and $\epsilon^{ijk} \partial_i \partial_k h=0 $, we can transform \eqref{pre2} to
 \begin{equation*}
 	\begin{split}
 			& \int^t_0 \int_{\mathbb{R}^3}\Lambda_x^{s_0-2} \big(    \partial_n v^a \partial^n w_a \big)\cdot \{ \Lambda_x^{s_0-2} \{ \partial_i ( \mathrm{e}^{-\rho} \epsilon^{ijk} \partial_k h) \Delta v_j \} dx+ \Lambda_x^{s_0-2} [\mathrm{e}^{-\rho}  \partial_k h \cdot \Delta \bw^k ] \} dxd\tau
 			\\
 		=	& \int^t_0 \int_{\mathbb{R}^3}\Lambda_x^{s_0-2} \big(    \partial_n v^a \partial^n w_a \big)\cdot \{ \Lambda_x^{s_0-2} \{ \epsilon^{ijk} \partial_i  (\mathrm{e}^{-\rho})  \partial_k h \Delta v_j \} dx+ \Lambda_x^{s_0-2} [\mathrm{e}^{-\rho}  \partial_k h \cdot \Delta \bw^k ] \} dxd\tau.
 	\end{split}
 \end{equation*}
For the entropy, we
calculate
\begin{equation*}
	\begin{split}
	& \mathrm{curl} \bH= \partial \rho \partial h,
\\
	& \mathbf{T}( \mathrm{e}^{\rho} \partial_i H^i)=\partial \bv   \partial^2 h   + \bw \partial \rho \partial h,
\end{split}
\end{equation*}
and
\begin{equation}\label{pref}
	\begin{split}
		\mathbf{T} \left\{ \partial^k( \mathrm{e}^{\rho} \partial_i H^i) \right\}=&-2\partial^k(\partial_i v^j) \partial_j  \partial^i h+\partial \bv \partial^3 h
		+ \partial \bw \partial \rho \partial h 
		\\
		& + \bw \partial \rho \partial^2h
		 +\bw \partial \rho \partial \rho \partial h + \bw \partial h \partial^2 h.
	\end{split}
\end{equation}
We can obtain an $L^2_x$ estimate of $h$ by using \eqref{fc0}. To  bound  $\|h\|_{\dot{H}_x^{s_0+1}}$, we operate on \eqref{pref} with $\Lambda^{s_0-2}_x$, multiply it with $\Lambda^{s_0-2}_x \partial^k( \mathrm{e}^{\rho} \partial_i H^i)$, and integrate over $[0,t]\times \mathbb{R}^3$. The most difficult term is
\begin{equation}\label{ppp}
	\int^t_0 \int_{\mathbb{R}^3} \Lambda^{s_0-2}_x[\partial^k(\partial_m v^j) \partial_j  \partial^m h]\cdot \Lambda_x^{s_0-2} \partial_k ( \mathrm{e}^\rho \partial_i H^i) dx d\tau.
\end{equation}
Integrating \eqref{ppp} by parts yields
\begin{equation}\label{ppp0}
	\begin{split}
		&\int^t_0 \int_{\mathbb{R}^3} \Lambda_x^{s_0-2}[\partial^k v^j \partial_m(\partial_j  \partial^m h)]\cdot \Lambda_x^{s_0-2} \partial_k ( \mathrm{e}^\rho \partial_i H^i) dx d\tau
		\\
		& +  \int^t_0 \int_{\mathbb{R}^3} \Lambda_x^{s_0-2}(\partial^k v^j \partial_j  \partial^m h )\cdot \Lambda_x^{s_0-2} \partial_m \partial_k ( \mathrm{e}^\rho \partial_i H^i) dx d\tau .
	\end{split}
\end{equation}
The first term in \eqref{ppp0} can be estimated by using product estimates. We rewrite the second term in \eqref{ppp0} as
\begin{align}
		& \int^t_0 \int_{\mathbb{R}^3} \Lambda_x^{s_0-2}(\partial^k v^j \partial_j  \partial^m h )\cdot \Lambda_x^{s_0-2} \partial_m \partial_k ( \mathrm{e}^\rho \partial_i H^i) dx d\tau \nonumber
		\\
		= & \int^t_0 \int_{\mathbb{R}^3} \Lambda_x^{s_0-2}\{ ( \partial^k v^j- \partial^j v^k) \partial_j  \partial^m h \} \cdot \Lambda_x^{s_0-2} \partial_m \partial_k ( \mathrm{e}^\rho \partial_i H^i) dx  d\tau \nonumber
		\\
		& - \int^t_0  \int_{\mathbb{R}^3} \Lambda_x^{s_0-2}(  \partial^j v^k \partial_j  \partial^m h ) \cdot \Lambda_x^{s_0-2} \partial_m \partial_k ( \mathrm{e}^\rho \partial_i H^i) dx  d\tau\nonumber
		\\\label{eq06}
		=& \int^t_0  \int_{\mathbb{R}^3} \Lambda_x^{s_0-2}\{ \epsilon^{lkj}\mathrm{e}^{\rho} w_l \partial_j  \partial^m h \} \cdot \Lambda_x^{s_0-2} \partial_m \partial_k ( \mathrm{e}^\rho \partial_i H^i) dx  d\tau 	
		\\\label{eq07}
		& - \int^t_0  \int_{\mathbb{R}^3} \Lambda_x^{s_0-2}(  \partial^j v^k \partial_j  \partial^m h ) \cdot \Lambda_x^{s_0-2} \partial_m \partial_k ( \mathrm{e}^\rho \partial_i H^i) dx  d\tau.
		\end{align}
We then use the Plancherel formula and the H\"older  inequality to bound \eqref{eq06}. Integrating \eqref{eq07} by parts yields
\begin{align}\label{eq08}
\eqref{eq07}= & -2 \int^t_0 \int_{\mathbb{R}^3} \Lambda_x^{s_0-2} (  \partial^j v^k \partial_k \partial_j  \partial^m h ) \cdot \Lambda_x^{s_0-2} \partial_m  ( \mathrm{e}^\rho \partial_i H^i) dx d\tau
\\\label{eq09}
		& -2 \int^t_0 \int_{\mathbb{R}^3} \Lambda_x^{s_0-2}( \partial_k  \partial^j v^k \partial_j  \partial^m h ) \cdot \Lambda_x^{s_0-2} \partial_m  ( \mathrm{e}^\rho \partial_i H^i) dx d\tau.
\end{align}
By using Lemma \ref{lpe}, we can bound \eqref{eq08}. For \eqref{eq09}, we use $\mathbf{T}\rho$ to replace $\partial_k v^k$ and then integrate it by parts. Gathering these above estimates together, and using Young's inequality, Gronwall's inequality, we can get the desired energy estimates:
\begin{equation}\label{EEE0}
	\begin{split}
		& \| (\bv, \rho)\|_{H_x^s}+\|h\|_{H_x^{1+s_0}}+\|\bw\|_{H_x^{s_0}}
		\leq E_0  \exp \{ {\int^t_0} (\|(d\bv, d\boldsymbol{\rho}, dh)\|_{L^\infty_x}+ \|\partial \bv, \partial h, \partial \rho\|_{\dot{B}^{s_0-2}_{\infty,2}})d\tau  \},
	\end{split}
\end{equation}
where $E_0$ are determined by the Sobolev norms $\|\bv_0\|_{H_x^s}, \|\rho_0\|_{H_x^s}$, $\|h_0\|_{H_x^{s_0+1}}$, and $\|\bw_0\|_{H_x^{s_0}}$. Hence, a Strichartz estimate for
\begin{equation}\label{SO}
	{\int^t_0} \|(d\bv, d\rho, dh)\|_{L^\infty_x}d\tau+{\int^t_0}\|\partial \bv, \partial\rho, \partial h\|_{\dot{B}^{s_0-2}_{\infty,2}}d\tau.
\end{equation}
is needed.
See Section \ref{ES1} for details.

$\bullet \ \textit{Step 2: Reduction to a result with small initial data}$.
Since the system \eqref{fc0} has finite propagation speed, by a standard compactness method, scaling and localization technique, we can reduce Theorem \ref{dingli} to the following statement. For any smooth, supported data $(\bv_0, {\rho}_0, h_0, \bw_0)$ satisfying
	\begin{equation*}
		\begin{split}
			&\|\bv_0\|_{H^s} + \|\rho_0 \|_{H^s} + \|\bw_0\|_{H^{s_0}}+ \|h_0\|_{H^{s_0+1}} \leq \epsilon_3,
		\end{split}
	\end{equation*}
then the Cauchy problem \eqref{fc0} admits a smooth solution $(\bv,\rho,h,\bw)$ on $[-1,1] \times \mathbb{R}^3$, which has the following properties:
	
	$\mathrm{(i)}$ energy estimates
	\begin{equation}\label{pre3}
		\begin{split}
			&\|\bv\|_{L^\infty_t H_x^{s}}+\| \rho \|_{L^\infty_t H_x^{s}} + \| \bw\|_{L^\infty_t H_x^{s_0}} + \| h \|_{L^\infty_t H_x^{s_0+1}} \leq \epsilon_2,
		\end{split}
	\end{equation}

	$\mathrm{(ii)}$ dispersive estimate for $\bv$ and $\rho$
	\begin{equation}\label{pre4}
		\|d \bv, d \rho, dh\|_{L^2_t C^\delta_x}+\| d \rho, \partial \bv_{+}, d \bv, dh\|_{L^2_t \dot{B}^{s_0-2}_{\infty,2}} \leq \epsilon_2,
	\end{equation}

	$\mathrm{(iii)}$ dispersive estimate for the linear equation

	Let $f$ satisfy
	the equation
		\begin{equation*}
		\begin{cases}
			& \square_{\mathbf{g}} f=0, \qquad (t,x) \in [-1,1]\times \mathbb{R}^3,
			\\
			&f(t_0,\cdot)=f_0 \in H_x^r(\mathbb{R}^3), \quad \partial_t f(t_0,\cdot)=f_1 \in H_x^{r-1}(\mathbb{R}^3).
		\end{cases}
	\end{equation*}
Here $\mathbf{g}$ is defined in \eqref{boldg}, which is a localization of $g$. For each $1 \leq r \leq s+1$, the Cauchy problem \eqref{linear} is well-posed in $H_x^r \times H_x^{r-1}$, and the following estimate holds:
\begin{equation}\label{pre5}
	\|\left< \partial \right>^k f\|_{L^2_{[-1,1]} L^\infty_x} \lesssim  \| f_0\|_{H^r}+ \| f_1\|_{H^{r-1}},\quad k<r-1.
\end{equation}
See Section \ref{Sec4} for the complete proof.

$\bullet \ \textit{Step 3: Reduction to a continuity argument}$. Due to the fact that the initial data $(\bv_0, {\rho}_0, h_0, \bw_0)$ is small, we can consider a perturbation problem around Minkowski space. We further
reduce the proof of \eqref{pre3}-\eqref{pre5} to: there is a continuous functional $G: \mathcal{H} \rightarrow \mathbb{R}^{+}$, satisfying $G(0)=0$, so that for each $(\bv, \rho, h, \bw) \in \mathcal{H}$ satisfying $G(\bv, \rho, h) \leq 2 \epsilon_1$ the following hold: $\mathrm{(i)}$ the function $\bv, \rho, h$, and $\bw$ satisfies $G(\bv, \rho, h) \leq \epsilon_1$; $\mathrm{(ii)}$ the estimates \eqref{pre3}-\eqref{pre4} and \eqref{pre5} both hold. By using energy estimates in Theorem \ref{ve}, the estimate \eqref{pre3} follows from \eqref{pre4}. Hence, the proof of $G(\bv, \rho, h) \leq \epsilon_1$ and \eqref{pre4}-\eqref{pre5} are the core. See Section \ref{ABA} for details.

$\bullet \ \textit{Step 4: Definition of $G$ and characteristic energy estimates}$.

Following \cite{ST}, for $\theta \in \mathbb{S}^2$, $\bx \in \mathbb{R}^3$, and let $\Sigma_{\theta, r}$ be the flowout of the set $\theta \cdot \bx = r-2$ along the null geodesic flow with respect of $\mathbf{g}$ in the direction $\theta$ at $t=-2$.
Let $\bx'_{\theta}$ be given orthonormal coordinates on the hyperplane in $\mathbb{R}^3$ perpendicular to $\theta$. Then, $\Sigma_{\theta,r}$ is of the form
\begin{equation*}
	\Sigma_{\theta,r}=\left\{ (t,\bx): \theta\cdot \bx-\phi_{\theta, r}=0  \right\}
\end{equation*}
for a smooth function $\phi_{\theta, r}(t,\bx'_{\theta})$.
For a given $\theta$, the family $\{\Sigma_{\theta,r}, \ r \in \mathbb{R}\}$ defines a foliation of $[-2,2]\times \mathbb{R}^3$, by characteristic hypersurfaces with respect to $\mathbf{g}$.

We define (see \eqref{500} for details)
\begin{equation}\label{dG}
	G= \sup_{\theta, r} \vert\kern-0.25ex\vert\kern-0.25ex\vert d \phi_{\theta,r}-dt\vert\kern-0.25ex\vert\kern-0.25ex\vert_{s_0,2,{\Sigma_{\theta,r}}}.
\end{equation}
For simplicity, let us suppose $\theta=(0,0,1)$ and $r=0$. In this case, we set $\Sigma=\Sigma_{\theta,r}$ and $\phi=\phi_{\theta,r}$. To prove $G \leq \epsilon_1$ and catch the geometry properties of $\Sigma$, let us introduce a null frame along $\Sigma$ as follows. Let
\begin{equation*}
	V=(dr)^*,
\end{equation*}
where $r$ is the foliation of $\Sigma$, and where $*$ denotes the identification of covectors and vectors induced by $\mathbf{g}$. Define a null frame with respect to $\mathbf{g}$ by setting
\begin{equation*}
	l=\left< dt,dx_3-d\phi\right>^{-1}_{\mathbf{g}} \left( dx_3-d \phi \right)^*, \quad \underline{l}=l+2\partial_t,
\end{equation*}
and choosing orthonormalized vector fields tangent to tangent to the fixed-time slice $\Sigma^t$ of $\Sigma$.
By further calculation (see Corollary \ref{vte}, Lemma \ref{te20}, and Lemma \ref{te21}), the bound of $G$ relies on
$\vert\kern-0.25ex\vert\kern-0.25ex\vert (\bv, \rho, h)\vert\kern-0.25ex\vert\kern-0.25ex\vert_{s_0,2,{\Sigma}}$ and $\vert\kern-0.25ex\vert\kern-0.25ex\vert ( \mathrm{curl}\bw, \Delta h ) \vert\kern-0.25ex\vert\kern-0.25ex\vert_{s_0-1,2,{\Sigma}}$. Hence, we need go back to the hyperbolic structure and the wave-transport structure. By a change of coordinates, we could use the hyperbolic structure to get a bound of $\vert\kern-0.25ex\vert\kern-0.25ex\vert \bv, \rho, h\vert\kern-0.25ex\vert\kern-0.25ex\vert_{s_0,2,{\Sigma_{\theta,r}}}$. As for $\mathrm{curl}\bw$ and $\Delta h$, the characteristic energy estimate is more difficult. For example, on the Cauchy slice $\{t=\tau\}\times \mathbb{R}^3$, we can use elliptic estimates to get the energy estimate of all derivatives of $\bw$ by using $\mathrm{div}\bw$ and $\mathrm{curl}\bw$. However, on the characteristic hypersurface, this type of elliptic energy estimates does not work. To overcome this difficulty, we use the Hodge decomposition to recover some transport equations for derivatives of $\bw$ and $\Delta h$, which is related to Riesz operator. To do that, some commutator estimates for the Riesz operator and $\bv \cdot \nabla$ concerning the compressible fluid are required. See Lemma \ref{ceR}, \ref{LPE}, \ref{ce}, \ref{te3}, and \ref{te20} for details. For the whole proof of the characteristic energy estimates, cf. Section \ref{sec6}.

$\bullet \ \textit{Step 5: Strichartz estimates}$. We should note that \eqref{pre5} of linear wave is not sufficient for us to establish the desired Strichartz estimate \eqref{pre4}. This leads us to introduce Lemma \ref{LD}. Using Lemma \ref{LD}, we are able to establish the following Strichartz estimate for a non-homogeneous linear wave equation. For any $1 \leq r \leq s+1$, for each $t_0 \in [-2,2)$, then the Cauchy problem
\begin{equation}\label{pre7}
	\begin{cases}
		& \square_{\mathbf{g}} f=\mathbf{T}F, \qquad (t,x) \in (t_0,2]\times \mathbb{R}^3,
		\\
		&f(t,x)|_{t=t_0}=f_0 \in H_x^r(\mathbb{R}^3),
		\\
		&\partial_t f(t,x)|_{t=t_0}=f_1-F(t_0,\cdot) \in H_x^{r-1}(\mathbb{R}^3),
	\end{cases}
\end{equation}
admits a solution $f \in C([-2,2],H_x^r) \times C^1([-2,2],H_x^{r-1})$ and the following estimates holds, provided $k<r-1$,
\begin{equation}\label{pre6}
	\| \left<\partial \right>^{k} f, \left<\partial \right>^{k-1} df\|_{L^2_{[-2,2]}L^\infty_x} \lesssim \|f_0\|_{H_x^r}+ \|f_1\|_{H_x^{r-1}}+\| F\|_{L^1_{[-2,2]}H_x^r}.
\end{equation}
So \eqref{pre6} is stronger than that in wave equations \cite{ST}. Please see Proposition \ref{r3} and Proposition \ref{r5} for details. As for \eqref{pre4}, it is essential for us to use a better wave equations for $\bv_{+}$ and $\rho+
\frac{1}{\gamma} h$ (see Lemma \ref{crh} and Lemma \ref{wte1}),
\begin{equation}\label{pre8}
\begin{cases}
	&{\square}_g (\rho +\frac{1}{\gamma} h)=D+ \frac{1}{\gamma}E,
	\\
	&{\square}_g \bv_{+}=\mathbf{T} \mathbf{T} {\bv}_{-}+\bQ.
\end{cases}
\end{equation}
We shall mention that this good structure of $\bv_{+}$ is firstly proposed by Wang \cite{WQEuler}.

Operating with $\Delta_j$ on \eqref{pre8} and using \eqref{pre5}-\eqref{pre7} (taking $r=s-s_0+1, k=0$), it follows that
\begin{equation}\label{pre9}
\begin{split}
	& \| (\Delta_j (d\rho +\frac{1}{\gamma} dh), \Delta_j d\bv_{+})\|_{L^2_{[-2,2]} L^\infty_x}
	\\
	\lesssim \ & \|\Delta_j D\|_{L^1_{[-2,2]} H_x^{s-s_0+1} } + \| \Delta_j E \|_{L^1_{[-2,2]} H_x^{s-s_0+1} }  + \| \Delta_j \bQ \|_{L^1_{[-2,2]} H_x^{s-s_0+1} }
	\\
	& +  \| \Delta_j (\rho, \bv, h) \|_{{L^\infty_{[-2,2]} H_x^{s-s_0+2} }} +  \| \Delta_j (\mathbf{T} {\bv}_{-}) \|_{{L^1_{[-2,2]} H_x^{s-s_0+1} }}
	\\
	&  + \| [\mathbf{T}, \Delta_j](\mathbf{T} {\bv}_{-}) \|_{{L^1_{[-2,2]} H_x^{s-s_0+1} }}+ \| [\square_{{g}}, \Delta_j] {\bv}_{+} \|_{{L^1_{[-2,2]} H_x^{s-s_0+1} }}.
\end{split}
\end{equation}
Multiplying \eqref{pre9} by $2^{(s_0-2)j}$, squaring and summing it over $j\geq 1$, we get
\begin{equation}\label{prea}
\| (d\rho +\frac{1}{\gamma} dh), d{\bv}_{+}\|_{L^2_{[-2,2]} \dot{B}^{s_0-2}_{\infty, 2}}
\lesssim \| \bv, \rho, h\|_{L^\infty_{[-2,2]} H_x^s}+ \| h\|_{L^2_{[-2,2]} H_x^{\frac52+}}+ \| \bw\|_{ L^2_{[-2,2]}H_x^{\frac{3}{2}+}}.
\end{equation}
Using \eqref{prea} and Sobolev imbedding for $h$ and $\bv_{-}$, allows us to prove \eqref{pre4}. Moreover, when $ 2< s \leq\frac{5}{2}$ and $a<s-1$, a general Strichartz estimate yields
\begin{equation}\label{preb}
	\begin{split}
		 \|\left< \partial \right>^{a-1} (d\rho+\frac{1}{\gamma}dh), \left< \partial \right>^{a-1} d\bv_{+}\|_{L^2_{[-2,2]} \dot{B}^{0}_{\infty, 2}}
		\lesssim  & \| \bv, \rho, h\|_{L^\infty_{[-2,2]} H_x^s}+ \| h\|_{L^2_{[-2,2]} H_x^{\frac52+}}
		\\
		\qquad & + \| \bw\|_{ L^2_{[-2,2]}H_x^{\frac{3}{2}+}}.
	\end{split}
\end{equation}
The entire proof can be found in Proposition \ref{r6} and Proposition \ref{r4}.

$\bullet \ \textit{Step 6: Proof of the estimates \eqref{pre6}-\eqref{pre7}}$. Through the above steps, we have obtained sufficient regularity of the characteristic hypersurfaces to control
the characteristic hypersurfaces. This allows us to use the idea of constructing approximate solutions by wave-packets \cite{ST,Sm,Wo}. The whole proof is presented in Section \ref{Ap}.

$\bullet \ \textit{Step 7: Continuous dependence.}$ If we prove the continuous dependence directly, we will find that the structure of the system is destroyed and there is loss of derivatives. In section \ref{Sub}, we overcome the difficulty by using \eqref{pre5} and a frequency decomposition technique. We first prove convergence in a weaker space. Secondly, we construct a sequence of smooth solutions  $\{(\bv^l,\rho^l,h^l,\bw^l)\}_{l \in \mathbb{Z}^+}$  satisfying \eqref{dbsh}-\eqref{dbs1}, which crucially relies on the Strichartz estimates \eqref{pre5} for Equation \eqref{Ss4}. Finally, we use we prove the convergence in $H^s_x \times H^s_x \times H^{s_0+1}_x\times H^{s_0}_x$ by making use of $(\bv^l,\rho^l,h^l,\bw^l)$. See section \ref{Sub} for details.

From the wave structure in equations \eqref{fc1} and \eqref{fc1s}, the regularity of characteristic hypersurfaces
is governed by the vorticity. When the vorticity has Sobolev regularity $s'\leq 2$, neither the vector-field approach by Wang \cite{WQEuler} nor Smith-Tataru method \cite{ST} can be applied directly to establish Strichartz estimate of velocity and logarithmic density, because they require higher(at least $2+$ order) regularity of the characteristic surfaces. To overcome the difficulty, a crucial insight relies on a new type of Strichartz estimate for solutions and Strichartz estimates for linear wave equations endowed with an acoustic metric, which allows for loss of derivatives but can be summed over short time intervals to obtain overall estimates.

The proofs of Theorems \ref{dingli2} and \ref{dingli3} have a common framework, adapted to the different regularity
regimes. The main steps are outlined below.
\subsubsection{A sketch of proof of Theorem \ref{dingli2}.}
The main idea is in the following.

$\bullet \ \textit{Step 1: Energy estimates}$. By using the structure of \eqref{CEE} and a fractional Leibniz rule, the energy estimate $\|  \bv\|_{L^\infty_{[0,t]} H_x^{\frac52}}+\| \rho\|_{L^\infty_{[0,t]} H_x^{\frac52}}+\|h\|_{L^\infty_{[0,t]} H_x^{\frac52+\epsilon}}+\|\bw\|_{L^\infty_{[0,t]} H_x^{\frac32+\epsilon}} $ is established in Theorem \ref{dingli2}. As a result, if we expect a bound of the energy
\begin{equation*}
	\|  \bv\|_{L^\infty_{[0,t]} H_x^{\frac52}}+\| \rho\|_{L^\infty_{[0,t]} H_x^{\frac52}}+\|h\|_{L^\infty_{[0,t]} H_x^{\frac52+\epsilon}}+\|\bw\|_{L^\infty_{[0,t]} H_x^{\frac32+\epsilon}}
\end{equation*}
then the goal is to estimate
\begin{equation}\label{pqqqq}
	\|d\bv, d\rho,dh\|_{L_{[0,t]}^2 L_x^\infty}.
\end{equation}
See Section \ref{ES3} for its proof.

$\bullet \ \textit{Step 2: Constructing a strong solution as a limit of smooth solutions}$.
Since the initial data is not so smooth, we apply a mollifier to $(\bv_{0},\rho_0,h_0, \bw_0)$. Thus we consider a sequence
\begin{align*}
	(\bv_{0j},\rho_{0j},h_{0j})=(P_{\leq j}\bv_{0},P_{\leq j}\rho_{0},P_{\leq j}h_{0}), \quad \bw_{0j}= \textrm{e}^{-\rho_{0j}}\textrm{curl} \bv_{0j},
\end{align*}
where $P_j=\sum_{j' \leq j}P_{j'}$ and $P_{j'}$ is a LP operator with frequency supports $\{ 2^{j'-3} \leq |\xi| \leq 2^{j'+3} \}$.

Considering above initial datum $(\bv_{0j},\rho_{0j}, h_{0j}, \bw_{0j})$, there exists $\bar{T}>0$, $\bar{M}_1, \bar{M}_3>0$ ($\bar{T}$, $\bar{M}_1$ and $\bar{M}_3$ only depends on $\bar{M}_0$ and $C_0, c_0, \epsilon$) such that a sequence solutions of \eqref{fc1} satisfy $(\bv_{j},\rho_{j})\in C([0,\bar{T}];H_x^\frac52)\cap C^1([0,\bar{T}];H_x^{\frac32})$, $h_{j}\in C([0,\bar{T}];H_x^{\frac52+\epsilon}) \cap C^1([0,\bar{T}];H_x^{\frac32+\epsilon})$, $\bw_{j}\in C([0,\bar{T}];H_x^{\frac32+\epsilon}) \cap C^1([0,\bar{T}];H_x^{\frac12+\epsilon})$ and
\begin{equation}\label{prec01}
	\|d\bv_j, d\rho_j, dh_j\|_{L^2_{[0,\bar{T}]}L_x^\infty} \leq \bar{M}_{1}.
\end{equation}
Moreover, for $\frac{7}{6} \leq r < \frac72$, consider the following linear wave equation
\begin{equation}\label{prec02}
	\begin{cases}
		\square_{{g}_j} f_j=0, \qquad [0,\bar{T}]\times \mathbb{R}^3,
		\\
		(f_j,\partial_t f_j)|_{t=0}=(f_{0j},f_{1j}),
	\end{cases}
\end{equation}
where $(f_{0j},f_{1j})=(P_{\leq j}f_0,P_{\leq j}f_1)$ and $(f_0,f_1)\in H_x^r \times H^{r-1}_x$. Then there is a unique solution $f_j$ on $[0,\bar{T}]\times \mathbb{R}^3$ such that for $a \leq r-\frac76$,
\begin{equation}\label{prec03}
	\begin{split}
		&\|\left< \partial \right>^{a-1} d{f}_j\|_{L^2_{[0,\bar{T}]} L^\infty_x}
		\leq  {\bar{M}_3}(\|{f}_0\|_{{H}_x^r}+ \|{f}_1 \|_{{H}_x^{r-1}}),
		\\
		&\|{f}_j\|_{L^\infty_{[0,\bar{T}]} H^{r}_x}+ \|\partial_t {f}_j\|_{L^\infty_{[0,\bar{T}]} H^{r-1}_x} \leq  {\bar{M}_3} (\| {f}_0\|_{H_x^r}+ \| {f}_1\|_{H_x^{r-1}}).
	\end{split}
\end{equation}
Based on \eqref{prec01}-\eqref{prec03}, then we are able to obtain a strong solution of Theorem \ref{dingli2} as a limit of the sequence $(\bv_{j},\rho_{j}, h_j, \bw_j)$. Please see Subsection \ref{keyc} for details.

It remains for us to prove \eqref{prec01}-\eqref{prec03}.

$\bullet \ \textit{Step 3: Obtaining a sequence of solutions on small time-intervals}$. We note $\|{\bw}_{0j}\|_{H^{2+\epsilon}}\lesssim 2^{\frac{j}{2}}$, and also $\|{h}_{0j}\|_{H^{\frac52+\epsilon}} \lesssim 2^{\frac{j}{2}}$. By using compactness, scaling, and physical localization, we shall get a sequence of small initial datum. By using Proposition \ref{DL3}, we can prove the existence of small solutions with these small datum. On one hand, Proposition \ref{DL3} can be derived from \eqref{preb}. On the other hand, returning back from small solutions to large solutions $(\bv_{j},\rho_{j},h_j, \bw_{j})$, then the time of existence of solutions $(\bv_{j},\rho_{j},h_j,\bw_{j})$ depends on $j\in \mathbb{Z}^+$. Because the norm $\|{\bw}_{0j}\|_{H^{2+\epsilon}}$ depending on $j$. Moreover, we precisely calculate $T_j=2^{-\frac{j}{3+\epsilon}} [\mathbb{E}(0)]^{-1}$, where $\mathbb{E}(0)=\|\bv_{0j}\|_{H^\frac52}+ \|\rho_{0j}\|_{H^\frac52} + \|h_{0j}\|_{H^\frac52+\epsilon}+ \|\bw_{0j}\|_{H^\frac32+\epsilon} $ and please see \eqref{qu01} and \eqref{qu03} for its formulations. On time-interval $[0, T_j]$, the solutions $(\bv_j, \rho_j, h_j)$ yields a higher-order Strichartz estimates,
\begin{equation}\label{prec04}
	\|d\rho_j, d\bv_{j}\|_{L^2_{[0,T_j]} C^a_x} \leq C \{ T_j \}^{-(\frac12+a)}  (1+\mathbb{E}(0)), \quad a<\sstar-1.
\end{equation}
Therefore, for high frequency $k\geq j$, by choosing a suitable number $a=\frac12-\frac{\epsilon}{40}$ in \eqref{prec04}, so we obtain
\begin{equation}\label{prec05}
	\begin{split}
		\| P_{k} d \bv_j, P_{k} d \rho_j, P_{k} d h_j \|_{L^2_{[0,T_j]}L^\infty_x}
		\leq & C  (1+\mathbb{E}^3(0)) \cdot 2^{-\frac{\epsilon}{40}k}  2^{-\frac{\epsilon}{10}j} 2^{-\frac{1}{2(3+\epsilon)}j},
	\end{split}
\end{equation}
Decompose
\begin{equation*}
	\begin{split}
		d \rho_j= P_{\geq j}d \rho_j+ \textstyle{\sum}^{j-1}_{k=1} \textstyle{\sum}_{m=k}^{j-1} P_k  (d\rho_{m+1}-d\rho_m)+ \textstyle{\sum}^{j-1}_{k=1}P_k d \rho_k .
	\end{split}
\end{equation*}
Then we need to see if there is a similar estimates for $P_k  (d\bv_{m+1}-d\bv_m)$. By using a Strichartz estimates for linear wave, for $k<j$, we shall also obtain
\begin{equation}\label{prec06}
	\begin{split}
		& \|P_k (d\rho_{m+1}-d\rho_m), P_k (d\bv_{m+1}-d\bv_m), P_k (dh_{m+1}-dh_m)\|_{L^2_{[0,T_{m+1}]} L^\infty_x}
		\\
		\leq & C(1+\mathbb{E}^2(0)) 2^{-\frac{1}{3+\epsilon}m}2^{-\frac{\epsilon}{10} k} 2^{-\frac{7\epsilon}{10} m}.
	\end{split}	
\end{equation}
We prove \eqref{prec04}-\eqref{prec06} in Subsection \ref{keyd}.

$\bullet \ \textit{Step 4: Extending a sequence of solutions to a regular time-intervals}$. Compared \eqref{prec} with \eqref{prec05}-\eqref{prec06}, there is a loss of derivatives in \eqref{prec05}-\eqref{prec06}. Fortunately, there are some decay estimates \eqref{prec05}-\eqref{prec06} on frequency number $j$ and $k$. If we can always extend it with a length $\approx T_j$ of time intervals, and with a number $\approx (T_j)^{-1}$, then we shall sum these estimates up and obtain a uniform Strichartz estimates $\|d \rho_j, d\bv_{j}, dh_j\|_{L^1_{[0,\bar{T}]} L^\infty_x}$ and $\|d \rho_j, d\bv_{j}, dh_{j}\|_{L^2_{[0,\bar{T}]} L^\infty_x}$ on a regular time-interval $[0,\bar{T}]$. In fact, we can extend it in this way from $[0,T_j]$ to $[0,T_{N_1}]$. We first extend it from $[0,T_j]$ to $[0,T_{j-1}]$. In this process, the growth in Strichartz estimates and energy estimates can be calculate precisely. Then we conclude it by induction method. Please refer Subsection \ref{keye} for details. Finally, we get
\begin{equation*}
	\bar{T}=2^{-\frac{N_1}{3+\epsilon}} [\mathbb{E}(0)]^{-1},
\end{equation*}
where $N_1$ is an fixed integer depending on $\bar{M}_0$ and $C_0, c_0, \epsilon$(please see \eqref{qu04}). Furthermore,
\begin{equation*}
	\| \bv_j \|_{L^\infty_{[0,\bar{T}]}H_x^{\frac52}}+ \| \rho_j \|_{L^\infty_{[0,\bar{T}]}H_x^{\frac52}} +  \| h_j \|_{L^\infty_{[0,\bar{T}]}H_x^{\frac52+\epsilon}}+ \| \bw_j \|_{L^\infty_{[0,\bar{T}]}H_x^{\frac32+\epsilon}},
\end{equation*}
and
\begin{equation*}
	\|d \rho_j\|_{L^2_{[0,\bar{T}]} L^\infty_x} + \|d\bv_{j}\|_{L^2_{[0,\bar{T}]} L^\infty_x}+ \|dh_{j}\|_{L^2_{[0,\bar{T}]} L^\infty_x}
\end{equation*}
is uniformly bounded (See \eqref{qu100}-\eqref{qu101}). In a similar idea, \eqref{prec02}-\eqref{prec03} also holds. For the whole proof, see Section \ref{Sec9}.

\subsubsection{A sketch of proof of Theorem \ref{dingli3}} Our main steps are as follows.

$\bullet \ \textit{Step 1: Energy estimates}$. The energy estimate $\|  \bv\|_{L^\infty_{[0,t]}H_x^{\sstar}}+\| \rho\|_{L^\infty_{[0,t]}H_x^{\sstar}}+\|\bw\|_{L^\infty_{[0,t]} H_x^{2}}$ is established in Theorem \ref{dingli3}. Therefore, if we expect a bound of the energy
\begin{equation*}
	\begin{split}
		& \|  \bv\|_{L^\infty_{[0,t]}H_x^{\sstar}}+\| \rho\|_{L^\infty_{[0,t]}H_x^{\sstar}}+\|\bw\|_{L^\infty_{[0,t]} H_x^{2}},
	\end{split}
\end{equation*}
then the goal is to estimate
\begin{equation}\label{pqqq}
	\|d\bv, d\rho\|_{L_{[0,t]}^2 L_x^\infty}.
\end{equation}
We refer the reader to Section \ref{ES2}.

$\bullet \ \textit{Step 2: Constructing a strong solution as a limit of smooth solutions}$.
Since the initial data is not so smooth, we apply a mollifier to $(\bv_{0},\rho_0,\bw_0)$. Thus we consider a sequence
\begin{align*}
	(\bv_{0j},\rho_{0j})=(P_{\leq j}\bv_{0},P_{\leq j}\rho_{0}), \quad \bw_{0j}= \textrm{e}^{-\rho_{0j}}\textrm{curl} \bv_{0j},
\end{align*}
where $P_j=\sum_{j' \leq j}P_{j'}$ and $P_{j'}$ is a LP operator with frequency supports $\{\xi\in\mathbb{R}^3: 2^{j'-3} \leq |\xi| \leq 2^{j'+3} \}$.

Considering above initial datum $(\bv_{0j},\rho_{0j}, \bw_{0j})$, there exists $T^*>0$, $\bar{M}_2, \bar{M}_4>0$ ($T^*$, $\bar{M}_2$ and $\bar{M}_4$ only depends on $C_0, c_0, s$ and $M_*$) such that a sequence solutions of \eqref{fc1s} satisfy $(\bv_{j},\rho_{j})\in C([0,T^*];H_x^s)\cap C^1([0,T^*];H_x^{s-1})$, $\bw_{j}\in C([0,T^*];H_x^2) \cap C^1([0,T^*];H_x^1)$. Moreover, for all $j\geq 1$, $\bv_j$ and $\rho_j$ satisfy a uniform Strichartz estimate
\begin{equation}\label{prec1}
	\|d\bv_j, d\rho_j\|_{L^2_{[0,T^*]}L_x^\infty} \leq \bar{M}_{2}.
\end{equation}
Additionally, for $\frac{s}{2} \leq r \leq 3$, considering
\begin{equation}\label{prec2}
	\begin{cases}
		\square_{{g}_j} f_j=0, \qquad [0,T^*]\times \mathbb{R}^3,
		\\
		(f_j,\partial_t f_j)|_{t=0}=(f_{0j},f_{1j}),
	\end{cases}
\end{equation}
where $(f_{0j},f_{1j})=(P_{\leq j}f_0,P_{\leq j}f_1)$ and $(f_0,f_1)\in H_x^r \times H^{r-1}_x$, there is a unique solution $f_j$ on $[0,T^*]\times \mathbb{R}^3$ such that for $a\leq r-\frac{s}{2}$,
\begin{equation}\label{prec4}
	\begin{split}
		&\|\left< \partial \right>^{a-1} d{f}_j\|_{L^2_{[0,T^*]} L^\infty_x}
		\leq  {\bar{M}_4}(\|{f}_0\|_{{H}_x^r}+ \|{f}_1 \|_{{H}_x^{r-1}}),
		\\
		&\|{f}_j\|_{L^\infty_{[0,T^*]} H^{r}_x}+ \|\partial_t {f}_j\|_{L^\infty_{[0,T^*]} H^{r-1}_x} \leq  {\bar{M}_4} (\| {f}_0\|_{H_x^r}+ \| {f}_1\|_{H_x^{r-1}}).
	\end{split}
\end{equation}
Based on \eqref{prec1}-\eqref{prec4}, then we are able to obtain a strong solution of Theorem \ref{dingli3} as a limit of the sequence $(\bv_{j},\rho_{j}, \bw_j)$. Please refer Subsection \ref{keypra} for details.

It remains for us to prove \eqref{prec1}-\eqref{prec4}.

$\bullet \ \textit{Step 3: Obtaining a sequence of solutions on small time-intervals}$. Note $\|{\bw}_{0j}\|_{H^{2+r}}\lesssim 2^{jr}$($r \geq 0$). By using compactness, scaling, and physical localization, we shall get a sequence of small initial datum. By using Proposition \ref{DDL3}, we can prove the existence of small solutions with these small datum. On one hand, Proposition \ref{DDL3} can be derived from \eqref{preb}. On the other hand, returning back from small solutions to large solutions $(\bv_{j},\rho_{j},\bw_{j})$, then the time of existence of solutions $(\bv_{j},\rho_{j},\bw_{j})$ depends on $j\in \mathbb{Z}^+$. Because the norm $\|{\bw}_{0j}\|_{H^{2+}}$ depending on $j$. Moreover, we precisely calculate $T^*_j=2^{-\delta_1j} [E(0)]^{-1}$($\delta_1=\frac{\sstar-2}{10}$ and $E(0)=\|\bv_{0j}\|_{H^s}+ \|\rho_{0j}\|_{H^s} + \|\bw_{0j}\|_{H^2} $, c.f \eqref{pu0} and \eqref{DTJ}). On $[0, T^*_j]$, the solutions $\bv_j$ and $\rho_j$ yields a higher-order Strichartz estimates,
\begin{equation}\label{prec}
	\|\left< \partial \right>^{a-1} d\rho_j, \left< \partial \right>^{a-1} d\bv_{j}\|_{L^2_{[0,T^*_j]} L^\infty_x} \leq C(1+E(0)), \quad a<\sstar-1.
\end{equation}
Therefore, by choosing a suitable number $a=9\delta_1$, for high frequency $k\geq j$, then we get
\begin{equation}\label{prec0}
	\|P_k d \rho_j, P_k d\bv_{j}\|_{L^2_{[0,T^*_j]} L^\infty_x} \leq C(1+E^3(0))2^{-\delta_{1} k} 2^{-7\delta_{1} j}.
\end{equation}
Decompose
\begin{equation*}
	\begin{split}
		d \bv_j= P_{\geq j}d \bv_j+ \textstyle{\sum}^{j-1}_{k=1} \textstyle{\sum}_{m=k}^{j-1} P_k  (d\bv_{m+1}-d\bv_m)+ \textstyle{\sum}^{j-1}_{k=1}P_k d \bv_k .
	\end{split}
\end{equation*}
Then we need to see if there is a similar estimates for $P_k  (d\bv_{m+1}-d\bv_m)$. By using a Strichartz estimates for linear wave, for $k<j$, we shall also obtain
\begin{equation}\label{prec00}
	\|P_k (d\rho_{m+1}-d\rho_m), P_k (d\bv_{m+1}-d\bv_m)\|_{L^2_{[0,T^*_{m+1}]} L^\infty_x} \leq C(1+E^3(0))2^{-\delta_{1} k} 2^{-6\delta_{1} m}.
\end{equation}
We prove \eqref{prec0}-\eqref{prec00} in Subsection \ref{esest}.

$\bullet \ \textit{Step 4: Extending a sequence of solutions to a regular time-intervals}$. Compared \eqref{prec} with \eqref{prec0}-\eqref{prec00}, there is a loss of derivatives in \eqref{prec0}-\eqref{prec00}. The benefit relies on some decay estimates \eqref{prec0}-\eqref{prec00} on frequency number $j$ and $k$. Inspired by Bahouri-Chemin \cite{BC2}, Tataru \cite{T1} and Ai-Ifrim-Tataru \cite{AIT}, if we can always extend it with a length $\approx T_j^*$ of time intervals, and with a number $\approx (T_j^*)^{-1}$, then we shall sum these estimates up and obtain a uniform Strichartz estimates $\|d \rho_j, d\bv_{j}\|_{L^1_{[0,T^*]} L^\infty_x}$ and $\|d \rho_j, d\bv_{j}\|_{L^2_{[0,T^*]} L^\infty_x}$ on a regular time-interval $[0,T^*]$. In fact, we can extend it in this way from $[0,T^*_j]$ to $[0,T^*_{N_0}]$. We first extend it from $[0,T^*_j]$ to $[0,T^*_{j-1}]$. In this process, the growth in Strichartz estimates and energy estimates can be calculate precisely. Then we conclude it by induction method. Please refer Subsection \ref{finalk} for details. Finally, we get
\begin{equation*}
	T^*=2^{-\delta_{1} N_0} [E(0)]^{-1},
\end{equation*}
where $N_0$ is an fixed integer depending on $M_*$ and $C_0, c_0, s$(please see \eqref{pp8}). Furthermore,
\begin{equation*}
	\| \bv_j \|_{L^\infty_{[0,T^*]}H_x^s}+ \| \rho_j \|_{L^\infty_{[0,T^*]}H_x^s} + \| \bw_j \|_{L^\infty_{[0,T^*]}H_x^2},
\end{equation*}
and
\begin{equation*}
	\|d \rho_j\|_{L^2_{[0,T^*]} L^\infty_x} + \|d\bv_{j}\|_{L^2_{[0,T^*]} L^\infty_x},
\end{equation*}
are uniformly bounded (See \eqref{kz65}-\eqref{kz66}). In a similar idea, \eqref{prec2}-\eqref{prec4} also holds. For the whole proof, see Section \ref{Sec8}.

\subsection{Organization of the paper} \label{sec:org-alt} Section \ref{sec:preliminaries} is concerned with deriving the structure of equations and some commutator estimates and some useful lemmas. This includes the highest-order derivatives of vorticity and entropy, i.e. Lemma \ref{PW}, Lemma \ref{PWh}, and a wave equation for $\rho+\frac1\gamma h,\bv_{+}$ in Lemma \ref{crh} and Lemma \ref{wte1}. In Section \ref{sec:energyest}, we prove some energy theorems Theorem \ref{ve}, Theorem \ref{vve}, and Theorem \ref{TT2} for Theorem \ref{dingli}, Theorem \ref{dingli2} and Theorem \ref{dingli3} respectively. We also prove the uniqueness of solutions based on these energy estimates, i.e. Corollary \ref{cor}, Corollary \ref{cor2}, and Corollary \ref{cor3}. In Section \ref{Sec4}-\ref{Ap}, we give a proof of existence of solutions for Theorem \ref{dingli}. Section \ref{Sub} presents the proof of continuous dependence of solutions in Theorem \ref{dingli}. In Sections \ref{SEs} and \ref{Ap}, we also establish some Strichartz estimates for the acoustic metric. Sections \ref{Sec8} contain the proofs of Theorem \ref{dingli3}. Finally, in Section \ref{Sec9}, we prove
Theorem \ref{dingli2} by using a similar idea as in the proof of Theorem \ref{dingli3}.


\section{Preliminaries} \label{sec:preliminaries}
\subsection{Reduction to a hyperbolic system}
Let us introduce a symmetric hyperbolic system to 3D compressible Euler equations \eqref{fc0}.
\begin{Lemma}\cite{LQ} \label{sh}
Let $(\bv, {\rho},h)$ be a solution of \eqref{fc0}. Then it also satisfies the following hyperbolic system
\begin{equation}\label{sq}
  {A}^0 (\bU) \partial_t \bU +\sum_{i=1}^3 {A}^i(\bU) \partial_{x_i}\bU=0,
\end{equation}
where $\bU=(v^1,v^2,v^3,p(\rho),h)^\mathrm{T}$ and
\begin{equation*}
A_0=
\left(
\begin{array}{ccccc}
\bar{\rho}\mathrm{e}^\rho & 0 & 0 & 0 & 0 \\
0 & \bar{\rho}\mathrm{e}^\rho & 0 & 0 & 0 \\
0 & 0 & \bar{\rho}\mathrm{e}^\rho & 0 & 0
\\ 0 & 0 & 0 & \bar{\rho}\mathrm{e}^\rho & 0
\\
0 & 0 & 0 & 0 & \bar{\rho}\mathrm{e}^\rho
\end{array}
\right ), \quad
A_1=\left(
\begin{array}{ccccc}
\bar{\rho}\mathrm{e}^\rho v_1 & 0 & 0 & 1 & 0 \\
0 &  \bar{\rho}\mathrm{e}^\rho v_1 & 0 &0 & 0 \\
0 & 0 &  \bar{\rho}\mathrm{e}^\rho v_1 &0 & 0  \\
1 & 0 & 0 & \bar{\rho}^{-1}\mathrm{e}^{-\rho}c^{-2}_s v_1 & 0
\\
0 & 0 & 0 & 0 & v_1
\end{array}
\right ),
\end{equation*}
\begin{equation*}
A_2=\left(
\begin{array}{ccccc}
\bar{\rho}\mathrm{e}^\rho v_2 & 0 & 0 & 0 & 0 \\
0 &  \bar{\rho}\mathrm{e}^\rho v_2 & 0 &1 & 0 \\
0 & 0 &  \bar{\rho}\mathrm{e}^\rho v_2 &0 & 0  \\
0 & 1 & 0 & \bar{\rho}^{-1}\mathrm{e}^{-\rho}c^{-2}_s v_2 & 0
\\
0 & 0 & 0 & 0 & v_2
\end{array}
\right ), \quad
A_3=\left(
\begin{array}{ccccc}
\bar{\rho}\mathrm{e}^\rho v_3 & 0 & 0 & 0 & 0 \\
0 &  \bar{\rho}\mathrm{e}^\rho v_3 & 0 &0 & 0 \\
0 & 0 &  \bar{\rho}\mathrm{e}^\rho v_3 &1 & 0  \\
0 & 0 & 1 & \bar{\rho}^{-1}\mathrm{e}^{-\rho}c^{-2}_s v_3 & 0
\\
0 & 0 & 0 & 0 & v_3
\end{array}
\right ).
\end{equation*}
\end{Lemma}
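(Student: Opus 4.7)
The proof is a direct verification, so the plan is simply to recast the three equations of \eqref{fc0} as five scalar equations in the unknown $\bU=(v^1,v^2,v^3,p,h)^{\mathrm{T}}$, then read off the matrix coefficients. The only conceptual ingredient is to use the pressure $p$ as a primary unknown in place of the density, which is what produces the symmetric off-diagonal coupling $A_i^{(k,4)}=A_i^{(4,k)}=\delta_i^k$ for $k=1,2,3$.

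First I would rewrite the momentum equation using $\nabla p$ rather than $\nabla\rho$ and $\nabla h$. From the equation of state \eqref{py1} one has
\begin{equation*}
\partial_i p=p_\rho\,\partial_i\rho+p_h\,\partial_i h=\gamma p\,\partial_i\rho+p\,\partial_i h,
\end{equation*}
and using $c_s^2=\mathrm{d}p/\mathrm{d}\varrho=\gamma p/\varrho=\gamma p/(\bar\rho e^\rho)$ this rearranges to
\begin{equation*}
c_s^2\,\partial^i\rho+\tfrac{1}{\gamma}c_s^2\,\partial^i h=\tfrac{1}{\bar\rho e^\rho}\,\partial^i p.
\end{equation*}
Multiplying the first line of \eqref{fc0} by $\bar\rho e^\rho$ then yields $\bar\rho e^\rho\,\mathbf{T}v^i+\partial^i p=0$, which is exactly rows $1$--$3$ of \eqref{sq}.

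Next I would derive the evolution equation for $p$ by applying $\mathbf{T}$ to $p=\varrho^\gamma e^h$ and substituting the second and third equations of \eqref{fc0}:
\begin{equation*}
\mathbf{T}p=p_\rho\,\mathbf{T}\rho+p_h\,\mathbf{T}h=-\gamma p\,\mathrm{div}\bv=-\bar\rho e^\rho c_s^2\,\mathrm{div}\bv.
\end{equation*}
Dividing by $\bar\rho e^\rho c_s^2$ puts this in the form $(\bar\rho e^\rho c_s^2)^{-1}\mathbf{T}p+\mathrm{div}\bv=0$, producing the fourth row of \eqref{sq}, with the off-diagonal entries $\partial_i v^i$ matching by symmetry the entries we already identified in rows $1$--$3$. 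The transport equation $\mathbf{T}h=0$ (rescaled by $\bar\rho e^\rho$) supplies the fifth row directly.

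The only real obstacle is bookkeeping: one must keep straight the roles of $\rho$, $h$, and $p$ in the state function, verify that $p_\rho=\gamma p$ and $p_h=p$, and confirm the identity $\gamma p=\bar\rho e^\rho c_s^2$. Once these are in hand, collecting the coefficients of $\partial_t U^k$ and $\partial_i U^k$ in the five resulting scalar equations reproduces the matrices $A^0,A^1,A^2,A^3$ displayed in the statement (with the understanding that $A^0$ is rescaled in the pressure row so that $A_i^{(4,k)}$ agrees with $A_i^{(k,4)}$, giving the symmetric hyperbolic form). No regularity or smallness hypothesis is needed: the argument is purely algebraic and holds pointwise for any classical solution of \eqref{fc0}.
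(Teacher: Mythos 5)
Your derivation is correct in substance: the paper simply cites \cite{LQ} and gives no proof, and the direct algebraic verification you carry out (expressing $\nabla p$ via $p_\rho=\gamma p$, $p_h=p$, multiplying the momentum equations by $\varrho=\bar\rho\mathrm{e}^\rho$, using $c_s^2=\gamma p/\varrho$ to convert $c_s^2\partial^i\rho+\tfrac1\gamma c_s^2\partial^i h$ into $\varrho^{-1}\partial^i p$, and then obtaining $\mathbf{T}p=-\gamma p\,\mathrm{div}\bv$) is exactly the natural route. Two small points deserve flagging, however.

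First, your own fourth row gives $(\bar\rho\mathrm{e}^\rho c_s^2)^{-1}\mathbf{T}p+\mathrm{div}\bv=0$, so the coefficient of $\partial_t p$ is $\bar\rho^{-1}\mathrm{e}^{-\rho}c_s^{-2}$, \emph{not} the $\bar\rho\mathrm{e}^\rho$ that appears in position $(4,4)$ of the displayed $A_0$. What you have actually uncovered is a typo in the lemma: as printed, $A^0_{44}=\bar\rho\mathrm{e}^\rho$ while $A^i_{44}=\bar\rho^{-1}\mathrm{e}^{-\rho}c_s^{-2}v_i$, so the ratio $A^i_{44}/A^0_{44}$ is not $v_i$ and the $p$-row would not be a material derivative. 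The consistent and symmetric form requires $A^0_{44}=\bar\rho^{-1}\mathrm{e}^{-\rho}c_s^{-2}$, as your computation produces. Rather than describing this as ``the understanding that $A^0$ is rescaled in the pressure row,'' you should say directly that the stated $A^0_{44}$ must be $\bar\rho^{-1}\mathrm{e}^{-\rho}c_s^{-2}$ for the decomposition to hold.

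Second, your remark that the fifth row is $\mathbf{T}h=0$ ``rescaled by $\bar\rho\mathrm{e}^\rho$'' is inconsistent with the displayed $A^i_{55}=v_i$: rescaling by $\bar\rho\mathrm{e}^\rho$ would force $A^i_{55}=\bar\rho\mathrm{e}^\rho v_i$. The entries $A^i_{55}=v_i$ dictate $A^0_{55}=1$ (again pointing to a typo in the printed $A_0$). The clean statement is simply that row 5 is $\partial_t h+v^a\partial_a h=0$ with unit coefficient, and the printed $A^0_{55}=\bar\rho\mathrm{e}^\rho$ is spurious. Once these two diagonal entries of $A_0$ are corrected, your verification reproduces the system exactly and confirms both hyperbolicity and symmetry.
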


\begin{Lemma}[The wave equation for $\rho+\frac{1}{\gamma}h$]\label{crh}
Let $(\bv,\rho,h)$ be a solution of \eqref{fc0}. Let ${\rho}$ and $\bw$ are defined in \eqref{pw1} and \eqref{pw11}. Then $\rho+\frac{1}{\gamma}h$ satisfies
\begin{equation}\label{wav2}
  \square_g (\rho+\frac{1}{\gamma}h)= D+ \frac{1}{\gamma}E.
\end{equation}
Here $D$ and $E$ are defined in \eqref{DDi}.
\end{Lemma}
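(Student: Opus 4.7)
The plan is to derive the identity by simply taking a linear combination of the two wave equations for $\rho$ and $h$ already established in Lemma \ref{Wte}. Since $\square_g = g^{\alpha\beta}\partial^2_{\alpha\beta}$ is a linear second-order differential operator acting on scalar functions, we have
\begin{equation*}
\square_g \left(\rho + \tfrac{1}{\gamma} h\right) = \square_g \rho + \tfrac{1}{\gamma} \square_g h.
\end{equation*}

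Next, I would substitute the two identities coming from system \eqref{fc1}, namely
\begin{equation*}
\square_g \rho = -\tfrac{1}{\gamma} c_s^2 \Delta h + D, \qquad \square_g h = c_s^2 \Delta h + E.
\end{equation*}
Adding $\tfrac{1}{\gamma}$ times the second to the first, the principal terms $-\tfrac{1}{\gamma} c_s^2 \Delta h$ and $\tfrac{1}{\gamma} c_s^2 \Delta h$ cancel exactly, leaving precisely $D + \tfrac{1}{\gamma} E$, which is \eqref{wav2}.

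There is no real obstacle here: the whole content of Lemma \ref{crh} is the observation that the bad term $c_s^2 \Delta h$, which is of the same order as $\square_g \rho$ and $\square_g h$ individually and would spoil any attempt to derive a pure wave equation for either variable in isolation, is killed by forming the specific combination $\rho + \tfrac{1}{\gamma} h$. The coefficient $\tfrac{1}{\gamma}$ is dictated uniquely by the relative weights of $c_s^2 \Delta h$ appearing in \eqref{fc1}, which in turn come from the state law \eqref{py1}. Hence the proof is one line of algebra once Lemma \ref{Wte} is in hand; the real work lies in the derivation of \eqref{fc1} itself, which was carried out in \cite{S2} and stated as Lemma \ref{Wte}.
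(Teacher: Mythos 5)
Your proof is correct and coincides exactly with the paper's one-line argument: take the two wave equations from Lemma \ref{Wte}, form $\square_g\rho + \tfrac{1}{\gamma}\square_g h$ by linearity of $\square_g$, and observe that the $\pm\tfrac{1}{\gamma}c_s^2\Delta h$ terms cancel. Your added remark on why the coefficient $\tfrac{1}{\gamma}$ is forced is a useful clarification but does not change the content.
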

\begin{proof}
In \eqref{fc1}, we note
\begin{equation*}
\begin{cases}
  & \square_g {\rho}=-\frac{1}{\gamma}c_s^2\Delta h+ D,
\\
& \square_g h =c_s^2\Delta h + E.
\end{cases}
\end{equation*}
Therefore we have
\begin{equation*}
  \square_g (\rho+\frac{1}{\gamma}h)= D+ \frac{1}{\gamma}E.
\end{equation*}
\end{proof}
\begin{Lemma}[a wave equation for $\bv_{-}$]\label{wte1}
Let $(\bv,\rho,h)$ be a solution of \eqref{fc0}. Let $\bv_{+}$ and $\bv_{-}$ be described in \eqref{dvc}-\eqref{etad}. Then $\bv_{+}$ satisfies
\begin{equation}\label{fc}
\begin{split}
&\square_g v^i_{+}=\mathbf{T}\mathbf{T} v_{-}^i+Q^i. 
\end{split}
\end{equation}
\end{Lemma}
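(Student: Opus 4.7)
The plan is to prove the identity by a short direct computation, combining the wave equation for $v^i$ already derived in Lemma \ref{Wte} with the definition of the decomposition $v^i = v^i_+ + v^i_-$ and the defining elliptic relation \eqref{etad}. No new geometric input is needed; the identity is essentially algebraic once the earlier reductions are in place.

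First I would write
\begin{equation*}
\square_g v^i_+ = \square_g v^i - \square_g v^i_-,
\end{equation*}
and substitute the first equation of \eqref{fc1}, namely $\square_g v^i = -\mathrm{e}^\rho c_s^2 \mathrm{curl}\,\bw^i + Q^i$, into the first term. For the second term I would use the decomposition \eqref{Box} of the wave operator in terms of the material derivative, $\square_g = -\mathbf{T}\mathbf{T} + c_s^2 \Delta$, to obtain
\begin{equation*}
\square_g v^i_- = -\mathbf{T}\mathbf{T} v^i_- + c_s^2 \Delta v^i_-.
\end{equation*}

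Next I would invoke the defining relation \eqref{etad} for $\bv_-$, namely $-\Delta v^i_- = \mathrm{e}^\rho \mathrm{curl}\,\bw^i$, to replace the Laplacian term:
\begin{equation*}
c_s^2 \Delta v^i_- = -c_s^2 \mathrm{e}^\rho \mathrm{curl}\,\bw^i.
\end{equation*}
Plugging this back and collecting,
\begin{equation*}
\square_g v^i_+ = -\mathrm{e}^\rho c_s^2 \mathrm{curl}\,\bw^i + Q^i + \mathbf{T}\mathbf{T} v^i_- + c_s^2 \mathrm{e}^\rho \mathrm{curl}\,\bw^i = \mathbf{T}\mathbf{T} v^i_- + Q^i,
\end{equation*}
which is exactly \eqref{fc}.

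There is no real obstacle here: once the wave equation for $v^i$ and the elliptic definition of $v^i_-$ are available, the cancellation of the $\mathrm{curl}\,\bw$ term is immediate and the spatial Laplacian of $v^i_-$ is precisely what converts $\square_g v^i_-$ into the transport double derivative $\mathbf{T}\mathbf{T} v^i_-$. The only point worth a short remark is that $\bv_-$ is defined through a nonlocal operator, so one should note that for the Sobolev regularities considered in the main theorems the inverse Laplacian in \eqref{etad} is well defined on the relevant spaces (since $\mathrm{e}^\rho \mathrm{curl}\,\bw \in H^{s_0-1}$ with $s_0>2$), which justifies the manipulations above.
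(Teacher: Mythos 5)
Your proof is correct and follows essentially the same route as the paper: substitute the wave equation for $v^i$ from Lemma \ref{Wte}, expand $\square_g v^i_-$ using $\square_g=-\mathbf{T}\mathbf{T}+c_s^2\Delta$, and cancel the $\mathrm{curl}\,\bw$ terms via \eqref{etad}. Your closing remark about the well-definedness of the nonlocal definition of $\bv_-$ is a reasonable addition not made explicit in the paper's proof.
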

\begin{proof}
By Lemma \ref{wte}, we find that
\begin{equation*}
  \square_g v^i=-\mathrm{e}^{{\rho}}c_s^2 \mathrm{curl} \bw^i+Q^i.
\end{equation*}
Substituting \eqref{dvc} and \eqref{etad} in the above equation, we have
\begin{equation*}
  \begin{split}
  \square_g v_{+}^i=&-\square_g v_{-}^i-\mathrm{e}^{{\rho}}c_s^2 \mathrm{curl} \bw^i+Q^i
  \\
  =& \mathbf{T}\mathbf{T}v_{-}^i-c_s^2 \Delta v_{-}^i-\mathrm{e}^{{\rho}}c_s^2 \mathrm{curl} \bw^i+Q^i
  \\
  =& \mathbf{T}\mathbf{T}v_{-}^i+c_s^2 \left( - \Delta v_{-}^i- \mathrm{e}^{{\rho}}\mathrm{curl} \bw^i \right) +Q^i
  \\
  =& \mathbf{T}\mathbf{T}v_{-}^i+Q^i.
  \end{split}
\end{equation*}
\end{proof}
\begin{Lemma}[Transport equations for $\bw$]\label{PW}
Let $(\bv,\rho,h)$ be a solution of \eqref{fc0}. Let ${\rho}$ and $\bw$ are defined in \eqref{pw1} and \eqref{pw11}. Then $\bw=(w^1,w^2,w^3)^{\mathrm{T}}$ satisfies
\begin{equation}\label{W0}
\mathbf{T} w^i =  (\bw \cdot \nabla )v^i+ \bar{\rho}^{\gamma-1} \mathrm{e}^{h+(\gamma-2)\rho}\epsilon^{iab}\partial_a \rho \partial_bh.
\end{equation}
Furthermore, we have
\begin{equation}\label{Wd1}
	\mathrm{div} \bw= -(\bw \cdot \nabla) {\rho},
\end{equation}
and
\begin{equation}\label{W1}
\begin{split}
\mathbf{T}  (\mathrm{curl} \bw^i) = &(\mathrm{curl}\bw \cdot \nabla)v^i-  \mathrm{curl}\bw^i \mathrm{div}\bv-2\epsilon^{imn}\partial_m v^j \partial_n w_j
+  \bar{\rho}^{\gamma-1}  \partial^l ( \mathrm{e}^{h+(\gamma-2)\rho}) \partial_l \rho \partial^i h
\\
&- \bar{\rho}^{\gamma-2}  \partial^l ( \mathrm{e}^{h+(\gamma-2)\rho}) \partial^i \rho \partial_l h
+\bar{\rho}^{\gamma-1} \mathrm{e}^{h+(\gamma-2)\rho}    \Delta \rho \partial^i h + \bar{\rho}^{\gamma-1} \mathrm{e}^{h+(\gamma-2)\rho}  \partial^m \rho \partial_m\partial^i h
 \\
 &- \bar{\rho}^{\gamma-1} \mathrm{e}^{h+(\gamma-2)\rho}    \partial_m \partial^i \rho  \partial^m h - \bar{\rho}^{\gamma-1} \mathrm{e}^{h+(\gamma-2)\rho}  \partial^i \rho \Delta h.
\end{split}
\end{equation}
and
\begin{equation}\label{W2}
\begin{split}
& \mathbf{T} ( \mathrm{curl} \mathrm{curl} \bw^i+F^i  )
=   \partial^i \big( 2 \partial_n v_a \partial^n w^a \big) + K^i,
\end{split}
\end{equation}
where
\begin{equation}\label{Fb}
  F^i= -\epsilon^{ijk} \partial_j \rho \cdot \mathrm{curl}\bw_k- 2\partial^a {\rho} \partial^i w_a+2 \mathrm{e}^{-\rho} \epsilon^{ijk} \partial_j v^m  \partial_m\partial_k h-\mathrm{e}^{-\rho} \epsilon^{ijk} \partial_k h \Delta v_j,
\end{equation}
and
\begin{equation}\label{rF}
\begin{split}
K^i=\sum^5_{a=1}  K^i_a,
\end{split}
\end{equation}
and
\begin{equation}\label{R1ib}
\begin{split}
 K^i_1=
&-  \bar{\rho}^{\gamma-1} \epsilon^{ijk}\partial_j \rho  \partial^l ( \mathrm{e}^{h+(\gamma-2)\rho}) \partial_l \rho \partial_k h
-2\epsilon_{kmn}\epsilon^{ijk}\partial^m v^a \partial_{j}(\partial^n w_a)
 \\
 & + 2\epsilon_{kmn}\epsilon^{ijk}\partial_m v^a \partial_{n}w_a \partial_j {\rho}
 +\bar{\rho}^{\gamma-1} \epsilon^{ijk}   \partial_l \rho \partial_k h \partial_j \partial^l ( \mathrm{e}^{h+(\gamma-2)\rho})
  \\
  & + \bar{\rho}^{\gamma-1} \epsilon^{ijk}   \partial^l ( \mathrm{e}^{h+(\gamma-2)\rho})\partial_k h \partial_j \partial_l \rho
   + \bar{\rho}^{\gamma-1} \epsilon^{ijk}  \partial^l ( \mathrm{e}^{h+(\gamma-2)\rho})  \partial_l \rho \partial_j\partial_k h
 \\
 & + \bar{\rho}^{\gamma-1}\epsilon^{ijk}  \partial_j \rho  \partial^l ( \mathrm{e}^{h+(\gamma-2)\rho}) \partial_k \rho \partial_l h
   - \bar{\rho}^{\gamma-1}\epsilon^{ijk}   \partial_k \rho \partial_l h \partial_j  \partial^l ( \mathrm{e}^{h+(\gamma-2)\rho})
 \\
 & - \bar{\rho}^{\gamma-1}\epsilon^{ijk}    \partial^l ( \mathrm{e}^{h+(\gamma-2)\rho})\partial_l h \partial_j \partial_k \rho
  - \bar{\rho}^{\gamma-1}\epsilon^{ijk}   \partial^l ( \mathrm{e}^{h+(\gamma-2)\rho})  \partial_k \rho \partial_j \partial_l h
\\
& +\bar{\rho}^{\gamma-1}\epsilon^{ijk} \mathrm{e}^{\rho}\partial_j( \mathrm{e}^{h+(\gamma-3)\rho})    \Delta \rho \partial_k h
+ \bar{\rho}^{\gamma-1} \epsilon^{ijk} \mathrm{e}^{\rho} \partial_j(\mathrm{e}^{h+(\gamma-3)\rho})  \partial^m \rho \partial_m\partial_k h
 \\
 & - \bar{\rho}^{\gamma-1} \epsilon^{ijk} \mathrm{e}^{\rho} \partial_j(\mathrm{e}^{h+(\gamma-3)\rho}) \partial^m h    \partial_m \partial_k \rho
- \bar{\rho}^{\gamma-1} \epsilon^{ijk} \mathrm{e}^{\rho} \partial_j(\mathrm{e}^{h+(\gamma-3)\rho})  \partial_k \rho \Delta h
\\
&- \bar{\rho}^{\gamma-1} \epsilon^{ijk} \mathrm{e}^{h+(\gamma-2)\rho}  \partial_k \rho \partial_j\Delta h,
\end{split}
\end{equation}
\begin{equation}\label{R2ib}
\begin{split}
  K^i_2=& -\frac{2}{\gamma} \bar{\rho}^{\gamma-1} \epsilon^{ijk} \mathrm{e}^{h+(\gamma-2)\rho}   \partial_j \partial^m h  \partial_m\partial_k h 
  \\
  & + \frac{2}{\gamma}\bar{\rho}^{\gamma-1} \epsilon^{ijk} \mathrm{e}^{h+(\gamma-2)\rho}  \partial_j (c^{-2}_s) (c^2_s \partial^m \rho-\frac{1}{\gamma}c^2_s \partial^m h) \partial_m\partial_k h
  \\
  &
  -\frac{2}{\gamma} \mathrm{e}^{-\rho} \epsilon^{ijk} \partial_j v^n  \partial_n v^m  \partial_m \partial_k h
   +\frac{4}{\gamma} \mathrm{e}^{-\rho} \partial_a v^a \epsilon^{ijk} \partial_n v^m  \partial_m \partial_k h
   \\
   &- \frac{2}{\gamma} \mathrm{e}^{-\rho} \epsilon^{ijk} \partial_j v^m ( \partial_m v^a \partial_a \partial_k h - \partial_a h \partial_m \partial_k v^a-\partial_k v^a \partial_a \partial_m h ), \ \ \ \ \ \ \ \ \ \ \ 
\end{split}
\end{equation}
\begin{equation}\label{R3ib}
\begin{split}
  K^i_3=& \frac{1}{\gamma} \mathrm{e}^{-\rho}\epsilon^{ijk}  \partial_a v^a \partial_k h \Delta v_j+ \frac{1}{\gamma}\mathrm{e}^{-\rho}\epsilon^{ijk}  \partial_k v^a  \partial_a h  \Delta v_j \ \ \ \ \ \ \ \ \ \ \ \ \ \ \ \ \ \ \ \ \ \ \ \ \ \ \ \ \ \ \ \ \ 
  \\
  &+ \frac{1}{\gamma} \mathrm{e}^{-\rho}\epsilon^{ijk} \partial_k h ( \partial^m v^a \partial_m \partial_a v_j + \Delta v^a \partial_a v_j )
   \\
   & +\frac{1}{\gamma} \bar{\rho}^{\gamma-1}\epsilon^{ijk} \mathrm{e}^{h+(\gamma-2)\rho} \partial_k h  \partial^m(c^{-2}_s) \partial_m (c^2_s\partial_j \rho- \frac{1}{\gamma}c^2_s \partial_j h)
  \\
  & +\frac{1}{\gamma} \bar{\rho}^{\gamma-1}\epsilon^{ijk} \mathrm{e}^{h+(\gamma-2)\rho} \partial_k h   \Delta(-c^{-2}_s)  (c^2_s\partial_j \rho- \frac{1}{\gamma}c^2_s \partial_j h)
  \\
  & -\frac{1}{\gamma}\bar{\rho}^{\gamma-1}\epsilon^{ijk} \mathrm{e}^{h+(\gamma-2)\rho} \partial_k h   \partial_j \Delta h, 
\end{split}
\end{equation}
\begin{equation}\label{R4ib}
  \begin{split}
  K^i_4= 
  &- 2 \partial^a {\rho} \big( \partial^i w^m \partial_m v_a + w^m \partial^i(\partial_{m}v_a) - \partial^i v^k \partial_k w_a \big) \ \ \ \ \ \ \ \ \ \ \ \ \ \ \ \ \ \ \ \ \ \ \ \ 
  \\
  &-2  \partial^a v^k \partial_k {\rho}  \partial^i w_a+2 \mathrm{e}^{{\rho}} \mathrm{curl}\bw^a  \partial^i w_a
  + 2 \epsilon^{ajk} \mathrm{e}^{{\rho}} w_k  \partial_j {\rho}  \partial^iw_a
  \\
  & + 2\mathrm{e}^{-{\rho}}\mathrm{div}\bv \partial^a {\rho}  \partial^i w_a- 2\epsilon_{amn} \bar{\rho}^{\gamma-1}  \mathrm{e}^{h+(\gamma-3)\rho} \partial^a {\rho} \partial^m h \partial^i \partial^n \rho
  \\
  &- 2\epsilon_{amn} \bar{\rho}^{\gamma-1}  \partial^a {\rho}  \partial^i(\mathrm{e}^{h+(\gamma-2)\rho}) \partial^n \rho \partial^m h+ 2\mathrm{e}^{\rho}w^i  \mathrm{curl}\bw^m \partial_m \rho
 \\
 &- 2\epsilon_{amn} \bar{\rho}^{\gamma-1}  \mathrm{e}^{h+(\gamma-2)\rho} \partial^a {\rho}   \partial^n \rho \partial^i \partial^m h+2\mathrm{e}^{\rho}\mathrm{curl}\bw^m \partial_m w^i  , 
  \end{split}
\end{equation}
\begin{equation}\label{R5ib}
  K^i_5=2\partial^i {\rho} \partial_{j}v^a \partial^j w_a
  -2  \partial_{j} v^a \partial^j\partial^i  w_a-\epsilon^{ijk} \mathrm{e}^{\rho} \partial_j v^m \partial_m (\mathrm{e}^{-{\rho}} \mathrm{curl}\bw_k),  \ \ \ \ \
\end{equation}
\begin{equation}\label{R6ib}
\begin{split}
  K^i_6=&2 \partial^i \rho \partial_n v_a \partial^n w^a- \mathrm{e}^{\rho}\mathrm{div}\bv \cdot \epsilon^{ijk}\partial_j (\mathrm{e}^{-\rho}  \mathrm{curl}\bw_k)+2\mathrm{div}\bv\cdot\partial^a {\rho} \partial^i w_a
  \\
  &-2 \mathrm{e}^{-\rho} \epsilon^{ijk} \mathrm{div}\bv \cdot \partial_j v^m  \partial_m\partial_k h+ \mathrm{e}^{-\rho} \epsilon^{ijk} \mathrm{div}\bv \cdot \partial_k h \Delta v_j.
\end{split}
\end{equation}
\end{Lemma}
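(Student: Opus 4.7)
The plan is to derive the four identities in sequence, with each subsequent identity obtained from its predecessor by applying a derivative (divergence or curl) and carefully tracking commutators with the material derivative $\mathbf{T}$.

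Identity \eqref{W0} is already recorded as the fifth line of the reduced system \eqref{fc1} in Lemma \ref{wte} \cite{LS2}, so no fresh argument is needed; I would simply quote it. The divergence identity \eqref{Wd1} follows immediately from the definition $\bw=\mathrm{e}^{-\rho}\mathrm{curl}\,\bv$: expanding $\mathrm{div}\,\bw=\partial_i(\mathrm{e}^{-\rho}\epsilon^{ijk}\partial_j v_k)$, the piece with $\partial_i\partial_j v_k$ vanishes by antisymmetry of $\epsilon^{ijk}$ in $i,j$, and the remaining piece is $-\mathrm{e}^{-\rho}\partial_i\rho\cdot(\mathrm{curl}\,\bv)^i=-\partial_i\rho\cdot w^i=-(\bw\cdot\nabla)\rho$.

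For \eqref{W1} I would apply $\epsilon^{imn}\partial_m$ to \eqref{W0} and invoke the commutator $[\epsilon^{imn}\partial_m,\mathbf{T}]\phi^n=\epsilon^{imn}(\partial_m v^j)\partial_j\phi^n$. On the left this yields $\mathbf{T}(\mathrm{curl}\,\bw^i)$ plus a commutator contribution; expanding the product of two Levi-Civita symbols via $\epsilon^{imn}\epsilon_{jkn}=\delta^i_j\delta^m_k-\delta^i_k\delta^m_j$ converts that commutator into the two structural pieces $-\mathrm{curl}\,\bw^i\,\mathrm{div}\,\bv$ and $-2\epsilon^{imn}\partial_m v^j\partial_n w_j$. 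On the right, $\mathrm{curl}$ of $(\bw\cdot\nabla)\bv$ produces $(\mathrm{curl}\,\bw\cdot\nabla)v^i$ modulo cubic remainders that combine with the above, while $\mathrm{curl}$ of the entropy/density term gives all second-derivative contributions in $\rho$ and $h$ after differentiating the coefficient $\bar{\rho}^{\gamma-1}\mathrm{e}^{h+(\gamma-2)\rho}$. Collecting these pieces delivers \eqref{W1} exactly.

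The intricate step is \eqref{W2}, obtained by applying another $\epsilon^{\ell m i}\partial_m$ to \eqref{W1}. The principal highest-order term comes from $\epsilon^{\ell m i}\partial_m(-2\epsilon^{iab}\partial_a v^j\partial_b w_j)$: contracting the two $\epsilon$'s, one of the resulting terms becomes a total derivative $\partial^\ell(2\partial_n v^a\partial^n w_a)$, while the remainder is swept into $K^\ell_5,K^\ell_6$. The definition of $F^i$ in \eqref{Fb} is dictated by the following principle: each of its four summands is precisely the expression whose $\mathbf{T}$-derivative, after using $\mathbf{T}\rho=-\mathrm{div}\,\bv$, $\mathbf{T}v^i=-c_s^2\partial^i\rho-\gamma^{-1}c_s^2\partial^i h$, and $\mathbf{T}h=0$ from \eqref{fc0} to eliminate the time derivatives of $\rho,\bv,h$, produces the remaining principal contributions arising from curling \eqref{W1}. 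Moving these $\mathbf{T}(\cdot)$ terms to the left as part of $\mathbf{T}(\mathrm{curl}\,\mathrm{curl}\,\bw^i+F^i)$ leaves on the right precisely $\partial^i(2\partial_n v_a\partial^n w^a)+K^i$, with $K^i$ given by the groups \eqref{R1ib}--\eqref{R6ib}.

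The main obstacle is the bookkeeping in this final step: every $\partial_m$ hits a product, so the derivation generates dozens of terms, each of which must be classified as either (i) absorbable into $\mathbf{T}(F^i)$ after invoking \eqref{fc0}, or (ii) part of the cubic remainder $K^i_a$. The systematic rule I would follow is to replace every $\partial_t$ of $\rho,\bv,h$ by its spatial counterpart via the hyperbolic equations, substitute $\mathrm{curl}\,\bv=\mathrm{e}^\rho\bw$ whenever possible, and collect terms by their tensorial type ($\partial^2\rho\cdot\partial h$, $\partial^2 h\cdot\partial h$, $\partial^2 v\cdot\partial h$, etc.). Once this is done honestly, the groups $K^i_1,\dots,K^i_6$ emerge as the natural partition of the surviving source, and the identities \eqref{W2}--\eqref{R6ib} are read off directly; no further cancellation or integration is required.
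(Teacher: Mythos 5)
Your derivations of \eqref{W0}, \eqref{Wd1} and \eqref{W1} are sound and track the paper's argument; for \eqref{W0} you cite Lemma~\ref{wte} rather than re-deriving it by applying $\mathrm{curl}$ to the first equation of \eqref{fc0} as the paper does, but that is a legitimate shortcut.

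For \eqref{W2}, however, your proposal has a genuine gap. You state that once the curl of \eqref{W1} is expanded, the surviving terms can be classified either as absorbable into $\mathbf{T}(F^i)$ ``after using \eqref{fc0} to eliminate the time derivatives of $\rho,\bv,h$'', or as cubic remainder. The trouble is that after curling \eqref{W1} there are no explicit time derivatives left to eliminate: what appears instead are \emph{third-order spatial} terms such as $\epsilon^{ijk}\,\partial_k h\,\partial_j\Delta\rho$ and $\epsilon^{ijk}\,\partial_j\partial^m\rho\,\partial_m\partial_k h$ which are too high order to go into $K^i$ and do not obviously cancel. The paper's mechanism, which you do not describe, is to run the equations of motion \emph{in reverse}: write $\partial^m\rho = -c_s^{-2}\mathbf{T}v^m - \gamma^{-1}\partial^m h$ so that $\partial_j\partial^m\rho\,\partial_m\partial_k h = \mathbf{T}\bigl(-\tfrac{2}{\gamma}\mathrm{e}^{-2\rho}\epsilon^{ijk}\partial_j v^m\partial_m\partial_k h\bigr)+\text{(cubic)}$, and likewise use the Hodge decomposition $\Delta\bv = \partial\,\mathrm{div}\,\bv - \mathrm{curl}^2\bv = -\mathbf{T}(\partial\rho)+\partial v^k\partial_k\rho+\mathrm{curl}(\mathrm{e}^\rho\bw)$ to trade $\Delta v_j$ (and hence $\partial_j\Delta\rho$ via $\mathrm{div}\,\bv=-\mathbf{T}\rho$) for a $\mathbf{T}$-derivative plus lower-order terms. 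These substitutions, not generic product-rule bookkeeping, are what make the two source terms $2\mathrm{e}^{-\rho}\epsilon^{ijk}\partial_j v^m\partial_m\partial_k h$ and $-\mathrm{e}^{-\rho}\epsilon^{ijk}\partial_k h\,\Delta v_j$ of $F^i$ emerge. Your stated rule of \emph{replacing} $\partial_t$ by spatial derivatives goes in the opposite direction from what the proof actually needs.

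There is a second omitted ingredient: the paper does not apply $\mathrm{curl}$ directly to \eqref{W1}, but rather conjugates by $\mathrm{e}^{-\rho}$, i.e.\ works with $\mathbf{T}(\mathrm{e}^{-\rho}\,\mathrm{curl}\,\bw^i)$, applies $\mathrm{curl}$, absorbs various terms, and only at the end multiplies back by $\mathrm{e}^\rho$. The commutation identity $\mathrm{e}^{\rho}\,\mathrm{curl}\bigl(\mathrm{e}^{-\rho}\,\mathrm{curl}\,\bw\bigr)^i = \mathrm{curl}\,\mathrm{curl}\,\bw^i - \epsilon^{ijk}\partial_j\rho\,\mathrm{curl}\,\bw_k$ is precisely the origin of the \emph{first} summand $-\epsilon^{ijk}\partial_j\rho\,\mathrm{curl}\,\bw_k$ of $F^i$, which therefore does not arise from the ``$\mathbf{T}$-derivative absorbs principal contributions'' principle you stated for all four summands. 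Without the conjugation by $\mathrm{e}^{\pm\rho}$ and the two equation-of-motion substitutions, the claimed reduction to bookkeeping would leave you with irreducible third-order terms and you could not close the identity.
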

\begin{proof}
From \eqref{py1}, we can calculate
\begin{equation}\label{ssf}
  c^2_s=\gamma \bar{\rho}^{\gamma-1}\mathrm{e}^{h+(\gamma-1)\rho}.
\end{equation}
Applying the curl-operator to the first equation in \eqref{fc0}, we find that
\begin{equation*}
\begin{split}
  \mathbf{T}( \epsilon^{ijk} \partial_j v_k)= &- \epsilon^{ijk} \partial_j v^m \partial_m v_k+ \bar{\rho}^{\gamma-1}  \mathrm{e}^{h+(\gamma-1)\rho}\epsilon^{ijk} \partial_k \rho \partial_j h
  \\
  = &- \epsilon^{ijk} \partial_j v^m (\partial_m v_k-\partial_k v_m) -\epsilon^{ijk} \partial_j v^m \partial_k v_m + \bar{\rho}^{\gamma-1}  \mathrm{e}^{h+(\gamma-1)\rho}\epsilon^{ijk} \partial_k \rho \partial_j h
  \\
  = &- \mathrm{e}^\rho \epsilon^{ijk}\epsilon_{mk}^{\ \ l} w_l \partial_j v^m + \bar{\rho}^{\gamma-1}  \mathrm{e}^{h+(\gamma-1)\rho}\epsilon^{ijk} \partial_k \rho \partial_j h
  \\
   = & \mathrm{e}^\rho (\bw \cdot \nabla) v^i- \mathrm{e}^\rho w^i \mathrm{div}\bv + \bar{\rho}^{\gamma-1}  \mathrm{e}^{h+(\gamma-1)\rho}\epsilon^{ijk} \partial_k \rho \partial_j h.
\end{split}
\end{equation*}
As a result, we get
\begin{equation*}
  \mathbf{T}w^i= (\bw \cdot \nabla) v^i+ \bar{\rho}^{\gamma-1}  \mathrm{e}^{h+(\gamma-2)\rho}\epsilon^{ijk} \partial_k \rho \partial_j h.
\end{equation*}
Then Equation \eqref{W0} is derived. On the other hand, it's direct for us to calculate
\begin{equation*}
	\mathrm{div}\bw= \mathrm{div}(\mathrm{e}^{-\rho}\mathrm{curl}\bv^i )=-(\bw \cdot \nabla)\rho.
\end{equation*}
So we have proved \eqref{Wd1}. Applying the curl-operator to \eqref{W0}, we have
\begin{equation}\label{W00}
  \begin{split}
  \mathbf{T}( \mathrm{curl}\bw^i )=& - \epsilon^{ijk}\partial_j v^m \partial_m w_k+ \epsilon^{ijk}\partial_m v_k \partial_j w^m+ \epsilon^{ijk}  w^m \partial_m \partial_j v_k
  \\
  & +  \bar{\rho}^{\gamma-1}\epsilon^{ijk}\epsilon^{kml}  \partial_j ( \mathrm{e}^{h+(\gamma-2)\rho} \partial_l \rho \partial_m h)
  \\
  =& - \epsilon^{ijk}\partial_j v^m \partial_m w_k+ \epsilon^{ijk}\partial_m v_k \partial_j w^m+  w^m \partial_m ( \mathrm{e}^\rho w^i)
  \\
  & +  \bar{\rho}^{\gamma-1}\epsilon^{ijk}\epsilon^{kml}  \partial_j ( \mathrm{e}^{h+(\gamma-2)\rho} \partial_l \rho \partial_m h)
  \end{split}
\end{equation}
Calculate
\begin{equation}\label{W01}
  \begin{split}
  - \epsilon^{ijk}\partial_j v^m \partial_m w_k=& - \epsilon^{ijk} \partial_j v^m ( \partial_m w_k - \partial_k w_m)- \epsilon^{ijk}\partial_j v^m\partial_k w_m
  \\
  =& (\delta^{i}_m \delta^j_l - \delta^{i}_l \delta^j_m ) \partial_j v^m \mathrm{curl}\bw^l- \epsilon^{ijk}\partial_j v^m\partial_k w_m
  \\
  =& \partial_j v^i \mathrm{curl}\bw^j-  \mathrm{curl}\bw^i \mathrm{div}\bv- \epsilon^{ijk}\partial_j v^m\partial_k w_m,
  \end{split}
\end{equation}
and
\begin{equation}\label{W02}
\begin{split}
  \epsilon^{ijk}\partial_m v_k \partial_j w^m =&  \epsilon^{ijk}  \partial_j w^m (\partial_m v_k - \partial_k v_m)+ \epsilon^{ijk}  \partial_j w^m \partial_k v_m
  \\
  =& -\mathrm{e}^\rho w^j \partial_j w^i + \mathrm{e}^\rho w^i \partial_j w^j + \epsilon^{ijk}  \partial_j w^m \partial_k v_m
  \\
  =&-\mathrm{e}^\rho w^j \partial_j w^i - \mathrm{e}^\rho w^i  w^j \partial_j \rho + \epsilon^{ijk}  \partial_j w^m \partial_k v_m
  \\
  =&-\mathrm{e}^\rho w^j \partial_j w^i - \mathrm{e}^\rho w^i  w^j \partial_j \rho - \epsilon^{ijk} \partial_j v_m \partial_k w^m .
\end{split}
\end{equation}
Inserting \eqref{W01} and \eqref{W02} to \eqref{W00}, we get
\begin{equation}\label{lpg}
\begin{split}
\mathbf{T}  (\mathrm{curl} \bw^i) = &(\mathrm{curl}\bw \cdot \nabla)v^i-  \mathrm{curl}\bw^i \mathrm{div}\bv-2\epsilon^{imn}\partial_m v^j \partial_n w_j
+  \bar{\rho}^{\gamma-1}  \partial^l ( \mathrm{e}^{h+(\gamma-2)\rho}) \partial_l \rho \partial^i h
\\
&- \bar{\rho}^{\gamma-2}  \partial^l ( \mathrm{e}^{h+(\gamma-2)\rho}) \partial^i \rho \partial_l h
+\bar{\rho}^{\gamma-1} \mathrm{e}^{h+(\gamma-2)\rho}    \Delta \rho \partial^i h + \bar{\rho}^{\gamma-1} \mathrm{e}^{h+(\gamma-2)\rho}  \partial^m \rho \partial_m\partial^i h
 \\
 &- \bar{\rho}^{\gamma-1} \mathrm{e}^{h+(\gamma-2)\rho}    \partial_m \partial^i \rho  \partial^m h - \bar{\rho}^{\gamma-1} \mathrm{e}^{h+(\gamma-2)\rho}  \partial^i \rho \Delta h.
\end{split}
\end{equation}
So we have proved \eqref{W1}. Multiplying with $\mathrm{e^{-\rho}}$, we can update \eqref{lpg} by
\begin{equation}\label{Lpg}
\begin{split}
\mathbf{T}  (\mathrm{e}^{-\rho} \mathrm{curl}\bw^i) = &(\mathrm{e}^{-\rho} \mathrm{curl}\bw \cdot \nabla)v^i-2\epsilon^{imn}\mathrm{e}^{-{\rho}}\partial_m v^j \partial_n w_j
+  \bar{\rho}^{\gamma-1} \mathrm{e}^{-\rho}   \partial^l ( \mathrm{e}^{h+(\gamma-2)\rho}) \partial_l \rho \partial^i h
\\
&- \bar{\rho}^{\gamma-2} \mathrm{e}^{-\rho}   \partial^l ( \mathrm{e}^{h+(\gamma-2)\rho}) \partial^i \rho \partial_l h
+\bar{\rho}^{\gamma-1} \mathrm{e}^{h+(\gamma-3)\rho}    \Delta \rho \partial^i h + \bar{\rho}^{\gamma-1} \mathrm{e}^{h+(\gamma-3)\rho}  \partial^m \rho \partial_m\partial^i h
 \\
 &- \bar{\rho}^{\gamma-1} \mathrm{e}^{h+(\gamma-3)\rho}    \partial_m \partial^i \rho  \partial^m h - \bar{\rho}^{\gamma-1} \mathrm{e}^{h+(\gamma-3)\rho}  \partial^i \rho \Delta h.
\end{split}
\end{equation}
Applying the operator $\mathrm{curl}$ to \eqref{Lpg} yields
\begin{equation}\label{ct}
\begin{split}
  \mathrm{curl} \mathbf{T}  (\mathrm{e}^{-\rho} \mathrm{curl}\bw^i)
  =& \epsilon^{ijk} \partial_j\left( -2\epsilon_{kmn}\mathrm{e}^{-{\rho}}\partial^m v^a \partial^n w_a \right) +  \epsilon^{ijk}\partial_j ( \mathrm{e}^{-\rho} \mathrm{curl}\bw^m \partial_m v_k )
  \\
  &-  \bar{\rho}^{\gamma-1} \epsilon^{ijk}\mathrm{e}^{-\rho}\partial_j \rho  \partial^l ( \mathrm{e}^{h+(\gamma-2)\rho}) \partial_l \rho \partial_k h
   +\bar{\rho}^{\gamma-1} \epsilon^{ijk}\mathrm{e}^{-\rho}   \partial_l \rho \partial_k h \partial_j \partial^l ( \mathrm{e}^{h+(\gamma-2)\rho})
  \\
  & + \bar{\rho}^{\gamma-1} \epsilon^{ijk}\mathrm{e}^{-\rho}   \partial^l ( \mathrm{e}^{h+(\gamma-2)\rho})\partial_k h \partial_j \partial_l \rho
   + \bar{\rho}^{\gamma-1} \epsilon^{ijk}\mathrm{e}^{-\rho}   \partial^l ( \mathrm{e}^{h+(\gamma-2)\rho})  \partial_l \rho \partial_j\partial_k h
  \\
  &+ \bar{\rho}^{\gamma-1}\epsilon^{ijk} \mathrm{e}^{-\rho} \partial_j \rho  \partial^l ( \mathrm{e}^{h+(\gamma-2)\rho}) \partial_k \rho \partial_l h
   - \bar{\rho}^{\gamma-1}\epsilon^{ijk} \mathrm{e}^{-\rho}  \partial_k \rho \partial_l h \partial_j  \partial^l ( \mathrm{e}^{h+(\gamma-2)\rho})
  \\
  &- \bar{\rho}^{\gamma-1}\epsilon^{ijk} \mathrm{e}^{-\rho}   \partial^l ( \mathrm{e}^{h+(\gamma-2)\rho})\partial_l h \partial_j \partial_k \rho
  - \bar{\rho}^{\gamma-1}\epsilon^{ijk} \mathrm{e}^{-\rho}   \partial^l ( \mathrm{e}^{h+(\gamma-2)\rho})  \partial_k \rho \partial_j \partial_l h
\\
&+\bar{\rho}^{\gamma-1}\epsilon^{ijk} \partial_j( \mathrm{e}^{h+(\gamma-3)\rho})    \Delta \rho \partial_k h
+ \bar{\rho}^{\gamma-1} \epsilon^{ijk} \partial_j(\mathrm{e}^{h+(\gamma-3)\rho})  \partial^m \rho \partial_m\partial_k h
\\
& - \bar{\rho}^{\gamma-1} \epsilon^{ijk} \partial_j(\mathrm{e}^{h+(\gamma-3)\rho}) \partial^m h    \partial_m \partial_k \rho
- \bar{\rho}^{\gamma-1} \epsilon^{ijk} \partial_j(\mathrm{e}^{h+(\gamma-3)\rho})  \partial_k \rho \Delta h
\\
&- \bar{\rho}^{\gamma-1} \epsilon^{ijk} \mathrm{e}^{h+(\gamma-3)\rho}  \partial_k \rho \partial_j\Delta h
 +\bar{\rho}^{\gamma-1}\epsilon^{ijk} \mathrm{e}^{h+(\gamma-3)\rho} \partial_k h   \partial_j\Delta \rho
\\
&+ 2\bar{\rho}^{\gamma-1} \epsilon^{ijk} \mathrm{e}^{h+(\gamma-3)\rho}  \partial_j\partial^m \rho \partial_m\partial_k h .
\end{split}
\end{equation}
Calculate
\begin{equation}\label{W10}
  \begin{split}
  \epsilon^{ijk}\partial_j ( \mathrm{e}^{-\rho} \mathrm{curl}\bw^m \partial_m v_k ) =&  \epsilon^{ijk}\partial_j  (\mathrm{e}^{-\rho} \mathrm{curl}\bw^m) \partial_m v_k +  \epsilon^{ijk}  (\mathrm{e}^{-\rho} \mathrm{curl}\bw^m) \partial_j \partial_m v_k
  \\
  =& \epsilon^{ijk}\partial_j  (\mathrm{e}^{-\rho} \mathrm{curl}\bw^m) \partial_m v_k + w^i  \mathrm{curl}\bw^m \partial_m \rho+\mathrm{curl}\bw^m \partial_m w^i
  \\
  =& \epsilon^{ijk}\mathrm{e}^{-\rho} \partial_m v_k \partial_j  (\mathrm{curl} \bw^m) -\epsilon^{ijk}  \mathrm{e}^{-\rho}  \partial_j \rho \partial_m v_k  \mathrm{curl} \bw^m
  \\
  & +\epsilon^{ijk} \mathrm{e}^{-\rho}  \partial^2_{jm}v_k \mathrm{curl} \bw^m + w^i  \mathrm{curl}\bw^m \partial_m \rho+\mathrm{curl}\bw^m \partial_m w^i
  \\
  =& \epsilon^{ijk}\mathrm{e}^{-\rho} \partial_m v_k \partial_j  (\mathrm{curl} \bw^m) -\epsilon^{ijk}  \mathrm{e}^{-\rho}  \partial_j \rho \partial_m v_k  \mathrm{curl} \bw^m
  \\
  &  + 2w^i  \mathrm{curl}\bw^m \partial_m \rho+2\mathrm{curl}\bw^m \partial_m w^i.
  \end{split}
\end{equation}
Substituting \eqref{W10} in \eqref{ct}, we find that
\begin{equation*}
\begin{split}
 \mathrm{curl} \mathbf{T}  (\mathrm{e}^{-\rho} \mathrm{curl}\bw^i)=&   -2\mathrm{e}^{-{\rho}} \epsilon^{ijk}\epsilon_{kmn} \partial_j(\partial^m v^a) \partial^n w_a
 -2\mathrm{e}^{-{\rho}}\epsilon_{kmn}\epsilon^{ijk}\partial^m v^a \partial_{j}(\partial^n w_a)
  \\
 & +\epsilon^{ijk}\mathrm{e}^{-\rho} \partial_m v_k \partial_j  (\mathrm{curl} \bw^m) -\epsilon^{ijk}  \mathrm{e}^{-\rho}  \partial_j \rho \partial_m v_k  \mathrm{curl} \bw^m
 \\
 &-  \bar{\rho}^{\gamma-1} \epsilon^{ijk}\mathrm{e}^{-\rho}\partial_j \rho  \partial^l ( \mathrm{e}^{h+(\gamma-2)\rho}) \partial_l \rho \partial_k h
    +\bar{\rho}^{\gamma-1} \epsilon^{ijk}\mathrm{e}^{-\rho}   \partial_l \rho \partial_k h \partial_j \partial^l ( \mathrm{e}^{h+(\gamma-2)\rho})
  \\
  & + \bar{\rho}^{\gamma-1} \epsilon^{ijk}\mathrm{e}^{-\rho}   \partial^l ( \mathrm{e}^{h+(\gamma-2)\rho})\partial_k h \partial_j \partial_l \rho
   + \bar{\rho}^{\gamma-1} \epsilon^{ijk}\mathrm{e}^{-\rho}   \partial^l ( \mathrm{e}^{h+(\gamma-2)\rho})  \partial_l \rho \partial_j\partial_k h
 \\
 & + \bar{\rho}^{\gamma-1}\epsilon^{ijk} \mathrm{e}^{-\rho} \partial_j \rho  \partial^l ( \mathrm{e}^{h+(\gamma-2)\rho}) \partial_k \rho \partial_l h
   - \bar{\rho}^{\gamma-1}\epsilon^{ijk} \mathrm{e}^{-\rho}  \partial_k \rho \partial_l h \partial_j  \partial^l ( \mathrm{e}^{h+(\gamma-2)\rho})
 \\
 & - \bar{\rho}^{\gamma-1}\epsilon^{ijk} \mathrm{e}^{-\rho}   \partial^l ( \mathrm{e}^{h+(\gamma-2)\rho})\partial_l h \partial_j \partial_k \rho
  - \bar{\rho}^{\gamma-1}\epsilon^{ijk} \mathrm{e}^{-\rho}   \partial^l ( \mathrm{e}^{h+(\gamma-2)\rho})  \partial_k \rho \partial_j \partial_l h
\\
& +\bar{\rho}^{\gamma-1}\epsilon^{ijk} \partial_j( \mathrm{e}^{h+(\gamma-3)\rho})    \Delta \rho \partial_k h
+ \bar{\rho}^{\gamma-1} \epsilon^{ijk} \partial_j(\mathrm{e}^{h+(\gamma-3)\rho})  \partial^m \rho \partial_m\partial_k h
 \\
 & - \bar{\rho}^{\gamma-1} \epsilon^{ijk} \partial_j(\mathrm{e}^{h+(\gamma-3)\rho}) \partial^m h    \partial_m \partial_k \rho
- \bar{\rho}^{\gamma-1} \epsilon^{ijk} \partial_j(\mathrm{e}^{h+(\gamma-3)\rho})  \partial_k \rho \Delta h
\\
&- \bar{\rho}^{\gamma-1} \epsilon^{ijk} \mathrm{e}^{h+(\gamma-3)\rho}  \partial_k \rho \partial_j\Delta h
+\bar{\rho}^{\gamma-1}\epsilon^{ijk} \mathrm{e}^{h+(\gamma-3)\rho} \partial_k h   \partial_j\Delta \rho
\\
& + 2\bar{\rho}^{\gamma-1} \epsilon^{ijk} \mathrm{e}^{h+(\gamma-3)\rho}  \partial_j\partial^m \rho \partial_m\partial_k h
+ 2\mathrm{e}^{-{\rho}}\epsilon_{kmn}\epsilon^{ijk}\partial_m v^a \partial_{n}w_a \partial_j {\rho}.
\end{split}
\end{equation*}
For simplicity, we rewrite
\begin{equation}\label{W11}
\begin{split}
 \mathrm{curl} \mathbf{T}  (\mathrm{e}^{-\rho} \mathrm{curl}\bw^i)=& -2\mathrm{e}^{-{\rho}} \epsilon^{ijk}\epsilon_{kmn} \partial_j(\partial^m v^a) \partial^n w_a
 +\bar{\rho}^{\gamma-1}\epsilon^{ijk} \mathrm{e}^{h+(\gamma-3)\rho} \partial_k h   \partial_j\Delta \rho
\\
& + 2\bar{\rho}^{\gamma-1} \epsilon^{ijk} \mathrm{e}^{h+(\gamma-3)\rho}  \partial_j\partial^m \rho \partial_m\partial_k h + Z^i_1 ,
\end{split}
\end{equation}
where
\begin{equation}\label{R1i}
\begin{split}
 Z^i_1= & 2w^i  \mathrm{curl}\bw^m \partial_m \rho+2\mathrm{curl}\bw^m \partial_m w^i - \bar{\rho}^{\gamma-1} \epsilon^{ijk} \mathrm{e}^{h+(\gamma-3)\rho}  \partial_k \rho \partial_j\Delta h
 \\
&-  \bar{\rho}^{\gamma-1} \epsilon^{ijk}\mathrm{e}^{-\rho}\partial_j \rho  \partial^l ( \mathrm{e}^{h+(\gamma-2)\rho}) \partial_l \rho \partial_k h
-2\mathrm{e}^{-{\rho}}\epsilon_{kmn}\epsilon^{ijk}\partial^m v^a \partial_{j}(\partial^n w_a)
 \\
 & + 2\mathrm{e}^{-{\rho}}\epsilon_{kmn}\epsilon^{ijk}\partial_m v^a \partial_{n}w_a \partial_j {\rho}
 +\bar{\rho}^{\gamma-1} \epsilon^{ijk}\mathrm{e}^{-\rho}   \partial_l \rho \partial_k h \partial_j \partial^l ( \mathrm{e}^{h+(\gamma-2)\rho})
  \\
  & + \bar{\rho}^{\gamma-1} \epsilon^{ijk}\mathrm{e}^{-\rho}   \partial^l ( \mathrm{e}^{h+(\gamma-2)\rho})\partial_k h \partial_j \partial_l \rho
   + \bar{\rho}^{\gamma-1} \epsilon^{ijk}\mathrm{e}^{-\rho}   \partial^l ( \mathrm{e}^{h+(\gamma-2)\rho})  \partial_l \rho \partial_j\partial_k h
 \\
 & + \bar{\rho}^{\gamma-1}\epsilon^{ijk} \mathrm{e}^{-\rho} \partial_j \rho  \partial^l ( \mathrm{e}^{h+(\gamma-2)\rho}) \partial_k \rho \partial_l h
   - \bar{\rho}^{\gamma-1}\epsilon^{ijk} \mathrm{e}^{-\rho}  \partial_k \rho \partial_l h \partial_j  \partial^l ( \mathrm{e}^{h+(\gamma-2)\rho})
 \\
 & - \bar{\rho}^{\gamma-1}\epsilon^{ijk} \mathrm{e}^{-\rho}   \partial^l ( \mathrm{e}^{h+(\gamma-2)\rho})\partial_l h \partial_j \partial_k \rho
  - \bar{\rho}^{\gamma-1}\epsilon^{ijk} \mathrm{e}^{-\rho}   \partial^l ( \mathrm{e}^{h+(\gamma-2)\rho})  \partial_k \rho \partial_j \partial_l h
\\
& +\bar{\rho}^{\gamma-1}\epsilon^{ijk} \partial_j( \mathrm{e}^{h+(\gamma-3)\rho})    \Delta \rho \partial_k h
+ \bar{\rho}^{\gamma-1} \epsilon^{ijk} \partial_j(\mathrm{e}^{h+(\gamma-3)\rho})  \partial^m \rho \partial_m\partial_k h
 \\
 & - \bar{\rho}^{\gamma-1} \epsilon^{ijk} \partial_j(\mathrm{e}^{h+(\gamma-3)\rho}) \partial^m h    \partial_m \partial_k \rho
- \bar{\rho}^{\gamma-1} \epsilon^{ijk} \partial_j(\mathrm{e}^{h+(\gamma-3)\rho})  \partial_k \rho \Delta h.
\end{split}
\end{equation}
can be viewed as a lower order term.
The third term of the right hand of \eqref{W11} causes some difficulty. We use $\partial^m \rho= -c^{-2}_s \mathbf{T}v^m-\frac{1}{\gamma} \partial^m h$ to calculate
\begin{equation}\label{W13}
\begin{split}
   2\bar{\rho}^{\gamma-1} \epsilon^{ijk} \mathrm{e}^{h+(\gamma-3)\rho}  \partial_j\partial^m \rho \partial_m\partial_k h
  =&  \mathbf{T}(-\frac{2}{\gamma} \mathrm{e}^{-2\rho} \epsilon^{ijk} \partial_j v^m  \partial_m\partial_k h )+Z^i_2,
\end{split}
\end{equation}
where
\begin{equation}\label{R2i}
\begin{split}
  Z^i_2=& -\frac{2}{\gamma} \bar{\rho}^{\gamma-1} \epsilon^{ijk} \mathrm{e}^{h+(\gamma-3)\rho}   \partial_j \partial^m h  \partial_m\partial_k h
  \\
  & + \frac{2}{\gamma}\bar{\rho}^{\gamma-1} \epsilon^{ijk} \mathrm{e}^{h+(\gamma-3)\rho}  \partial_j (c^{-2}_s) (c^2_s \partial^m \rho-\frac{1}{\gamma}c^2_s \partial^m h) \partial_m\partial_k h
  \\
  &
  -\frac{2}{\gamma} \mathrm{e}^{-2\rho} \epsilon^{ijk} \partial_j v^n  \partial_n v^m  \partial_m \partial_k h
   +\frac{4}{\gamma} \mathrm{e}^{-2\rho} \partial_a v^a \epsilon^{ijk} \partial_n v^m  \partial_m \partial_k h
   \\
   &- \frac{2}{\gamma} \mathrm{e}^{-2\rho} \epsilon^{ijk} \partial_j v^m ( \partial_m v^a \partial_a \partial_k h - \partial_a h \partial_m \partial_k v^a-\partial_k v^a \partial_a \partial_m h ).
\end{split}
\end{equation}
Similarly, we have
\begin{equation}\label{W14}
\begin{split}
  &\bar{\rho}^{\gamma-1}\epsilon^{ijk} \mathrm{e}^{h+(\gamma-3)\rho} \partial_k h   \partial_j\Delta \rho
  \\
  =&-\bar{\rho}^{\gamma-1}\epsilon^{ijk} \mathrm{e}^{h+(\gamma-3)\rho} \partial_k h   \Delta(-c^{-2}_s \mathbf{T}v_j)-\frac{1}{\gamma}\bar{\rho}^{\gamma-1}\epsilon^{ijk} \mathrm{e}^{h+(\gamma-2)\rho} \partial_k h   \partial_j \Delta h
  \\
  =& \mathbf{T}(\frac{1}{\gamma} \mathrm{e}^{-2\rho} \epsilon^{ijk} \partial_k h \Delta v_j)+Z^i_3,
\end{split}
\end{equation}
where
\begin{equation}\label{R3i}
\begin{split}
  Z^i_3=& \frac{1}{\gamma} \mathrm{e}^{-2\rho}\epsilon^{ijk}  \partial_a v^a \partial_k h \Delta v_j+ \frac{1}{\gamma}\mathrm{e}^{-2\rho}\epsilon^{ijk}  \partial_k v^a  \partial_a h  \Delta v_j
  \\
  &+ \frac{1}{\gamma} \mathrm{e}^{-2\rho}\epsilon^{ijk} \partial_k h ( \partial^m v^a \partial_m \partial_a v_j + \Delta v^a \partial_a v_j )-\frac{1}{\gamma}\bar{\rho}^{\gamma-1}\epsilon^{ijk} \mathrm{e}^{h+(\gamma-3)\rho} \partial_k h   \partial_j \Delta h
   \\
   & +\frac{1}{\gamma} \bar{\rho}^{\gamma-1}\epsilon^{ijk} \mathrm{e}^{h+(\gamma-3)\rho} \partial_k h  \partial^m(c^{-2}_s) \partial_m (c^2_s\partial_j \rho- \frac{1}{\gamma}c^2_s \partial_j h)
  \\
  & +\frac{1}{\gamma} \bar{\rho}^{\gamma-1}\epsilon^{ijk} \mathrm{e}^{h+(\gamma-3)\rho} \partial_k h   \Delta(-c^{-2}_s)  (c^2_s\partial_j \rho- \frac{1}{\gamma}c^2_s \partial_j h).
\end{split}
\end{equation}
By using $\epsilon^{ijk} \epsilon_{kmn}=\delta^j_m \delta^i_n- \delta^i_m \delta^j_n$, we can obtain
\begin{equation}\label{W15}
\begin{split}
 -2 \epsilon^{ijk} \epsilon_{kmn} \mathrm{e}^{-{\rho}} \partial_{j} (\partial^m v^a) \partial^n w_a
= & -2\mathrm{e}^{-{\rho}} (\delta^j_m \delta^i_n- \delta^i_m \delta^j_n) \partial_{j}(\partial^m v^a) \partial_n w_a
  \\
  = & \Xi_1+\Xi_2,
\end{split}
\end{equation}
where
\begin{equation}\label{A12}
\begin{split}
  \Xi_1:= -2\mathrm{e}^{-{\rho}} \Delta v^a \partial^i w_a, \quad \Xi_2:= 2\mathrm{e}^{-{\rho}} \partial^i( \partial_j v^a) \partial^j w_a.
  \end{split}
\end{equation}
The Hodge decomposition yields
\begin{equation}\label{Hod}
  \begin{split}
  \Delta \bv = \partial \mathrm{div} \bv - \mathrm{curl}^2 \bv
  & =- \partial \mathbf{T} {\rho}+ \mathrm{curl} \big( \mathrm{e}^{{\rho}}\bw \big)
  \\
  & = -\mathbf{T} (\partial {\rho})+ \partial v^k \partial_k {\rho}++ \mathrm{curl} \big( \mathrm{e}^{{\rho}}\bw \big).
  \end{split}
\end{equation}
Substituting \eqref{Hod} in $\Xi_1$, we can rewrite $\Xi_1$ as
\begin{equation}\label{A1F}
\begin{split}
  \Xi_1 =&-2\mathrm{e}^{-{\rho}} \big[ -\mathbf{T} (\partial^a {\rho})+ \partial^a v^k \partial_k {\rho}-\mathrm{e}^{{\rho}} \epsilon^{ajk} \partial_j {\rho} w_k- \mathrm{e}^{{\rho}}\mathrm{curl}\bw^a \big] \partial^i w_a
  \\
   = &\mathbf{T} \big( 2\mathrm{e}^{-{\rho}}  \partial^a {\rho} \partial^i w_a  \big)- 2\mathrm{e}^{-{\rho}} \partial^a {\rho} \mathbf{T} ( \partial^i w_a)
   +  2\mathrm{e}^{-{\rho}}\mathrm{div}\bv \partial^a {\rho}  \partial^i w_a
   \\
   &-2\mathrm{e}^{-{\rho}}   \partial^a v^k \partial_k {\rho}  \partial^i w_a
   +2 \mathrm{curl}\bw^a  \partial^i w_a+2 \epsilon^{ajk} \partial_j {\rho} w_k  \partial^iw_a
  \\
  =& \mathbf{T} \big( 2\mathrm{e}^{-{\rho}} \partial^a {\rho}  \partial^i w_a  \big)- 2\mathrm{e}^{-{\rho}} \partial^a {\rho}  \mathbf{T} (\partial^i w_a)
  \\
  &-2\mathrm{e}^{-{\rho}}   \partial^a v^k \partial_k {\rho}  \partial^i w_a+2 \mathrm{curl}\bw^a  \partial^i w_a
  \\
  &
  + 2 \epsilon^{ajk} \partial_j {\rho} w_k  \partial^iw_a+ 2\mathrm{e}^{-{\rho}} \partial^a {\rho}  \partial^i w_a.
\end{split}
\end{equation}
In \eqref{A1F}, it remains for us to write $- 2\mathrm{e}^{-{\rho}} \partial^a {\rho}  \mathbf{T} (\partial^i w_a)$ in a suitable way. Note $\mathbf{T} w_a= w^m \partial_m v_a+\bar{\rho}^{\gamma-1}  \mathrm{e}^{h+(\gamma-2)\rho}\epsilon_{amn} \partial^n \rho \partial^m h $. Then we can calculate that
\begin{equation*}
\begin{split}
   \mathbf{T} (\partial^i w_a)  = & \partial^i (\mathbf{T} w_a)- [\partial^i,  \mathbf{T}]w_a
 \\
 =&  \partial^i w^m \partial_m v_a + w^m \partial^i(\partial_{m}v_a) - \partial^i v^k \partial_k w_a
 \\
 &+ \epsilon_{amn}\bar{\rho}^{\gamma-1}  \partial^i(\mathrm{e}^{h+(\gamma-2)\rho}) \partial^n \rho \partial^m h
 \\
 &  + \epsilon_{amn}\bar{\rho}^{\gamma-1}  \mathrm{e}^{h+(\gamma-2)\rho}\partial^i \partial^n \rho \partial^m h
 \\
 &  +\epsilon_{amn} \bar{\rho}^{\gamma-1}  \mathrm{e}^{h+(\gamma-2)\rho}  \partial^n \rho \partial^i \partial^m h .
\end{split}
\end{equation*}
We therefore derive that
\begin{equation}\label{A1R}
\begin{split}
 & - 2\mathrm{e}^{-{\rho}}\partial^a {\rho}  \mathbf{T}( \partial^iw_a ) \\
  =& - 2\mathrm{e}^{-{\rho}} \partial^a {\rho} \big( \partial^i w^m \partial_m v_a + w^m \partial^i(\partial_{m}v_a) - \partial^i v^k \partial_k w_a \big)
  \\
  &- 2\epsilon_{amn} \bar{\rho}^{\gamma-1} \mathrm{e}^{-{\rho}} \partial^a {\rho}  \partial^i(\mathrm{e}^{h+(\gamma-2)\rho}) \partial^n \rho \partial^m h
 \\
 & - 2\epsilon_{amn} \bar{\rho}^{\gamma-1}  \mathrm{e}^{h+(\gamma-3)\rho} \partial^a {\rho} \partial^i \partial^n \rho \partial^m h )
 \\
 &- 2\epsilon_{amn} \bar{\rho}^{\gamma-1}  \mathrm{e}^{h+(\gamma-3)\rho} \partial^a {\rho}   \mathrm{e}^{h+(\gamma-2)\rho}  \partial^n \rho \partial^i \partial^m h  .
 \end{split}
\end{equation}
Substituting \eqref{A1R} in \eqref{A1F}, we have
\begin{equation}\label{A1v}
\begin{split}
  \Xi_1
  = & \mathbf{T} \big( 2\mathrm{e}^{-{\rho}}   \partial^a {\rho} \partial^i w_a  \big)+Z_4^{i},
\end{split}
\end{equation}
where
\begin{equation}\label{R4i}
  \begin{split}
  Z^i_4=&-2\mathrm{e}^{-{\rho}}   \partial^a v^k \partial_k {\rho}  \partial^i w_a+2 \mathrm{e}^{{\rho}} W^a  \partial^i w_a
  \\
  &
  + 2 \epsilon^{ajk} \partial_j {\rho} w_k  \partial^iw_a+2\mathrm{e}^{-{\rho}}\mathrm{div}\bv \partial^a {\rho}  \partial^i w_a
  \\
  & - 2\mathrm{e}^{-{\rho}} \partial^a {\rho} \big( \partial^i w^m \partial_m v_a + w^m \partial^i(\partial_{m}v_a) - \partial^i v^k \partial_k w_a \big)
  \\
  &- 2\epsilon_{amn} \bar{\rho}^{\gamma-1} \mathrm{e}^{-{\rho}} \partial^a {\rho}  \partial^i(\mathrm{e}^{h+(\gamma-2)\rho}) \partial^n \rho \partial^m h
 \\
 & - 2\epsilon_{amn} \bar{\rho}^{\gamma-1}  \mathrm{e}^{h+(\gamma-3)\rho} \partial^a {\rho}  \partial^m h \partial^i \partial^n \rho
 \\
 &- 2\epsilon_{amn} \bar{\rho}^{\gamma-1}  \mathrm{e}^{h+(\gamma-3)\rho} \partial^a {\rho}   \partial^n \rho \partial^i \partial^m h  .
  \end{split}
\end{equation}
Finally, let us consider $\Xi_2$. By using chain rule, we obtain
\begin{equation}\label{A2m}
\begin{split}
  \Xi_2 = \ &  2\mathrm{e}^{-{\rho}} \partial^i ( \partial_j v^a) \partial^j  w_a
  \\
  = \ &  \partial^i (2\mathrm{e}^{-{\rho}} \partial_{j} v^a \partial^j w_a)+2\mathrm{e}^{-{\rho}} \partial^i {\rho} \partial_{j}v^a \partial^j w_a
  -2\mathrm{e}^{-{\rho}}  \partial_{j} v^a \partial^j(\partial^i  w_a)
  \\
  = \ & \partial^i (2\mathrm{e}^{-{\rho}} \partial_{j} v^a \partial^j w_a)+\tilde{Z}^i_5,
\end{split}
\end{equation}
where
\begin{equation}\label{R5I}
  \tilde{Z}^i_5=2\mathrm{e}^{-{\rho}} \partial^i {\rho} \partial_{j}v^a \partial^j w_a
  -2\mathrm{e}^{-{\rho}}  \partial_{j} v^a \partial^j(\partial^i  w_a).
\end{equation}
Inserting \eqref{A1v}, \eqref{R4i}, \eqref{A2m}, and \eqref{R5I} to \eqref{W15}, we can update \eqref{W15} by
\begin{equation}\label{W17}
\begin{split}
 -2 \epsilon^{ijk} \epsilon_{kmn} \mathrm{e}^{-{\rho}} \partial_{j} (\partial^m v^a) \partial^n w_a
= \mathbf{T} \big( 2\mathrm{e}^{-{\rho}}   \partial^a {\rho} \partial^i w_a  \big)+ \partial^i (2\mathrm{e}^{-{\rho}} \partial_{j} v^a \partial^j w_a)+K_4^{i}+Z^i_5.
\end{split}
\end{equation}
Substituting \eqref{W13},\eqref{W14}, and \eqref{W17} in \eqref{W11}, we find
\begin{equation}\label{CW}
\begin{split}
   \mathrm{curl} \mathbf{T}  (\mathrm{e}^{-\rho} \mathrm{curl}\bw^i)=& \partial^i \big( 2 \mathrm{e}^{-\rho}  \partial_n v_a \partial^n w^b \big) + Z^i_1+Z^i_2+Z^i_3+Z^i_4+Z^i_5
  \\
  &+ \mathbf{T} \big( - 2\mathrm{e}^{- {\rho}}\partial^a {\rho} \partial^i w_a-\frac2\gamma \mathrm{e}^{-2\rho} \epsilon^{ijk} \partial_j v^m  \partial_m\partial_k h-\frac{1}{\gamma}\mathrm{e}^{-2\rho} \epsilon^{ijk} \partial_k h \Delta v_j \big).
\end{split}
\end{equation}
By chain rule, we then get
\begin{equation}\label{TW}
  \begin{split}
  \mathbf{T} \left\{ \mathrm{curl} (\mathrm{e}^{-\rho} \mathrm{curl}\bw^i)\right\}  =& \mathbf{T} \left\{ \epsilon^{ijk}\partial_j (\mathrm{e}^{-\rho} \mathrm{curl}\bw_k) \right\}
  \\
 &=\epsilon^{ijk} \partial_j \mathbf{T}  (\mathrm{e}^{-\rho} \mathrm{curl}\bw_k) +\epsilon^{ijk} [\mathbf{T}, \partial_j](\mathrm{e}^{-\rho} \mathrm{curl}\bw_k)
 \\
 & = \mathrm{curl} \mathbf{T}  (\mathrm{e}^{-\rho} \mathrm{curl}\bw^i) -\epsilon^{ijk} \partial_j v^m \partial_m (\mathrm{e}^{-\rho} \mathrm{curl}\bw_k).
  \end{split}
\end{equation}
Substituting \eqref{CW} in \eqref{TW} yields
\begin{equation}\label{Re}
\begin{split}
 \mathbf{T} \left\{ \mathrm{curl} (\mathrm{e}^{-\rho} \mathrm{curl}\bw^i) \right\}
 =& \mathbf{T}\big( 2\mathrm{e}^{- {\rho}}\partial^a {\rho} \partial^i w_a-\frac2\gamma \mathrm{e}^{-2\rho} \epsilon^{ijk} \partial_j v^m  \partial_m\partial_k h+ \mathrm{e}^{-\rho} \epsilon^{ijk} \partial_k h \Delta v_j \big)
 \\
 &+ \partial^i \big( 2 \mathrm{e}^{-{\rho}} \partial_n v^a \partial^n w_a \big)
+ Z^i_1+Z^i_2+Z^i_3+Z^i_4+Z^i_5.
\end{split}
\end{equation}
Here
\begin{equation}\label{R5ie}
	\begin{split}
	Z^i_5=&\tilde{Z}^i_5-\epsilon^{ijk} \partial_j v^m \partial_m (\mathrm{e}^{-\rho} \mathrm{curl}\bw_k)
	\\
	=&\tilde{Z}^i_5-\epsilon^{ijk} \partial_j v^m \partial_m (\mathrm{e}^{-{\rho}} \mathrm{curl}\bw_k).
	\end{split}
\end{equation}
Multiplying $\mathrm{e}^{ {\rho}}$ on \eqref{Re}, we conclude
\begin{equation}\label{rrr}
\begin{split}
& \mathbf{T} \left\{ \mathrm{e}^{ {\rho}} \mathrm{curl}  (\mathrm{e}^{-\rho} \mathrm{curl}\bw^i)- 2\partial^a {\rho} \partial^i w_a+2 \mathrm{e}^{-\rho} \epsilon^{ijk} \partial_j v^m  \partial_m\partial_k h-\mathrm{e}^{-\rho} \epsilon^{ijk} \partial_k h \Delta v_j  \right\}
\\
=&    \partial^i \big( 2 \partial_n v_a \partial^n w^a \big) + K^i,
\end{split}
\end{equation}
where $K^i=\sum_{a=1}^6 K^i_a$, $K_a^i=\mathrm{e}^{\rho} Z^i_a, (a=1,2,\cdots,5)$ and $K_6^i$ is as stated in \eqref{R6ib}. We also note $\mathrm{e}^{ {\rho}} \mathrm{curl}  (\mathrm{e}^{-\rho} \mathrm{curl}\bw^i)= \mathrm{curl}\mathrm{curl}\bw^i -\epsilon^{ijk} \partial_j \rho \cdot \mathrm{curl}\bw_k$, so we can obtain \eqref{W2} if we use \eqref{rrr}.
\end{proof}

\begin{Lemma}[transport equations for $\bH$]\label{PWh}
Let $\bH=(H^1,H^2,H^3)^{\mathrm{T}}$ be defined in \eqref{pwh}. Then $\bH$ satisfies
\begin{equation}\label{Hh0}
  \begin{split}
  \mathbf{T}H^i= \mathrm{e}^{-\rho} \partial_j v^j \partial^i h -\mathrm{e}^{-\rho} \partial^j v^i \partial_j h- \epsilon^{ijk}w_k\partial_j h,
  \end{split}
\end{equation}
Moreover, we have
\begin{equation}\label{curH}
  \mathrm{curl} \bH^i=-\epsilon^{ijk}\mathrm{e}^{-\rho}\partial_j\rho \partial_k h,
\end{equation}
and
\begin{equation}\label{Hh1}
  \mathbf{T}( \mathrm{e}^{\rho} \partial_i H^i)=-2\partial_i v^j \partial_j  \partial^i h   + \epsilon^{ijm}\mathrm{e}^{\rho} w_m \partial^i \rho \partial_j h,
\end{equation}
and
\begin{equation}\label{Hh2}
\begin{split}
  \mathbf{T} \left\{ \partial^k( \mathrm{e}^{\rho} \partial_i H^i) \right\}=&-2\partial^k(\partial_i v^j) \partial_j  \partial^i h+Y^k.
\end{split}
\end{equation}
Above, we have denoted
\begin{equation}\label{Hh3}
 \begin{split}
   Y^k=& -2\partial_i v^j \partial^k  \partial^i (\partial_j h) + \epsilon^{ijm}\mathrm{e}^{\rho} w_m \partial^i \rho \partial^k\rho \partial_j h + \epsilon^{ijm}\mathrm{e}^{\rho} \partial^k w_m \partial^i \rho \partial_j h
  \\
  & + \epsilon^{ijm}\mathrm{e}^{\rho} w_m \partial_j h \partial_k \partial^i \rho   + \epsilon^{ijm}\mathrm{e}^{\rho} w_m \partial^i \rho \partial^k \partial_j h .
 \end{split}
\end{equation}
\end{Lemma}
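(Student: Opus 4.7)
The plan is to establish the four identities by direct computation, relying on three ingredients: the equations from \eqref{fc0}, in particular $\mathbf{T}\rho=-\mathrm{div}\bv$ and $\mathbf{T}h=0$; the elementary commutator $[\mathbf{T},\partial_i]f=-(\partial_i v^j)\partial_j f$; and the vorticity identity $\partial^i v^j-\partial^j v^i=\epsilon^{ijk}\mathrm{e}^{\rho}w_k$, a rewriting of $\mathrm{curl}\bv=\mathrm{e}^{\rho}\bw$.

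For \eqref{Hh0}, I apply $\mathbf{T}$ to $H^i=\mathrm{e}^{-\rho}\partial^i h$ by the product rule. The factor $\mathbf{T}\mathrm{e}^{-\rho}=-\mathrm{e}^{-\rho}\mathbf{T}\rho=\mathrm{e}^{-\rho}\partial_j v^j$ handles the exponential, while the commutator together with $\mathbf{T}h=0$ yields $\mathbf{T}\partial^i h=-\partial^i v^j\partial_j h$. Writing $\partial^i v^j=\partial^j v^i+(\partial^i v^j-\partial^j v^i)$ and invoking the vorticity identity produces the claimed decomposition. Identity \eqref{curH} is immediate, as $\mathrm{curl}\bH^i=\epsilon^{ijk}\partial_j(\mathrm{e}^{-\rho}\partial_k h)$ and the second-derivative piece $\epsilon^{ijk}\partial_j\partial_k h$ vanishes by antisymmetry.

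For \eqref{Hh1}, I first rewrite $\mathrm{e}^{\rho}\partial_i H^i=\Delta h-\partial_i\rho\cdot\partial^i h$ and then apply $\mathbf{T}$. The term $\mathbf{T}\Delta h$ is handled via the second-order commutator $[\mathbf{T},\Delta]h=\Delta v^j\partial_j h+2\partial^k v^j\partial_k\partial_j h$ combined with $\mathbf{T}h=0$, and produces the leading contribution $-2\partial_i v^j\partial_j\partial^i h$. The term $\mathbf{T}(\partial_i\rho\cdot\partial^i h)$ is expanded by the product rule using $\mathbf{T}\rho=-\mathrm{div}\bv$ and the first-order commutator on each factor. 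The remaining bilinear pieces of the forms $\partial^2\bv\cdot\partial h$ and $\partial\bv\cdot\partial\rho\cdot\partial h$ are then repackaged by symmetrizing in dummy indices and applying the vorticity identity, so that they collapse to the single cross-product term $\epsilon^{ijm}\mathrm{e}^{\rho}w_m\partial^i\rho\cdot\partial_j h$. Identity \eqref{Hh2} follows by differentiating \eqref{Hh1} with $\partial^k$ and commuting through $\mathbf{T}$ via $[\mathbf{T},\partial^k]=-(\partial^k v^j)\partial_j$; the principal term $-2\partial^k(\partial_i v^j)\partial_j\partial^i h$ is isolated, and every other contribution produced by Leibniz and commutator remainders, including derivatives falling on $\epsilon^{ijm}\mathrm{e}^{\rho}w_m$, on $\partial^i\rho$, or on $\partial_j h$, is collected into $Y^k$ as in \eqref{Hh3}.

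The main obstacle is the algebraic step in \eqref{Hh1}: after all commutators have been applied, one is left with a $\partial^2\bv\cdot\partial h$ piece alongside two $\partial\bv\cdot\partial\rho\cdot\partial h$ pieces, and these must be rearranged by invoking the vorticity identity in two separate places together with a careful relabeling of dummy indices in order to condense into the compact right-hand side. The other three identities are then essentially mechanical.
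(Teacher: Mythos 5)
Your handling of \eqref{Hh0} and \eqref{curH} is correct and is essentially the paper's own route. The step you flag as the ``main obstacle'' for \eqref{Hh1} --- that the leftover $\partial^2\bv\cdot\partial h$ and $\partial\bv\cdot\partial\rho\cdot\partial h$ pieces ``collapse to the single cross-product $\epsilon^{ijm}\mathrm{e}^{\rho}w_m\partial^i\rho\,\partial_j h$'' --- is asserted, not argued, and as described it does not go through. After the commutators, the surviving $\partial^2\bv$ terms combine to $(\partial^j\mathrm{div}\,\bv-\Delta v^j)\partial_j h=(\mathrm{curl}\,\mathrm{curl}\,\bv)^j\partial_j h=\bigl(\mathrm{curl}(\mathrm{e}^{\rho}\bw)\bigr)^j\partial_j h$, which equals a cross-product $\mathrm{e}^{\rho}(\nabla\rho\times\bw)\cdot\nabla h$ \emph{plus} an $\mathrm{e}^{\rho}(\mathrm{curl}\,\bw)\cdot\nabla h$ remnant carrying a first derivative of $\bw$; and the $\partial\bv\cdot\partial\rho\cdot\partial h$ terms from the product rule sum to $\partial_a v^b\bigl(\partial_a\rho\,\partial_b h+\partial_b\rho\,\partial_a h\bigr)$, a contraction of $\nabla\bv$ against a \emph{symmetric} tensor, which the vorticity identity (replacing only the antisymmetric part of $\nabla\bv$) cannot act on. So the ``symmetrize and apply the vorticity identity'' maneuver does not absorb the latter, and the $\mathrm{curl}\,\bw\cdot\nabla h$ remnant from the former has no evident partner. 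You must carry out this reduction explicitly; note that the paper's own display \eqref{H04} carries a $-\epsilon^{ijk}\mathrm{e}^{\rho}\partial_i w_k\,\partial_j h$ term into its penultimate line and then drops it without comment, so you should not take for granted that the compact target form falls out.

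A smaller but real issue is \eqref{Hh2}: you correctly insert $[\mathbf{T},\partial^k]=-(\partial^k v^j)\partial_j$, and then state that the resulting remainder ``is collected into $Y^k$ as in \eqref{Hh3}.'' But the $Y^k$ of \eqref{Hh3} is precisely the Leibniz expansion of $\partial^k$ applied to the right-hand side of \eqref{Hh1}; it contains no term of the form $-\partial^k v^j\,\partial_j(\mathrm{e}^{\rho}\partial_i H^i)$, and that term does not vanish. Your $Y^k$ would therefore differ from the stated one by this commutator (the paper's own step \eqref{H06} in fact writes $\mathbf{T}\partial^k(\cdot)=\partial^k\mathbf{T}(\cdot)$ with no commutator at all). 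Reconcile this before relying on the displayed form of $Y^k$.
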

\begin{proof}
Firstly, taking derivatives of $\mathbf{T}h=0$, we have
\begin{equation}\label{H01}
\begin{split}
  \partial_t (\partial^i h)+(\bv \cdot \nabla) (\partial^i h)=&- \partial^i v^j \partial_j h
  \\
  =&- (\partial^i v^j-\partial^j v^i) \partial_j h-\partial^j v^i \partial_j h
  \\
  =& -\partial^j v^i \partial_j h-\mathrm{e}^{\rho}\epsilon^{ijk}w_k\partial_j h.
\end{split}
\end{equation}
By using \eqref{H01} and \eqref{fc0}, we derive that
\begin{equation}\label{H02}
  \begin{split}
  \mathbf{T}(\mathrm{e}^{-\rho}\partial^i h)=& -\partial^i h \mathrm{e}^{-\rho} \mathbf{T}h+ \mathrm{e}^{-\rho} \mathbf{T}(\partial^i h)
  \\
  =& \mathrm{e}^{-\rho} \partial_j v^j \partial^i h -\mathrm{e}^{-\rho} \partial^j v^i \partial_j h- \epsilon^{ijk}w_k\partial_j h.
  \end{split}
\end{equation}
Taking derivative $\partial_i$ of \eqref{H02}, we get
\begin{equation}\label{H03}
  \begin{split}
  \mathbf{T}\left\{ \partial_i (\mathrm{e}^{-\rho}\partial^i h) \right\}=& -\partial_i v^j \partial_j  (\mathrm{e}^{-\rho}\partial^i h)- \mathrm{e}^{-\rho} \partial_i \rho  (\partial^j v^i \partial_j h+\partial_j v^j \partial^i h )- \epsilon^{ijk}\partial_i w_k\partial_j h- \epsilon^{ijk} w_k\partial_{ij} h
  \\
  &+ \mathrm{e}^{-\rho}\Delta h \partial_jv^j + \mathrm{e}^{-\rho}\partial^i h \partial_i (\partial_j v^j) -\mathrm{e}^{-\rho}\partial_j h \partial^j (\partial_i v^i)- \mathrm{e}^{-\rho}   \partial^j v^i \partial_i(\partial_j h)
  \\
  =& -2\mathrm{e}^{-\rho}\partial_i v^j \partial_j  (\partial^i h)+ \mathrm{e}^{-\rho}\Delta h\cdot \mathrm{div}\bv- \mathrm{e}^{-\rho} \partial_i \rho  (\partial^j v^i \partial_j h+\partial_j v^j \partial^i h )
  \\
  &- \epsilon^{ijk}\partial_i w_k\partial_j h- \epsilon^{ijk} w_k\partial_{ij} h  + \mathrm{e}^{-\rho} \partial_i \rho  \partial_i v^j \partial_j h.
  \end{split}
\end{equation}
Therefore, we can deduce that
\begin{equation}\label{H04}
  \begin{split}
  \mathbf{T}\left\{ \mathrm{e}^{\rho} \partial_i (\mathrm{e}^{-\rho}\partial^i h) \right\}=& \mathrm{e}^{\rho}  \mathbf{T}\left\{ \partial_i (\mathrm{e}^{-\rho}\partial^i h) \right\}+ \mathrm{e}^{\rho} \partial_i (\mathrm{e}^{-\rho}\partial^i h )\mathbf{T}\rho
  \\
  =& \mathrm{e}^{\rho}  \mathbf{T}\left\{ \partial_i (\mathrm{e}^{-\rho}\partial^i h) \right\}+ \partial_i \rho\ \partial^i h \mathrm{div}\bv-\mathrm{div}\bv \Delta h
  \\
  =&-2\partial_i v^j \partial_j  (\partial^i h)- \partial_i \rho  (\partial^j v^i \partial_j h+\partial_j v^j \partial^i h )
  \\
  &- \epsilon^{ijk}\mathrm{e}^{\rho}\partial_i w_k\partial_j h- \epsilon^{ijk} \mathrm{e}^{\rho}w_k\partial_{ij} h  +  \partial_i \rho  \partial_i v^j \partial_j h+\partial_i \rho\ \partial^i h \mathrm{div}\bv
  \\
  =&-2\partial_i v^j \partial_j  \partial^i h- \partial_i \rho  \partial_j h (\partial_i v^j-\partial^j v^i)
  - \epsilon^{ijk}\mathrm{e}^{\rho}\partial_j h \partial_i w_k
  \\
  =&-2\partial_i v^j \partial_j  \partial^i h   + \epsilon^{ijm}\mathrm{e}^{\rho} w_m \partial^i \rho \partial_j h.
  \end{split}
\end{equation}
Taking derivatives $\partial^k$ of \eqref{H04} yields
\begin{equation}\label{H06}
\begin{split}
  \mathbf{T} \left\{ \partial^k( \mathrm{e}^{\rho} \partial_i H^i) \right\}=&\partial^k\left\{ -2\partial_i v^j \partial_j  \partial^i h   + \epsilon^{ijm}\mathrm{e}^{\rho} w_m \partial^i \rho \partial_j h \right\}
  \\
  =&-2\partial^k(\partial_i v^j) \partial_j  \partial^i h+Y^k,
\end{split}
\end{equation}
where
\begin{equation}\label{H07}
\begin{split}
  Y^k=& -2\partial_i v^j \partial^k\partial_j  \partial^i h + \epsilon^{ijm}\mathrm{e}^{\rho} w_m \partial^i \rho \partial^k\rho \partial_j h + \epsilon^{ijm}\mathrm{e}^{\rho} \partial^k w_m \partial^i \rho \partial_j h
  \\
  & + \epsilon^{ijm}\mathrm{e}^{\rho} w_m \partial_j h \partial_k \partial^i \rho   + \epsilon^{ijm}\mathrm{e}^{\rho} w_m \partial^i \rho \partial^k \partial_j h .
\end{split}
\end{equation}
\end{proof}
\begin{remark}
	This lemma is different from \cite[Lemma 5.3]{S2}, since we take third-order derivatives on the transport equations. We are not able to prove the lemma directly by taking spatial derivatives as in  \cite[Lemma 5.3]{S2} or  \cite[Proposition 2.1]{DLS}.
\end{remark}
\subsection{Commutator estimates and product estimates}
We start by recalling some commutator estimates and product estimates, which have been proved in references \cite{KP,LD,ML,ST,WQRough}.
\begin{Lemma}\label{jh}\cite{KP}
	Let $ a \geq 0$. Then for any scalar function $f_1, f_2$, we have
	\begin{equation*}
		\|\Lambda_x^a(f_1f_2)-(\Lambda_x^a f_1)f_2\|_{L_x^2} \lesssim \|\Lambda_x^{a-1}f_1\|_{L^{2}_x}\|\partial f_2\|_{L_x^\infty}+ \|f_1\|_{L^p_x}\|\Lambda_x^a f_2\|_{L_x^{q}},
	\end{equation*}
	where $\frac{1}{p}+\frac{1}{q}=\frac{1}{2}$.
\end{Lemma}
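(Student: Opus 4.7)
The lemma is the classical Kato--Ponce commutator estimate, so the plan is to follow the standard paraproduct proof. First, I would apply Bony's decomposition
\[
f_1 f_2 = T_{f_1} f_2 + T_{f_2} f_1 + R(f_1, f_2),
\]
where $T_g h := \sum_j S_{j-2} g \cdot \Delta_j h$ is the paraproduct and $R(g,h) := \sum_{|j-k|\le 2} \Delta_j g \, \Delta_k h$ is the resonant remainder, and split the target expression $\Lambda_x^a(f_1 f_2) - (\Lambda_x^a f_1) f_2$ into three corresponding pieces after also expanding $(\Lambda_x^a f_1) f_2$ via Bony's formula and matching terms frequency-block by frequency-block.

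Second, for the low-high paraproduct contribution $\Lambda_x^a T_{f_2} f_1 - T_{f_2} \Lambda_x^a f_1$, I would exploit that on each block $\Delta_j f_1$, the factor $S_{j-2} f_2$ is frequency-localized at scales $\lesssim 2^{j-2}$ while the symbol $|\xi|^a$ is smooth away from the origin. A Taylor expansion of $|\xi + \eta|^a$ around $\xi$ at frequency $|\xi| \simeq 2^j$ (with $|\eta| \ll 2^j$) produces a gain of one derivative on the low-frequency slot, with remainder symbols of Coifman--Meyer type. Combined with Bernstein's inequality and $\ell^2_j$-summation via Littlewood--Paley orthogonality, this yields the first term $\|\Lambda_x^{a-1} f_1\|_{L^2} \|\partial f_2\|_{L^\infty}$.

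Third, the high-low piece $\Lambda_x^a T_{f_1} f_2 - T_{\Lambda_x^a f_1} f_2$ and the resonant piece $\Lambda_x^a R(f_1, f_2) - R(\Lambda_x^a f_1, f_2)$ I would bound directly: there is no need to exploit cancellation since the $\Lambda_x^a$ derivative can simply be moved onto $f_2$ (using that $f_2$ carries the high frequency in both regimes), and then Bernstein's inequality, Hölder's inequality with $1/p + 1/q = 1/2$, and the Littlewood--Paley square-function characterizations of $L^p$ and $L^q$ deliver the remaining bound $\|f_1\|_{L^p} \|\Lambda_x^a f_2\|_{L^q}$.

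The main obstacle is the symbol-smoothness computation in the low-high step — essentially the Coifman--Meyer multiplier machinery underlying Kato--Ponce — where one must verify that the commuted operator has symbol satisfying the correct Mihlin-type bounds so that its $L^2$ boundedness combines cleanly with the derivative gain. Since this is exactly the content of the original proof, I would invoke \cite{KP} for the detailed paraproduct manipulations rather than redoing them.
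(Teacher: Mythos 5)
The paper offers no proof of this lemma; it is cited directly from \cite{KP}, so there is no in-paper argument to compare against. Your sketch is a correct account of the standard paraproduct proof of the Kato--Ponce commutator estimate: the commutator cancellation lives entirely in the low-high piece $\Lambda_x^a T_{f_2} f_1 - T_{f_2}\Lambda_x^a f_1$, where Taylor expansion of the symbol $|\zeta|^a - |\zeta-\eta|^a$ in the small variable $\eta$ transfers one derivative from $f_1$ to $f_2$ and produces the first term $\|\Lambda_x^{a-1} f_1\|_{L^2_x}\|\partial f_2\|_{L^\infty_x}$, while the high-low and resonant pieces carry no cancellation and are handled by H\"older and Bernstein to give $\|f_1\|_{L^p_x}\|\Lambda_x^a f_2\|_{L^q_x}$. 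This matches the mechanism in \cite{KP}, which performs the same dyadic decomposition of the bilinear symbol and invokes a Coifman--Meyer type multiplier theorem, just without the explicit Bony-paraproduct packaging you used.
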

\begin{Lemma}\label{cj}\cite{KP}
	Let $a \geq 0$. For any scalar function $f_1$ and $f_2$, we have
	\begin{equation*}
		\|f_1 f_2\|_{H_x^a} \lesssim \|f_1\|_{L_x^{\infty}}\| f_2\|_{H_x^a}+ \|f_2\|_{L_x^{\infty}}\| f_1 \|_{H_x^a}.
	\end{equation*}
\end{Lemma}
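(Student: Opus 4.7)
The plan is to prove Lemma \ref{cj} by Bony's paraproduct decomposition relative to the homogeneous Littlewood--Paley projectors $\Delta_j$ from \cite{BCD}. Write
$$f_1 f_2 = T_{f_1}f_2 + T_{f_2}f_1 + R(f_1,f_2),$$
with the low--high paraproducts $T_f g := \sum_{j\geq -1} S_{j-3}f \cdot \Delta_j g$, where $S_{j-3}:=\sum_{j'\leq j-4}\Delta_{j'}$, and the high--high remainder $R(f_1,f_2):=\sum_{|j-k|\leq 2}\Delta_j f_1 \cdot \Delta_k f_2$. I would then estimate the three pieces separately in the $H^a$ norm and combine.

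For the low--high paraproduct $T_{f_1}f_2$, each summand $S_{j-3}f_1 \cdot \Delta_j f_2$ has Fourier support in an annulus $\{|\xi|\simeq 2^j\}$, so $\Delta_N(T_{f_1}f_2)$ receives only $O(1)$ contributions from indices $j$ with $|j-N|\leq C$. Hence $\|\Delta_N T_{f_1}f_2\|_{L^2}\lesssim \|f_1\|_{L^\infty}\sum_{|j-N|\leq C}\|\Delta_j f_2\|_{L^2}$. Multiplying by $2^{aN}$, squaring, and summing in $N$ yields $\|T_{f_1}f_2\|_{\dot{H}^a}\lesssim \|f_1\|_{L^\infty}\|f_2\|_{\dot{H}^a}$. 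By symmetry, $\|T_{f_2}f_1\|_{\dot{H}^a}\lesssim \|f_2\|_{L^\infty}\|f_1\|_{\dot{H}^a}$.

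For the high--high remainder, each summand $\Delta_j f_1 \cdot \Delta_k f_2$ with $|j-k|\leq 2$ has spectrum contained in $\{|\xi|\lesssim 2^j\}$, so $\Delta_N R$ collects only terms with $j\geq N-C$. Using Bernstein's inequality to compare $L^p$ norms at frequency $2^N$, one obtains $\|\Delta_N R\|_{L^2}\lesssim \|f_2\|_{L^\infty}\sum_{j\geq N-C}\|\Delta_j f_1\|_{L^2}$. Multiplying by $2^{aN}$ and invoking Schur's test (admissible because $a\geq 0$, so that the kernel $2^{a(N-j)}\mathbf{1}_{N\leq j+C}$ defines a bounded convolution operator on $\ell^2$) gives $\|R(f_1,f_2)\|_{\dot{H}^a}\lesssim \|f_1\|_{\dot{H}^a}\|f_2\|_{L^\infty}$. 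Combined with the trivial $L^2$ bound $\|f_1 f_2\|_{L^2}\leq \|f_1\|_{L^\infty}\|f_2\|_{L^2}$ for the low-frequency part of the inhomogeneous norm, this completes the proof.

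The only delicate point is convergence of the high--high Schur sum, which is where the hypothesis $a\geq 0$ is used: the weight $2^{-a(j-N)}$ (for $j\geq N-C$) produces an $\ell^1$-kernel in $j-N$, giving a bounded convolution on $\ell^2$. At the endpoint $a=0$, one may bypass paraproducts altogether via the trivial pointwise product bound. No structure of $f_1, f_2$ beyond the stated norms is required, and the roles of $f_1$ and $f_2$ are interchangeable in the remainder estimate, which is what yields the symmetric right-hand side of the lemma.
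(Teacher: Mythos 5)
The paper does not prove this lemma: it is stated with a citation to Kato--Ponce [KP] and used as a black box. Your Bony paraproduct argument is a correct and standard modern proof of this Moser-type tame product estimate, and it is a genuinely different route from the original [KP] reference, which predates systematic paraproduct calculus and proceeds via Coifman--Meyer commutator estimates and interpolation. The paraproduct route is cleaner and makes it transparent which piece of the product is responsible for which term on the right-hand side.

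Two small points of care in your write-up. First, in the high--high remainder you invoke Bernstein's inequality, but it is not needed here: $\|\Delta_N(\Delta_j f_1\,\Delta_k f_2)\|_{L^2}\leq \|\Delta_j f_1\|_{L^2}\|\Delta_k f_2\|_{L^\infty}\lesssim\|\Delta_j f_1\|_{L^2}\|f_2\|_{L^\infty}$ follows from H\"older and the uniform $L^\infty$-boundedness of $\Delta_k$. Bernstein becomes necessary only if one wants remainder estimates for negative regularity indices, which is not the setting here. Second, your phrase ``admissible because $a\geq 0$'' in the Schur-test step should read $a>0$; the kernel $2^{a(N-j)}\mathbf{1}_{N\leq j+C}$ is summable in $j-N$ only for strictly positive $a$. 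You do correct this in the closing paragraph by treating $a=0$ via the pointwise bound $\|f_1f_2\|_{L^2}\leq\|f_1\|_{L^\infty}\|f_2\|_{L^2}$, so the argument is complete, but the two statements should be reconciled for consistency. With those cosmetic repairs the proof is fully rigorous.
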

\begin{Lemma}\label{jh0}\cite{AK}
Let $F(u)$ be a smooth function of $u$, $F(0)=0$ and $u \in L^\infty_x$. For any $s \geq 0$, we have
	\begin{equation*}
		\|F(u)\|_{H^s} \lesssim  \|u\|_{H^{s}}(1+ \|u\|_{L^\infty_x}).
	\end{equation*}
\end{Lemma}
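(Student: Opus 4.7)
The plan is to establish this composition estimate via the classical Moser argument based on Littlewood-Paley paralinearization. The $L^2$ part of the norm is essentially trivial: since $F(0)=0$ and $F$ is smooth, writing $F(u)=u\int_0^1 F'(\tau u)\,d\tau$ and using $|\tau u|\leq \|u\|_{L^\infty_x}$ gives $|F(u)|\lesssim_F \|u\|_{L^\infty_x}\,|u|/\|u\|_{L^\infty_x}$ pointwise (when $\|u\|_{L^\infty_x}>0$), and more importantly $\|F(u)\|_{L^2_x}\lesssim \|u\|_{L^2_x}$ with a constant depending on $F$ and $\|u\|_{L^\infty_x}$. The heart of the proof is the homogeneous estimate $\|F(u)\|_{\dot H^s}\lesssim (1+\|u\|_{L^\infty_x})\|u\|_{\dot H^s}$, which I would derive by telescoping through the Littlewood-Paley partial sums $S_ju = \sum_{j'<j}\Delta_{j'}u$, using the convention $S_{-1}u = 0$.

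Concretely, telescoping and the mean value theorem yield the paralinearization
\begin{equation*}
F(u) \;=\; \sum_{j\geq -1}\bigl(F(S_{j+1}u)-F(S_j u)\bigr) \;=\; \sum_{j\geq -1} m_j\,\Delta_j u, \qquad m_j := \int_0^1 F'\bigl(S_j u + \tau\Delta_j u\bigr)\,d\tau.
\end{equation*}
Since $|S_ju+\tau\Delta_j u|\lesssim \|u\|_{L^\infty_x}$ uniformly in $j$ and $\tau\in[0,1]$, one has the uniform multiplier bound $\|m_j\|_{L^\infty_x}\leq \|F'\|_{L^\infty([-C\|u\|_{L^\infty_x},C\|u\|_{L^\infty_x}])}$, and Bernstein's inequality together with the chain rule gives the dyadic derivative estimate $\|\nabla m_j\|_{L^\infty_x}\lesssim 2^j\|F''\|_{L^\infty}\|u\|_{L^\infty_x}$. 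Applying $\Delta_k$ to the sum, splitting into high-low and high-high interactions in the standard Bony manner, and using the almost-orthogonality of the Littlewood-Paley pieces, one obtains
\begin{equation*}
2^{ks}\|\Delta_k F(u)\|_{L^2_x} \;\lesssim\; \sum_{j} c_{k-j}\,2^{js}\|\Delta_j u\|_{L^2_x},
\end{equation*}
with a summable sequence $\{c_\ell\}$ whose $\ell^1$-norm is controlled by $\|F'\|_{L^\infty}$ on the range of $u$. Young's inequality on $\ell^2$ then closes the estimate in $\dot H^s$.

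The main subtlety, and where the explicit factor $(1+\|u\|_{L^\infty_x})$ (as opposed to a general continuous function of $\|u\|_{L^\infty_x}$) enters, is the splitting $F(u) = F'(0)\,u + R(u)$ with $R(u) := F(u) - F'(0)\,u$. The linear part contributes $\|F'(0)\,u\|_{H^s}\lesssim \|u\|_{H^s}$ with no $L^\infty$ factor. For the remainder, Taylor expansion gives $R(u) = u^2 \int_0^1(1-\tau)F''(\tau u)\,d\tau$, so $R$ is smooth with $R(0)=R'(0)=0$; applying the paralinearization above to $R$ (whose associated multipliers $\widetilde m_j$ vanish when $u\equiv 0$ and satisfy $\|\widetilde m_j\|_{L^\infty_x}\lesssim \|u\|_{L^\infty_x}$) produces $\|R(u)\|_{H^s}\lesssim \|u\|_{L^\infty_x}\|u\|_{H^s}$. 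Combining the two pieces yields the stated bound $\|F(u)\|_{H^s}\lesssim (1+\|u\|_{L^\infty_x})\|u\|_{H^s}$. The one mildly technical point—and the only real obstacle—is verifying the paraproduct bounds on $\widetilde m_j\Delta_j u$ for non-integer $s$, which requires tracking the frequency support of $\widetilde m_j$ carefully through the chain rule; this is routine but tedious, and is handled as in standard references on Moser estimates.
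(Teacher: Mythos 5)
This lemma is cited from \cite{AK} without proof, so there is no in-paper argument to compare against; what you have written is the standard Moser composition estimate via Littlewood--Paley paralinearization, which is the right machinery to invoke. The telescoping identity $F(u)=\sum_j m_j\,\Delta_j u$, the multiplier bounds, and the dyadic bookkeeping leading to Young's convolution inequality are all correct in outline.

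There is, however, a genuine gap in the final step, and it concerns precisely the quantity the lemma highlights. You split $F(u)=F'(0)u+R(u)$ with $R(0)=R'(0)=0$ and assert $\|R(u)\|_{H^s}\lesssim\|u\|_{L^\infty}\|u\|_{H^s}$ with an implied constant that does not depend on $\|u\|_{L^\infty}$. The argument you sketch only gives the linear-in-$\|u\|_{L^\infty}$ bound for the \emph{sup-norms} of the modified multipliers $\widetilde m_j$. Once $s\ge 1$ the paraproduct/high-low analysis also requires the bounds $\|\nabla^m\widetilde m_j\|_{L^\infty}\lesssim 2^{jm}$, and the constants there involve $R''$, $R'''$, \dots evaluated over the range of $u$ together with further powers of $\|u\|_{L^\infty}$ produced by the chain rule. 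These are not $O(\|u\|_{L^\infty})$ for general smooth $F$. Concretely, take $F(u)=u^3$ (so $R=F$) and $u=a\chi+\varepsilon\phi$ with $\chi$ a fixed bump, $\phi$ a bump at frequency $2^N$, and $\varepsilon 2^{Ns}\gg a$: then $\|u\|_{L^\infty}\sim a$, $\|u\|_{H^s}\sim\varepsilon 2^{Ns}$, but the cross term $3a^2\chi^2\varepsilon\phi$ already gives $\|u^3\|_{H^s}\gtrsim a^2\varepsilon 2^{Ns}\sim\|u\|_{L^\infty}^2\|u\|_{H^s}$. So a strictly linear dependence on $\|u\|_{L^\infty}$ with an $F$-only constant cannot hold. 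The way out, and the reading that makes both the lemma and the places it is applied correct, is that the implicit constant in $\lesssim$ is permitted to depend on $\|u\|_{L^\infty}$ (this is harmless throughout the paper, where the $L^\infty$ bound is fixed). Under that reading, though, the factor $(1+\|u\|_{L^\infty})$ is decorative and your split $F=F'(0)u+R$ buys nothing: the plain Moser estimate $\|F(u)\|_{H^s}\le C(s,F,\|u\|_{L^\infty})\|u\|_{H^s}$ already suffices, and the ``routine but tedious'' frequency bookkeeping you defer is exactly where the non-linear dependence hides, so it should not be waved away if the linear factor is to be taken literally.
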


\begin{Lemma}\label{ps}\cite{ST}
	Suppose that $0 \leq r, r' < \frac{3}{2}$ and $r+r' > \frac{3}{2}$. Then for any scalar function $f_1$ and $f_2$, the following estimate holds:
	\begin{equation*}
		\|f_1f_2\|_{H^{r+r'-\frac{3}{2}}(\mathbb{R}^3)} \leq C_{r,r'} \|f_1\|_{H^{r}(\mathbb{R}^3)}\|f_2\|_{H^{r'}(\mathbb{R}^3)}.
	\end{equation*}
	If $-r \leq r' \leq r$ and $r>\frac{3}{2}$ then
\begin{equation*}
		\|f_1 f_2\|_{H^{r'}(\mathbb{R}^3)} \leq C_{r,r'} \|f_1\|_{H^{r}(\mathbb{R}^3)}\|f_2\|_{H^{r'}(\mathbb{R}^3)}.
	\end{equation*}
\end{Lemma}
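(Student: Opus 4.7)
The plan is to establish both inequalities via Bony's paraproduct decomposition
\[
f_1 f_2 \;=\; \Pi_{f_1} f_2 \;+\; \Pi_{f_2} f_1 \;+\; R(f_1,f_2),
\]
with $\Pi_g h := \sum_j S_{j-1} g\, \Delta_j h$ and $R(f,g) := \sum_{|j-k|\le 2}\Delta_j f\, \Delta_k g$, and to control each piece through Littlewood--Paley analysis together with Bernstein's inequality and Sobolev embeddings on $\mathbb{R}^3$.

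For the low--high paraproduct in the first inequality, Bernstein and Cauchy--Schwarz yield
\[
\|S_{k-1} f_2\|_{L^\infty} \;\le\; \textstyle\sum_{j\le k-2} 2^{(3/2 - r')j}\,(2^{jr'}\|\Delta_j f_2\|_{L^2}) \;\lesssim\; 2^{(3/2-r')k}\|f_2\|_{H^{r'}},
\]
where the final step uses $r'<3/2$ to sum the geometric tail. Since $\Delta_l(\Pi_{f_2} f_1)$ is frequency-localized at scale $2^l\approx 2^k$, this yields $\|\Delta_l \Pi_{f_2} f_1\|_{L^2}\lesssim 2^{(3/2-r')l}\|f_2\|_{H^{r'}}\|\Delta_l f_1\|_{L^2}$. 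Weighting by $2^{(r+r'-3/2)l}$ and taking $\ell^2$ in $l$ produces the desired $H^{r+r'-3/2}$-bound, and $\Pi_{f_1} f_2$ is treated symmetrically using $r<3/2$.

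The main obstacle is the resonant term $R(f_1,f_2)$, because the absence of a frequency gap prevents placing either factor in $L^\infty$. I would write
\[
\Delta_l R(f_1,f_2) \;=\; \textstyle\sum_{j\ge l-C}\Delta_l\bigl(\Delta_j f_1\,\tilde{\Delta}_j f_2\bigr),
\]
and apply Bernstein $\|u\|_{L^2}\lesssim 2^{3j/2}\|u\|_{L^1}$ to the product (whose Fourier support lies in a ball of radius $\lesssim 2^j$), together with H\"older $\|\Delta_j f_1\,\tilde{\Delta}_j f_2\|_{L^1}\le \|\Delta_j f_1\|_{L^2}\|\tilde{\Delta}_j f_2\|_{L^2}$. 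Setting $a_j := 2^{jr}\|\Delta_j f_1\|_{L^2}$ and $b_j := 2^{jr'}\|\tilde{\Delta}_j f_2\|_{L^2}$, this rearranges into
\[
2^{(r+r'-3/2)l}\|\Delta_l R\|_{L^2} \;\lesssim\; \textstyle\sum_{j\ge l-C} 2^{-(r+r'-3/2)(j-l)}\, a_j b_j,
\]
with a geometrically decaying kernel in $j-l$ by virtue of $r+r'>3/2$. Young's inequality on dyadic sequences (or explicit Cauchy--Schwarz in $j$ followed by Fubini) controls the $\ell^2_l$-norm by $\|a_j b_j\|_{\ell^2}\le \|a\|_{\ell^\infty}\|b\|_{\ell^2}\le\|a\|_{\ell^2}\|b\|_{\ell^2}=\|f_1\|_{H^r}\|f_2\|_{H^{r'}}$; the strict inequality $r+r'>3/2$ is essential for convergence.

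For the second inequality, the hypothesis $r>3/2$ yields the embedding $H^r\hookrightarrow L^\infty$. When $0\le r'\le r$, a standard Kato--Ponce/Moser estimate
\[
\|f_1 f_2\|_{H^{r'}} \;\lesssim\; \|f_1\|_{L^\infty}\|f_2\|_{H^{r'}}\;+\;\|f_2\|_{L^\infty}\|f_1\|_{H^{r'}}
\]
combined with $\|f_2\|_{L^\infty}\lesssim\|f_2\|_{H^r}$ and the monotonicity $\|f_1\|_{H^{r'}}\lesssim \|f_1\|_{H^r}$ (valid since $0\le r'\le r$) finishes the argument. The range $-r\le r'<0$ is handled by duality: the pairing $\langle f_1 f_2,g\rangle=\langle f_2, f_1 g\rangle$ reduces the claim to the positive-index bound $\|f_1 g\|_{H^{-r'}}\lesssim \|f_1\|_{H^r}\|g\|_{H^{-r'}}$, which holds by the case just established because $0<|r'|\le r$.
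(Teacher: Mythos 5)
The paper itself does not prove this lemma; it simply cites Smith--Tataru \cite{ST}, so there is no internal proof to compare against. Your paraproduct argument for the first inequality is correct and is the standard route: the low--high pieces are handled by Bernstein on the low-frequency factor (convergence of the geometric tail needs exactly $r,r'<\tfrac32$), and the resonant piece is closed by the exponential gain $2^{-(r+r'-3/2)(j-l)}$ from the output localization plus Young's inequality, which needs exactly $r+r'>\tfrac32$. The bookkeeping there is fine.

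There is, however, a genuine gap in your treatment of the second inequality in the range $0\le r'\le r$. You invoke the Moser estimate
\begin{equation*}
	\|f_1 f_2\|_{H^{r'}} \lesssim \|f_1\|_{L^\infty}\|f_2\|_{H^{r'}} + \|f_2\|_{L^\infty}\|f_1\|_{H^{r'}},
\end{equation*}
and then bound the second term using $\|f_2\|_{L^\infty}\lesssim \|f_2\|_{H^r}$. But the hypothesis only places $f_2$ in $H^{r'}$, and when $r'<\tfrac32$ the function $f_2$ need not belong to $L^\infty$ at all; and even if one accepted the bound, the resulting quantity $\|f_2\|_{H^r}\|f_1\|_{H^{r}}$ is \emph{not} controlled by the target $\|f_1\|_{H^r}\|f_2\|_{H^{r'}}$ unless $r'=r$, since $\|f_2\|_{H^r}$ is the stronger norm when $r'<r$. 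So the Moser route only establishes the borderline case $r'=r$, not the full claimed range $0\le r'\le r$.

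The repair is to run the same Bony decomposition you already use in the first part, exploiting the asymmetry. Write $f_1 f_2=\Pi_{f_1}f_2+\Pi_{f_2}f_1+R(f_1,f_2)$. The piece $\Pi_{f_1}f_2$ is easy because $\|S_{k-1}f_1\|_{L^\infty}\lesssim\|f_1\|_{H^r}$ uniformly in $k$ (this is where $r>\tfrac32$ enters), giving directly $\|\Pi_{f_1}f_2\|_{H^{r'}}\lesssim\|f_1\|_{H^r}\|f_2\|_{H^{r'}}$. For $\Pi_{f_2}f_1$, use $\|S_{k-1}f_2\|_{L^\infty}\lesssim 2^{(3/2-r')_+k}\|f_2\|_{H^{r'}}$, and absorb the loss $2^{(3/2-r')k}$ into the $2^{rk}$ decay of $\|\Delta_k f_1\|_{L^2}$; this closes precisely because $r>\tfrac32\ge\tfrac32-r'+r'$, i.e.\ $\|f_1\|_{\dot H^{3/2}}\lesssim\|f_1\|_{H^r}$. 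For $R(f_1,f_2)$, applying Bernstein at the output scale $2^l$ gives, as in your first part, $2^{r'l}\|\Delta_l R\|_{L^2}\lesssim\sum_{j\ge l-C}2^{(3/2-r)l}2^{-(r+r')(j-l)}a_jb_j$, and the positive residual exponent $r-\tfrac32$ and the decaying kernel (with $r+r'>0$) close the sum. Your duality reduction for $-r\le r'<0$ is fine as stated; it simply invokes the $0<-r'\le r$ case, so once that case is fixed the negative-index range follows.
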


\begin{Lemma}\label{lpe}\cite{WQRough}
	Let $0 \leq \alpha <1 $. Then
	\begin{equation*}
		\|\Lambda_x^\alpha(f_1f_2)\|_{L^{2}_x(\mathbb{R}^3)} \lesssim \|f_1\|_{\dot{B}^{\alpha}_{\infty,2}(\mathbb{R}^3)}\|f_2\|_{L^2_x(\mathbb{R}^3)}+ \|f_1\|_{L^\infty_x(\mathbb{R}^3)}\|f_2\|_{\dot{H}^\alpha_x(\mathbb{R}^3)}.
	\end{equation*}
\end{Lemma}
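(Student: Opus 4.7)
The plan is to use Bony's paraproduct decomposition
\[
 f_1 f_2 \;=\; T_{f_1} f_2 \;+\; T_{f_2} f_1 \;+\; R(f_1,f_2),
\]
with $T_{g} h := \sum_{j} S_{j-2} g\cdot \Delta_j h$ (where $S_{j-2}=\sum_{k\le j-3}\Delta_k$ is the standard low-frequency projector) and $R(f_1,f_2):= \sum_{|j-j'|\le 1}\Delta_j f_1\,\Delta_{j'} f_2$, and to control each piece in $\dot H^\alpha$ (which is equivalent to $\Lambda_x^\alpha L^2$) by one of the two terms on the right-hand side of the stated inequality. Both paraproducts are annulus-localized at scale $2^j$, so I would handle them by classical almost-orthogonality; the remainder is only supported in the ball $|\xi|\lesssim 2^j$, and handling it is the main technical step.

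For the low-high paraproduct $T_{f_1}f_2$, the $j$-th summand has Fourier support in $|\xi|\sim 2^j$, so almost-orthogonality together with $\|S_{j-2} f_1\|_{L^\infty}\lesssim \|f_1\|_{L^\infty}$ gives
\[
 \|T_{f_1} f_2\|_{\dot H^\alpha}^2 \;\lesssim\; \sum_j 2^{2j\alpha}\|S_{j-2}f_1\cdot \Delta_j f_2\|_{L^2}^2 \;\lesssim\; \|f_1\|_{L^\infty}^2\,\|f_2\|_{\dot H^\alpha}^2.
\]
The symmetric paraproduct $T_{f_2}f_1$ is handled in the same way, except one now places $S_{j-2}f_2\in L^2$ and $\Delta_j f_1\in L^\infty$:
\[
 \|T_{f_2} f_1\|_{\dot H^\alpha}^2 \;\lesssim\; \sum_j 2^{2j\alpha}\|\Delta_j f_1\|_{L^\infty}^2\|S_{j-2} f_2\|_{L^2}^2 \;\le\; \|f_2\|_{L^2}^2\,\|f_1\|_{\dot B^\alpha_{\infty,2}}^2,
\]
using $\|S_{j-2} f_2\|_{L^2}\le \|f_2\|_{L^2}$ and the definition of the Besov norm.

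The remainder $R(f_1,f_2)$ is where the argument is most delicate, because its summands $\Delta_j f_1\,\Delta_{j'}f_2$ lose annulus-localization and are only supported in the ball $|\xi|\lesssim 2^j$. I would use Bernstein's inequality to gain the $\alpha$-derivative at cost $2^{j\alpha}$ and then H\"older to separate the two factors:
\[
 \|\Delta_j f_1\cdot \Delta_{j'} f_2\|_{\dot H^\alpha}\;\lesssim\; 2^{j\alpha}\|\Delta_j f_1\|_{L^\infty}\|\Delta_{j'} f_2\|_{L^2}.
\]
Summing in $j$ by the triangle inequality in $\dot H^\alpha$ (which is valid for the range $0\le\alpha<1$ under consideration) and then using Cauchy--Schwarz in the $j$-variable produces exactly the $\ell^2$-Besov norm of $f_1$:
\[
 \|R(f_1,f_2)\|_{\dot H^\alpha}\;\lesssim\; \sum_j 2^{j\alpha}\|\Delta_j f_1\|_{L^\infty}\|\tilde\Delta_j f_2\|_{L^2} \;\lesssim\; \|f_1\|_{\dot B^\alpha_{\infty,2}}\|f_2\|_{L^2}.
\]
Combining the three contributions and using $\|\Lambda_x^\alpha F\|_{L^2}\simeq \|F\|_{\dot H^\alpha}$ closes the estimate. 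The main obstacle is precisely the $R$ term: because the annulus structure is lost one is forced to sum in $\ell^1$ rather than $\ell^2$, and it is Cauchy--Schwarz that converts this back into the $\ell^2$-Besov norm $\dot B^\alpha_{\infty,2}$ on $f_1$ rather than a weaker $\ell^1$-version.
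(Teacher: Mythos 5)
Your proof is correct. Note first that the paper does not prove this lemma at all — it is stated with a citation to \cite{WQRough} (Q.~Wang, ``Rough Solutions of Einstein vacuum equations in CMCSH gauge''), so there is no in-paper argument to compare against. That said, your Bony-paraproduct argument is exactly the standard route, and it mirrors in spirit what the paper itself does for the closely related Lemma~\ref{LPE}, which is also proved by splitting into the two paraproducts plus a remainder and treating the remainder with Bernstein.

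All three pieces check out: the two paraproducts are annulus-localized so almost-orthogonality gives the square-function characterization of $\dot H^\alpha$ directly, and the remainder is ball-localized so you pay $2^{j\alpha}$ by Bernstein (here is where $\alpha\ge 0$ is genuinely used), sum in $\ell^1$ by the triangle inequality, and recover the $\ell^2$ Besov norm by Cauchy--Schwarz. One small remark: your parenthetical ``which is valid for the range $0\le\alpha<1$ under consideration'' attached to the triangle inequality is a red herring — the triangle inequality holds in any normed space irrespective of $\alpha$, and in fact your argument nowhere uses $\alpha<1$. The upper restriction in the lemma statement is inherited from the cited source; your proof establishes the estimate for all $\alpha\ge 0$. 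The only place the structure of $\alpha$ enters is in Bernstein's inequality for the ball-localized remainder, which requires $\alpha\ge 0$.
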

\begin{Lemma}\label{wql}\cite{WQRough}
	Let $0 < \alpha <1 $. Then
	\begin{equation*}
		\|\Lambda_x^\alpha(f_1f_2f_3)\|_{L^{2}_x(\mathbb{R}^3)} \lesssim \|f_i\|_{H^{1+\alpha}_x(\mathbb{R}^3)}\textstyle{\prod_{j\neq i}}\|f_j\|_{H^1_x(\mathbb{R}^3)}.
	\end{equation*}
\end{Lemma}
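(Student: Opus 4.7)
The plan is to prove the estimate by a double application of Lemma \ref{ps}, the fractional Leibniz rule for inhomogeneous Sobolev spaces in $\mathbb{R}^3$. Since $f_1 f_2 f_3$ is symmetric in its three arguments, fix $i=1$ without loss of generality, so the target becomes
\[
\|\Lambda_x^\alpha(f_1 f_2 f_3)\|_{L^2_x} \lesssim \|f_1\|_{H^{1+\alpha}_x}\|f_2\|_{H^1_x}\|f_3\|_{H^1_x}.
\]

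First I would handle the pair $f_2 f_3$. Applying the first form of Lemma \ref{ps} with $r=r'=1$ (which satisfies $r,r'<3/2$ and $r+r'=2>3/2$), one obtains
\[
\|f_2 f_3\|_{H^{1/2}_x} \lesssim \|f_2\|_{H^1_x}\|f_3\|_{H^1_x}.
\]
Next I would combine this bound with $f_1$ by a second application of Lemma \ref{ps}, now taking $r=1+\alpha$ and $r'=1/2$. In the range $\alpha<1/2$ one has $r,r'<3/2$ and $r+r'-3/2=\alpha$, so the first form applies and gives
\[
\|f_1\cdot(f_2 f_3)\|_{H^\alpha_x} \lesssim \|f_1\|_{H^{1+\alpha}_x}\|f_2 f_3\|_{H^{1/2}_x}.
\]
Chaining these two steps and using $\|\Lambda_x^\alpha g\|_{L^2_x}\leq \|g\|_{H^\alpha_x}$ settles the range $\alpha\in(0,1/2)$, which already covers the applications of the lemma later in the paper (where the exponent is $\alpha=s_0-2\in(0,1/2]$ since $s_0\leq s\leq 5/2$).

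The main obstacle is extending to $\alpha\in[1/2,1)$, since then $1+\alpha\geq 3/2$ and the first form of Lemma \ref{ps} no longer accommodates the outer product. The natural remedy is to switch to the second form of Lemma \ref{ps}, which applies precisely when $r=1+\alpha>3/2$ and yields $\|f_1 g\|_{H^\alpha_x}\lesssim \|f_1\|_{H^{1+\alpha}_x}\|g\|_{H^\alpha_x}$; the task then reduces to the bilinear estimate for $\|f_2 f_3\|_{H^\alpha_x}$, which must be proved by a Littlewood-Paley/Bony paraproduct decomposition combined with Bernstein inequalities and the Sobolev embedding $H^1_x(\mathbb{R}^3)\hookrightarrow L^6_x$, splitting into cases according to which factor carries the highest frequency. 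The borderline $\alpha=1/2$ is the most delicate point, because $H^{3/2}$ just fails to embed into $L^\infty$, and is handled by $\epsilon$-interpolation between the two adjacent cases.
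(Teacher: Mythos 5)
Your chaining argument for $\alpha\in(0,\tfrac12)$ is correct: both applications of Lemma~\ref{ps} satisfy its hypotheses, and you correctly observe that this range covers the paper's actual uses (there $\alpha=s_0-2\in(0,\tfrac12)$). Since the paper cites this lemma from \cite{WQRough} and gives no proof of its own, I can only judge your argument on its merits.

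The genuine gap is in your proposed treatment of $\alpha\in[\tfrac12,1)$. You reduce the problem to the bilinear estimate $\|f_2 f_3\|_{H^\alpha_x}\lesssim\|f_2\|_{H^1_x}\|f_3\|_{H^1_x}$ and suggest establishing it by a paraproduct/Bernstein argument. That estimate is in fact \emph{false} for $\alpha>\tfrac12$, as a scaling test shows: take $f_2=f_3=g(\cdot/\lambda)$ with $g$ a fixed Schwartz function. In $\mathbb{R}^3$ one has $\|f_j\|_{\dot H^1}\simeq\lambda^{1/2}$ (and the $L^2$ part is negligible as $\lambda\to 0$), while $\|f_2f_3\|_{\dot H^\alpha}\simeq\lambda^{3/2-\alpha}$, so the ratio of the two sides is $\simeq\lambda^{1/2-\alpha}\to\infty$. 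The bilinear map $H^1\times H^1\to H^\beta$ in $\mathbb{R}^3$ saturates precisely at $\beta=\tfrac12$, which is exactly what the first clause of Lemma~\ref{ps} with $r=r'=1$ gives; no refinement of the Littlewood--Paley argument can produce $\beta>\tfrac12$. So the strategy of isolating $f_1$ and asking the full $H^\alpha$ regularity of the remaining pair cannot work.

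The deeper point is that any proof of the trilinear statement for $\alpha\geq\tfrac12$ must treat the triple product directly (e.g.\ via a trilinear paraproduct decomposition), rather than peeling off one factor and chaining bilinear estimates: the chaining step discards the crucial fact that when one factor carries all the high frequency, the other two only need to be controlled at comparatively low frequency. You should also note that the lemma as printed, with a single free index $i$ on the right and read as ``for every $i$,'' is itself false for $\alpha>\tfrac12$ by the same kind of scaling (take $f_1$ fixed smooth, $f_2=f_3$ oscillating at scale $\lambda^{-1}$, and pick $i=1$); it must be understood with an implicit sum over $i$ as in the source \cite{WQRough}. Flagging that ambiguity would have been preferable to proposing a remedy that rests on a false sub-estimate.
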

\begin{Lemma}\cite{ML}\label{Miao}
Given $p \in [1,\infty]$
such that $p \geq m'$ with $m'$ the conjugate exponent
of $m$. Let $f, g$ and $\Phi$ belong to the suitable functional spaces. Then
\begin{equation*}
  \| \Phi * (hf)-h *(\Phi f) \|_{L^p_x} \lesssim \| x \Phi \|_{L^1} \| \nabla f\|_{L^\infty_x} \| h \|_{L^p}.
\end{equation*}
\end{Lemma}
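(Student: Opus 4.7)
The statement is the standard convolution commutator estimate, so the plan is to rewrite the difference as a single convolution whose kernel carries the increment of $f$, bound that increment pointwise by the gradient, and then close with Young's inequality. Under the natural reading of the commutator as $[\Phi\ast,f]h := \Phi\ast(fh)-f(\Phi\ast h)$, the first step is the algebraic identity
\begin{equation*}
\bigl(\Phi\ast(fh)\bigr)(x) - f(x)\bigl(\Phi\ast h\bigr)(x) \;=\; \int \Phi(y)\,\bigl[f(x-y)-f(x)\bigr]\,h(x-y)\,dy,
\end{equation*}
which follows by simply pulling the constant $f(x)$ inside the convolution integral. This rewriting is the whole point: it converts a difference of two convolutions into one convolution whose kernel is $\Phi(y)\bigl[f(x-y)-f(x)\bigr]$, concentrating all the cancellation into the $f$-increment.

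Next, I would invoke the fundamental theorem of calculus in the form
\begin{equation*}
f(x-y)-f(x) \;=\; -\int_0^1 \nabla f(x-ty)\cdot y\,dt,
\end{equation*}
which yields the pointwise bound $|f(x-y)-f(x)|\le |y|\,\|\nabla f\|_{L^\infty_x}$. Inserting this into the identity above gives the pointwise majorant
\begin{equation*}
\bigl|\Phi\ast(fh)(x) - f(x)(\Phi\ast h)(x)\bigr| \;\le\; \|\nabla f\|_{L^\infty_x}\;\bigl(|{\cdot}\,\Phi({\cdot})|\ast |h|\bigr)(x),
\end{equation*}
where the kernel $|y|\,|\Phi(y)|$ has $L^1$-norm equal to $\|x\Phi\|_{L^1}$.

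Finally, I would apply Young's convolution inequality in the form $L^1 \ast L^p \hookrightarrow L^p$ to the right-hand side, which yields
\begin{equation*}
\bigl\|\Phi\ast(fh)-f(\Phi\ast h)\bigr\|_{L^p_x} \;\le\; \|\nabla f\|_{L^\infty_x}\,\|x\Phi\|_{L^1}\,\|h\|_{L^p_x},
\end{equation*}
exactly the claimed inequality. The hypothesis relating $p$ to the conjugate exponent $m'$ is there precisely to guarantee admissibility of the Young triple used here, which is the only place where the exponent restriction plays a role. There is no serious obstacle in the argument; the only delicate point is the bookkeeping of the Young exponents to ensure we land in $L^p$ and no Lebesgue space other than $L^1$ is needed for $|x|\Phi$, and this is exactly what the stated hypotheses provide.
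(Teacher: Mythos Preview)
The paper does not prove this lemma; it is quoted from \cite{ML} without argument. Your proof is the standard and correct one for the convolution commutator $[\Phi\ast,f]h=\Phi\ast(fh)-f(\Phi\ast h)$: the algebraic identity, the mean-value bound $|f(x-y)-f(x)|\le |y|\,\|\nabla f\|_{L^\infty}$, and Young's inequality $L^1\ast L^p\hookrightarrow L^p$ are exactly the three steps, and nothing more is needed. One minor remark: in the version you actually prove, only the Young pair $(1,p,p)$ is used, so no restriction on $p$ is required; the hypothesis $p\ge m'$ in the stated lemma is a remnant of a more general formulation in \cite{ML} (where $\Phi$ may lie in $L^m$ rather than $x\Phi\in L^1$) and plays no role in the case at hand.
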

\begin{Lemma}\label{yinli}\cite{LD}
	Let $0<a <1$. Then, for any functions $f_1$ and $f_2$, the following estimates
	\begin{equation}\label{YL0}
		\| \Lambda_x^{a}(f_1f_2)- f_1 \Lambda_x^{a} f_2- f_2 \Lambda_x^{a} f_1 \|_{L^2_x} \lesssim \| \Lambda_x^{a} f_1 \|_{L^2_x} \| f_2 \|_{L^\infty_x}.
	\end{equation}
\end{Lemma}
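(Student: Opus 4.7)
The estimate is a Kenig--Ponce--Vega-type fractional Leibniz commutator. My plan is to prove it using Bony's paraproduct decomposition. Let $\Delta_j$ denote the homogeneous Littlewood--Paley projectors and $S_j=\sum_{k\leq j-1}\Delta_k$. Define the paraproduct $T_g h:=\sum_{j} S_{j-3}g\cdot\Delta_j h$ and the remainder $R(g,h):=\sum_{|j-k|\leq 2}\Delta_j g\cdot\Delta_k h$, so that $f_1 f_2 = T_{f_1}f_2 + T_{f_2}f_1 + R(f_1,f_2)$. Applying the same decomposition to $f_1 \Lambda_x^a f_2$ and $f_2\Lambda_x^a f_1$ and subtracting, the left-hand side reorganises as
\begin{equation*}
\begin{split}
\Lambda_x^a(f_1f_2) - f_1\Lambda_x^a f_2 - f_2\Lambda_x^a f_1 = &\ [\Lambda_x^a, T_{f_1}]f_2 + [\Lambda_x^a, T_{f_2}]f_1 \\
&\ -T_{\Lambda_x^a f_1}f_2 - T_{\Lambda_x^a f_2}f_1 \\
&\ +\Lambda_x^a R(f_1,f_2) - R(\Lambda_x^a f_1,f_2) - R(f_1,\Lambda_x^a f_2).
\end{split}
\end{equation*}

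I would then estimate each piece separately. For the paraproduct commutator $[\Lambda_x^a, T_{f_1}]f_2 = \sum_j [\Lambda_x^a, S_{j-3}f_1]\Delta_j f_2$, since $\Delta_j f_2$ is frequency-localised at scale $2^j$ and $\widehat{S_{j-3}f_1}$ is supported at scale $\le 2^{j-2}$, a kernel-convolution argument in the spirit of Lemma \ref{Miao} shows each summand is bounded by $2^{aj}\|f_1\|_{L^\infty_x}\|\Delta_j f_2\|_{L^2_x}$; near-orthogonality then yields the bound $\|f_1\|_{L^\infty_x}\|\Lambda_x^a f_2\|_{L^2_x}$. The low-frequency paraproducts $T_{\Lambda_x^a f_i}f_j$ are controlled directly by $\|\Lambda_x^a f_i\|_{L^2_x}\|f_j\|_{L^\infty_x}$ via Bernstein. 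For the remainder difference, the restriction $0<a<1$ is crucial for $\ell^2$ summability: each term $\Delta_j f_1\Delta_k f_2$ in $R$ has Fourier support in a ball of radius $\sim 2^{\max(j,k)}$, so that the fractional operator $\Lambda_x^a$ costs at worst $2^{a\max(j,k)}$ and the telescoping difference $\Lambda_x^a R(f_1,f_2) - R(\Lambda_x^a f_1,f_2) - R(f_1,\Lambda_x^a f_2)$ sums absolutely without logarithmic loss.

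The main technical obstacle will be controlling the paraproduct commutator sharply, that is, \emph{without} differentiating $f_1$. A direct application of the Kato--Ponce estimate (Lemma \ref{jh}) would produce $\|\nabla f_1\|_{L^\infty_x}\|\Lambda_x^{a-1}f_2\|_{L^2_x}$, which is too strong for our needs. The paraproduct structure avoids this: the Fourier-support gap between $\widehat{S_{j-3}f_1}$ and $\widehat{\Delta_j f_2}$ makes $[\Lambda_x^a, S_{j-3}f_1]\Delta_j f_2$ behave like a smooth multiplier of order $2^{aj}$ acting on a single $L^\infty_x$ factor of $f_1$, so that only $\|f_1\|_{L^\infty_x}$ appears in the bound. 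Symmetrising in $f_1 \leftrightarrow f_2$ and assembling the estimates closes the proof.
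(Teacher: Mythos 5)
The paper cites this estimate from [LD] without supplying a proof, so there is no internal argument to compare against; what follows is a review of your proposal on its own terms. Your paraproduct decomposition is correct as an identity, but the way you then estimate the pieces does not yield the lemma. The bound you assert for the commutator $[\Lambda_x^a,T_{f_1}]f_2$, namely $\|f_1\|_{L^\infty_x}\|\Lambda_x^a f_2\|_{L^2_x}$, sits on the wrong side of the asymmetry, and after the final step of ``symmetrising in $f_1\leftrightarrow f_2$'' the most you can conclude is $\|\Lambda_x^a f_1\|_{L^2_x}\|f_2\|_{L^\infty_x}+\|f_1\|_{L^\infty_x}\|\Lambda_x^a f_2\|_{L^2_x}$. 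That is genuinely weaker than the one-sided estimate in the lemma: take $f_1$ a fixed bump and $f_2=\mathrm{e}^{iNx}\phi$ with $\phi$ a bump, then the extraneous term $\|f_1\|_{L^\infty_x}\|\Lambda_x^a f_2\|_{L^2_x}\sim N^a$ while the claimed right-hand side $\|\Lambda_x^a f_1\|_{L^2_x}\|f_2\|_{L^\infty_x}$ stays $O(1)$, so the first term cannot be absorbed into the second. The claim that $T_{\Lambda_x^a f_1}f_2$ is ``controlled directly via Bernstein'' by the same right-hand side also does not survive inspection in isolation: after almost-orthogonality one is left with $\sum_j\|S_{j-3}(\Lambda_x^a f_1)\|_{L^2_x}^2\|\Delta_j f_2\|_{L^\infty_x}^2$, and neither $\sum_j\|S_{j-3}(\Lambda_x^a f_1)\|_{L^2_x}^2$ nor $\sum_j\|\Delta_j f_2\|_{L^\infty_x}^2$ closes.

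The conceptual mistake is the remark that the commutator should be estimated ``without differentiating $f_1$'' --- that is exactly backwards, and it is also why the two problematic terms above were doomed when treated separately. The right move is to combine $[\Lambda_x^a,T_{f_1}]f_2 - T_{\Lambda_x^a f_1}f_2$ at each frequency $j$: its symbol is $|\xi_1+\xi_2|^a-|\xi_2|^a-|\xi_1|^a$ with $|\xi_1|\ll|\xi_2|\sim 2^j$, and the $|\xi_1|^a$ term (the one you could not handle) cancels against the subtracted paraproduct, leaving only the first-order mean-value contribution $\sim 2^{(a-1)j}\nabla S_{j-3}f_1\cdot\Delta_j f_2$ plus better remainders. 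So the commutator \emph{does} land a full derivative on $f_1$, multiplied by the gain $2^{(a-1)j}$. The standing hypothesis $0<a<1$ then enters not just in the high-high remainder but already here: $\sum_j 2^{2(a-1)j}\|\nabla S_{j-3}f_1\|_{L^2_x}^2 = \sum_k 2^{2k}\|\Delta_k f_1\|_{L^2_x}^2\sum_{j\ge k+4}2^{2(a-1)j}\lesssim\sum_k 2^{2ak}\|\Delta_k f_1\|_{L^2_x}^2=\|\Lambda_x^a f_1\|^2_{\dot H^0}$ precisely because $a-1<0$, and this pushes the entire $\ell^2_j$ weight onto $f_1$, leaving only $\|\Delta_j f_2\|_{L^\infty_x}\le\|f_2\|_{L^\infty_x}$ on the other factor. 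An analogous weighted (Young-type) summation is also what closes the high-high remainder without a logarithmic loss; ``near-orthogonality'' alone does not.
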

\begin{remark}
	Let $0<\epsilon<\frac16$. By Lemma \ref{LD}, we have
	\begin{equation}\label{YL2}
		\| \Lambda_x^{\frac12+\epsilon}(f_1f_2)- f_1 \Lambda_x^{\frac12+\epsilon} f_2- f_2 \Lambda_x^{\frac12+\epsilon} f_1 \|_{L^2_x} \lesssim \| \Lambda_x^{\frac12+\epsilon} f_1 \|_{L^2_x} \| f_2 \|_{L^\infty_x}.
	\end{equation}
	So we get
	\begin{equation*}
		\begin{split}
			\| \Lambda_x^{\frac12+\epsilon}(f_1f_2) \|_{L^2_x} \lesssim & \| f_1 \Lambda_x^{\frac12+\epsilon} f_2\|_{L^2_x} + \| f_2 \Lambda_x^{\frac12+\epsilon} f_1 \|_{L^2_x} + \| \Lambda_x^{\frac12+\epsilon} f_1 \|_{L^2_x} \| f_2 \|_{L^\infty_x}
			\\
			\lesssim & \| f_1 \|_{L^p_x} \| \Lambda_x^{\frac12+\epsilon} f_2\|_{L^q_x} + \| f_2 \|_{L^\infty_x} \|\Lambda_x^{\frac12+\epsilon} f_1 \|_{L^2_x}
			\\
			\lesssim & \| f_1 \|_{H^{\frac12+\epsilon}_x} \| \Lambda_x^{\frac12+\epsilon} f_2\|_{H^{\frac12-\epsilon}_x} + \| f_2 \|_{L^\infty_x} \|\Lambda_x^{\frac12+\epsilon} f_1 \|_{L^2_x}
			\\
			\lesssim & \| f_1 \|_{H^{\frac12+\epsilon}_x} \|  f_2\|_{H^{1}_x} + \| f_2 \|_{L^\infty_x} \| f_1 \|_{H^{\frac12+\epsilon}_x}.
		\end{split}
	\end{equation*}
	Here we take $p=\frac{3}{1-\epsilon}$ and $q=\frac{6}{1+2\epsilon}$.

	Take $f_1= \partial \bw, f_2=\partial \bv$,
	\begin{equation}\label{YL3}
		\| \Lambda_x^{\frac12+\epsilon}(\partial \bw \partial \bv) \lesssim \| \partial \bw \|_{H^{\frac12+\epsilon}_x} \| \partial \bv \|_{H^{\frac32}_x}+ \| \partial \bv \|_{L^\infty_x} \| \partial \bw \|_{H^{\frac12+\epsilon}_x} .
	\end{equation}
	Similarly, if we take $f_1= \partial \bw, f_2=\partial \bv$ or $\partial \rho$, then
	\begin{equation}\label{YL4}
		\begin{split}
			& \| \Lambda_x^{\frac12+\epsilon}(\partial^2 h \partial \bv) \lesssim \| \partial^2 h \|_{H^{\frac12+\epsilon}_x} \| \partial \bv \|_{H^{\frac32}_x}+ \| \partial \bv \|_{L^\infty_x} \| \partial^2 h \|_{H^{\frac12+\epsilon}_x} ,
			\\
			& \| \Lambda_x^{\frac12+\epsilon}(\partial^2 h \partial \rho) \lesssim \| \partial^2 h \|_{H^{\frac12+\epsilon}_x} \| \partial \rho \|_{H^{\frac32}_x} + \| \partial \bv \|_{L^\infty_x} \| \partial^2 h \|_{H^{\frac12+\epsilon}_x} .
		\end{split}
	\end{equation}
\end{remark}
\subsection{Useful lemmas}
Let us introduce some useful product estimates and commutator estimates, which play a crucial role in the paper.
\begin{Lemma}\label{LD}
	Let the metric $g$ is defined in \eqref{metricd}. If $f(t,x;\tau)$ is the solution of
\begin{equation*}
\begin{cases}
  &\square_{g}f=0, \quad t> \tau,
  \\
  &(f,\mathbf{T} f)|_{t=\tau}=-(F,B)(\tau,x),
\end{cases}
\end{equation*}
then
\begin{equation*}
  V(t,x)=\int^t_0 f(t,x;\tau)d\tau
\end{equation*}
is the solution of the linear wave equation
\begin{equation*}
\begin{cases}
  &\square_{g}V=\mathbf{T}F+B,
  \\
  &(V, \mathbf{T}V)|_{t=0}=(0,-F(0,x)).
  \end{cases}
\end{equation*}
\end{Lemma}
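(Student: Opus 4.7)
The plan is to proceed by direct verification, treating this as a Duhamel-type representation formula where the time-slice data of the parametrized homogeneous problem generate the forcing $\mathbf{T}F+B$. The key observation is that since $\mathbf{T}=\partial_t+\bv(t,x)\cdot\nabla$ has coefficients depending only on $(t,x)$ (not on the parameter $\tau$), the operator $\mathbf{T}$ commutes with the $\tau$-integration except for the boundary term produced when $\partial_t$ meets the variable upper limit.

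First I would compute $\mathbf{T}V$. Differentiating under the integral with the variable upper limit and using the initial condition $f(\tau,x;\tau)=-F(\tau,x)$ gives
\begin{equation*}
\mathbf{T}V(t,x)=f(t,x;t)+\int_0^t \mathbf{T}f(t,x;\tau)\,d\tau=-F(t,x)+\int_0^t \mathbf{T}f(t,x;\tau)\,d\tau.
\end{equation*}
In particular, evaluating at $t=0$ gives $\mathbf{T}V|_{t=0}=-F(0,x)$, and $V|_{t=0}=0$ is immediate, so the initial conditions are verified.

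Next I would apply $\mathbf{T}$ once more. Repeating the same device and using the second initial datum $(\mathbf{T}f)(\tau,x;\tau)=-B(\tau,x)$, one gets
\begin{equation*}
\mathbf{T}\mathbf{T}V(t,x)=-\mathbf{T}F(t,x)+(\mathbf{T}f)(t,x;t)+\int_0^t \mathbf{T}\mathbf{T}f(t,x;\tau)\,d\tau=-\mathbf{T}F-B+\int_0^t \mathbf{T}\mathbf{T}f\,d\tau.
\end{equation*}
Since $\Delta$ acts only in $x$ and commutes with $\int_0^t d\tau$, we have $c_s^2\Delta V=\int_0^t c_s^2\Delta f\,d\tau$. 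Combining via $\square_g=-\mathbf{T}\mathbf{T}+c_s^2\Delta$ yields
\begin{equation*}
\square_g V=\mathbf{T}F+B+\int_0^t \square_g f(t,x;\tau)\,d\tau=\mathbf{T}F+B,
\end{equation*}
because each $f(\cdot,\cdot;\tau)$ solves the homogeneous equation on $\{t>\tau\}$.

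There is no real obstacle here; the lemma is a clean application of Duhamel's principle adapted to the material derivative. The only point requiring care is the order in which $\partial_t$ and the variable upper limit interact: one must record the boundary contributions $f(t,x;t)$ and $(\mathbf{T}f)(t,x;t)$ correctly and read them off from the prescribed Cauchy data, and one must note that $\bv(t,x)\cdot\nabla$ passes through the $\tau$-integral without producing boundary terms since the integration variable is time, not space.
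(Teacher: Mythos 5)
Your proof is correct and proceeds by exactly the same computation as the paper: differentiate $V$ twice under the integral with the moving upper limit, read off the boundary contributions $f(t,x;t)=-F(t,x)$ and $(\mathbf{T}f)(t,x;t)=-B(t,x)$ from the Cauchy data, and combine with $c_s^2\Delta V$ via $\square_g=-\mathbf{T}\mathbf{T}+c_s^2\Delta$, using $\square_g f=0$ to kill the remaining integral. Your explicit remark that $\bv\cdot\nabla$ passes through the $\tau$-integral without a boundary term is a correct and useful clarification, but the underlying argument is the same.
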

\begin{proof}
We first note $\square_g= -\mathbf{T} \mathbf{T}+c^2_s \Delta$.  By calculating, we get
\begin{equation*}
\begin{split}
\mathbf{T} V &= \int^t_0 \mathbf{T} f(t,x;\tau)d\tau+f(t,x;t)
\\
&=\int^t_0 \mathbf{T} f(t,x;\tau)d\tau-F(t,x).
\end{split}
\end{equation*}
Applying the operator $\mathbf{T}$ again, we have
\begin{equation}\label{d1}
\begin{split}
-\mathbf{T}\mathbf{T} V &= -\int^t_0\mathbf{T} \mathbf{T} f(t,x;\tau)d\tau-\mathbf{T}f(t,x;t)+\mathbf{T}F
\\
&= -\int^t_0\mathbf{T} \mathbf{T} f(t,x;\tau)d\tau+B(t,x)+\mathbf{T}F.
\end{split}
\end{equation}
On the other hand,
\begin{equation}\label{d2}
  c^2_s \Delta V= \int^t_0 c^2_s \Delta f(t,x;\tau)d\tau.
\end{equation}
Adding \eqref{d1} and \eqref{d2}, we can derive that
\begin{equation*}
  \square_{g}V=\mathbf{T}F+B.
\end{equation*}
Furthermore, the function $V$ satisfies
\begin{equation*}
  V|_{t=0}=0, \quad \mathbf{T}V|_{t=0}=-F(0,x).
\end{equation*}
The proof of the lemma is now complete.
\end{proof}

\begin{Lemma}\label{LPE}
	{Let $0 < \alpha <1 $. Let $\beta> \alpha$ and sufficiently close to $\alpha$. Then}
	\begin{equation}\label{HF}
		\| f_1f_2\|_{\dot{B}^{\alpha}_{\infty, 2}(\mathbb{R}^3)} \lesssim \|f_2\|_{L^{\infty}_x} \|f_1\|_{\dot{B}^{\alpha}_{\infty, 2}(\mathbb{R}^3)}+\|f_2\|_{C^{\beta}(\mathbb{R}^3)} \|f_1\|_{L^\infty}.
	\end{equation}
\end{Lemma}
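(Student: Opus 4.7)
My plan is to use Bony's paraproduct decomposition
\[
f_1 f_2 \;=\; T_{f_2} f_1 \;+\; T_{f_1} f_2 \;+\; R(f_1, f_2),
\]
where $T_{g} f := \sum_k S_{k-N} g \cdot \Delta_k f$ for a fixed integer $N$, and $R(f_1, f_2) := \sum_{|k-l|\leq N} \Delta_k f_1 \cdot \Delta_l f_2$. The first term has $f_2$ in its low-frequency role, which naturally produces the factor $\|f_2\|_{L^\infty}$; the other two place $f_2$ in high-frequency blocks, and it is there that the extra regularity in $C^\beta$ (with $\beta>\alpha$) will be spent.

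For the first paraproduct, standard frequency-support considerations give
\[
\|\Delta_j T_{f_2} f_1\|_{L^\infty} \lesssim \|S_{j-N} f_2\|_{L^\infty}\, \|\widetilde{\Delta}_j f_1\|_{L^\infty} \lesssim \|f_2\|_{L^\infty}\, \|\widetilde{\Delta}_j f_1\|_{L^\infty},
\]
where $\widetilde{\Delta}_j = \Delta_{j-1}+\Delta_j+\Delta_{j+1}$. Multiplying by $2^{j\alpha}$, squaring and summing over $j\geq -1$ produces the first term $\|f_2\|_{L^\infty}\|f_1\|_{\dot B^{\alpha}_{\infty,2}}$ on the right-hand side. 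For the second paraproduct, a symmetric estimate yields
\[
\|\Delta_j T_{f_1} f_2\|_{L^\infty} \lesssim \|f_1\|_{L^\infty}\, \|\widetilde{\Delta}_j f_2\|_{L^\infty} \lesssim \|f_1\|_{L^\infty}\, 2^{-j\beta}\|f_2\|_{C^\beta},
\]
where the last inequality uses the characterization $C^\beta = B^{\beta}_{\infty,\infty}$. Because $\beta>\alpha$, the weighted $\ell^2$-sum $\sum_{j\geq -1} 2^{2j(\alpha-\beta)}$ is finite, and this piece is controlled by $\|f_1\|_{L^\infty}\|f_2\|_{C^\beta}$.

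For the remainder term $R(f_1, f_2)$, the blocks $\Delta_k f_1 \cdot \Delta_l f_2$ with $|k-l|\leq N$ have frequency support in a ball of radius $\sim 2^k$, so only indices $k \geq j - N_0$ contribute to $\Delta_j R(f_1, f_2)$. Then
\[
\|\Delta_j R(f_1, f_2)\|_{L^\infty} \lesssim \sum_{k \geq j - N_0} \|\Delta_k f_1\|_{L^\infty}\, \|\widetilde{\Delta}_k f_2\|_{L^\infty} \lesssim \|f_1\|_{L^\infty}\|f_2\|_{C^\beta} \sum_{k \geq j - N_0} 2^{-k\beta} \lesssim \|f_1\|_{L^\infty}\|f_2\|_{C^\beta}\, 2^{-j\beta},
\]
and the same condition $\beta > \alpha$ makes the $\ell^2$-sum in $j$ convergent. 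Combining the three contributions yields the claimed inequality.

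The only mild subtlety I foresee lies in the remainder: one needs $\beta$ strictly larger than $\alpha$ (not just $\beta \geq \alpha$) for the geometric sum $\sum 2^{2j(\alpha-\beta)}$ to converge, which is why the hypothesis is stated as $\beta > \alpha$ sufficiently close to $\alpha$. All other estimates are standard Littlewood--Paley manipulations, so I do not anticipate a serious obstacle beyond keeping track of the paraproduct frequency supports.
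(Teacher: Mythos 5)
Your proof is correct and follows essentially the same route as the paper: Bony's paraproduct decomposition, bounding the $T_{f_2}f_1$ piece by $\|f_2\|_{L^\infty}\|f_1\|_{\dot B^\alpha_{\infty,2}}$, and using the $C^\beta = B^\beta_{\infty,\infty}$ characterization together with $\beta>\alpha$ to make the weighted geometric sums converge for the $T_{f_1}f_2$ and remainder pieces. The only difference is notational — you use the $T_g f$ shorthand while the paper writes out the $\Delta_k\cdot S_{k-1}$ blocks explicitly — so this is not a distinct argument.
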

\begin{proof}
Firstly, we have
\begin{equation*}
  \| f_1 f_2 \|^2_{\dot{B}^{\alpha}_{\infty, 2}(\mathbb{R}^3)}= \textstyle{\sum}_{j\geq -1} 2^{2j\alpha}\| \Delta_j (f_1 f_2) \|^2_{L^\infty}.
\end{equation*}
By using Bony decomposition, we get
\begin{equation*}
  \Delta_j (f_1 f_2)= \textstyle{\sum}_{|k-j|\leq 2 }\Delta_j (\Delta_k f_2 S_{k-1}f_1)+\textstyle{\sum}_{|k-j|\leq 2 }\Delta_j (\Delta_k f_1 S_{k-1}f_2)+\textstyle{\sum}_{k \geq j-1 }\Delta_j (\Delta_k f_2 \Delta_{k}f_1).
\end{equation*}
By H\"older's inequality, we derive that
\begin{equation}\label{HF1}
  \begin{split}
  & \textstyle{\sum}_{j\geq -1} 2^{2j\alpha}\| \Delta_j (f_1 f_2) \|^2_{L^\infty}
  \\
   \lesssim & \ \textstyle{\sum}_{j\geq -1} 2^{2j\alpha} (\textstyle{\sum}_{|k-j|\leq 2 } \| \Delta_j \Delta_k f_2 \|_{L^\infty} \| S_{k-1} f_1  \|_{L^\infty})^2
   \\
   & \ + \textstyle{\sum}_{j\geq -1} 2^{2j\alpha} (\textstyle{\sum}_{|k-j|\leq 2 } \| \Delta_j \Delta_k f_1 \|_{L^\infty} \| S_{k-1} f_2 \|_{L^\infty})^2
  \\
    & \ + \textstyle{\sum}_{j\geq -1} 2^{2j\alpha} (\textstyle{\sum}_{k \geq j-1} \| \Delta_j \Delta_k f_1 \|_{L^\infty} \| \Delta_{k} f_2 \|_{L^\infty})^2
    \\
   \lesssim & \   \textstyle{\sum}_{j\geq -1} 2^{2j\alpha} \| \Delta_j f_2 \|^2_{L^\infty} \| f_1 \|^2_{L^\infty}+\|h\|^2_{L^{\infty}_x} \|f_1\|^2_{\dot{B}^{\alpha}_{\infty, 2}(\mathbb{R}^3)}
     \\
    & + \textstyle{\sum}_{j\geq -1}\| \Delta_j f_1 \|^2_{L^\infty}  (\textstyle{\sum}_{k \geq j-1}2^{2(-\beta+\alpha)}2^{k\beta}  \| \Delta_{k} f_2 \|_{L^\infty})^2 .
  \end{split}
\end{equation}
It is sufficient to verify the bounds of the last right terms on \eqref{HF1}. Note
\begin{equation}\label{HF2}
\begin{split}
  \textstyle{\sum}_{j\geq -1} 2^{2j\alpha} \| \Delta_j f_2 \|^2_{L^\infty} \| f_1 \|^2_{L^\infty} \lesssim & \textstyle{\sum}_{j\geq -1} 2^{2j (-\beta+\alpha) } 2^{2j \beta }  \| \Delta_j f_1 \|^2_{L^\infty} \| f_1 \|^2_{L^\infty}
  \\
 \lesssim & \| f_1 \|^2_{L^\infty} \{ 2^{2j (-\beta+\alpha) } \}_{l^1_{j}} (\{2^{j \beta }  \| \Delta_j f_2 \|_{L^\infty}\}_{l^\infty_{j}} )^2
  \\
  \lesssim & \| f_1 \|^2_{L^\infty} \| f_2 \|^2_{\dot{B}^{\beta}_{\infty,\infty}}
  \lesssim  \| f_1 \|^2_{L^\infty} \| f_2 \|^2_{C^{\beta}}.
\end{split}
\end{equation}
We also note
\begin{equation}\label{HF3}
\begin{split}
& \textstyle{\sum}_{j\geq -1}\| \Delta_j f_1 \|^2_{L^\infty}  (\textstyle{\sum}_{k \geq j-1}2^{2(-k\beta+j\alpha)}2^{k\beta}  \| \Delta_{k} f_2 \|_{L^\infty})^2
\\
\lesssim & \ \textstyle{\sum}_{j\geq -1}\| \Delta_j f_1 \|^2_{L^\infty}  (\textstyle{\sum}_{k \geq j-1}2^{2(-k\beta+j\alpha)}2^{k\beta}  \| \Delta_{k} f_2 \|_{L^\infty})^2
\\
\lesssim & \ \| f_2 \|^2_{\dot{B}^\beta_{\infty,\infty}}\textstyle{\sum}_{j\geq -1} 2^{2j(\alpha-\beta)}\| \Delta_j f_1 \|^2_{L^\infty}
\\
\lesssim & \ \| f_1 \|^2_{L^\infty}  \| f_2 \|^2_{C^\beta}.
\end{split}
\end{equation}
Substituting \eqref{HF2} and \eqref{HF3} in \eqref{HF1}, we arrive at \eqref{HF}.
\end{proof}

The next commutator estimate is concerns Riesz operators.
\begin{Lemma}\label{ceR}
	{Let $0 \leq \alpha <1$. Denote the Riesz operator $\mathbf{R}:=\partial^2(-\Delta)^{-1}$. Then}
	\begin{equation*}
		\| [\mathbf{R}, \bv \cdot \nabla]f\|_{\dot{H}^{\alpha}_x(\mathbb{R}^3)} \lesssim \| \bv\|_{\dot{B}^{1+\alpha}_{\infty, \infty}} \|f\|_{L^2_{x}(\mathbb{R}^3)}+ \| \bv\|_{\dot{B}^1_{\infty, \infty}} \|f\|_{\dot{H}^\alpha_x(\mathbb{R}^3)}.
	\end{equation*}
\end{Lemma}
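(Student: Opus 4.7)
My plan is to reduce the commutator to a multiplication commutator and estimate it via Bony's paraproduct decomposition. Since $\mathbf{R} = \partial^2(-\Delta)^{-1}$ is a Fourier multiplier with smooth symbol $-\xi_i\xi_j/|\xi|^2$ of order zero, it commutes with each partial derivative $\partial_j$, and hence
\[
[\mathbf{R}, \bv \cdot \nabla]f = \mathbf{R}(v^j \partial_j f) - v^j \mathbf{R}(\partial_j f) = [\mathbf{R}, v^j]\partial_j f.
\]
Applying Bony's decomposition $v^j \partial_j f = T_{v^j}(\partial_j f) + T_{\partial_j f}(v^j) + R(v^j, \partial_j f)$ and the analogous decomposition to $v^j \mathbf{R}(\partial_j f)$, I split
\[
[\mathbf{R}, v^j]\partial_j f = [\mathbf{R}, T_{v^j}]\partial_j f + \bigl(\mathbf{R} T_{\partial_j f} - T_{\mathbf{R}(\partial_j f)}\bigr)v^j + \bigl(\mathbf{R} R(v^j,\partial_j f) - R(v^j, \mathbf{R}(\partial_j f))\bigr).
\]

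For the low-high paraproduct commutator $[\mathbf{R}, T_{v^j}]\partial_j f$, the key point is that $\mathbf{R}$ is a smooth zero-order multiplier and each summand $S_{k-3} v^j \cdot \Delta_k(\partial_j f)$ is spectrally localized near $2^k$. Taylor-expanding the symbol of $\mathbf{R}$ about the high frequency yields the standard Coifman--Meyer commutator gain of one derivative,
\[
\|[\mathbf{R}, S_{k-3} v^j]\Delta_k(\partial_j f)\|_{L^2} \lesssim 2^{-k}\|\nabla S_{k-3} v\|_{L^\infty}\|\Delta_k(\partial_j f)\|_{L^2} \lesssim \|\bv\|_{\dot B^1_{\infty,\infty}}\|\Delta_k f\|_{L^2}.
\]
Weighting by $2^{\alpha k}$ and using almost orthogonality in the output frequencies then produces the bound $\|\bv\|_{\dot B^1_{\infty,\infty}}\|f\|_{\dot H^\alpha}$.

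For the remaining high-low and remainder pieces, the output spectrum is governed by the high-frequency piece $\Delta_k v^j$ of the velocity, so after dyadic decomposition one faces products such as $\Delta_k v^j \cdot \partial_j(S_{k-3} f)$. I would exploit the derivative $\partial_j$ by using the identity $[\mathbf{R}, v^j]\partial_j = \partial_j[\mathbf{R}, v^j] - [\mathbf{R}, \partial_j v^j]$ to move $\partial_j$ onto $v^j$, and then apply $L^2$-boundedness of $\mathbf{R}$ together with Bernstein's inequality to obtain at dyadic level $k$ a bound of the form $2^k\|\Delta_k v\|_{L^\infty}\,\|f\|_{L^2}$. Combining with $\|\Delta_k v\|_{L^\infty}\le 2^{-(1+\alpha)k}\|\bv\|_{\dot B^{1+\alpha}_{\infty,\infty}}$ and using almost orthogonality in the output frequencies $\sim 2^k$ produces the desired $\|\bv\|_{\dot B^{1+\alpha}_{\infty,\infty}}\|f\|_{L^2}$ contribution. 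The main obstacle is the careful bookkeeping of frequency interactions needed to ensure that the endpoint Besov norms $\dot B^{1+\alpha}_{\infty,\infty}$ and $\dot B^1_{\infty,\infty}$ (rather than stronger $\ell^2$-summed Besov versions) genuinely suffice; this requires fully exploiting the almost orthogonality of the output frequencies at each stage and treating the low-frequency contributions of $f$ through the uniform bound $\|S_{k-3} f\|_{L^2}\le \|f\|_{L^2}$ inside the commutator.
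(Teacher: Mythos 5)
Your argument is essentially the same as the paper's: both reduce to Bony's paraproduct decomposition, use a Coifman--Meyer type commutator gain for the piece in which $\bv$ sits at low frequency, and handle the pieces in which $\bv$ carries the high frequency by pairing Bernstein's inequality with the $L^2$-boundedness of $\mathbf{R}$. The paper formulates the commutator gain through its Lemma \ref{Miao} (a first-moment estimate on the convolution kernel of $\Delta_j \mathbf{R}$), whereas you phrase it as a Taylor expansion of the symbol; these are the same estimate. Two cosmetic differences: you commute $\mathbf{R}$ past $\nabla$ \emph{before} decomposing (turning the problem into a multiplication commutator $[\mathbf{R}, v^j]\partial_j$), while the paper decomposes $\bv\cdot\nabla f$ directly under $\Delta_j$; and for the high-$\bv$ pieces you invoke the identity $[\mathbf{R}, v^j]\partial_j = \partial_j[\mathbf{R}, v^j] - [\mathbf{R}, \partial_j v^j]$ to move the derivative, which is a clean trick but is not strictly needed --- since the low-frequency factor $S_{k-3}f$ costs at most $2^k$ under $\partial_j$, the direct Bernstein bound $\|\partial_j S_{k-3}f\|_{L^2}\lesssim 2^k\|f\|_{L^2}$ already yields the factor $2^{(1+\alpha)k}\|\Delta_k v\|_{L^\infty}$. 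Your final caveat about whether the $\ell^\infty$ Besov norms (as opposed to $\ell^2$) genuinely close the estimate is the right thing to worry about: the way to see it is to expand $\|\partial_j S_{k-3}f\|_{L^2}^2\lesssim \sum_{m<k}2^{2m}\|\Delta_m f\|_{L^2}^2$, swap the order of the $(k,m)$ summation, and observe that the inner $k$-sum $\sum_{k\ge m}2^{2\alpha k}\|\Delta_k v\|_{L^\infty}^2$ telescopes geometrically once $\|\Delta_k v\|_{L^\infty}\lesssim 2^{-(1+\alpha)k}\|\bv\|_{\dot B^{1+\alpha}_{\infty,\infty}}$ is inserted; this Fubini step is what makes $\dot B^{1+\alpha}_{\infty,\infty}$ suffice, and is left implicit in the paper's write-up as well.
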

\begin{proof}
By paraproduct decomposition (c.f. \cite{BCD}), we have
\begin{equation*}
\begin{split}
 \Delta_j [\mathbf{R}, \bv \cdot \nabla]f &= \textstyle{\sum}_{|k-j|\leq 2} \Delta_j \left[\mathbf{R} (\Delta_k \bv \cdot \nabla S_{k-1}f)-\Delta_k \bv \cdot \nabla \mathbf{R} S_{k-1}f \right]
 \\
 & \quad + \textstyle{\sum}_{|k-j|\leq 2} \Delta_j \left[\mathbf{R} (S_{k-1} \bv \cdot \nabla \Delta_{k}f)-S_{k-1} \bv \cdot \nabla \mathbf{R} \Delta_{k}f \right]
 \\
 & \quad + \textstyle{\sum}_{k\geq j-1} \Delta_j \left[\mathbf{R} (\Delta_{k} \bv \cdot \nabla \Delta_{k}f)-\Delta_{k} \bv \cdot \nabla \mathbf{R} \Delta_{k}f \right]
 \\
 & = B_1+B_2+B_3,
 \end{split}
\end{equation*}
where
\begin{equation*}
\begin{split}
  B_1&= \textstyle{\sum}_{|k-j|\leq 2} \Delta_j \left\{ \mathbf{R} (\Delta_k \bv \cdot \nabla S_{k-1}f)-\Delta_k \bv \cdot \nabla \mathbf{R} S_{k-1}f \right\},
  \\
  B_2&= \textstyle{\sum}_{|k-j|\leq 2} \Delta_j \left\{\mathbf{R} (S_{k-1} \bv \cdot \nabla \Delta_{k}f)-S_{k-1} \bv \cdot \nabla \mathbf{R} \Delta_{k}f \right\}
  \\
  B_3&=\textstyle{\sum}_{k\geq j-1} \Delta_j \left\{ \mathbf{R} (\Delta_{k} \bv \cdot \nabla \Delta_{k}f)-\Delta_{k} \bv \cdot \nabla \mathbf{R} \Delta_{k}f \right\}.
\end{split}
\end{equation*}

For $0\leq \alpha <1$, by H\"older's inequality and Bernstein's inequality, we find that
\begin{equation}\label{B1}
\begin{split}
  \{2^{j\alpha} \| B_1 \|_{L^2_x} \}_{l^2_j} & \lesssim  \left\{ \textstyle{\sum}_{|k-j|\leq 2} (\| \nabla \mathbf{R} S_{k-1} f\|_{L^\infty_x}+\| \nabla S_{k-1} f\|_{L^\infty_x}) 2^{j\alpha} \|\Delta_j \Delta_k \bv\|_{L^2} \right\}_{l^2_j}
  \\
  & \lesssim \left\{ \textstyle{\sum}_{|k-j|\leq 2}  2^{j\alpha} 2^k \|\Delta_j \Delta_k \bv\|_{L_x^\infty} (\|\mathbf{R} S_{k-1} f \|_{L^2}+\| S_{k-1} f \|_{L^2}) \right\}_{l^2_j}
  \\
  & \lesssim \| \bv\|_{\dot{B}^{1+\alpha}_{\infty,\infty}} (\|\mathbf{R} f \|_{L^2_x}+\| f \|_{L^2_x}) \lesssim \| \bv\|_{\dot{B}^{1+\alpha}_{\infty,\infty}} \| f \|_{L^2_x}.
\end{split}
\end{equation}
Due to H\"older's inequality, we have
\begin{equation}\label{B3}
\begin{split}
  \{2^{j\alpha} \| B_3 \|_{L^2_x} \}_{l^2_j} &\lesssim \{2^{j\alpha}  \textstyle{\sum}_{k\geq j-1} 2^k  \| \Delta_k \bv\|_{L^\infty}\cdot 2^{-k}(\| \nabla \Delta_k f\|_{L^2}+\| \nabla \mathbf{R} \Delta_k f\|_{L^2})  \}_{l^2_j}
  \\
  &\lesssim \|\bv\|_{\dot{B}^{1}_{\infty,\infty}} \| f \|_{\dot{H}^\alpha}.
\end{split}
\end{equation}
Note
\begin{equation*}
  B_2= \textstyle{\sum}_{|k-j|\leq 2} \Delta_j [\mathbf{R} , S_{k-1} \bv \cdot]\Delta_{k} \nabla f.
\end{equation*}
By using Lemma \ref{Miao} and H\"older's inequality, we get
\begin{equation}\label{B2}
\begin{split}
 \{2^{j\alpha} \| B_2 \|_{L_x^2} \}_{l^2_j}\lesssim & \left\{ \textstyle{\sum}_{|k-j|\leq 2} \|x\Phi\|_{L^1} \| \nabla S_{k-1} \bv \|_{L^\infty_x} 2^{j\alpha} \| \Delta_j \Delta_{k}f \|_{L^2_x} \right\}_{l^2_j}
 \\
 \lesssim & \left\{ \textstyle{\sum}_{|k-j|\leq 2} \| \nabla S_{k-1} \bv \|_{L^\infty_x} 2^{j\alpha} \| \Delta_j \Delta_{k}f \|_{L^2_x} \right\}_{l^2_j}
\\
 \lesssim & \|\bv\|_{\dot{B}^{1}_{\infty,\infty}}\| f \|_{\dot{H}^\alpha}.
\end{split}
\end{equation}
Here, we use the fact that ${\Phi}=\frac{x_ix_j}{|x|^2}2^{3j}\Psi(2^jx)$ and $\Psi$ is in Schwartz space. Gathering \eqref{B1}, \eqref{B3} and \eqref{B2} together, we have finished the proof of Lemma \ref{ceR}.
\end{proof}
\begin{Lemma}\label{YR}
	Let $2<s_1 \leq s_2$. Then
	\begin{equation*}
		\{ 2^{(s_1-1)j}\| [\Delta_j, \mathbf{T}]f\|_{\dot{H}^{s_2-s_1}_x(\mathbb{R}^3)} \}_{l^2_j} \lesssim  \| \partial \bv\|_{L^\infty_x }\|f\|_{\dot{H}^{s_2-1}_x(\mathbb{R}^3)}+\| \bv \|_{H^{s_2}}  \| f\|_{{H}^{1}}.
	\end{equation*}
\end{Lemma}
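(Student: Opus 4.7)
Since $\partial_t$ commutes with $\Delta_j$, the commutator reduces to $[\Delta_j,\bv\cdot\nabla]f$, and the plan is to apply Bony's paraproduct decomposition exactly as in the proof of Lemma~\ref{ceR}. Splitting
\begin{equation*}
[\Delta_j,\bv\cdot\nabla]f = B_1^j + B_2^j + B_3^j + B_4^j,
\end{equation*}
where $B_2^j=\sum_{|k-j|\leq 2}[\Delta_j, S_{k-1}\bv\cdot]\nabla\Delta_k f$ is the ``genuine'' commutator, $B_1^j=\sum_{|k-j|\leq 2}\Delta_j(\Delta_k\bv\cdot\nabla S_{k-1}f)$ and $B_3^j=\sum_{k\geq j-1}\Delta_j(\Delta_k\bv\cdot\nabla\tilde\Delta_k f)$ are the high--low and high--high paraproducts, and $B_4^j=-\sum_{k>j+2}\Delta_k\bv\cdot\nabla\Delta_j f$ is the tail arising when $\Delta_j$ is commuted past the high-frequency part of $\bv$.

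The pieces $B_1^j, B_2^j, B_3^j$ are each supported in a dyadic annulus of radius $\sim 2^j$, so their $\dot{H}^{s_2-s_1}$-norm equals a harmless multiple of $2^{(s_2-s_1)j}$ times the $L^2$-norm. For $B_2^j$, Lemma~\ref{Miao} applied to the convolution kernel of $\Delta_j$ yields $\|B_2^j\|_{L^2}\lesssim 2^{-j}\|\partial\bv\|_{L^\infty}\|\nabla\Delta_j f\|_{L^2}\lesssim \|\partial\bv\|_{L^\infty}\|\Delta_j f\|_{L^2}$, and weighting by $2^{(s_2-1)j}$ and taking $l^2$ in $j$ produces exactly the first term $\|\partial\bv\|_{L^\infty}\|f\|_{\dot{H}^{s_2-1}}$ on the right-hand side. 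For $B_1^j$ and $B_3^j$, H\"older's inequality combined with Bernstein (placing $\Delta_k\bv$ in $L^2$ with the full $s_2$ weight and $\nabla S_{k-1}f$ or $\nabla\tilde\Delta_k f$ in $L^\infty$ at the cost of an $H^1$ bound on $f$) produces contributions of the form $\|\bv\|_{H^{s_2}}\|f\|_{H^1}$, using the hypothesis $s_2>2$ to absolutely sum the high-frequency tail in $B_3^j$ via Young's convolution inequality.

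The main obstacle is the tail $B_4^j$: unlike the other pieces, it is not localized at frequency $\sim 2^j$ but lives at frequency $\sim 2^k$ for each $k>j+2$, so its $\dot{H}^{s_2-s_1}$-norm must be computed by resolving $\sum_l 2^{2(s_2-s_1)l}\|\Delta_l B_4^j\|_{L^2}^2$ with $l\sim k$. Using Bernstein to write $\|\nabla\Delta_j f\|_{L^\infty}\lesssim 2^{(5/2)j}\|\Delta_j f\|_{L^2}$ and tracking the weight $2^{(s_1-1)j}\cdot 2^{(s_2-s_1)k}$ across the double sum, the off-diagonal factor is geometrically summable because $s_1>2$ (this is the only point where the strict inequality $s_1>2$ is used), and a double Cauchy--Schwarz in $j$ and $k$ produces a bound by $\|\bv\|_{\dot{H}^{s_2}}\|f\|_{H^1}\lesssim\|\bv\|_{H^{s_2}}\|f\|_{H^1}$. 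Combining the four contributions yields the claimed inequality.
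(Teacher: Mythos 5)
Including the tail $B_4^j=-\sum_{k>j+2}\Delta_k\bv\cdot\nabla\Delta_j f$ is a good observation and a genuine improvement in bookkeeping over the paper's own proof: the decomposition $A_1+A_2+A_3$ there restricts every subtracted piece to $|k-j|\lesssim 2$, so the contribution of high-frequency $\bv$ acting on $\Delta_j f$ never appears explicitly. Your $B_1,B_2,B_3$ together with the Lemma~\ref{Miao} treatment of $B_2$ track the paper's $A_1,A_2,A_3$ closely.

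The trouble is in the estimate you sketch for $B_4^j$. Normalize $a_j=2^j\|\Delta_j f\|_{L^2}$ and $b_k=2^{s_2k}\|\Delta_k\bv\|_{L^2}$, so $\|a\|_{l^2}=\|f\|_{\dot H^1}$ and $\|b\|_{l^2}=\|\bv\|_{\dot H^{s_2}}$. With your Bernstein step $\|\nabla\Delta_j f\|_{L^\infty}\lesssim 2^{5j/2}\|\Delta_j f\|_{L^2}$, the weighted contribution of the $(j,k)$ pair is $2^{(s_1-1)j}\,2^{(s_2-s_1)k}\,\|\Delta_k\bv\|_{L^2}\,\|\nabla\Delta_j f\|_{L^\infty}\lesssim 2^{j/2}\cdot 2^{-s_1(k-j)}a_jb_k$. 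The $(k-j)$ direction is indeed geometrically summable, but for \emph{any} $s_1>0$ — so the hypothesis $s_1>2$ is not what rescues you here — and the residual $2^{j/2}$ is not: summing in $k$ and taking $l^2_j$ leaves $\|\bv\|_{\dot H^{s_2}}\{2^{-j}\|\nabla\Delta_j f\|_{L^\infty}\}_{l^2_j}=\|\bv\|_{\dot H^{s_2}}\|f\|_{\dot B^0_{\infty,2}}\lesssim\|\bv\|_{\dot H^{s_2}}\|f\|_{\dot H^{3/2}}$, which requires half a derivative more on $f$ than the stated $H^1$ bound, and indeed $H^1(\mathbb R^3)\not\hookrightarrow\dot B^0_{\infty,2}$. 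Trying instead to land on the first right-hand term via $\|\Delta_k\bv\|_{L^\infty}\lesssim 2^{-k}\|\nabla\bv\|_{L^\infty}$ produces $\sum_{k>j}2^{2(s_2-s_1-1)k}$, which converges only when $s_2-s_1<1$, an extra restriction not in the statement. A similar optimism infects your $B_1,B_3$ step: putting ``$\nabla S_{k-1}f$ in $L^\infty$ at the cost of an $H^1$ bound on $f$'' is not available in three dimensions, since $\|\nabla S_{k-1}f\|_{L^\infty}\lesssim 2^{3k/2}\|f\|_{\dot H^1}$ carries a $2^{3j/2}$ loss at $k\sim j$ that must be paid for somewhere. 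The paper's $A_1$ estimate circumvents this via an $L^3\times L^6$ H\"older split rather than $L^2\times L^\infty$; you should either import that split or re-examine exactly which norm of $f$ survives your bookkeeping before treating this lemma as a usable black box.
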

\begin{proof}
We first have
\begin{equation*}
  [\Delta_j, \mathbf{T}]f= [\Delta_j, \bv \cdot \nabla]f.
\end{equation*}
By using paraproduct decomposition, we have
\begin{equation*}
\begin{split}
  [\Delta_j, \bv \cdot \nabla]f &= \textstyle{\sum}_{|k-j|\leq 2}\big( \Delta_j  (\Delta_k \bv \cdot \nabla S_{k-1}f)-\Delta_k \bv \cdot \nabla  S_{k-1}\Delta_jf \big)
 \\
 & \quad + \textstyle{\sum}_{|k-j|\leq 2}\big( \Delta_j (S_{k-1} \bv \cdot \nabla \Delta_{k}f)-S_{k-1} \bv \cdot \nabla  \Delta_{k}\Delta_jf \big)
 \\
 & \quad + \textstyle{\sum}_{k\geq j-1} \big( \Delta_j  (\Delta_{k}\bv \cdot \nabla \Delta_{k}f)-\Delta_{k}\bv \cdot \nabla  \Delta_{k}\Delta_jf \big)
 \\
 & = A_1+A_2+A_3,
 \end{split}
\end{equation*}
where
\begin{equation*}
\begin{split}
  A_1&= \textstyle{\sum}_{|k-j|\leq 2}  \left\{  \Delta_j(\Delta_k \bv \cdot \nabla S_{k-1}f)-\Delta_k \bv \cdot \nabla  S_{k-1}\Delta_jf \right\},
  \\
  A_2&= \textstyle{\sum}_{|k-j|\leq 2}  \left\{\Delta_j (S_{k-1} \bv \cdot \nabla \Delta_{k}f)-S_{k-1} \bv \cdot \nabla \Delta_{k}\Delta_jf \right\}
  \\
  A_3&=\textstyle{\sum}_{k\geq j-1}  \left\{  \Delta_j(\Delta_{k}\bv \cdot \nabla \Delta_{k}f)-\Delta_{k}\bv \cdot \nabla  \Delta_{k}\Delta_jf \right\}.
\end{split}
\end{equation*}
Note $\mathrm{supp} \widehat{A}_1, \mathrm{supp} \widehat{A}_2 \subseteq \{\xi\in \mathbb{R}^3: 2^{j-5} \leq |\xi|\leq 2^{j+5} \}$. By H\"older's inequality and commutator estimate, we find that
\begin{equation}\label{A2}
\begin{split}
  \{2^{j(s_1-1)} \| A_2 \|_{\dot{H}^{s_2-s_1}_x} \}_{l^2_j} & \lesssim  \left\{ 2^{j(s_2-1)} \| A_2 \|_{L^2} \right\}_{l^2_j}
  \\
  & = \left\{ 2^{j(s_2-1)} \| [\Delta_j, S_{j-1} \bv \cdot \nabla] \Delta_j f\|_{L^2} \right\}_{l^2_j}
  \\
  & \lesssim \left\{ 2^{j(s_2-1)}  \|\nabla \bv\|_{L^\infty}  \|\Delta_j f\|_{L^2} \right\}_{l^2_j}\\
  & \lesssim  \|\nabla \bv\|_{L^\infty}  \| f\|_{\dot{H}^{s_2-1}}.
\end{split}
\end{equation}
Due to H\"older's inequality, we have
\begin{equation}\label{A1}
\begin{split}
  \{2^{j(s_1-1)} \| A_1 \|_{\dot{H}^{s_2-s_1}_x} \}_{l^2_j} & \lesssim  \left\{ 2^{j(s_2-1)} \| A_2 \|_{L^2} \right\}_{l^2_j}
  \\
  & = \left\{ 2^{j(s_2-1)} \|\Delta_j \bv \|_{L^3}  \| \nabla f\|_{L^6} \right\}_{l^2_j}
  \\
  & \lesssim  \| \bv \|_{\dot{B}^{s_2-1}_{3,2}}  \| \nabla f\|_{L^6}
  \\
  & \lesssim  \| \bv \|_{H^{s_2-\frac{1}{2}}}  \| f\|_{H^1} \lesssim  \| \bv \|_{H^{s_2}}  \| f\|_{H^1}.
\end{split}
\end{equation}
We rewrite $A_3$ by the form
\begin{equation*}
  A_3= \textstyle{\sum}_{k \geq j-1}  [\Delta_j , \Delta_k \bv \cdot \nabla]\Delta_{k}  f.
\end{equation*}
By using H\"older's inequality, we get
\begin{equation}\label{A3}
\begin{split}
  \{2^{j(s_1-1)} \| A_3 \|_{\dot{H}^{s_2-s_1}_x} \}_{l^2_j} & \lesssim  \|\nabla \bv\|_{L_x^\infty}  \| f\|_{\dot{H}_x^{s_2-1}}.
\end{split}
\end{equation}
Thus, we complete the proof of the lemma.
\end{proof}
\begin{Lemma}\label{ce}
	{Let $0 < \alpha <1 $. Then}
	\begin{equation*}
		\| [\Lambda^\alpha_x, \bv \cdot \nabla]f\|_{L^2_x(\mathbb{R}^3)} \lesssim \|\partial \bv\|_{L^\infty_x} \|f\|_{\dot{H}^{\alpha}_{x}(\mathbb{R}^3)}.
	\end{equation*}
\end{Lemma}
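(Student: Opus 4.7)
The starting point is an algebraic reduction exploiting the fact that $\Lambda_x^\alpha = (-\Delta)^{\alpha/2}$ and $\partial_i$ are both Fourier multipliers on $\mathbb{R}^3$ and therefore commute. Thus
\begin{equation*}
[\Lambda_x^\alpha, \bv \cdot \nabla]f \;=\; \Lambda_x^\alpha(\bv^i \partial_i f) - \bv^i \partial_i \Lambda_x^\alpha f \;=\; [\Lambda_x^\alpha, \bv^i]\,\partial_i f,
\end{equation*}
so, setting $g := \partial_i f$ (which satisfies $\|g\|_{\dot H_x^{\alpha-1}} \lesssim \|f\|_{\dot H_x^{\alpha}}$), it is enough to establish the sharp commutator bound
\begin{equation*}
\|[\Lambda_x^\alpha, a]g\|_{L^2_x} \;\lesssim\; \|\nabla a\|_{L^\infty_x}\,\|g\|_{\dot H_x^{\alpha-1}}, \qquad 0 < \alpha < 1,
\end{equation*}
applied with $a = \bv^i$.

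For the commutator bound I would run a Bony paraproduct decomposition combined with Littlewood--Paley analysis. Writing $ag$ as the sum of low--high, high--low and high--high contributions and using that $\Lambda_x^\alpha$ commutes with every LP projection, we decompose
\begin{equation*}
[\Lambda_x^\alpha, a]g \;=\; \sum_k [\Lambda_x^\alpha, S_{k-1}a]\,\Delta_k g \;+\; \sum_k [\Lambda_x^\alpha, \Delta_k a]\,S_{k-1} g \;+\; \sum_k [\Lambda_x^\alpha, \Delta_k a]\,\tilde\Delta_k g,
\end{equation*}
where $\tilde\Delta_k = \Delta_{k-1}+\Delta_k+\Delta_{k+1}$. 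For the low--high piece, each summand is spectrally localized near $|\xi|\approx 2^k$ and, since the frequency of $S_{k-1}a$ is $\ll 2^k$, the mean-value inequality $||\xi|^\alpha - |\eta|^\alpha| \lesssim |\xi-\eta|\,(|\xi|\vee|\eta|)^{\alpha-1}$ yields one derivative of cancellation in the commutator symbol; a standard Fourier-kernel estimate then gives $\|[\Lambda_x^\alpha, S_{k-1}a]\Delta_k g\|_{L^2_x} \lesssim 2^{k(\alpha-1)}\|\nabla a\|_{L^\infty_x}\|\Delta_k g\|_{L^2_x}$, and square-summing in $k$ together with the identity $\|g\|_{\dot H_x^{\alpha-1}}^2 \simeq \sum_k 2^{2k(\alpha-1)}\|\Delta_k g\|_{L^2_x}^2$ closes this contribution. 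For the high--low piece the frequencies of the two factors are comparable, so no symbolic cancellation survives; instead one invokes Bernstein's inequality $\|\Delta_k a\|_{L^\infty_x} \lesssim 2^{-k}\|\nabla a\|_{L^\infty_x}$ together with $\|S_{k-1}g\|_{L^2_x}^2 = \sum_{k'<k-1}\|\Delta_{k'}g\|_{L^2_x}^2$, and then exploits the convergence of $\sum_{k>k'} 2^{2k(\alpha-1)}$, valid precisely because $\alpha < 1$, to resum. The high--high piece is treated by the same device, using that $\Delta_j$ of the resonant sum receives contributions only from $k \geq j-2$ and that the same geometric factor $2^{k(\alpha-1)}$ appears.

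The main obstacle is the high--low paraproduct: no symbolic cancellation is available there, and the restriction $\alpha < 1$ is used twice, once to ensure convergence of the relevant geometric series in $k$, and once to guarantee that $[\Lambda_x^\alpha, a]$ maps $\dot H_x^{\alpha-1}$ continuously into $L^2_x$ without requiring any Sobolev regularity on $a$ beyond Lipschitz control. This is precisely why the estimate saturates at $\alpha = 1$, and why only $\|\partial \bv\|_{L^\infty_x}$ (and not a stronger norm of $\bv$) appears on the right-hand side. A detailed implementation of the paraproduct estimates above can be found in \cite{WQRough}.
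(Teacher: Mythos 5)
Your argument is correct and runs on the same paraproduct machinery as the paper's proof: Bony decomposition into low--high, high--low, and high--high pieces, Bernstein for the pieces where $\bv$ carries the high frequency, and a commutator cancellation for the piece where $\bv$ is low and the other factor is high. The only packaging differences are (i) you first peel off the gradient, writing $[\Lambda_x^\alpha, \bv\cdot\nabla]f = [\Lambda_x^\alpha, \bv^i]\partial_i f$ and targeting $\|[\Lambda_x^\alpha, a]g\|_{L^2_x}\lesssim\|\nabla a\|_{L^\infty_x}\|g\|_{\dot H_x^{\alpha-1}}$, whereas the paper keeps $\bv\cdot\nabla$ glued together in its $V_1, V_2, V_3$; and (ii) for the low-$\bv$ paraproduct you invoke the mean-value inequality for the symbol $|\xi|^\alpha$ directly, whereas the paper invokes Lemma~\ref{Miao}, the kernel estimate $\|\Phi*(hf)-h*(\Phi f)\|_{L^p}\lesssim\|x\Phi\|_{L^1}\|\nabla f\|_{L^\infty}\|h\|_{L^p}$ from \cite{ML}. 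These are the same cancellation expressed on the Fourier side versus the physical side, and both give the factor $2^{k(\alpha-1)}$ that, together with the convergence of $\sum 2^{2k(\alpha-1)}$ for $\alpha<1$, closes the estimate. One small caveat in your explanation: for the high-$\bv$, low-$g$ paraproduct the two factors' frequencies are well separated rather than ``comparable''; what actually matters is that the product frequency is pinned to that of $\Delta_k a$, so the symbol difference does not gain, which is why one falls back on Bernstein there.
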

\begin{proof}
By using paraproduct decomposition, we have
\begin{equation*}
\begin{split}
 \Delta_j [\Lambda^\alpha_x, \bv \cdot \nabla]f &= \textstyle{\sum}_{|k-j|\leq 2} \Delta_j \left[\Lambda^\alpha_x (\Delta_k \bv \cdot \nabla S_{k-1}f)-\Delta_k \bv \cdot \nabla \Lambda^\alpha_x S_{k-1}f \right]
 \\
 & \quad + \textstyle{\sum}_{|k-j|\leq 2} \Delta_j \left[\Lambda^\alpha_x (S_{k-1} \bv \cdot \nabla \Delta_{k}f)-S_{k-1} \bv \cdot \nabla \Lambda^\alpha_x \Delta_{k}f \right]
 \\
 & \quad + \textstyle{\sum}_{k\geq j-1} \Delta_j \left[\Lambda^\alpha_x (\Delta_{k}\bv \cdot \nabla \Delta_{k}f)-\Delta_{k}\bv \cdot \nabla \Lambda^\alpha_x \Delta_{k}f \right]
 \\
 & = V_1+V_2+V_3,
 \end{split}
\end{equation*}
where
\begin{equation*}
\begin{split}
  V_1&= \textstyle{\sum}_{|k-j|\leq 2} \Delta_j \left[\Lambda^\alpha_x (\Delta_k \bv \cdot \nabla S_{k-1}f)-\Delta_k \bv \cdot \nabla \Lambda^\alpha_x S_{k-1}f \right],
  \\
  V_2&= \textstyle{\sum}_{|k-j|\leq 2} \Delta_j \left[\Lambda^\alpha_x (S_{k-1} \bv \cdot \nabla \Delta_{k}f)-S_{k-1} \bv \cdot \nabla \Lambda^\alpha_x \Delta_{k}f \right]
  \\
  & = \textstyle{\sum}_{|k-j|\leq 2} \Delta_j [\Lambda^\alpha_x , S_{k-1} \bv \cdot \nabla]\Delta_{k}  f,
  \\
  V_3&=\textstyle{\sum}_{k\geq j-1} \Delta_j \left[\Lambda^\alpha_x (\Delta_{k}\bv \cdot \nabla \Delta_{k}f)-\Delta_{k}\bv \cdot \nabla \Lambda^\alpha_x \Delta_{k}f \right].
\end{split}
\end{equation*}

For $0<\alpha <1$, by H\"older's inequality and Bernstein's inequality, we can derive
\begin{equation}\label{V1}
\begin{split}
  \{ \| V_1 \|_{L^2_x} \}_{l^2_j} & \lesssim  \left\{ \textstyle{\sum}_{|k-j|\leq 2} (\| \nabla \Lambda_x^\alpha S_{k-1} f\|_{L^\infty_x}+2^{j\alpha}\| \nabla S_{k-1} f\|_{L^\infty_x}) \|\Delta_j \Delta_k \bv\|_{L^2_x} \right\}_{l^2_j}
  \\
  & \lesssim \left\{ \textstyle{\sum}_{|k-j|\leq 2}  2^{\alpha j} \|\Delta_j \Delta_k \bv\|_{L_x^2} \| S_{k-1}\Lambda_x f \|_{L^\infty_x} \right\}_{l^2_j}
  \\
  & \lesssim \|\partial \bv\|_{L^\infty_x} \| f \|_{\dot{H}^\alpha}. 
\end{split}
\end{equation}
Using H\"older's inequality again, we have
\begin{equation}\label{V3}
  \{ \| V_3 \|_{L^2_x} \}_{l^2_j} \lesssim \| \nabla \bv\|_{L_x^\infty} \| f \|_{\dot{H}^\alpha}.
\end{equation}
By using Lemma \ref{Miao} (by setting ${\Phi}=2^{j(\alpha+3)}\Psi(2^3x)$, $\Psi$ is in Schwartz space) and H\"older inequality, we find that
\begin{equation}\label{V2}
\begin{split}
 \{ \| V_2 \|_{L_x^2} \}_{l^2_j}\lesssim & \left\{ \textstyle{\sum}_{|k-j|\leq 2} \|x\Phi\|_{L^1} \| \nabla S_{k-1} \bv \|_{L^\infty_x} \| \Delta_j \Delta_{k}f \|_{L^2_x} \right\}_{l^2_j}
 \\
 \lesssim & \left\{ \textstyle{\sum}_{|k-j|\leq 2} 2^{j\alpha} \| \nabla S_{k-1} \bv \|_{L^\infty_x} \| \Delta_j \Delta_{k}f \|_{L^2_x} \right\}_{l^2_j}
\\
 \lesssim & \|\nabla \bv\|_{\dot{B}^{0}_{\infty,\infty}}\| f \|_{\dot{H}^\alpha} \lesssim  \|\nabla \bv \|_{L_x^\infty} \| f \|_{\dot{H}^\alpha}.
\end{split}
\end{equation}
Gathering \eqref{V1}, \eqref{V3}, and \eqref{V2} together, we have proved this lemma.
\end{proof}
\begin{Lemma}\label{yx}
Let $2<s_1\leq s_2$. Let $\mathbf{g}$ be a Lorentz metric and $\mathbf{g}^{00}=-1$. We have
\begin{equation}\label{YX}
\begin{split}
 \left\{ 2^{(s_1-1)j}\|[\square_{\mathbf{g}}, \Delta_j]f\|_{\dot{H}^{s_2-s_1}} \right\}_{l^2_j} \lesssim & \ \| d f\|_{L_x^\infty} \|d \mathbf{g}\|_{\dot{H}^{s_2-1}}+\| d \mathbf{g}\|_{L_x^\infty}\|d f\|_{\dot{H}^{s_2-1}}.
\end{split}
\end{equation}
\end{Lemma}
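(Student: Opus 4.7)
Since $\mathbf{g}^{00}=-1$ is a constant, the only nonzero contributions to $\square_{\mathbf{g}}=\mathbf{g}^{\alpha\beta}\partial_\alpha\partial_\beta$ under commutation with $\Delta_j$ come from components with at least one spatial index, so
\begin{equation*}
[\square_{\mathbf{g}},\Delta_j]f=-[\Delta_j,\mathbf{g}^{\alpha\beta}]\partial_\alpha\partial_\beta f,
\end{equation*}
where the implicit sum runs over $(\alpha,\beta)\neq(0,0)$. Selecting a spatial index $i\in\{\alpha,\beta\}$ and letting $\beta'$ denote the other index, I plan to apply the Leibniz-type identity
\begin{equation*}
[\Delta_j,\mathbf{g}^{\alpha\beta}]\partial_\alpha\partial_\beta f=\partial_i\bigl([\Delta_j,\mathbf{g}^{\alpha\beta}]\partial_{\beta'}f\bigr)-[\Delta_j,\partial_i\mathbf{g}^{\alpha\beta}]\partial_{\beta'}f
\end{equation*}
to distribute exactly one derivative onto $\mathbf{g}$ and one onto $f$ in each piece, so that the bounds can be matched against $d\mathbf{g}$ and $df$ on the right-hand side of \eqref{YX}. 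Denote the first term (with the outer spatial derivative) by A and the second by B.

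For each piece I expand the inner commutator $[\Delta_j,a]b$ (with $a\in\{\mathbf{g},\partial\mathbf{g}\}$ and $b=\partial f$) by Bony's paraproduct decomposition into low-high, high-low, and high-high contributions, exactly as in the proofs of Lemmas~\ref{ceR} and~\ref{ce}. For the low-high term $\sum_{|k-j|\le 2}[\Delta_j,S_{k-1}a]\Delta_k b$, Lemma~\ref{Miao} applied with $\Phi$ the Littlewood-Paley kernel at scale $2^j$ yields
\begin{equation*}
\|[\Delta_j,S_{k-1}a]\Delta_k b\|_{L^2}\lesssim 2^{-j}\|\nabla S_{k-1}a\|_{L^\infty}\|\Delta_k b\|_{L^2}.
\end{equation*}
In Term~A the outer $\partial_i$ brings a factor $2^j$ cancelling the $2^{-j}$ gain, and weighting by $2^{(s_1-1)j}$ on $\dot{H}^{s_2-s_1}$ (which on a dyadic block at frequency $2^j$ produces $2^{(s_2-1)j}\|\cdot\|_{L^2}$) combines to give, after $\ell^2_j$-summation, the bound $\|d\mathbf{g}\|_{L^\infty}\|df\|_{\dot H^{s_2-1}}$. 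The high-low piece is handled symmetrically, yielding $\|df\|_{L^\infty}\|d\mathbf{g}\|_{\dot H^{s_2-1}}$, and Term~B is treated in the same manner but without the outer derivative (since the inner factor is already $\partial\mathbf{g}$).

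The main obstacle is the high-high interaction $\sum_{k\ge j-1}[\Delta_j,\Delta_k a]\tilde{\Delta}_k b$, where the input frequencies $2^k$ exceed the output frequency $2^j$ and the commutator offers no intrinsic cancellation. To handle it I employ the two-sided split
\begin{equation*}
\|\Delta_k a\cdot\tilde{\Delta}_k b\|_{L^2}\le\min\bigl(\|\Delta_k a\|_{L^\infty}\|\tilde{\Delta}_k b\|_{L^2},\;\|\Delta_k a\|_{L^2}\|\tilde{\Delta}_k b\|_{L^\infty}\bigr),
\end{equation*}
choosing whichever placement allows one to pull out $\|d\mathbf{g}\|_{L^\infty}$ or $\|df\|_{L^\infty}$, and absorbing the remaining Bernstein factor $2^{-k}$ into $d\mathbf{g}$ or $df$ respectively. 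The resulting tail has the form $\sum_{k\ge j-1}2^{s_2(j-k)}c_k$, with $c_k=2^{(s_2-1)k}\|\Delta_k df\|_{L^2}$ or $c_k=2^{(s_2-1)k}\|\Delta_k d\mathbf{g}\|_{L^2}$; since $s_2>s_1>2$, the weights $2^{s_2(j-k)}$ form a summable sequence for $k\ge j-1$, and Young's inequality in $\ell^2_j$ applied to the convolution closes the estimate. Collecting the bounds for Terms~A and~B and summing over the finitely many index configurations delivers~\eqref{YX}.
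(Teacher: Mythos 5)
Your proposal is correct in its essentials and takes a genuinely different route from the paper. You reduce the second-order commutator to first-order commutators via the Leibniz identity $[\Delta_j,\mathbf{g}]\partial_i\partial_{\beta'}f=\partial_i([\Delta_j,\mathbf{g}]\partial_{\beta'}f)-[\Delta_j,\partial_i\mathbf{g}]\partial_{\beta'}f$, so that one derivative sits on $\mathbf{g}$ and one on $f$ in each piece before you run a single paraproduct expansion. The paper instead keeps the second-order commutator intact and splits $R_j=E_j+A_j+G_j$: $A_j=(S_j\mathbf{g}-\mathbf{g})\Delta_j\partial^2 f$ isolates the high-frequency tail of the coefficient and is handled by pure product estimates; $E_j=\Delta_j(\mathbf{g}\,\partial^2 f)-S_j\mathbf{g}\,\Delta_j\partial^2 f$ has its coefficient pre-smoothed, so $\widehat{E_j}$ is genuinely localized near $2^j$ and the commutator gain is clean. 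Your approach avoids the $S_j$-truncation of $\mathbf{g}$ at the cost of having to track frequency localization through the Leibniz split; the paper's avoids the Leibniz step at the cost of an extra term $A_j$. Both work. One imprecision in your writeup worth fixing: you state that Lemma~\ref{Miao} gives $\|[\Delta_j,S_{k-1}a]\Delta_k b\|_{L^2}\lesssim 2^{-j}\|\nabla S_{k-1}a\|_{L^\infty}\|\Delta_k b\|_{L^2}$ uniformly for $a\in\{\mathbf{g},\partial\mathbf{g}\}$, but for $a=\partial\mathbf{g}$ (Term~B) the factor $\|\nabla S_{k-1}(\partial\mathbf{g})\|_{L^\infty}$ is a second derivative of $\mathbf{g}$, which is not among the controlled quantities. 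You need the additional Bernstein step $\|\nabla S_{k-1}(\partial\mathbf{g})\|_{L^\infty}\lesssim 2^{k}\|\partial\mathbf{g}\|_{L^\infty}$ (which, with $|k-j|\le 2$, cancels the $2^{-j}$) — or, more simply, drop Lemma~\ref{Miao} for Term~B's low-high piece entirely and bound the two halves of the commutator directly by $\|\partial\mathbf{g}\|_{L^\infty}\|\Delta_k\partial f\|_{L^2}$, since no gain is required there. The analogous caveat applies to the high-low pieces, where placing the gradient on $S_{k-1}(\partial f)$ would produce $\partial^2 f$. Once you make that explicit, the argument closes.
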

	\begin{proof}
Let ${R}_j=[\square_{\mathbf{g}}, \Delta_j]f$ and $S_j=\textstyle\sum_{j'\leq j} \Delta_{j'}$. Note $\mathbf{g}^{00}=-1$. Then we can decompose ${R}_j$ as
\begin{equation*}
	{R}_j={E}_j+{A}_j+{G}_j,
\end{equation*}
where
\begin{equation*}
\begin{split}
E_j=&\Delta_j(\mathbf{g}^{\alpha i}\partial_{\alpha i }f)-S_{j}(\mathbf{g}^{\alpha i})\Delta_j(\partial_{\alpha i }f),
\\
A_j=&\left( S_{j}(\mathbf{g}^{\alpha i})-\mathbf{g}^{\alpha i}\right)\Delta_j(\partial_{\alpha i}f),
\\
G_j=&\Delta_j\big( c^{-1}_s \partial_\alpha(c_s\mathbf{g}^{\alpha\beta})\big)\partial_{\beta}f.
\end{split}
\end{equation*}
We note
\begin{equation*}
  \Delta_{k} \Delta_j f=0, \quad \mathrm{if} \ |k-j|\geq 8.
\end{equation*}
Note again the support of $\widehat{E}_j$ being in the set $\{\xi: 2^{j-5} \leq |\xi|\leq 2^{j+10}\}$, then
\begin{equation*}
\begin{split}
  &\| E_j \|_{\dot{H}^{s_2-s_1}} \lesssim 2^{j(s_2-s_1)}\| E_j\|_{L^2_x}.
\end{split}
\end{equation*}
For $A_j$, we first have
\begin{equation*}
   A_j = \sum_{j'>j} \Delta_{j'} (\mathbf{g}^{\alpha i})\Delta_j(\partial_{\alpha i}f).
\end{equation*}
By classical product estimates, we derive that
\begin{equation}\label{AJ}
\begin{split}
  \| A_j \|_{\dot{H}^{s_2-s_1}}
  & \lesssim  \sum_{j'>j} ( \| \Delta_{j'} (\mathbf{g}^{\alpha i}) \|_{L^\infty} \| \Delta_j(\partial_{\alpha i}f) \|_{\dot{H}^{s_2-s_1}}+ \| \Delta_{j'} (\mathbf{g}^{\alpha i}) \|_{\dot{H}^{s_2-s_1}} \| \Delta_j(\partial_{\alpha i}f) \|_{L^\infty}).
\end{split}
\end{equation}
Using Bernstein's inequality, we can update \eqref{AJ} as
\begin{equation*}
\begin{split}
 & \{  2^{j(s_1-1)}\| A_j \|_{\dot{H}^{s_2-s_1}}  \}_{l^2_j}
 \\
  \lesssim  & \{ \sum_{j'>j} ( \| \Delta_{j'} (\nabla \mathbf{g}^{\alpha i}) \|_{L^\infty} 2^{-j'+j}\| \Delta_j(\partial f) \|_{\dot{H}^{s_2-1}}
  \\
  & + 2^{j(s_1-1)}\| \Delta_{j'} (\mathbf{g}^{\alpha i}) \|_{\dot{H}^{s_2-s_1}}\cdot 2^j\| \Delta_j(\partial f) \|_{L^\infty} ) \}_{l^2_j}.
\end{split}
\end{equation*}
We set
\begin{equation*}
  \begin{split}
  N_1&=\{ \sum_{j'>j}  \| \Delta_{j'} (\nabla \mathbf{g}^{\alpha i}) \|_{L^\infty} 2^{-j'+j}\| \Delta_j(\partial f) \|_{\dot{H}^{s_2-1}} \}_{l^2_j},
  \\
  N_2&=\{ \sum_{j'>j} 2^{j(s_1-1)}\| \Delta_{j'} (\mathbf{g}^{\alpha i}) \|_{\dot{H}^{s_2-s_1}}\cdot 2^j\| \Delta_j(\partial f) \|_{L^\infty} ) \}_{l^2_j}.
  \end{split}
\end{equation*}
A direct calculation tells us
\begin{equation*}
  N_1 \lesssim \| d \mathbf{g} \|_{L^\infty_x} \|d f\|_{\dot{H}^{s_2-1}_x}, \quad N_2 \lesssim \| d f \|_{L^\infty_x} \|d \mathbf{g}\|_{\dot{H}^{s_2-1}_x}.
\end{equation*}
In a result, we get
\begin{equation}\label{A9}
 \left\{   2^{(s_1-1)j} \| A_j \|_{\dot{H}^{s_2-s_1}} \right\}_{l^2_j} \lesssim \| d \mathbf{g} \|_{L^\infty_x} \|d f\|_{\dot{H}^{s_2-1}_x}+ \| d f \|_{L^\infty_x} \|d \mathbf{g}\|_{\dot{H}^{s_2-1}_x}.
\end{equation}
For $G_j$, by phase decomposition, we obtain
\begin{equation*}
\begin{split}
  \Delta_k G_j=& \textstyle{\sum_{|k-m|\leq 2}} \Delta_k \left( \Delta_m\big(\Delta_j( c^{-1}_s \partial_\alpha(c_s\mathbf{g}^{\alpha\beta}))\big) S_{m-1}\partial_{\beta}f \right)
  \\
  &+\textstyle{\sum_{|k-m|\leq 2}} \Delta_k \left( S_{m-1}\big(\Delta_j( c^{-1}_s \partial_\alpha(c_s\mathbf{g}^{\alpha\beta}))\big) \Delta_m\partial_{\beta}f \right)
  \\
  &+\Delta_k \left( \tilde{\Delta}_k\big(\Delta_j( c^{-1}_s \partial_\alpha(c_s\mathbf{g}^{\alpha\beta}))\big)\tilde{\Delta}_k\partial_{\beta}f \right),
\end{split}
\end{equation*}
where $\tilde{\Delta}_k=\Delta_{k-1}+\Delta_k+\Delta_{k+1}$. Due to H\"older's inequality, we have
\begin{equation*}
\begin{split}
  \|\Delta_k G_j \|_{L^2_x} \lesssim & \ \|\Delta_k \Delta_j\big( c^{-1}_s \partial_\alpha(c_s\mathbf{g}^{\alpha\beta})\big)\|_{L^2_x} (\| S_k\partial_{\beta}f \|_{L^\infty_x}+\| \Delta_k \partial_{\beta}f \|_{L^\infty_x})
  \\
  & +\| S_k\big(\Delta_j( c^{-1}_s \partial_\alpha(c_s\mathbf{g}^{\alpha\beta})) \|_{L^\infty_x} \|\Delta_k\partial_{\beta}f \|_{L^2_x}.
\end{split}
\end{equation*}
By H\"older's inequality again, we prove
\begin{equation*}
\begin{split}
  \| G_j \|_{L^2_x} \lesssim  \| \Delta_j\big( c^{-1}_s \partial_\alpha(c_s\mathbf{g}^{\alpha\beta})\big)\|_{L^2_x} \| d f \|_{L^\infty_x}.
\end{split}
\end{equation*}
In a result, we can get
\begin{equation*}
  \| G_j \|_{\dot{H}_x^{s_2-s_1}} \lesssim \| \Delta_j\big( c^{-1}_s \partial_\alpha(c_s\mathbf{g}^{\alpha\beta})\big)\|_{L^2_x} \| d f \|_{L^\infty_x}+ \|  c^{-1}_s \partial_\alpha(c_s\mathbf{g}^{\alpha\beta}) \|_{L^\infty_x} \|\Delta_j \partial_{\beta}f \|_{L^2_x}.
\end{equation*}
\begin{equation}\label{G9}
 \left\{   2^{(s_1-1)j} \| G_j \|_{\dot{H}^{s_2-s_1}} \right\}_{l^2_j} \lesssim \| d f\|_{L^\infty_x} \|d\mathbf{g} \|_{\dot{H}^{s_2-1}_x}+ \| d\mathbf{g}\|_{L^\infty_x} \|df \|_{\dot{H}^{s_2-1}_x}.
\end{equation}
It remains for us to give a bound for $E_j$. By phase decomposition, we have
\begin{equation*}
  \begin{split}
  E_j= & \Delta_j(\mathbf{g}^{\alpha i}\partial_{\alpha i }f)-S_{j}(\mathbf{g}^{\alpha i})\Delta_j(\partial_{\alpha i }f)
  \\
   =& \Delta_j\left(S_j(\mathbf{g}^{\alpha i})\partial_{\alpha i }f \right)-S_{j}(\mathbf{g}^{\alpha i})\Delta_j(\partial_{\alpha i }f)
  \\
  &+ \Delta_j\left(\Delta_j(\mathbf{g}^{\alpha i})S_j(\partial_{\alpha i }f) \right) + \Delta_j \left(\tilde{\Delta}_j(\mathbf{g}^{\alpha i})\tilde{\Delta}_j(\partial_{\alpha i }f) \right),
  \end{split}
\end{equation*}
where $\tilde{\Delta}_j={\Delta}_{j-1}+{\Delta}_j+{\Delta}_{j+1}$.
By using commutator estimates, we have
\begin{equation*}
\begin{split}
  \| E_j \|_{L^2_x} &= \| [{\Delta}_j, {S}_j \mathbf{g}^{\alpha i} ]\Delta_j(\partial_{\alpha i })f \|_{L^2_x}+\| \Delta_j\left(\Delta_j(\mathbf{g}^{\alpha i})S_j(\partial_{\alpha i }f) \right)\|_{L^2_x} + \| \Delta_j \left(\tilde{\Delta}_j(\mathbf{g}^{\alpha i})\tilde{\Delta}_j(\partial_{\alpha i }f) \right)\|_{L^2_x}
  \\
  & \lesssim  \|\partial \mathbf{g}\|_{L^\infty_x} \|\Delta_j(d f) \|_{L^2_x}+\|\partial f\|_{L^\infty_x} \|\Delta_j(d \mathbf{g}) \|_{L^2_x}.
\end{split}
\end{equation*}
Therefore, we obtain
\begin{equation}\label{E9}
 \left\{   2^{(s_1-1)j} \| E_j \|_{\dot{H}^{s_2-s_1}} \right\}_{l^2_j} \lesssim \| \partial \mathbf{g}\|_{L^\infty_x} \|d f\|_{\dot{H}^{s_2-1}_x}+\| \partial f\|_{L^\infty_x} \|d \mathbf{g}\|_{\dot{H}^{s_2-1}_x}.
\end{equation}
By combining \eqref{A9}, \eqref{G9}, and \eqref{E9}, we find that \eqref{YX} holds.
\end{proof}
\begin{Lemma}\label{yux}
Let $(\bv,\rho,h)$ be a solution of \eqref{fc0}. Let $\bv_{-}$ be defined in \eqref{etad}. Let $\bQ, D$ and $E$ be stated in \eqref{DDi}. Let $s \in (2,\frac52]$ and $2<s_0<s$. Then the following estimates
\begin{equation}\label{YYE}
  \| \bQ, D, E\|_{ H_x^{s-1}} \lesssim \| d\rho, d\bv, dh \|_{L_x^\infty} (\| \rho, \bv \|_{H_x^{s}}+ \| h \|_{H^{\frac52+}}+ \| \bw \|_{H^{\frac32+}}),
\end{equation}
and
\begin{equation}\label{eta}
  \| \mathbf{T} \bv_{-} \|_{H_x^s} \lesssim  \|h\|^3_{H_x^{s}} + \| \rho \|^3_{H_x^{s}}+  ( \|\bv\|_{H_x^{s}} + \| \rho \|_{H_x^{s}})(\| \bw \|_{H^{\frac{3}{2}+}} + \| h \|_{H_x^{\frac{5}{2}+}}) ,
\end{equation}
hold. Moreover, the function $\bv_{-}$ satisfies
\begin{equation}\label{eee}
  \| \bv_{-} \|_{H^{s_0+1}}  \lesssim (1+\| \rho \|_{H^{s_0}_x}) \| \bw \|_{H_x^{s_0}}.
\end{equation}
\end{Lemma}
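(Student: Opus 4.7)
\textbf{Proof plan for Lemma \ref{yux}.} The plan is to handle the three estimates in order of increasing difficulty, since the estimate for $\bv_{-}$ feeds into the estimate for $\mathbf{T}\bv_{-}$.

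First, for \eqref{eee}, I would apply elliptic regularity directly to the defining equation $-\Delta v_{-}^i = \mathrm{e}^\rho \,\mathrm{curl}\,\bw^i$, giving $\|\bv_{-}\|_{\dot H^{s_0+1}} \lesssim \|\mathrm{e}^\rho\,\mathrm{curl}\,\bw\|_{\dot H^{s_0-1}}$. Since $s_0 > 2 > 3/2$, Lemma \ref{cj} gives $\|\mathrm{e}^\rho\,\mathrm{curl}\,\bw\|_{H^{s_0-1}} \lesssim \|\mathrm{e}^\rho\|_{L^\infty}\|\bw\|_{H^{s_0}} + \|\mathrm{curl}\,\bw\|_{L^\infty}\|\mathrm{e}^\rho - 1\|_{H^{s_0-1}}$, and the composition estimate in Lemma \ref{jh0} (applied to $F(u) = \mathrm{e}^u - 1$, using $\rho \in L^\infty$ from \eqref{HEw}) together with Sobolev embedding yields the stated bound. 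The $L^2$ piece of $\bv_{-}$ is controlled similarly using $(I-\Delta)^{-1}$ on low frequencies, exploiting the decay of $\mathrm{curl}\,\bw$.

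Next, for \eqref{YYE}, I would expand $\bQ^i, D, E$ via \eqref{DDi} as finite sums of bilinear products of the form $f(\bv,\rho,h)\,\partial_\alpha A\,\partial_\beta B$ with $A, B \in \{\bv, \rho, h\}$ and $f$ a smooth function. By Lemma \ref{cj},
\[
\|f\,\partial A\,\partial B\|_{H^{s-1}} \lesssim \|f\|_{L^\infty\cap H^{s-1}}\bigl(\|\partial A\|_{L^\infty}\|\partial B\|_{H^{s-1}} + \|\partial B\|_{L^\infty}\|\partial A\|_{H^{s-1}}\bigr),
\]
so the $L^\infty$ factor of $(d\bv, d\rho, dh)$ can be pulled out. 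The remaining factor is controlled by $\|\bv,\rho\|_{H^s} + \|h\|_{H^{5/2+}} + \|\bw\|_{H^{3/2+}}$, noting that $\bw$ does not appear at top order in $\bQ, D, E$ and that the prefactor $\|f\|_{H^{s-1}}$ is tamed via Lemma \ref{jh0} since $f$ is smooth in $(\bv,\rho,h)$ and these are bounded by \eqref{HEw}.

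The main effort will be \eqref{eta}. The strategy is to apply $\mathbf{T}$ to $-\Delta \bv_{-} = \mathrm{e}^\rho\,\mathrm{curl}\,\bw$ and commute:
\[
-\Delta(\mathbf{T}\bv_{-}) = \mathbf{T}\bigl(\mathrm{e}^\rho\,\mathrm{curl}\,\bw\bigr) + [\mathbf{T},\Delta]\bv_{-},
\]
so by elliptic regularity $\|\mathbf{T}\bv_{-}\|_{H^s} \lesssim \|\mathbf{T}(\mathrm{e}^\rho\,\mathrm{curl}\,\bw)\|_{H^{s-2}} + \|[\mathbf{T},\Delta]\bv_{-}\|_{H^{s-2}}$. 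The commutator $[\mathbf{T},\Delta]\bv_{-} = -2\partial_i v^j\,\partial_i\partial_j\bv_{-} - \Delta v^j\,\partial_j\bv_{-}$ is estimated by Lemma \ref{ps} using \eqref{eee} (with $s_0$ replaced by $s$, which is legitimate since the RHS of \eqref{eee} already controls the required norm up to the $H^s$ threshold). For $\mathbf{T}(\mathrm{e}^\rho\,\mathrm{curl}\,\bw) = -\mathrm{e}^\rho\,\mathrm{div}\,\bv\cdot\mathrm{curl}\,\bw + \mathrm{e}^\rho\,\mathbf{T}\,\mathrm{curl}\,\bw$, I would substitute the transport identity \eqref{W1} for $\mathbf{T}\,\mathrm{curl}\,\bw$. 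The resulting source terms split into two families: (i) quadratic terms in $\partial\bv, \partial\bw$, which by Lemma \ref{ps} contribute $(\|\bv\|_{H^s} + \|\rho\|_{H^s})(\|\bw\|_{H^{3/2+}} + \|h\|_{H^{5/2+}})$; and (ii) cubic terms in $\partial\rho, \partial h, \partial^2 h, \partial^2\rho$ multiplied by smooth functions of $\rho, h$, which by the triple product Lemma \ref{wql} contribute $\|h\|_{H^s}^3 + \|\rho\|_{H^s}^3$. The hard part will be this last step: since $s \leq 5/2$ forces $s-2 \leq 1/2$, product estimates in $H^{s-2}$ are at the boundary of the admissible range in Lemma \ref{ps} (which requires $r+r' > 3/2$), so one must carefully distribute derivatives among $\partial\rho, \partial h, \partial^2 h$, using the slightly stronger $H^{5/2+}$ norm of $h$ to close the estimate with a small loss.
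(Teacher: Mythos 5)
Your plan matches the paper's proof almost step for step: elliptic regularity together with the algebra estimates for \eqref{eee}; Lemma \ref{cj} applied to the bilinear expansion of $\bQ, D, E$, followed by converting time derivatives to spatial derivatives via \eqref{fc0}, for \eqref{YYE}; and for \eqref{eta}, the identical commutation of $\mathbf{T}$ past $\Delta$, substitution of \eqref{W1}, and product estimates via Lemmas \ref{ps} and \ref{wql}. Two small points to fix when writing it out: the terms $\partial^2\rho\,\partial h$ and $\partial\rho\,\partial^2 h$ coming from \eqref{W1} are quadratic (not cubic) and belong in family (i), producing $\|\rho\|_{H^s}\|h\|_{H^{5/2+}}$ via Lemma \ref{ps} rather than a cube; and to control $\|\partial^2\bv_{-}\|_{H^{1/2+}}$ in the commutator term you should invoke the elliptic estimate at the low level $\|\bv_{-}\|_{H^{5/2+}}\lesssim(1+\|\rho\|)\|\bw\|_{H^{3/2+}}$ rather than ``\eqref{eee} with $s_0\to s$'', since the latter would put $\|\bw\|_{H^s}$ on the right-hand side, which is not available in \eqref{eta}.
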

\begin{proof}
We recall the expression of $D$
\begin{equation*}
\begin{split}
  D=& -3c_s^{-1}\frac{\partial c_s}{\partial \rho} g^{\alpha \beta} \partial_\alpha \rho \partial_\beta \rho+2 \textstyle{\sum_{1 \leq a < b \leq 3} }\big\{ \partial_a v^a \partial_b v^b-\partial_a v^b \partial_b v^a \big\}
\\
&  - \frac{1}{\gamma}c^2_s \partial^a h \partial_a \rho- \frac{1}{\gamma}c^2_s \partial^a h \partial_a h+c^3_s \partial_\beta h \partial_{\alpha} (c^{-3}_s g^{\alpha \beta})
\end{split}
\end{equation*}
which is described in \eqref{DDi}. Using Lemma \ref{cj}, we can get
\begin{equation}\label{Ds}
  \| D \|_{H_x^{s-1}} \lesssim  \| d\rho \|_{L_x^\infty} \| d\rho \|_{H_x^{s-1}}+ \| \partial \bv \|_{L_x^\infty} \| \partial v \|_{H_x^{s-1}}.
\end{equation}
Note
\begin{equation*}
\begin{split}
  {Q^i}:=& -\frac{2}{\gamma}e^{\rho} c^2_s \epsilon^i_{ab}  w^b  \partial^a h-( 1+c_s^{-1}\frac{\partial c_s}{\partial \rho})g^{\alpha \beta} \partial_\alpha {\rho} \partial_\beta v^i
 -\frac{1}{\gamma}c^2_s \epsilon^{iab}w_a \partial_b h
 \\
 & - \frac12 c^2_s \partial^a h \partial_a v^i+ \frac{2c_s}{\gamma} \frac{\partial c_s}{\partial \rho} \mathrm{div}\bv \partial^i h+c^3_s \partial_\beta h \partial_{\alpha} (c^{-3}_s g^{\alpha \beta}).
\end{split}
\end{equation*}
By Lemma \ref{cj}, we can deduce that
\begin{equation}\label{Qi}
\begin{split}
  \|\bQ \|_{H_x^{s-1}}&  \lesssim (\|d\rho \|_{L_x^\infty} + \|d \bv \|_{L_x^\infty}+ \|dh \|_{L_x^\infty}) ( \| d \rho \|_{H_x^{s-1}}+ \| d \bv \|_{H_x^{s-1}}+ \| d h \|_{H_x^{s-1}}).
\end{split}
\end{equation}
By using \eqref{fc0}, then we have
\begin{equation}\label{fa}
  \| d \rho \|_{H_x^{s-1}}+ \| d \bv \|_{H_x^{s-1}}+ \| d h \|_{H_x^{s-1}} \lesssim \|\bv\|_{H_x^{s}}+\| \rho \|_{H_x^{s}}+\| h \|_{H_x^{s}}.
\end{equation}
Combining \eqref{fa}, \eqref{Qi}, and \eqref{Ds}, we have proved \eqref{YYE}. It remains for us to prove \eqref{eta} and \eqref{eee}. By using \eqref{etad} and Sobolev imbedding, and elliptic estimates, we can derive that
\begin{equation}\label{ETAs}
\begin{split}
  \| \bv_{-} \|_{H_x^{s}} \leq  \| \mathrm{e}^{\rho} \mathrm{curl} \bw \|_{H_x^{s-2}} &\lesssim  (1+\|\rho\|_{H^{\frac{3}{2}+}} )\| \bw \|_{H_x^{s-1}},
\end{split}
\end{equation}
and
\begin{equation}\label{ETAs0}
\begin{split}
  \| \bv_{-}  \|_{H^{s_0+1}} \lesssim (1+\| \rho \|_{H^{2}_x})\| \bw \|_{H_x^{s_0}}.
\end{split}
\end{equation}
Let us give a bound for $\mathbf{T} \bv_{-}$. By using \eqref{etad}, we get
\begin{equation*}
  -\Delta \bv_{-} =\mathrm{e}^{\rho}\mathrm{curl}\bw.
\end{equation*}
Then we have
\begin{equation}\label{bs}
  -\Delta(\mathbf{T} v_{-}^i)  = \mathbf{T} (\mathrm{e}^{\rho}\mathrm{curl}\bw^i)  - \Delta v^m \partial_m v_{-}^i-2 \partial_j v^m \partial^j(\partial_m v_{-}^i).
\end{equation}
By \eqref{bs}, \eqref{W1}, and Lemma \ref{ps}, we have
\begin{equation}\label{bsl}
\begin{split}
 \| \mathbf{T}  \bv_{-} \|_{H_x^s}  \lesssim & \| \partial \bv \cdot \partial \bw \|_{H_x^{s-2}}+ \| \partial^2 \bv \cdot \partial \bv_{-} \|_{H_x^{s-2}}+\| \partial^2 \bv_{-} \cdot \partial \bv \|_{H_x^{s-2}}+ \| \partial^2 \rho \cdot \partial h \|_{H_x^{s-2}}
 \\
 & + \| \partial h \cdot \partial \rho \cdot \partial h \|_{H_x^{s-2}}+ \| \partial \rho \cdot \partial \rho \cdot \partial h \|_{H_x^{s-2}}
  + \| \partial \rho \cdot \partial^2 h \|_{H_x^{s-2}}
 \\
  \lesssim & \| \partial \bv \|_{H_x^{s-1}} \| \partial \bw \|_{H_x^{\frac{1}{2}+}} +\| \partial^2 \bv \|_{H_x^{s-2}} \| \partial \bv_{-} \|_{H_x^{\frac{3}{2}+}}+\| \partial \bv \|_{H_x^{s-1}} \| \partial^2 \bv_{-} \|_{H_x^{\frac{1}{2}+}}
  \\
  & + (\| \partial h \|_{H_x^{s-1}}+\| \partial \rho \|_{H_x^{s-1}}) \| \partial \rho \|_{H_x^{s-1}} \| \partial h \|_{H_x^{s-1}}
  \\
  &+  \| \partial \rho \|_{H_x^{s-1}} \| \partial^2 h \|_{H_x^{\frac{1}{2}+}} + \| \partial^2 \rho \|_{H_x^{s-2}} \| \partial h \|_{H_x^{\frac{3}{2}+}}
 \\
   \lesssim & (\| \bw \|_{H^{\frac{3}{2}+}} + \| h \|_{H_x^{\frac{5}{2}+}})  ( \|\bv\|_{H_x^{s}} + \| \rho \|_{H_x^{s}})+  \|h\|^3_{H_x^{s}} + \| \rho \|^3_{H_x^{s}}.
\end{split}
\end{equation}
This completes the proof of the lemma.
\end{proof}
\section{Energy estimates} \label{sec:energyest}
In this part, we will prove the energy estimates. Let us first use the hyperbolic system to give a classical energy estimate for density, velocity, and entropy.
\begin{theorem}[\cite{M}]\label{dv}{(Classical energy estimates for velocity and density)}
	Let $(\rho,\bv,h)$ be a solution of \eqref{fc0}. Then for any $a\geq 0$, we have
and
\begin{equation}\label{E0}
 \| ({\rho},\bv,h)\|_{H_x^a}  \leq   \|({\rho}_0,\bv_0,h_0)\|_{H_x^a}  \exp(C {\int^t_0} \|(d\bv, d\rho,dh)\|_{L^\infty_x}d\tau), \quad t \in [0,T].
\end{equation}
\end{theorem}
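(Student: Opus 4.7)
The plan is to exploit the symmetric hyperbolic formulation \eqref{sq} from Lemma \ref{sh}, where $\bU=(v^1,v^2,v^3,p(\rho),h)^{\mathrm{T}}$. The matrices $A^0(\bU),A^i(\bU)$ are smooth in $\bU$ and symmetric, and the assumption \eqref{HEw} (uniform $L^\infty$ bounds on $\bv,\rho,h$ together with $c_s|_{t=0}>c_0>0$, propagated on $[0,T]$) ensures that $A^0$ is uniformly positive definite with $A^0\simeq I$. The argument splits naturally into an $L^2$ estimate and a homogeneous $\dot H^a$ estimate, which together give the $H^a$ bound.

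For the $L^2$ estimate I would pair \eqref{sq} with $\bU$, integrate over $\mathbb{R}^3$, and use the symmetry of $A^0,A^i$ to obtain
\begin{equation*}
\frac{d}{dt}\int \langle A^0\bU,\bU\rangle\, dx
=\int \Big\langle \Big(\partial_t A^0+\sum_{i=1}^{3}\partial_i A^i\Big)\bU,\bU\Big\rangle\, dx.
\end{equation*}
Since the entries of $\partial_t A^0$ and $\partial_i A^i$ are smooth functions of $\bU$ contracted with $\partial_{t,x}\bU$, and the hyperbolic system recasts $\partial_t\bU$ in terms of $\partial_x\bU$, this integrand is pointwise controlled by $C\|(d\bv,d\rho,dh)\|_{L^\infty_x}|\bU|^2$, so $A^0\simeq I$ yields the $L^2$ part.

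For the homogeneous $\dot H^a$ part with $a>0$, I would apply $\Lambda_x^a$ to \eqref{sq}, which gives
\begin{equation*}
A^0\,\partial_t(\Lambda_x^a \bU)+\sum_{i=1}^{3}A^i\,\partial_i(\Lambda_x^a \bU)
=-[\Lambda_x^a,A^0]\partial_t \bU-\sum_{i=1}^{3}[\Lambda_x^a,A^i]\partial_i \bU,
\end{equation*}
then pair with $\Lambda_x^a \bU$. The left side again produces $\tfrac{d}{dt}\int \langle A^0\Lambda_x^a\bU,\Lambda_x^a\bU\rangle dx$ plus a term bounded by $C\|(d\bv,d\rho,dh)\|_{L^\infty_x}\|\bU\|_{\dot H_x^a}^2$, while the commutator terms are handled by Lemma \ref{jh} (Kato--Ponce) via
\begin{equation*}
\|[\Lambda_x^a,A^i]\partial_i\bU\|_{L^2_x}
\lesssim \|\partial A^i\|_{L^\infty_x}\|\bU\|_{\dot H_x^a}+\|\partial \bU\|_{L^\infty_x}\|A^i\|_{\dot H_x^a}.
\end{equation*}
The chain rule gives $\partial A^i=DA^i(\bU)\cdot \partial\bU$, and the Moser-type composition estimate (Lemma \ref{jh0}), together with the uniform $L^\infty$ bounds on $\bU$ from \eqref{HEw}, converts $\|A^i\|_{\dot H^a}$ into $\lesssim \|\bU\|_{\dot H^a}$ with constants depending only on $C_0,c_0$.

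Adding the $L^2$ and $\dot H^a$ pieces and using $A^0\simeq I$ produces
\begin{equation*}
\frac{d}{dt}\|\bU\|_{H_x^a}^2\lesssim \|(d\bv,d\rho,dh)\|_{L^\infty_x}\|\bU\|_{H_x^a}^2,
\end{equation*}
after which Gronwall's inequality delivers \eqref{E0}. The main technical point, and the only nontrivial step, is the fractional Leibniz/commutator bound: one must ensure that the coefficient norms $\|A^i\|_{\dot H^a}$ can be absorbed back into $\|\bU\|_{\dot H^a}$ with universal constants. This is precisely where the $L^\infty$ hypotheses \eqref{HEw} and the strict positivity of $c_s$ are used, since they guarantee that $\bU$ stays in a fixed compact set on which the smooth nonlinear dependence of $A^i$ on $\bU$ is uniformly Lipschitz at all orders.
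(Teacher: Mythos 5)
Your argument is correct and is essentially the standard symmetric-hyperbolic energy estimate that the paper is citing from \cite{M} (Majda), which is why the paper itself gives no proof; you have reproduced the reference's approach rather than invented a new one. The one point you leave implicit but should keep in mind is the equivalence $\|\bU\|_{H^a_x}\simeq\|(\bv,\rho,h)\|_{H^a_x}$, since $\bU$ carries $p(\rho)$ rather than $\rho$: this uses Lemma \ref{jh0} together with the $L^\infty$ bound on $\rho$ and the positivity of $p'(\rho)$, and is exactly parallel to the way you pass from $\|A^i\|_{\dot H^a}$ back to $\|\bU\|_{\dot H^a}$ in the commutator step.
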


\subsection{Energy estimates of Theorem \ref{dingli}}\label{ES1}
\begin{theorem}\label{ve}{(Energy estimates: type 1)}
Let $(\rho,\bv,h)$ be a solution of \eqref{fc0}. Let $\bw$ be defined in \eqref{pw11} and $\bw$ satisfy the equation \eqref{W0}. For $2<s_0<s\leq\frac52$, the following energy estimates hold:
\begin{equation}\label{eet}
\begin{split}
  E_s(t) \leq & C{E}_0 \exp\left\{ E_0 \int^t_0  \|(\partial \bv, \partial \rho, \partial h) \|_{\dot{B}^{s_0-2}_{\infty,2}}  d\tau
    \cdot \exp ( \int^t_0  \|(d \bv, d \rho, d h) \|_{L^\infty_x}  d\tau ) \right\},
\end{split}
\end{equation}
where $C$ is a universal constant in the sense of section \ref{sec:statement} and
\begin{equation}\label{ee1}
  E_s(t)=\|\bv\|^2_{H_x^{s}} + \|\rho\|^2_{H_x^{s}}+\|h\|^2_{H_x^{s_0+1}}+ \|\bw\|^2_{H_x^{s_0}},
\end{equation}
and ${E}_0=E_s(0)+E^2_s(0)+E^3_s(0)+E^{2s_0-1}_s(0)+E^{1+\frac{s_0-1}{s_0-2}}_s(0)$.
\end{theorem}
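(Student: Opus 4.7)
The plan is to split $E_s$ into the hyperbolic block $\|(\bv,\rho,h)\|_{H_x^s}$, the vorticity block $\|\bw\|_{H_x^{s_0}}$, and the entropy block $\|h\|_{H_x^{s_0+1}}$, estimate each block separately, then close by a Gronwall argument. First I would apply Theorem \ref{dv} to the symmetric hyperbolic system \eqref{sq} to control the first block in terms of $\int_0^t\|(d\bv,d\rho,dh)\|_{L^\infty_x}\,d\tau$, which produces the inner exponential in \eqref{eet}. This step is routine and does not involve $\bw$.

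The vorticity block is the heart of the argument. An $L^2$ bound on $\bw$ follows from \eqref{W0} and Lemma \ref{cj}. For $\|\bw\|_{\dot H_x^{s_0}}$ I would use \eqref{W2}, namely
\begin{equation*}
\mathbf{T}(\mathrm{curl}\,\mathrm{curl}\,\bw^i + F^i) = \partial^i(2\partial_n v_a \partial^n w^a) + K^i,
\end{equation*}
apply $\Lambda_x^{s_0-2}$, pair in $L^2$ with $\Lambda_x^{s_0-2}(\mathrm{curl}\,\mathrm{curl}\,\bw^i + F^i)$, and integrate on $[0,t]\times\mathbb R^3$. The $F^i$ and $K^i$ contributions are absorbed by Lemmas \ref{cj}, \ref{ps}, \ref{lpe}, \ref{wql} and produce $\|(d\bv,d\rho,dh)\|_{L^\infty_x}$ and $\|(\partial\bv,\partial\rho,\partial h)\|_{\dot B^{s_0-2}_{\infty,2}}$ times polynomial powers of $E_s$. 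The genuinely hard piece is \eqref{eq01}; since $\partial^i(\mathrm{curl}\,\mathrm{curl}\,\bw^i)=0$, integration by parts reduces it to \eqref{eq02}. There I would perform the Plancherel rebalancing sketched in the Overview: move half a derivative from $\Lambda_x^{s_0-2}\partial^i(\partial_n v^a\partial^n w_a)$ onto the $\partial\rho\,\partial\bw$ or $\partial\bv\,\partial^2 h$ factor, and apply Lemma \ref{lpe}. The $\epsilon^{ijk}\partial_k h\,\Delta v_j$ piece is rewritten via $\epsilon^{ijk}\partial_i\Delta v_j=\Delta\bw^k$ and $\epsilon^{ijk}\partial_i\partial_k h=0$, so two of the three derivatives fall on $\bw$, where they are counted by the energy. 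Finally, $-\Delta=\mathrm{curl}\,\mathrm{curl}-\nabla\,\mathrm{div}$ combined with \eqref{Wd1} recovers $\|\bw\|_{\dot H_x^{s_0}}$ from the curl-curl energy.

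The entropy block runs in parallel. An $L^2$ bound on $h$ is immediate from $\mathbf{T}h=0$. For the $\dot H_x^{s_0+1}$ piece I would use \eqref{Hh2} for $\partial^k(e^\rho\partial_iH^i)$, apply $\Lambda_x^{s_0-2}$, and pair with $\Lambda_x^{s_0-2}\partial^k(e^\rho\partial_iH^i)$. The dangerous term \eqref{ppp} is integrated by parts into the two pieces of \eqref{ppp0}; in the second one I symmetrize $\partial^k v^j=\tfrac12(\partial^k v^j-\partial^j v^k)+\tfrac12(\partial^k v^j+\partial^j v^k)$, so the antisymmetric part becomes $\tfrac12 e^\rho\epsilon^{lkj}w_l$ and yields \eqref{eq06}, estimated by Plancherel against $\|\bw\|_{H^{s_0}}$ already controlled above; the symmetric part \eqref{eq07} is integrated by parts once more to \eqref{eq08}+\eqref{eq09}, and in \eqref{eq09} one replaces $\partial_k v^k$ by $-\mathbf{T}\rho$, so a further integration by parts in $\mathbf{T}$ moves the last derivative onto $e^\rho\partial_iH^i$, where it is absorbed by the energy. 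The $Y^k$ terms in \eqref{Hh3} are quadratic or cubic in $(d\bv,d\rho,dh,\partial^2\rho,\partial\bw)$ and are controlled by Lemmas \ref{cj}, \ref{ps}, \ref{lpe}, \ref{wql}.

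Combining the three blocks produces a differential inequality of the form
\begin{equation*}
\tfrac{d}{dt}E_s \lesssim \Phi(E_s)\bigl(\|(d\bv,d\rho,dh)\|_{L^\infty_x}+\|(\partial\bv,\partial\rho,\partial h)\|_{\dot B^{s_0-2}_{\infty,2}}\bigr),
\end{equation*}
where $\Phi$ is polynomial, its degree dictated by the cubic interactions $\partial\rho\,\partial\rho\,\partial h$ in $F^i$, $K^i$, $Y^k$ (this accounts for the $E_0^{2s_0-1}$ and $E_0^{1+(s_0-1)/(s_0-2)}$ powers in the definition of $E_0$). Gronwall then yields \eqref{eet}, with the $L^\infty$ norm generating the inner exponential and the Besov norm the outer one. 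The main obstacle is the top-order curl-curl cubic term: a naive H\"older estimate would lose one derivative, and only the combination of $\partial^i\mathrm{curl}\,\mathrm{curl}\,\bw^i=0$, the Plancherel rebalancing, and the algebraic identity $\epsilon^{ijk}\partial_i\partial_kh=0$ keeps the estimate at the borderline Besov regularity $\dot B^{s_0-2}_{\infty,2}$, which is exactly the space in which the Strichartz estimates of later sections deliver control.
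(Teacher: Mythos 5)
Your proposal takes essentially the same route as the paper: the three-block decomposition, the hyperbolic energy estimate for the $H^s$ block, the vorticity block via the elliptic split $\|\bw\|_{\dot H^{s_0}}\lesssim\|\mathrm{div}\,\bw\|_{\dot H^{s_0-1}}+\|\mathrm{curl}\,\mathrm{curl}\,\bw\|_{\dot H^{s_0-2}}$ combined with the transport law \eqref{W2}, the divergence-free cancellation $\partial^i\mathrm{curl}\,\mathrm{curl}\,\bw^i=0$, the Plancherel rebalancing, and the identities $\epsilon^{ijk}\partial_i\Delta v_j=\Delta\bw^k$, $\epsilon^{ijk}\partial_i\partial_k h=0$; the entropy block via \eqref{Hh2} with isolation of the vorticity and $\mathrm{div}\,\bv=-\mathbf T\rho$; and a final Gronwall. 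Two small slips relative to the paper's proof. First, in the entropy block the paper uses the split $\partial^kv^j=(\partial^kv^j-\partial^jv^k)+\partial^jv^k$ rather than your $\tfrac12$-weighted symmetric/antisymmetric decomposition; with your version the further integration by parts produces $\tfrac12\Delta v^j$ in addition to $\tfrac12\partial^j\mathrm{div}\,\bv$, and you would need one more application of the Hodge decomposition \eqref{Hod} to absorb the extra term — harmless, but worth knowing that the paper's choice avoids it entirely. Second, the Plancherel-rebalanced pieces (the analogues of $I_{42}$, $I_{44}$) are actually estimated by a factor of $t$ times energy quantities, \emph{not} by $\int_0^t\|(\partial\bv,\partial\rho,\partial h)\|_{\dot B^{s_0-2}_{\infty,2}}\,d\tau$, and the paper closes them by restricting to $0<t\lesssim 1$ before Gronwall; attributing the borderline Besov control to these terms misdescribes how they are absorbed. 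Neither point is a gap — your plan is the paper's plan.
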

\begin{proof}
By using \eqref{E0}, we have
\begin{equation}\label{e000}
 \| ({\rho},\bv,h)\|^2_{H_x^s}(t)\leq C\| (\rho_0,\bv_0,h_0)\|^2_{H_x^s} \exp\{ \int^t_0 \|(d\bv, d\rho,dh)\|_{L^\infty_x} d\tau \} .
\end{equation}
We then discuss $\bw$ and $h$ into several steps.

\textbf{Step 1: The estimate for $\bw$}. Recall that $\bw$ satisfies \eqref{W0}. Multiplying $\bw$ on \eqref{W0} and integrating it on $ [0,t]\times \mathbb{R}^3$, we can get
\begin{equation}\label{e001}
  \| \bw \|^2_{L_x^2}(t)- \| \bw \|^2_{L_x^2}(0)\leq C \int^t_0 \| \partial \bv\|_{L^\infty_x}\|\bw\|^2_{L^2_x}d\tau+C\int^t_0\| \partial \rho\|_{L^\infty_x}\|\partial h\|_{L^2_x} \|\bw\|_{L^2_x} d\tau.
\end{equation}
By elliptic estimates, so we have
\begin{equation}\label{w1}
  \| \bw \|_{\dot{H}^{s_0}_x} \leq \| \mathrm{curl} \mathrm{curl} \bw \|_{\dot{H}^{s_0-2}_x}+\| \mathrm{div} \bw\|_{\dot{H}^{s_0-1}_x}.
\end{equation}
Let us first estimate $\| \mathrm{div} \bw \|_{\dot{H}^{s_0-1}_x}$. Note \eqref{Wd1}. By applying Lemma \ref{ps} to \eqref{Wd1}, it follows that
\begin{equation}\label{DW}
\begin{split}
  \| \mathrm{div} \bw \|_{{H}^{s_0-1}_x} &= \|  \bw \cdot \partial \rho  \|_{{H}^{s_0-1}_x} \leq C \|\bw\|_{H_x^{\frac32+}}\|\rho\|_{{H}^{s_0}_x}.
\end{split}
\end{equation}
Let us next estimate $\| \mathrm{curl} \mathrm{curl} \bw \|_{\dot{H}^{s_0-2}_x}$. Taking derivatives $\Lambda^{s_0-2}_x$ of \eqref{W2}, we find
\begin{equation}\label{W31}
\begin{split}
 \mathbf{T} \left\{ \Lambda^{s_0-2}_x \left( \mathrm{curl} \mathrm{curl} \bw^i +F^i \right) \right\}=& \Lambda^{s_0-2}_x \partial^i \big( 2 \partial_n v_a \partial^n w^a \big)+ \Lambda^{s_0-2}_xK^i
 \\
 &  + [\mathbf{T},\Lambda^{s_0-2}_x]\left( \mathrm{curl} \mathrm{curl} \bw^i +F^i \right).
\end{split}
\end{equation}
Multiplying $\Lambda^{s_0-2}_x(\mathrm{curl} \mathrm{curl} \bw^i +F^i)$
on \eqref{W31}, and then integrating it over $[0,t] \times \mathbb{R}^3$, which can implies
\begin{equation}\label{W32}
\begin{split}
    \|\mathrm{curl} \mathrm{curl} \bw + \bF\|^2_{\dot{H}^{s_0-2}_x}(t)
  \lesssim & \|\mathrm{curl} \mathrm{curl} \bw + \bF\|^2_{\dot{H}^{s_0-2}_x}(0)+ | \int^t_0 \int_{\mathbb{R}^3}\mathrm{div}\bv \cdot |\Lambda^{s_0-2}_x(\mathrm{curl} \mathrm{curl} \bw + \bF)|^2 dxd\tau |
  \\
  &+  | \int^t_0 \int_{\mathbb{R}^3}[\mathbf{T},\Lambda^{s_0-2}_x]\left( \mathrm{curl} \mathrm{curl} \bw^i +F^i \right)\cdot \Lambda^{s_0-2}_x(\mathrm{curl} \mathrm{curl} \bw_i + F_i)dxd\tau |
  \\
  & +  | \int^t_0 \int_{\mathbb{R}^3}\Lambda^{s_0-2}_x \partial^i \big( 2   \partial_n v_a \partial^n w^a \big) \cdot \Lambda^{s_0-2}_x(\mathrm{curl} \mathrm{curl} \bw_i + F_i)dxd\tau |
  \\
  & + | \int^t_0 \int_{\mathbb{R}^3} \Lambda^{s_0-2}_x K^i \cdot \Lambda^{s_0-2}_x(\mathrm{curl} \mathrm{curl} \bw_i + F_i)dxd\tau | ,
\end{split}
\end{equation}
where $ \bF=(F^1,F^2,F^3)^{\mathrm{T}}$. By H\"older's inequality, it follows that
\begin{equation}\label{e002}
  \|\bF\|_{\dot{H}^{s_0-2}_x}(t) \lesssim  \|\rho\|_{H^{s_0}_x}\|\bw\|_{H^{\frac32+}_x}+\|\bv\|_{H^{s_0}_x}\|h\|_{H^{\frac52+}_x}.
\end{equation}
By Young's inequality, we further derive that
\begin{equation}\label{e002a}
  \|\bF\|^2_{\dot{H}^{s_0-2}_x}(t) \leq  \frac18 (\|\bw\|^2_{H^{s_0}_x}+ \|h\|^2_{H^{s_0+1}_x})+ C (\|\bv\|^4_{H^{s_0}_x}+\|\rho\|^4_{H^{s_0}_x} ).
\end{equation}
For the right hand side of \eqref{W32}, let us set
\begin{equation}\label{W34}
\begin{split}
   I_0
  =& \|\mathrm{curl} \mathrm{curl} \bw + \bF\|^2_{\dot{H}^{s_0-2}_x}(0),
  \\
  I_1=&  \int^t_0 \int_{\mathbb{R}^3}\mathrm{div}\bv \cdot |\Lambda^{s_0-2}_x(\mathrm{curl} \mathrm{curl} \bw + \bF)|^2 dxd\tau,
  \\
  I_2=& \int^t_0 \int_{\mathbb{R}^3} \Lambda^{s_0-2}_xK^i\cdot\Lambda^{s_0-2}_x(\mathrm{curl} \mathrm{curl} \bw_i + F_i)dxd\tau,
  \\
  I_3=& \int^t_0 \int_{\mathbb{R}^3}[\mathbf{T},\Lambda^{s_0-2}_x]\left( \mathrm{curl} \mathrm{curl} \bw^i +F^i \right)\cdot\Lambda^{s_0-2}_x(\mathrm{curl} \mathrm{curl} \bw_i + F_i)dxd\tau
  \\
  I_4=&\int^t_0 \int_{\mathbb{R}^3}\Lambda^{s_0-2}_x \partial^i \big( 2   \partial_n v_a \partial^n w^a \big) \cdot \Lambda^{s_0-2}_x(\mathrm{curl} \mathrm{curl} \bw_i + F_i)dxd\tau.
\end{split}
\end{equation}
Let us estimate $I_0,I_1,\ldots,I_4$ one by one. For $I_0$, we have
\begin{equation}\label{I00}
\begin{split}
  |I_0| & \leq \|\mathrm{curl} \mathrm{curl} \bw_0 \|^2_{\dot{H}^{s_0-2}_x}+ C(\|\rho\|_{H^{s_0}_x}\|\bw\|_{H^{\frac32+}_x}+\|\bv\|_{H^{s_0}_x}\|h\|_{H^{\frac52+}_x})
  \\
  & \leq  \|\bw_0 \|^2_{{H}^{s_0}_x} + C(\|\rho_0\|_{H^{s_0}_x}\|\bw_0\|_{H^{\frac32+}_x}+\|\bv_0\|_{H^{s_0}_x}\|h_0\|_{H^{\frac52+}_x}).
\end{split}
\end{equation}
For $I_1$, using \eqref{e002}, we have
\begin{equation}\label{I001}
\begin{split}
  |I_1| \lesssim & \int^t_0 \|\partial \bv\|_{L^\infty_x}\|\mathrm{curl} \mathrm{curl} \bw + \bF\|^2_{\dot{H}^{s_0-2}_x}d\tau
  \\
  \leq & \int^t_0 \|\partial \bv\|_{L^\infty_x}\left\{ \|\bw\|^2_{{H}^{s_0}_x}+\|\bv\|^2_{H^s_x}+\|\rho\|^2_{H^s_x}+\|h\|^2_{H^{s_0+1}_x} \right\}d\tau
  \\
  & + \int^t_0 \|\partial \bv\|_{L^\infty_x} (\|\rho\|^2_{H^{s_0}_x}\|\bw\|^2_{H^{\frac32+}_x}+\|\bv\|^2_{H^{s_0}_x}\|h\|^2_{H^{\frac52+}_x}) d\tau.
\end{split}
\end{equation}
For $I_2$, recalling \eqref{rF} and using Lemma \ref{lpe}, Lemma \ref{wql}, we find that
\begin{equation*}
\begin{split}
  |{I_2}|
  \lesssim & \int^t_0 \|\bK\|_{ \dot{H}^{s_0-2}_x}  \|\mathrm{curl} \mathrm{curl} \bw + \bF\|_{\dot{H}^{s_0-2}_x}d\tau
  \\
  \leq &  \int^t_0 ( \|\partial \rho \partial h \partial \rho \partial h \|_{ \dot{H}^{s_0-2}_x}
   + \|\partial \bv \partial^2 \bw \|_{ \dot{H}^{s_0-2}_x} ) \|\mathrm{curl} \mathrm{curl} \bw + \bF\|_{\dot{H}^{s_0-2}_x}d\tau
  \\
   & + \int^t_0 ( \|\partial \rho \partial \bv \partial \bw \|_{ \dot{H}^{s_0-2}_x}
  + \|\partial \bw \partial \bw \|_{ \dot{H}^{s_0-2}_x} ) \|\mathrm{curl} \mathrm{curl} \bw + \bF\|_{\dot{H}^{s_0-2}_x}d\tau
  \\
  & + \int^t_0 ( \|\partial \rho \partial \rho \partial \rho \partial h \|_{ \dot{H}^{s_0-2}_x}
  + \|\partial \rho \partial h \partial^2 h \|_{ \dot{H}^{s_0-2}_x} ) \|\mathrm{curl} \mathrm{curl} \bw + \bF\|_{\dot{H}^{s_0-2}_x}d\tau
  \\
  & + \int^t_0 ( \|\partial \rho \partial h \partial^2\rho  \|_{ \dot{H}^{s_0-2}_x}
  + \|\partial \bv \partial \bv \partial^2 h \|_{ \dot{H}^{s_0-2}_x} ) \|\mathrm{curl} \mathrm{curl} \bw + \bF\|_{\dot{H}^{s_0-2}_x}d\tau
  \\
  & + \int^t_0 ( \|\partial \rho \partial^3 h   \|_{ \dot{H}^{s_0-2}_x}
  + \|\partial \rho \partial \rho \partial^2 h \|_{ \dot{H}^{s_0-2}_x} ) \|\mathrm{curl} \mathrm{curl} \bw + \bF\|_{\dot{H}^{s_0-2}_x}d\tau
  \\
  & + \int^t_0 ( \|\partial^2 h \partial^2 h   \|_{ \dot{H}^{s_0-2}_x}
  + \|\partial h \partial \bv \partial^2 \bv \|_{ \dot{H}^{s_0-2}_x} ) \|\mathrm{curl} \mathrm{curl} \bw + \bF\|_{\dot{H}^{s_0-2}_x}d\tau.
\end{split}
\end{equation*}
Therefore, we get
\begin{equation}\label{I002}
	\begin{split}
		|{I_2}|
		\lesssim &  \int^t_0 \| (\partial \bv,\partial \rho,\partial h )\|_{\dot{B}^{s_0-2}_{\infty,2}}( \|\bw\|^2_{{H}^{s_0}_x}+\|\bv\|^2_{H^s_x}+\|\rho\|^2_{H^s_x}+\|h\|^2_{H^{s_0+1}_x} ) d\tau
		\\
		&  +\int^t_0 (\| \partial \bv \|_{\dot{B}^{s_0-2}_{\infty,2}} + \| \partial \rho \|_{\dot{B}^{s_0-2}_{\infty,2}} + \| \partial h \|_{\dot{B}^{s_0-2}_{\infty,2}})(\| \rho\|_{ H^2_x }+\| \bv\|_{ H^2_x }+\| h\|_{ H^2_x } )
		\\
		& \qquad \quad \cdot (\| \bw \|^2_{{H}^{s_0}_x} + \| \rho\|^2_{ H^s_x } +\|\bv\|^2_{H^s_x }+\| h \|^2_{{H}^{s_0+1}_x}) d\tau
		\\
		&  +\int^t_0 (\| \partial \bv \|_{\dot{B}^{s_0-2}_{\infty,2}} + \| \partial \rho \|_{\dot{B}^{s_0-2}_{\infty,2}} + \| \partial h \|_{\dot{B}^{s_0-2}_{\infty,2}})(\| \rho\|^2_{ H^2_x }+\| \bv\|^2_{ H^2_x }+\| h\|^2_{ H^2_x } )
		\\
		& \qquad \quad \cdot (\| \bw \|^2_{{H}^{s_0}_x} + \| \rho\|^2_{ H^s_x } +\|\bv\|^2_{H^s_x }+\| h \|^2_{{H}^{s_0+1}_x}) d\tau
		\\
		&  +\int^t_0 (\| \partial \bv \|_{\dot{B}^{s_0-2}_{\infty,2}} + \| \partial \rho \|_{\dot{B}^{s_0-2}_{\infty,2}} + \| \partial h \|_{\dot{B}^{s_0-2}_{\infty,2}})(\| \rho\|^3_{ H^2_x }+\| \bv\|^3_{ H^2_x }+\| h\|^3_{ H^2_x } )
		\\
		& \qquad \quad \cdot (\| \bw \|^2_{{H}^{s_0}_x} + \| \rho\|^2_{ H^s_x } +\|\bv\|^2_{H^s_x }+\| h \|^2_{{H}^{s_0+1}_x}) d\tau.
	\end{split}
\end{equation}
Above, we set $ \bK=(K^1,K^2,K^3)^{\mathrm{T}}$. For $I_3$, using Lemma \ref{ce}, we obtain
\begin{equation}\label{I03}
\begin{split}
  |I_3| \lesssim & \int^t_0 \|\partial \bv\|_{L^\infty_x}  \|\mathrm{curl} \mathrm{curl} \bw + \bF\|^2_{\dot{H}^{s_0-2}_x}d\tau
  \\
  \leq & \int^t_0 \|\partial \bv\|_{ L^\infty_x } (\|\bw\|^2_{{H}^{s_0}_x}+\|\bv\|^2_{H^s_x}+\|\rho\|^2_{H^s_x}+\|h\|^2_{H^{s_0+1}_x})d\tau
  \\
  & + \int^t_0 \|\partial \bv\|_{ L^\infty_x  }  (\|\rho\|^2_{H^{s_0}_x}\|\bw\|^2_{H^{\frac32+}_x}+\|\bv\|^2_{H^{s_0}_x}\|h\|^2_{H^{\frac52+}_x}) d\tau.
\end{split}
\end{equation}
For $I_4$, we divide it into $I_4=I_{41}+I_{42}+I_{43}+I_{44}$, where
\begin{equation*}
  I_{41}=2\int^t_0\int_{\mathbb{R}^3} \Lambda_x^{s_0-2}\partial^i \big(   \partial_n v_a \partial^n w^a \big) \cdot \Lambda_x^{s_0-2}( \mathrm{curl} \mathrm{curl} \bw_i) dxd\tau, \ \ \ \ \ \ \ \ \ \ \ \ \ \ \ \ \ \ \ \ \ \ \
\end{equation*}
\begin{equation*}
\begin{split}
  I_{42}=&-4\int^t_0\int_{\mathbb{R}^3} \Lambda_x^{s_0-2}\partial^i \big(  \partial_n v_a \partial^n w^a \big) \cdot
  \Lambda_x^{s_0-2}(\mathrm{e}^{-{\rho}}\partial^a \rho \partial_i w_a ) dxd\tau \ \ \ \ \ \ \ \ \ \ \ \ \ \ \ \ \
  \\
  & -2\int^t_0\int_{\mathbb{R}^3} \Lambda_x^{s_0-2}\partial^i \big(   \partial_n v_a \partial^n w^a \big) \cdot \Lambda_x^{s_0-2}( \epsilon^{ijk}\partial_j \rho \mathrm{curl}\bw_k ) dxd\tau,
\end{split}
\end{equation*}
\begin{equation*}
  I_{43}=4\int^t_0\int_{\mathbb{R}^3} \Lambda_x^{s_0-2}\partial^i \big(    \partial_n v_a \partial^n w^a \big) \cdot
  \Lambda_x^{s_0-2}(\mathrm{e}^{-{\rho}}\epsilon_i^{\ jk}\partial_j v^m \partial_m \partial_k h ) dxd\tau, \ \ \ \ \ \ \ \ \
\end{equation*}
\begin{equation*}
  I_{44}=-2\int^t_0\int_{\mathbb{R}^3} \Lambda_x^{s_0-2}\partial^i \big(  \partial_n v_a \partial^n w^a \big) \cdot
  \Lambda_x^{s_0-2}(\mathrm{e}^{-{\rho}}\epsilon_i^{\ jk}\partial_k h \Delta v_j ) dxd\tau, \ \ \ \ \ \ \ \ \ \ \ \
\end{equation*}
Integrating $I_{41}$ by parts, we find that
\begin{equation}\label{I41}
\begin{split}
  I_{41}=&-2\int^t_0\int_{\mathbb{R}^3} \Lambda_x^{s_0-2} \big(   \partial_n v_a \partial^n w^a \big) \cdot \Lambda_x^{s_0-2}\partial^i( \mathrm{curl} \mathrm{curl} \bw_i) dxd\tau
  =  0,
\end{split}
\end{equation}
where we use the fact $\partial^i( \mathrm{curl} \mathrm{curl} \bw_i)=\mathrm{div}(\mathrm{curl} \mathrm{curl} \bw)=0$. Due to the Plancherel formula, it follows that
\begin{equation*}
\begin{split}
  I_{42}=&-4\int^t_0\int_{\mathbb{R}^3} \Lambda_x^{s_0-2}\Lambda_x^{-\frac12}\partial^i \big(   \partial_n v_a \partial^n w^a \big) \cdot
  \Lambda_x^{s_0-2}\Lambda_x^{\frac12}(\partial^a \rho \partial_i w_a ) dxd\tau
  \\
  & -2\int^t_0\int_{\mathbb{R}^3} \Lambda_x^{s_0-2}\Lambda_x^{-\frac12} \partial^i \big(   \partial_n v_a \partial^n w^a \big) \cdot \Lambda_x^{\frac12}\Lambda_x^{s_0-2}( \epsilon^{ijk}\partial_j \rho \mathrm{curl}\bw_k ) dxd\tau.
\end{split}
\end{equation*}
By H\"older's inequality again, we can bound $I_{42}$ as
\begin{equation}\label{I42}
\begin{split}
  |I_{42}| & \leq C\int^t_0 \| \partial \bv \partial \bw \|_{\dot{H}_x^{s_0-\frac32}}\| \partial \rho \partial \bw \|_{\dot{H}_x^{s_0-\frac32}}d\tau
  \\
  & \leq C t  \| \bv\|_{L_t^\infty{H}_x^{s_0}}\| \rho\|_{L_t^\infty{H}_x^{s_0}} \| \bw\|^2_{L_t^\infty{H}_x^{2}}.
\end{split}
\end{equation}
Integrating $I_{43}$ by parts and using $\epsilon_i^{\ jk}\partial^i\partial_j v^m=0$, gives
\begin{equation*}
\begin{split}
  I_{43}=&-4\int^t_0\int_{\mathbb{R}^3} \Lambda^{s_0-2} \big(    \partial_n v_a \partial^n w^a \big) \cdot
  \Lambda^{s_0-2}\partial^i(\mathrm{e}^{-{\rho}}\epsilon_i^{\ jk}\partial_j v^m \partial_m \partial_k h ) dxd\tau
  \\
  =&-4\int^t_0\int_{\mathbb{R}^3} \Lambda^{s_0-2} \big(    \partial_n v_a \partial^n w^a \big) \cdot
  \Lambda^{s_0-2}(-\mathrm{e}^{-{\rho}}\partial^i \rho \epsilon_i^{\ jk}\partial_j v^m \partial_m \partial_k h ) dxd\tau
  \\
  &-4\int^t_0\int_{\mathbb{R}^3} \Lambda^{s_0-2} \big(    \partial_n v_a \partial^n w^a \big) \cdot
  \Lambda^{s_0-2}(\mathrm{e}^{-{\rho}}\epsilon_i^{\ jk}\partial^i\partial_j v^m \partial_m \partial_k h ) dxd\tau
  \\
  &-4\int^t_0\int_{\mathbb{R}^3} \Lambda^{s_0-2} \big(   \partial_n v_a \partial^n w^a \big) \cdot
  \Lambda^{s_0-2}(\mathrm{e}^{-{\rho}}\epsilon_i^{\ jk}\partial_j v^m \partial^i \partial_m \partial_k h ) dxd\tau
  \\
  =&4\int^t_0\int_{\mathbb{R}^3} \Lambda^{s_0-2} \big(   \partial_n v_a \partial^n w^a \big) \cdot
  \Lambda^{s_0-2}(\mathrm{e}^{-{\rho}}\epsilon_i^{\ jk} \partial^i \rho \partial_j v^m \partial_m \partial_k h ) dxd\tau
  \\
  &-4\int^t_0\int_{\mathbb{R}^3} \Lambda^{s_0-2} \big(    \partial_n v_a \partial^n w^a \big) \cdot
  \Lambda^{s_0-2}(\mathrm{e}^{-{\rho}}\epsilon_i^{\ jk}\partial_j v^m \partial^i \partial_m \partial_k h ) dxd\tau.
\end{split}
\end{equation*}
Hence, by using Lemma \ref{lpe}, we can estimate $I_{43}$ by
\begin{equation}\label{I43}
\begin{split}
  | I_{43} | \lesssim & \int^t_0 \| \partial \bv \partial \bw \|_{\dot{H}^{s_0-2}_x} \left(  \|\partial \rho \partial \bv \partial^2h \|_{\dot{H}^{s_0-2}_x} + \|\partial \bv \partial^3 h \|_{\dot{H}^{s_0-2}_x} \right)d\tau
  \\
  \leq &  \int^t_0 \| \partial \bv\|_{\dot{B}^{s_0-2}_{\infty,2}}(\| \rho\|^2_{ H^2_x }+\| \bv\|^2_{ H^2_x } ) (\| \bw \|^2_{{H}^{s_0}_x} + \| \rho\|^2_{ H^s_x } +\|\bv\|^2_{H^s_x }+\| h \|^2_{{H}^{s_0+1}_x}) d\tau
  \\
  &+ \int^t_0 \| \partial \bv\|_{\dot{B}^{s_0-2}_{\infty,2}}(\| \rho\|_{ H^2_x }+\| \bv\|_{ H^2_x } ) (\| \bw \|^2_{{H}^{s_0}_x} + \| \rho\|^2_{ H^s_x } +\|\bv\|^2_{H^s_x }+\| h \|^2_{{H}^{s_0+1}_x}) d\tau.
\end{split}
\end{equation}
Integrating $I_{44}$ by parts, using $\epsilon_i^{\ jk}\partial^i\partial_k h=0$ and $\epsilon_i^{\ jk}\Delta \partial^i v_j=\Delta \bw^k$, we have
\begin{equation}\label{I44i}
\begin{split}
  I_{44}=& 2\int^t_0\int_{\mathbb{R}^3} \Lambda_x^{s_0-2} \big(  \partial_n v_a \partial^n w^a \big) \cdot
  \Lambda_x^{s_0-2}\partial^i(\mathrm{e}^{-{\rho}}\epsilon_i^{\ jk}\partial_k h \Delta v_j ) dxd\tau
  \\
  = & 2\int^t_0\int_{\mathbb{R}^3} \Lambda_x^{s_0-2} \big(  \partial_n v_a \partial^n w^a \big) \cdot
  \Lambda_x^{s_0-2}(-\mathrm{e}^{-{\rho}}\partial^i \rho \epsilon_i^{\ jk}\partial_k h \Delta v_j ) dxd\tau
  \\
  & + 2\int^t_0\int_{\mathbb{R}^3} \Lambda_x^{s_0-2} \big(  \partial_n v_a \partial^n w^a \big) \cdot
  \Lambda_x^{s_0-2}(\mathrm{e}^{-{\rho}}\epsilon_i^{\ jk}\partial^i \partial_k h \Delta v_j ) dxd\tau
  \\
  & + 2\int^t_0\int_{\mathbb{R}^3} \Lambda_x^{s_0-2} \big(  \partial_n v_a \partial^n w^a \big) \cdot
  \Lambda_x^{s_0-2}(\mathrm{e}^{-{\rho}}\epsilon_i^{\ jk}\partial_k h \Delta \partial^i v_j ) dxd\tau
  \\
  = & -2\int^t_0\int_{\mathbb{R}^3} \Lambda_x^{s_0-2} \big(  \partial_n v_a \partial^n w^a \big) \cdot
  \Lambda_x^{s_0-2}(\mathrm{e}^{-{\rho}}\epsilon_i^{\ jk} \partial^i \rho \partial_k h \Delta v_j ) dxd\tau
  \\
  & + 2\int^t_0\int_{\mathbb{R}^3} \Lambda_x^{s_0-2} \big(  \partial_n v_a \partial^n w^a \big) \cdot
  \Lambda_x^{s_0-2}(\mathrm{e}^{-{\rho}}\partial_k h \Delta \bw_k ) dxd\tau.
\end{split}
\end{equation}
To give a bound for $I_{44}$, let us calculate $\Lambda_x^{s_0-2}(\mathrm{e}^{-{\rho}}\epsilon_i^{\ jk} \partial^i \rho \partial_k h \Delta v_j )$. The Hodge decomposition implies that
\begin{equation}\label{bbb}
  \Delta v_j= \epsilon_{jab}\partial^a \mathrm{curl}\bv^b +\partial_j \mathrm{div} \bv= \epsilon_{jab}\mathrm{e}^{\rho}\partial^a \rho w^b + \epsilon_{jab}\mathrm{e}^{\rho} \partial^a w^b +\partial_j \mathrm{div} \bv.
\end{equation}
Due to \eqref{bbb}, we obtain
\begin{equation}\label{I441}
\begin{split}
   \mathrm{e}^{-{\rho}}\epsilon_i^{\ jk} \partial^i \rho \partial_k h \Delta v_j
  =&  \epsilon_i^{\ jk}\epsilon_{jab} \partial^i \rho \partial_k h \partial^a \rho w^b
   + \epsilon_i^{\ jk}\epsilon_{jab} \partial^i \rho \partial_k h \partial^a w^b
   \\
   & + \mathrm{e}^{-{\rho}}\epsilon_i^{\ jk} \partial^i \rho \partial_k h \partial_j \mathrm{div} \bv
   \\
    =& \epsilon_i^{\ jk}\epsilon_{jab} \partial^i \rho \partial_k h \partial^a \rho w^b
   +\epsilon_i^{\ jk}\epsilon_{jab} \partial^i \rho \partial_k h \partial^a w^b
   \\
   & + \partial_j (\mathrm{e}^{-{\rho}}\epsilon_i^{\ jk} \partial^i \rho \partial_k h  \mathrm{div} \bv )
   -  (-\mathrm{e}^{-{\rho}}\epsilon_i^{\ jk}\partial_j \rho \partial^i \rho \partial_k h  \mathrm{div} \bv )
   \\
   & -(\mathrm{e}^{-{\rho}}\epsilon_i^{\ jk}  \partial_j\partial^i \rho \partial_k h  \mathrm{div} \bv )
   - (\mathrm{e}^{-{\rho}}\epsilon_i^{\ jk}  \partial^i \rho \partial_j \partial_k h  \mathrm{div} \bv )
   \\
    =& \epsilon_i^{\ jk}\epsilon_{jab} \partial^i \rho \partial_k h \partial^a \rho w^b
   + \epsilon_i^{\ jk}\epsilon_{jab} \partial^i \rho \partial_k h \partial^a w^b
   \\
   & + \partial_j (\mathrm{e}^{-{\rho}}\epsilon_i^{\ jk} \partial^i \rho \partial_k h  \mathrm{div} \bv ).
\end{split}
\end{equation}
Using \eqref{I441} in \eqref{I44i}, we have
\begin{equation}\label{I440}
\begin{split}
  I_{44}=
   & -2\int^t_0\int_{\mathbb{R}^3} \Lambda_x^{s_0-2} \big(  \partial_n v_a \partial^n w^a \big) \cdot \Lambda_x^{s_0-2} \big(\epsilon_i^{\ jk}\epsilon_{jab} \partial^i \rho \partial_k h \partial^a \rho w^b  \big) dxd\tau
   \\
   & -2\int^t_0\int_{\mathbb{R}^3} \Lambda_x^{s_0-2} \big(  \partial_n v_a \partial^n w^a \big) \cdot \Lambda_x^{s_0-2} \big(\epsilon_i^{\ jk}\epsilon_{jab} \partial^i \rho \partial_k h \partial^a w^b  \big) dxd\tau
   \\
   & -2\int^t_0\int_{\mathbb{R}^3} \Lambda_x^{s_0-2} \big(  \partial_n v_a \partial^n w^a \big) \cdot \Lambda_x^{s_0-2} \left\{ \partial_j (\mathrm{e}^{-{\rho}}\epsilon_i^{\ jk} \partial^i \rho \partial_k h  \mathrm{div} \bv )  \right\} dxd\tau
   \\
   & + 2\int^t_0\int_{\mathbb{R}^3} \Lambda_x^{s_0-2} \big(  \partial_n v_a \partial^n w^a \big) \cdot
   \Lambda_x^{s_0-2}(\mathrm{e}^{-{\rho}}\partial_k h \Delta \bw_k ) dxd\tau.
\end{split}
\end{equation}
Using the Plancherel formula and the H\"older inequality, we can bound $I_{44}$ by
\begin{equation}\label{I144}
\begin{split}
  | I_{44} | \lesssim & \int^t_0 \| \partial \bv \partial \bw\|_{\dot{H}^{s_0-2}_x}   \| \partial \rho \partial h \partial \rho  \bw\|_{\dot{H}^{s_0-2}_x} d\tau
   + \int^t_0 \| \partial \bv \partial \bw\|_{\dot{H}^{s_0-2}_x}    \| \partial \rho \partial h \partial \bw\|_{\dot{H}^{s_0-2}_x} d\tau
  \\
  &+  \int^t_0 \| \partial \bv \partial \bw\|_{\dot{H}^{s_0-\frac{3}{2}}_x}   \| \partial \rho \partial h \partial \bv \|_{\dot{H}^{s_0-\frac{3}{2}}_x} d\tau
   + \int^t_0 \| \partial \bv \partial \bw\|_{\dot{H}^{s_0-2}_x}   \| \partial h \partial^2 \bw \|_{\dot{H}^{s_0-2}_x} d\tau
  \\
  \leq & \int^t_0 \| \partial \bv,\partial \rho,\partial h\|_{L^\infty_x}\| \bv,\rho,h \|^3_{{H}^{2}_x}
   \cdot (\| \bw \|^2_{{H}^{s_0}_x} +\| \bv \|^2_{{H}^{s}_x} +\| \rho \|^2_{{H}^{s}_x} +\| h \|^2_{{H}^{s_0+1}_x}  )  d\tau
  \\
   & + \int^t_0 \| \partial \bv,\partial \rho,\partial h\|_{L^\infty_x}\| \bv,\rho,h \|^2_{{H}^{2}_x}
   \cdot (\| \bw \|^2_{{H}^{s_0}_x} +\| \bv \|^2_{{H}^{s}_x} +\| \rho \|^2_{{H}^{s}_x} +\| h \|^2_{{H}^{s_0+1}_x}  )  d\tau
   \\
   & + \int^t_0 \| \partial \bv,\partial \rho,\partial h\|_{\dot{B}^{s_0-2}_{\infty,2}}\| \bv,\rho,h \|_{{H}^{2}_x}
  \cdot (\| \bw \|^2_{{H}^{s_0}_x} +\| \bv \|^2_{{H}^{s}_x} +\| \rho \|^2_{{H}^{s}_x} +\| h \|^2_{{H}^{s_0+1}_x}  )  d\tau.
\end{split}
\end{equation}
Combining \eqref{I41} to \eqref{I144}, we get
\begin{equation}\label{I004}
\begin{split}
  |I_4| \leq
  & C\int^t_0 (\|(\partial \bv, \partial \rho,\partial h)\|_{L^\infty_x})\| (\bv ,\rho,h) \|^3_{{H}^{2}_x}
    \cdot (\| \bw \|^2_{{H}^{s_0}_x} +\|( \bv ,\rho )\|^2_{{H}^{s}_x} +\| h \|^2_{{H}^{s_0+1}_x}  )  d\tau
  \\
   & + C\int^t_0 (\|(\partial \bv, \partial \rho,\partial h)\|_{L^\infty_x})\| (\bv ,\rho,h) \|^2_{{H}^{2}_x}
   (\| \bw \|^2_{{H}^{s_0}_x} +\|( \bv ,\rho )\|^2_{{H}^{s}_x} +\| h \|^2_{{H}^{s_0+1}_x}  )  d\tau
   \\
   & + C t  \| \bv\|_{L_t^\infty{H}_x^{s_0}}\| \rho\|_{L_t^\infty{H}_x^{s_0}} \| \bw\|^2_{L_t^\infty{H}_x^{2}} .
\end{split}
\end{equation}
On the other hand, we have
\begin{equation}\label{W33}
\begin{split}
  \|\mathrm{curl} \mathrm{curl} \bw \|^2_{\dot{H}^{s_0-2}_x} \leq & 2\|\mathrm{curl} \mathrm{curl} \bw + \bF\|^2_{\dot{H}^{s_0-2}_x} + 2\|\bF\|^2_{\dot{H}^{s_0-2}_x}.
\end{split}
\end{equation}
Combining \eqref{W33}, \eqref{e002a}, \eqref{I001}-\eqref{I004}, we can conclude that
\begin{small}
\begin{equation}\label{e003}
\begin{split}
   & \|\mathrm{curl} \mathrm{curl} \bw \|^2_{\dot{H}^{s_0-2}_x}(t)
   \\
  \leq &  C(\|\rho\|^2_{L^\infty_tH^{s_0}_x}\|\bw\|^2_{L^\infty_t H^{\frac32+}_x}+\|\bv\|^2_{L^\infty_t H^{s_0}_x}\|h\|^2_{L^\infty_t H^{\frac52+}_x})
  \\
  &+ C\|\bw_0 \|^2_{{H}^{s_0}_x}+  C t  \| \bv\|_{L_t^\infty{H}_x^{s_0}}\| \rho\|_{L_t^\infty{H}_x^{s_0}} \| \bw\|^2_{L_t^\infty{H}_x^{2}}
  \\
  & +C\int^t_0 \|\partial \bv\|_{L^\infty_x}( \|\bw\|^2_{{H}^{s_0}_x}+\|\bv\|^2_{H^s_x}+\|\rho\|^2_{H^s_x}+\|h\|^2_{H^{s_0+1}_x} ) d\tau
  \\
  & + C\int^t_0 \|\partial \bv\|_{L^\infty_x} (\|\rho\|^2_{H^{s_0}_x}\|\bw\|^2_{H^{\frac32+}_x}+\|\bv\|^2_{H^{s_0}_x}\|h\|^2_{H^{\frac52+}_x}) d\tau
  \\
  & + C\int^t_0 \| (\partial \bv , \partial \rho , \partial h) \|_{\dot{B}^{s_0-2}_{\infty,2}}( \|\bw\|^2_{{H}^{s_0}_x}+\|\bv\|^2_{H^s_x}+\|\rho\|^2_{H^s_x}+\|h\|^2_{H^{s_0+1}_x} ) d\tau
  \\
  &  +C\int^t_0 \| (\partial \bv , \partial \rho , \partial h) \|_{\dot{B}^{s_0-2}_{\infty,2}}\| (\rho, \bv, h)\|_{ H^2_x } (\| \bw \|^2_{{H}^{s_0}_x} + \| \rho\|^2_{ H^s_x } +\|\bv\|^2_{H^s_x }+\| h \|^2_{{H}^{s_0+1}_x}) d\tau
  \\
  &  +C\int^t_0 \| (\partial \bv , \partial \rho , \partial h) \|_{\dot{B}^{s_0-2}_{\infty,2}}\| (\rho, \bv, h)\|^2_{ H^2_x }
   (\| \bw \|^2_{{H}^{s_0}_x} + \| \rho\|^2_{ H^s_x } +\|\bv\|^2_{H^s_x }+\| h \|^2_{{H}^{s_0+1}_x}) d\tau
  \\
  &  +C\int^t_0 \| ( \partial \bv , \partial \rho , \partial h ) \|_{\dot{B}^{s_0-2}_{\infty,2}} \| ( \rho, \bv, h) \|^3_{ H^2_x }
  (\| \bw \|^2_{{H}^{s_0}_x} + \| \rho\|^2_{ H^s_x } +\|\bv\|^2_{H^s_x }+\| h \|^2_{{H}^{s_0+1}_x}) d\tau .
\end{split}
\end{equation}
\end{small}
Adding \eqref{e003} with \eqref{DW}, we obtain
\begin{equation}\label{e004}
\begin{split}
   & \|\bw \|^2_{\dot{H}^{s_0}_x}(t)
   \\
   \leq & C\|\bw_0 \|^2_{{H}^{s_0}_x}+ C t  \| \bv\|_{L_t^\infty{H}_x^{s_0}}\| \rho\|_{L_t^\infty{H}_x^{s_0}} \| \bw\|^2_{L_t^\infty{H}_x^{2}}
  \\
  & + C(\|\rho\|^2_{L^\infty_tH^{s_0}_x}\|\bw\|^2_{L^\infty_t H^{\frac32+}_x}+\|\bv\|^2_{L^\infty_t H^{s_0}_x}\|h\|^2_{L^\infty_t H^{\frac52+}_x})
  \\
  & +C\int^t_0 \|\partial \bv\|_{L^\infty_x}( \|\bw\|^2_{{H}^{s_0}_x}+\|\bv\|^2_{H^s_x}+\|\rho\|^2_{H^s_x}+\|h\|^2_{H^{s_0+1}_x} ) d\tau
  \\
  & + C\int^t_0 \|\partial \bv\|_{L^\infty_x} (\|\rho\|^2_{H^{s_0}_x}\|\bw\|^2_{H^{\frac32+}_x}+\|\bv\|^2_{H^{s_0}_x}\|h\|^2_{H^{\frac52+}_x}) d\tau
  \\
  & + C\int^t_0 \| (\partial \bv, \partial \rho, \partial h) \|_{\dot{B}^{s_0-2}_{\infty,2}}( \|\bw\|^2_{{H}^{s_0}_x}+\|(\bv,\rho)\|^2_{H^s_x}+\|h\|^2_{H^{s_0+1}_x} ) d\tau
  \\
  &  +C\int^t_0 \| (\partial \bv , \partial \rho , \partial h) \|_{\dot{B}^{s_0-2}_{\infty,2}}\| (\rho, \bv, h)\|_{ H^2_x } (\| \bw \|^2_{{H}^{s_0}_x} + \| (\rho,\bv)\|^2_{H^s_x }+\| h \|^2_{{H}^{s_0+1}_x}) d\tau
  \\
  &  +C\int^t_0 \| (\partial \bv , \partial \rho , \partial h) \|_{\dot{B}^{s_0-2}_{\infty,2}}\| (\rho, \bv, h)\|^2_{ H^2_x }
   (\| \bw \|^2_{{H}^{s_0}_x} + \| (\rho,\bv)\|^2_{H^s_x }+\| h \|^2_{{H}^{s_0+1}_x}) d\tau
  \\
  &  +C\int^t_0 \| ( \partial \bv , \partial \rho , \partial h ) \|_{\dot{B}^{s_0-2}_{\infty,2}} \| ( \rho, \bv, h) \|^3_{ H^2_x }
  (\| \bw \|^2_{{H}^{s_0}_x} + \| (\rho,\bv)\|^2_{H^s_x }+\| h \|^2_{{H}^{s_0+1}_x}) d\tau .
\end{split}
\end{equation}
By interpolation, we have
\begin{equation}\label{DW1}
\begin{split}
  \|  \bw \|_{H_x^{\frac32+}} \lesssim & \|  \bw \|^{\frac{1}{2(s_0-1)}}_{L_x^2}\| \bw \|^{1-\frac{1}{2(s_0-1)}}_{H^{s_0}_x},
  \\
  \|  \bw \|_{H_x^{2}} \lesssim & \|  \bw \|^{\frac{s_0-2}{s_0-1}}_{H_x^1}\| \bw \|^{\frac{1}{s_0-1}}_{H^{s_0}_x},
  \\
  \|  h \|_{H_x^{\frac52+}} \lesssim & \|  h \|^{\frac{1}{2(s_0-1)}}_{H_x^2}\| h \|^{1-\frac{1}{2(s_0-1)}}_{H^{s_0+1}_x}.
\end{split}
\end{equation}
By using \eqref{DW1} and Young's inequality, we can derive that\footnote{In \eqref{E005}, we get the highest-derivatives norm $\frac{1}{100}\| \bw\|^{2}_{L_t^\infty{H}_x^{s_0}}$ for it will be canceled by energy term $\| \bw\|^{2}_{L_t^\infty{H}_x^{s_0}}$.}
\begin{equation}\label{E005}
	\begin{split}
		C t  \| \bv\|_{L_t^\infty{H}_x^{s_0}}\| \rho\|_{L_t^\infty{H}_x^{s_0}} \| \bw\|^2_{L_t^\infty{H}_x^{2}} \leq & 	C t  \| \bv\|_{L_t^\infty{H}_x^{s_0}}\| \rho\|_{L_t^\infty{H}_x^{s_0}} \| \bw\|^{\frac{2(s_0-2)}{s_0-1}}_{L_t^\infty{H}_x^{1}}\| \bw\|^{\frac{2}{s_0-1}}_{L_t^\infty{H}_x^{s_0}}
		\\
		\leq & C t^{1+\frac{1}{s_0-2}}  \| \bv\|^{2+\frac{s_0-1}{s_0-2}}_{L_t^\infty{H}_x^{s_0}}\| \rho\|^{\frac{s_0-1}{s_0-2}}_{L_t^\infty{H}_x^{s_0}}+ \frac{1}{100} \| \bw\|^{2}_{L_t^\infty{H}_x^{s_0}}.
	\end{split}
\end{equation}
In a similar way, we find that
\begin{equation}\label{e005}
\begin{split}
   \|\bv\|^2_{L^\infty_t H^{s_0}_x}\|h\|^2_{L^\infty_t H^{\frac52+}_x}
   \leq & C  \|  \bv \|^{2}_{H_x^{s_0}} \|  h \|^{\frac{1}{s_0-1}}_{H_x^2}  \| h \|^{2-\frac{1}{s_0-1}}_{H^{s_0+1}_x}
   \\
   \leq & C  \|  \bv \|^{4(s_0-1)}_{H_x^{s_0}} \|  h \|^{2}_{H_x^2} + \frac{1}{100}\| h \|^{2}_{H^{s_0+1}_x},
\end{split}
\end{equation}
and
\begin{equation}\label{ee005}
		 \|\rho\|^2_{L^\infty_tH^{s_0}_x}\|\bw\|^2_{L^\infty_t H^{\frac32+}_x}
		\leq  C  \|  \rho \|^{4(s_0-1)}_{H_x^{s_0}} \|  \bv \|^{2}_{H_x^{s_0}} + \frac{1}{100}\| \bw \|^{2}_{H^{s_0}_x} .
\end{equation}
Using \eqref{E005}-\eqref{ee005},  \eqref{e004} yields, upon canceling the small terms with factor $1/100$ from \eqref{E005}, \eqref{ee005},
\begin{small}
\begin{equation}\label{e0004}
	\begin{split}
		 \|\bw \|^2_{\dot{H}^{s_0}_x}(t)
		\leq & C\|\bw_0 \|^2_{{H}^{s_0}_x}+ C t^{1+\frac{1}{s_0-2}}  \| \bv\|^{2+\frac{s_0-1}{s_0-2}}_{L_t^\infty{H}_x^{s_0}}\| \rho\|^{\frac{s_0-1}{s_0-2}}_{L_t^\infty{H}_x^{s_0}} +  \frac{1}{50}\| \bw \|^{2}_{H^{s_0}_x}
		\\
		& + C  \|  \bv \|^{4(s_0-1)}_{L_t^\infty H_x^{s_0}} \|  h \|^{2}_{L_t^\infty H_x^2}+C  \|  \rho \|^{4(s_0-1)}_{L_t^\infty H_x^{s_0}} \|  \bv \|^{2}_{L_t^\infty H_x^{s_0}}  + \frac{1}{100}\| h \|^{2}_{H^{s_0+1}_x}
		\\
		& +C\int^t_0 \|\partial \bv\|_{L^\infty_x}( \|\bw\|^2_{{H}^{s_0}_x}+\|(\bv,\rho)\|^2_{H^s_x}+\|h\|^2_{H^{s_0+1}_x} ) d\tau
		\\
		& + C\int^t_0 \|\partial \bv\|_{L^\infty_x} (\|\rho\|^2_{H^{s_0}_x}\|\bw\|^2_{H^{\frac32+}_x}+\|\bv\|^2_{H^{s_0}_x}\|h\|^2_{H^{\frac52+}_x}) d\tau
		\\
		& + C\int^t_0 \| (\partial \bv, \partial \rho, \partial h) \|_{\dot{B}^{s_0-2}_{\infty,2}}( \|\bw\|^2_{{H}^{s_0}_x}+\|(\bv,\rho)\|^2_{H^s_x}+\|h\|^2_{H^{s_0+1}_x} ) d\tau
		\\
		&  +C\int^t_0 \| (\partial \bv , \partial \rho , \partial h) \|_{\dot{B}^{s_0-2}_{\infty,2}}\| (\rho, \bv, h)\|_{ H^2_x } (\| \bw \|^2_{{H}^{s_0}_x} + \| (\rho,\bv)\|^2_{H^s_x }+\| h \|^2_{{H}^{s_0+1}_x}) d\tau
		\\
		&  +C\int^t_0 \| (\partial \bv , \partial \rho , \partial h) \|_{\dot{B}^{s_0-2}_{\infty,2}}\| (\rho, \bv, h)\|^2_{ H^2_x }
		(\| \bw \|^2_{{H}^{s_0}_x} + \| (\rho,\bv)\|^2_{H^s_x }+\| h \|^2_{{H}^{s_0+1}_x}) d\tau
		\\
		&  +C\int^t_0 \| ( \partial \bv , \partial \rho , \partial h ) \|_{\dot{B}^{s_0-2}_{\infty,2}} \| ( \rho, \bv, h) \|^3_{ H^2_x }
		(\| \bw \|^2_{{H}^{s_0}_x} + \| (\rho,\bv)\|^2_{H^s_x }+\| h \|^2_{{H}^{s_0+1}_x}) d\tau .
	\end{split}
\end{equation}
\end{small}
\textbf{Step 2: The estimate for $h$}.

By using the equation \eqref{fc0}, we can see that
\begin{equation*}
  \partial_t h+ (\bv \cdot \nabla)h=0.
\end{equation*}
This gives the $L^2$ estimate
\begin{equation}\label{e006}
  \|h(t)\|^2_{L^{2}_x}  \leq  \| h_0 \|^2_{L^{2}}+ C \int^t_0 \|d \bv\|_{L^\infty_x}  \|h\|^2_{L^{2}_x} d\tau.
\end{equation}
The Gronwall's inequality tells us
\begin{equation}\label{He1}
  \|h(t)\|_{L^{2}_x}  \leq  C\| h_0 \|_{L^{2}}\exp \big( {\int^t_0} \|d \bv\|_{L^\infty_x}  d\tau \big).
\end{equation}
For the high-order energy, noticing
\begin{equation*}
  \|\Delta h\|_{\dot{H}^{s_0-1}_x}= \|\mathrm{e}^\rho \partial_i H^i + \partial_i \rho \partial^i h \|_{\dot{H}^{s_0-1}_x},
\end{equation*}
we have
\begin{equation}\label{He2}
  \|\Delta h\|^2_{\dot{H}^{s_0-1}_x} \leq  \|\mathrm{e}^\rho \partial_i H^i\|^2_{\dot{H}^{s_0-1}_x}+ \|\partial_i \rho \partial^i h \|^2_{\dot{H}^{s_0-1}_x}+ 2\|\mathrm{e}^\rho \partial_i H^i\|_{\dot{H}^{s_0-1}_x} \|\partial_i \rho \partial^i h \|_{\dot{H}^{s_0-1}_x}.
\end{equation}
Applying  $\Lambda_x^{s_0-2}$ to \eqref{Hh2}, we get
\begin{equation}\label{He5}
  \mathbf{T} \left \{ \Lambda_x^{s_0-2} \partial^k ( \mathrm{e}^\rho \partial_i H^i) \right\}=-2 \Lambda_x^{s_0-2}[\partial^k(\partial_m v^j) \partial_j  H^m]+\Lambda_x^{s_0-2}Y-[\Lambda_x^{s_0-2},\mathbf{T}]\partial^k ( \mathrm{e}^\rho \partial_i H^i).
\end{equation}
Multiplying \eqref{He5} by $\Lambda^{s_0-2} \partial_k ( \mathrm{e}^\rho \partial_i H^i)$ integrating over $[0,t]\times \mathbb{R}^3$, we have
\begin{equation}\label{He6}
  \begin{split}
   \| ( \mathrm{e}^\rho \partial_i H^i) \|^2_{\dot{H}^{s_0-1}_x}(t)=& \| ( \mathrm{e}^\rho \partial_i H^i) \|^2_{\dot{H}^{s_0-1}_x}(0)+\int^t_0\int_{\mathbb{R}^3} \mathrm{div}\bv \cdot |\Lambda_x^{s_0-2} \nabla ( \mathrm{e}^\rho \partial_i H^i) |^2dxd\tau
   \\
   &+ \int^t_0\int_{\mathbb{R}^3} \Lambda_x^{s_0-2}Y^k \cdot \Lambda_x^{s_0-2} \partial_k ( \mathrm{e}^\rho \partial_i H^i) dxd\tau
  \\
  &-\int^t_0\int_{\mathbb{R}^3} [\Lambda_x^{s_0-2},\mathbf{T}]\partial^k ( \mathrm{e}^\rho \partial_i H^i)\cdot \Lambda_x^{s_0-2} \partial_k ( \mathrm{e}^\rho \partial_i H^i) dxd\tau  \\
  &- 2 \int^t_0\int_{\mathbb{R}^3} \Lambda_x^{s_0-2}[\partial^k(\partial_m v^j) \partial_j  H^m]\cdot \Lambda_x^{s_0-2} \partial_k ( \mathrm{e}^\rho \partial_i H^i) dxd\tau
  \\
  =&\widetilde{I}_0+\widetilde{I}_1+\widetilde{I}_2+\widetilde{I}_3+\widetilde{I}_4,
  \end{split}
\end{equation}
where
\begin{equation*}
\begin{split}
\widetilde{I}_0=&\| ( \mathrm{e}^\rho \partial_i H^i) \|^2_{\dot{H}^{s_0-1}_x}(0),
\\
  \widetilde{I}_1=&\int^t_0\int_{\mathbb{R}^3} \mathrm{div}\bv \cdot |\Lambda_x^{s_0-2} \nabla ( \mathrm{e}^\rho \partial_i H^i) |^2dxd\tau,
  \\
  \widetilde{I}_2= &\int^t_0\int_{\mathbb{R}^3} \Lambda_x^{s_0-2}Y^k \cdot \Lambda_x^{s_0-2} \partial_k ( \mathrm{e}^\rho \partial_i H^i) dxd\tau,
  \\
  \widetilde{I}_3=&-\int^t_0\int_{\mathbb{R}^3} [\Lambda_x^{s_0-2},\mathbf{T}]\partial^k ( \mathrm{e}^\rho \partial_i H^i)\cdot \Lambda_x^{s_0-2} \partial_k ( \mathrm{e}^\rho \partial_i H^i) dxd\tau,
  \\
  \widetilde{I}_4=&- 2 \int^t_0 \int_{\mathbb{R}^3} \Lambda_x^{s_0-2}[\partial^k(\partial_m v^j) \partial_j  H^m]\cdot \Lambda_x^{s_0-2} \partial_k ( \mathrm{e}^\rho \partial_i H^i) dxd\tau.
\end{split}
\end{equation*}
Let us estimate $\widetilde{I}_0, \widetilde{I}_1, \ldots, \widetilde{I}_4$ one by one. Note
\begin{equation}\label{nn}
  \mathrm{e}^\rho \partial_i H^i=\Delta h- \partial_i \rho \partial^i h.
\end{equation}
By using H\"older's inequality and \eqref{nn}, we can estimate the term $I_0$ by
\begin{equation}\label{sI00}
  |\tilde{I}_0| \leq  \| h_0 \|^2_{H^{s_0+1}}+ C\| h_0 \|^2_{H^{\frac52+}}\| \rho_0 \|^2_{H^{s_0}} .
\end{equation}
For the term $\widetilde{I}_1$, we can estimate it by
\begin{equation}\label{I01}
\begin{split}
  |\widetilde{I}_1|
  \leq & C\int^t_0 \|\partial \bv \|_{L^\infty_x}\| \Delta h- \partial \rho \partial h\|^2_{\dot{H}^{s_0-1}_x}d\tau
  \\
  \leq & C\int^t_0 \|\partial \bv \|_{L^\infty_x} \|h\|^2_{{H}^{s_0+1}_x} d\tau+C\int^t_0 \|\partial \bv \|_{L^\infty_x} \| h \|^2_{H^{\frac52+}}\| \rho \|^2_{H^{s_0}} d\tau.
\end{split}
\end{equation}
By using Lemma \ref{lpe}, Lemma \ref{wql} and H\"older's inequality, we have
\begin{equation}\label{I02}
\begin{split}
  |\widetilde{I}_2| \lesssim & \int^t_0 \|\partial \bv \|_{\dot{B}^{s_0-2}_{\infty,2}}( \|h\|^2_{H^{s_0+1}_x}+ \|\bw\|^2_{H^{s_0}_x}+ \|\rho\|^2_{H^{s}_x}+ \|\bv\|^2_{H^{s}_x})d\tau
  \\
  &+\int^t_0 \|(\partial \bv, \partial \rho, \partial h) \|_{\dot{B}^{s_0-2}_{\infty,2}}\|( h,\rho,\bv)\|_{H^{2}_x}( \|h\|^2_{H^{s_0+1}_x}+ \|\bw\|^2_{H^{s_0}_x}+ \| (\rho,\bv) \|^2_{H^{s}_x})d\tau
  \\
  &+\int^t_0 \|(\partial \bv, \partial \rho, \partial h) \|_{\dot{B}^{s_0-2}_{\infty,2}} \|( h,\rho,\bv) \|^2_{H^{s_0}_x}( \|h\|^2_{H^{s_0+1}_x}+ \|\bw\|^2_{H^{s_0}_x}+ \|(\rho, \bv)\|^2_{H^{s}_x})d\tau
  \\
  & + \int^t_0 \|(\partial \bv, \partial \rho, \partial h) \|_{\dot{B}^{s_0-2}_{\infty,2}} \|( h,\rho, \bv)\|^3_{H^{2}_x}( \|h\|^2_{H^{s_0+1}_x}+ \|\bw\|^2_{H^{s_0}_x}+ \|(\rho,\bv)\|^2_{H^{s}_x})d\tau.
\end{split}
\end{equation}
For $\widetilde{I}_3$, using Lemma \ref{ce}, we have
\begin{equation}\label{sI03}
  |\widetilde{I}_3| \lesssim \int^t_0 \|\partial \bv \|_{\dot{B}^{0}_{\infty,2}} \|h\|^2_{{H}^{s_0+1}_x} d\tau + \int^t_0 \|\partial \bv \|_{\dot{B}^{0}_{\infty,2}} \| h \|^2_{H^{\frac52+}}\| \rho \|^2_{H^{s_0}} d\tau.
\end{equation}
It remains for us to handle the difficult term $\widetilde{I}_4$, which is the most difficult term. To estimate it, let us integrate it by parts as follows:
\begin{equation}\label{I08}
  \begin{split}
  \widetilde{I}_4=&- 2 \int^t_0 \int_{\mathbb{R}^3} \Lambda^{s_0-2}[\partial^k(\partial_m v^j) \partial_j  \partial^m h]\cdot \Lambda^{s_0-2} \partial_k ( \mathrm{e}^\rho \partial_i H^i) dx d\tau
  \\
  =&  2 \int^t_0 \int_{\mathbb{R}^3} \Lambda^{s_0-2}[\partial^k v^j \partial_m(\partial_j  \partial^m h)]\cdot \Lambda^{s_0-2} \partial_k ( \mathrm{e}^\rho \partial_i H^i) dx d\tau
  \\
  & + 2 \int^t_0 \int_{\mathbb{R}^3} \Lambda^{s_0-2}(\partial^k v^j \partial_j  \partial^m h )\cdot \Lambda^{s_0-2} \partial_m \partial_k ( \mathrm{e}^\rho \partial_i H^i) dx d\tau
  \\
  =& \widetilde{I}_{40}+\widetilde{I}_{41},
  \end{split}
\end{equation}
where
\begin{equation*}
  \begin{split}
  \widetilde{I}_{40}=& 2 \int^t_0 \int_{\mathbb{R}^3} \Lambda^{s_0-2}[\partial^k v^j \partial_m(\partial_j  \partial^m h)]\cdot \Lambda^{s_0-2} \partial_k ( \mathrm{e}^\rho \partial_i H^i) dx d\tau,
  \\
  \widetilde{I}_{41}=&2 \int^t_0 \int_{\mathbb{R}^3} \Lambda^{s_0-2}(\partial^k v^j \partial_j  \partial^m h )\cdot \Lambda^{s_0-2} \partial_m \partial_k ( \mathrm{e}^\rho \partial_i H^i) dx d\tau.
  \end{split}
\end{equation*}
By Lemma \ref{lpe} and \eqref{nn}, it's easy for us to obtain
\begin{equation}\label{I40}
  |\widetilde{I}_{40}| \lesssim \int^t_0 \|\partial \bv \|_{\dot{B}^{s_0-2}_{\infty,2}}\|  h \|^2_{{H}^{s_0+1}_x} d\tau+ \int^t_0 \|\partial \bv \|_{\dot{B}^{0}_{\infty,2}} \| h \|^2_{H^{\frac52+}}\| \rho \|^2_{H^{s_0}} d\tau.
\end{equation}
However, for $\widetilde{I}_{41}$, we may not estimate it directly. We then insert a term to transfer some derivatives.
\begin{equation}\label{I09}
  \begin{split}
  \widetilde{I}_{41}=&2 \int^t_0 \int_{\mathbb{R}^3} \Lambda^{s_0-2}\{ ( \partial^k v^j- \partial^j v^k) \partial_j  \partial^m h \} \cdot \Lambda^{s_0-2} \partial_m \partial_k ( \mathrm{e}^\rho \partial_i H^i) dx  d\tau
  \\
  & -2 \int^t_0  \int_{\mathbb{R}^3} \Lambda^{s_0-2}(  \partial^j v^k \partial_j  \partial^m h ) \cdot \Lambda^{s_0-2} \partial_m \partial_k ( \mathrm{e}^\rho \partial_i H^i) dx  d\tau.
  \\
  =&2 \int^t_0  \int_{\mathbb{R}^3} \Lambda^{s_0-2}\{ \epsilon^{lkj}\mathrm{e}^{\rho} w_l \partial_j  \partial^m h \} \cdot \Lambda^{s_0-2} \partial_m \partial_k ( \mathrm{e}^\rho \partial_i H^i) dx  d\tau
  \\
  & -2 \int^t_0  \int_{\mathbb{R}^3} \Lambda^{s_0-2}(  \partial^j v^k \partial_j  \partial^m h ) \cdot \Lambda^{s_0-2} \partial_m \partial_k ( \mathrm{e}^\rho \partial_i H^i) dx  d\tau
  \\
  =& \widetilde{I}_{42}+\widetilde{I}_{43}.
  \end{split}
\end{equation}
Above, we set
\begin{equation*}
\begin{split}
  \widetilde{I}_{42}=&2 \int^t_0 \int_{\mathbb{R}^3} \Lambda^{s_0-2}\{ \epsilon^{lkj}\mathrm{e}^{\rho} w_l \partial_j  \partial^m h \} \cdot \Lambda^{s_0-2} \partial_m \partial_k ( \mathrm{e}^\rho \partial_i H^i) dx  d\tau,
  \\
  \widetilde{I}_{43}=&-2 \int^t_0 \int_{\mathbb{R}^3} \Lambda^{s_0-2}(  \partial^j v^k \partial_j  \partial^m h ) \cdot \Lambda^{s_0-2} \partial_m \partial_k ( \mathrm{e}^\rho \partial_i H^i) dx  d\tau.
\end{split}
\end{equation*}
For $\widetilde{I}_{42}$, we can integrate it by parts in the following
\begin{equation*}
\begin{split}
  \widetilde{I}_{42}=&-2 \int^t_0 \int_{\mathbb{R}^3} \Lambda^{s_0-2}\partial_m \{ \epsilon^{lkj}\mathrm{e}^{\rho} w_l \partial_j  \partial^m h \} \cdot \Lambda^{s_0-2}  \partial_k ( \mathrm{e}^\rho \partial_i H^i) dx d\tau
  \\
  =&-2 \int^t_0 \int_{\mathbb{R}^3} \Lambda^{s_0-2}\partial^2_{mj} \{ \epsilon^{lkj}\mathrm{e}^{\rho} w_l   \partial^m h \} \cdot \Lambda^{s_0-2}  \partial_k ( \mathrm{e}^\rho \partial_i H^i) dx  d\tau
  \\
  & +2 \int^t_0 \int_{\mathbb{R}^3} \Lambda^{s_0-2}\partial_m \{ \epsilon^{lkj}\mathrm{e}^{\rho} w_l \partial_j \rho \partial^m h \} \cdot \Lambda^{s_0-2}  \partial_k ( \mathrm{e}^\rho \partial_i H^i) dx  d\tau.
\end{split}
\end{equation*}
By H\"older's inequality, we can bound $\widetilde{I}_{42}$ by
\begin{equation}\label{sI42}
\begin{split}
  |\widetilde{I}_{42}| \lesssim & \int^t_0  ( \|\bw \partial h   \|_{\dot{H}^{s_0}_x} + \|\bw \partial h \partial \rho  \|_{\dot{H}^{s_0-1}_x}) \|\Delta h  - \partial \rho \partial h \|_{\dot{H}^{s_0-1}_x} d\tau
  \\
  \leq & \int^t_0 (\|\partial \bv \|_{L^\infty_x}+ \|\partial h \|_{L^\infty_x}) (\| \bv \|^2_{{H}^{s}_x}+ \|  \rho \|^2_{{H}^{s}_x}+ \| \bw \|^2_{{H}^{s_0}_x}+ \| h \|^2_{{H}^{s_0+1}_x}) d\tau
  \\
  & + \int^t_0 \| (\partial \bv , \partial h, \partial \rho) \|_{L^\infty_x} \| ( \bv , \rho , h) \|_{{H}^{s_0}_x} \{ \| (\bv , \rho) \|^2_{{H}^{s}_x}+ \| \bw \|^2_{{H}^{s_0}_x}+ \| h \|^2_{{H}^{s_0+1}_x}\} d\tau
  \\
  & + \int^t_0 \| (\partial \bv , \partial h, \partial \rho) \|_{L^\infty_x} \| ( \bv , \rho , h) \|^2_{{H}^{s_0}_x} \{ \| (\bv , \rho) \|^2_{{H}^{s}_x}+ \| \bw \|^2_{{H}^{s_0}_x}+ \| h \|^2_{{H}^{s_0+1}_x}\} d\tau.
\end{split}
\end{equation}
If we estimate $\widetilde{I}_{43}$ by Sobolev inequalities, then there is a loss of derivatives. So let us integrate it by parts as follows.
\begin{equation*}
\begin{split}
  \widetilde{I}_{43}=&-2 \int^t_0 \int_{\mathbb{R}^3} \Lambda^{s_0-2} \partial_k(  \partial^j v^k \partial_j  \partial^m h ) \cdot \Lambda^{s_0-2} \partial_m  ( \mathrm{e}^\rho \partial_i H^i) dx d\tau
  \\
  =& -2 \int^t_0 \int_{\mathbb{R}^3} \Lambda^{s_0-2} (  \partial^j v^k \partial_k \partial_j  \partial^m h ) \cdot \Lambda^{s_0-2} \partial_m  ( \mathrm{e}^\rho \partial_i H^i) dx d\tau
  \\
  & -2 \int^t_0 \int_{\mathbb{R}^3} \Lambda^{s_0-2}( \partial_k  \partial^j v^k \partial_j  \partial^m h ) \cdot \Lambda^{s_0-2} \partial_m  ( \mathrm{e}^\rho \partial_i H^i) dx d\tau.
\end{split}
\end{equation*}
To write $\widetilde{I}_{43}$ in a simpler form, we set
\begin{equation}\label{I000}
  \widetilde{I}_{43}=\widetilde{I}_{44}+\widetilde{I}_{45},
\end{equation}
where
\begin{equation*}
\begin{split}
  \widetilde{I}_{44}
  =& -2 \int^t_0 \int_{\mathbb{R}^3} \Lambda^{s_0-2} (  \partial^j v^k \partial_k \partial_j  \partial^m h ) \cdot \Lambda^{s_0-2} \partial_m  ( \mathrm{e}^\rho \partial_i H^i) dx d\tau,
  \\
  \widetilde{I}_{45} =& -2 \int^t_0 \int_{\mathbb{R}^3} \Lambda^{s_0-2}( \partial^j \partial_k   v^k \partial_j  \partial^m h ) \cdot \Lambda^{s_0-2} \partial_m  ( \mathrm{e}^\rho \partial_i H^i) dx d\tau.
\end{split}
\end{equation*}
By H\"older's inequality, we can use Lemma \ref{lpe} to obtain
\begin{equation}\label{I44}
\begin{split}
 |\widetilde{I}_{44}| \leq & \int^t_0 \|\partial \bv \partial^3 h \|_{\dot{B}^{s_0-2}_{\infty,2}} \|\Delta h -\partial \rho \partial h \|_{\dot{H}_x^{s_0-1}} d\tau
 \\
 \lesssim &  \int^t_0 \|\partial \bv \|_{\dot{B}^{s_0-2}_{\infty,2}}\| h \|^2_{{H}^{s_0+1}_x} d\tau+\int^t_0 \|\partial \bv \|_{\dot{B}^{s_0-2}_{\infty,2}}\| \rho \|_{{H}^{s_0}_x}\| h \|^2_{{H}^{s_0+1}_x} d\tau.
\end{split}
\end{equation}
For $\widetilde{I}_{45}$, using the fact $\partial_k v^k = \mathrm{div}\bv=- \mathbf{T}\rho$ yields
\begin{equation*}
\begin{split}
  \widetilde{I}_{45} =& -2 \int^t_0 \int_{\mathbb{R}^3} \Lambda^{s_0-2}\left\{  \partial^j (\mathrm{div}\bv)  \partial_j  \partial^m h \right\} \cdot \Lambda^{s_0-2} \partial_m  ( \mathrm{e}^\rho \partial_i H^i) dx d\tau
  \\
  = & 2 \int^t_0 \int_{\mathbb{R}^3} \Lambda^{s_0-2}\{ \partial^j(\mathbf{T}\rho)  \partial_j  \partial^m h \} \cdot \Lambda^{s_0-2} \partial_m  ( \mathrm{e}^\rho \partial_i H^i) dx d\tau
  \\
  = & 2 \int^t_0 \int_{\mathbb{R}^3} \Lambda^{s_0-2}\{ \mathbf{T}(\partial^j\rho)  \partial_j  \partial^m h \} \cdot \Lambda^{s_0-2} \partial_m  ( \mathrm{e}^\rho \partial_i H^i) dx d\tau
  \\
  & +2 \int^t_0 \int_{\mathbb{R}^3} \Lambda^{s_0-2} ( \partial^j v^l \partial_l \rho   \partial_j  \partial^m h ) \cdot \Lambda^{s_0-2} \partial_m  ( \mathrm{e}^\rho \partial_i H^i) dx d\tau.
\end{split}
\end{equation*}
By commutators rule between the derivatives operators, we further derive that
\begin{equation*}
\begin{split}
  \widetilde{I}_{45}
  = & 2 \int^t_0 \int_{\mathbb{R}^3}  \mathbf{T} \left\{  \Lambda^{s_0-2} (\partial^j\rho  \partial_j  \partial^m h)  \cdot \Lambda^{s_0-2} \partial_m  ( \mathrm{e}^\rho \partial_i H^i) \right\} dx d\tau
  \\
  & +2 \int^t_0 \int_{\mathbb{R}^3} \Lambda^{s_0-2} ( \partial^j v^l \partial_l \rho   \partial_j  \partial^m h ) \cdot \Lambda^{s_0-2} \partial_m  ( \mathrm{e}^\rho \partial_i H^i) dx d\tau
  \\
  & - 2 \int^t_0 \int_{\mathbb{R}^3}  [\mathbf{T}, \Lambda^{s_0-2}] (\partial^j\rho  \partial_j  \partial^m h )  \cdot \Lambda^{s_0-2} \partial_m  ( \mathrm{e}^\rho \partial_i H^i) dx d\tau
  \\
  &-2 \int^t_0 \int_{\mathbb{R}^3}    \Lambda^{s_0-2} \{(\partial^j\rho  \mathbf{T}(\partial_j  \partial^m h ) \}  \cdot \Lambda^{s_0-2} \partial_m  ( \mathrm{e}^\rho \partial_i H^i) dx d\tau
  \\
  &+2 \int^t_0 \int_{\mathbb{R}^3}    \Lambda^{s_0-2} (\partial^j\rho  \partial_j  \partial^m h)  \cdot \Lambda^{s_0-2} \mathbf{T} \partial_m    ( \mathrm{e}^\rho   \partial_i H^i  ) dx d\tau
  \\
  &-2 \int^t_0 \int_{\mathbb{R}^3}    \Lambda^{s_0-2} (\partial^j\rho  \partial_j  \partial^m h)  \cdot [\mathbf{T}, \Lambda^{s_0-2} ]   \partial_m( \mathrm{e}^\rho \partial_i H^i)\} dx d\tau.
\end{split}
\end{equation*}
For simplicity, we also write
\begin{equation}\label{I90}
  \widetilde{I}_{45}= \widetilde{I}_{46}+\widetilde{I}_{47}+\widetilde{I}_{48}+\widetilde{I}_{49}+\widetilde{I}_{410}+\widetilde{I}_{411}.
\end{equation}
Here, we set
\begin{equation}\label{I7e}
\begin{split}
  \widetilde{I}_{46}&=2 \int^t_0 \int_{\mathbb{R}^3}  \mathbf{T} \left\{  \Lambda^{s_0-2} (\partial^j\rho  \partial_j  \partial^m h)  \cdot \Lambda^{s_0-2} \partial_m  ( \mathrm{e}^\rho \partial_i H^i) \right\} dx d\tau,
  \\
  \widetilde{I}_{47}& = 2 \int^t_0 \int_{\mathbb{R}^3} \Lambda^{s_0-2} ( \partial^j v^l \partial_l \rho   \partial_j  \partial^m h ) \cdot \Lambda^{s_0-2} \partial_m  ( \mathrm{e}^\rho \partial_i H^i) dx d\tau,
  \\
  \widetilde{I}_{48}&= - 2 \int^t_0 \int_{\mathbb{R}^3}  [\mathbf{T}, \Lambda^{s_0-2}] (\partial^j\rho  \partial_j  \partial^m h)  \cdot \Lambda^{s_0-2} \partial_m  ( \mathrm{e}^\rho \partial_i H^i) dx d\tau,
  \\
  \widetilde{I}_{49}&=-2 \int^t_0 \int_{\mathbb{R}^3}    \Lambda^{s_0-2} \{(\partial^j\rho  \mathbf{T}(\partial_j  \partial^m h) \}  \cdot \Lambda^{s_0-2} \partial_m  ( \mathrm{e}^\rho \partial_i H^i) dx d\tau,
  \\
  \widetilde{I}_{410}& =2 \int^t_0 \int_{\mathbb{R}^3}    \Lambda^{s_0-2} (\partial^j\rho  \partial_j  \partial^m h)  \cdot \Lambda^{s_0-2}\left\{ \mathbf{T} \partial_m    ( \mathrm{e}^\rho   \partial_i H^i  ) \right\} dx d\tau,
  \\
  \widetilde{I}_{411}& = -2 \int^t_0 \int_{\mathbb{R}^3}    \Lambda^{s_0-2} (\partial^j\rho  \partial_j  \partial^m h)  \cdot [\mathbf{T}, \Lambda^{s_0-2} ]   \partial_m( \mathrm{e}^\rho \partial_i H^i)\} dx d\tau.
\end{split}
\end{equation}
Integrating $\widetilde{I}_{46}$ by parts, we have
\begin{equation}\label{I7r}
\begin{split}
  \widetilde{I}_{46}=&2 \int^t_0 \int_{\mathbb{R}^3}  \mathbf{T} \left\{  \Lambda^{s_0-2} (\partial^j\rho  \partial_j  \partial^m h)  \cdot \Lambda^{s_0-2} \partial_m  ( \mathrm{e}^\rho \partial_i H^i) \right\} dx d\tau
  \\
  =
  & 2 \int^t_0 \frac{d}{d\tau} \int_{\mathbb{R}^3}    \Lambda^{s_0-2} (\partial^j\rho  \partial_j  \partial^m h)  \cdot \Lambda^{s_0-2} \partial_m  ( \mathrm{e}^\rho \partial_i H^i)dx d\tau
  \\
  &- 2 \int^t_0 \int_{\mathbb{R}^3}  \mathrm{div}\bv \cdot  \Lambda^{s_0-2} (\partial^j\rho  \partial_j  \partial^m h)  \cdot \Lambda^{s_0-2} \partial_m  ( \mathrm{e}^\rho \partial_i H^i)  dx d\tau
  \\
  =
  &  M(t)-M(0)+\widetilde{I}_{412},
\end{split}
\end{equation}
where
\begin{equation*}
\begin{split}
M(t)=& 2\int_{\mathbb{R}^3}    \Lambda^{s_0-2} (\partial^j\rho  \partial_j  \partial^m h)  \cdot \Lambda^{s_0-2} \partial_m  ( \mathrm{e}^\rho \partial_i H^i)dx,
\\
  \widetilde{I}_{412}= &- 2 \int^t_0 \int_{\mathbb{R}^3}  \mathrm{div}\bv \cdot  \Lambda^{s_0-2} (\partial^j\rho  \partial_j  \partial^m h)  \cdot \Lambda^{s_0-2} \partial_m  ( \mathrm{e}^\rho \partial_i H^i)  dx d\tau.
\end{split}
\end{equation*}
Using Sobolev's inequality, we find
\begin{equation}\label{I412}
\begin{split}
  | \widetilde{I}_{412} | \lesssim  & \int^t_0 \| \partial \bv \|_{L^\infty_x}  \| \partial\rho \partial^2h \|_{\dot{H}^{s_0-2}_x} \|\Delta h - \partial\rho \partial h \|_{\dot{H}^{s_0-1}_x} d\tau
  \\
  \lesssim  & \int^t_0 \| \partial \bv \|_{L^\infty_x} ( \| \rho \|_{{H}^{s_0}_x} +\| h \|_{{H}^{s_0}_x}  ) ( \| \rho \|^2_{{H}^{s}_x} +\| h \|^2_{{H}^{s_0+1}_x}  ) d\tau
  \\
  & + \int^t_0 \| \partial \bv \|_{L^\infty_x} ( \| \rho \|^2_{{H}^{s_0}_x} +\| h \|^2_{{H}^{s_0}_x}  ) ( \| \rho \|^2_{{H}^{s}_x} +\| h \|^2_{{H}^{s_0+1}_x}  ) d\tau.
\end{split}
\end{equation}
By Lemma \ref{lpe}, we also have
\begin{equation}\label{I47}
\begin{split}
  | \widetilde{I}_{47} |\lesssim & \int^t_0 \|\partial \bv \partial \rho \partial^2 h\|_{\dot{H}^{s_0-2}_x}    \|\Delta h - \partial\rho \partial h \|_{\dot{H}^{s_0-1}_x} d\tau
  \\
   \lesssim & \int^t_0 \| \partial \bv \|_{L^\infty_x} ( \| \rho \|_{{H}^{s_0}_x} +\| h \|_{{H}^{s_0}_x}  ) ( \| \rho \|^2_{{H}^{s}_x} +\| h \|^2_{{H}^{s_0+1}_x}  ) d\tau
  \\
  & + \int^t_0 \| \partial \bv \|_{L^\infty_x} ( \| \rho \|^2_{{H}^{s_0}_x} +\| h \|^2_{{H}^{s_0}_x}  ) ( \| \rho \|^2_{{H}^{s}_x} +\| h \|^2_{{H}^{s_0+1}_x}  ) d\tau.
\end{split}
\end{equation}
By utilizing Lemma \ref{ce} and H\"older's inequality, we obtain
\begin{equation}\label{I48}
\begin{split}
  | \widetilde{I}_{48} |
   \lesssim & \int^t_0  \| \partial \bv \|_{\dot{B}^{0}_{\infty,2}} \| \partial \rho \partial^2 h\|_{\dot{H}^{s_0-2}_x}  \|\Delta h - \partial\rho \partial h \|_{\dot{H}^{s_0-1}_x} d\tau
  \\
  \lesssim & \int^t_0 \| \partial \bv \|_{\dot{B}^{0}_{\infty,2}} ( \| \rho \|_{{H}^{s_0}_x} +\| h \|_{{H}^{s_0}_x}  ) ( \| \rho \|^2_{{H}^{s}_x} +\| h \|^2_{{H}^{s_0+1}_x}  ) d\tau
  \\
  & + \int^t_0 \| \partial \bv \|_{\dot{B}^{0}_{\infty,2}} ( \| \rho \|^2_{{H}^{s_0}_x} +\| h \|^2_{{H}^{s_0}_x}  ) ( \| \rho \|^2_{{H}^{s}_x} +\| h \|^2_{{H}^{s_0+1}_x}  ) d\tau.
\end{split}
\end{equation}
We note that $\mathbf{T}(\partial^2 h)$ satisfies
\begin{equation*}
  \mathbf{T}(\partial^2 h)=- \partial^2 \bv \cdot \partial h+ \partial \bv \cdot \partial^2 h.
\end{equation*}
Hence, by H\"older's inequality again, we can estimate $\tilde{I}_{49}$ by
\begin{equation}\label{I49}
\begin{split}
   | \widetilde{I}_{49} |  \leq & C\int^t_0  (\| \partial \rho \partial h \partial^2 \bv \|_{\dot{H}^{s_0-2}_{x}}+ \| \partial \rho \partial \bv\partial^2 h\|_{\dot{H}^{s_0-2}_x} ) \|\Delta h - \partial\rho \partial h \|_{\dot{H}^{s_0-1}_x} d\tau
   \\
   \lesssim & \int^t_0 (\| \partial \rho \|_{\dot{B}^{s_0-2}_{\infty,2}}+ \| ( \partial \rho,  \partial h) \|_{L^\infty_x}) \| \bv \|_{{H}^{s_0}_{x}} ( \| \bv \|^2_{{H}^{s}_{x}} +\| h \|^2_{{H}^{s_0+1}_{x}}) d\tau
   \\
   & + \int^t_0 (\| \partial \rho \|_{\dot{B}^{s_0-2}_{\infty,2}}+ \| ( \partial \rho,  \partial h) \|_{L^\infty_x})  \| ( \bv , \rho , h) \|^2_{{H}^{s_0}_{x}}  ( \| \bv \|^2_{{H}^{s}_{x}} +\| h \|^2_{{H}^{s_0+1}_{x}}) d\tau.
\end{split}
\end{equation}
To estimate $\widetilde{I}_{410}$, we calculate
\begin{equation*}
\begin{split}
   \widetilde{I}_{410} =& 2 \int^t_0 \int_{\mathbb{R}^3}    \Lambda^{s_0-2} (\partial^j\rho  \partial_j  \partial^m h)  \cdot \Lambda^{s_0-2}\left\{ -2\partial_m (\partial_i v^j) \partial_j  \partial^i h+Y_m  \right\} dx d\tau
   \\
   =& -4 \int^t_0 \int_{\mathbb{R}^3}    \Lambda^{s_0-2} (\partial^j\rho  \partial_j  \partial^m h)  \cdot \Lambda^{s_0-2} \left\{ \partial_m( \partial_i v^j \partial_j  \partial^i h)  \right\} dx d\tau
   \\
   & + 4 \int^t_0 \int_{\mathbb{R}^3}    \Lambda^{s_0-2} (\partial^j\rho  \partial_j  \partial^m h)  \cdot \Lambda^{s_0-2} (  \partial_i v^j \partial_m \partial_j  \partial^i h  ) dx d\tau
   \\
   &+ 2 \int^t_0 \int_{\mathbb{R}^3}    \Lambda^{s_0-2} (\partial^j\rho  \partial_j  \partial^m h)  \cdot \Lambda^{s_0-2} Y_m   dx d\tau.
\end{split}
\end{equation*}
By the Plancherel formula, we have
\begin{equation*}
  \begin{split}
    \widetilde{I}_{410} =& 2 \int^t_0 \int_{\mathbb{R}^3}    \Lambda^{s_0-2}_x (\partial^j\rho  \partial_j  \partial^m h)  \cdot \Lambda^{s_0-2}_x \left\{ -2\partial_m (\partial_i v^j) \partial_j  \partial^i h+Y_m  \right\} dx d\tau
   \\
   =& -4 \int^t_0 \int_{\mathbb{R}^3}    \Lambda^{s_0-2} \Lambda^{\frac12}_x(\partial^j\rho  \partial_j  \partial^m h)  \cdot \Lambda_x^{s_0-2}\Lambda^{-\frac12}_x \left\{ \partial_m( \partial_i v^j \partial_j  \partial^i h)  \right\} dx d\tau
   \\
   & + 4 \int^t_0 \int_{\mathbb{R}^3}    \Lambda^{s_0-2}_x (\partial^j\rho  \partial_j  \partial^m h)  \cdot \Lambda^{s_0-2}_x (  \partial_i v^j \partial_m \partial_j  \partial^i h  ) dx d\tau
   \\
   &+ 2 \int^t_0 \int_{\mathbb{R}^3}    \Lambda^{s_0-2}_x (\partial^j\rho  \partial_j  \partial^m h)  \cdot \Lambda^{s_0-2}_x Y_m   dx d\tau.
\end{split}
\end{equation*}
Using \eqref{Hh2} and H\"older's inequality, we deduce that
\begin{equation}\label{I410}
\begin{split}
  | \widetilde{I}_{410} | \lesssim &  \int^t_0   \| \partial\rho \partial^2 h \|_{\dot{H}^{s_0-\frac{3}{2}}_{x}} \| \partial\bv \partial^2 h \|_{\dot{H}^{s_0-\frac{3}{2}}_{x}}d\tau+ C\int^t_0   \| \partial\rho \partial^2 h \|_{\dot{H}^{s_0-2}_{x}} \| \partial\bv \partial^3 h \|_{\dot{H}^{s_0-2}_{x}}d\tau
  \\
  & + \int^t_0   \| \partial\rho \partial^2 h \|_{\dot{H}^{s_0-2}_{x}} \| \bY \|_{\dot{H}^{s_0-2}_{x}}d\tau
  \\
  \lesssim &  \int^t_0  \| \partial \bv \|_{\dot{B}^{0}_{\infty,2}}\| \rho \|_{{H}^{2}_{x}} \| h \|^2_{{H}^{s_0+1}_{x}}  d\tau
   + \int^t_0   ( \| \bv \|^2_{{H}^{2}_{x}} +  \| \rho \|^2_{{H}^{2}_{x}}) \| h \|^2_{{H}^{s_0+1}_{x}}  d\tau
  \\
  & + \int^t_0    \| ( \partial \bv, \partial h, \partial \rho) \|_{L^\infty_x}  \| (\bv, \rho , h) \|^2_{{H}^{s_0}_{x}} \{ \| (\bv , \rho) \|^2_{{H}^{s}_{x}} + \| \bw \|^2_{{H}^{s_0}_{x}}+\| h \|^2_{{H}^{s_0+1}_{x}}\} d\tau
  \\
  & + \int^t_0    \| ( \partial \bv, \partial h, \partial \rho) \|_{L^\infty_x}  \| (\bv, \rho , h) \|^3_{{H}^{s_0}_{x}} \{ \| (\bv , \rho) \|^2_{{H}^{s}_{x}} + \| \bw \|^2_{{H}^{s_0}_{x}}+\| h \|^2_{{H}^{s_0+1}_{x}}\} d\tau.
\end{split}
\end{equation}
By Lemma \ref{ce}, we also have
\begin{equation}\label{I411}
\begin{split}
  | \widetilde{I}_{411} | \lesssim &   \int^t_0   \| \partial\rho \partial^2 h \|_{\dot{H}^{s_0-2}_{x}} \| \partial\bv\|_{\dot{B}^{0}_{\infty,2}} \|\Delta h - \partial \rho \partial h \|_{\dot{H}^{s_0-1}_{x}}d\tau
  \\
  \lesssim
   &  \int^t_0  \| \partial \bv \|_{\dot{B}^{0}_{\infty,2}}  \| (\bv, \rho , h) \|_{{H}^{s_0}_{x}} ( \| \bv \|^2_{{H}^{s}_{x}} +  \| \rho \|^2_{{H}^{s}_{x}} + \| \bw \|^2_{{H}^{s_0}_{x}}+\| h \|^2_{{H}^{s_0+1}_{x}}) d\tau
   \\
   &  + \int^t_0  \| \partial \bv \|_{\dot{B}^{0}_{\infty,2}}  \| (\bv, \rho , h) \|^2_{{H}^{s_0}_{x}} ( \| \bv \|^2_{{H}^{s}_{x}} +  \| \rho \|^2_{{H}^{s}_{x}} + \| \bw \|^2_{{H}^{s_0}_{x}}+\| h \|^2_{{H}^{s_0+1}_{x}}) d\tau.
\end{split}
\end{equation}
Observing form \eqref{He6}, \eqref{I08}, \eqref{I09}, \eqref{I000}, \eqref{I90}, and \eqref{I7e}, we have
\begin{equation}\label{Hep}
  \begin{split}
   \| ( \mathrm{e}^\rho \partial_i H^i) \|^2_{\dot{H}^{s_0-1}_x}(t)
  =&M(t)-M(0)+\widetilde{I}_0+\widetilde{I}_1+\widetilde{I}_2
  +\widetilde{I}_3+\widetilde{I}_{40}
  +\widetilde{I}_{42}
  \\
  &+\widetilde{I}_{44}+\widetilde{I}_{47}+\widetilde{I}_{48}+\widetilde{I}_{49}+\widetilde{I}_{410}+\widetilde{I}_{411}+\widetilde{I}_{412}.
  \end{split}
\end{equation}
By using \eqref{Hep} and then combining \eqref{I00}, \eqref{I01}, \eqref{I02}, \eqref{I03}, \eqref{I40}, \eqref{sI42}, \eqref{I44}, \eqref{I47}, \eqref{I48}, \eqref{I49}, \eqref{I410}, \eqref{I411} and \eqref{I412}, we can conclude that
\begin{equation}\label{fin}
\begin{split}
  & \| ( \mathrm{e}^\rho \partial_i H^i) \|^2_{\dot{H}^{s_0-1}_x}(t)
  \\
  \lesssim & M(t)-M(0)+\| h_0 \|^2_{H^{s_0+1}}+ \| h_0 \|^2_{H^{\frac52+}}\| \rho_0 \|^2_{H^{s_0}}
\\
& + \int^t_0 \|\partial \bv \|_{L^\infty_x} \| h \|^2_{H^{\frac52+}}\| \rho \|^2_{H^{s_0}} d\tau
 + \int^t_0   ( \| \bv \|^2_{{H}^{2}_{x}} +  \| \rho \|^2_{{H}^{2}_{x}}) \| h \|^2_{{H}^{s_0+1}_{x}}  d\tau
\\
&+ \int^t_0 \|\partial \bv \|_{\dot{B}^{s_0-2}_{\infty,2}}( \|h\|^2_{H^{s_0+1}_x}+ \|\bw\|^2_{H^{s_0}_x}+ \|\rho\|^2_{H^{s}_x}+ \|\bv\|^2_{H^{s}_x})d\tau
  \\
  &+\int^t_0 \|(\partial \bv, \partial \rho, \partial h) \|_{\dot{B}^{s_0-2}_{\infty,2}}\|( h,\rho,\bv)\|_{H^{s_0}_x}( \|h\|^2_{H^{s_0+1}_x}+ \|\bw\|^2_{H^{s_0}_x}+ \| (\rho,\bv) \|^2_{H^{s}_x})d\tau
  \\
  &+\int^t_0 \|(\partial \bv, \partial \rho, \partial h) \|_{\dot{B}^{s_0-2}_{\infty,2}} \|( h,\rho,\bv) \|^2_{H^{s_0}_x}( \|h\|^2_{H^{s_0+1}_x}+ \|\bw\|^2_{H^{s_0}_x}+ \|(\rho, \bv)\|^2_{H^{s}_x})d\tau
  \\
  & +\int^t_0 \|(\partial \bv, \partial \rho, \partial h) \|_{\dot{B}^{s_0-2}_{\infty,2}} \|( h,\rho, \bv)\|^3_{H^{s_0}_x}( \|h\|^2_{H^{s_0+1}_x}+ \|\bw\|^2_{H^{s_0}_x}+ \|(\rho,\bv)\|^2_{H^{s}_x})d\tau.
\end{split}
\end{equation}
By H\"older's inequality, we have
\begin{equation}\label{Mt}
  | M(t) | \lesssim  \| \rho \|_{{H}^{2}_{x}} \| h \|_{{H}^{s_0+\frac12}_{x}}\| h \|_{{H}^{s_0+1}_{x}}+\| h \|_{{H}^{\frac52+}_{x}}\| \rho \|_{{H}^{2}_{x}}\| \rho \|_{{H}^{s_0}_{x}} \| h \|_{{H}^{s_0+\frac12}}.
\end{equation}
Using Young's inequality, \eqref{Mt} yields
\begin{equation}\label{Mtt}
  | M(t) | \leq \frac{1}{100}\| h \|^2_{L^\infty_t {H}^{s_0+1}_{x}}+ C \| \rho \|^4_{L^\infty_t {H}^{2}_{x}}\| h \|^2_{L^\infty_t {H}^{s_0}_{x}}.
\end{equation}
Combining \eqref{fin} with \eqref{Mtt}, we obtain that
\begin{equation}\label{FF}
\begin{split}
  & \| ( \mathrm{e}^\rho \partial_i H^i) \|^2_{\dot{H}^{s_0-1}_x}(t)
  \\
  \lesssim &  \| \rho \|^4_{L^\infty_t {H}^{2}_{x}}\| h \|^2_{L^\infty_t {H}^{s_0}_{x}}+\| h_0 \|^2_{H^{s_0+1}}+ \| h_0 \|^2_{H^{\frac52+}}\| \rho_0 \|^2_{H^{s_0}}+ \frac{1}{100} \| h \|^2_{L^\infty_t {H}^{s_0+1}_{x}}
\\
& + \int^t_0 \|\partial \bv \|_{L^\infty_x} \| h \|^2_{H^{\frac52+}}\| \rho \|^2_{H^{s_0}} d\tau
 + \int^t_0   ( \| \bv \|^2_{{H}^{2}_{x}} +  \| \rho \|^2_{{H}^{2}_{x}}) \| h \|^2_{{H}^{s_0+1}_{x}}  d\tau
\\
&+ \int^t_0 \|\partial \bv \|_{\dot{B}^{s_0-2}_{\infty,2}}( \|h\|^2_{H^{s_0+1}_x}+ \|\bw\|^2_{H^{s_0}_x}+ \|\rho\|^2_{H^{s}_x}+ \|\bv\|^2_{H^{s}_x})d\tau
  \\
  &+\int^t_0 \|(\partial \bv, \partial \rho, \partial h) \|_{\dot{B}^{s_0-2}_{\infty,2}}\|( h,\rho,\bv)\|_{H^{s_0}_x}( \|h\|^2_{H^{s_0+1}_x}+ \|\bw\|^2_{H^{s_0}_x}+ \| (\rho,\bv) \|^2_{H^{s}_x})d\tau
  \\
  &+\int^t_0 \|(\partial \bv, \partial \rho, \partial h) \|_{\dot{B}^{s_0-2}_{\infty,2}} \|( h,\rho,\bv) \|^2_{H^{s_0}_x}( \|h\|^2_{H^{s_0+1}_x}+ \|\bw\|^2_{H^{s_0}_x}+ \|(\rho, \bv)\|^2_{H^{s}_x})d\tau
  \\
  & +\int^t_0 \|(\partial \bv, \partial \rho, \partial h) \|_{\dot{B}^{s_0-2}_{\infty,2}} \|( h,\rho, \bv)\|^3_{H^{s_0}_x}( \|h\|^2_{H^{s_0+1}_x}+ \|\bw\|^2_{H^{s_0}_x}+ \|(\rho,\bv)\|^2_{H^{s}_x})d\tau.
\end{split}
\end{equation}
\textbf{Step 3: The total energy.} By \eqref{E0}, we have
\begin{equation}\label{e007}
  \|( h,\rho,\bv) \|_{H^{s_0}_x} \leq C \|( h_0,\rho_0,\bv_0) \|_{H^{s_0}_x}\exp\{ \int^t_0 \|(d \bv, d \rho, d h) \|_{L^\infty_x} d\tau \}.
\end{equation}
Gathering \eqref{e000}, \eqref{e001}, \eqref{e0004}, \eqref{e006}, \eqref{He2}, \eqref{FF}, \eqref{e007} and taking $0< t \leq \frac{1}{2C}$, upon cancelling the small terms with factor $1/100$, $1/50$  from \eqref{e0004}, \eqref{FF}, we have
\begin{equation}\label{fee}
\begin{split}
  E_s(t) \leq & CE_0\exp\{ \int^t_0 \|(d \bv, d \rho, d h) \|_{L^\infty_x} d\tau\}
   +C \int^t_0  \|(\partial \bv, \partial \rho, \partial h) \|_{\dot{B}^{s_0-2}_{\infty,2}}(\|( h_0,\rho_0,\bv_0) \|_{H^{s_0}_x}
  \\
  & +\|( h_0,\rho_0,\bv_0) \|^2_{H^{s_0}_x}+\|( h_0,\rho_0,\bv_0) \|^3_{H^{s_0}_x}) \cdot \exp\{ \int^\tau_0 \|(d \bv, d \rho, d h) \|_{L^\infty_x} d\tau \}E_s(\tau)d\tau
  \\
  & +C \int^t_0
  \|( h_0,\rho_0,\bv_0) \|^2_{H^{s_0}_x} \exp\{ \int^\tau_0 \|(d \bv, d \rho, d h) \|_{L^\infty_x} d\tau \}E_s(\tau)d\tau,
\end{split}
\end{equation}
where ${E}_0=E_s(0)+E^2_s(0)+E^3_s(0)+E^{2s_0-1}_s(0)+E^{1+\frac{s_0-1}{s_0-2}}_s(0)$.
By using Gronwall's equality, we conclude that
\begin{equation*}
\begin{split}
  E_s(t) \leq & C{E}_0 \exp\left\{ {E}_0 \int^t_0  \|(\partial \bv, \partial \rho, \partial h) \|_{\dot{B}^{s_0-2}_{\infty,2}}  d\tau
    \cdot \exp ( \int^t_0  \|(d \bv, d \rho, d h) \|_{L^\infty_x}  d\tau ) \right\}.
\end{split}
\end{equation*}
So we have proved Theorem \ref{ve}.
\end{proof}

\subsection{Energy estimates of Theorem \ref{dingli3}}\label{ES2}
\begin{theorem}\label{TT2}
Let $(\rho,\bv)$ be a solution of \eqref{fc0} with $h=0$. Let $2<\sstar<\frac52$. Let $\bw$ be defined in \eqref{pw1}. Then the following energy estimates hold:
\begin{equation}\label{DD1}
\begin{split}
\mathcal{E}(t) \lesssim  \mathcal{E}_0 \exp \{ \mathcal{E}_1 \int^t_0 \|(d \bv, d \rho)\|_{L^\infty_x} d\tau \cdot \exp ( \int^t_0  \|(d \bv, d \rho) \|_{L^\infty_x}  d\tau )\},
\end{split}
\end{equation}
where $\mathcal{E}(t)$, $\mathcal{E}_0$ and $\mathcal{E}_1$ are defined by
\begin{equation}\label{DD2}
  \mathcal{E}(t)=\|\bv\|^2_{H_x^{\sstar}} + \|\rho\|^2_{H_x^{\sstar}}+ \|\bw\|^2_{H_x^{2}},
\end{equation}
and
\begin{equation}\label{TTE}
  \begin{split}
  \mathcal{E}_0 = &   \|\bv_0\|^2_{H_x^{\sstar}} + \|\rho_0\|^2_{H_x^{\sstar}}+ \|\bw_0\|^2_{H_x^{2}}+   \|  \bv_0 \|^{2}_{H^{\sstar}}\|\rho_0\|^4_{H^{\sstar}},
  \end{split}
\end{equation}
and
\begin{equation}\label{TTF}
  \mathcal{E}_1 = 1+
    \| (\rho_0, \bv_0)\|_{ H^2 } + \| (\rho_0, \bv_0 )\|^2_{ H^2 }
   + \| ( \rho_0, \bv_0 ) \|^3_{ H^2 }.
\end{equation}
\end{theorem}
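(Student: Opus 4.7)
The plan is to adapt the proof strategy of Theorem \ref{ve} to the isentropic regime ($h\equiv 0$), while working at the critical vorticity regularity $\bw\in H^2$ (so the parameter $s_0=2$ that was forbidden in Theorem \ref{ve} is exactly the target here). First I apply the classical hyperbolic energy estimate \eqref{E0} of Theorem \ref{dv} to the symmetric system \eqref{fc0s}, obtaining $\|(\bv,\rho)\|_{H^\sstar}$ controlled by $\|(\bv_0,\rho_0)\|_{H^\sstar}\exp(C\!\int_0^t\!\|(d\bv, d\rho)\|_{L^\infty}d\tau)$. Second, from \eqref{W0} with $h=0$ we have $\mathbf{T}w^i=(\bw\cdot\nabla)v^i$, which yields the $L^2$ bound $\|\bw\|_{L^2}^2(t)\leq \|\bw_0\|_{L^2}^2+C\!\int_0^t\!\|\partial\bv\|_{L^\infty}\|\bw\|_{L^2}^2 d\tau$.

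For the top-order part I use the Hodge decomposition $\|\bw\|_{\dot H^2}\lesssim \|\mathrm{curl}\,\mathrm{curl}\,\bw\|_{L^2}+\|\mathrm{div}\,\bw\|_{\dot H^1}$. The divergence piece is handled directly via \eqref{Wd1}: $\|\mathrm{div}\,\bw\|_{\dot H^1}=\|\bw\cdot\nabla\rho\|_{\dot H^1}\lesssim \|\bw\|_{H^{3/2+}}\|\rho\|_{H^\sstar}$ by Lemma \ref{ps}. For $\mathrm{curl}\,\mathrm{curl}\,\bw$ I use the transport equation \eqref{W2}, which with $h=0$ simplifies greatly: $F^i$ reduces to $-\epsilon^{ijk}\partial_j\rho\cdot\mathrm{curl}\,\bw_k-2\partial^a\rho\,\partial^i w_a$, the nonlinearities $K^i_2,K^i_3$ vanish entirely, and only the $(\partial\bv,\partial\rho,\partial\bw)$-polynomial pieces of $K^i_1,K^i_4,K^i_5,K^i_6$ survive. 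Pairing \eqref{W2} against $\mathrm{curl}\,\mathrm{curl}\,\bw^i+F^i$ in $L^2$ and integrating over $[0,t]\times\mathbb{R}^3$ produces an initial-data term, a $\mathrm{div}\,\bv$-weighted energy term, and the principal source
\[
\int_0^t\!\!\int \partial^i\bigl(2\partial_n v_a\,\partial^n w^a\bigr)\cdot\bigl(\mathrm{curl}\,\mathrm{curl}\,\bw_i+F_i\bigr)\,dx\,d\tau.
\]

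Integrating this source by parts in $\partial^i$ and using $\partial^i\mathrm{curl}\,\mathrm{curl}\,\bw_i=0$ kills the leading contribution, leaving a pairing with $\partial^i F_i$ whose borderline pieces are of the form $\partial^a\rho\,\Delta w_a$ and $\partial^i\partial^a\rho\,\partial_i w_a$. These I treat by the Plancherel-shift trick used in Step 1 of the sketch of Theorem \ref{dingli}, namely
\[
\int (\partial_n v_a\partial^n w^a)(\partial^a\rho\,\Delta w_a)\,dx=\int\Lambda_x^{-1/2}(\partial_n v_a\partial^n w^a)\cdot\Lambda_x^{1/2}(\partial^a\rho\,\Delta w_a)\,dx,
\]
so that half a derivative is redistributed; Lemma \ref{ps} and Lemma \ref{lpe} then produce a bound of order $\|\bv\|_{H^\sstar}\|\rho\|_{H^\sstar}\|\bw\|_{H^2}^2$. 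The residual components of $K^i$ are quadratic or cubic products estimated by H\"older, Lemma \ref{cj}, and the Sobolev embedding $H^{\sstar-1}\hookrightarrow L^\infty$ (which is available because $\sstar>2$); crucially no Besov-norm driver appears, because $h\equiv 0$ removes the third-order $h$-derivative terms (cf.\ \eqref{pre1}) that forced $\dot B^{s_0-2}_{\infty,2}$ drivers in Theorem \ref{ve}.

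Combining everything, interpolating $\|\bw\|_{H^{3/2+}}\lesssim \|\bw\|_{L^2}^\theta\|\bw\|_{H^2}^{1-\theta}$ (with $\theta$ slightly larger than $1/4$), and invoking Young's inequality to absorb the resulting super-critical term into $\|\bw\|_{H^2}^2$, I close the system and apply Gr\"onwall to obtain \eqref{DD1}. The precise shape of $\mathcal{E}_0$, in particular the extra factor $\|\bv_0\|_{H^\sstar}^2\|\rho_0\|_{H^\sstar}^4$, is exactly what Young's inequality produces in that absorption step, while the cubic polynomial in $\|(\rho_0,\bv_0)\|_{H^2}$ inside $\mathcal{E}_1$ arises from the $L^\infty$-driven multilinear bounds on $K^i$. \textbf{The main obstacle} is precisely the criticality $\bw\in H^2$: there is no derivative slack for the source $\partial^i(\partial_n v_a\partial^n w^a)\cdot\partial^i F_i$, so the Plancherel half-derivative shift together with the careful Young/interpolation absorption is essential (and is the same structural difficulty flagged in Remark \ref{rem:difficult-2}). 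Everything else reduces to routine product estimates that are markedly easier than the entropy-coupled analogues in Theorem \ref{ve}.
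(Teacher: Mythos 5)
Your proposal follows essentially the same route as the paper's proof (classical hyperbolic energy for $(\bv,\rho)$, the $L^2$ bound on $\bw$ from the vorticity transport equation, Hodge decomposition $\|\bw\|_{\dot H^2}\lesssim\|\mathrm{curl}\,\mathrm{curl}\,\bw\|_{L^2}+\|\mathrm{div}\,\bw\|_{\dot H^1}$, the transport equation for $\mathrm{curl}\,\mathrm{curl}\,\bw+\Omega$ with the two $h$-free pieces surviving, elimination of the leading source via $\partial^i(\mathrm{curl}\,\mathrm{curl}\,\bw_i)=0$, the Plancherel half-derivative shift, interpolation/Young absorption producing the $\|\bv_0\|_{H^\sstar}^2\|\rho_0\|_{H^\sstar}^4$ term, and Gr\"onwall), and you correctly identify that the absence of the entropy removes the Besov-norm drivers. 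However, there is one concrete error: the Plancherel shift as you wrote it,
\begin{equation*}
\int(\partial_n v_a\,\partial^n w^a)(\partial^a\rho\,\Delta w_a)\,dx=\int\Lambda_x^{-1/2}(\partial_n v_a\,\partial^n w^a)\cdot\Lambda_x^{1/2}(\partial^a\rho\,\Delta w_a)\,dx,
\end{equation*}
goes in the wrong direction: $\|\Lambda_x^{1/2}(\partial\rho\,\Delta\bw)\|_{L^2}=\|\partial\rho\,\Delta\bw\|_{\dot H^{1/2}}$ cannot be bounded by $\|\rho\|_{H^\sstar}\|\bw\|_{H^2}$ (it needs $\Delta\bw\in H^{1/2}$, i.e.\ $\bw\in H^{5/2}$, which you do not have; neither Lemma~\ref{ps} nor Lemma~\ref{lpe} closes this term). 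The correct shift after integration by parts is $\Lambda_x^{+1/2}$ on $(\partial\bv\,\partial\bw)$ and $\Lambda_x^{-1/2}$ on $(\partial\rho\,\Delta\bw)$, so that the second factor sits in $\dot H^{-1/2}$ where a duality/H\"older bound $\|\partial\rho\,\Delta\bw\|_{\dot H^{-1/2}}\lesssim\|\partial\rho\|_{L^3}\|\Delta\bw\|_{L^2}\lesssim\|\rho\|_{H^2}\|\bw\|_{H^2}$ suffices, while $\|\partial\bv\,\partial\bw\|_{\dot H^{1/2}}\lesssim\|\bv\|_{H^\sstar}\|\bw\|_{H^2}$ by Lemma~\ref{ps}. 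Alternatively, do not integrate by parts at all: keep $\int\partial^i(\partial\bv\,\partial\bw)\cdot\Omega_i$ and shift $\Lambda^{-1/2}$ onto $\partial^i(\partial\bv\,\partial\bw)$ and $\Lambda^{1/2}$ onto $\Omega_i=\partial\rho\,\partial\bw$ (both factors then have exactly one derivative on $\bw$, which is what the paper does). This is a localized slip in the shift direction rather than a missing idea, but as written that line fails.
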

\begin{proof}
By using \eqref{E0}, taking $h=0$ and integrating over $[0,t]$, we have
\begin{equation}\label{DD0}
 \| ({\rho},\bv)\|^2_{H_x^{\sstar}}(t)  \leq   \| (\rho_0,\bv_0)\|^2_{H_x^{\sstar}}+ C\int^t_0 \|(d\bv, d\rho)\|_{L^\infty_x}\| ({\rho},\bv)\|^2_{H_x^{\sstar}}d\tau.
\end{equation}
We then discuss $\bw$ into several steps.

\textbf{Step 1: The estimate for $\bw$}. Recall that $\bw$ satisfies \eqref{W0}. Taking $h=0$, \eqref{W0} and \eqref{W2} reduce to
\begin{equation}\label{DB0}
\mathbf{T} w^i =  (\bw \cdot \nabla )v^i,
\end{equation}
and
\begin{equation}\label{DB1}
\begin{split}
& \mathbf{T} ( \mathrm{curl} \mathrm{curl} \bw^i+\Omega^i  )
=   \partial^i \big( 2 \partial_n v_a \partial^n w^a \big) + \Gamma^i,
\end{split}
\end{equation}
where
\begin{equation}\label{DB2}
  \Omega^i= -\epsilon^{ijk} \partial_j \rho \cdot \mathrm{curl}\bw_k- 2\partial^a {\rho} \partial^i w_a,
\end{equation}
and
\begin{equation}\label{DB3}
\begin{split}
\Gamma^i= \Gamma^i_1+\Gamma^i_2+\Gamma^i_3+\Gamma^i_4 ,
\end{split}
\end{equation}
and
\begin{equation*}
\begin{split}
 \Gamma^i_1=
&
-2\epsilon_{kmn}\epsilon^{ijk}\partial^m v^a \partial_{j}(\partial^n w_a) + 2\epsilon_{kmn}\epsilon^{ijk}\partial_m v^a \partial_{n}w_a \partial_j {\rho}, \ \ \ \ \ \ \ \ \ \ \ \ \ \ \ \ \ \ \ \ \ \ \ \ \ \ \ \ \ \ \ \ \ \ \ \ \ \ \
\end{split}
\end{equation*}
\begin{equation*}
  \begin{split}
  \Gamma^i_2=&- 2 \partial^a {\rho} \big( \partial^i w^m \partial_m v_a + w^m \partial^i(\partial_{m}v_a) - \partial^i v^k \partial_k w_a \big) 
   + 2\mathrm{e}^{\rho} w^i  \mathrm{curl}\bw^m \partial_m \rho
   \\
   & +2\mathrm{e}^{\rho}\mathrm{curl}\bw^m \partial_m w^i 
   -2   \partial^a v^k \partial_k {\rho}  \partial^i w_a+2 \mathrm{e}^{{\rho}} \mathrm{curl}\bw^a \partial^i w_a
  + 2 \epsilon^{ajk} \mathrm{e}^{{\rho}} w_k  \partial_j {\rho}  \partial^iw_a,
  \end{split}
\end{equation*}
\begin{equation*}
  \Gamma^i_3=2 \partial^i {\rho} \partial_{j}v^a \partial^j w_a
  -2  \partial_{j} v^a \partial^j\partial^i  w_a-\epsilon^{ijk} \partial_j v^m \partial_m ( \mathrm{e}^{-\rho}\mathrm{curl}\bw_k), \ \ \ \ \ \ \ \ \ \ \ \ \ \ \ \ \ \ \ \ \ \ \ \ \ \ \ \ \ \ \
\end{equation*}
\begin{equation*}
\begin{split}
  \Gamma^i_4=&2 \partial^i \rho \partial_n v_a \partial^n w^a-  \mathrm{e}^{\rho}\mathrm{div}\bv \cdot \epsilon^{ijk} \partial_j ( \mathrm{e}^{-\rho} \mathrm{curl} \bw_k ) +2\mathrm{div}\bv\cdot\partial^a {\rho} \partial^i w_a . \ \ \ \ \ \ \ \ \ \ \ \ \ \ \ \ \ \ \ \ \
\end{split}
\end{equation*}
Multiplying \eqref{DB0} bt $\bw$ and integrating over $ [0,t]\times \mathbb{R}^3$, we can get
\begin{equation}\label{DB4}
  \| \bw \|^2_{L_x^2}(t)- \| \bw \|^2_{L_x^2}(0)\leq C \int^t_0 \| \partial \bv\|_{L^\infty_x}\|\bw\|^2_{L^2_x}d\tau.
\end{equation}
By elliptic estimates, we have
\begin{equation}\label{DB5}
  \| \bw \|_{\dot{H}^{2}_x} \leq \| \mathrm{curl} \mathrm{curl} \bw \|_{L^2_x}+\| \mathrm{div} \bw\|_{\dot{H}^{1}_x}.
\end{equation}
Let us first estimate $\| \mathrm{div} \bw \|_{\dot{H}^{s_0-1}_x}$. Note \eqref{W01}. Using product estimates in Lemma \ref{ps}, we can derive that
\begin{equation}\label{DB6}
\begin{split}
  \| \mathrm{div} \bw \|_{{H}^{1}_x} &= \|  \bw \cdot \partial \rho  \|_{{H}^{1}_x} \leq C \|\bw\|_{H_x^{\frac32+}}\|\rho\|_{{H}^{s}_x}.
\end{split}
\end{equation}
Let us next estimate $\| \mathrm{curl} \mathrm{curl} \bw \|_{L^2_x}$. Multiplying \eqref{DB1} by $\mathrm{curl} \mathrm{curl} \bw^i +\Omega^i$ on , and integrating over $[0,t] \times \mathbb{R}^3$, we find
\begin{equation}\label{DB7}
\begin{split}
    \|\mathrm{curl} \mathrm{curl} \bw + \Omega\|^2_{L^2_x}(t)
  \leq & \|\mathrm{curl} \mathrm{curl} \bw + \Omega\|^2_{L^2_x}(0)+ C | \int^t_0 \int_{\mathbb{R}^3}\mathrm{div}\bv \cdot |\mathrm{curl} \mathrm{curl} \bw + \Omega|^2 dxd\tau |
  \\
  & + C | \int^t_0 \int_{\mathbb{R}^3} \partial^i \big( 2   \partial_n v_a \partial^n w^a \big) (\mathrm{curl} \mathrm{curl} \bw_i + \Omega_i)dxd\tau |
  \\
  & +C | \int^t_0 \int_{\mathbb{R}^3}  \Gamma^i (\mathrm{curl} \mathrm{curl} \bw_i + \Omega_i)dxd\tau | ,
\end{split}
\end{equation}
where $ \Omega=(\Omega^1,\Omega^2,\Omega^3)$ and $ \Gamma=(\Gamma^1,\Gamma^2,\Gamma^3)$. By H\"older's inequality, we can estimate
\begin{equation}\label{DB8}
  \|\Omega\|_{L^2_x}(t) \leq  C\|\rho\|_{H^{\sstar}_x}\|\bw\|_{H^{\frac32+}_x}.
\end{equation}
By Young's inequality, we further derive that
\begin{equation}\label{DB9}
  \|\Omega\|^2_{L^2_x}(t) \leq  \frac18 \|\bw\|^2_{H^{2}_x}+  C \|\rho\|^4_{H^{\sstar}_x} .
\end{equation}
For the right hand side of \eqref{DB7}, let us set
\begin{equation}\label{DC0}
\begin{split}
   G_0
  =& \|\mathrm{curl} \mathrm{curl} \bw + \Omega\|^2_{L^2_x}(0),
  \\
  G_1=&  \int^t_0 \int_{\mathbb{R}^3}\mathrm{div}\bv \cdot |\mathrm{curl} \mathrm{curl} \bw + \Omega|^2 dxd\tau,
  \\
  G_2=& \int^t_0 \int_{\mathbb{R}^3} \Gamma^i\cdot(\mathrm{curl} \mathrm{curl} \bw_i + \Omega_i)dxd\tau,
  \\
  G_3=&\int^t_0 \int_{\mathbb{R}^3} \partial^i \big( 2   \partial_n v_a \partial^n w^a \big) \cdot (\mathrm{curl} \mathrm{curl} \bw_i + \Omega_i)dxd\tau.
\end{split}
\end{equation}
Let us estimate $G_0,G_1,\ldots,G_3$ one by one. For $G_0$, we can use H\"older's inequality to bound it by
\begin{equation}\label{DC1}
\begin{split}
  |G_0| & \leq \|\mathrm{curl} \mathrm{curl} \bw_0 \|^2_{L^2_x}+ C\|\rho_0\|_{H^{\sstar}_x}\|\bw_0\|_{H^{\frac32+}_x} \leq  C\|\bw_0 \|^2_{{H}^{2}_x} + C\|\rho_0\|^2_{H^{\sstar}_x}.
\end{split}
\end{equation}
For $G_1$, we have
\begin{equation}\label{DC2}
\begin{split}
  |G_1| \lesssim & \int^t_0 \|\partial \bv\|_{L^\infty_x}\|\mathrm{curl} \mathrm{curl} \bw + \Omega\|^2_{L^2_x}d\tau
  \\
  \lesssim & \int^t_0 \|\partial \bv\|_{L^\infty_x}\left\{ \|\bw\|^2_{{H}^{2}_x}+\|\bv\|^2_{H^{\sstar}_x}+\|\rho\|^2_{H^{\sstar}_x} \right\}d\tau.
\end{split}
\end{equation}
For $G_2$, recalling the expression of \eqref{rF} and using Lemma \ref{lpe}, we can obtain
\begin{equation}\label{DC3}
\begin{split}
  |{G_2}|
  \lesssim & \int^t_0 \|\Gamma\|_{L^2_x}  \|\mathrm{curl} \mathrm{curl} \bw + \Omega\|_{L^2_x}d\tau
  \\
  \lesssim 
  &  \int^t_0 (\| \partial \bv \|_{L^\infty_x} + \| \partial \rho \|_{L^\infty_x} )( \|\bw\|^2_{{H}^{2}_x}+\|\bv\|^2_{H^{\sstar}_x}+\|\rho\|^2_{H^{\sstar}_x} ) d\tau + \int^t_0 \|\bw\|^2_{{H}^{2}_x} d\tau
  \\
  &  +\int^t_0 (\| \partial \bv \|_{L^\infty_x} + \| \partial \rho \|_{L^\infty_x} )(\| \rho\|_{ H^2_x }+\| \bv\|_{ H^2_x } )  (\| \bw \|^2_{{H}^{2}_x} + \| \rho\|^2_{ H^{\sstar}_x } +\|\bv\|^2_{H^{\sstar}_x }) d\tau
  \\
  &  +\int^t_0 (\| \partial \bv \|_{L^\infty_x} + \| \partial \rho \|_{L^\infty_x} )(\| \rho\|^2_{ H^2_x }+\| \bv\|^2_{ H^2_x } )
   (\| \bw \|^2_{{H}^{2}_x} + \| \rho\|^2_{ H^{\sstar}_x } +\|\bv\|^2_{H^{\sstar}_x }) d\tau
  \\
  &  +\int^t_0 (\| \partial \bv \|_{L^\infty_x} + \| \partial \rho \|_{L^\infty_x} )(\| \rho\|^3_{ H^2_x }+\| \bv\|^3_{ H^2_x } ) (\| \bw \|^2_{{H}^{2}_x} + \| \rho\|^2_{ H^{\sstar}_x } +\|\bv\|^2_{H^{\sstar}_x }) d\tau.
\end{split}
\end{equation}
For $G_3$, we divide it into $G_3=G_{31}+G_{32}$. Here
\begin{equation*}
 G_{31}=2\int^t_0\int_{\mathbb{R}^3} \partial^i \big(   \partial_n v_a \partial^n w^a \big) \cdot ( \mathrm{curl} \mathrm{curl} \bw_i) dxd\tau, \ \ \ \ \ \ \ \ \ \ \ \ \ \ \ \ \ \ \ \ \ \ \
\end{equation*}
\begin{equation*}
\begin{split}
  G_{32}=&-4\int^t_0\int_{\mathbb{R}^3} \partial^i \big(  \partial_n v_a \partial^n w^a \big) ( \partial^a \rho \partial_i w_a ) dxd\tau \ \ \ \ \ \ \ \ \ \ \ \ \ \ \ \ \ \ \
  \\
  & -2\int^t_0\int_{\mathbb{R}^3} \partial^i \big(   \partial_n v_a \partial^n w^a \big) ( \epsilon^{ijk}\partial_j \rho \mathrm{curl}\bw_k ) dxd\tau.
\end{split}
\end{equation*}
Integrating $G_{31}$ by parts, we have
\begin{equation}\label{G31}
  G_{31}=-2\int^t_0\int_{\mathbb{R}^3}  \big(   \partial_n v_a \partial^n w^a \big) \cdot \partial^i( \mathrm{curl} \mathrm{curl} \bw_i) dxd\tau
  = 0 .
\end{equation}
Here we use the fact $\partial^i( \mathrm{curl} \mathrm{curl} \bw_i)=\mathrm{div}(\mathrm{curl} \mathrm{curl} \bw)=0$. Due to the Plancherel formula, we have
\begin{equation*}
\begin{split}
  G_{32}=&-4\int^t_0\int_{\mathbb{R}^3} \Lambda_x^{-\frac12}\partial^i \big(   \partial_n v_a \partial^n w^a \big) \cdot
  \Lambda_x^{\frac12}(\partial^a \rho \partial_i w_a ) dxd\tau
  \\
  & -2\int^t_0\int_{\mathbb{R}^3} \Lambda_x^{-\frac12} \partial^i \big(   \partial_n v_a \partial^n w^a \big) \cdot \Lambda_x^{\frac12}( \epsilon^{ijk}\partial_j \rho \mathrm{curl}\bw_k ) dxd\tau.
\end{split}
\end{equation*}
We then use H\"older's inequality to bound $G_{32}$
\begin{equation}\label{G42}
\begin{split}
  |G_{32}| & \leq C\int^t_0 \| \partial \bv \partial \bw \|_{\dot{H}_x^{\frac12}}\| \partial \rho \partial \bw \|_{\dot{H}_x^{\frac12}}d\tau
  \\
  & \leq C t\sup_{\tau\in[0,t]} ( \| \bv\|^2_{{H}_x^{{\sstar}}}(\tau) + \| \bw\|^2_{{H}_x^{2}}(\tau)+\| \rho\|^2_{{H}_x^{{\sstar}}}(\tau) ).
\end{split}
\end{equation}
On the other hand, we have
\begin{equation}\label{DC4}
\begin{split}
  \|\mathrm{curl} \mathrm{curl} \bw \|^2_{L^2_x} \leq 2\|\mathrm{curl} \mathrm{curl} \bw + \Omega\|^2_{L^2_x} + 2\|\Omega\|^2_{L^2_x}.
\end{split}
\end{equation}
Combining \eqref{DC1}, \eqref{DC2}, \eqref{DC3}, \eqref{G31}-\eqref{G42}, we get the following estimate
\begin{equation}\label{DC5}
\begin{split}
  & \|\mathrm{curl} \mathrm{curl} \bw \|^2_{L^2_x}(t)
  \\
  \leq & C\|\bw_0 \|^2_{{H}^{2}}+ C\|\rho_0\|_{H^{{\sstar}}}\|\bw_0\|_{H^{\frac32+}}
  \\
  &+ C t( \| \bv\|^2_{L^\infty_t{H}_x^{{\sstar}}} + \| \bw\|^2_{L^\infty_t{H}_x^{2}}+ \| \rho\|^2_{L^\infty_t{H}_x^{{\sstar}}})
  \\
  & +C\int^t_0 \|\partial \bv\|_{L^\infty_x}( \|\bw\|^2_{{H}^{2}_x}+\|\bv\|^2_{H^{\sstar}_x}+\|\rho\|^2_{H^{\sstar}_x} ) d\tau
  \\
  & + C\int^t_0 \| (\partial \bv , \partial \rho ) \|_{L^\infty_x}( \|\bw\|^2_{{H}^{{\sstar}}_x}+\|\bv\|^2_{H^{\sstar}_x}+\|\rho\|^2_{H^{\sstar}_x} ) d\tau
  \\
  &  +C\int^t_0 \| (\partial \bv , \partial \rho ) \|_{L^\infty_x}\| (\rho, \bv)\|_{ H^2_x } (\| \bw \|^2_{{H}^{2}_x} + \| \rho\|^2_{ H^{\sstar}_x } +\|\bv\|^2_{H^{\sstar}_x }) d\tau
  \\
  &  +C\int^t_0 \| (\partial \bv , \partial \rho ) \|_{L^\infty_x}\| (\rho, \bv)\|^2_{ H^2_x }
   (\| \bw \|^2_{{H}^{2}_x} + \| \rho\|^2_{ H^{\sstar}_x } +\|\bv\|^2_{H^{\sstar}_x} ) d\tau
  \\
  &  +C\int^t_0 \| ( \partial \bv , \partial \rho ) \|_{L^\infty_x} \| ( \rho, \bv ) \|^3_{ H^2_x }
  (\| \bw \|^2_{{H}^{2}_x} + \| \rho\|^2_{ H^{\sstar}_x } +\|\bv\|^2_{H^{\sstar}_x } ) d\tau .
\end{split}
\end{equation}
Adding \eqref{DC5} with \eqref{DW}, we can prove
\begin{equation}\label{DC6}
\begin{split}
   \|\bw \|^2_{\dot{H}^{2}_x}(t)
   \leq & C\|\bw_0 \|^2_{{H}^{2}_x}+ C\|\rho\|^2_{L^\infty_tH^{{\sstar}}_x}\|\bw\|^2_{L^\infty_t H^{\frac32+}_x}
  \\
  &+ C t( \| \bv\|^2_{L^\infty_t{H}_x^{{\sstar}}} + \| \bw\|^2_{L^\infty_t{H}_x^{2}}+ \| \rho\|^2_{L^\infty_t{H}_x^{{\sstar}}})
  \\
  & +C\int^t_0 \|\partial \bv\|_{L^\infty_x}( \|\bw\|^2_{{H}^{2}_x}+\|\bv\|^2_{H^{\sstar}_x}+\|\rho\|^2_{H^{\sstar}_x} ) d\tau
  \\
  & + C\int^t_0 \| (\partial \bv , \partial \rho ) \|_{L^\infty_x}( \|\bw\|^2_{{H}^{2}_x}+\|\bv\|^2_{H^{\sstar}_x}+\|\rho\|^2_{H^{\sstar}_x} ) d\tau
  \\
  &  +C\int^t_0 \| (\partial \bv , \partial \rho ) \|_{L^\infty_x}\| (\rho, \bv)\|_{ H^2_x } (\| \bw \|^2_{{H}^{2}_x} + \| \rho\|^2_{ H^{\sstar}_x } +\|\bv\|^2_{H^{\sstar}_x } ) d\tau
  \\
  &  +C\int^t_0 \| (\partial \bv , \partial \rho ) \|_{L^\infty_x}\| (\rho, \bv )\|^2_{ H^2_x }
   (\| \bw \|^2_{{H}^{2}_x} + \| \rho\|^2_{ H^{\sstar}_x } +\|\bv\|^2_{H^{\sstar}_x } ) d\tau
  \\
  &  +C\int^t_0 \| ( \partial \bv , \partial \rho ) \|_{L^\infty_x} \| ( \rho, \bv ) \|^3_{ H^2_x }
  (\| \bw \|^2_{{H}^{2}_x} + \| \rho\|^2_{ H^{\sstar}_x } +\|\bv\|^2_{H^{\sstar}_x } ) d\tau .
\end{split}
\end{equation}
By interpolation formula and Young's inequality, we can derive that
\begin{equation}\label{DC7}
\begin{split}
   C\|\rho\|^2_{L^\infty_tH^{{\sstar}}_x}\|\bw\|^2_{L^\infty_t H^{\frac32+}_x}
   \leq  & C  \|  \bw \|^{2}_{L_x^2}\|\rho\|^4_{L^\infty_tH^{{\sstar}}_x}+ \frac{1}{16} \| \bw \|^{2}_{H^{2}_x}
   \\
    \leq  & C  \|  \bv_0 \|^{2}_{H^{\sstar}}\|\rho_0\|^4_{H^{{\sstar}}} \exp(6\int^t_0 \|d\bv, d\rho\|_{L^\infty_x} d\tau)+ \frac{1}{16} \| \bw \|^{2}_{H^{2}_x}.
\end{split}
\end{equation}
Gathering \eqref{DD0}, \eqref{DB4}, \eqref{DC6} and \eqref{DC7}, and taking
\begin{equation}\label{Can}
	0< t \leq \frac{1}{10(C+1)},
\end{equation}
we have
\begin{equation*}
  \begin{split}
  \mathcal{E}(t) \leq & C \mathcal{E}(0)+ C  \|  \bv_0 \|^{2}_{H_x^{\sstar}}\|\rho_0\|^4_{L^\infty_tH^{{\sstar}}_x} \exp(6\int^t_0 \|d\bv, d\rho\|_{L^\infty_x} d\tau)
  \\
  &  + C\int^t_0 \| (\partial \bv , \partial \rho ) \|_{L^\infty_x}\mathcal{E}(\tau) d\tau
    +C\int^t_0 \| (\partial \bv , \partial \rho ) \|_{L^\infty_x}\| (\rho, \bv)\|_{ H^2_x } \mathcal{E}(\tau) d\tau
   \\
   &  +C\int^t_0 \| (\partial \bv , \partial \rho ) \|_{L^\infty_x}\| (\rho, \bv )\|^2_{ H^2_x }
   \mathcal{E}(\tau) d\tau
    +C\int^t_0 \| ( \partial \bv , \partial \rho ) \|_{L^\infty_x} \| ( \rho, \bv ) \|^3_{ H^2_x }
  \mathcal{E}(\tau) d\tau .
  \end{split}
\end{equation*}
By Gronwall's inequality, we conclude that
\begin{equation}\label{EE}
  \begin{split}
  \mathcal{E}(t) \leq & \{ C \mathcal{E}(0)+ C  \|  \bv_0 \|^{2}_{H^{\sstar}}\|\rho_0\|^4_{H^{{\sstar}}} \exp(6\int^t_0 \|d\bv, d\rho\|_{L^\infty_x} d\tau) \}F(t),
  \end{split}
\end{equation}
where
\begin{equation*}
  F(t)=\exp\{ C\int^t_0 \| (\partial \bv , \partial \rho ) \|_{L^\infty_x}(1+
    \| (\rho, \bv)\|_{ H^2_x } + \| (\rho, \bv )\|^2_{ H^2_x }
   + \| ( \rho, \bv ) \|^3_{ H^2_x }) d\tau \}.
\end{equation*}
By using \eqref{DD0}, we have
\begin{equation}\label{EEE}
	\begin{split}
  F(t) \leq & \exp\{ C\int^t_0 \| (\partial \bv , \partial \rho ) \|_{L^\infty_x} d\tau  (1+
    \| (\rho_0, \bv_0)\|_{ H^2_x }
    \\
    &\quad+ \| (\rho_0, \bv_0 )\|^2_{ H^2_x }
   + \| ( \rho_0, \bv_0 ) \|^3_{ H^2_x })\exp( C\int^t_0 \| (\partial \bv , \partial \rho ) \|_{L^\infty_x} d\tau )  \}.
\end{split}
\end{equation}
Combining \eqref{EE} and \eqref{EEE}, we can obtain \eqref{DD1}.
\end{proof}
\begin{corollary}\label{hes}
Let $(\rho,\bv)$ be a solution of \eqref{fc0} with $h=0$. Let $2<\sstar<\frac52$. Let $\bw$ be defined in \eqref{pw1}. Then the following energy estimates hold:
\begin{equation}\label{hes0}
\begin{split}
\mathbb{E}(t) \lesssim  \mathbb{E}_0 \exp \{ \mathbb{E}_1 \int^t_0 \|(d \bv, d \rho)\|_{L^\infty_x} d\tau \cdot \exp ( \int^t_0  \|(d \bv, d \rho) \|_{L^\infty_x}  d\tau )\},
\end{split}
\end{equation}
where $\mathbb{E}(t)$, $\mathbb{E}_0$ and $\mathbb{E}_1$ are defined by
\begin{equation*}
\begin{split}
  \mathbb{E}(t)=& \|\bv\|^2_{H_x^{\sstar+1}} + \|\rho\|^2_{H_x^{\sstar+1}}+ \|\bw\|^2_{H_x^{3}},
\\
  \mathbb{E}_0 = &   \|\bv_0\|^2_{H_x^{\sstar+1}} + \|\rho_0\|^2_{H_x^{\sstar+1}}+ \|\bw_0\|^2_{H_x^{3}}+   \|  \bv_0 \|^{2}_{H^{\sstar}}\|\rho_0\|^4_{H^{\sstar}} ,
  \\
  \mathbb{E}_1 = &  (1+
    \| \bv_0 \|_{ H^3 } + \| \rho_0\|_{ H^3 }).
   \end{split}
\end{equation*}
\end{corollary}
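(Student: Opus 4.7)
The plan is to mirror the proof of Theorem \ref{TT2} one derivative higher throughout. The classical hyperbolic bound of Theorem \ref{dv} applied at $a=s^{*}+1$ gives the required control of $\|(\bv,\rho)\|_{H^{s^{*}+1}_x}$ in terms of $\int_0^t \|(d\bv,d\rho)\|_{L^\infty_x}\,d\tau$, so only the $H^3_x$ estimate for $\bw$ requires new work. For the $L^2_x$ part of $\bw$, I would simply pair the transport equation \eqref{DB0} with $\bw$, as in \eqref{DB4}. For the $\dot{H}^3_x$ part, I would use the same Hodge decomposition as before,
\[
\|\bw\|_{\dot{H}^3_x}\ \lesssim\ \|\mathrm{curl}\,\mathrm{curl}\,\bw\|_{\dot{H}^1_x}+\|\mathrm{div}\,\bw\|_{\dot{H}^2_x}.
\]

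The divergence piece is immediate: from \eqref{Wd1} and Lemma \ref{ps}, $\|\mathrm{div}\,\bw\|_{\dot{H}^2_x}=\|\bw\cdot\partial\rho\|_{\dot{H}^2_x}\lesssim \|\bw\|_{H^{3/2+}}\|\rho\|_{H^{s^{*}+1}_x}$. For the curl-curl piece, I would apply $\Lambda_x$ to the transport equation \eqref{DB1}, test against $\Lambda_x(\mathrm{curl}\,\mathrm{curl}\,\bw+\Omega)$, and integrate over $[0,t]\times\mathbb{R}^3$. This produces the four analogues $G_0,G_1,G_2,G_3$ of the terms in \eqref{DC0} but with one extra $\Lambda_x$ on every factor. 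The terms $G_0$ (initial data) and $G_1$ (transport coefficient $\mathrm{div}\,\bv$) are controlled by H\"older and Young exactly as in \eqref{DC1}--\eqref{DC2}. The term $G_2$ involving $\Gamma^i$ is multilinear in $\partial\bv,\partial\rho,\bw,\partial\bw$ and their derivatives, so each monomial is estimated by Lemmas \ref{cj}, \ref{ps}, \ref{lpe} at $\dot{H}^1_x$ level, producing a bound of the form $\|(\partial\bv,\partial\rho)\|_{L^\infty_x}$ times a polynomial of degree at most $3$ in $\|(\rho,\bv)\|_{H^3_x}$ times $\mathbb{E}(\tau)$; this is exactly the structure encoded in $\mathbb{E}_1$.

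The main obstacle is the top-order source term
\[
G_3\ =\ 2\int_0^t\!\!\int_{\mathbb{R}^3}\Lambda_x\partial^i\bigl(\partial_n v_a\,\partial^n w^a\bigr)\cdot \Lambda_x\bigl(\mathrm{curl}\,\mathrm{curl}\,\bw_i+\Omega_i\bigr)\,dx\,d\tau,
\]
which resists a direct $L^\infty\cdot L^2$ bound because $\partial^2\bw$ only has $H^1_x$ regularity. Splitting $G_3=G_{31}+G_{32}$ according to the two pieces of $\mathrm{curl}\,\mathrm{curl}\,\bw+\Omega$, the term $G_{31}$ is handled by integration by parts together with $\partial^i(\mathrm{curl}\,\mathrm{curl}\,\bw_i)=0$, exactly as in \eqref{G41}, so the apparently worst contribution drops out. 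For $G_{32}$ I would use the Plancherel trick from \eqref{G42}: transfer a $\Lambda_x^{-1/2}$ from the $\partial^2\bw$ factor onto the $\partial\rho\,\partial\bw$ factor, which lands both sides in $\dot{H}^{3/2}_x$, and then close by H\"older and Sobolev embedding. The output is a term of size $t\cdot \sup_{[0,t]}\mathbb{E}(\tau)$, which is absorbed once $t$ is chosen small as in \eqref{Can}.

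To close the argument, I would combine the four bounds and handle the cross term $\|\rho\|^2_{H^{s^{*}}}\|\bw\|^2_{H^{3/2+}}$ coming from $\|\Omega\|^2_{\dot{H}^1_x}$ by the interpolation $\|\bw\|_{H^{3/2+}}\lesssim \|\bw\|_{L^2_x}^{\theta}\|\bw\|_{H^3_x}^{1-\theta}$ and Young's inequality, splitting it as $\tfrac{1}{16}\|\bw\|^2_{H^3_x}+C\|\bv_0\|^2_{H^{s^{*}}}\|\rho_0\|^4_{H^{s^{*}}}\exp(C\int_0^t\|(d\bv,d\rho)\|_{L^\infty_x}\,d\tau)$; this is precisely where the last summand in $\mathbb{E}_0$ comes from, and it uses only the $H^{s^{*}}$ (not $H^{s^{*}+1}$) norm of $\bv_0$, exactly as stated. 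Restricting to a time interval as in \eqref{Can} to absorb the $\tfrac{1}{16}$ term and the $t$-small contribution from $G_{32}$ on the left, and then applying Gronwall's inequality with weight $\|(d\bv,d\rho)\|_{L^\infty_x}(1+\|\bv\|_{H^3}+\|\rho\|_{H^3})$, yields \eqref{hes0} with the stated $\mathbb{E}_0$ and $\mathbb{E}_1$.
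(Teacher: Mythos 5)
Your plan mirrors Theorem \ref{TT2} one derivative higher, carrying the full $\mathrm{curl}\,\mathrm{curl}\,\bw+\Omega$ machinery (antisymmetrized correction $\Omega$, integration by parts against the divergence-free piece, and the Plancherel frequency-transfer trick) up to the $\dot H^1_x$ level. This works, but it is not how the paper argues. The paper's proof of Corollary \ref{hes} drops back to the once-curled transport equation \eqref{W00} for $\mathrm{curl}\,\bw$: it applies $\Delta$ to get \eqref{hes4}, tests against $\Delta\mathrm{curl}\,\bw_i$, and closes directly with H\"older, product estimates, interpolation and Young's inequality. The correction $\Omega$ and the Plancherel trick are simply not needed at this regularity, and the reason is worth noting: the apparently dangerous terms from the commutator and source are $\partial\bv\,\partial^3\bw$ and $\partial^2\bv\,\partial^2\bw$, and once $\bw\in H^3_x$ and $\bv\in H^{\sstar+1}_x$ with $\sstar+1>3$, the first is absorbed by $\|\partial\bv\|_{L^\infty_x}\|\bw\|_{H^3_x}$ and the second lands in $L^2_x$ by Sobolev, so the delicate antisymmetrization of $\Omega$ that was essential to rescue the $s_0\le 3$ case in Theorem \ref{TT2} is superfluous here. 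So your proposal is a valid but heavier route; the paper's is a simpler observation that at one derivative higher the vorticity estimate can be done naively on $\mathrm{curl}\,\bw$. One small caveat on your $G_{32}$ step: transferring $\Lambda_x^{-1/2}$ across and bounding both factors in $\dot H^{3/2}_x$ produces $\int_0^t \|\partial\bv\,\partial\bw\|_{\dot H^{3/2}_x}\|\partial\rho\,\partial\bw\|_{\dot H^{3/2}_x}\,d\tau$, which is quadratic (in fact quartic in the constituent norms), not linear, in the energy; this is still absorbed on a time interval depending on the data, as in \eqref{Can}, but it should be stated as such rather than as an order-$\mathbb{E}$ term.
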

\begin{proof}
By using \eqref{E0}, we have
\begin{equation}\label{hes1}
 \| ({\rho},\bv)\|^2_{H_x^{\sstar+1}}(t)\leq C\| (\rho_0,\bv_0)\|^2_{H_x^{\sstar+1}} \exp\{ \int^t_0 \|(d\bv, d\rho)\|_{L^\infty_x} d\tau \} .
\end{equation}
Multiplying \eqref{DB0} by $\bw$ and integrating over $ [0,t]\times \mathbb{R}^3$, we  get
\begin{equation}\label{hes2}
  \| \bw \|^2_{L_x^2} \leq \| \bw_0 \|^2_{L_x^2} + C\int^t_0 \| \partial \bv\|_{L^\infty_x}\|\bw\|^2_{L^2_x}d\tau.
\end{equation}
Let us first estimate $\| \mathrm{div} \bw \|_{\dot{H}^{2}_x}$. Note \eqref{W01}. Using product estimates in Lemma \ref{ps}, we can derive that
\begin{equation}\label{hes3}
\begin{split}
  \| \mathrm{div} \bw \|_{{H}^{2}_x} &= \|  \bw \cdot \partial \rho  \|_{{H}^{2}_x} \leq C \|\bw\|_{H_x^{\frac52+}}\|\rho\|_{{H}^{\sstar+1}_x}.
\end{split}
\end{equation}
Let us next estimate $\| \mathrm{curl} \bw \|_{\dot{H}^2_x}$. By using \eqref{W00}, we obtain
\begin{equation}\label{hes4}
  \begin{split}
  \mathbf{T}(\Delta \mathrm{curl}\bw^i )
  =& - \epsilon^{ijk} \Delta(\partial_j v^m \partial_m w_k)+ \epsilon^{ijk}\Delta(\partial_m v_k \partial_j w^m)+  \Delta\{ w^m \partial_m ( \mathrm{e}^\rho w^i) \}
  \\
  & - \Delta v^m \partial_m \mathrm{curl}\bw^i- \partial^k v^m \partial_k \partial_m \mathrm{curl}\bw^i
  \end{split}
\end{equation}
Multiplying \eqref{hes4} by $\Delta \mathrm{curl}\bw_i$ and integrating over $ [0,t]\times \mathbb{R}^3$, we have
\begin{equation}\label{hes5-a}
  \begin{split}
  & \| \Delta \mathrm{curl}\bw \|^2_{L^2_x}-\| \Delta \mathrm{curl}\bw_0 \|^2_{L^2_x}
  \\
  \leq &   C\int^t_0 \| \partial \bv \partial \bw\|_{H^2_x}\| \Delta \mathrm{curl}\bw \|_{L^2_x}d\tau  + C\int^t_0  \|\partial \bv \partial^3 \bw\|_{L^2_x} \| \Delta \mathrm{curl}\bw \|_{L^2_x}d\tau
  \\
  & + C\int^t_0 \| \bw  \partial( \mathrm{e}^\rho \bw) \|_{H^2_x} \| \Delta \mathrm{curl}\bw \|_{L^2_x}d\tau
   +C\int^t_0 \| \partial^2 \bv \partial^2 \bw\|_{L^2_x} \| \Delta \mathrm{curl}\bw \|_{L^2_x}d\tau.
  \end{split}
\end{equation}
Using H\"older's inequality, \eqref{hes5-a} yields
\begin{equation}\label{hes5-b}
  \begin{split}
 \| \Delta \mathrm{curl}\bw \|^2_{L^2_x}
  \leq & \|\bw_0 \|^2_{H^3_x}+  C\int^t_0 \|\bw\|_{H^{3}_x}\|\bv\|_{H^{3}_x} \|\bw\|_{H^{\frac52+}_x}d\tau  + C\int^t_0  \|\partial \bv\|_{L^\infty_x} \| \bw\|^2_{H^3_x} d\tau
  \\
  & + C\int^t_0 \| \partial \bv\|_{L^\infty_x} \| \rho \|_{H^3_x} \| \bw \|_{H^3_x}d\tau
   +C \int^t_0 \| \bv \|_{H^3_x} \bw\|_{H^{\frac52+}_x} \|\bw \|_{H^3_x}d\tau
  \\
  & +  C\int^t_0 \| \partial \rho \|_{L^\infty_x} \| \bw \|_{H^2_x} \| \bw \|_{H^{\frac32+}_x} \| \bw \|_{H^3_x}d\tau.
  \end{split}
\end{equation}
Due to \eqref{hes3}, \eqref{hes5-b} and Young's inequality, we can show that
\begin{equation}\label{hes5-c}
  \begin{split}
 \| \bw \|^2_{\dot{H}^3_x}
  \leq & \|\bw_0 \|^2_{H^3_x}+\|\bv\|^2_{H_x^{\sstar+1}}\|\rho\|^4_{{H}^{\sstar+1}_x}
       + C\int^t_0  \|\partial \bv\|_{L^\infty_x} ( \| \bw\|^2_{H^3_x} + \| \rho \|^2_{H^3_x} ) d\tau
  \\
  &
   +C \int^t_0 \| \bv \|_{H^3_x}  \|\bw \|^2_{H^3_x}d\tau
  +  C\int^t_0 \| \partial \rho \|_{L^\infty_x} \| \bv \|_{H^{3}_x} \| \bw \|^2_{H^3_x}d\tau.
  \end{split}
\end{equation}
Using \eqref{hes1}, \eqref{hes2}, \eqref{hes5-c}, and Gronwall's inequality, we get \eqref{hes0}.
\end{proof}
\subsection{Energy estimates of Theorem \ref{dingli2}}\label{ES3}
\begin{theorem}\label{vve}
	Let $(\rho,\bv,h)$ be a solution of \eqref{fc0}. Let $\bw$ be defined in \eqref{pw11} and $\bw$ satisfy the equation \eqref{W0}. Then the following energy estimates hold:
	\begin{equation}\label{hh0}
		\begin{split}
			\tilde{E}(t) \leq & C\tilde{E}_0 \exp \{ \tilde{E}_1 \int^t_0 \|d \bv, d \rho, d h)\|_{L^\infty_x} d\tau \}.
		\end{split}
	\end{equation}
	Here $\tilde{E}(t)$, $\tilde{E}_0$ and $\tilde{E}_1$ are defined by
	\begin{equation}\label{hh1}
		\tilde{E}(t)=\|\bv\|^2_{H_x^{\frac52}} + \|\rho\|^2_{H_x^{\frac52}}+\|h\|^2_{H_x^{\frac52+\epsilon}}+ \|\bw\|^2_{H_x^{\frac32+\epsilon}},
	\end{equation}
	and
	\begin{equation}\label{hh2}
		\begin{split}
			\tilde{E}_0  = & \tilde{E}(0)
			+   \| \rho_0 \|^2_{H^{\frac52}} \| h_0 \|^2_{H^{\frac52}} \exp\{   \int^t_0 \|(d \bv, d \rho, d h)\|_{L^\infty_x} d\tau \},
		\end{split}
	\end{equation}
	and
	\begin{equation}\label{hh3}
		\begin{split}
			\tilde{E}_1  =
			&   ( \|( \bv_0,  \rho_0 ,  h_0) \|_{H^{\frac52}} + \|( \bv_0,  \rho_0,  h_0) \|^2_{H^{\frac52}}  ) \exp\{   \int^t_0 \|(d \bv, d \rho, d h)\|_{L^\infty_x} d\tau \}  .
		\end{split}
	\end{equation}
\end{theorem}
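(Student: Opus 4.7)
The proof parallels that of Theorem \ref{ve} but adapts to the lower regularity setting $(\bv, \rho) \in H_x^{5/2}$, $h \in H_x^{5/2+\epsilon}$, $\bw \in H_x^{3/2+\epsilon}$. The decisive simplification is that the target vorticity regularity $3/2+\epsilon$ lies strictly below $2$, so we avoid the $\mathrm{curl}\,\mathrm{curl}\,\bw$ detour used in Theorem \ref{ve} and instead commute $\partial \Lambda_x^{1/2+\epsilon}$ directly through the transport equation \eqref{W0}. Correspondingly, the whole argument is homogeneous in the $L^\infty_x$-norm of $(d\bv, d\rho, dh)$ and requires no $\dot{B}^{s_0-2}_{\infty,2}$-type contributions, matching the simpler form of the exponential in \eqref{hh0}.

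The plan has four steps. First, I would apply the classical hyperbolic energy estimate Theorem \ref{dv} at level $a = 5/2$ to the symmetric system \eqref{sq}, producing the joint $H_x^{5/2}$-bound for $(\bv, \rho, h)$. Second, I would upgrade $h$ to $H_x^{5/2+\epsilon}$ by applying $\partial^2 \Lambda_x^{1/2+\epsilon}$ to $\mathbf{T} h = 0$, squaring and integrating, and handling the remainder through the transport commutator estimate Lemma \ref{ce} (valid since $1/2+\epsilon \in (0,1)$) together with the bilinear fractional Leibniz estimate \eqref{YL4}, which assigns the extra $\epsilon$-derivatives to $h$ where they are available rather than to $\bv$ where they are not. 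Third, I would commute $\partial \Lambda_x^{1/2+\epsilon}$ against \eqref{W0}: the stretching term $(\bw \cdot \nabla)\bv$ is handled by Lemma \ref{cj}; the transport commutator by Lemma \ref{ce}; and the entropy-density coupling $e^{h+(\gamma-2)\rho}\epsilon^{iab}\partial_a\rho\,\partial_b h$ by \eqref{YL3}-\eqref{YL4} together with Lemma \ref{jh0}, contributing $\lesssim \|\rho\|_{H_x^{5/2}}\|h\|_{H_x^{5/2+\epsilon}}$ to the rate. After a Young's inequality absorption and substitution of the classical $H_x^{5/2}$-control of $\rho$ and $h$ from Step~1, this yields precisely the signature term $\|\rho_0\|^2_{H^{5/2}}\|h_0\|^2_{H^{5/2}}\exp(\int\cdots)$ appearing in $\tilde{E}_0$. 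Fourth, I would close via Gronwall's inequality.

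The main obstacle is Step 2. A naive Kato-Ponce decomposition of $[\Lambda_x^{5/2+\epsilon}, \bv \cdot \nabla]h$ generates a remainder controlled by $\|\bv\|_{H^{5/2+\epsilon}}\|\nabla h\|_{L^\infty}$, which exceeds the available $H^{5/2}_x$-regularity of $\bv$. The resolution is structural: work with $\partial^2 h$ at level $H^{1/2+\epsilon}$ rather than with $\Lambda^{5/2+\epsilon}h$ directly, decompose the commutator $[\partial^2,\bv\cdot\nabla]h$ into bilinear pieces $\partial\bv\cdot\partial^2 h$ and $\partial^2\bv\cdot\nabla h$, and invoke the asymmetric Leibniz estimates \eqref{YL3}-\eqref{YL4} (for the first type) together with Sobolev embedding $\nabla h \in H^{3/2+\epsilon}\hookrightarrow L^\infty$ combined with Lemma \ref{ps} applied with indices that assign the higher regularity to $\nabla h$ (for the second type). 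Verifying that every commutator term arising from the derivatives of the Euler system matches one of these templates after expansion is the principal technical task, but no new analytic ingredient beyond those already present in Section 2 is required.
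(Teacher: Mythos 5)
Both Steps~2 and~3 of your plan have genuine gaps, and they share the same root cause: commuting $\partial\Lambda_x^{1/2+\epsilon}$ (resp.\ $\partial^2\Lambda_x^{1/2+\epsilon}$) directly through the transport equations produces terms carrying \emph{two} spatial derivatives on a field having only $H^{5/2}_x$ regularity, which cannot be absorbed at level $\dot{H}^{1/2+\epsilon}_x$. In Step~3, commuting $\partial\Lambda_x^{1/2+\epsilon}$ through \eqref{W0} generates from the stretching term $(\bw\cdot\nabla)\bv$ a piece $\Lambda_x^{1/2+\epsilon}(\bw\,\partial^2\bv)$ requiring either $\bv\in H^{5/2+\epsilon}_x$ or $\partial^2\bv\in L^\infty_x$, and from the baroclinic term $\mathrm{e}^{h+(\gamma-2)\rho}\epsilon^{iab}\partial_a\rho\,\partial_b h$ a piece $\Lambda_x^{1/2+\epsilon}(\partial^2\rho\,\partial h)$ requiring $\rho\in H^{5/2+\epsilon}_x$; neither is available. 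The paper defuses both obstructions at once by first Hodge-decomposing, $\|\bw\|_{\dot H^{3/2+\epsilon}_x}\lesssim\|\mathrm{curl}\,\bw\|_{\dot H^{1/2+\epsilon}_x}+\|\mathrm{div}\,\bw\|_{\dot H^{1/2+\epsilon}_x}$, because the transport equations \eqref{W1} and \eqref{Wd1} for $\mathrm{curl}\,\bw$ and $\mathrm{div}\,\bw$ contain no second derivative of $\bv$, and then subtracting a corrector: it propagates $\mathrm{curl}\,\bw-\frac{1}{\gamma}\partial h\,\partial\bv+\frac{1}{\gamma}\partial h\,\mathrm{div}\,\bv$ in $\dot H^{1/2+\epsilon}_x$, and the time derivative of the corrector, via $\mathbf{T}\bv=-c_s^2\partial\rho-\frac{1}{\gamma}c_s^2\partial h$, cancels the $\partial^2\rho\,\partial h$ pieces. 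You cannot skip this renormalization.

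The same issue defeats Step~2, and the fix you sketch does not close. Lemma~\ref{ps} cannot control $\|\partial^2\bv\cdot\nabla h\|_{\dot H^{1/2+\epsilon}_x}$ with only $\bv\in H^{5/2}_x$: the first form demands both indices strictly below $\tfrac32$, so you may not assign $r'=\tfrac32+\epsilon$ to $\nabla h$, while the second form would need $\partial^2\bv\in H^{1/2+\epsilon}_x$, i.e.\ $\bv\in H^{5/2+\epsilon}_x$ again. The estimate is in fact false: take $\partial^2\bv$ concentrated at frequency $N$ with $\|\partial^2\bv\|_{\dot H^{1/2}_x}\sim 1$ and $\nabla h$ supported at unit frequency of unit size; then $\|\partial^2\bv\cdot\nabla h\|_{\dot H^{1/2+\epsilon}_x}\sim N^\epsilon$. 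What rescues the paper is a structural identity, not a sharper bilinear estimate. Instead of $\Delta h$, it propagates the renormalized Laplacian $\mathrm{e}^\rho\partial_i H^i=\mathrm{e}^\rho\,\mathrm{div}(\mathrm{e}^{-\rho}\nabla h)=\Delta h-\partial\rho\cdot\nabla h$ from Lemma~\ref{PWh}, whose transport equation \eqref{Hh1}, $\mathbf{T}(\mathrm{e}^\rho\partial_i H^i)=-2\partial_i v^j\partial_j\partial^i h+\epsilon^{ijm}\mathrm{e}^\rho w_m\partial^i\rho\,\partial_j h$, carries only one derivative on $\bv$ against two on $h$. This null structure is the ingredient your outline omits, and it cannot be recovered by rearranging the exponents in Lemma~\ref{ps} or in the fractional Leibniz estimates \eqref{YL3}--\eqref{YL4}.
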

\begin{proof}
	To prove \eqref{hh0}, we divide it into three steps.

	\textbf{Step 1: The energy estimate for $\bw$.} By elliptic estimates, we obtain that
	\begin{equation}\label{ww1}
		\| \bw \|^2_{\dot{H}^{\frac{3}{2}+\epsilon}_x} \leq 2\| \mathrm{curl} \bw \|^2_{\dot{H}^{\frac{1}{2}+\epsilon}_x}+2\| \mathrm{div} \bw\|^2_{\dot{H}^{\frac{1}{2}+\epsilon}_x}.
	\end{equation}
	Note \eqref{Wd1}. By H\"older's inequality and Young's inequality, we can bound $\| \mathrm{div} \bw\|_{\dot{H}^{\frac{1}{2}+\epsilon}_x}$ by
	\begin{equation}\label{ww2}
		\begin{split}
			\| \mathrm{div} \bw \|^2_{\dot{H}^{\frac{1}{2}+\epsilon}_x}
			\leq  & C\| \bw \|^2_{{H}^{1+\epsilon}_x} \| \rho\|^2_{{H}^{2+\epsilon}_x}
			\\
			\leq & \frac{1}{16}\| \bw \|^2_{{H}^{\frac{3}{2}+\epsilon}_x}+ C\| \bw\|^2_{L^{2}_x} \| \rho\|^6_{{H}^{2+\epsilon}_x}.
		\end{split}
	\end{equation}
	On the other hand, we have
	\begin{equation}\label{e010}
		\begin{split}
			\| \mathrm{curl}\bw\|^2_{\dot{H}^{\frac{1}{2}+\epsilon}_x} \leq & 2\| \mathrm{curl}\bw-\frac{1}{\gamma}\partial h \partial \bv+ \frac{1}{\gamma}\partial h \mathrm{div}\bv \|^2_{\dot{H}^{\frac{1}{2}+\epsilon}_x}+ 2 \| \frac{1}{\gamma}\partial h \partial \bv- \frac{1}{\gamma}\partial h \mathrm{div}\bv \|^2_{\dot{H}^{\frac{1}{2}+\epsilon}_x}
			\\
			\leq & 2\| \mathrm{curl}\bw-\frac{1}{\gamma}\partial h \partial \bv+ \frac{1}{\gamma}\partial h \mathrm{div}\bv \|^2_{\dot{H}^{\frac{1}{2}+\epsilon}_x}
			+ C \| h \|^2_{{H}^{2+\epsilon}_x} \| \bv \|^2_{{H}^{2+\epsilon}_x}.
		\end{split}
	\end{equation}
	By Lemma \ref{PW}, Lemma \ref{ce}, Lemma \ref{jh}, H\"older's inequality and \eqref{YL3}, \eqref{YL4}, we obtain
	\begin{equation}\label{ww4}
		\begin{split}
			& \frac{d}{dt} \| \mathrm{curl}\bw-\frac{1}{\gamma}\partial h \partial \bv+ \frac{1}{\gamma}\partial h \mathrm{div}\bv \|^2_{\dot{H}^{\frac{1}{2}+\epsilon}_x}
			\\
			\lesssim  & \|\partial \bv \|_{L^\infty_x} \| \mathrm{curl}\bw-\frac{1}{\gamma}\partial h \partial \bv+ \frac{1}{\gamma}\partial h \mathrm{div}\bv \|^2_{\dot{H}^{\frac{1}{2}+\epsilon}_x}
			\\
			& + \|\partial \bv \partial \bw \|_{\dot{H}^{\frac{1}{2}+\epsilon}_x} \| \mathrm{curl}\bw-\frac{1}{\gamma}\partial h \partial \bv+ \frac{1}{\gamma}\partial h \mathrm{div}\bv \|_{\dot{H}^{\frac{1}{2}+\epsilon}_x}
			\\
			& + \|(\partial \rho , \partial h)\cdot \partial^2 h \|_{\dot{H}^{\frac{1}{2}+\epsilon}_x} \| \mathrm{curl}\bw-\frac{1}{\gamma}\partial h \partial \bv+ \frac{1}{\gamma}\partial h \mathrm{div}\bv \|_{\dot{H}^{\frac{1}{2}+\epsilon}_x}
			\\
			& + \|(\partial \rho , \partial h, \partial \bv)\cdot (\partial \rho , \partial h, \partial \bv)\cdot (\partial \rho , \partial h, \partial \bv) \|_{\dot{H}^{\frac{1}{2}+\epsilon}_x} \| \mathrm{curl}\bw-\frac{1}{\gamma}\partial h \partial \bv+ \frac{1}{\gamma}\partial h \mathrm{div}\bv \|_{\dot{H}^{\frac{1}{2}+\epsilon}_x}
			\\
			\lesssim  & \|\partial \bv \|_{L^\infty_x} \|  \partial \bw \|^2_{{H}^{\frac{1}{2}+\epsilon}_x} + \|\partial \bv \|_{H^{\frac32}_x} \|  \partial \bw \|^2_{{H}^{\frac{1}{2}+\epsilon}_x} + ( \|\partial \rho \|_{L^\infty_x}+ \|\partial h \|_{L^\infty_x})  \|\partial^2 h \|_{\dot{H}^{\frac{1}{2}+\epsilon}_x}  \|\partial \bw \|_{\dot{H}^{\frac{1}{2}+\epsilon}_x}
			\\
			& +  ( \|\partial \rho \|_{H^{\frac32}_x}+ \|\partial h \|_{H^{\frac32}_x})  \|\partial^2 h \|_{\dot{H}^{\frac{1}{2}+\epsilon}_x}  \|\partial \bw \|_{\dot{H}^{\frac{1}{2}+\epsilon}_x}
			+\|\partial \bv \|_{L^\infty_x} \|\partial h \|_{L^6_x} \| \partial \bv \|_{W^{\frac{1}{2}+\epsilon,3}_x}\|\partial \bw \|_{\dot{H}^{\frac{1}{2}+\epsilon}_x}
			\\
			&
			+ \|\partial \bv \|_{L^\infty_x} \|\partial \rho \|_{L^6_x} \|\partial \bv \|_{W^{\frac{1}{2}+\epsilon,3}_x}\|\partial \bw \|_{\dot{H}^{\frac{1}{2}+\epsilon}_x} +   \|\partial \rho \|_{L^\infty_x} \|\partial h \|_{L^6_x} \| \partial \bv \|_{W^{\frac{1}{2}+\epsilon,3}_x}\|\partial \bw \|_{\dot{H}^{\frac{1}{2}+\epsilon}_x}
			\\
			& + \|\partial \bv, \partial \rho, \partial h \|_{L^\infty_x}
			\|\partial \bv , \partial \rho , \partial h \|_{\dot{H}^{\frac{1}{2}+\epsilon}_x}(\|\partial \bv \|^3_{L^6_x}+\|\partial \rho \|^3_{L^6_x}+\|\partial h \|^3_{L^6_x})
			\\
			\lesssim & \|\partial \bv \|_{L^\infty_x} \|   \bw \|^2_{{H}^{\frac{3}{2}+\epsilon}_x} +\|\partial \rho \|_{L^\infty_x} \| h \|_{{H}^{\frac{5}{2}+\epsilon}_x}  \| \bw \|_{{H}^{\frac{3}{2}+\epsilon}_x}
			\\
			& +  \|(\bv, h, \rho) \|_{H^{\frac52}_x} ( \|  h \|_{{H}^{\frac{5}{2}+\epsilon}_x} + \|  \bw \|_{{H}^{\frac{3}{2}+\epsilon}_x} )  \|  \bw \|_{{H}^{\frac{3}{2}+\epsilon}_x}
			\\
			&+ \|\partial \bv \|_{L^\infty_x} (\| h \|_{H^2_x}+\| \rho \|_{H^2_x}) \|\bv \|_{H^{2}_x}\| \bw \|_{{H}^{\frac{3}{2}+\epsilon}_x}
			\\
			& + (\|\partial \bv\|_{L^\infty_x}+\|\partial \rho\|_{L^\infty_x}+\|\partial h \|_{L^\infty_x}) (\| \bv \|^4_{{H}^{2}_x}+\|\rho \|^4_{{H}^{2}_x}+\| h \|^4_{{H}^{2}_x}).
		\end{split}
	\end{equation}
	Integrating \eqref{ww4} over $[0,t]$, we find that
	\begin{equation*}
		\begin{split}
			\| \mathrm{curl}\bw-\frac{1}{\gamma}\partial h \partial \bv+ \frac{1}{\gamma}\partial h \mathrm{div}\bv \|^2_{\dot{H}^{\frac{1}{2}+\epsilon}_x}
			\leq & \|\bw_0\|^2_{{H}^{\frac{3}{2}+\epsilon}_x}
			+C \| h_0 \|^2_{{H}^{2+\epsilon}_x} \| \bv_0 \|^2_{{H}^{2+\epsilon}_x}
			\\
			& +C\int^t_0 \|\partial \bv \|_{L^\infty_x} \tilde{E}(\tau) d\tau
			\\
			& +C\int^t_0 \|(\partial \bv, \partial \rho, \partial h) \|_{L^\infty_x} \|( \bv,  \rho, h) \|_{H^{2}_x}\tilde{E}(\tau) d\tau
			\\
			& +C\int^t_0 \|(\partial \bv, \partial \rho, \partial h) \|_{L^\infty_x} \|( \bv,  \rho, h) \|^2_{H^{2}_x}\tilde{E}(\tau) d\tau .
		\end{split}
	\end{equation*}
	The above estimate combining with \eqref{e010} yields
	\begin{equation}\label{e011}
		\begin{split}
			& \| \mathrm{curl}\bw\|^2_{\dot{H}^{\frac{1}{2}+\epsilon}_x}
			\\
			\leq & C\|\bw_0\|^2_{{H}^{\frac{3}{2}+\epsilon}_x}+C \| h_0 \|^2_{{H}^{2+\epsilon}_x} \| \bv_0 \|^2_{{H}^{2+\epsilon}_x}+C \| h \|^2_{{H}^{2+\epsilon}_x} \| \bv \|^2_{{H}^{2+\epsilon}_x}
			\\
			& +C\int^t_0 \|\partial \bv \|_{L^\infty_x} \tilde{E}(\tau) d\tau+C\int^t_0 \|(\partial \bv, \partial \rho, \partial h) \|_{L^\infty_x} \|( \bv,  \rho, h) \|_{H^{2}_x}\tilde{E}(\tau) d\tau
			\\
			& +C\int^t_0 \|(\partial \bv, \partial \rho, \partial h) \|_{L^\infty_x} \|( \bv,  \rho, h) \|^2_{H^{2}_x}\tilde{E}(\tau) d\tau .
		\end{split}
	\end{equation}

	\quad \textbf{Step 2: The energy estimate for $h$.} Let us now estimate $\| \Delta h \|_{\dot{H}^{\frac12+\epsilon}_x}$. Note $\Delta h= \mathrm{e}^\rho \partial_i H^i+ \partial_i \rho \partial^i h$. Thus, we have
	\begin{equation}\label{e014}
		\| \Delta h \|^2_{\dot{H}^{\frac12+\epsilon}_x} \leq 2 \|\mathrm{e}^{\rho} \partial_i H^i\|^2_{\dot{H}^{\frac12+\epsilon}_x} + 2 \|\partial_i \rho \partial^i h\|^2_{\dot{H}^{\frac12+\epsilon}_x}.
	\end{equation}
	Notice
	\begin{equation*}
		\|\partial_i \rho \partial^i h\|^2_{\dot{H}^{\frac12+\epsilon}_x} \leq C\|\rho \|^2_{{H}^{2+\epsilon}_x}\|\bv \|^2_{{H}^{2+\epsilon}_x}.
	\end{equation*}
	Taking derivatives $\Lambda^{\frac12+\epsilon}_x$ of \eqref{Hh1}, we have
	\begin{equation}\label{ww9}
		\begin{split}
			& \mathbf{T}\{ \Lambda^{\frac12+\epsilon}_x( \mathrm{e}^{\rho} \partial_i H^i)\} 
			\\
			=& -2\Lambda^{\frac12+\epsilon}_x(\partial_i v^j \partial_j  \partial^i h)   + \Lambda^{\frac12+\epsilon}_x( \epsilon^{ijm} \mathrm{e}^{\rho} w_m \partial^i \rho \partial_j h )+[\mathbf{T}, \Lambda^{\frac12+\epsilon}_x](\mathrm{e}^{\rho} \partial_i H^i).
		\end{split}
	\end{equation}
	Multiplying $\Lambda^{\frac12+\epsilon}_x( \mathrm{e}^{\rho} \partial_i H^i)$ by \eqref{ww9} and integrating over $[0,t] \times \mathbb{R}^3$ gives
	\begin{equation*}
		\begin{split}
			\|\mathrm{e}^{\rho} \partial_i H^i\|^2_{\dot{H}^{\frac12+\epsilon}_x} \leq &  C\|h_0\|^2_{H^{\frac52+\epsilon}}+C\|\rho_0 \|^2_{{H}^{2+\epsilon}_x}\|\bv_0 \|^2_{{H}^{2+\epsilon}_x}
			+ C\int^t_0  \|\partial \bv \|_{L^\infty_x} \cdot \|\mathrm{e}^{\rho} \partial_i H^i\|^2_{\dot{H}^{\frac12+\epsilon}_x}d\tau
			\\
			& +C \int^t_0   \| \partial \bv  \partial^2 h\|_{\dot{H}^{\frac12+\epsilon}_x}  \|\mathrm{e}^{\rho} \partial_i H^i\|_{\dot{H}^{\frac12+\epsilon}_x} d\tau
			+C \int^t_0  \| \bw \partial h  \partial \rho\|_{\dot{H}^{\frac12+\epsilon}_x} \|\mathrm{e}^{\rho} \partial_i H^i\|_{\dot{H}^{\frac12+\epsilon}_x} d\tau.
		\end{split}
	\end{equation*}
By H\"older's inequality, we obtain
		\begin{equation*}
		\begin{split}
		\|\mathrm{e}^{\rho} \partial_i H^i\|^2_{\dot{H}^{\frac12+\epsilon}_x} 
			\leq & \| h_0 \|^2_{{H}^{\frac52+\epsilon}_x} + C\|\rho_0 \|^2_{{H}^{2+\epsilon}_x}\|\bv_0 \|^2_{{H}^{2+\epsilon}_x}
			+ C\int^t_0  \|\partial \bv \|_{L^\infty_x}  \|h\|^2_{{H}^{\frac52+\epsilon}_x}d\tau
			\\
			& +C\int^t_0  \|\partial \bv \|_{L^\infty_x} \|\rho\|^2_{{H}^{2+\epsilon}_x} \|h\|^2_{{H}^{2+\epsilon}_x}d\tau
			+C\int^t_0  \| \bv \|_{H^{\frac52}_x}  \|h\|^2_{{H}^{\frac52+\epsilon}_x}d\tau
			\\
			&+C\int^t_0  \|\partial \bv \|_{L^\infty_x} \|\rho\|_{{H}^{2+\epsilon}_x}\|h\|_{{H}^{2+\epsilon}_x} \|h\|_{{H}^{\frac52+\epsilon}_x}d\tau
			\\
			&+C\int^t_0  \| \bv \|_{H^{\frac52}_x} \|\rho\|_{{H}^{2+\epsilon}_x}\|h\|_{{H}^{2+\epsilon}_x} \|h\|_{{H}^{\frac52+\epsilon}_x}d\tau
			\\
			& +C\int^t_0  \|(\partial \rho, \partial h) \|_{L^\infty_x} \|(h, \rho)\|^2_{{H}^{2}_x} \|\bw\|_{{H}^{\frac32+\epsilon}_x}\|h\|_{{H}^{\frac52+\epsilon}_x}d\tau
			\\
			& +C\int^t_0  \|(\partial \rho, \partial h) \|_{L^\infty_x} \|(h, \rho)\|^3_{{H}^{2}_x} \|\bw\|_{{H}^{\frac32+\epsilon}_x}d\tau .
		\end{split}
	\end{equation*}
	Therefore, we can conclude that
	\begin{equation}\label{e015}
		\begin{split}
			\| \Delta h \|^2_{\dot{H}^{\frac12+\epsilon}_x} \leq & C\|h_0\|^2_{H^{\frac52+\epsilon}}+C\|\rho_0 \|^2_{{H}^{2+\epsilon}_x}\|\bv_0 \|^2_{{H}^{2+\epsilon}_x}
			\\
			& +C\int^t_0   \|(\partial \bv, \partial h, \partial \rho) \|_{L^\infty_x} \tilde{E}(\tau) d\tau
			\\
			& + C\int^t_0  \|( \bv,  h,  \rho) \|_{H^{\frac52}_x}\tilde{E}(\tau) d\tau + + C\int^t_0  \|( \bv,  h,  \rho) \|^2_{H^{\frac52}_x}\tilde{E}(\tau) d\tau
			\\
			& + C\int^t_0  \|(\partial \bv, \partial h, \partial \rho) \|_{L^\infty_x} \|( \bv,  h,  \rho) \|_{H^2_x}\tilde{E}(\tau) d\tau
			\\
			& +C\int^t_0  \|(\partial \bv, \partial h, \partial \rho) \|_{L^\infty_x} \|( \bv,  h,  \rho) \|^2_{H^2_x}\tilde{E}(\tau) d\tau .
		\end{split}
	\end{equation}
	
	\textbf{Step 3: The total energy estimates.} By gathering \eqref{ww2}, \eqref{e011}, and \eqref{e015}, we have
	
	\begin{equation*}
		\begin{split}
			\tilde{E}(t) \leq & C\tilde{E}(0) + C\int^t_0 \{  \|( \bv,  h,  \rho) \|_{H^{\frac52}_x} + \|( \bv,  h,  \rho) \|^2_{H^{\frac52}_x}  \} \tilde{E}(\tau) d\tau
			\\
			&
			+ C  \| \rho_0 \|^2_{H^{\frac52}} \| h_0 \|^2_{H^{\frac52}} \exp\{ C  \int^t_0 \|(d \bv, d \rho, d h)\|_{L^\infty_x} d\tau \}
			\\
			& + C\int^t_0  \|(\partial \bv, \partial h, \partial \rho) \|_{L^\infty_x} \{ \|( \bv,  h,  \rho) \|_{H^2_x}+\|( \bv,  h,  \rho) \|^2_{H^2_x} \} \tilde{E}(\tau) d\tau .
		\end{split}
	\end{equation*}
	By Gronwall's inequality, we can prove that
	\begin{equation*}
		\begin{split}
			\tilde{E}(t) \leq & \tilde{E}_0 \exp \{ \tilde{E}_1 \int^t_0 \|(d \bv, d \rho, d h)\|_{L^\infty_x} d\tau \},
		\end{split}
	\end{equation*}
	where $\tilde{E}_0$ and $\tilde{E}_1$ are stated in \eqref{hh2} and \eqref{hh3}.
\end{proof}
\subsection{Uniqueness of the solution}
\begin{corollary}\label{cor}
Assume $2<s_0<s\leq\frac52$ and \eqref{HEw}. Suppose $(\bv_1,\rho_1, h_1)$ and $(\bv_2, \rho_2, h_2)$ to be solutions of \eqref{CEE} with the same initial data $(\bv_0, \rho_0, h_0) \in H^{s} \times H^s \times H^{s_0+1}$. We assume the initial specific vorticity $\bw_0=\bar{\rho}\mathrm{e}^{\rho_0} \mathrm{curl}\bv_0 \in H^{s_0}$. Then there exists a constant $T>0$ such that
  $(\bv_1,\rho_1, \bv_2, \rho_2) \in C([0,T],H_x^s) \cap C^1([0,T],H_x^{s-1})$, $\bw \in C([0,T],H_x^{s_0}) \cap C^1([0,T],H_x^{s_0-1})$ and $(d\bv_1, d\rho_1, dh_1, d\bv_2,d\rho_2, dh_2) \in {L^2_{[0,T]} L^\infty_x}$. Furthermore, we have
\begin{equation*}
  \bv_1=\bv_2, \quad \rho_1=\rho_2, \quad h_1=h_2.
\end{equation*}
\end{corollary}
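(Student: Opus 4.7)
The plan is to deduce uniqueness from a standard symmetric-hyperbolic energy argument for the \emph{difference} of the two solutions, carried out at the $L^2$ level where the low regularity is not a problem and where the Strichartz assumption $(d\bv_j, d\rho_j, dh_j)\in L^2_{[0,T]}L^\infty_x$ supplies exactly the $L^1_t L^\infty_x$ coefficient control needed to close Gronwall.

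First I would set $\bU_j=(v_j^1,v_j^2,v_j^3,p(\varrho_j,h_j),h_j)^{\mathrm T}$ and invoke Lemma \ref{sh} to write
\[
A^0(\bU_j)\partial_t\bU_j+\textstyle\sum_i A^i(\bU_j)\partial_{x_i}\bU_j=0,\qquad j=1,2,
\]
where $A^0,A^i$ are the symmetric matrices listed in Lemma \ref{sh}. Subtracting the two systems and denoting $\bV:=\bU_1-\bU_2$, one obtains
\[
A^0(\bU_1)\partial_t\bV+\textstyle\sum_i A^i(\bU_1)\partial_{x_i}\bV=\bigl[A^0(\bU_2)-A^0(\bU_1)\bigr]\partial_t\bU_2+\textstyle\sum_i\bigl[A^i(\bU_2)-A^i(\bU_1)\bigr]\partial_{x_i}\bU_2.
\]
Since the matrices $A^\alpha$ are smooth functions of $\bU$ on the range dictated by \eqref{HEw} (in particular $A^0$ is positive definite), the right-hand side is pointwise bounded by $C(|\bV|)\,(|\partial_t\bU_2|+|\partial\bU_2|)|\bV|$, and the assumption \eqref{HEw} (together with the already established $L^\infty$-bound on $\bv_j,\rho_j,h_j$) keeps all coefficients in a compact regime.

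Next I would run the standard energy estimate: take the $L^2$ inner product with $\bV$, use the symmetry of $A^i(\bU_1)$ to move $\partial_{x_i}$ off $\bV$ at the cost of $\partial_{x_i}A^i(\bU_1)$, and use the positivity of $A^0(\bU_1)$ to control $\|\bV\|_{L^2_x}^2$. This produces an inequality of the schematic form
\[
\frac{d}{dt}\|\bV(t)\|_{L^2_x}^2\le C\bigl(\|d\bU_1\|_{L^\infty_x}+\|d\bU_2\|_{L^\infty_x}\bigr)\|\bV(t)\|_{L^2_x}^2.
\]
Because $(d\bv_j,d\rho_j,dh_j)\in L^2_{[0,T]}L^\infty_x\subset L^1_{[0,T]}L^\infty_x$, the coefficient $\|d\bU_1\|_{L^\infty_x}+\|d\bU_2\|_{L^\infty_x}$ lies in $L^1_{[0,T]}$. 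Since $\bV(0)=0$ by hypothesis, Gronwall's inequality forces $\bV\equiv 0$ on $[0,T]$, i.e.\ $\bv_1=\bv_2$, $\varrho_1=\varrho_2$, and $h_1=h_2$; the logarithmic density identity $\rho_j=\ln(\varrho_j/\bar\rho)$ then yields $\rho_1=\rho_2$ as claimed.

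The only mild subtlety is that $\partial_t\bU_2$ need not sit in $L^\infty_x$ through a Sobolev embedding when $s\le 5/2$; however, this is circumvented by using the equation itself to write $\partial_t\bU_2=-[A^0(\bU_2)]^{-1}\sum_iA^i(\bU_2)\partial_{x_i}\bU_2$, so that $\partial_t\bU_2$ inherits its $L^\infty_x$ bound directly from the spatial $d\bU_2$ control provided by the Strichartz hypothesis. Once this replacement is made, the argument is entirely routine; the conceptual content lies in already having at hand the Strichartz estimate from Theorem \ref{dingli}, which is precisely the ingredient that allows the $L^2$ uniqueness argument to bypass the need for derivative control at the scaling-critical threshold.
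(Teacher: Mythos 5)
Your proof is correct and follows exactly the route the paper takes, which is stated there in one line: invoke the symmetric hyperbolic formulation from Lemma \ref{sh}, run the $L^2$ energy estimate on the difference, and close with Gronwall using the $L^1_{[0,T]}L^\infty_x$ coefficient control furnished by the Strichartz hypothesis. Your only genuine addition — converting $\partial_t\bU_2$ into spatial derivatives via the equation — is a valid safety net, though it is not strictly needed here since the paper's $d$ already includes $\partial_t$, so the hypothesized Strichartz bound controls the time derivative directly.
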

\begin{proof}
	By using Lemma \ref{sh} and Strichartz estimates $(d\bv_1, d\rho_1, dh_1, d\bv_2,d\rho_2, dh_2) \in {L^2_{[0,T]} L^\infty_x}$, we can bound
	\begin{equation*}
		\|\bv_1-\bv_2, \rho_1-\rho_2, h_1-h_2\|_{L^2_x} \lesssim \|\bv_1-\bv_2, \rho_1-\rho_2, h_1-h_2\|_{L^2_x}(0)=0.
	\end{equation*}
So the solution is unique.
\end{proof}

Similarly, we can obtain the following corollary.
\begin{corollary}\label{cor2}
Assume $2<s\leq\frac52$ and \eqref{HEw}. Suppose $(\bv_1,\rho_1)$ and $(\bv_2, \rho_2)$ to be solutions of \eqref{fc1} with the same initial data $(\bv_0, \rho_0) \in H^{s} \times H^s $. We assume the initial specific vorticity $\bw_0=\bar{\rho}\mathrm{e}^{\rho_0} \mathrm{curl}\bv_0 \in H^{2}$. Then there exists a constant $T^*>0$ such that
  $(\bv_1,\rho_1, \bv_2, \rho_2) \in C([0,T^*],H_x^s) \cap C^1([0,T^*],H_x^{s-1})$, $\bw \in C([0,T^*],H_x^{2}) \cap C^1([0,T^*],H_x^{1})$ and $(d\bv_1, d\rho_1, d\bv_2,d\rho_2) \in {L^2_{[0,T^*]} L^\infty_x}$. Furthermore, we have
\begin{equation*}
  \bv_1=\bv_2, \quad \rho_1=\rho_2.
\end{equation*}
\end{corollary}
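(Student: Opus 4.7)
The plan is to imitate the strategy of Corollary \ref{cor} but adapted to the isentropic setting, where we drop the entropy component and reduce everything to a first-order symmetric hyperbolic system for the pair $(\bv,p(\rho))$. First I would specialize Lemma \ref{sh} by setting $h\equiv 0$, so that the last row and column of the matrices $A_0,A_1,A_2,A_3$ drop out and $\bU=(v^1,v^2,v^3,p(\rho))^{\mathrm{T}}$ satisfies a $4\times 4$ symmetric hyperbolic system of the form
\begin{equation*}
A^0(\bU_i)\partial_t \bU_i+\sum_{k=1}^{3}A^k(\bU_i)\partial_{x_k}\bU_i=0,\qquad i=1,2,
\end{equation*}
with $A^0$ uniformly positive definite thanks to \eqref{HEw} and the uniform $L^\infty$ bounds on $\bv_i,\rho_i$ guaranteed by the energy estimate in Theorem \ref{TT2}.

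Next, I would write the equation for the difference $\bU:=\bU_1-\bU_2$. Since $A^\alpha(\bU)$ is smooth in $\bU$, one obtains schematically
\begin{equation*}
A^0(\bU_1)\partial_t \bU+\sum_{k=1}^{3}A^k(\bU_1)\partial_{x_k}\bU=\mathcal{R}(\bU_1,\bU_2)\,\bU,
\end{equation*}
where $\mathcal{R}$ is linear in $\bU$ with coefficients that are polynomial expressions of $(\bU_1,\bU_2,d\bU_2)$. Multiplying by $\bU$ and integrating over $\mathbb{R}^3$, the symmetric structure ensures the principal terms give a boundary term in $t$ plus a coercive quadratic form, leaving a zeroth-order remainder controlled by $(\|d\bv_1\|_{L^\infty_x}+\|d\rho_1\|_{L^\infty_x}+\|d\bv_2\|_{L^\infty_x}+\|d\rho_2\|_{L^\infty_x})\|\bU\|_{L^2_x}^2$, together with lower-order factors involving $\|\bU_1,\bU_2\|_{L^\infty_x}$, which are already bounded via Sobolev embedding. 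This yields
\begin{equation*}
\tfrac{d}{dt}\|\bU\|_{L^2_x}^2\lesssim \bigl(1+\|d\bv_1,d\rho_1,d\bv_2,d\rho_2\|_{L^\infty_x}\bigr)\|\bU\|_{L^2_x}^2.
\end{equation*}

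Finally, the crucial input is that the Strichartz hypothesis $(d\bv_1,d\rho_1,d\bv_2,d\rho_2)\in L^2_{[0,T^*]}L^\infty_x$ makes the coefficient on the right an $L^1_t$ function on $[0,T^*]$. Gronwall's inequality then gives $\|\bU(t)\|_{L^2_x}\leq \|\bU(0)\|_{L^2_x}\exp(\ldots)=0$, and since $p$ is a strictly monotone function of $\rho$ under \eqref{HEw}, we conclude $\bv_1=\bv_2$ and $\rho_1=\rho_2$. The main (minor) obstacle is verifying that the zero-order remainder truly depends only on $d\bU_i$ (not $d^2\bU_i$), which follows from the quasilinear, not fully nonlinear, structure of \eqref{sq}; and checking that the Lipschitz bounds supplied by the assumed Strichartz regularity are the only nontrivial input needed—this is precisely where the hypothesis $\bw_0\in H^2$ and the accompanying Strichartz estimate of Theorem \ref{dingli3} enters, exactly as in Corollary \ref{cor}.
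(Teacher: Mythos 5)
Your proposal is correct and follows essentially the same route the paper takes for Corollary \ref{cor} (and then invokes by ``similarly'' for Corollary \ref{cor2}): write the difference of the two solutions of the symmetric hyperbolic system of Lemma \ref{sh} (with $h\equiv 0$), derive an $L^2_x$ Gronwall inequality whose coefficient is $L^1_t$-integrable thanks to the $L^2_{[0,T^*]}L^\infty_x$ Strichartz bound on $d\bv_i,d\rho_i$, and conclude from the vanishing initial difference. Your added remarks about the monotonicity of $p(\rho)$ and the absence of $d^2\bU_i$ in the remainder are correct and merely make explicit what the paper leaves tacit.
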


\begin{corollary}\label{cor3}
	Assume $0<\epsilon<\frac19$ and \eqref{HEw}. Suppose $(\bv_1,\rho_1,h_1)$ and $(\bv_2, \rho_2,h_2)$ to be solutions of \eqref{CEE} with the same initial data $(\bv_0, \rho_0,h_0) \in H^{\frac52} \times H^{\frac52}\times H^{\frac52+} $. We assume the initial specific vorticity $\bw_0=\bar{\rho}\mathrm{e}^{\rho_0} \mathrm{curl}\bv_0 \in H^{\frac{3}{2}+}$. Then there exists a constant $\bar{T}>0$ such that
	$(\bv_1,\rho_1, \bv_2, \rho_2) \in C([0,\bar{T}],H_x^{\frac52}) \cap C^1([0,\bar{T}],H_x^{\frac32})$, $\bw \in C([0,\bar{T}],H_x^{\frac{3}{2}+}) \cap C^1([0,\bar{T}],H_x^{\frac{1}{2}+})$, $(d\bv_1, d\rho_1, dh_1) \in {L^2_{[0,\bar{T}]} L^\infty_x}$, and $(d\bv_2,d\rho_2,dh_2) \in {L^2_{[0,\bar{T}]} L^\infty_x}$. Furthermore, we have
	\begin{equation*}
		\bv_1=\bv_2, \quad \rho_1=\rho_2,\quad h_1=h_2.
	\end{equation*}
\end{corollary}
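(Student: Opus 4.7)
The plan is to mimic the strategy behind Corollaries \ref{cor} and \ref{cor2}, working at the $L^2_x$ level on the difference of two solutions and exploiting the symmetric hyperbolic formulation provided by Lemma \ref{sh} together with the Strichartz control $(d\bv_i,d\rho_i,dh_i)\in L^2_{[0,\bar T]}L^\infty_x$ for $i=1,2$. Set
\begin{equation*}
\bU_i = (v_i^1,v_i^2,v_i^3,p(\rho_i),h_i)^{\mathrm T},\qquad \delta\bU=\bU_1-\bU_2 .
\end{equation*}
Since $\bU_i$ both solve $A^0(\bU_i)\partial_t\bU_i+\sum_k A^k(\bU_i)\partial_{x_k}\bU_i=0$, subtracting yields a symmetric hyperbolic equation for $\delta\bU$ of the form
\begin{equation*}
A^0(\bU_1)\partial_t\delta\bU+\sum_k A^k(\bU_1)\partial_{x_k}\delta\bU = \bigl(A^0(\bU_2)-A^0(\bU_1)\bigr)\partial_t\bU_2+\sum_k\bigl(A^k(\bU_2)-A^k(\bU_1)\bigr)\partial_{x_k}\bU_2 .
\end{equation*}

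Next I would carry out the standard symmetric hyperbolic $L^2$ energy estimate: multiply by $\delta\bU$, integrate over $\mathbb{R}^3$, and integrate by parts using the symmetry of the $A^k$. The resulting inequality takes the schematic form
\begin{equation*}
\tfrac{d}{dt}\int_{\mathbb{R}^3}\langle A^0(\bU_1)\delta\bU,\delta\bU\rangle\,dx
\lesssim \bigl(\|d\bU_1\|_{L^\infty_x}+\|d\bU_2\|_{L^\infty_x}\bigr)\,\|\delta\bU\|_{L^2_x}^2,
\end{equation*}
where the coefficient function arises from $\partial_t A^0(\bU_1)$, $\partial_k A^k(\bU_1)$, and from the mean-value bound
$|A^k(\bU_2)-A^k(\bU_1)|\lesssim|\delta\bU|$ in the right-hand side. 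Because of the energy estimates of Theorem \ref{vve} and Corollary \ref{cor2}-type bounds, $A^0(\bU_1)$ is uniformly positive definite on $[0,\bar T]\times\mathbb{R}^3$, so the quadratic form is equivalent to $\|\delta\bU\|_{L^2_x}^2$.

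Since $(d\bv_i,d\rho_i,dh_i)\in L^2_{[0,\bar T]}L^\infty_x$ for $i=1,2$, the time-integrated coefficient $\int_0^t(\|d\bU_1\|_{L^\infty_x}+\|d\bU_2\|_{L^\infty_x})\,d\tau$ is finite on $[0,\bar T]$. Gronwall's inequality then gives
\begin{equation*}
\|\delta\bU(t)\|_{L^2_x}^2\lesssim \|\delta\bU(0)\|_{L^2_x}^2\cdot\exp\!\Bigl(C\!\int_0^t\!(\|d\bU_1\|_{L^\infty_x}+\|d\bU_2\|_{L^\infty_x})\,d\tau\Bigr)=0 ,
\end{equation*}
because the two solutions share initial data. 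Hence $\bv_1=\bv_2,\rho_1=\rho_2,h_1=h_2$ on $[0,\bar T]\times\mathbb{R}^3$, and the induced identity $\bw_1=\bw_2$ follows from \eqref{pw11}. The main (only) potential obstacle is ensuring that the coefficient $A^0(\bU_1)$ in the energy quadratic form remains uniformly positive on $[0,\bar T]$; this is guaranteed by \eqref{HEw} together with the $L^\infty$ bound on $(\bv,\rho,h)$ supplied by Theorem \ref{dingli2}\eqref{point:d2:1}, which keeps $c_s$ bounded away from zero. No higher Sobolev regularity is needed in the uniqueness step, so the argument is insensitive to whether $s_0=3/2+\epsilon$ or larger.
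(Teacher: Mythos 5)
Your argument is essentially the paper's own proof: the paper proves Corollary \ref{cor} by applying Lemma \ref{sh} and the $L^2_{[0,T]}L^\infty_x$ Strichartz bounds on the derivatives to run an $L^2_x$ Gronwall estimate on the difference $\bU_1-\bU_2$, then states that Corollaries \ref{cor2} and \ref{cor3} follow "similarly." You have filled in the same $L^2$ symmetric-hyperbolic difference estimate with slightly more detail (explicit difference system, positivity of $A^0$), but the route and the key ingredients are identical.
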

\section{Proof of Theorem \ref{dingli}: the existence for small, smooth, compactly supported data}\label{Sec4}
Theorem \ref{dingli} includes two parts, the existence of solutions and the continuous dependence of solutions. Of course, we should prove the existence of solutions at first. For the continuous dependence of solutions in Theorem \ref{dingli}, we will prove it in Section \ref{Sub} as a separate part. In this section, our goal is to give a reduction of the existence of solutions in Theorem \ref{dingli} to the case of small, smooth, compactly supported data by using physical localization arguments. To obtain our goal, we first state Proposition \ref{p3} for considering smooth initial data, and then we prove Theorem \ref{dingli} by assuming Proposition \ref{p3} holds. After that, we introduce Proposition \ref{p1}, and we prove Proposition \ref{p3} by assuming Proposition \ref{p1}. So we reduce the proof of Theorem \ref{dingli} to Proposition \ref{p1}, which is about the existence result for small, smooth, compactly supported data.
\subsection{A reduction to the case of smooth initial data}
\begin{proposition}\label{p3}
Let $2<s_0<s\leq\frac52$ and \eqref{HEw} hold. For each $M_0>0$, there exists $T, M>0$ (depending on $C_0, c_0, s, s_0$, and $M_0$) such that, for each smooth initial data $(\bv_0, \rho_0, h_0, \bw_0)$ which satisfies
\begin{equation}\label{p30}
	\begin{split}
	&\|\bv_0, \rho_0 \|_{H^s} + \| \bw_0\|_{H^{s_0}}+ \| h_0\|_{H^{s_0+1}}  \leq M_0,
	\end{split}
	\end{equation}
there exists a smooth solution $(\bv, \rho, h, \bw)$ to \eqref{fc1} on $[0,T] \times \mathbb{R}^3$ satisfying
\begin{equation}\label{p31}
\begin{split}
 & \|\bv, \rho\|_{L^\infty_{[0,T]}H_x^s}+\|\bw\|_{L^\infty_{[0,T]}H_x^{s_0}}+\|h\|_{L^\infty_{[0,T]} H_x^{s_0+1}} \leq M,
 \\
 & \|\bv,\rho,h\|_{L^\infty_{[0,T] \times \mathbb{R}^3 }} \leq 1+C_0.
\end{split}
\end{equation}
Furthermore, the solution satisfies the following properties:

$\mathrm{(1)}$ dispersive estimate for $\bv$, $\rho$, $h$ and $\bv_+$
	\begin{equation}\label{p32}
	\|d \bv, d \rho, dh\|_{L^2_t C^\delta_x}+\|d \bv_+, d {\rho}, d \bv, dh\|_{L^2_t \dot{B}^{s_0-2}_{\infty,2}} \leq M,
	\end{equation}

$\mathrm{(2)}$ Let $f$ satisfy equation. For each $1 \leq r \leq s+1$, the Cauchy problem \eqref{linear} is well-posed in $H_x^r \times H_x^{r-1}$. Moreover, the following energy estimate
\begin{equation}\label{p333}
	\begin{split}
		\|{f}\|_{L^\infty_{[0,T]} H^{r}_x}+ \|\partial_t {f}\|_{L^\infty_{[0,T]} H^{r-1}_x} \leq  C_{M_0}(\| {f}_0\|_{H_x^r}+ \| {f}_1\|_{H_x^{r-1}}).
	\end{split}
\end{equation}
and Strichartz estimate
	\begin{equation}\label{p33}
	\|\left< \partial \right>^k f\|_{L^2_{[0,T]} L^\infty_x} \leq  C( \| f_0\|_{H^r}+ \| f_1\|_{H^{r-1}} ),\quad  k<r-1,
	\end{equation}
holds, and the same estimates hold with $\left< \partial \right>^k$ replaced by $\left< \partial \right>^{k-1}d$. Above, $ C_{M_0}$ is a constant depending on $C_0, c_0, s, s_0, M_0$.
\end{proposition}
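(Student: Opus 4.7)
\textbf{Plan of proof for Proposition \ref{p3}.} The strategy is a standard reduction via finite speed of propagation, physical localization, and scaling, using Proposition \ref{p1} as a black box. I would proceed in four steps.

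\textbf{Step 1: Physical localization.} Since \eqref{fc0} is a hyperbolic system with a finite acoustic speed bounded in terms of $C_0$, I can localize the initial data. Pick a partition of unity $\{\chi_\alpha\}$ subordinate to a covering of $\mathbb{R}^3$ by balls $B(x_\alpha,1)$ of uniformly bounded overlap. For each $\alpha$, consider the localized data $(\chi_\alpha \bv_0, \chi_\alpha \rho_0, \chi_\alpha h_0)$ (together with the corresponding localized specific vorticity). If I can produce a solution $(\bv_\alpha,\rho_\alpha,h_\alpha)$ on $[0,T]\times\mathbb{R}^3$ for each $\alpha$ with a uniform-in-$\alpha$ lifespan $T$ and uniform bounds, finite speed of propagation together with uniqueness (Corollary \ref{cor}) shows that on the cone of dependence over $B(x_\alpha, 1/2)$, the solution $(\bv_\alpha,\rho_\alpha,h_\alpha)$ must agree with any other localized solution whose support overlaps. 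Summing via $\{\chi_\alpha\}$ then yields a global-in-space solution with the desired bounds. Thus it suffices to handle data supported in a ball, with $H^s\times H^s\times H^{s_0+1}\times H^{s_0}$ norms comparable to $M_0$.

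\textbf{Step 2: Rescaling to small data.} Next I exploit the scaling of \eqref{fc0}. For a large parameter $\lambda>0$, set
\[
\bv^\lambda(t,x)=\bv(\lambda^{-1}t,\lambda^{-1}x),\qquad \rho^\lambda(t,x)=\rho(\lambda^{-1}t,\lambda^{-1}x),\qquad h^\lambda(t,x)=h(\lambda^{-1}t,\lambda^{-1}x),
\]
which again solves \eqref{fc0}. Since $s,s_0+1>3/2$ are above the scaling-critical exponent, rescaling by a sufficiently large $\lambda=\lambda(M_0,\epsilon_3)$ reduces the localized data in Step 1 to satisfy
\[
\|\bv_0^\lambda\|_{H^s}+\|\rho_0^\lambda\|_{H^s}+\|\bw_0^\lambda\|_{H^{s_0}}+\|h_0^\lambda\|_{H^{s_0+1}}\leq \epsilon_3,
\]
with compact support in a ball of radius $\lambda$. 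A mild cutoff (constant outside the support of the rescaled data) together with modifying the constant background ensures that \eqref{HEw} still holds with new constants depending only on $C_0,c_0$.

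\textbf{Step 3: Invoking the small-data result.} Proposition \ref{p1} (the existence result for small, smooth, compactly supported data, which is what the remainder of Sections \ref{Sec4}--\ref{Ap} is devoted to proving) produces a smooth solution on $[-1,1]\times\mathbb{R}^3$ for the rescaled data, satisfying the energy bound \eqref{pre3}, the dispersive bound \eqref{pre4} for $d\bv^\lambda,d\rho^\lambda, dh^\lambda,d\bv^\lambda_+$, and the linear wave estimate \eqref{pre5} for the associated metric $\mathbf{g}^\lambda$ localizing $g^\lambda$. Undoing the rescaling produces, on the time interval $[0,\lambda^{-1}]$, a solution for the localized problem. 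The quantities appearing in \eqref{p31}--\eqref{p33} then follow by Sobolev scaling: $H^s$ norms rescale by a power of $\lambda$ (absorbed into $M$), the $L^2_tL^\infty_x$ Strichartz quantities rescale favorably, and the linear equation \eqref{linear} for $\mathbf{g}$ inherits \eqref{p33} from \eqref{pre5}, with $k<r-1$ as required.

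\textbf{Step 4: Gluing and main obstacle.} With a uniform lifespan $T=T(C_0,c_0,s,s_0,M_0)>0$ (namely $T\simeq \lambda^{-1}$), the localized pieces are reassembled using finite propagation speed and the uniqueness Corollary \ref{cor}. The bounds \eqref{p31}, \eqref{p32}, \eqref{p333}, \eqref{p33} are then consequences of the corresponding rescaled estimates, with $C_{M_0}$ tracking the powers of $\lambda$ used. The main obstacle is Step 3, namely Proposition \ref{p1}: the continuous functional argument (Step 3 of the overview), the characteristic energy estimates of Section \ref{sec6}, and the wave-packet construction of Section \ref{Ap} are what really drive the result. The reduction itself (Steps 1, 2, 4) is essentially bookkeeping: verifying that the scaling parameter is chosen uniformly in terms of $M_0$, that Sobolev norms of the localized data are uniformly controlled by $M_0$, and that the metric $g$ stays close to Minkowski on the rescaled scale so that $\mathbf{g}$ is a legitimate localization. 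The only genuine subtlety is ensuring that the linear equation estimate \eqref{p33} at the \emph{arbitrary} regularity $1\leq r\leq s+1$ is produced (not only at the specific index tied to $\bv,\rho,h$), which requires that Proposition \ref{p1} provides \eqref{pre5} at the full range of exponents $1\leq r\leq s+1$; this is exactly what is built into its statement.
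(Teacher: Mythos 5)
Your overall architecture (localize, rescale, invoke the small-data Proposition~\ref{p1}, then glue by finite speed of propagation and Corollary~\ref{cor}) is the same one the paper uses, but you have the first two steps in the wrong order, and that order matters — as written, Step~2 contains a genuine gap.

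\textbf{The gap.} You localize to a unit ball first, obtaining data with $H^s$-norm of size $\simeq M_0$, and then set $\bv^{\lambda}(x)=\bv(\lambda^{-1}x)$, claiming that for large $\lambda$ the \emph{inhomogeneous} norm $\|\bv_0^\lambda\|_{H^s}$ is pushed below $\epsilon_3$. This is false: the dilation $f\mapsto f(\lambda^{-1}\cdot)$ gives $\|f(\lambda^{-1}\cdot)\|_{\dot H^s}=\lambda^{\frac32-s}\|f\|_{\dot H^s}\to0$ for $s>\tfrac32$, but $\|f(\lambda^{-1}\cdot)\|_{L^2}=\lambda^{\frac32}\|f\|_{L^2}\to\infty$. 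Since the compactly supported localized data has a nonzero $L^2$ component, the $H^s$-norm does not shrink — it grows. So the smallness hypothesis \eqref{300} of Proposition~\ref{p1} is not met and the reduction collapses.

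\textbf{Why the paper's order works.} The paper first rescales the \emph{un-localized} data, getting smallness of the homogeneous norms $\|\underline{\bv}_0\|_{\dot H^s}$, etc., at the cost of an enormous $L^2$ norm. It then localizes to a ball $B(\by,c+2)$ \emph{and simultaneously subtracts the pointwise constant} $\underline{\bv}_0(\by)$: the cutoff function $\chi(\bx-\by)$ multiplied by $\underline{\bv}_0(\bx)-\underline{\bv}_0(\by)$ has compact support and vanishes at the center, so by the Sobolev embedding $\dot H^s\hookrightarrow \dot C^{s-\frac32}$ (here $s\in(2,\tfrac52]$) the localized piece satisfies $\|\widetilde{\bv}_0\|_{H^s}\lesssim\|\underline{\bv}_0\|_{\dot H^s}\lesssim\epsilon_3$. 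The constant-subtraction is doubly important: it is also what makes the shifted quantities $\widetilde{\bv}+\underline{\bv}_0(\by)$, $\widetilde{\rho}+\underline{\rho}_0(\by)$, $\widetilde{h}+\underline{h}_0(\by)$ the genuine local solution on the cone $\mathrm{K}^y$, and it forces you to work with the shifted metric $\widetilde{g}_j=g(\widetilde{\bv}+\underline{\bv}_0(\by),\ldots)$ and the shifted transport operator $\widetilde{\mathbf T}$ (the system \eqref{p}--\eqref{DDE}). Your sketch does not track these shifts, so the equation the localized pieces solve is not the one Proposition~\ref{p1} applies to, and the gluing of the pieces with different local constants $\underline{\bv}_0(\by)$ is not accounted for.

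Finally, for the linear-wave estimates \eqref{p333}--\eqref{p33}, you need to repeat the same scale-then-localize scheme for the linear equation itself (producing a localized $f^y$ solving $\square_{\widetilde g}f^y=0$), and then undo the change of coordinates carefully so that the factor $T^{r-1-k}$ produced by the scaling is $\leq1$ (this uses $k<r-1$ and $T\leq1$). Your sketch gestures at this but does not verify it; the paper does so explicitly in \eqref{po3}--\eqref{po13}.
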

In the following, we will use Proposition \ref{p3} to prove Theorem \ref{dingli}.
\subsection{Proof of Theorem \ref{dingli} by Proposition \ref{p3}}
Consider the initial data $(\bv_0, \rho_0, h_0, \bw_0) \in H^s \times H^s \times H^{s_0+1} \times H^{s_0}$ in Theorem \ref{dingli} and
\begin{equation*}
	\|\bv_0\|_{H^s} + \|{\rho}_0 \|_{H^s} + \|\bw_0\|_{H^{s_0}} + \|h_0\|_{H^{s_0+1}} \leq M_0,
\end{equation*}
Let $\{(\bv_{0k}, \rho_{0k}, h_{0k}, \bw_{0k})\}_{k \in \mathbb{Z}^{+}}$ be a sequence of smooth data converging to $(\bv_0, \rho_0, h_0, \bw_0)$ in $H^s \times H^s  \times H^{s_0+1} \times H^{s_0}$, where $\bw_{0k}$ is defined by
\begin{equation*}
  \bw_{0k}=\bar{\rho}^{-1}\mathrm{e}^{-\rho_{0k}}\mathrm{curl}\bv_{0k}.
\end{equation*}
By Proposition \ref{p3}, for each data $(\bv_{0k}, \rho_{0k}, h_{0k}, \bw_{0k})$, there exists a solution $(\bv_k, \rho_k, h_k, \bw_k)$ to System \eqref{fc1}, and
 \begin{equation*}
   (\bv_k, \rho_k, h_k, \bw_k)|_{t=0}=(\bv_{0k}, \rho_{0k}, h_{0k}, \bw_{0k}), \quad \bw_k=\bar{\rho}^{-1}\mathrm{e}^{-{\rho}_{k}}\mathrm{curl}\bv_k.
 \end{equation*}
We should note that the solutions of \eqref{wte} also satisfy the symmetric hyperbolic system \eqref{sh}. Set $\bU_k=(p_k, \bv_k, h_k)^{\mathrm{T}}, k \in \mathbb{Z}^+$, where $p_k=\bar{\rho}^\gamma \mathrm{e}^{h_k+\gamma \rho_k}$. For $j \in \mathbb{Z}^+$, we then have
\begin{equation*}
\begin{split}
  &A^0( \bU_k )\partial_t \bU_k+ \sum^3_{i=1}A^i( \bU_k )\partial_{i}\bU_k = 0,
  \\
  &A^0( \bU_l ) \partial_t \bU_l+ \sum^3_{i=1}A^i(\bU_l)\partial_{i}\bU_l=0.
  \end{split}
\end{equation*}
The standard energy estimates tell us that
\begin{equation*}
  \frac{d}{dt}\|\bU_k - \bU_l \|_{H_x^{s-1}} \leq C_{\bU_k, \bU_l} \left(\| d\bU_k, d\bU_l\|_{L^\infty_x}\|\bU_k-\bU_l\|_{H_x^{s-1}}+ \|\bU_k-\bU_l\|_{L^\infty_x}\| \partial \bU_l\|_{H_x^{s-1}} \right),
\end{equation*}
where $C_{\bU_k, \bU_l}$ depends on the $L^\infty_x$ norm of $\bU_k, \bU_l$. By Strichartz estimates of $d\bv_k, d\rho_k, dh_k, k \in \mathbb{Z}^+$ in Proposition \ref{p3}, we derive that
\begin{equation*}
\begin{split}
  \|(\bU_k-\bU_l)(t,\cdot)\|_{H_x^{s-1}} &\lesssim \|(\bU_k-\bU_l)(0,\cdot)\|_{H_x^{s-1}}
  \\
  &\lesssim\|(\bv_{0k}-\bv_{0l}, \rho_{0k}-\rho_{0l}, h_{0k}-h_{0l}) \|_{H^{s-1}}.
  \end{split}
\end{equation*}
As a result, $\{(p_k,\bv_k,h_k)\}_{k \in \mathbb{Z}^+}$ is a Cauchy sequence in $C([-T,T];H_x^{s-1})$. Denote $(p,\bv,h)$ being the limit. Then
\begin{equation*}
  \lim_{k\rightarrow \infty}(p_k,\bv_k,h_k )=(p,\bv,h) \ \mathrm{in} \ C([-T,T];H_x^{s-1}).
\end{equation*}
Since $\rho_k=\frac{1}{\gamma}\ln(p_k \mathrm{e}^{-h_k}\bar{\rho}^{-\gamma})$, then
\begin{equation*}
  \lim_{k\rightarrow \infty}\rho_k=\rho \ \mathrm{in} \ C([-T,T];H_x^{s-1}).
\end{equation*}
Consider
\begin{equation*}
 \bw_k=\bar{\rho}^{-1}\mathrm{e}^{-\rho_k}\mathrm{curl}\bv_k, \quad k \in \mathbb{Z}^+.
\end{equation*}
Due to product estimates and elliptic estimates, we have
\begin{equation}\label{cx}
\begin{split}
  \|\bw_k-\bw_l\|_{H_x^{s_0-2}} &\lesssim (\| \bv_k-\bv_l\|_{H_x^{s_0-1}}+\|\rho_k-\rho_l\|_{H_x^{s_0-1}})
  \\
  & \lesssim (\| \bv_{0k}-\bv_{0l}\|_{H^{s}}+\|{\rho}_{0k}-{\rho}_{0l} \|_{H^{s}}).
\end{split}
\end{equation}
Above, in the last line we use Lemma \ref{jh0}. The estimate \eqref{cx} implies that $\{\bw_k\}_{k \in \mathbb{Z}^+}$ is a Cauchy sequence in $C([-T,T];H_x^{s_0-2})$. We denote $\bw$ being the limit, that is
\begin{equation*}
  \lim_{k\rightarrow \infty} \bw_k = \bw \ \mathrm{in} \ C([-T,T];H_x^{s_0-2}).
\end{equation*}
Since $(\bv_k,\rho_k, h_k)$ is uniformly bounded in $C([-T,T];H_x^{s})$ and $\bw_k$ is bounded in $C([-T,T];H_x^{s_0})$ respectively. Thus, $(\bv, \rho, h) \in C([-T,T];H_x^{s}), \bw \in C([-T,T];H_x^{s_0})$. Using Proposition \ref{p3} again, the sequence $\{(d\bv_k,d\rho_k,dh_k)\}_{k \in \mathbb{Z}^+}$ is uniformly bounded in $L^2([-T,T];C_x^\delta)$. Consequently, the sequence $\{(d\bv_k, d\rho_k,dh_k)\}_{k \in \mathbb{Z}^+}$ converges to $(d\bv, d\rho, dh)$ in $L^2([-T,T];L_x^\infty)$. That is,
\begin{equation}\label{ccr}
  \lim_{k \rightarrow \infty} (d\bv_k, d\rho_k,dh_k)=(d\bv, d\rho, dh).
\end{equation}
On the other hand, we also deduce that the sequence $\{(d \bv_k,d\rho_k,dh_k)\}_{k \in \mathbb{Z}^+}$ is bounded in $L^2([-T,T];\dot{B}^{s_0-2}_{\infty,2})$.
Combined with \eqref{ccr}, this yields
 \begin{equation}\label{cc1}
   (d \bv, d\rho, dh) \in L^2([-T,T];\dot{B}^{s_0-2}_{\infty,2}).
 \end{equation}
By using \eqref{p32}, we have that
 \begin{equation}\label{cc2}
	\bv, \rho, h \in L^\infty([-T,T]\times \mathbb{R}^3).
\end{equation}
Set
\begin{equation*}
  \bv=\bv_{+}+\bv_{-}, \quad \bv_{-}=(-\Delta)^{-1} (\mathrm{e}^{\rho} \mathrm{curl} \bw).
\end{equation*}
We have
\begin{equation}\label{cc2g}
\begin{split}
  \| \partial \bv_{+}\|_{\dot{B}^{s_0-2}_{\infty,2}} \leq \| \partial \bv\|_{\dot{B}^{s_0-2}_{\infty,2}}+\| \partial \bv_{-}\|_{\dot{B}^{s_0-2}_{\infty,2}}.
  \end{split}
\end{equation}
Using elliptic estimates and Sobolev inequalities, we have
\begin{equation*}
\begin{split}
  \| \partial \bv_{-}\|_{\dot{B}^{s_0-2}_{\infty, 2}} & \lesssim \| \partial(-\Delta)^{-1}(\mathrm{e}^{{\rho}} \mathrm{curl} \bw) \|_{\dot{B}^{s_0-\frac{1}{2}}_{2,2}}
   \\
  & \lesssim \|\mathrm{e}^{{\rho}} \mathrm{curl} \bw\|_{\dot{H}^{s_0-\frac{3}{2}}}
 \\
 & \lesssim (1+\|{\rho}_0\|_{H^{s}})\|\bw_0 \|_{H^{s_0}}.
 \end{split}
\end{equation*}
As a result, we have $\partial \bv_{-} \in L^2([-T,T];\dot{B}^{s_0-2}_{\infty,2})$. By \eqref{cc2g}, we can obtain $\partial \bv_{+} \in L^2([-T,T];\dot{B}^{s_0-2}_{\infty,2})$. This completes the proof of Theorem \ref{dingli}.

Now, let us reduce Proposition \ref{p3} to the case of smooth, small initial data by compactness arguments, i.e. Proposition \ref{p1} in the following contents.
\subsection{A reduction to the case of small initial data}
\begin{proposition}\label{p1}
	Let $2<s_0<s \leq \frac52$ and $\delta_* \in [0,s-2)$. Assume \eqref{a0} and \eqref{a1} hold. Consider the Cauchy problem \eqref{fc1}. Suppose the initial data $(\bv_0, {\rho}_0, \bw_0, h_0)$ be smooth, supported in $B(0,c+2)$ and satisfying
	\begin{equation}\label{300}
	\begin{split}
	&\|\bv_0\|_{H^s} + \|\rho_0 \|_{H^s} + \|\bw_0\|_{H^{s_0}}+ \|h_0\|_{H^{s_0+1}} \leq \epsilon_3.
	\end{split}
	\end{equation}
	Then the Cauchy problem \eqref{fc1} admits a smooth solution $(\bv,\rho,\bw,h)$ on $[-1,1] \times \mathbb{R}^3$, which have the following properties:
	
	$\mathrm{(1)}$ energy estimates
	\begin{equation}\label{s402}
	\begin{split}
	&\|\bv\|_{L^\infty_t H_x^{s}}+\| \rho \|_{L^\infty_t H_x^{s}} + \| \bw\|_{H_x^{s_0}} + \| h \|_{H_x^{s_0+1}} \leq \epsilon_2,
	\\
	& \|\bv, \rho, h\|_{L^\infty_t L^\infty_x} \leq 1+C_0.
	\end{split}
	\end{equation}

	$\mathrm{(2)}$ dispersive estimate for $\bv$ and $\rho$
	\begin{equation}\label{s403}
	\|d \bv, d \rho, dh\|_{L^2_t C^{\delta_*}_x}+\| d \rho, \partial \bv_{+}, d \bv, dh\|_{L^2_t \dot{B}^{s_0-2}_{\infty,2}} \leq \epsilon_2,
	\end{equation}

	$\mathrm{(3)}$ dispersive estimate for the linear equation

For any $t_0 \in [-1,1)$, let $f$ satisfy
 the linear wave equation
 \begin{equation}\label{Linear}
 	\begin{cases}
 		& \square_g f=0, \qquad (t,x) \in (t_0,1]\times \mathbb{R}^3,
 		\\
 		&f(t_0,\cdot)=f_0 \in H^r(\mathbb{R}^3), \quad \partial_t f(t_0,\cdot)=f_1 \in H^{r-1}(\mathbb{R}^3),
 	\end{cases}
 \end{equation}
where $g$ has the formulations defined in \eqref{Met}. For each $1 \leq r \leq s+1$, the Cauchy problem \eqref{linear} admits a solution $f$ in $C([-1,1];H_x^r) \times C^1([-1,1];H_x^{r-1})$ if the data $(f_0,f_1) \in H_x^r \times H_x^{r-1}$. Moreover, for $k<r-1$, the following estimate holds:
	\begin{equation}\label{304}
	\|\left< \partial \right>^k f\|_{L^2_{[-1,1]} L^\infty_x} \lesssim  \| f_0\|_{H_x^r}+ \| f_1\|_{H_x^{r-1}},
	\end{equation}
and the same estimates hold with $\left< \partial \right>^k$ replaced by $\left< \partial \right>^{k-1}d$.
\end{proposition}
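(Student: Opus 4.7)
The plan is to carry out a continuity (bootstrap) argument on the class $\mathcal{H}$ of smooth solutions satisfying $G(\bv,\rho,h) \leq 2\epsilon_1$, where $G$ is the functional defined in \eqref{dG} that measures the Sobolev regularity of the characteristic hypersurfaces $\Sigma_{\theta,r}$ of the acoustic metric $g$. By a standard local existence theorem (Lemma \ref{sh}) together with a continuation criterion, it suffices to prove the a priori estimates \eqref{s402}--\eqref{304} and the improved bound $G(\bv,\rho,h) \leq \epsilon_1$ under the bootstrap hypothesis $G \leq 2\epsilon_1$. Because the initial data is supported in $B(0,c+2)$ and has norm $\leq \epsilon_3 \ll \epsilon_2 \ll \epsilon_1 \ll \epsilon_0 \ll 1$, the solution can be regarded as a small perturbation of the Minkowski background, so the acoustic metric $\mathbf{g}$ (after the standard localization near spatial infinity) is close to Minkowski.

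The first step is to close the energy estimate \eqref{s402}. Feeding the bootstrap consequence of \eqref{s403} into Theorem \ref{ve}, we control the exponent $\int_0^t (\|(d\bv,d\rho,dh)\|_{L^\infty_x} + \|(\partial\bv,\partial\rho,\partial h)\|_{\dot B^{s_0-2}_{\infty,2}})\,d\tau \lesssim \epsilon_2$, and since $E_0 \lesssim \epsilon_3$, the smallness hierarchy \eqref{a0} yields the desired $\epsilon_2$-bound. The $L^\infty_{t,x}$ bound on $\bv,\rho,h$ follows from a Sobolev embedding combined with the transport structure $\mathbf{T} h = 0$, $\mathbf{T}\rho = -\mathrm{div}\bv$, and $\mathbf{T} v^i = -c_s^2\partial^i\rho - \frac{1}{\gamma}c_s^2\partial^i h$ in \eqref{fc0}, exploiting finite speed of propagation and the initial hypothesis \eqref{HEw}.

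The second step, and the main obstacle, is to establish the linear Strichartz estimate \eqref{304} for the acoustic wave operator $\square_g$ in the regime where the metric has regularity barely above $H^{s_0+1}$ with $s_0 > 2$. The plan is to follow the wave-packet parametrix construction of Smith--Tataru, which requires sufficient regularity of the null hypersurfaces $\Sigma_{\theta,r}$. Here we exploit the bootstrap hypothesis $G \leq 2\epsilon_1$: combined with Corollary \ref{vte}, Lemma \ref{te20} and Lemma \ref{te21}, this reduces control of the geometry of $\Sigma_{\theta,r}$ to characteristic energy estimates for $(\bv,\rho,h)$ and for $(\mathrm{curl}\bw, \Delta h)$ along $\Sigma_{\theta,r}$. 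The latter are derived from the hyperbolic-transport structure \eqref{fc0}--\eqref{W2}, using the Hodge decomposition together with the Riesz commutator estimates in Lemma \ref{ceR} and Lemma \ref{ce} to reinstate a transport equation for the top-order pieces. Once these characteristic norms are bounded, a frequency-localized wave-packet parametrix delivers \eqref{304} with the expected loss of derivatives, and \eqref{304} in turn, via Duhamel's principle (Lemma \ref{LD}), upgrades to an inhomogeneous Strichartz estimate of the type $\|\langle\partial\rangle^{k-1}df\|_{L^2_tL^\infty_x} \lesssim \|f_0\|_{H^r}+\|f_1\|_{H^{r-1}}+\|F\|_{L^1_t H^r_x}$ for sources of the form $\square_g f = \mathbf{T} F$.

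The third step uses the improved wave equations of Lemma \ref{crh} and Lemma \ref{wte1}, namely $\square_g(\rho+\tfrac{1}{\gamma}h) = D + \tfrac{1}{\gamma}E$ and $\square_g v^i_+ = \mathbf{T}\mathbf{T} v^i_- + Q^i$, where the right-hand sides lie in the $\mathbf{T}$-image plus controllable quadratic nonlinearities bounded through Lemma \ref{yux}. Applying the inhomogeneous Strichartz estimate from Step 2 to each dyadic piece $\Delta_j$, squaring and summing with weight $2^{(s_0-2)j}$, and using the commutator bound Lemma \ref{yx} for $[\square_g,\Delta_j]$ and Lemma \ref{YR} for $[\mathbf{T},\Delta_j]$, we obtain the $\dot B^{s_0-2}_{\infty,2}$ part of \eqref{s403}. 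The $L^2_tC^{\delta_*}_x$ estimate for $d\bv, d\rho, dh$ follows from \eqref{304} with a slightly larger $k$, Sobolev embedding $\dot B^{\delta_*}_{\infty,2}\hookrightarrow C^{\delta_*}$, and Sobolev control of $d\bv_- = d(-\Delta)^{-1}(\mathrm{e}^\rho\mathrm{curl}\bw)$ via \eqref{eee}. Finally, the improvement $G(\bv,\rho,h) \leq \epsilon_1$ is obtained by returning to the characteristic energy estimates of Step 2 and observing that the smallness $\epsilon_3$ of the data, combined with the now-established \eqref{s402}--\eqref{s403}, forces $G$ to be of order $\epsilon_2 \ll \epsilon_1$, closing the bootstrap.
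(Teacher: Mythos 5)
Your sketch tracks the paper's proof of Proposition \ref{p1} (Sections 5--8) essentially step for step: localize the acoustic metric to a perturbation $\mathbf{g}$ of Minkowski as in \eqref{boldg}, bootstrap the characteristic-foliation functional $G$ of \eqref{dG}, close the energy through Theorem \ref{ve}, derive the linear Strichartz estimate by the Smith--Tataru wave-packet parametrix after securing characteristic energy estimates for $(\bv,\rho,h)$ and for $(\mathrm{curl}\bw,\Delta h)$ along $\Sigma_{\theta,r}$ (Corollary \ref{vte}, Lemmas \ref{te20}, \ref{te21}, Riesz commutator Lemma \ref{ceR}), and finally upgrade to the nonlinear Strichartz estimate \eqref{s403} through the wave equations $\square_g(\rho+\tfrac1\gamma h)=D+\tfrac1\gamma E$ and $\square_g v_+^i=\mathbf{T}\mathbf{T}v_-^i+Q^i$ together with the inhomogeneous Strichartz estimate of Proposition \ref{r3}.

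The one place your framing departs from the paper is the continuity structure that closes the bootstrap. You describe a time-continuation bootstrap driven by a local existence theorem and a continuation criterion, whereas the paper (Section 5) performs a connectedness argument in the \emph{data} parameter: it introduces the one-parameter family of data $(\gamma\bv_0,\gamma\rho_0,\gamma h_0)$, $\gamma\in[0,1]$, defines $\text{A}\subset[0,1]$ as the set of $\gamma$ for which the corresponding solution exists on the whole slab with $G\leq\epsilon_1$ and satisfies the a priori bounds, and shows $\text{A}$ is nonempty, open (via Proposition \ref{p4} and continuity of $G$), and closed. This is not purely cosmetic: $G$ is defined via the characteristic foliation over the entire time slab $[-2,2]$, so a time-bootstrap would require reworking the wave-packet construction and characteristic-energy estimates with uniform constants on variable intervals $[-2,T]$, which the $\gamma$-parametrized argument cleanly sidesteps. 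A minor further point: Lemma \ref{sh} provides the symmetric-hyperbolic reformulation, not itself a local existence theorem, though it is the vehicle by which classical local well-posedness for smooth data is invoked.
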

\subsection{Proof of Proposition \ref{p3} by Proposition \ref{p1}}
In Proposition \ref{p3}, the corresponding statement is considering the small, supported data. While, the initial data is large in Proposition \ref{p1}. Firstly, by using a scaling method, we can reduce the initial data in Proposition \ref{p1} to be small.

$\textbf{Step 1: Scaling}$. Assume
\begin{equation}\label{a4}
\begin{split}
&\|\bv_0\|_{H^s}+ \| \rho_0\|_{H^s} + \|\bw_0\|_{H^{s_0}} + \|h_0\|_{H^{s_0+1}}  \leq M_0.
\end{split}
\end{equation}
If taking the scaling
\begin{equation}\label{s1}
\begin{split}
&\underline{\bv}(t,x)=\bv(Tt,Tx),\quad \underline{\rho}(t,x)={\rho}(Tt,Tx), 
\quad \underline{h}(t,x)=h(Tt,Tx),
\end{split}
\end{equation}
we then obtain
\begin{equation}\label{s2}
\begin{split}
&\|\underline{\bv}(0)\|_{\dot{H}^s} \leq M_0 T^{s-\frac{3}{2}}, \qquad \qquad \|\underline{\rho}(0)\|_{\dot{H}^s} \leq M_0 T^{s-\frac{3}{2}},
\\
& \|\underline{\bw}(0)\|_{\dot{H}^{s_0}} \leq M_0 T^{s_0-\frac{1}{2}},  \qquad \|\underline{h}(0)\|_{\dot{H}^{s_0+1}} \leq M_0 T^{s_0-\frac{1}{2}}.
\end{split}
\end{equation}
Here we use the fact that $\underline{\bw}(0)=\mathrm{e}^{ -\underline{\rho}(0)}\mathrm{curl}\underline{\bv}(0)$. Choose sufficiently small $T$
such that
\begin{equation}\label{s3}
\max\{ M_0 T^{s_0-\frac{1}{2}}, M_0 T^{s-\frac{3}{2}}\} \ll \epsilon_3.
\end{equation}
Therefore, we get
\begin{equation*}
\begin{split}
  &\|\underline{\bv}(0)\|_{\dot{H}^s} \leq \epsilon_3, \quad \|\underline{{\rho}}(0)\|_{\dot{H}^s} \leq \epsilon_3, \quad \|\underline{\bw}(0)\|_{\dot{H}^{s_0}} \leq \epsilon_3, \quad \|\underline{h}(0)\|_{\dot{H}^{s_0+1}} \leq \epsilon_3.
\end{split}
\end{equation*}
Using \eqref{HEw}, we also have
\begin{equation}\label{vv0}
	\begin{split}
		\|\underline{\bv}(0), \underline{\rho}(0),\underline{h}(0)\|_{L^\infty} \leq C_0.
	\end{split}
\end{equation}
Secondly, to reduce the initial data with support set, we need a physical localization technique.

$\textbf{Step 2: Localization}$.

For the propagation speed of \eqref{fc1} is finite, we set $c$ be the largest speed of \eqref{fc1}. Set $\chi$ be a smooth function supported in $B(0,c+2)$, and which equals $1$ in $B(0,c+1)$. For given $\by \in \mathbb{R}^3$, we define the localized initial data for the velocity and density near $\by$:
\begin{equation*}
\begin{split}
\widetilde{\bv}_0(\bx)=&\chi(\bx-\by)\left( \underline{\bv}_0(\bx)- \underline{\bv}_0(\by)\right),
\\
\widetilde{\rho}_0(\bx)=&\chi(\bx-\by)\left( \underline{\rho}_0(\bx)-\underline{\rho}_0(\by)\right),
\\
\widetilde{h}_0(\bx)=&\chi(\bx-\by)\left( \underline{h}_0(\bx)-\underline{h}_0(\by)\right).
\end{split}
\end{equation*}
To let $\widetilde{\bw}_0$ be the specific vorticity, we should keep the relation in \eqref{pw1}. That is,
\begin{equation}\label{wde}
  \widetilde{\bw}_0=\mathrm{e}^{ -\widetilde{\rho}_0}\mathrm{curl}\widetilde{\bv}_0.
\end{equation}
Then $\widetilde{\bw}_0$ is the specific vorticity. Since $s,s_0\in (2, \frac{5}{2})$, we can verify
\begin{equation}\label{cc3}
\begin{split}
&\| \widetilde{\bv}_0\|_{H^s} \lesssim \|\underline{\bv}_0\|_{\dot{H}^s} \lesssim \epsilon_3
\\
&\|\widetilde{\rho}_0\|_{H^s} \lesssim \|\underline{\rho}_0 \|_{\dot{H}^s} \lesssim \epsilon_3
\\
&\|\widetilde{h}_0\|_{H^{s_0+1}} \lesssim \|\underline{h}_0\|_{\dot{H}^{s_0+1}} \lesssim \epsilon_3.
\end{split}
\end{equation}
For $s>2$, using \eqref{cc3} and Sobolev imbeddings, we get
\begin{equation}\label{vv1}
	\begin{split}
		\| \widetilde{\bv}_0, \widetilde{\rho}_0, \widetilde{\rho}_0 \|_{L^\infty} \leq  \| \widetilde{\bv}_0, \widetilde{\rho}_0, \widetilde{\rho}_0 \|_{{H}^s} \leq 1.
	\end{split}
\end{equation}
A calculating starting from \eqref{wde} yields
\begin{equation}\label{WY1}
   \widetilde{\bw}_0= \mathrm{e}^{ -\widetilde{\rho }_0} \nabla \chi(\bx-\by)\cdot (\underline{\bv}_0(\bx)-\underline{\bv}_0(\by))+\chi(\bx-\by)  \mathrm{e}^{ -\widetilde{\rho}_0}\mathrm{curl}{\underline{\bv}}_0.
\end{equation}
Hence, we have
\begin{equation}\label{W0e}
  \| \widetilde{\bw}_0 \|_{L^2} \lesssim \| \partial \underline{\bv}_0 \|_{L^2} \lesssim \epsilon_3.
\end{equation}
Substituting $\underline{\bw}_0= \mathrm{e}^{ -{\underline{\rho} }_0}\mathrm{curl}\underline{\bv}_0$ into \eqref{WY1}, we can rewrite \eqref{WY1} as
\begin{equation*}
   \widetilde{\bw}_0=  \mathrm{e}^{- \widetilde{ \rho_0}} \nabla \chi(\bx-\by)\cdot (\underline{\bv}_0(\bx)-\underline{\bv}_0(\by))+\chi(\bx-\by) \mathrm{e}^{ -\widetilde{ \rho_0}+{\underline{\rho}}_0} \underline{\bw}_0.
\end{equation*}
Therefore, we can obtain
\begin{equation}\label{Whe}
\begin{split}
\| \widetilde{\bw}_0 \|_{\dot{H}^{s_0}} &= \|  \mathrm{e}^{ - \widetilde{ \rho_0}} \nabla \chi(\bx-\by)\cdot (\underline{\bv}_0(\bx)-\underline{\bv}_0(\by))+\chi(\bx-\by) \mathrm{e}^{ -\widetilde{{\rho}_0}+{\underline{\rho}}_0}\underline{\bw}_0 \|_{\dot{H}^{s_0}}
\\
&\lesssim \|\underline{\bw}_0 \|_{\dot{H}^{s_0}}+ \|\underline{\bv}_0 \|_{\dot{H}^{s_0}}+\| \underline{\rho}_0\|_{\dot{H}^{s_0}} \lesssim \epsilon_3.
\end{split}
\end{equation}
Adding \eqref{W0e} and \eqref{Whe}, we can get
\begin{equation}\label{cc4}
\| \widetilde{\bw}_0 \|_{H^{s_0}} \lesssim \epsilon_3.
\end{equation}
By Proposition \ref{p1}, there is a smooth solution $(\widetilde{\bv}, \widetilde{\rho}, \widetilde{h}, \widetilde{\bw})$ on $[-1,1]\times \mathbb{R}^3$ satisfying the following equation(citing \eqref{DDi})
\begin{equation}\label{p}
\begin{cases}
&\square_{\widetilde{{g}}} \widetilde{\bv}=-\mathrm{e}^{\widetilde{\rho}+\underline{\rho}_0(y)}\tilde{c}^2_s\mathrm{curl} \widetilde{\bw}+\widetilde{\bQ},
\\
&\square_{\widetilde{{g}}} \widetilde{\rho}=-\frac{1}{\gamma}\widetilde{c}_s^2\Delta \widetilde{h}+ \widetilde{{D}},
\\
&\square_{\widetilde{{g}}} \widetilde{h}=\widetilde{c}_s^2\Delta \widetilde{h} + \widetilde{E},
\\
& \widetilde{\mathbf{T}}h=0,
\\
& \widetilde{\mathbf{T}} \widetilde{\bw}=(\widetilde{\bw} \cdot \nabla) \widetilde{\bv}+\bar{\rho} \mathrm{e}^{\widetilde{h}+\underline{h}_0(y)+(\gamma-2)(\widetilde{\rho}+\underline{\rho}_0(y))}\epsilon^{iab}\partial_a \widetilde{\rho} \partial_b \widetilde{h}.
\end{cases}
\end{equation}
Above, the quantities $\tilde{c}^2_s$ and $\tilde{{g}}$ are defined by
\begin{equation}\label{DDE}
\begin{split}
\tilde{c}^2_s:&= \frac{dp}{d{\rho}}(\widetilde{\rho}+\underline{\rho}_0(y), \widetilde{h}+\underline{h}_0(y)),
\\
\widetilde{g}:&=g(\widetilde{\bv}+\underline{\bv}_0(y), \widetilde{h}+\underline{h}_0(y), \widetilde{\rho}+\underline{\rho}_0(y) ),
\\
\widetilde{\mathbf{T}}:&=\partial_t+ (\widetilde{\bv}+\underline{\bv}_0(y))\cdot \nabla,
\end{split}
\end{equation}
and
\begin{equation*}
  \begin{split}
  \widetilde{Q}^i=& \widetilde{Q}^{i\alpha\beta} \partial_\alpha \widetilde{\rho} \partial_\beta \widetilde{h}+\widetilde{Q}^{i\alpha\beta }_{1j} \partial_\alpha \widetilde{\rho} \partial_\beta \widetilde{v}^j+\widetilde{Q}^{i\alpha\beta}_{2j} \partial_\alpha \widetilde{h} \partial_\beta \widetilde{v}^j+\widetilde{Q}^{\alpha\beta}_{3j} \partial_\alpha \widetilde{v}^i \partial_\beta \widetilde{v}^j,
  \\
  \widetilde{D}=& \widetilde{D}_1^{\alpha\beta} \partial_\alpha \widetilde{\rho} \partial_\beta \widetilde{h}+\widetilde{D}_2^{\alpha\beta} \partial_\alpha \widetilde{h} \partial_\beta \widetilde{h}+\widetilde{D}_3^{\alpha\beta} \partial_\alpha \widetilde{\rho} \partial_\beta \widetilde{\rho}
  \\
  & + \widetilde{D}^{\alpha\beta }_{1j} \partial_\alpha \widetilde{\rho} \partial_\beta \widetilde{v}^j+\widetilde{D}^{\alpha\beta}_{2j} \partial_\alpha \widetilde{h} \partial_\beta \widetilde{v}^j+\widetilde{D}^{i\alpha\beta}_{3j} \partial_\alpha \widetilde{v}_i \partial_\beta \widetilde{v}^j,
  \\
  \widetilde{E}=& \widetilde{E}_1^{\alpha\beta} \partial_\alpha \widetilde{h} \partial_\beta \widetilde{\rho}+\widetilde{E}^{\alpha\beta }_{2} \partial_\alpha \widetilde{h} \partial_\beta \widetilde{h}+\widetilde{E}^{\alpha\beta}_{3j} \partial_\alpha \widetilde{h} \partial_\beta \widetilde{v}^j,
\end{split}
\end{equation*}
and $\widetilde{Q}^{i\alpha\beta}$, $\widetilde{Q}^{i\alpha\beta }_{1j}$, $\widetilde{Q}^{i\alpha\beta}_{2j}$,  $\widetilde{Q}^{\alpha\beta}_{3j}$, $\widetilde{D}_1^{\alpha\beta}$, $\widetilde{D}_2^{\alpha\beta}$, $\widetilde{D}_3^{\alpha\beta}$, $\widetilde{D}^{\alpha\beta }_{1j}$, $\widetilde{D}^{\alpha\beta}_{2j}$, $\widetilde{D}^{i\alpha\beta}_{3j}$, $\widetilde{E}_1^{\alpha\beta}$, $\widetilde{E}^{\alpha\beta }_{2}$, $\widetilde{E}^{\alpha\beta}_{3j}$ have the same formulations with $Q^{i\alpha\beta}$, $Q^{i\alpha\beta }_{1j}$, $Q^{i\alpha\beta}_{2j}$,  $Q^{\alpha\beta}_{3j}$, $D_1^{\alpha\beta}$, $D_2^{\alpha\beta}$, $D_3^{\alpha\beta}$, $D^{\alpha\beta }_{1j}$, $D^{\alpha\beta}_{2j}$, $D^{i\alpha\beta}_{3j}$, $E_1^{\alpha\beta}$, $E^{\alpha\beta }_{2}$, $E^{\alpha\beta}_{3j}$ by replacing $({\bv}, {\rho}, {h})$ to $(\widetilde{\bv}+\underline{\bv}_0(y), \widetilde{h}+\underline{h}_0(y), \widetilde{\rho}+\underline{\rho}_0(y))$.  Furthermore, the specific vorticity $\widetilde{\bw}$ satisfies
\begin{equation}\label{Wy}
  \widetilde{\bw}= \mathrm{e}^{ -\widetilde{\rho}}\mathrm{curl}\widetilde{\bv}.
\end{equation}
Set
\begin{equation}\label{dvp}
  \widetilde{\bv}_{+}:=\widetilde{\bv}-\widetilde{\bv}_{-}, \quad \Delta \widetilde{\bv}_{-}:=-\mathrm{e}^{\widetilde{\rho}}\mathrm{curl}\widetilde{\bw}.
\end{equation}
By Proposition \ref{p1} again, we can find the solution of \eqref{DDE} satisfying
\begin{equation}\label{see00}
  \|\widetilde{\bv}\|_{H_x^s}+ \|\widetilde{\rho}\|_{H_x^s}+ \| \widetilde{\bw} \|_{H_x^{s_0}}+ \| \widetilde{h} \|_{H_x^{s_0+1}} \leq C(\|\widetilde{\bv}_0\|_{H_x^s}+ \|\widetilde{\rho}_0\|_{H_x^s}+ \| \widetilde{\bw}_0 \|_{H_x^{s_0}}+ \| \widetilde{h}_0 \|_{H_x^{s_0+1}}),
\end{equation}
and
\begin{equation}\label{see10}
  \|d\widetilde{\bv}, d\widetilde{\rho},d\widetilde{h}\|_{L^2_tC^{\delta_*}_x}+ \| d\widetilde{\bv}, \partial \widetilde{\bv}_{+}, d\widetilde{\rho},d\widetilde{h} \|_{L^2_t \dot{B}^{s_0-2}_{\infty,2}} \leq C(\|\widetilde{\bv}_0\|_{H_x^s}+ \|\widetilde{\rho}_0\|_{H_x^s}+ \| \widetilde{\bw}_0 \|_{H_x^{s_0}}+ \| \widetilde{h}_0 \|_{H_x^{s_0+1}}).
\end{equation}
Furthermore, the linear equation
\begin{equation}\label{cc5}
\begin{cases}
&\square_{ \widetilde{{g}}} \tilde{f}=0,
\\
&\tilde{f}(t_0,\cdot)=\widetilde{f}_0, \ \partial_t\tilde{f}(t_0,)=\widetilde{f}_1,
\end{cases}
\end{equation}
admits a solution $\widetilde{f} \in C([0,T],H_x^r)\times C^1([0,T],H_x^{r-1})$, and the following estimate holds:
	\begin{equation}\label{s31}
	\|\left< \partial \right>^k \widetilde{f}\|_{L^2_t L^\infty_x} \lesssim  \| \widetilde{f}_0\|_{H_x^r}+ \| \widetilde{f}_1\|_{H_x^{r-1}},\quad k<r-1.
	\end{equation}
Consequently, the function $(\widetilde{\bv}+\underline{\bv}_0, \widetilde{\rho}+\underline{\rho}_0, \widetilde{h}+\underline{h}_0(y), \widetilde{\bw})$ is also a solution of \eqref{p}, and its initial data coincides with $(\bv_0, {\rho}_0, h_0, \bw_0)$ in $B(\by,c+1)$. Giving the restrictions, for $\by\in \mathbb{R}^3$,
\begin{equation}\label{RS}
  \left( \widetilde{\bv}+\underline{\bv}_0 \right)|_{\mathrm{K}^y},
  \quad (\widetilde{\rho}+\underline{\rho}_0 )|_{\mathrm{K}^y},
  \\
  \quad (\widetilde{h}+\underline{h}_0 )|_{\mathrm{K}^y},
  \\
  \quad \widetilde{\bw}|_{\mathrm{K}^y},
\end{equation}
where $\mathrm{K}^y=\left\{ (t,\bx): ct+|\bx-\by| \leq c+1, |t| <1 \right\}$, then the restrictions \eqref{RS} solve \eqref{CEE}-\eqref{id} on $\mathrm{K}^y$. By finite speed of propagation, a smooth solution $(\bar{\bv}, \bar{\rho}, \bar{h},  \bar{\bw})$ solves \eqref{fc1} in $[-1,1] \times \mathbb{R}^3$, where $\bar{\bv}, \bar{\rho}, \bar{h}$ and $\bar{\bw}$ is denoted by
\begin{equation}\label{vw}
\begin{split}
  \bar{\bv}(t,\bx)  &= \widetilde{\bv}+\underline{\bv}_0(y), \ \ \ \ \ (t,\bx) \in \mathrm{K}^y,
  \\
    \bar{\rho}(t,\bx)  &=\widetilde{ \rho}+ \underline{\rho}_0(y), \ \ \ \ \ \  (t,\bx) \in \mathrm{K}^y,
    \\
    \bar{h}(t,\bx)  &=\widetilde{ h}+ \underline{h}_0(y), \ \ \ \ \ \ (t,\bx) \in \mathrm{K}^y,
  \\
   \bar{\bw}(t,\bx)  &=\mathrm{e}^{ -\underline{{\rho}}_0(y)}\widetilde{\bw}, \qquad (t,x) \in \mathrm{K}^y.
\end{split}
\end{equation}
From \eqref{vw}, by time-space scaling $(t,\bx)$ to $(T^{-1}t,T^{-1}\bx)$, we also obtain
\begin{equation}\label{po0}
	\begin{split}
	 &(\bv, \rho, h, \bw)=(\bar{\bv}, \bar{\rho}, \bar{h},  \bar{\bw})(T^{-1}t,T^{-1}\bx),
	 \\
	 &(\bv, \rho, h, \bw)|_{t=0}=(\bar{\bv}, \bar{\rho}, \bar{h},  \bar{\bw})(0,T^{-1}\bx)=(\bv_0, \rho_0, h_0, \bw_0).
	\end{split}
\end{equation}
Therefore, the function $(\bv, \rho, h, \bw)$ defined in \eqref{po0} is the solution of \eqref{fc1} according to the uniqueness of solutions, i.e. Corollary \ref{cor}. To obtain the energy estimates for $(\bv,\rho,h,\bw)$ on $[0,T]\times \mathbb{R}^3$, let us use the cartesian grid $3^{-\frac12} \mathbb{Z}^3$ in $\mathbb{R}^3$, and a corresponding smooth partition of unity
\begin{equation*}
	\textstyle{\sum}_{\by \in 3^{-\frac12} \mathbb{Z}^3} \psi(\bx-\by)=1,
\end{equation*}
such that the function $\psi$ is supported in the unit ball. Therefore, we have
\begin{equation*}
	\begin{split}
		& \bar{\bv}=\textstyle{\sum}_{\by \in 3^{-\frac12} \mathbb{Z}^3} \psi(\bx-\by)(\widetilde{\bv}+\underline{\bv}_0(y) ),
		\\
		& \bar{\rho}=\textstyle{\sum}_{\by \in 3^{-\frac12} \mathbb{Z}^3} \psi(\bx-\by)(\widetilde{\rho}+\underline{\rho}_0(y) ),
		\\
		& \bar{h}=\textstyle{\sum}_{\by \in 3^{-\frac12} \mathbb{Z}^3} \psi(\bx-\by)(\widetilde{h}+\underline{h}_0(y) ),
		\\
		& \bar{\bw}=e^{-\bar{\rho} } \textrm{curl}\bar{\bv}.
	\end{split}
\end{equation*}
By \eqref{see10} and \eqref{vw}, we can see
\begin{equation}\label{po1}
	\begin{split}
		 & \|d\bar{\bv}, d\bar{\rho},d\bar{h}\|_{L^2_{[0,1]}C^{\delta_*}_x}+ \| d\bar{\bv}, \partial \bar{\bv}_{+}, d\bar{\rho},d\bar{h} \|_{L^2_{[0,1]} \dot{B}^{s_0-2}_{\infty,2}}
		 \\
		  \leq & \sup_{\by \in 3^{-\frac12} \mathbb{Z}^3} ( \|d\widetilde{\bv}, d\widetilde{\rho},d\widetilde{h}\|_{L^2_{[0,1]}{C^{\delta_*}_x}}+ \| d\widetilde{\bv}, \partial \widetilde{\bv}_{+}, d\widetilde{\rho},d\widetilde{h} \|_{L^2_{[0,1]} \dot{B}^{s_0-2}_{\infty,2}} )
		\\
		 \leq & C(\|\widetilde{\bv}_0\|_{H_x^s}+ \|\widetilde{\rho}_0\|_{H_x^s}+ \| \widetilde{\bw}_0 \|_{H_x^{s_0}}+ \| \widetilde{h}_0 \|_{H_x^{s_0+1}}).
	\end{split}
\end{equation}
Using \eqref{vv0}, \eqref{cc3}, \eqref{cc4}, and \eqref{see00}, we shall obtain
\begin{equation}\label{po03}
	\begin{split}
		& \|\bar{\bv}, \bar{\rho},\bar{h}\|_{L^\infty_{[0,1]}L^\infty_x} \leq \|\widetilde{\bv},\widetilde{\rho},\widetilde{h} \|_{H_x^{s}} + \|\underline{\bv}_0, \underline{\rho}_0, \underline{h}_0\|_{L^\infty_x} \leq 1+C_0.
	\end{split}
\end{equation}
Taking changing of coordinates and using ${\delta_*} \in (0,s-2)$ and $2<s_0<s$, we get
\begin{equation}\label{po2}
	\begin{split}
		& \|d{\bv}, d{\rho},d{h}\|_{L^2_{[0,T]}C^{\delta_*}_x}+ \| d{\bv}, \partial {\bv}_{+}, d{\rho},d{h} \|_{L^2_{[0,T]} \dot{B}^{s_0-2}_{\infty,2}}
		\\
		\leq & T^{-(\frac12+{\delta_*})}\|d\widetilde{\bv}, d\widetilde{\rho},d\widetilde{h}\|_{L^2_{[0,1]}C^{\delta_*}_x}+ \| d\widetilde{\bv}, \partial \widetilde{\bv}_{+}, d\widetilde{\rho},d\widetilde{h} \|_{L^2_{[0,1]} \dot{B}^{s_0-2}_{\infty,2}}
		\\
		\leq
		&C (T^{-(\frac12+{\delta_*})}+T^{-(\frac12+s_0-2)}) (\|\widetilde{\bv}_0\|_{H_x^s}+ \|\widetilde{\rho}_0\|_{H_x^s}+ \| \widetilde{\bw}_0 \|_{H_x^{s_0}}+ \| \widetilde{h}_0 \|_{H_x^{s_0+1}})
		\\
		\leq & C (T^{-(\frac12+{\delta_*})}+T^{-(\frac12+s_0-2)})(T^{s-\frac32}\|{\bv}_0\|_{\dot{H}_x^s}+ T^{s-\frac32}\|{\rho}_0\|_{\dot{H}_x^s}
		\\
		&\qquad \qquad \qquad \qquad \qquad \qquad + T^{s_0+1-\frac32}\| {\bw}_0 \|_{\dot{H}_x^{s_0}}+ T^{s_0+1-\frac32}\| {h}_0 \|_{\dot{H}_x^{s_0+1}})
		\\
		\leq & C (\|{\bv}_0\|_{H_x^s}+ \|{\rho}_0\|_{H_x^s}+ \| {\bw}_0 \|_{H_x^{s_0}}+ \| {h}_0 \|_{H_x^{s_0+1}}).
	\end{split}
\end{equation}
By using \eqref{po03} and changing of coordinates from $(t,\bx) \rightarrow (T^{-1}t,T^{-1}\bx)$, we get
\begin{equation}\label{po05}
	\begin{split}
		& \|{\bv}, {\rho},{h}\|_{L^\infty_{[0,T]\times \mathbb{R}^3}} = \|\bar{\bv}, \bar{\rho},\bar{h}\|_{L^\infty_{[0,1]}L^{\infty}_x}  \leq 1+C_0.
	\end{split}
\end{equation}
Using Theorem \ref{ve} and \eqref{po2}, we can obtain that $(\bv, \rho, h, \bw)$ satisfies \eqref{p31} and \eqref{p32}, which is stated in Proposition \ref{p3}. It remains for us to prove \eqref{p33} in Proposition \ref{p3}.

We consider the following homogeneous linear wave equation
\begin{equation}\label{po3}
	\begin{cases}
		\square_{{g}} f=0, \quad [0,T]\times \mathbb{R}^3,
		\\
		(f,f_t)|_{t=0}=(f_0,f_1)\in H_x^r \times H^{r-1}_x, \quad 1\leq r \leq s+1.
	\end{cases}
\end{equation}
For \eqref{po3} is scaling invariant, so we transfer the system \eqref{po3} to the localized linear equation
\begin{equation}\label{po4}
	\begin{cases}
		\square_{\widetilde{g}} {f}^y=0, \quad [0,1]\times \mathbb{R}^3,
		\\
		({f}^y,{f}^y_t)|_{t=0}=(f_0^y,f^y_1),
	\end{cases}
\end{equation}
where
\begin{equation}\label{po5}
	\begin{split}
		& f_0^y=\chi(\bx-\by)(\widetilde{f}_0-\widetilde{f}_0(\by)), \quad f^y_1=\chi(\bx-\by)\widetilde{f}_1,
		\\ & \widetilde{f}_0=f_0(T\bx),\quad \widetilde{f}_1=f_1(T\bx).
	\end{split}
\end{equation}
Let
\begin{equation}\label{po6}
	\widetilde{f}=\textstyle{\sum}_{\by\in 3^{-\frac12}\mathbb{Z}^3}\psi(\bx-\by)({f}^y+\widetilde{f}_0(\by))
\end{equation}
Using Proposition \ref{p1} again, for $k<r-1$, we shall obtain that
\begin{equation}\label{po7}
	\begin{split}
		\|\left< \partial \right>^{k-1} d {f}^y\|_{L^2_{[0,1]} L^\infty_x} \leq  & C(\| \chi(\bx-\by)(\widetilde{f}_0-\widetilde{f}_0(\by)) \|_{H_x^r}+ \| \chi(\bx-\by)\widetilde{f}_1 \|_{H_x^{r-1}})
		\\
		\leq & C(\|\widetilde{f}_0\|_{\dot{H}_x^r}+ \| \widetilde{f}_1 \|_{\dot{H}_x^{r-1}}).
	\end{split}
\end{equation}
We again use the finite speed of propagation to conclude that $\widetilde{f}={f}^y+\widetilde{f}_0(\by)$ on the region $K^y$. Using \eqref{po6} and \eqref{po7}, so we get
\begin{equation}\label{po8}
	\begin{split}
		& \|\left< \partial \right>^{k-1} d \widetilde{f}\|_{L^2_{[0,1]} L^\infty_x}
		\\
		= & \sup_{\by\in 3^{-\frac12}\mathbb{Z}^3}\|\left< \partial \right>^{k-1} d ({f}^y+\widetilde{f}_0(\by))\|_{L^2_{[0,1]} L^\infty_x}
		\\
		= & \sup_{\by\in 3^{-\frac12}\mathbb{Z}^3}\|\left< \partial \right>^{k-1} d {f}^y\|_{L^2_{[0,1]} L^\infty_x}
		\\
		\leq & C(\|\widetilde{f}_0\|_{\dot{H}_x^r}+ \| \widetilde{f}_1 \|_{\dot{H}_x^{r-1}}).
	\end{split}
\end{equation}
By changing of coordinates from $(t,\bx)\rightarrow (T^{-1}t,T^{-1}\bx)$, we have
\begin{equation}\label{po9}
	\begin{split}
		\|\left< \partial \right>^{k-1} d{f}\|_{L^2_{[0,T]} L^\infty_x} = & T^{k-\frac12}\|\left< \partial \right>^{k-1} d\widetilde{f}\|_{L^2_{[0,1]} L^\infty_x}.
	\end{split}
\end{equation}
Combing \eqref{po8} with \eqref{po9}, it follows
\begin{equation}\label{po10}
	\begin{split}
		\|\left< \partial \right>^{k-1} d{f}\|_{L^2_{[0,T]} L^\infty_x}
		\leq & CT^{r-1-k}(\|{f}_0\|_{\dot{H}_x^r}+ \| {f}_1 \|_{\dot{H}_x^{r-1}})
		\\
		\leq & C(\|{f}_0\|_{{H}_x^r}+ \| {f}_1 \|_{{H}_x^{r-1}}).
	\end{split}
\end{equation}
Above, we also use $k<r-1$ and $T\leq 1$. By using energy estimates for \eqref{po3} and using \eqref{po2}, then we obtain
\begin{equation}\label{po11}
	\begin{split}
		\|d {f}\|_{L^\infty_{[0,T]} H^{r-1}_x} \leq  & C(\| {f}_0\|_{H_x^r}+ \| {f}_1\|_{H_x^{r-1}}) \exp\{  \|d g \|_{L^1_{[0,T]L^\infty_x}}\}
		\\
		\leq  & C_{M_0}(\| {f}_0\|_{H_x^r}+ \| {f}_1\|_{H_x^{r-1}}) \exp\{  \|d \bv,d\rho,dh \|_{L^1_{[0,T]L^\infty_x}}\}
		\\
		\leq & C_{M_0}(\| {f}_0\|_{H_x^r}+ \| {f}_1\|_{H_x^{r-1}}).
	\end{split}
\end{equation}
By using
\begin{equation}\label{po12}
	\begin{split}
		\|f\|_{L^\infty_{[0,T]} L^{2}_x}=
		& \|\int^{t}_0 \partial_ t fd\tau -f_0\|_{L^\infty_{[0,T]} L^{2}_x}
		\\
		\leq & T \|d {f}\|_{L^\infty_{[0,T]} L^{2}_x}+ \| {f}_0\|_{L_x^2}
		\\
		\leq & C_{M_0}(\| {f}_0\|_{H_x^r}+ \| {f}_1\|_{H_x^{r-1}}).
	\end{split}
\end{equation}
Adding \eqref{po11} and \eqref{po12}, so we have proved
\begin{equation}\label{po13}
	\begin{split}
		\|{f}\|_{L^\infty_{[0,T]} H^{r}_x}+ \|\partial_t {f}\|_{L^\infty_{[0,T]} H^{r-1}_x} \leq  C_{M_0}(\| {f}_0\|_{H_x^r}+ \| {f}_1\|_{H_x^{r-1}}).
	\end{split}
\end{equation}
Combining \eqref{po12}, \eqref{po13}, we have proved \eqref{p33} and \eqref{p333} in Proposition \ref{p3}. So we complete the proof of Proposition \ref{p3}.
\section{A bootstrap argument}\label{ABA}
For Proposition \ref{p1} is for the existence of small solutions, so we can consider it as a small perturbation of $(\mathbf{0},0,\mathbf{0},0)$. Therefore, the acoustic metric $g$ in \eqref{Met} is a perturbation near a flat Minkowski metric. So it's possible for us to give a proof of Proposition \ref{p1} by a bootstrap argument, i.e. using Proposition \ref{p4} to prove Proposition \ref{p1}. To reduce the proof of Proposition \ref{p1} to Proposition \ref{p4}. Let us make some preparations. We denote $\mathbf{m}$ being a standard Minkowski metric,
\begin{equation*}
  \mathbf{m}^{00}=-1, \quad \mathbf{m}^{ij}=\delta^{ij}, \quad i, j=1,2,3.
\end{equation*}
Taking $\bv=0, h=0, \rho=0$ in $g$ (please see the formulation of $g$ in \eqref{Met}), the inverse matrix of the metric $g$ is
\begin{equation*}
g^{-1}(0)=
\left(
\begin{array}{ccccc}
-1 & 0 & 0 & 0 & 0\\
0 & c^2_s(0) & 0& 0 & 0\\
0 & 0 & c^2_s(0) & 0 &0
\\
0 & 0 & 0 &   c^2_s(0)&0
\\
0 & 0 & 0 & 0 &  c^2_s(0)
\end{array}
\right ).
\end{equation*}
By a linear change of coordinates which preserves $dt$, we may assume that $g^{\alpha \beta}(0)=\mathbf{m}^{\alpha \beta}$.  Let $\chi$ be a smooth cut-off function in the region $B(0,3+2c) \times [-2, 2]$ which equals to $1$ in the region $B(0,2+2c) \times [-1, 1]$ and equals to $0$ in $B(0,3+2c) \times [-2, -\frac{3}{2}]$. Set
\begin{equation}\label{boldg}
  \mathbf{g}=\chi(t,x)(g-g(0))+g(0).
\end{equation}
Here
\begin{equation*}
  {g}=-dt\otimes dt+c_s^{-2}\sum_{a=1}^{3}\left( dx^a-v^adt\right)\otimes\left( dx^a-v^adt\right).
\end{equation*}
Now we are ready to consider the Cauchy problem of the following system
\begin{equation}\label{CS}
	\begin{cases}
\square_{\mathbf{g}} v^i=-\mathrm{e}^{\rho}c_s^2 \mathrm{curl} \bw^i+Q^i,
\\
\square_{\mathbf{g}} {\rho}=-\frac{1}{\gamma}c_s^2\Delta h+ D,
\\
\square_{\mathbf{g}} h=c_s^2\Delta h + E,
\\
 \mathbf{T}w^i=w^a \partial_a v^i+\bar{\rho} \mathrm{e}^{h+(\gamma-2)\rho}\epsilon^{iab}\partial_a \rho \partial_b h.
\end{cases}
	\end{equation}
where  $Q^i, E^i$ ($i=1,2,3$), and $\mathcal{D}$ has the same formulations in \eqref{DDi}, and $\mathbf{g}$ is defined in \eqref{boldg}. We denote by $\mathcal{H}$ the family of smooth solutions $(\bv, \rho, h, \bw)$ to equation \eqref{CS} for $t \in [-2,2]$, with the initial data $(\bv_0, \rho_0, h_0, \bw_0)$ supported in $B(0,2+c)$, and for which
\begin{equation}\label{401}
\|\bv_0\|_{H^s} + \|\rho_0\|_{H^s}+ \|h_0\|_{H^{s_0+1}}+\| \bw_0\|_{H^{s_0}}  \leq \epsilon_3, \ \ \ \ \ \ \ \ \ \ \ \ \ \
\end{equation}
\begin{equation}\label{402}
 \| \bv\|_{L^\infty_t H_x^{s}}+\|\rho\|_{L^\infty_t H_x^{s}}+\|h\|_{L^\infty_t H_x^{s_0+1}}+\| \bw\|_{L^\infty_t H_x^{s_0}} \leq 2 \epsilon_2,
\end{equation}
\begin{equation}\label{403}
  \| d \bv, d \rho, dh, \partial \bv_{+}\|_{L^2_t C_x^\delta}+ \|d \rho,dh, \partial \bv_{+}, d\bv\|_{L^2_t \dot{B}^{s_0-2}_{\infty,2}} \leq 2 \epsilon_2.
\end{equation}
Therefore, the bootstrap argument can be stated as follows:
\begin{proposition}\label{p4}
Assume that \eqref{a0} holds. Then there is a continuous functional $G: \mathcal{H} \rightarrow \mathbb{R}^{+}$, satisfying $G(0)=0$, so that for each $(\bv, \rho, h, \bw) \in \mathcal{H}$ satisfying $G(\bv, \rho, h) \leq 2 \epsilon_1$ the following hold:

$\mathrm{(1)}$ The function $\bv, \rho, h$, and $\bw$ satisfies $G(\bv, \rho, h) \leq \epsilon_1$.

$\mathrm{(2)}$ The following estimate holds,
\begin{equation}\label{404}
\|\bv\|_{L^\infty_t H_x^{s}}+ \|\rho\|_{L^\infty_t H_x^{s}}+ \|h\|_{L^\infty_t H_x^{s_0+1}} + \|\bw\|_{L^\infty_t H_x^{s_0}} \leq \epsilon_2,
\end{equation}
\begin{equation}\label{405}
\|d \bv, d \rho, dh\|_{L^2_t C^\delta_x}+\|\partial \bv_{+}, d \rho, dh,  d\bv\|_{L^2_t \dot{B}^{s_0-2}_{\infty,2}} \leq \epsilon_2. \ \ \ \ \ \ \ \ \
\end{equation}

$\mathrm{(3)}$ For $1 \leq k \leq s+1$, the linear wave equation \eqref{Linear} endowed with the metric $\mathbf{g}$ is well-posed when $(f_0,f_1) \in H_x^r \times H_x^{r-1}$, and the Strichartz estimates \eqref{304} hold.
\end{proposition}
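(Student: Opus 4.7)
\medskip

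\noindent\textbf{Proof plan for Proposition \ref{p4}.}

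The plan is a bootstrap: starting from the assumption $G(\bv,\rho,h)\le 2\epsilon_1$ together with \eqref{401}--\eqref{403}, we will sharpen each estimate by a factor $1/2$. The functional $G$ will be defined as in \eqref{dG}, i.e.\ for $\theta\in\mathbb{S}^2$ and $r\in\mathbb{R}$, let $\Sigma_{\theta,r}=\{(t,\bx):\theta\cdot\bx-\phi_{\theta,r}(t,\bx'_\theta)=0\}$ be the characteristic hypersurface for $\mathbf{g}$ obtained by flowing $\{\theta\cdot\bx=r-2\}$ from $t=-2$ along null geodesics in direction $\theta$, and set
\begin{equation*}
G(\bv,\rho,h):=\sup_{\theta,r}\vert\kern-0.25ex\vert\kern-0.25ex\vert d\phi_{\theta,r}-dt\vert\kern-0.25ex\vert\kern-0.25ex\vert_{s_0,2,\Sigma_{\theta,r}}.
\end{equation*}
The smallness $G(0)=0$ and continuity of $G$ on $\mathcal{H}$ follow from the smooth dependence of the null flow on the metric coefficients.

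\textbf{Step 1: Energy bound.} Under the bootstrap hypotheses \eqref{402}--\eqref{403}, the right-hand side of the energy estimate \eqref{eet} in Theorem \ref{ve} involves $\int_0^t\|(d\bv,d\rho,dh)\|_{L^\infty_x}\,d\tau$ and $\int_0^t\|(\partial\bv,\partial\rho,\partial h)\|_{\dot B^{s_0-2}_{\infty,2}}\,d\tau$, both controlled by $2\epsilon_2$. Since $E_0\lesssim \epsilon_3\ll\epsilon_2$ by \eqref{401} and \eqref{a0}, \eqref{eet} immediately yields $\|\bv,\rho\|_{L^\infty_t H^s_x}+\|h\|_{L^\infty_tH^{s_0+1}_x}+\|\bw\|_{L^\infty_tH^{s_0}_x}\le\epsilon_2$, improving \eqref{402}.

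\textbf{Step 2: Strichartz bound for the nonlinear solution.} With $G\le 2\epsilon_1$ in hand, the linear-wave Strichartz estimates (item (3), which I will establish below) apply to the wave equations \eqref{pre8} satisfied by $\bv_+$ and $\rho+\frac{1}{\gamma}h$. Applying \eqref{pre6} with $r=s-s_0+1$ and $k=0$ as in \eqref{pre9}, then summing the $\Delta_j$ pieces in $\ell^2$ with weights $2^{(s_0-2)j}$ and using Lemma \ref{yx} for the commutators $[\square_\mathbf{g},\Delta_j]$, Lemma \ref{YR} for $[\mathbf{T},\Delta_j]$, and Lemma \ref{yux} for $\|\bQ,D,E\|_{H^{s-1}_x}$ and $\|\mathbf{T}\bv_-\|_{H^s_x}$, I obtain \eqref{prea}. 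Combined with Sobolev embedding and the elliptic estimate $\|\bv_-\|_{H^{s_0+1}}\lesssim\|\bw\|_{H^{s_0}}$ from \eqref{eee}, this yields the $\dot B^{s_0-2}_{\infty,2}$ bound of \eqref{405}. A parallel argument with $k=\delta$ (using $s>2+\delta_0>2+\delta$) produces the $L^2_tC^\delta_x$ bound. All constants factor through $\epsilon_3\ll\epsilon_2$, improving \eqref{403}.

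\textbf{Step 3: Improving $G$ via characteristic energy estimates.} This is the core geometric step. Fixing $\theta=(0,0,1)$ and $r=0$, introduce the null frame $(l,\underline{l},e_A)$ along $\Sigma=\Sigma_{\theta,r}$ constructed from $dt$ and $dx_3-d\phi$, and reduce (via Corollary \ref{vte} and Lemmas \ref{te20}--\ref{te21}) the bound on $\vert\kern-0.25ex\vert\kern-0.25ex\vert d\phi-dt\vert\kern-0.25ex\vert\kern-0.25ex\vert_{s_0,2,\Sigma}$ to bounds for $\vert\kern-0.25ex\vert\kern-0.25ex\vert(\bv,\rho,h)\vert\kern-0.25ex\vert\kern-0.25ex\vert_{s_0,2,\Sigma}$ and $\vert\kern-0.25ex\vert\kern-0.25ex\vert(\mathrm{curl}\bw,\Delta h)\vert\kern-0.25ex\vert\kern-0.25ex\vert_{s_0-1,2,\Sigma}$. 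The first is controlled by changing coordinates so that $\Sigma$ becomes a near-graph over a Cartesian hyperplane and running standard characteristic energy estimates on the hyperbolic system \eqref{sq}. The second requires more: on the Cauchy slice one controls all derivatives of $\bw$ via $\mathrm{div}\bw$ and $\mathrm{curl}\bw$, but on $\Sigma$ the elliptic argument fails. I instead use a Hodge decomposition together with the Riesz-operator commutator Lemma \ref{ceR} and the product estimates in Lemmas \ref{LPE}, \ref{ce} to recover transport equations for the Riesz-projected second derivatives of $\bw$ and for $\Delta h$, and then run transport energy estimates along $\mathbf{T}$ restricted to $\Sigma$. The resulting bound is of the form
\begin{equation*}
G(\bv,\rho,h)\lesssim \epsilon_3+(2\epsilon_2)(2\epsilon_1)+(2\epsilon_1)^2\le\epsilon_1,
\end{equation*}
by our ordering \eqref{a0}. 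This is the main obstacle: the characteristic Hodge-type decomposition must produce only terms either of lower order or carrying an explicit small factor, and the Riesz-commutator estimates must respect the $\dot H^{s_0-1}$ topology along $\Sigma$.

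\textbf{Step 4: Linear Strichartz estimate \eqref{304}.} Once $G\le\epsilon_1$ is available, the characteristic hypersurfaces of $\mathbf{g}$ have sufficient regularity to build wave-packet parametrices in the spirit of Smith--Tataru \cite{ST} (executed in detail in Section \ref{Ap}). Applied to solutions of the homogeneous problem \eqref{Linear}, this gives \eqref{pre5} for $k<r-1$, and the same bound with $\langle\partial\rangle^k$ replaced by $\langle\partial\rangle^{k-1}d$ follows by writing $d$ in terms of the frame $(\mathbf{T},\partial_i)$ and using the equation to trade $\mathbf{T}^2f$ for $c_s^2\Delta f$. Duhamel together with Lemma \ref{LD} promotes this to the inhomogeneous bound \eqref{pre6} needed in Step 2, closing the logical loop. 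The energy estimate $\|f\|_{L^\infty_tH^r_x}+\|\partial_tf\|_{L^\infty_tH^{r-1}_x}\lesssim\|f_0\|_{H^r}+\|f_1\|_{H^{r-1}}$ is standard for $\square_\mathbf{g}$ given the already-proven Strichartz bounds on the coefficients of $\mathbf{g}$ through \eqref{405}.

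The principal difficulty is Step 3: producing the $\dot H^{s_0-1}$-characteristic control of $\mathrm{curl}\bw$ and $\Delta h$ without loss, which is the mechanism that forces the regularity threshold $s_0>2$ and which drives the whole scheme.
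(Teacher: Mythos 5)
Your proposal follows essentially the same architecture as the paper: define $G$ exactly as in \eqref{dG}; use the bootstrap hypothesis to run the characteristic energy estimates (Corollary \ref{vte}, Lemmas \ref{te20}--\ref{te21}, Corollary \ref{Rfenjie}, Lemma \ref{chi}) and close $G\le\epsilon_1$; build wave-packet Strichartz estimates for $\square_{\mathbf{g}}$ (Propositions \ref{r5}, \ref{r3}); transfer these to $\bv_+$ and $\rho+\tfrac1\gamma h$ (Proposition \ref{r6}) and feed the result into the energy theorem to recover \eqref{404}--\eqref{405} (Proposition \ref{r4}).

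One logical point deserves a correction. In your Step 4 you write that the wave-packet parametrix requires $G\le\epsilon_1$, i.e.\ the \emph{improved} bound produced in Step 3. The paper's Proposition \ref{r5} is actually proved only under the bootstrap hypotheses $(\bv,\rho,h,\bw)\in\mathcal{H}$ and $G\le 2\epsilon_1$; the regularity of the foliations supplied by Proposition \ref{r1} (under the bootstrap, not the improved bound) is what Section \ref{Ap} uses. Were Step 4 genuinely to need the output of Step 3 while Step 2 needs Step 4, you would have to verify that Step 3 is independent of Step 2 to avoid circularity. It is (the characteristic energy estimates use only the bootstrap, as your list of lemmas indicates), so the logic closes; but stating the threshold as $2\epsilon_1$ removes any appearance of circularity and matches how the paper actually parcels the dependency. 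Beyond this, your Step 3 bound $G\lesssim\epsilon_3+\epsilon_2\epsilon_1+\epsilon_1^2$ is in fact a more explicit account of the quadratic terms from the transport equation for $\chi_{ab}$ than the paper's statement $G\lesssim\epsilon_2$, and it closes the bootstrap in the same way given $\epsilon_1\ll 1$. With the one clarification above, the proposal is correct and takes the paper's route.
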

\subsection{Proof of Proposition \ref{p1} by Proposition \ref{p4}}
Note the initial data in Proposition \ref{p1} satisfying
\begin{equation*}
  \|\bv_0\|_{H^s} + \|\rho_0\|_{H^s}+ \|h_0\|_{H^{s_0+1}}+\| \bw_0\|_{H^{s_0}}  \leq \epsilon_3.
\end{equation*}
We denote by $\text{A}$ the subset of those $\gamma \in [0,1]$ such that the equation \eqref{CS} admits a smooth solution $(\bv_\gamma,\rho_\gamma,h_\gamma,\bw_\gamma)$ having the initial data
\begin{equation*}
  \begin{split}
  \bv_\gamma(0)=&\gamma \bv_0,
  \\
  \rho_\gamma(0)=&\gamma \rho_0,
  \\
  h_\gamma(0)=&\gamma h_0,
  \\
  \bw_\gamma(0)=& \bar{\rho} \mathrm{e}^{-\rho_\gamma(0)}\mathrm{curl}\bv_\gamma(0),
  \end{split}
\end{equation*}
and such that $G(\bv_\gamma, \rho_\gamma, h_\gamma) \leq \epsilon_1$ and \eqref{404}, \eqref{405} hold.

If $\gamma=0$, then
\begin{equation*}
  (\bv_\gamma, \rho_\gamma, h_\gamma, \bw_\gamma)(t,x)=(\mathbf{0},0,0,\mathbf{0}).
\end{equation*}
is a smooth solution of \eqref{CS} with initial data
\begin{equation*}
  (\bv_\gamma, \rho_\gamma, h_\gamma, \bw_\gamma)(0,x)=(\mathbf{0},0,0,\mathbf{0}).
\end{equation*}
Thus, the set $\text{A}$ is not empty. If we can prove that $\text{A}=[0,1]$, then $1 \in \text{A}$. Also, \eqref{300}-\eqref{304} follow from Proposition \ref{p4}. As a result, Proposition \ref{p1} holds. It is sufficient to prove that $\text{A}$ is both open and closed in $[0,1]$.

(1) $\text{A}$ is open. Let $\gamma \in \text{A}$. Then $(\bv_\gamma, \rho_\gamma, h_\gamma, \bw_\gamma)$ is a smooth solution to \eqref{CS}, where
\begin{equation*}
  \bw_\gamma= \bar{\rho}^{-1} \mathrm{e}^{-\rho_\gamma}\mathrm{curl}\bv_\gamma.
\end{equation*}
Let $\beta$ be close to $\gamma$. By the continuity of $G$, it follows that
\begin{equation*}
  G(\bv_\beta, \rho_\beta, h_\beta) \leq 2\epsilon_1,
\end{equation*}
and also \eqref{401}-\eqref{403} holds. Using Proposition \ref{p4}, we have
\begin{equation*}
  G(\bv_\beta, \rho_\beta, h_\beta) \leq \epsilon_1,
\end{equation*}
and the estimates \eqref{404}, \eqref{405} hold. Thus, we have showed that $\beta \in \text{A}$.

(2) $\text{A}$ is closed. Let $\gamma_k \in \text{A}, k \in \mathbb{N}^+$. Let $\gamma$ be a limit satisfying $\lim_{k \rightarrow \infty} \gamma_k = \gamma$.
Then there exists a sequence $\{(\bv_{\gamma_k}, \rho_{\gamma_k}, h_{\gamma_k}, \bw_{\gamma_k}) \}_{k \in \mathbb{N}^+}$ being the smooth solutions to \eqref{CS} and
\begin{align*}
 & \|(\bv_{\gamma_k}, \rho_{\gamma_k})\|_{L^\infty_t H_x^{s}}+ \|(\partial_t \bv_{\gamma_k}, \partial_t \rho_{\gamma_k})\|_{L^\infty_t H_x^{s-1}}+ \|h_{\gamma_k}\|_{L^\infty_t H_x^{s_0+1}}+ \|\partial_t h_{\gamma_k}\|_{L^\infty_t H_x^{s_0}}
 \\
 &+ \| \bw_{\gamma_k} \|_{L^\infty_t H_x^{s_0}}+\| \partial_t \bw_{\gamma_k} \|_{L^\infty_t H_x^{s_0-1}}+  \| d \bv_{\gamma_k}, d \rho_{\gamma_k}, d h_{\gamma_k} \|_{L^2_t C_x^\delta}+ \| d \bv_{\gamma_k}, d \rho_{\gamma_k},dh_{\gamma_k} \|_{L^2_t \dot{B}_{\infty,2}^{s_0-2}} \leq \epsilon_2.
 \end{align*}
Then there exists a subsequence of $\{(\bv_{\gamma_k}, \rho_{\gamma_k}, h_{\gamma_k}, \bw_{\gamma_k}) \}_{k \in \mathbb{N}^+}$ such that $ (\bv_{\gamma}, \rho_{\gamma}, h_{\gamma}, \bw_{\gamma}) $ is the limit, and the limit satisfies
\begin{equation*}
\begin{split}
 &\|(\bv_{\gamma}, \rho_{\gamma})\|_{L^\infty_t H_x^{s}}+ \|(\partial_t \bv_{\gamma}, \partial_t \rho_{\gamma})\|_{L^\infty_t H_x^{s-1}}+\| h_{\gamma} \|_{L^\infty_t H_x^{s_0+1}}+\| \partial_t h_{\gamma} \|_{L^\infty_t H_x^{s_0}}
 \\
 &+ \| \bw_{\gamma} \|_{L^\infty_t H_x^{s_0}}+\| \partial_t \bw_{\gamma} \|_{L^\infty_t H_x^{s_0-1}}+  \| d \bv_{\gamma}, d \rho_{\gamma},dh \|_{L^2_t C_x^\delta}+  \| d \bv_{\gamma}, d \rho_{\gamma},dh_{\gamma} \|_{L^2_t \dot{B}_{\infty,2}^{s_0-2}} \leq \epsilon_2,
\end{split}
\end{equation*}
Thus, $G(\bv_\gamma, \rho_\gamma ,h_\gamma) \leq \epsilon_1$, which implies $\gamma \in \text{A}$. This completes the proof of Proposition \ref{p1}.
\section{Regularity of the characteristic hypersurface}\label{sec6}
To prove Proposition \ref{p4}, we need to give a definition of $G$ (please see \eqref{500}) and also prove the Strichartz estimates \eqref{404}-\eqref{405}. Inspired by Smith-Tataru's result \cite{ST} for quasi-linear wave equations, we try to find the character of the system \eqref{CS} and analysing the regularity of characteristic hypersurface. Next, we will prove Proposition \ref{p4} by using Proposition \ref{r2}, Proposition \ref{r3}, and Proposition \ref{r4}.

Following \cite{ST}, for $\theta \in \mathbb{S}^2$, $\bx \in \mathbb{R}^3$, and let $\Sigma_{\theta, r}$ be the flowout of the set $\theta \cdot \bx = r-2$ along the null geodesic flow with respect of $\mathbf{g}$ in the direction $\theta$ at $t=-2$.
Let $\bx'_{\theta}$ be given orthonormal coordinates on the hyperplane in $\mathbb{R}^3$ perpendicular to $\theta$. Then, $\Sigma_{\theta,r}$ is of the form
\begin{equation*}
	\Sigma_{\theta,r}=\left\{ (t,\bx): \theta\cdot \bx-\phi_{\theta, r}=0  \right\}
\end{equation*}
for a smooth function $\phi_{\theta, r}(t,\bx'_{\theta})$.
For a given $\theta$, the family $\{\Sigma_{\theta,r}, \ r \in \mathbb{R}\}$ defines a foliation of $[-2,2]\times \mathbb{R}^3$, by characteristic hypersurfaces with respect to $\mathbf{g}$.

We now introduce two norms for functions defined on $[-2,2] \times \mathbb{R}^3$,
\begin{equation}\label{d0}
  \begin{split}
  &\vert\kern-0.25ex\vert\kern-0.25ex\vert f\vert\kern-0.25ex\vert\kern-0.25ex\vert_{s_0, \infty} = \sup_{-2 \leq t \leq 2} \sup_{0 \leq j \leq 1} \| \partial_t^j f(t,\cdot)\|_{H^{s_0-j}(\mathbb{R}^3)},
  \\
  & \vert\kern-0.25ex\vert\kern-0.25ex\vert  f\vert\kern-0.25ex\vert\kern-0.25ex\vert_{s_0,2} = \big( \sup_{0 \leq j \leq 1} \int^{2}_{-2} \| \partial_t^j f(t,\cdot)\|^2_{H^{s_0-j}(\mathbb{R}^3)} dt \big)^{\frac{1}{2}}.
  \end{split}
\end{equation}
The same notation applies for functions in $[-2,2] \times \mathbb{R}^3$. We denote
\begin{equation*}
  \vert\kern-0.25ex\vert\kern-0.25ex\vert f\vert\kern-0.25ex\vert\kern-0.25ex\vert_{s_0,2,\Sigma_{\theta,r}}=\vert\kern-0.25ex\vert\kern-0.25ex\vert f|_{\Sigma_{\theta,r}} \vert\kern-0.25ex\vert\kern-0.25ex\vert_{s_0,2},
\end{equation*}
where the right hand side is the norm of the restriction of $f$ to ${\Sigma_{\theta,r}}$, taken over the $(t,\bx'_{\theta})$ variables used to parametrise ${\Sigma_{\theta,r}}$. Similarly, the notation $\|f\|_{H^{s_0}(\Sigma_{\theta,r})}$ denotes the $H^{s_0}(\mathbb{R}^2)$ norm of $f$ restricted to the time $t$ slice of ${\Sigma_{\theta,r}}$ using the $\bx'_{\theta}$ coordinates on ${\Sigma^t_{\theta,r}}$.

We now set
\begin{equation}\label{500}
  G(\bv, \rho,h)= \sup_{\theta, r} \vert\kern-0.25ex\vert\kern-0.25ex\vert d \phi_{\theta,r}-dt\vert\kern-0.25ex\vert\kern-0.25ex\vert_{s_0,2,{\Sigma_{\theta,r}}}.
\end{equation}

\begin{proposition}\label{r1}
Let $(\bv, \rho, h, \bw) \in \mathcal{H}$ so that $G(\bv, \rho, h) \leq 2 \epsilon_1$. Then
\begin{equation}\label{501}
  \vert\kern-0.25ex\vert\kern-0.25ex\vert  {\mathbf{g}}^{\alpha \beta}-\mathbf{m}^{\alpha \beta} \vert\kern-0.25ex\vert\kern-0.25ex\vert_{s_0,2,{\Sigma_{\theta,r}}} + \vert\kern-0.25ex\vert\kern-0.25ex\vert 2^{j}({\mathbf{g}}^{\alpha \beta}-S_j {\mathbf{g}}^{\alpha \beta}), d \Delta_j {\mathbf{g}}^{\alpha \beta}, 2^{-j} \partial_x \Delta_j d {\mathbf{g}}^{\alpha \beta}  \vert\kern-0.25ex\vert\kern-0.25ex\vert_{s_0-1,2,{\Sigma_{\theta,r}}} \lesssim \epsilon_2.
\end{equation}
\end{proposition}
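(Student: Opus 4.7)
I will reduce \eqref{501} to a characteristic energy estimate for $(\bv,\rho,h)$ on $\Sigma_{\theta,r}$, then handle the Littlewood--Paley quantities by Bernstein-type manipulations. First I observe that, by the change of coordinates which makes $g^{\alpha\beta}(0)=\mathbf{m}^{\alpha\beta}$ and by the definition \eqref{boldg}, one has $\mathbf{g}^{\alpha\beta}-\mathbf{m}^{\alpha\beta}=\chi(t,x)\,F^{\alpha\beta}(\bv,\rho,h)$ with $F^{\alpha\beta}$ smooth and $F^{\alpha\beta}(0)=0$. Parametrizing $\Sigma_{\theta,r}$ as the graph $\theta\cdot\bx=\phi_{\theta,r}(t,\bx'_\theta)$, the bootstrap hypothesis $G(\bv,\rho,h)\leq 2\epsilon_1$ makes this a near-identity perturbation of the flat null hyperplane in the norm $\vert\kern-0.25ex\vert\kern-0.25ex\vert \cdot\vert\kern-0.25ex\vert\kern-0.25ex\vert_{s_0,2}$; since $s_0>3/2$, the associated surface space is a Banach algebra and admits a Moser composition estimate (analogous to Lemma \ref{jh0}), yielding
\[
\vert\kern-0.25ex\vert\kern-0.25ex\vert \mathbf{g}^{\alpha\beta}-\mathbf{m}^{\alpha\beta}\vert\kern-0.25ex\vert\kern-0.25ex\vert_{s_0,2,\Sigma_{\theta,r}}\;\lesssim\;\vert\kern-0.25ex\vert\kern-0.25ex\vert (\bv,\rho,h)\vert\kern-0.25ex\vert\kern-0.25ex\vert_{s_0,2,\Sigma_{\theta,r}}.
\]

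Next I would establish $\vert\kern-0.25ex\vert\kern-0.25ex\vert (\bv,\rho,h)\vert\kern-0.25ex\vert\kern-0.25ex\vert_{s_0,2,\Sigma_{\theta,r}}\lesssim\epsilon_2$ by a characteristic (null-flux) energy argument. Straighten $\Sigma_{\theta,r}$ via the change of variables $\psi:=\theta\cdot\bx-\phi_{\theta,r}(t,\bx'_\theta)$, so that $\Sigma_{\theta,r}=\{\psi=0\}$ and the Jacobian is close to the identity by $G\leq 2\epsilon_1$. Apply $\Lambda^{s_0}_{(t,\bx'_\theta)}$ to the symmetric hyperbolic system \eqref{sq} of Lemma \ref{sh}, multiply by $A^0(\bU)\Lambda^{s_0}\bU$, and integrate by parts Stokes-style over $\{\psi<0\}\cap\{-2\leq t\leq t_*\}$. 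Because $\Sigma_{\theta,r}$ is exactly null for the acoustic metric $g$ (hence for $\mathbf{g}$ on the region of interest), the flux contribution on $\{\psi=0\}$ is non-negative and dominates the $L^2_{(t,\bx'_\theta)}$-norm of $\Lambda^{s_0}_{(t,\bx'_\theta)}(\bv,\rho,h)|_{\Sigma_{\theta,r}}$; the bulk error is closed by Gr\"onwall using $\|(d\bv,d\rho,dh)\|_{L^2_tL^\infty_x}\leq\epsilon_2$ from \eqref{403}, and the top-order commutators $[\Lambda^{s_0},\mathbf{T}]$, $[\Lambda^{s_0},\square_\mathbf{g}]$ are absorbed via Lemma \ref{ce} and Lemma \ref{yx}. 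Repeating the same argument with one factor of $\partial_t$ pulled out (using the hyperbolic system to trade $\partial_t$ for spatial derivatives) supplies the $j=1$ piece of the norm $\vert\kern-0.25ex\vert\kern-0.25ex\vert\cdot\vert\kern-0.25ex\vert\kern-0.25ex\vert_{s_0,2}$; the initial surface $\{t=-2\}$ contributes only the (small) trace of the compactly-supported data bounded by \eqref{401}. For the three Littlewood--Paley quantities in \eqref{501} I would then use Bernstein to shift derivatives across the dyadic scale, namely $\|2^j(I-S_j)f\|_{H^{s_0-1}}\lesssim\|f\|_{H^{s_0}}$, $\|d\Delta_jf\|_{H^{s_0-1}}\lesssim\|f\|_{H^{s_0}}$ and $\|2^{-j}\partial_x\Delta_j df\|_{H^{s_0-1}}\lesssim\|f\|_{H^{s_0}}$, each uniformly in $j$; since the frequency cut-off is in the ambient $\bx$-variable while the norm is taken on the curved $\Sigma_{\theta,r}$, Bernstein produces commutators between $\Delta_j$ and the restriction to $\Sigma_{\theta,r}$, but these are of size $O(\|d\phi_{\theta,r}-dt\|)=O(\epsilon_1)$ thanks to $G\leq 2\epsilon_1$ and are harmless.

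\textbf{Main obstacle.} The delicate step is the characteristic energy estimate of the middle paragraph. The wave part of the system ($\square_\mathbf{g}\bv^i,\square_\mathbf{g}\rho,\square_\mathbf{g}h$) degenerates on $\Sigma_{\theta,r}$, so the boundary flux controls only tangential derivatives --- which is fortunately exactly what $\vert\kern-0.25ex\vert\kern-0.25ex\vert\cdot\vert\kern-0.25ex\vert\kern-0.25ex\vert_{s_0,2,\Sigma}$ measures --- while the transport equation $\mathbf{T} h=0$ is non-characteristic (since $\mathbf{T}$ is $\mathbf{g}$-timelike under the smallness) and gives full Sobolev control. The crux is that $\Sigma_{\theta,r}$ is itself defined through the unknown $(\bv,\rho,h)$, so the whole argument must be consistent with the bootstrap: one must verify that the commutators generated by the non-flatness of the change of variables, the paraproducts arising in $F^{\alpha\beta}(\bv,\rho,h)$, and the top-order $[\Lambda^{s_0},\mathbf{T}]$ error all close within the $\epsilon_2$-budget under the standing assumption $G\leq 2\epsilon_1$. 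This is what forces the hierarchical smallness $\epsilon_3\ll\epsilon_2\ll\epsilon_1\ll\epsilon_0\ll 1$ of \eqref{a0} and makes the proposition a cornerstone of the later wave-packet construction.
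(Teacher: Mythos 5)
Your plan for the first piece, $\vert\kern-0.25ex\vert\kern-0.25ex\vert\mathbf{g}^{\alpha\beta}-\mathbf{m}^{\alpha\beta}\vert\kern-0.25ex\vert\kern-0.25ex\vert_{s_0,2,\Sigma_{\theta,r}}\lesssim\epsilon_2$, coincides with the paper's: straighten $\Sigma_{\theta,r}$, run a characteristic energy estimate for the symmetric hyperbolic system of Lemma \ref{sh}, close with the standing Strichartz control, and pass to $\mathbf{g}$ by the algebra/Moser estimate on the surface (Lemma \ref{te2} in the paper). That part is fine and corresponds to Lemma \ref{te1} together with Corollary \ref{vte}.

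The second half, however --- the bound for $2^j(\mathbf{g}-S_j\mathbf{g})$, $d\Delta_j\mathbf{g}$ and $2^{-j}\partial_x\Delta_j d\mathbf{g}$ in $\vert\kern-0.25ex\vert\kern-0.25ex\vert\cdot\vert\kern-0.25ex\vert\kern-0.25ex\vert_{s_0-1,2,\Sigma_{\theta,r}}$ --- cannot be obtained by Bernstein and a ``small commutator with restriction,'' and this is a genuine gap. The projector $\Delta_j$ lives in the ambient spatial Fourier variable, while $\vert\kern-0.25ex\vert\kern-0.25ex\vert\cdot\vert\kern-0.25ex\vert\kern-0.25ex\vert_{s_0-1,2,\Sigma_{\theta,r}}$ is the $H^{s_0-1}(\bx'_\theta)$--$L^2_t$ norm of the trace. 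Restriction to a hypersurface is a trace operator, and for a generic function it costs half a derivative: $\|f|_{\Sigma^t}\|_{H^{s_0-1}(\mathbb{R}^2)}\lesssim\|f\|_{H^{s_0-1/2}(\mathbb{R}^3)}$. Applying this to $f=2^j(I-S_j)\mathbf{g}$ and Bernstein produces $\|2^j(I-S_j)\mathbf{g}\|_{H^{s_0-1/2}(\mathbb{R}^3)}\lesssim 2^{j/2}\|\mathbf{g}\|_{H^{s_0}}$, which diverges as $j\to\infty$. Moreover, the failure has nothing to do with the curvature of $\Sigma_{\theta,r}$: even on a flat slanted hyperplane, the restriction of a frequency-$2^j$ mode is not frequency-localized in $\bx'_\theta$, so the ``commutator between $\Delta_j$ and the restriction'' is not $O(\epsilon_1)$ uniformly in $j$; it carries the full trace loss.

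What the paper does instead (its Lemma \ref{fre}) is to apply the characteristic energy estimate of Lemma \ref{te1} \emph{directly to each Littlewood--Paley piece}: since $\Delta_m\bU$, $S_j\bU$ and $2^{-j}\partial_x S_j\bU$ again solve the hyperbolic system \eqref{sq} up to a commutator source $[\Delta_0,A^\alpha(\bU)]\partial_\alpha\bU$ controlled by Kato--Ponce, Lemma \ref{te1} gives $\vert\kern-0.25ex\vert\kern-0.25ex\vert\Delta_m\bU\vert\kern-0.25ex\vert\kern-0.25ex\vert^2_{s_0-1,2,\Sigma}\lesssim\|d\bU\|_{L^2_tL^\infty_x}\|\Delta_m\bU\|^2_{L^\infty_tH^{s_0-1}_x}$ and so on, and then a summation over $m\ge j$ yields the bound uniformly in $j$. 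In other words, the trace of a \emph{solution} of the hyperbolic system on a characteristic surface is a half-derivative better than the Sobolev trace theorem would predict --- this is precisely the gain that Smith--Tataru exploit --- and one must use the equation, not Bernstein, to see it. Replacing the Bernstein step of your paragraph by the ``apply Lemma \ref{te1} to $\Delta_m\bU$ and sum'' step restores a correct proof that is essentially identical to the paper's.
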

\begin{proposition}\label{r2}
Let $(\bv, \rho, h, \bw) \in \mathcal{H}$ so that $G(\bv, \rho, h) \leq 2 \epsilon_1$. Then

\begin{equation}\label{G}
G(\bv, \rho, h) \lesssim \epsilon_2.
\end{equation}
Furthermore, for each $t$ it holds that
\begin{equation}\label{502}
  \|d \phi_{\theta,r}(t,\cdot)-dt \|_{C^{1,\delta}_{x'}} \lesssim \epsilon_2+  \| d {\mathbf{g}}(t,\cdot) \|_{C^\delta_x(\mathbb{R}^3)}.
\end{equation}
\end{proposition}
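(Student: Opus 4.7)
Fix $(\theta,r)$ and write $\phi = \phi_{\theta,r}$, $\Sigma = \Sigma_{\theta,r}$. Since $\Sigma$ is the flowout of the hyperplane $\{\theta\cdot x = r-2\}$ at $t=-2$ along the null geodesic flow of $\mathbf{g}$, the function $\psi(t,x) := \theta\cdot x - \phi(t,x'_\theta)$ satisfies the eikonal equation $\mathbf{g}^{\alpha\beta}\partial_\alpha\psi\,\partial_\beta\psi = 0$ with the data $d\psi|_{t=-2} = \theta\cdot dx$. Solving this Hamilton--Jacobi equation for $\partial_t\phi$ expresses $\partial_t\phi$ as a smooth function $F(\nabla_{x'}\phi,\mathbf{g})$ with $F(0,\mathbf{m})=1$, so that our goal reduces to propagating the bound on $d\phi-dt$ along $\Sigma$ starting from the flat datum $d\phi|_{t=-2} = dt$. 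My plan is first to derive a transport equation for $d\phi$ along the null generators of $\Sigma$, then to run a characteristic energy estimate at level $s_0$, using Proposition \ref{r1} as the key input on the regularity of $\mathbf{g}|_\Sigma$ and closing with the bootstrap assumption $G(\bv,\rho,h)\leq 2\epsilon_1$.

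\textbf{Step 1: Transport equation for $d\phi$ along $\Sigma$.} Differentiating the eikonal equation in the $(t,x'_\theta)$ variables yields, along the generator field $l$ of $\Sigma$, an ODE of schematic form $l(d\phi) = P(d\phi,d\phi) + Q(d\phi)\,d\mathbf{g}$, where $P$ is quadratic and $Q$ is smooth with $Q(dt)=O(1)$. Using the null frame $(l,\underline{l},e_A)$ from Step 4 of the introduction, all derivatives normal to $\Sigma$ of $\phi$ can be recovered from the tangential ones through this ODE, so the $\vert\kern-0.25ex\vert\kern-0.25ex\vert\cdot\vert\kern-0.25ex\vert\kern-0.25ex\vert_{s_0,2,\Sigma}$-norm of $d\phi-dt$ can be estimated purely through tangential derivatives on $\Sigma$.

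\textbf{Step 2: Characteristic energy estimate.} I apply tangential fractional derivatives $\Lambda_{x'}^{s_0-1}$ to the transport equation of Step~1 and perform an $L^2_{t,x'}$ estimate on $\Sigma$. The quadratic term $P(d\phi,d\phi)$ and the commutators $[\Lambda^{s_0-1},\cdot]$ are handled by the product and commutator lemmas of Section~2 (Lemmas \ref{jh}, \ref{cj}, \ref{lpe}, \ref{ce}) together with the bootstrap $\vert\kern-0.25ex\vert\kern-0.25ex\vert d\phi-dt\vert\kern-0.25ex\vert\kern-0.25ex\vert_{s_0,2,\Sigma}\leq 2\epsilon_1$. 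The nonhomogeneous term $Q(d\phi)\,d\mathbf{g}$ is controlled by Proposition \ref{r1}, which provides
\[
  \vert\kern-0.25ex\vert\kern-0.25ex\vert \mathbf{g}^{\alpha\beta} - \mathbf{m}^{\alpha\beta}\vert\kern-0.25ex\vert\kern-0.25ex\vert_{s_0,2,\Sigma} + \vert\kern-0.25ex\vert\kern-0.25ex\vert d\Delta_j\mathbf{g}^{\alpha\beta}\vert\kern-0.25ex\vert\kern-0.25ex\vert_{s_0-1,2,\Sigma} \lesssim \epsilon_2,
\]
with uniform control over Littlewood--Paley frequencies so that the paradifferential decomposition of $Q(d\phi)\,d\mathbf{g}$ at level $s_0-1$ closes. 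Integrating along $l$ from $t=-2$ (where $d\phi-dt=0$) and applying Grönwall, together with the smallness $\epsilon_1\ll\epsilon_0\ll 1$, gives
\[
  \vert\kern-0.25ex\vert\kern-0.25ex\vert d\phi-dt\vert\kern-0.25ex\vert\kern-0.25ex\vert_{s_0,2,\Sigma} \lesssim \epsilon_2,
\]
uniformly in $(\theta,r)$, which is exactly $G(\bv,\rho,h)\lesssim\epsilon_2$.

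\textbf{Step 3: The pointwise bound \eqref{502} and the main obstacle.} For the fixed-time bound, the same transport ODE integrated along the generators of $\Sigma$ yields
\[
  \|d\phi(t,\cdot)-dt\|_{C^{1,\delta}_{x'}} \lesssim \|d\phi(-2,\cdot)-dt\|_{C^{1,\delta}} + \int_{-2}^{t}\bigl(\|d\mathbf{g}(\tau,\cdot)\|_{C^\delta_x} + \|d\phi-dt\|_{L^\infty}\,\|d\phi-dt\|_{C^{1,\delta}}\bigr)\,d\tau,
\]
and a Grönwall argument combined with the smallness of $\vert\kern-0.25ex\vert\kern-0.25ex\vert d\phi-dt\vert\kern-0.25ex\vert\kern-0.25ex\vert_{s_0,2,\Sigma}$ (via Sobolev embedding on $\Sigma^t$, valid because $s_0>2$ and $\delta<s_0-2$) gives \eqref{502}. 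The hard part throughout is closing Step~2 at the sharp regularity $s_0>2$: the coefficient $Q(d\phi)\,d\mathbf{g}$ lives at the borderline $H^{s_0-1}$ on $\Sigma$ and cannot be bounded by brute force, so one must (i) use the null structure of the eikonal equation to convert transverse derivatives into tangential ones, and (ii) rely on the fine Littlewood--Paley-type bound for $d\Delta_j\mathbf{g}$ from Proposition \ref{r1} rather than a plain $H^{s_0}$ bound --- this is precisely where the role of $\mathrm{curl}\,\bw$ and $\Delta h$ (through the Hodge decomposition applied to $\mathbf{g} = g(\bv,\rho,h)$) enters the estimate.
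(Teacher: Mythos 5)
Your approach is genuinely different from the paper's, but as written it does not close at the critical regularity $s_0$, and the gap is exactly the point where the paper's geometric machinery is indispensable.

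The derivative count in your Step~2 is one short. The target norm $\vert\kern-0.25ex\vert\kern-0.25ex\vert d\phi - dt \vert\kern-0.25ex\vert\kern-0.25ex\vert_{s_0,2,\Sigma}$ asks for $d\phi - dt$ in $L^2_t H^{s_0}_{x'}(\Sigma)$, i.e.\ $\nabla_{x'}\phi$ and $\partial_t\phi - 1$ each at $s_0$ tangential derivatives. Differentiating the eikonal relation $\partial_t\phi = G(\nabla_{x'}\phi,\, \mathbf{g}|_\Sigma)$ tangentially and transporting along the generators, the inhomogeneous term you call $Q(d\phi)\,d\mathbf{g}$ involves a tangential derivative of $\mathbf{g}$ restricted to $\Sigma$, which by Proposition~\ref{r1} lives in $L^2_t H^{s_0-1}_{x'}(\Sigma)$ --- not $L^1_t H^{s_0}_{x'}$. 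Gr\"onwall then delivers $d\phi - dt \in L^\infty_t H^{s_0-1}_{x'}$, one derivative below what \eqref{G} requires. The frequency-localized bound $\vert\kern-0.25ex\vert\kern-0.25ex\vert d\Delta_j\mathbf{g}\vert\kern-0.25ex\vert\kern-0.25ex\vert_{s_0-1,2,\Sigma} \lesssim \epsilon_2$ from Proposition~\ref{r1} that you invoke is \emph{uniform in $j$} and does not square-sum, so it cannot by itself upgrade the regularity; your claim that ``the paradifferential decomposition of $Q(d\phi)\,d\mathbf{g}$ at level $s_0-1$ closes'' is asserted but not substantiated, and I do not see how it could be.

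The paper avoids this loss by going one order higher. It estimates the null second fundamental form $\chi_{ab} = \langle D_{e_a}l, e_b\rangle$ (roughly $\nabla^2\phi$) directly via the Raychaudhuri transport equation $l(\chi_{ab}) = \langle R(l,e_a)l,e_b\rangle - \chi\cdot\chi - \dots$, and then the crucial step is Corollary~\ref{Rfenjie}: the curvature component decomposes as $\langle R(l,e_a)l,e_b\rangle = l(f_2) + f_1$ with $\|f_2\|_{L^2_tH^{s_0-1}_{x'}(\Sigma)} + \|f_1\|_{L^1_tH^{s_0-1}_{x'}(\Sigma)} \lesssim \epsilon_2$. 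This is where $\mathrm{curl}\,\bw$ and $\Delta h$ enter concretely --- through $\square_{\mathbf{g}}\mathbf{g}_{\mu\nu}$ via Lemmas~\ref{te20}, \ref{te21} and the characteristic elliptic/transport machinery --- and it is exactly the $l(f_2)$ piece, integrated along the generator, that recovers the missing derivative. Lemma~\ref{chi} then yields $\|\chi_{ab}\|_{L^2_tH^{s_0-1}_{x'}(\Sigma)}\lesssim\epsilon_2$, which at the level of $\phi$ is $L^2_t H^{s_0+1}_{x'}$ control, i.e.\ $d\phi - dt \in L^2_t H^{s_0}_{x'}$, closing \eqref{G}; the $C^{1,\delta}$ bound \eqref{502} then comes from the pointwise version \eqref{609} of the same lemma. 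Your Step~3 gestures at ``the null structure of the eikonal equation'' and ``the Hodge decomposition applied to $\mathbf{g}$'', which correctly identifies where the resolution must come from, but without the Raychaudhuri equation for $\chi$ and the decomposition $l(f_2)+f_1$ your argument has no mechanism to gain the derivative back. If you wish to pursue the eikonal route, you would essentially need to differentiate the Hamilton--Jacobi equation twice, identify the resulting transport for $\nabla^2_{x'}\phi$ with the $\chi$-equation, and then you are back to the paper's proof.
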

\subsection{Energy estimates on the characteristic hypersurface}
Let $(\bv, \rho, h, \bw) \in \mathcal{H}$. Then the following estimates hold:
\begin{equation}\label{5021}
   \vert\kern-0.25ex\vert\kern-0.25ex\vert \bv,\rho,h\vert\kern-0.25ex\vert\kern-0.25ex\vert_{s,\infty}+\vert\kern-0.25ex\vert\kern-0.25ex\vert \bw \vert\kern-0.25ex\vert\kern-0.25ex\vert_{s_0,\infty} +\|d\bv,d\rho,dh, \partial \bv_{+}\|_{L^2_tC^\delta_x}+ \|d\bv, d\rho,dh, \partial \bv_{+}\|_{L^2_t \dot{B}^{s_0-2}_{\infty,2}} \lesssim \epsilon_2.
\end{equation}
It is sufficient to prove Proposition \ref{r1} and Proposition \ref{r2} for $\theta=(0,0,1)$ and $r=0$. We fix this choice, and suppress $\theta$ and $r$ in our notation. We use $(x_3, \bx')$ instead of $(x_{\theta}, \bx'_{\theta})$, where $x_{\theta}=\theta \cdot \bx$. Then $\Sigma$ is defined by
\begin{equation*}
  \Sigma=\left\{ x_3- \phi(t,\bx')=0 \right\}.
\end{equation*}
The hypothesis $G \leq 2 \epsilon_1$ implies that
\begin{equation}\label{503}
 \vert\kern-0.25ex\vert\kern-0.25ex\vert d \phi_{\theta,r}(t,\cdot)-dt \vert\kern-0.25ex\vert\kern-0.25ex\vert_{s_0,2, \Sigma} \leq 2 \epsilon_1.
\end{equation}
By Sobolev's imbedding, we have
\begin{equation}\label{504}
  \|d \phi(t,\bx')-dt \|_{L^2_t C^{1,\delta}_{x'}} + \| \partial_t d \phi(t,\bx')\|_{L^2_t C^{\delta}_{x'}} \lesssim \epsilon_1.
\end{equation}
Let us now introduce two lemmas, which is from \cite{ST}.
\begin{Lemma}\label{te0}\cite{ST}
Let $\tilde{f}(t,\bx)=f(t,\bx',x_3+\phi(t,\bx'))$. Then we have
\begin{equation*}
  \vert\kern-0.25ex\vert\kern-0.25ex\vert \tilde{f}\vert\kern-0.25ex\vert\kern-0.25ex\vert_{s_0,\infty}\lesssim \vert\kern-0.25ex\vert\kern-0.25ex\vert f\vert\kern-0.25ex\vert\kern-0.25ex\vert_{s_0,\infty}, \quad \|d\tilde{f}\|_{L^2_tL_x^\infty}\lesssim \|d f\|_{{L^2_tL_x^\infty}}, \quad  \|\tilde{f}\|_{H^{s_0}_{x}}\lesssim \|f\|_{H^{s_0}_{x}}.
\end{equation*}
\end{Lemma}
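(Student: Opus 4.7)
The plan is to exploit that the map $\Phi(t,\bx',x_3) := (t,\bx',x_3+\phi(t,\bx'))$ is a volume-preserving diffeomorphism of $[-2,2]\times\mathbb{R}^3$ whose Jacobian matrix is close to the identity: by \eqref{504} we have $\nabla\Phi - I \in L^2_t C^{\delta}_{x'}$ with small norm, and $\det d\Phi \equiv 1$ because $\phi$ does not depend on $x_3$. Since $\tilde f = f\circ \Phi$, all $L^p$ norms are preserved exactly, and differentiation in $\Phi$-coordinates can be exchanged with differentiation in the original coordinates via the chain rule with controlled factors.

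First I would dispatch the $L^2_tL^\infty_x$ bound for $d\tilde f$: the chain rule gives $d\tilde f = (df\circ\Phi)\cdot d\Phi$, and since $\|d\phi\|_{L^\infty_x}\le 1+\|d\phi-dt\|_{L^\infty_x}$ is uniformly bounded by \eqref{504}, one has $\|d\tilde f\|_{L^2_tL^\infty_x}\lesssim \|df\|_{L^2_tL^\infty_x}$. The $L^\infty$ part of $\vert\kern-0.25ex\vert\kern-0.25ex\vert\tilde f\vert\kern-0.25ex\vert\kern-0.25ex\vert_{s_0,\infty}$ (which by Sobolev embedding only demands $L^2_x$ control after differentiation) follows from volume preservation.

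Second, for the spatial bound $\|\tilde f(t,\cdot)\|_{H^{s_0}_x}\lesssim \|f(t,\cdot)\|_{H^{s_0}_x}$, I would freeze time and treat $\Phi_t(\bx) := (\bx',x_3+\phi(t,\bx'))$ as a composition map on $\mathbb{R}^3$. Decompose $f$ via Littlewood--Paley, $f = \sum_k \Delta_k f$, and estimate $\|\Delta_k(f\circ\Phi_t)\|_{L^2}$ by combining (i) volume preservation of $\Phi_t$, (ii) the $C^{1,\delta}_{x'}$ smallness of $d\phi-dt$ from \eqref{504}, and (iii) the Sobolev bound $\phi(t,\cdot)\in H^{s_0}_{x'}(\mathbb{R}^2)$ coming from the hypothesis $G(\bv,\rho,h)\le 2\epsilon_1$. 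A paraproduct decomposition of $f\circ\Phi_t$, treating separately the high-frequency part of $f$ composed with a $C^{1,\delta}$ map and the low-frequency part composed with $\Phi_t$ whose relevant Sobolev norm is controlled, then gives the desired bound after summing over $k$ with weights $2^{2ks_0}$. For the full space-time norm $\vert\kern-0.25ex\vert\kern-0.25ex\vert\tilde f\vert\kern-0.25ex\vert\kern-0.25ex\vert_{s_0,\infty}$ I also need $\partial_t \tilde f$ in $L^\infty_t H^{s_0-1}_x$: write $\partial_t\tilde f = (\partial_t f)\circ\Phi + ((\partial_{x_3}f)\circ\Phi)\,\partial_t\phi$, estimate the first term by the same spatial argument, and the second by Lemma \ref{cj} using $\partial_t\phi\in L^\infty_t(H^{s_0-1}_{x'}\cap C^\delta_{x'})$ from \eqref{503}--\eqref{504}.

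The main obstacle is the fractional composition estimate at the critical regularity $s_0\in (2,5/2]$: the diffeomorphism $\Phi_t$ only has $H^{s_0}_{x'}$ (not smooth) regularity in the transverse variables, so classical composition theorems do not apply directly. The saving feature is that $\phi(t,\cdot)$ is a function of the two-dimensional variable $\bx'$ only, so Sobolev embedding places $\nabla_{x'}\phi$ in $L^\infty_{x'}$ since $s_0-1>1=\dim(\mathbb{R}^2)/2$; combined with the \emph{smallness} of $d\phi-dt$ in $C^{1,\delta}$, one can perform a paraproduct analysis (as in Smith--Tataru \cite{ST}) to bound $\|f\circ\Phi_t\|_{H^{s_0}_x}$ by $\|f\|_{H^{s_0}_x}$ with a constant independent of $f$, depending only on the small quantities $\|d\phi-dt\|_{C^{1,\delta}_{x'}}$ and $\|\phi(t,\cdot)\|_{H^{s_0}_{x'}}$. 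The key technical ingredient is that the $\det d\Phi_t \equiv 1$ identity eliminates any Jacobian weight, reducing the problem to a purely harmonic-analytic statement about composition by maps close to the identity in $C^{1,\delta}$.
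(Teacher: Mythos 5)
The paper does not prove Lemma~\ref{te0}; it is imported directly from Smith--Tataru \cite{ST}, so there is no in-paper argument to compare against. Your sketch captures the ingredients that make the composition estimate work: the shear structure of $\Phi_t(\bx)=(\bx',x_3+\phi(t,\bx'))$ with unit Jacobian, the chain rule with $\|d\phi\|_{L^\infty}$ bounded, and the fact that $\phi$ depends only on the two-dimensional variable $\bx'$ so that the trace bound $\|d\phi-dt\|_{L^\infty_tH^{s_0-1/2}_{x'}}\lesssim\epsilon_1$ (from \eqref{503} and Lemma~\ref{te2}) gives $\nabla_{x'}\phi\in L^\infty_{t,x'}$, and a paraproduct decomposition handles the fractional part. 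The one place you are imprecise is the claim that one can rely on the ``small quantities $\|d\phi-dt\|_{C^{1,\delta}_{x'}}$'' at fixed time: \eqref{504} gives $d\phi-dt\in L^2_tC^{1,\delta}_{x'}$, and the pointwise-in-time trace only places $d\phi-dt$ in $H^{s_0-1/2}_{x'}(\mathbb{R}^2)\hookrightarrow C^{s_0-3/2}_{x'}$, which for $s_0\le\tfrac52$ is H\"older of order strictly less than one, not $C^{1,\delta}$. This does not invalidate the plan --- what the paraproduct estimate actually needs is $\phi(t,\cdot)\in H^{s_0+1/2}_{x'}$ together with $\nabla\phi\in L^\infty$, both of which you have --- but you should replace the fixed-time $C^{1,\delta}$ hypothesis with the correct fixed-time Sobolev one, or be explicit that the $C^{1,\delta}$ input is only integrated in time. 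With that adjustment the argument is the standard one and aligns with the approach of \cite{ST}.
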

\begin{Lemma}\label{te2}\cite{ST}
For $r\geq 1$, we have
\begin{equation*}
\begin{split}
  \sup_{t\in[-2,2]} \| f\|_{H^{r-\frac{1}{2}}(\mathbb{R}^n)} & \lesssim \vert\kern-0.25ex\vert\kern-0.25ex\vert f \vert\kern-0.25ex\vert\kern-0.25ex\vert_{r,2},
  \\
  \sup_{t\in[-2,2]} \| f\|_{H^{r-\frac{1}{2}}(\Sigma^t)} & \lesssim \vert\kern-0.25ex\vert\kern-0.25ex\vert f \vert\kern-0.25ex\vert\kern-0.25ex\vert_{r,2,\Sigma}.
\end{split}
\end{equation*}
If $r> \frac{n+1}{2}$, then
\begin{equation*}
  \vert\kern-0.25ex\vert\kern-0.25ex\vert f_1f_2\vert\kern-0.25ex\vert\kern-0.25ex\vert_{r,2}\lesssim  \vert\kern-0.25ex\vert\kern-0.25ex\vert f_2 \vert\kern-0.25ex\vert\kern-0.25ex\vert_{r,2} \vert\kern-0.25ex\vert\kern-0.25ex\vert f_1\vert\kern-0.25ex\vert\kern-0.25ex\vert_{r,2}.
\end{equation*}
Similarly, if $r>\frac{n}{2}$, then
\begin{equation*}
  \vert\kern-0.25ex\vert\kern-0.25ex\vert f_1f_2\vert\kern-0.25ex\vert\kern-0.25ex\vert_{r,2,\Sigma}\lesssim  \vert\kern-0.25ex\vert\kern-0.25ex\vert f_2 \vert\kern-0.25ex\vert\kern-0.25ex\vert_{r,2,\Sigma} \vert\kern-0.25ex\vert\kern-0.25ex\vert f_1 \vert\kern-0.25ex\vert\kern-0.25ex\vert_{r,2,\Sigma}.
\end{equation*}
\end{Lemma}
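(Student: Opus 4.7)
The plan is to view $\vert\kern-0.25ex\vert\kern-0.25ex\vert \cdot \vert\kern-0.25ex\vert\kern-0.25ex\vert_{r,2}$ as an anisotropic norm that controls $f \in L^2_t H^r_x$ together with $\partial_t f \in L^2_t H^{r-1}_x$, and to extract from this a half-derivative gain in time. Once that is done, the two trace inequalities fall out of a fundamental-theorem-of-calculus argument, and the two product estimates follow from the spatial Sobolev product rules applied pointwise in $t$.

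\textbf{Step 1: the trace inequalities.} For the first inequality I would differentiate $t \mapsto \|f(t)\|_{H^{r-1/2}}^2$ and rewrite
\[\|f(t)\|_{H^{r-1/2}}^2 - \|f(t_0)\|_{H^{r-1/2}}^2 = 2\int_{t_0}^t \langle \langle\partial\rangle^{r} f(s),\, \langle\partial\rangle^{r-1} \partial_s f(s)\rangle_{L^2_x}\, ds,\]
splitting the fractional order as $r - 1/2 = r/2 + (r-1)/2$ so that Cauchy--Schwarz produces $\|f\|_{L^2_t H^r_x} \cdot \|\partial_t f\|_{L^2_t H^{r-1}_x} \leq \vert\kern-0.25ex\vert\kern-0.25ex\vert f\vert\kern-0.25ex\vert\kern-0.25ex\vert_{r,2}^2$. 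A suitable $t_0 \in [-2,2]$ selected by the mean value theorem gives $\|f(t_0)\|_{H^{r-1/2}}^2 \leq \tfrac{1}{4}\|f\|_{L^2_t H^{r-1/2}_x}^2 \leq \tfrac{1}{4}\vert\kern-0.25ex\vert\kern-0.25ex\vert f\vert\kern-0.25ex\vert\kern-0.25ex\vert_{r,2}^2$ (using $r-1/2 \leq r$), and taking the supremum over $t$ closes the estimate. For the $\Sigma$ version I would parametrize $\Sigma$ as the graph $x_3 = \phi(t,\bx')$ and use the induced chart $(t,\bx') \mapsto (t,\bx',\phi(t,\bx'))$; the smallness of $d\phi - dt$ guaranteed by $G \leq 2\epsilon_1$ in \eqref{503} together with Proposition \ref{r1} makes this chart a bi-Lipschitz diffeomorphism preserving fractional Sobolev norms up to universal constants, so that the same argument applies on $\mathbb{R}^{n-1}$ in the $\bx'$ variables.

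\textbf{Step 2: the product estimates.} Pointwise in $t$ I would apply the classical spatial bounds $\|f_1 f_2\|_{H^r} \lesssim \|f_1\|_{L^\infty_x}\|f_2\|_{H^r} + \|f_2\|_{L^\infty_x}\|f_1\|_{H^r}$ and $\|\partial_t(f_1 f_2)\|_{H^{r-1}} \lesssim \|\partial_t f_1\|_{H^{r-1}}\|f_2\|_{L^\infty_x} + \|f_1\|_{L^\infty_x}\|\partial_t f_2\|_{H^{r-1}}$, the second relying on $H^{r-1/2}$ acting as a multiplier on $H^{r-1}$ when $r-1/2 > n/2$. Then I would distribute the $L^2_t$ norm so that one factor in each product carries $L^\infty_t L^\infty_x$ (absorbed by $\sup_t \|f_i(t)\|_{H^{r-1/2}}$ via Step 1) and the other carries either $L^2_t H^r_x$ or $L^2_t H^{r-1}_x$. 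This yields $\vert\kern-0.25ex\vert\kern-0.25ex\vert f_1 f_2\vert\kern-0.25ex\vert\kern-0.25ex\vert_{r,2} \lesssim \vert\kern-0.25ex\vert\kern-0.25ex\vert f_1\vert\kern-0.25ex\vert\kern-0.25ex\vert_{r,2}\vert\kern-0.25ex\vert\kern-0.25ex\vert f_2\vert\kern-0.25ex\vert\kern-0.25ex\vert_{r,2}$ under the threshold $r > (n+1)/2$. The $\Sigma$ version is the same computation after restricting to $\Sigma^t$, but now the ambient spatial dimension of the slice is $n-1$, which relaxes the embedding threshold to $r > n/2$.

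\textbf{Main obstacle.} The non-trivial input is the second trace estimate, where one must check that restriction to $\Sigma^t$ preserves the $H^{r-1/2}$ regularity despite the fact that $\phi$ is only as smooth as $s_0 > 2$ permits. The key is that the composition $f \circ (t,\bx',\phi(t,\bx'))$ loses no fractional derivatives because $\phi$ itself is one derivative smoother on the foliation level by \eqref{504} and Proposition \ref{r1}, which supplies $d\phi \in L^\infty_t C^\delta_{\bx'}$ plus control of $\partial_t d\phi$ in $L^2_t C^\delta_{\bx'}$. All other ingredients are essentially routine spatial Sobolev calculus combined with Cauchy--Schwarz in time, so once this geometric regularity of the foliation is invoked the proof closes.
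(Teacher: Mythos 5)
The paper cites this lemma from \cite{ST} without reproducing a proof, so there is no in-paper argument to compare against; what follows is a standalone review of your proposal.

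Your argument for the first trace estimate and for the product estimate is sound: the identity $\tfrac{d}{dt}\|f\|^2_{H^{r-1/2}} = 2\langle \langle\partial\rangle^r f,\langle\partial\rangle^{r-1}\partial_t f\rangle_{L^2_x}$ combined with Cauchy--Schwarz in $t$ gives the half-derivative gain, and picking the base time $t_0$ by the mean value theorem closes the loop since $r-\tfrac12\leq r$. For the product estimates, invoking the Moser inequality for $\|f_1f_2\|_{H^r}$ and the multiplier property of $H^{r-1/2}$ on $H^{r-1}$ (valid when $r-\tfrac12>n/2$ and $r\geq 1$) to handle the $(\partial_t f_1)f_2$ term, then distributing $L^2_t$ with the trace estimate absorbing the $L^\infty_{t,x}$ factor, is exactly the right bookkeeping and gives the stated threshold $r>(n+1)/2$.

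Where you go astray is in identifying the $\Sigma$ versions as the ``main obstacle'' and bringing in the regularity of $\phi$ (\eqref{503}, \eqref{504}, Proposition \ref{r1}). These are red herrings here. By the paper's definitions in \eqref{d0} and the surrounding text, both $\vert\kern-0.25ex\vert\kern-0.25ex\vert f\vert\kern-0.25ex\vert\kern-0.25ex\vert_{r,2,\Sigma}$ and $\|f\|_{H^{r-1/2}(\Sigma^t)}$ are computed intrinsically in the $(t,\bx'_\theta)$ parametrization of $\Sigma$: one writes $\tilde f(t,\bx')=f(t,\bx',\phi(t,\bx'))$ and uses the flat $H^s(\mathbb{R}^{n-1})$ norm in $\bx'$ for the left side, and the flat $\vert\kern-0.25ex\vert\kern-0.25ex\vert\cdot\vert\kern-0.25ex\vert\kern-0.25ex\vert_{r,2}$ norm in $(t,\bx')$ for the right side. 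No comparison to the ambient $\mathbb{R}^n$ norm is ever taken, so the question of whether the graph chart ``loses fractional derivatives'' never arises: the two $\Sigma$ statements are \emph{literally} the first two applied to $\tilde f$ on $[-2,2]\times\mathbb{R}^{n-1}$. This is also exactly why the algebra threshold drops from $r>(n+1)/2$ to $r>n/2$, since the embedding $H^{r-1/2}\hookrightarrow L^\infty$ now only needs to hold in $n-1$ spatial variables. Your final paragraph, which presents this drop in threshold as a consequence of bi-Lipschitz invariance and the smoothness of $\phi$, reverses cause and effect; the correct reason is the dimensional count on the chart, and the regularity of $\phi$ plays no role in Lemma \ref{te2} itself (it enters later, e.g.\ in Lemma \ref{te1}, when ambient quantities are actually restricted to $\Sigma$).
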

\begin{Lemma}\label{te1}
{Suppose $\bU$ satisfy the hyperbolic symmetric system}
\begin{equation}\label{505}
  A^0(\bU)\bU_t+ \sum^{3}_{i=1}A^i(\bU)\bU_{x_i}= 0.
\end{equation}
Then
\begin{equation}\label{te10}
\begin{split}
 \vert\kern-0.25ex\vert\kern-0.25ex\vert  \bU\vert\kern-0.25ex\vert\kern-0.25ex\vert_{s_0,2,\Sigma} & \lesssim \|d \bU \|_{L^2_t L^{\infty}_x}+ \| \bU\|_{L^{\infty}_tH_x^{s_0}}.
   \end{split}
\end{equation}
\end{Lemma}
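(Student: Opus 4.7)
The plan is to reduce to a flat hypersurface and apply a standard characteristic energy estimate for symmetric hyperbolic systems, where the key input is that $\Sigma$ is characteristic for the acoustic metric $\mathbf{g}$ that controls the principal part of \eqref{505}.

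First, I would flatten $\Sigma$ via the change of variables $(t,\bx',y) \mapsto (t,\bx',y+\phi(t,\bx'))$, setting $\tilde{\bU}(t,\bx',y) := \bU(t,\bx',y+\phi(t,\bx'))$, so that $\Sigma$ corresponds to $\{y=0\}$. By Lemma~\ref{te0}, it suffices to show
\begin{equation*}
\vert\kern-0.25ex\vert\kern-0.25ex\vert \tilde{\bU}(\cdot,\cdot,0)\vert\kern-0.25ex\vert\kern-0.25ex\vert_{s_0,2} \lesssim \|d\tilde{\bU}\|_{L^2_t L^\infty_{x',y}} + \|\tilde{\bU}\|_{L^\infty_t H^{s_0}_{x',y}},
\end{equation*}
and the right-hand side is controlled by the right-hand side of \eqref{te10} via Lemma~\ref{te0} once more. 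The transformed system reads
\begin{equation*}
A^0(\tilde{\bU})\,\partial_t \tilde{\bU} + \sum_{i=1}^{2} A^i(\tilde{\bU})\,\partial_{x_i}\tilde{\bU} + B(\tilde{\bU},d\phi)\,\partial_y \tilde{\bU} = 0,
\end{equation*}
where $B = A^3 - \phi_t A^0 - \phi_{x_1}A^1 - \phi_{x_2}A^2$, and the characteristic property of $\Sigma$ for $\mathbf{g}$ means $\det B|_{y=0}=0$ in the directions corresponding to the acoustic modes.

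Second, I would carry out an $L^2$ trace estimate on $\{y=0\}$ in the standard way: symmetrize, differentiate the system via $\Lambda_{t,x'}^{s_0}$ (or a dyadic version), multiply by $\Lambda_{t,x'}^{s_0}\tilde{\bU}$, and integrate by parts over the slab $\{-2 \le t \le 2,\ -\eta < y < 0\}$ (and its mirror). The $\partial_y$-flux through $\{y=0\}$ is controlled by $B|_{y=0}$; while this is degenerate on the acoustic modes, the non-characteristic modes yield a positive definite flux, and for the characteristic modes one controls the trace by pairing with $\partial_t + v\cdot\nabla$ and using the transport structure inherited by the remaining components of \eqref{505}. The bulk term on the left is $\int|\Lambda^{s_0}\tilde{\bU}|^2\,dy\,dt\,dx'$, which is bounded by $\|\tilde{\bU}\|^2_{L^\infty_t H^{s_0}}$ after taking $\eta$ of order one and using Fubini.

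Third, the commutator and product terms generated by $\Lambda^{s_0}$ interacting with $A^i(\tilde{\bU})$ and $B(\tilde{\bU},d\phi)$ are handled by Kato--Ponce (Lemma~\ref{jh}, \ref{cj}) and the Moser estimate Lemma~\ref{jh0}: these produce factors $\|dA^i(\tilde{\bU})\|_{L^\infty}\lesssim \|d\tilde{\bU}\|_{L^\infty}$ times $\|\tilde{\bU}\|_{H^{s_0}}$, plus contributions from $d\phi$, which by \eqref{504} are bounded in $L^2_t C^{1,\delta}_{x'}$ and hence absorb via a Gronwall argument in $t$. This yields the desired energy inequality with right-hand side $\|d\bU\|_{L^2_tL^\infty_x}\cdot \|\bU\|_{L^\infty_t H^{s_0}}$ plus $\|\bU\|_{L^\infty_t H^{s_0}}^2$, which after taking square roots gives \eqref{te10}.

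The main obstacle I anticipate is the precise treatment of the boundary flux at $\{y=0\}$ given that $B$ is merely semi-definite: one must carefully decompose $\tilde{\bU}$ into the kernel and image of $B|_{y=0}$, use the equation to trade $\partial_y$-derivatives of the kernel component for tangential derivatives, and ensure that the commutators with the low-regularity $d\phi$ (only in $H^{s_0-1/2}$ on $\Sigma$ by \eqref{503} and Lemma~\ref{te2}) close at the level of $s_0$ derivatives. Product estimates on $\Sigma$ from Lemma~\ref{te2}, valid because $s_0>2>n/2$ for $n=2$, are what make this closure possible.
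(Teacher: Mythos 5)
Your proposal shares the paper's first and third steps (flatten $\Sigma$ via $x_3\mapsto x_3-\phi(t,x')$ and invoke Lemma \ref{te0}; close commutators with Kato--Ponce and the bounds \eqref{503}--\eqref{504} on $d\phi$), but the middle step is organized quite differently. You rewrite the flattened system as a tangential symmetric hyperbolic system with a single normal coefficient $B=A^3-\phi_tA^0-\phi_{x_1}A^1-\phi_{x_2}A^2$, and you propose to exploit that $B|_{\Sigma}$ is degenerate on the acoustic modes (since $\Sigma$ is null for $\mathbf{g}$), decomposing $\tilde{\bU}$ into $\ker B$ and its complement and using the transport structure on the kernel part. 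The paper never performs this decomposition. It keeps the full symmetric hyperbolic form $\sum_{\alpha=0}^{3}A^\alpha(\tilde{\bU})\partial_\alpha\tilde{\bU}$ on the left, moves all $\phi$-dependent corrections to the right as source terms of the schematic form $d\phi\cdot\partial_3\tilde{\bU}$, and runs a direct $L^2$ energy/commutator estimate. The crucial algebraic fact it uses is the much more elementary one that $\phi$ is independent of $x_3$: after applying $\partial^{\beta}_{x'}$ and pairing with $\partial^{\beta}_{x'}\tilde{\bU}$, the leading term $\phi_t\,\partial_3\partial^{\beta}_{x'}\tilde{\bU}\cdot\partial^{\beta}_{x'}\tilde{\bU}$ integrates away in $x_3$ without a boundary contribution precisely because $\partial_3(\phi_t)=0$, so only commutators $[\partial^{\beta}_{x'},\,d\phi\cdot\partial_3]\tilde{\bU}$ survive. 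The characteristic nature of $\Sigma$ thus enters only indirectly, through the smallness \eqref{503}--\eqref{504} of $d\phi-dt$ and of $\partial d\phi$, rather than through a kernel/image splitting of $B$. What the paper's route buys is avoiding any discussion of the rank of $B|_\Sigma$ or of how to close the degenerate flux -- the whole issue simply doesn't arise. What your route buys is a more transparent account of where the trace gain comes from; indeed the paper's write-up is quite terse on this point. Both lead to the same right-hand side $\|d\bU\|_{L^2_tL^\infty_x}+\|\bU\|_{L^\infty_tH^{s_0}_x}$, and both rely on Lemma \ref{te2} to make the $H^{s_0-1}_{x'}(\Sigma)$ algebra available; but your step involving a decomposition by $\ker B$ is extra machinery the paper does not need.
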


\begin{proof}
Choosing the change of coordinates $x_3 \rightarrow x_3-\phi(t,x')$ and setting $\tilde{\bU}(t,x)=U(t,\bx',x_3+\phi(t,\bx'))$, the system \eqref{te10} is transformed to
\begin{equation*}
  A^0(\tilde{\bU}) \partial_t \tilde{\bU}+ \sum_{i=1}^3 A^i(\tilde{\bU}) \partial_{x_i} \tilde{\bU}= - \partial_t \phi  \partial_3 \tilde{\bU} - \sum_{\alpha=0}^3 A^\alpha(\tilde{\bU}) \partial_{x_\alpha}\phi \partial_\alpha \tilde{\bU}.
\end{equation*}
Since $\phi$ is independent of $x_3$, this yields
\begin{equation}\label{U}
  A^0(\tilde{\bU}) \partial_t \tilde{\bU}+ \sum_{i=1}^3 A^i(\tilde{\bU}) \partial_{x_i} \tilde{\bU}= - \partial_t \phi  \partial_3 \tilde{\bU} - \sum_{\alpha=0}^2 A^\alpha(\tilde{\bU}) \partial_{x_\alpha}\phi \partial_\alpha \tilde{\bU}.
\end{equation}
To prove \eqref{te10}, we first establish the $0$-order estimate. A direct calculation on$[-2,2]\times \mathbb{R}^3$ shows that
\begin{equation*}
\begin{split}
   \vert\kern-0.25ex\vert\kern-0.25ex\vert \tilde{\bU}\vert\kern-0.25ex\vert\kern-0.25ex\vert^2_{0,2,\Sigma} & \lesssim \| d\tilde{\bU} \|_{L^1_t L_x^\infty}\|\tilde{\bU}\|_{L_x^2} + \|\partial d\phi\|_{L^1_t L_{x'}^\infty}\|\tilde{\bU}\|_{L_x^2}
   \\
   & \lesssim \| d\tilde{\bU} \|_{L^1_t L_x^\infty}\|\tilde{\bU}\|_{L_x^2} + \|\partial d\phi\|_{L^1_t L_x^\infty}\|\tilde{\bU}\|_{L_x^2}.
\end{split}
\end{equation*}
By using Lemma \ref{te0}, \eqref{5021} and \eqref{504}, we can prove that
\begin{equation}\label{U0}
 \vert\kern-0.25ex\vert\kern-0.25ex\vert \bU\vert\kern-0.25ex\vert\kern-0.25ex\vert_{0,2,\Sigma} \lesssim \|d \bU \|_{L^2_t L^{\infty}_x}+ \| \bU\|_{L^{\infty}_tL_x^2}.
\end{equation}
We now establish the $s_0$-order estimate. Taking the derivative of $\partial^{\beta}_{x'}$($1 \leq |\beta| \leq s_0$) on \eqref{U} and integrating it on $[-2,2]\times \mathbb{R}^3$, we get
\begin{equation}\label{U1}
  \begin{split}
 \| \partial^{\beta}_{x'} \tilde{\bU}\|^2_{L^2_{\Sigma}} & \lesssim \| d \tilde{\bU} \|_{L^1_t L^{\infty}_x} \| \partial^{\beta}_{x} \tilde{\bU}\|_{L^{\infty}_tL_x^2} +|I_1|+|I_2|,
  \end{split}
\end{equation}
where
\begin{equation*}
\begin{split}
  &I_1= -\int_{-2}^2\int_{\mathbb{R}^3} \partial^{\beta}_{x'}  \big( \partial_t \phi  \partial_3 \tilde{\bU} \big) \cdot \Lambda^{\beta}_{x'} \tilde{\bU}  dxd\tau,
\\
& I_2= -\sum^2_{\alpha=0}\int_{-2}^2\int_{\mathbb{R}^3} \partial^{\beta}_{x'} \big( A^\alpha(\tilde{\bU}) \partial_{x_\alpha}\phi \partial_\alpha \tilde{\bU} \big) \cdot \partial^{\beta}_{x'} \tilde{\bU}   dxd\tau.
\end{split}
\end{equation*}
We can write $I_1$ as
\begin{equation*}
\begin{split}
  I_1 =& -\int_{-2}^2\int_{\mathbb{R}^3} \big( \partial^{\beta}_{x'}(\partial_t \phi  \partial_3 \tilde{\bU})-\partial_t \phi \partial_3 \partial^{\beta}_{x'} \tilde{\bU} \big)   \partial^{\beta}_{x'} \tilde{\bU}dxd\tau
  \\
  & + \int_{-2}^2\int_{\mathbb{R}^3} \partial_t \phi \cdot \partial_3 \partial^{\beta}_{x'} \tilde{\bU} \cdot \partial^{\beta}_{x'} \tilde{\bU}  dx d\tau,
 \\
 =& -\int_{-2}^2\int_{\mathbb{R}^3} [ \partial^{\beta}_{x'}, \partial_t \phi  \partial_3] \tilde{\bU} \cdot \partial^{\beta}_{x'} \tilde{\bU}dxd\tau
\end{split}
\end{equation*}
We also write
\begin{equation*}
\begin{split}
  I_2 = & -\sum^2_{\alpha=0}\int_{-2}^2\int_{\mathbb{R}^3} \big( \partial^{\beta}_{x'} \big( A^\alpha(\tilde{\bU}) \partial_{x_\alpha}\phi \partial_\alpha \tilde{\bU}) -  A^\alpha(\tilde{\bU}) \partial_{x_\alpha}\phi \partial_\alpha \partial^{\beta}_{x'} \tilde{\bU} \big) \cdot \partial^{\beta}_{x'} \tilde{\bU}  dxd\tau
  \\
  & +\sum^2_{\alpha=0} \int_{-2}^2\int_{\mathbb{R}^3} \big( A^\alpha(\tilde{\bU}) \partial_{x_\alpha}\phi \big) \cdot \partial_\alpha(\partial^{\beta}_{x'} \tilde{\bU}) \cdot \partial^{\beta}_{x'} \tilde{\bU}dxd\tau.
\end{split}
\end{equation*}
By commutator estimates in Lemma \ref{jh}, we can get
\begin{equation}\label{U2}
\begin{split}
  |I_1|
  & \lesssim  \big( \|\partial^\beta \tilde \bU\|_{L^{\infty}_tL_x^2} \| d \partial_t \phi \|_{L^1_tL_{x}^\infty}+ \sup_{\theta, r} \|\partial^{\beta}_{x'} \partial_t \phi\|_{L^2(\Sigma_{\theta,r})} \| d \tilde{\bU}\|_{L^1_tL_x^\infty} \big)\cdot\|\partial^\beta \tilde \bU\|_{L^\infty_tL_x^2}
\end{split}
\end{equation}
and
\begin{equation}\label{U3}
\begin{split}
  |I_2| \lesssim  & \big(\| \partial^{\beta}_{x'} \tilde{\bU} \|_{L^2_tL_x^2} \|d \partial \phi\|_{L^1_t L^\infty_x} + \|d\tilde{\bU}\|_{L^1_tL_x^\infty} \sup_{\theta,r}\|\partial^{\beta}_{x'}d \phi\|_{L^2(\Sigma_{\theta,r})}  \big) \cdot \|\partial^{\beta}_{x'} \tilde{\bU} \|_{L^\infty_tL_x^2}
  \\
  & + \big( \| d \tilde{\bU} \|_{L^2_t L_x^\infty} \| \partial \phi\| _{L^2_tL_x^\infty}+ \|\tilde{\bU}\|_{L^2_t L_x^\infty} \|\partial^2\phi\|_{L^2_tL_{x}^\infty} \big)  \cdot \|\partial^\beta \tilde{\bU} \|^2_{L^\infty_t L_x^2}.
\end{split}
\end{equation}
Taking sum of $1\leq \beta \leq s_0$ on \eqref{U1}, due to Lemma \ref{te0}, \eqref{U2}, \eqref{U3}, \eqref{5021}, and \eqref{504}, we obtain
\begin{equation}\label{U4}
\begin{split}
 \vert\kern-0.25ex\vert\kern-0.25ex\vert \partial_{x'} \bU\vert\kern-0.25ex\vert\kern-0.25ex\vert_{s_0-1,2,\Sigma} & \lesssim \|d \bU \|_{L^2_t L^{\infty}_x}+\|d \bU\|_{L^{\infty}_tH_x^{s_0-1}}.
   \end{split}
\end{equation}
Taking derivatives on \eqref{505}, we have
\begin{equation*}
  \begin{split}
  A^0(\bU)(\partial \bU)_t + \sum^3_{i=1} A^i(\bU)(\partial \bU)_{x_i}=-\sum^3_{\alpha=0}\partial(A^\alpha(\bU))\bU_{x_\alpha},
  \end{split}
\end{equation*}
In a similar process, we can obtain
\begin{equation}\label{U40}
\begin{split}
 \vert\kern-0.25ex\vert\kern-0.25ex\vert \partial \bU \vert\kern-0.25ex\vert\kern-0.25ex\vert_{s_0-1,2,\Sigma} & \lesssim \|d \bU \|_{L^2_t L^{\infty}_x}+\|d \bU\|_{L^{\infty}_tH_x^{s_0-1}}.
   \end{split}
\end{equation}
Using $\partial_t \bU=- \sum^3_{i=1}(A^0)^{-1}A^i(\bU)\partial_i \bU$ and Lemma \ref{te2}, we have
\begin{equation}\label{U5}
\begin{split}
\vert\kern-0.25ex\vert\kern-0.25ex\vert \partial_t \bU \vert\kern-0.25ex\vert\kern-0.25ex\vert_{s_0-1,2,\Sigma}
& \lesssim \vert\kern-0.25ex\vert\kern-0.25ex\vert \bU \vert\kern-0.25ex\vert\kern-0.25ex\vert_{s_0-1,2,\Sigma} \vert\kern-0.25ex\vert\kern-0.25ex\vert\partial_t \bU\vert\kern-0.25ex\vert\kern-0.25ex\vert_{s_0-1,2,\Sigma}
 \lesssim \|d \bU \|_{L^2_t L^{\infty}_x}.
\end{split}
\end{equation}
Combining \eqref{U0}, \eqref{U4}, \eqref{U5}, and \eqref{U5}, we obtain \eqref{te10}. Thus, the proof is finished.
\end{proof}
Using Lemma \ref{te1}, and combining with \eqref{402} and \eqref{403}, we can obtain the following corollary.
\begin{corollary}\label{vte}
Suppose that $(\bv, \rho, h, \bw) \in \mathcal{H}$. Then the following estimate
\begin{equation}\label{e017}
\vert\kern-0.25ex\vert\kern-0.25ex\vert \bv \vert\kern-0.25ex\vert\kern-0.25ex\vert_{s_0,2,\Sigma}+ \vert\kern-0.25ex\vert\kern-0.25ex\vert \rho \vert\kern-0.25ex\vert\kern-0.25ex\vert_{s_0,2,\Sigma}+ \vert\kern-0.25ex\vert\kern-0.25ex\vert h \vert\kern-0.25ex\vert\kern-0.25ex\vert_{s_0,2,\Sigma} \lesssim \epsilon_2
\end{equation}
holds.
\end{corollary}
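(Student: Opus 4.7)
The plan is to apply Lemma \ref{te1} directly to the triple $(\bv,\rho,h)$, which satisfies the symmetric hyperbolic reduction \eqref{sq} of Lemma \ref{sh}. Strictly, the state vector in that lemma is $\bU=(v^{1},v^{2},v^{3},p(\rho,h),h)^{\mathrm{T}}$, so the first step is to note that because $p=\bar\rho^{\gamma}\mathrm{e}^{h+\gamma\rho}$ is a smooth function of $(\rho,h)$, and since $\|\rho,h\|_{L^\infty_t L^\infty_x}\lesssim 1$ by \eqref{HEw} and Sobolev embedding from \eqref{402}, the tame estimate of Lemma \ref{jh0} yields
\[
\vert\kern-0.25ex\vert\kern-0.25ex\vert p(\rho,h)\vert\kern-0.25ex\vert\kern-0.25ex\vert_{s_0,2,\Sigma}\simeq\vert\kern-0.25ex\vert\kern-0.25ex\vert\rho\vert\kern-0.25ex\vert\kern-0.25ex\vert_{s_0,2,\Sigma}+\vert\kern-0.25ex\vert\kern-0.25ex\vert h\vert\kern-0.25ex\vert\kern-0.25ex\vert_{s_0,2,\Sigma},
\]
with an analogous equivalence for the $L^\infty_t H^{s_0}_x$ and $L^2_t L^\infty_x$ norms of $dp$ versus $d\rho$ and $dh$. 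Consequently the conclusion of Corollary \ref{vte} is equivalent to the same estimate for $\bU$.

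Applying Lemma \ref{te1} to $\bU$ then gives
\[
\vert\kern-0.25ex\vert\kern-0.25ex\vert\bU\vert\kern-0.25ex\vert\kern-0.25ex\vert_{s_0,2,\Sigma}\lesssim \|d\bU\|_{L^2_tL^\infty_x}+\|\bU\|_{L^\infty_tH^{s_0}_x}.
\]
For the first term on the right, since $s_0<s$, the embedding $C^{\delta}_x\hookrightarrow L^\infty_x$ together with the dispersive bound in \eqref{403} furnishes $\|d\bv,d\rho,dh\|_{L^2_tL^\infty_x}\lesssim \epsilon_2$, whence $\|d\bU\|_{L^2_tL^\infty_x}\lesssim\epsilon_2$ after invoking the chain-rule equivalence above. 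For the second term, the energy bound \eqref{402} together with $s_0<s$ yields $\|\bv,\rho,h\|_{L^\infty_tH^{s_0}_x}\lesssim\epsilon_2$, and hence $\|\bU\|_{L^\infty_tH^{s_0}_x}\lesssim \epsilon_2$.

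Summing these contributions and converting back from $\bU$ to $(\bv,\rho,h)$ using the equivalences noted above produces \eqref{e017}. The one mildly subtle point, which I view as the main technical step rather than a genuine obstacle, is the tame/Moser equivalence between $p(\rho,h)$ and $(\rho,h)$ on $\Sigma$: it requires that the restriction operation commute with the nonlinear composition, which is permissible because the change of variables $x_{3}\mapsto x_{3}-\phi(t,\bx')$ used in the proof of Lemma \ref{te1} is a diffeomorphism (by \eqref{504}) and leaves the product/composition estimates of Lemma \ref{te2} invariant in the $(t,\bx')$ coordinates. Everything else is a direct insertion of the bootstrap hypotheses \eqref{402}--\eqref{403} into Lemma \ref{te1}.
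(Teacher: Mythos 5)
Your proposal matches the paper's argument: the corollary is stated as an immediate consequence of Lemma \ref{te1} applied to the hyperbolic state vector $\bU=(\bv, p(\rho,h), h)$ together with the bootstrap bounds \eqref{402}--\eqref{403}, and your account — including the observation that $h$ sits directly in $\bU$ and $\rho=\frac{1}{\gamma}\ln(p\,\mathrm{e}^{-h}\bar\rho^{-\gamma})$ is recovered by a Moser-type composition estimate — makes explicit exactly the steps the paper leaves implicit. No substantive gap.
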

The estimate \eqref{e017} is not sufficient to prove Proposition \ref{r2}, for we also need the higher-order estimate of $h$ and $\bw$ along $\Sigma$. The next goal is to establish the characteristic energy estimates for $\bw$. Compared with the energy estimates for $\bw$, the characteristic energy is along the hypersurface, not the Cauchy slices. Therefore, it's not trivial. To prove the energy estimates of $\bw$ along the null hypersurface, let us give a lemma firstly.
\begin{Lemma}\label{te3}
Let $f$ satisfy the following transport equation
\begin{equation}\label{333}
  \mathbf{T} f=F.
\end{equation}
Set $L= \partial (-\Delta)^{-1}\mathrm{curl}$. Then
\begin{equation}\label{teE}
\begin{split}
  \vert\kern-0.25ex\vert\kern-0.25ex\vert  Lf\vert\kern-0.25ex\vert\kern-0.25ex\vert^2_{s_0-2,2,\Sigma} \lesssim & \ \big( \| \partial \bv\|_{L^2_t\dot{B}^{s_0-2}_{\infty,2}}+\vert\kern-0.25ex\vert\kern-0.25ex\vert d\phi-dt \vert\kern-0.25ex\vert\kern-0.25ex\vert_{s_0,2,\Sigma} \big) \|f\|^2_{{H}^{s_0-2}_x}(1+\vert\kern-0.25ex\vert\kern-0.25ex\vert d\phi-dt \vert\kern-0.25ex\vert\kern-0.25ex\vert_{s_0,2,\Sigma})
  \\
  & + \sum_{\sigma\in\{0,s_0-2 \}}\big|\int^t_0 \int_{\mathbb{R}^3} \Lambda^{\sigma}_{x'}{LF} \cdot \Lambda^{\sigma}_{x'}Lfdxd\tau \big|.
\end{split}
\end{equation}
\end{Lemma}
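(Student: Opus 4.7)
The plan is to convert the transport equation for $f$ into one for $Lf$ and then execute a characteristic energy argument along $\Sigma$ analogous to Lemma \ref{te1}. Applying $L$ to $\mathbf{T}f=F$ gives
\begin{equation*}
\mathbf{T}(Lf) = LF + [\bv\cdot\nabla,\,L]f,
\end{equation*}
so $Lf$ satisfies a transport equation whose right-hand side splits into the desired source $LF$ (which will be kept explicitly on the RHS of \eqref{teE}) and a commutator term that we can treat as a forcing. I would then measure $Lf$ on $\Sigma$ at the two levels $\sigma \in \{0, s_0-2\}$; the mixed space–time Sobolev norm $\vert\kern-0.25ex\vert\kern-0.25ex\vert\cdot\vert\kern-0.25ex\vert\kern-0.25ex\vert_{s_0-2,2,\Sigma}$ is recovered from $\|Lf\|_{L^2_tH^{s_0-2}_{x'}(\Sigma)}$ together with $\|\partial_t Lf\|_{L^2_tH^{s_0-3}_{x'}(\Sigma)}$, the latter being read off directly from the transport equation once the former is controlled.

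\textbf{Commutator and energy inequality.} For the commutator, since $L=\partial(-\Delta)^{-1}\mathrm{curl}$ is of Riesz type, Lemma \ref{ceR} furnishes
\begin{equation*}
\|[\bv\cdot\nabla,\,L]f\|_{\dot{H}^{\sigma}} \lesssim \|\partial \bv\|_{\dot{B}^{\sigma}_{\infty,\infty}}\|f\|_{L^2} + \|\partial \bv\|_{\dot{B}^{0}_{\infty,\infty}}\|f\|_{\dot{H}^{\sigma}}, \qquad \sigma\in[0,s_0-2],
\end{equation*}
and after squaring and integrating in $t$ the right-hand side is dominated by $\|\partial\bv\|_{L^2_t\dot{B}^{s_0-2}_{\infty,2}}\|f\|_{H^{s_0-2}_x}^2$, using that $\dot{B}^{s_0-2}_{\infty,2}\hookrightarrow\dot{B}^{s_0-2}_{\infty,\infty}$ and that $s_0-2>0$. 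To produce the characteristic estimate, I would flatten $\Sigma$ by the change of variables $\widetilde g(t,x)=(Lf)(t,x',x_3+\phi(t,x'))$, under which the transport operator becomes $\widetilde{\mathbf{T}}=\partial_t+\tilde \bv\cdot\nabla -\partial_t\phi\,\partial_3-\sum_{\alpha\leq 2}\tilde v^\alpha\partial_{x_\alpha}\phi\,\partial_\alpha$; this mirrors exactly the set-up of Lemma \ref{te1}. Applying $\Lambda^{\sigma}_{x'}$ for $\sigma\in\{0,s_0-2\}$, pairing with $\Lambda^{\sigma}_{x'}\widetilde g$, and integrating on $[-2,t]\times\mathbb{R}^3$ yields $\|\Lambda^{\sigma}_{x'}\widetilde g(t)\|_{L^2_x}^2$ on the left, while the right-hand side carries four kinds of error terms: (i) the source $\int\Lambda^{\sigma}_{x'}LF\cdot\Lambda^{\sigma}_{x'}Lf$, retained in \eqref{teE}; (ii) the Riesz commutator $[\bv\cdot\nabla,L]f$ bounded above; (iii) the tangential commutator $[\Lambda^{\sigma}_{x'},\bv\cdot\nabla]\widetilde g$, controlled via Lemma \ref{ce} by $\|\partial\bv\|_{\dot{B}^{s_0-2}_{\infty,2}}\|\widetilde g\|_{\dot{H}^{s_0-2}_{x'}}$; and (iv) the $\phi$-dependent drift terms from the change of variables, handled exactly as in \eqref{U2}--\eqref{U3} and turned into factors of $\vert\kern-0.25ex\vert\kern-0.25ex\vert d\phi-dt\vert\kern-0.25ex\vert\kern-0.25ex\vert_{s_0,2,\Sigma}$ by Lemma \ref{te2}.

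\textbf{Integration and main obstacle.} Summing the $\sigma=0$ and $\sigma=s_0-2$ estimates, integrating in $t$, and using that $\vert\kern-0.25ex\vert\kern-0.25ex\vert d\phi-dt\vert\kern-0.25ex\vert\kern-0.25ex\vert_{s_0,2,\Sigma}\leq 2\epsilon_1\ll 1$ to absorb absorbable factors, one arrives at \eqref{teE} with the quadratic factor $(1+\vert\kern-0.25ex\vert\kern-0.25ex\vert d\phi-dt\vert\kern-0.25ex\vert\kern-0.25ex\vert_{s_0,2,\Sigma})$ accounting for both the Jacobian of the change of variables and the trace coefficient estimates. The $\partial_t Lf$ component of $\vert\kern-0.25ex\vert\kern-0.25ex\vert Lf\vert\kern-0.25ex\vert\kern-0.25ex\vert_{s_0-2,2,\Sigma}$ is then extracted from $\partial_t(Lf)=-\bv\cdot\nabla(Lf)+[\bv\cdot\nabla,L]f+LF$, the first summand being estimated by Lemma \ref{te2} together with the spatial bound already obtained. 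The hard part of the argument is the dovetailing of the nonlocal operator $L$ with the restriction to $\Sigma$: $L$ does not commute with the trace, and Lemma \ref{ceR} only yields a Besov bound for $\partial\bv$ in the full space, so one must verify that the commutator contributions survive the flattening and the tangential integration by parts without forcing a stronger norm of $d\phi-dt$ than is available. This is exactly where the curl–div structure of $L$ combined with $s_0>2$ is used to keep the error on the right-hand side at the level stated in \eqref{teE}.
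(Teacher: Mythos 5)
Your proposal takes essentially the same route as the paper's proof: apply $L$ to the transport equation to produce $\mathbf{T}(Lf)=LF+[L,\mathbf{T}]f$, estimate the Riesz commutator via Lemma \ref{ceR}, flatten $\Sigma$ by the change of variables $x_3\mapsto x_3-\phi(t,x')$, run the energy argument at the two levels $\sigma\in\{0,s_0-2\}$ using $\Lambda^\sigma_{x'}$, and split the error into the source, the tangential commutator (Lemma \ref{ce}), and the $\phi$-dependent drift terms. The only small presentational discrepancy is in step (iv): the paper's proof of Lemma \ref{te3} does not quite replay \eqref{U2}--\eqref{U3}, which use integer-order Kato--Ponce bounds, but instead works at fractional regularity and leans on the Besov product estimate Lemma \ref{LPE} in \eqref{r1E}--\eqref{r2E} to convert $\|\partial(d\phi)\|_{\dot B^{s_0-2}_{\infty,2}}$ into a factor of $\vert\kern-0.25ex\vert\kern-0.25ex\vert d\phi-dt\vert\kern-0.25ex\vert\kern-0.25ex\vert_{s_0,2,\Sigma}$; you identify the right target norm but would need this specific lemma (not just the argument pattern from Lemma \ref{te1}) to close the $\sigma=s_0-2$ case. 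Aside from that, the structure, the lemmas invoked, and the way the $(1+\vert\kern-0.25ex\vert\kern-0.25ex\vert d\phi-dt\vert\kern-0.25ex\vert\kern-0.25ex\vert_{s_0,2,\Sigma})$ factor arises are all as in the paper.
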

\begin{proof}
Applying the operator $L$ to \eqref{333}, we have
\begin{equation*}
  \mathbf{T} Lf=LF+[L,\mathbf{T}]f.
\end{equation*}
Choosing the change of coordinates $x_3 \rightarrow x_3-\phi(t,x')$ and setting $\widetilde{f}=f(x_1,x_2,x_3-\phi(t,x'))$, then the above equation transforms to
\begin{equation*}
\begin{split}
  (\partial_t+ \partial_t \phi \partial_{x_3}) \widetilde{Lf}+ \tilde{v}^i \cdot (\partial_{x_i}+\partial_{x_i} \phi \partial_{x_3} ) \widetilde{Lf}= & \widetilde{LF}+\widetilde{[L,\mathbf{T}]f}.
  \end{split}
\end{equation*}
Rewrite it as
\begin{equation}\label{Q}
\begin{split}
  \partial_t\widetilde{Lf}+ (\tilde{\bv} \cdot \nabla)\widetilde{Lf}= & \widetilde{LF}+\widetilde{[L,\mathbf{T}]f}
- \partial_t \phi \partial_{x_3}\widetilde{Lf}-(\tilde{\bv} \cdot \nabla) \phi \partial_{x_3} \widetilde{Lf} .
  \end{split}
\end{equation}
Multiplying $\widetilde{Lf}$ and integrating it on $\mathbb{R}^{+} \times \mathbb{R}^3$, we can show that
\begin{equation}\label{LL}
  \begin{split}
  \vert\kern-0.25ex\vert\kern-0.25ex\vert Lf\vert\kern-0.25ex\vert\kern-0.25ex\vert^2_{0,2,\Sigma} \lesssim \ &\big| \int^2_{-2} \int_{\mathbb{R}^3} LF \cdot Lfdx d\tau \big|+ \| \partial \bv\|_{L^2_t L^\infty_x}(1+ \|\partial \phi\|_{L^\infty_x})\|Lf\|^2_{L^2_x}
  \\
  & \ +\|[L,\mathbf{T}]f\|_{L^2_t L^2_x}\|Lf\|^2_{L^2_x}
  \end{split}
\end{equation}
Considering $L= \partial (-\Delta)^{-1}\mathrm{curl}$, by commutator estimates in Lemma \ref{ceR}, we get
\begin{equation}\label{LL0}
\begin{split}
  \|[L,\mathbf{T}]f\|_{L^2_x} \lesssim & \ \|\partial \bv\|_{L^\infty}\|f \|_{L^2_x}.
\end{split}
\end{equation}
By elliptic estimates, we also have
\begin{equation}\label{LL1}
  \|Lf\|^2_{L^2_x} \lesssim \|f\|^2_{L^2_x}.
\end{equation}
By using \eqref{LL}, \eqref{LL0}, and \eqref{LL1}, it can give us
\begin{equation}\label{te30}
  \vert\kern-0.25ex\vert\kern-0.25ex\vert Lf\vert\kern-0.25ex\vert\kern-0.25ex\vert^2_{0,2,\Sigma} \lesssim  \ \| \partial v\|_{L^2_t L^\infty_x}(1+ \|\partial \phi\|_{L^2_tL^\infty_x})\|f\|^2_{L^2_x} + \big| \int^2_{-2} \int_{\mathbb{R}^3} LF \cdot Lfdx d\tau \big|.
\end{equation}
It remains for us to estimate the high order term. Taking derivatives $\Lambda^{s_0-2}_{x'}$ on \eqref{Q}, we have
\begin{equation}\label{Q0}
\begin{split}
  \partial_t \Lambda^{s_0-2}_{x'}\widetilde{Lf}+ (\tilde{\bv} \cdot \nabla) \Lambda^{s_0-2}_{x'}\widetilde{Lf}=
  & \Lambda^{s_0-2}_{x'}\left( \widetilde{[L,\mathbf{T}]f} \right)-\Lambda^{s_0-2}_{x'}( \partial_t \phi \partial_{x_3}\widetilde{Lf})
  \\
  & +\Lambda^{s_0-2}_{x'}\widetilde{LF} -\Lambda^{s_0-2}_{x'}\{(\tilde{\bv} \cdot \nabla) \phi \partial_{x_3} \widetilde{Lf}\}
  \\
  & -[\Lambda^{s_0-2}_{x'}, \tilde{v}^i \partial_{x_i}]\widetilde{Lf} .
  \end{split}
\end{equation}
Multiplying $\Lambda^{s_0-2}_{x'}\widetilde{Lf}$ on \eqref{Q0} and integrating it on $[-2,2]\times \mathbb{R}^3$, we have
\begin{equation*}
  \begin{split}
  \|\Lambda^{s_0-2}_{x'}\widetilde{Lf}\|_{L^2_{\Sigma}} \lesssim & \ \|d\bv\|_{L^1_t L^\infty_x}\|\Lambda^{s_0-2}_{x'}Lf\|^2_{L^{2}_x}+
   \|\Lambda^{s_0-2}_{x'} ( {[L,\mathbf{T}]f} )\|_{L^1_t L^2_x}\|\Lambda^{s_0-2}_{x'}Lf\|_{L^{2}_x}
  \\
  & \ +\big|\int^t_0 \Lambda^{s_0-2}_{x'}{LF} \cdot \Lambda^{s_0-2}_{x'}Lfdxd\tau \big|
  + \| [\Lambda^{s_0-2}_{x'}, \tilde{\bv} \cdot \nabla ]{Lf}\|_{L^1_t L^2_x}\|\Lambda^{s_0-2}_{x'}Lf\|_{L^{2}_x}
  \\
  & \ +\big| \int^2_{-2}\int_{\mathbb{R}^3} \big( \Lambda^{s_0-2}_{x'}( \partial_t \phi \partial_{x_3}{Lf})-\Lambda^{s_0-2}_{x'}\{(\tilde{\bv} \cdot \nabla) \phi \partial_{x_3} {Lf}\} \big)\Lambda^{s_0-2}_{x'}{Lf}dxd\tau \big|.
  \end{split}
\end{equation*}
We will estimate the right terms one by one. By using elliptic estimates, we can prove
\begin{equation}\label{Q1}
  \|d\bv\|_{L^1_t L^\infty_x}\|\Lambda^{s_0-2}_{x'}Lf\|^2_{L^{2}_x} \leq \|d\bv\|_{L^2_t L^2_x}\|f\|^2_{\dot{H}^{s_0-2}_x},
\end{equation}
and
\begin{equation}\label{Q40}
\begin{split}
   \|\Lambda^{s_0-2}_{x'} ( {[L,\mathbf{T}]f} )\|_{L^1_t L^2_x} \|\Lambda^{s_0-2}_{x'}Lf\|_{L^{2}_x}
  \lesssim  \ \|[L,\mathbf{T}]f\|_{L^1_t \dot{H}^{s_0-2}_x} \|f\|_{\dot{H}^{s_0-2}_x}.
\end{split}
\end{equation}
For the right term $\|[L,\mathbf{T}]f\|_{L^1_t \dot{H}^{s_0-2}_x}$ in \eqref{Q40}, by using Lemma \ref{ceR}, we have
\begin{equation*}
\begin{split}
  \|[L,\mathbf{T}]f\|_{L^1_t \dot{H}^{s_0-2}_x} & \lesssim \|\bv\|_{L^1_t\dot{B}^1_{\infty,\infty}}   \| f \|_{\dot{H}^{s_0-2}_x}+ \| \bv\|_{L^1_t\dot{B}^{s_0-1}_{\infty,\infty}}  \| f \|_{L^2_x}
  \\
 & \lesssim \| \partial \bv\|_{L^2_t L^{\infty}_x}\|f \|_{\dot{H}^{s_0-2}_x}+ \| \partial \bv\|_{L^2_t \dot{B}^{s_0-2}_{\infty,2}} \|f \|_{L^2_x}
 \\
 & \lesssim \big( \| \partial \bv\|_{L^2_t\dot{B}^{s_0-2}_{\infty,2}}+\| \partial \bv\|_{L^2_tL^{\infty}_x} \big) \|f\|_{{H}^{s_0-2}_x}.
\end{split}
\end{equation*}
Substituting in \eqref{Q40}, we obtain
\begin{equation}\label{Q4}
\begin{split}
   \|\Lambda^{s_0-2}_{x'} ( {[L,\mathbf{T}]f} )\|_{L^1_t L^2_x} \|\Lambda^{s_0-2}_{x'}Lf\|_{L^{2}_x}
  \lesssim  \big( \| \partial \bv\|_{L^2_t\dot{B}^{s_0-2}_{\infty,2}}+\| \partial \bv\|_{L^2_tL^{\infty}_x} \big) \|f\|^2_{{H}^{s_0-2}_x}.
\end{split}
\end{equation}
Using Lemma \ref{ce}, we also have
\begin{equation}\label{Q5}
\begin{split}
 \| [\Lambda^{s_0-2}_{x'}, \tilde{\bv} \cdot \nabla]{Lf}\|_{L^1_t L^2_x}\|\Lambda^{s_0-2}_{x'}Lf\|_{L^{2}_x}
\leq & \ \| [\Lambda^{s_0-2}_{x}, \tilde{\bv} \cdot \nabla]{Lf}\|_{L^1_t L^2_x}\|\Lambda^{s_0-2}_{x}Lf\|_{L^{2}_x}
\\
\lesssim & \ \| [\Lambda^{s_0-2}_{x}, \tilde{\bv} \cdot \nabla]{Lf}\|_{L^1_t L^2_x}\|f\|_{\dot{H}^{s_0-2}_x}
\\
\lesssim & \ \| \partial \bv\|_{L^1_t \dot{B}^0_{\infty,2}} \|Lf\|_{\dot{H}^{s_0-2}_x}\|f\|_{\dot{H}^{s_0-2}_x}
\\
\lesssim & \ \| \partial \bv\|_{L^2_t \dot{B}^0_{\infty,2}} \|f\|^2_{\dot{H}^{s_0-2}_x}.
\end{split}
\end{equation}
For $\phi$ is independent with $x_3$, we have
\begin{equation}\label{jhz}
\begin{split}
  & \int^2_{-2}\int_{\mathbb{R}^3} \left( \Lambda^{s_0-2}_{x'}( \partial_t \phi \partial_{x_3}{Lf})-\Lambda^{s_0-2}_{x'}\{ (\tilde{\bv} \cdot \nabla) \phi \partial_{x_3} {Lf}\} \right)\Lambda^{s_0-2}_{x'}{Lf}dxd\tau
  \\
  = \ & \int^2_{-2}\int_{\mathbb{R}^3} \left( [\Lambda^{s_0-2}_{x'}, \partial_t \phi \partial_{x_3}]{Lf})-[\Lambda^{s_0-2}_{x'}, (\tilde{\bv} \cdot \nabla) \phi \partial_{x_3}]{Lf} \right)\Lambda^{s_0-2}_{x'}{Lf}dxd\tau
  \\
  & + \int^2_{-2}\int_{\mathbb{R}^3} \left( \partial_t \phi \partial_{x_3}\Lambda^{s_0-2}_{x'}{Lf} - (\tilde{\bv} \cdot \nabla) \phi \partial_{x_3}\Lambda^{s_0-2}_{x'}{Lf} \right)\Lambda^{s_0-2}_{x'}{Lf}dxd\tau
  \\
  = \ & \int^2_{-2}\int_{\mathbb{R}^3} \left( [\Lambda^{s_0-2}_{x'}, \partial_t \phi \partial_{x_3}]{Lf})-[\Lambda^{s_0-2}_{x'}, (\tilde{\bv} \cdot \nabla) \phi \partial_{x_3}]{Lf} \right)\Lambda^{s_0-2}_{x'}{Lf}dxd\tau
  \\
  & + \int^2_{-2}\int_{\mathbb{R}^3} \partial_{x_3} \tilde{v}^i \partial_{x_i} \phi \Lambda^{s_0-2}_{x'}{Lf} \Lambda^{s_0-2}_{x'}{Lf}dxd\tau.
\end{split}
\end{equation}
We will estimate the right terms on \eqref{jhz}. For the first one and the second one, we use Lemma \ref{ce} and Lemma \ref{LPE} to bound
\begin{equation}\label{r1E}
\begin{split}
  & \big( \|[\Lambda^{s_0-2}_{x'}, \partial_t \phi \partial_{x_3}]{Lf}\|_{L^1_tL^2_x}+\|[\Lambda^{s_0-2}_{x'}, (\tilde{\bv} \cdot \nabla) \phi \partial_{x_3}]{Lf}\|_{L^1_tL^2_x} \big) \|Lf\|_{\dot{H}^{s_0-2}_x}
  \\
  \lesssim & \big( \|\partial(\partial_t \phi)\|_{L^1_t \dot{H}^{s_0-2}_x} + \partial(\bv \partial \phi)\|_{L^1_t \dot{H}^{s_0-2}_x} \big)\|Lf\|^2_{\dot{H}^{s_0-2}_x}
  \\
  \lesssim & \big(\| \partial(d \phi)\|_{L^2_t \dot{B}^{s_0-2}_{\infty,2}} + \|\partial \bv\|_{L^2_t \dot{B}^{s_0-2}_x} \|\partial \phi\|_{L^2_t L^\infty_x} + \|\partial \phi\|_{L^2_t C^{\beta}_x}\|\partial \bv\|_{L^2_t L^\infty_x} \big)\|Lf\|^2_{\dot{H}^{s_0-2}_x},
\end{split}
\end{equation}
where we take $\beta$=$s_0-\frac32-\epsilon_0>s_0-2$. By Sobolev imbedding, we can get
\begin{equation*}
  \| \partial(d \phi)\|_{L^2_t \dot{B}^{s_0-2}_{\infty,2}} \lesssim \| \partial(d \phi)\|_{L^2_t C^{\beta}} \lesssim  \| \partial(d \phi-dt)\|_{L^2_t H_{x'}^{s_0-1}(\Sigma)} \lesssim \vert\kern-0.25ex\vert\kern-0.25ex\vert d\phi-dt \vert\kern-0.25ex\vert\kern-0.25ex\vert_{s_0,2,\Sigma},
\end{equation*}
and
\begin{equation*}
  \|\partial \phi\|_{L^2_t L^\infty_x} + \|\partial \phi\|_{L^2_t C^{\beta}_x} \leq 1+\vert\kern-0.25ex\vert\kern-0.25ex\vert d \phi-dt\vert\kern-0.25ex\vert\kern-0.25ex\vert_{L^2_t C^{\beta}_x}\lesssim 1+ \vert\kern-0.25ex\vert\kern-0.25ex\vert d\phi-dt\vert\kern-0.25ex\vert\kern-0.25ex\vert_{s_0,2,\Sigma}.
\end{equation*}
Substituting in \eqref{r1E}, we can rewrite \eqref{r1E} as
\begin{equation}\label{r2E}
\begin{split}
  & \big( \|[\Lambda^{s_0-2}_{x'}, \partial_t \phi \partial_{x_3}]{Lf}\|_{L^1_tL^2_x}+\|[\Lambda^{s_0-2}_{x'}, (\tilde{\bv} \cdot \nabla) \phi \partial_{x_3}]{Lf}\|_{L^1_tL^2_x} \big) \|Lf\|_{\dot{H}^{s_0-2}_x}
  \\
  \lesssim & \|f\|^2_{\dot{H}^{s_0-2}_x} \big( \vert\kern-0.25ex\vert\kern-0.25ex\vert d\phi-dt\vert\kern-0.25ex\vert\kern-0.25ex\vert_{s_0,2,\Sigma}+ (\|\partial \bv\|_{L^2_t \dot{B}^{s_0-2}_x}+\|\partial \bv\|_{L^2_t L^\infty_x})(1+\vert\kern-0.25ex\vert\kern-0.25ex\vert d\phi-dt\vert\kern-0.25ex\vert\kern-0.25ex\vert_{s_0,2,\Sigma}) \big).
\end{split}
\end{equation}
The third term on the right hand side of \eqref{jhz} can be bounded by
\begin{equation}\label{r3E}
\begin{split}
  & \left|\int^2_{-2}\int_{\mathbb{R}^3} \partial_{x_3}\tilde{v}^i \partial_{x_i} \phi \Lambda^{s_0-2}_{x'}{Lf} \Lambda^{s_0-2}_{x'}{Lf}dxd\tau \right|
  \\
  \lesssim & \| \partial \bv\|_{L^2_t L^\infty_x} \|\partial \phi\|_{L^2_t L^\infty_x} \|Lf\|^2_{\dot{H}^{s_0-2}_x}
  \\
  \lesssim & (1+ \vert\kern-0.25ex\vert\kern-0.25ex\vert d\phi-dt\vert\kern-0.25ex\vert\kern-0.25ex\vert_{s_0,2,\Sigma})\| \partial \bv\|_{L^2_t L^\infty_x}\|f\|^2_{H^{s_0-2}_x}.
\end{split}
\end{equation}
Substituting \eqref{r2E} and \eqref{r3E}, we can estimate \eqref{jhz} by
\begin{equation}\label{Q6}
 \begin{split}
 & \left|\int^2_{-2}\int_{\mathbb{R}^3} \big( \Lambda^{s_0-2}_{x'}( \partial_t \phi \partial_{x_3}{Lf})-\Lambda^{s_0-2}_{x'}\{ (\bv \cdot \nabla) \phi \partial_{x_3} {Lf} \} \big) \Lambda^{s_0-2}_{x'}{Lf}  dxd\tau \right|
\\
 \lesssim & \|f\|^2_{\dot{H}^{s_0-2}_x} \big( \vert\kern-0.25ex\vert\kern-0.25ex\vert d\phi-dt\vert\kern-0.25ex\vert\kern-0.25ex\vert_{s_0,2,\Sigma}+ (\|\partial \bv\|_{L^2_t \dot{B}^{s_0-2}_x}+\|\partial \bv\|_{L^2_t L^\infty_x})(1+\vert\kern-0.25ex\vert\kern-0.25ex\vert d\phi-dt\vert\kern-0.25ex\vert\kern-0.25ex\vert_{s_0,2,\Sigma}) \big)
 \\
 \lesssim & \|f\|^2_{\dot{H}^{s_0-2}_x} \big( \vert\kern-0.25ex\vert\kern-0.25ex\vert d\phi-dt\vert\kern-0.25ex\vert\kern-0.25ex\vert_{s_0,2,\Sigma}+ \|\partial \bv\|_{L^2_t \dot{B}^{s_0-2}_x})(1+\vert\kern-0.25ex\vert\kern-0.25ex\vert d\phi-dt\vert\kern-0.25ex\vert\kern-0.25ex\vert_{s_0,2,\Sigma}) \big),
  \end{split}
\end{equation}
where we use the fact that $\|\partial \bv\|_{L^2_t L_x^\infty} \leq \|\partial \bv\|_{L^2_t \dot{B}^{s_0-2}_x}$.
Combining \eqref{Q1} to \eqref{Q6}, we can get
\begin{equation}\label{te31}
\begin{split}
  \vert\kern-0.25ex\vert\kern-0.25ex\vert \Lambda^{s_0-2}_{x'} Lf\vert\kern-0.25ex\vert\kern-0.25ex\vert^2_{0,2,\Sigma} \lesssim & \ \big( \| \partial \bv\|_{L^2_t\dot{B}^{s_0-2}_{\infty,2}}+\vert\kern-0.25ex\vert\kern-0.25ex\vert d\phi-dt \vert\kern-0.25ex\vert\kern-0.25ex\vert_{s_0,2,\Sigma} \big) \|f\|^2_{{H}^{s_0-2}_x}(1+\vert\kern-0.25ex\vert\kern-0.25ex\vert d\phi-dt \vert\kern-0.25ex\vert\kern-0.25ex\vert_{s_0,2,\Sigma})
  \\
  & + \left|\int^t_0 \int_{\mathbb{R}^3} \Lambda^{s_0-2}_{x'}{LF} \cdot \Lambda^{s_0-2}_{x'}Lfdxd\tau \right|.
\end{split}
\end{equation}
Adding \eqref{te30} and \eqref{te31}, we have
\begin{equation*}
\begin{split}
  \vert\kern-0.25ex\vert\kern-0.25ex\vert  Lf\vert\kern-0.25ex\vert\kern-0.25ex\vert^2_{s_0-2,2,\Sigma} \lesssim & \ \big( \| \partial \bv\|_{L^2_t\dot{B}^{s_0-2}_{\infty,2}}+\vert\kern-0.25ex\vert\kern-0.25ex\vert d\phi-dt \vert\kern-0.25ex\vert\kern-0.25ex\vert_{s_0,2,\Sigma} \big) \|f\|^2_{{H}^{s_0-2}_x}(1+\vert\kern-0.25ex\vert\kern-0.25ex\vert d\phi-dt \vert\kern-0.25ex\vert\kern-0.25ex\vert_{s_0,2,\Sigma})
  \\
  & + \sum_{\sigma\in\{0,s_0-2 \}} \left|\int^t_0 \int_{\mathbb{R}^3} \Lambda^{\sigma}_{x'}{LF} \cdot \Lambda^{\sigma}_{x'}Lfdxd\tau \right|.
\end{split}
\end{equation*}
Therefore, we complete the proof of Lemma \ref{te3}.
\end{proof}

We also need to give the estimate of $\mathrm{curl} \bw$ along the characteristic hypersurfaces, for $\mathrm{curl} \bw$ is a nonlinear term in the wave equation of the velocity. Then, $\mathrm{curl} \bw$ decides the regularity of the characteristic hypersurfaces.
\begin{Lemma}\label{te20}
Suppose that $(\bv, \rho, h, \bw) \in \mathcal{H}$. 
Then
\begin{equation}\label{te201}
\begin{split}
 \vert\kern-0.25ex\vert\kern-0.25ex\vert  \mathrm{curl}\bw  \vert\kern-0.25ex\vert\kern-0.25ex\vert_{s_0-1,2,\Sigma} \lesssim  \epsilon_2.
   \end{split}
\end{equation}
\end{Lemma}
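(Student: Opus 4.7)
The plan is to apply the characteristic energy estimate of Lemma \ref{te3} to the modified transport equation \eqref{W2} for the composite quantity $f^i := \mathrm{curl}\mathrm{curl}\bw^i + F^i$ (with $F^i$ as in \eqref{Fb}), rather than to $\bw$ or $\mathrm{curl}\bw$ directly. The reason for this choice is the algebraic identity $L(\mathrm{curl}\mathrm{curl}\bw) = \partial\mathrm{curl}\bw$ for the zero-order operator $L = \partial(-\Delta)^{-1}\mathrm{curl}$ appearing in Lemma \ref{te3}; this identity holds because $\mathrm{curl}(\mathrm{curl}\mathrm{curl}\bw) = -\Delta\mathrm{curl}\bw$, using $\mathrm{div}\circ\mathrm{curl}=0$. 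Consequently an $\vert\kern-0.25ex\vert\kern-0.25ex\vert\cdot\vert\kern-0.25ex\vert\kern-0.25ex\vert_{s_0-2,2,\Sigma}$-bound on $Lf$ produces exactly the $\vert\kern-0.25ex\vert\kern-0.25ex\vert\cdot\vert\kern-0.25ex\vert\kern-0.25ex\vert_{s_0-1,2,\Sigma}$-bound on $\mathrm{curl}\bw$ asserted in \eqref{te201}.

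Concretely, from \eqref{W2} we have $\mathbf{T} f^i = \partial^i(2\partial_n v_a \partial^n w^a) + K^i$, and Lemma \ref{te3} yields
\begin{equation*}
\vert\kern-0.25ex\vert\kern-0.25ex\vert Lf\vert\kern-0.25ex\vert\kern-0.25ex\vert^2_{s_0-2,2,\Sigma} \lesssim \bigl(\|\partial\bv\|_{L^2_t\dot B^{s_0-2}_{\infty,2}} + \vert\kern-0.25ex\vert\kern-0.25ex\vert d\phi-dt\vert\kern-0.25ex\vert\kern-0.25ex\vert_{s_0,2,\Sigma}\bigr)\|f\|^2_{H^{s_0-2}_x}\bigl(1+\vert\kern-0.25ex\vert\kern-0.25ex\vert d\phi-dt\vert\kern-0.25ex\vert\kern-0.25ex\vert_{s_0,2,\Sigma}\bigr) + \mathcal{P},
\end{equation*}
where $\mathcal{P}$ denotes the sum over $\sigma\in\{0,s_0-2\}$ of the duality pairings $|\int^t_0\int_{\mathbb{R}^3}\Lambda^\sigma_{x'} L[\partial^i(\partial_n v_a \partial^n w^a) + K^i]\cdot\Lambda^\sigma_{x'} Lf\,dx\,d\tau|$. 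Under the bootstrap assumption $G(\bv,\rho,h) \leq 2\epsilon_1$, the Strichartz bound \eqref{403}, and Proposition \ref{r1}, the first prefactor is $\lesssim \epsilon_2$; meanwhile $\|f\|_{H^{s_0-2}_x} \lesssim \|\bw\|_{H^{s_0}_x} + \|F\|_{H^{s_0-2}_x} \lesssim \epsilon_2$ by Lemma \ref{ps} applied to the explicit expression \eqref{Fb} together with \eqref{5021}.

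The hard step is estimating the pairing $\mathcal{P}$. The leading forcing $\partial^i(\partial_n v_a \partial^n w^a)$ has divergence structure, so the outer $\partial^i$ is transferred by integration by parts onto $\Lambda^\sigma_{x'} Lf$; the identity $\mathrm{div}(\mathrm{curl}\mathrm{curl}\bw)\equiv 0$ then annihilates the top-order piece and leaves cross terms of precisely the type controlled in \eqref{I41}--\eqref{I144} of the proof of Theorem \ref{ve}. A Plancherel redistribution of derivatives (splitting $s_0-2$ as $(s_0-\tfrac{5}{2})+\tfrac{1}{2}$), together with Lemma \ref{lpe}, Lemma \ref{wql}, Lemma \ref{LPE}, and Lemma \ref{ce}, then bounds these contributions by $\epsilon_2^2\vert\kern-0.25ex\vert\kern-0.25ex\vert Lf\vert\kern-0.25ex\vert\kern-0.25ex\vert_{s_0-2,2,\Sigma} + \epsilon_2^3$. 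The purely nonlinear remainder $K^i$ of \eqref{rF} is cubic-or-higher in the controlled first derivatives and is handled identically. Absorbing $\vert\kern-0.25ex\vert\kern-0.25ex\vert Lf\vert\kern-0.25ex\vert\kern-0.25ex\vert_{s_0-2,2,\Sigma}$ on the left then gives $\vert\kern-0.25ex\vert\kern-0.25ex\vert Lf\vert\kern-0.25ex\vert\kern-0.25ex\vert_{s_0-2,2,\Sigma} \lesssim \epsilon_2$.

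Subtracting the controlled $LF$-piece (again by Lemma \ref{ps} and \eqref{5021}) yields $\vert\kern-0.25ex\vert\kern-0.25ex\vert\partial\mathrm{curl}\bw\vert\kern-0.25ex\vert\kern-0.25ex\vert_{s_0-2,2,\Sigma} \lesssim \epsilon_2$. The tangential components $\partial_{x_1}\mathrm{curl}\bw$ and $\partial_{x_2}\mathrm{curl}\bw$ then directly furnish the $s_0-1$ tangential derivatives of $\mathrm{curl}\bw$ on each $\Sigma^t$; the $L^2(\Sigma)$ piece comes from Lemma \ref{te1} applied to the transport equation \eqref{W0}; and the time-derivative component $\|\partial_t\mathrm{curl}\bw\|_{H^{s_0-2}(\Sigma^t)}$ in the definition \eqref{d0} is obtained by substituting $\partial_t\mathrm{curl}\bw = -\bv\cdot\nabla\mathrm{curl}\bw + (\text{RHS of \eqref{W1}})$ and invoking the bound on $\partial\mathrm{curl}\bw$ together with \eqref{5021}. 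The principal obstacle throughout is the careful bookkeeping of commutators among $L$, the tangential operators $\Lambda^\sigma_{x'}$, and the material derivative $\mathbf{T}$ on the curved hypersurface $\Sigma$; these commutator errors are absorbed using the regularity of $\phi$ from Proposition \ref{r1} together with Lemma \ref{ceR} and Lemma \ref{ce}.
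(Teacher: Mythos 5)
Your structural outline -- Hodge decomposition reducing the problem to $L(\mathrm{curl}\,\mathrm{curl}\,\bw)$, application of Lemma \ref{te3} to a transported composite quantity, integration by parts on the divergence-form source $\partial^i\mathcal{G}$, commutator/Plancherel control of the duality pairing, subtraction of the lower-order pieces, and a separate treatment of the time derivative -- does mirror the paper's proof. The concrete gap is in your choice of transported quantity, and it is precisely the point where the paper is careful.

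You apply Lemma \ref{te3} to $f^i := \mathrm{curl}\,\mathrm{curl}\,\bw^i + F^i$ with $F^i$ the full expression from \eqref{Fb}, which contains the piece $-\mathrm{e}^{-\rho}\epsilon^{ijk}\partial_k h\,\Delta v_j$. To pass from $\vert\kern-0.25ex\vert\kern-0.25ex\vert Lf\vert\kern-0.25ex\vert\kern-0.25ex\vert_{s_0-2,2,\Sigma}$ to $\vert\kern-0.25ex\vert\kern-0.25ex\vert L\,\mathrm{curl}\,\mathrm{curl}\,\bw\vert\kern-0.25ex\vert\kern-0.25ex\vert_{s_0-2,2,\Sigma}$ you must bound $\vert\kern-0.25ex\vert\kern-0.25ex\vert LF\vert\kern-0.25ex\vert\kern-0.25ex\vert_{s_0-2,2,\Sigma}$, and you claim this follows from Lemma \ref{ps} and \eqref{5021}. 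That works for three of the four pieces of $F^i$ -- those are genuinely one order below $\mathrm{curl}\,\mathrm{curl}\,\bw$ -- but it fails for $\partial h\,\Delta v$. The interior regularity of $\mathrm{e}^{-\rho}\partial h\,\Delta v$ is $H^{s-2}_x$ (product of $\partial h\in H^{s_0}_x$ and $\Delta\bv\in H^{s-2}_x$, via Lemma \ref{ps}), whereas controlling the characteristic norm $\vert\kern-0.25ex\vert\kern-0.25ex\vert\cdot\vert\kern-0.25ex\vert\kern-0.25ex\vert_{s_0-2,2,\Sigma}$ via the trace inequality used in \eqref{RR}--\eqref{RR1} needs interior regularity at least $H^{s_0-\frac32}_x$. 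Since $s\le\frac52$ and $s_0>2$ we always have $s-2 < s_0-\frac32$, so the trace estimate never covers this term. And this is not a technicality you can dodge by being cleverer with product estimates: by the Hodge identity $\Delta\bv = \nabla\,\mathrm{div}\,\bv - \mathrm{curl}(\mathrm{e}^{\rho}\bw)$, the quantity $\Delta\bv$ sits at exactly the same regularity level as $\mathrm{curl}\,\bw$; treating $\partial h\,\Delta v$ as a subtractable lower-order remainder is therefore circular with the very bound you are trying to prove.

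The paper's proof escapes by using a different composite $\underline{F}^i$ (defined just above \eqref{TO}), which keeps $\mathrm{curl}\,\mathrm{curl}\,\bw^i$ and the three genuinely lower-order pieces $-\epsilon^{ijk}\partial_j\rho\,\mathrm{curl}\bw_k$, $-2\partial^a\rho\,\partial^i w_a$, $2\mathrm{e}^{-\rho}\epsilon^{ijk}\partial_j v^m\partial_m\partial_k h$, but \emph{excludes} the $\Delta v$ piece. The price is that the transport equation \eqref{TO} for $\underline{F}^i$ then has the additional source $\mathbf{T}\bigl(-\mathrm{e}^{-\rho}\epsilon^{ijk}\partial_k h\,\Delta v_j\bigr)$, wrapped in a total material derivative. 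That extra source is handled separately as the pairing $\bar{\mathrm{J}}$: because it appears inside $\mathbf{T}(\cdot)$, one can integrate by parts in $\mathbf{T}$ (see $\bar{\mathrm{J}}_{234}$, which collapses to $\tfrac12\int\mathbf{T}\bigl|\Lambda^{\sigma}_{x'}L(\mathrm{e}^{-\rho}\partial h\,\Delta v)\bigr|^2$) so that only interior $L^\infty_t H^{s_0-2}_x$ norms of $\partial h\,\Delta v$ appear -- exactly the ones you do control. The final subtraction \eqref{Rr} then only removes the three trace-admissible pieces, each of which lies in $H^{s-\frac32}_x$. If you rework your proposal with $\underline{F}^i$ in place of $f^i$ and add the $\bar{\mathrm{J}}$-type pairing for the extra $\mathbf{T}(\cdot)$ source, the argument closes and coincides with the paper's.
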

\begin{proof}
By \eqref{d0}, we see
\begin{equation}\label{C1}
\begin{split}
  \vert\kern-0.25ex\vert\kern-0.25ex\vert  \mathrm{curl}\bw  \vert\kern-0.25ex\vert\kern-0.25ex\vert_{s_0-1,2,\Sigma}
  = & \| \mathrm{curl}\bw \|_{L^2_t H^{s_0-1}_{x'}(\Sigma)}+ \| \partial_t \mathrm{curl}\bw\|_{L^2_t H^{s_0-2}_{x'}(\Sigma)}.
\end{split}
\end{equation}
We will divide it into several steps for estimating the right terms on \eqref{C1}.

\textbf{Step 1: $\| \mathrm{curl}\bw \|_{L^2_t H^{s_0-1}_{x'}(\Sigma)}$}. Noting \eqref{W1}, we can simply write it as
\begin{equation}\label{TOE0}
\mathbf{T}  \mathrm{curl} \bw^i = ( \mathrm{curl} \bw \cdot \nabla) v^i+J^i.
\end{equation}
Here
\begin{equation*}
  \begin{split}
  J^i=&-  \mathrm{curl}\bw^i \mathrm{div}\bv-2\epsilon^{imn}\partial_m v^j \partial_n w_j
+  \bar{\rho}^{\gamma-1}  \partial^l ( \mathrm{e}^{h+(\gamma-2)\rho}) \partial_l \rho \partial^i h
\\
&- \bar{\rho}^{\gamma-2}  \partial^l ( \mathrm{e}^{h+(\gamma-2)\rho}) \partial^i \rho \partial_l h
+\bar{\rho}^{\gamma-1} \mathrm{e}^{h+(\gamma-2)\rho}    \Delta \rho \partial^i h + \bar{\rho}^{\gamma-1} \mathrm{e}^{h+(\gamma-2)\rho}  \partial^m \rho \partial_m\partial^i h
 \\
 &- \bar{\rho}^{\gamma-1} \mathrm{e}^{h+(\gamma-2)\rho}    \partial_m \partial^i \rho  \partial^m h - \bar{\rho}^{\gamma-1} \mathrm{e}^{h+(\gamma-2)\rho}  \partial^i \rho \Delta h.
  \end{split}
\end{equation*}
By changing of coordinates $x_3 \rightarrow x_3-\phi(t,x')$, then \eqref{TOE0} becomes to
\begin{equation}\label{TOE}
\begin{split}
  \partial_t\widetilde{\mathrm{curl}\bw^i}+ (\tilde{\bv} \cdot \nabla)\widetilde{\mathrm{curl}\bw^i}= & (\widetilde{\mathrm{curl}\bw}\cdot \nabla)\widetilde{v^i}+\widetilde{\mathrm{curl}\bw^j} \partial_j \phi \partial_{x_3}\widetilde{v^i}
  \\
  &
- \partial_t \phi \partial_{x_3}\widetilde{\mathrm{curl}\bw^i}-(\tilde{\bv} \cdot \nabla) \phi \partial_{x_3} \widetilde{\mathrm{curl}\bw^i}+\widetilde{J}^i.
  \end{split}
\end{equation}
Multiplying $\widetilde{\mathrm{curl}\bw_i}$ and integrating it on $[-2,2]\times \mathbb{R}^3$, we can obtain
\begin{equation*}
  \begin{split}
  \| \mathrm{curl}\bw \|^2_{L^2_t L^2_{x'}(\Sigma)} \lesssim &  \|\partial \bv\|_{L^1_tL^\infty_x}  \| \mathrm{curl}\bw \|^2_{L^\infty_t L^2_x}(1+\|\partial \phi \|_{L^\infty_t L^\infty_x})
  \\
  & +  \|\bJ \|_{L^1_t L^2_x} \| \mathrm{curl}\bw \|_{L^\infty_t L^2_x}
  + \|\partial d\phi \|_{L^2_t L^\infty_x}\|\mathrm{curl}\bw\|^2_{L^\infty_t L^2_x}( 1+ \|\partial \bv\|_{L^2_tL^\infty_x} ).
  \end{split}
\end{equation*}
Using \eqref{402}, and \eqref{403}, we have
\begin{equation}\label{OM0}
  \| \mathrm{curl}\bw \|^2_{L^2_t L^2_{x'}(\Sigma)} \lesssim \epsilon^3_2.
\end{equation}
It remains for us to bound $\| \Lambda^{s_0-1}_{x'}(\mathrm{curl}\bw)\|_{L^2_t L^2_{x'}(\Sigma)}$. Let us first estimate $\|  \partial (\mathrm{curl}\bw)\|_{L^2_t H^{s_0-2}_{x'}(\Sigma)}$. By the Hodge's decomposition,
\begin{equation}\label{OM1}
\begin{split}
  \|  \partial (\mathrm{curl}\bw)\|_{L^2_t H^{s_0-2}_{x'}(\Sigma)} &= \| L ( \mathrm{curl} \mathrm{curl}\bw)+H ( \mathrm{div} \mathrm{curl}\bw)\|_{L^2_t H^{s_0-2}_{x'}(\Sigma)}
 \\
  & = \| L ( \mathrm{curl} \mathrm{curl}\bw)\|_{L^2_t H^{s_0-2}_{x'}(\Sigma)},
\end{split}
\end{equation}
where the operators $L$ and $H$ are given by $\partial(-\Delta_x)^{-1}\mathrm{curl}$ and $\partial(-\Delta_x)^{-1}\nabla$ respectively. By \eqref{W2}, we can get
\begin{equation}\label{TO}
\begin{split}
 \mathbf{T} \underline{F}^i
=  \partial^i \mathcal{G} + K^i+\mathbf{T}(-\frac{1}{\gamma}\mathrm{e}^{-\rho} \epsilon^{ijk} \partial_k h \Delta v_j),
\end{split}
\end{equation}
where we set $\underline{\bF}=(\underline{F}^1,\underline{F}^2,\underline{F}^3)$, $G=2 \partial_n v_a \partial^n w^a$, and
\begin{equation*}
 \underline{F}^i= \mathrm{curl} \mathrm{curl}\bw^i -\epsilon^{ijk} \partial_j \rho \cdot \mathrm{curl}\bw_k- 2\partial^a {\rho} \partial^i w_a+2 \mathrm{e}^{-\rho} \epsilon^{ijk} \partial_j v^m  \partial_m\partial_k h.
\end{equation*}
Operating the operator $L$ on \eqref{TO}, and using Lemma \ref{te3}, we can deduce that
\begin{equation}\label{L0}
\begin{split}
  \vert\kern-0.25ex\vert\kern-0.25ex\vert L \underline{\bF} \vert\kern-0.25ex\vert\kern-0.25ex\vert^2_{s_0-2,2,\Sigma} \lesssim  \ & \ \big( \| \partial \bv \|_{L^2_t\dot{B}^{s_0-2}_{\infty,2}}+\vert\kern-0.25ex\vert\kern-0.25ex\vert d\phi-dt \vert\kern-0.25ex\vert\kern-0.25ex\vert_{s_0,2,\Sigma} \big) \|\underline{\bF}\|^2_{{H}^{s_0-2}_x}(1+\vert\kern-0.25ex\vert\kern-0.25ex\vert d\phi-dt \vert\kern-0.25ex\vert\kern-0.25ex\vert_{s_0,2,\Sigma})
  \\
  & +  \sum_{\sigma \in \{0,s_0-2 \}}\left| \int^2_{-2} \int_{\mathbb{R}^3}  \Lambda_{x'}^{\sigma}\widetilde{LK^i} \cdot \Lambda_{x'}^{\sigma} \widetilde{L\underline{F}_i}dxd\tau    \right|
  \\
  & +  \sum_{\sigma \in \{0,s_0-2 \}}\left| \int^2_{-2} \int_{\mathbb{R}^3}  \Lambda_{x'}^{\sigma}\left\{(\partial^i+\partial^i \phi \partial^3)\widetilde{L\mathcal{G}}\right\} \cdot \Lambda_{x'}^{\sigma} \widetilde{L\underline{F}_i}dxd\tau    \right|
  \\
  & +  \sum_{\sigma \in \{0,s_0-2 \}}\left| \int^2_{-2} \int_{\mathbb{R}^3}  \Lambda_{x'}^{\sigma} \left\{\mathbf{T}\widetilde{\{L(-\mathrm{e}^{-\rho} \epsilon^{ijk} \partial_k h \Delta v_j)\}}\right\} \cdot \Lambda_{x'}^{\sigma} \widetilde{L\underline{F}_i}dxd\tau    \right|.
  \end{split}
\end{equation}
We set
\begin{equation*}
\begin{split}
\mathrm{I}&=\sum_{\sigma \in \{0,s_0-2 \}}\left| \int^2_{-2} \int_{\mathbb{R}^3}  \Lambda_{x'}^{\sigma}\widetilde{LK^i} \cdot \Lambda_{x'}^{\sigma} \widetilde{L\underline{F}_i}dxd\tau    \right|,
\\
\mathrm{J}&=   \sum_{\sigma \in \{0,s_0-2 \}}\left| \int^2_{-2} \int_{\mathbb{R}^3}  \Lambda_{x'}^{\sigma}\left\{(\partial^i+\partial^i \phi \partial^3)\widetilde{L\mathcal{G}}\right\} \cdot \Lambda_{x'}^{\sigma} \widetilde{L\underline{F}_i}dxd\tau    \right|,
\\
\bar{\mathrm{J}}&=\sum_{\sigma \in \{0,s_0-2 \}}\left| \int^2_{-2} \int_{\mathbb{R}^3}  \Lambda_{x'}^{\sigma} \left\{\mathbf{T}\widetilde{\{L(-\mathrm{e}^{-\rho} \epsilon^{ijk} \partial_k h \Delta v_j)\}}\right\} \cdot \Lambda_{x'}^{\sigma} \widetilde{L\underline{F}_i}dxd\tau    \right|.
\end{split}
\end{equation*}
For $\sigma \in \{0,s_0-2 \}$, we use H\"older's inequality to give the bound
\begin{equation}\label{J20}
\begin{split}
  \mathrm{I}
  \leq  &   \textstyle{\sum_{\sigma \in \{0,s_0-2 \}}}\| \Lambda_{x'}^{\sigma} L \bK \|_{L^1_t L^2_x} \|\Lambda_{x'}^{\sigma} L\underline{\bF } \|_{L^\infty_t L^2_x}
  \\
  \lesssim & \| \bK\|_{L^1_t H^{s_0-2}_x} \| \underline{\bF}\|_{L^\infty_t H^{s_0-2}_x}.
\end{split}
\end{equation}
Using \eqref{rF} and Sobolev inequalities, we get
\begin{equation}\label{R}
  \begin{split}
 \| \bK\|_{L^1_t H^{s_0-2}_x}
 \lesssim   & \|d \rho, d \bv, dh\|_{L^1_t \dot{B}^{s_0-2}_{\infty,2}}(\|\bv\|_{H^{s}}+\| \rho \|_{H^{s}}+\| \bw \|_{H^{s_0}}+\| h \|_{H^{s_0+1}})
 \\
 & + \|d \rho, d \bv, dh\|_{L^1_t \dot{B}^{s_0-2}_{\infty,2}}(\|\bv\|^2_{H^{s}}+\| \rho \|^2_{H^{s}}+\| \bw \|^2_{H^{s_0}}+\| h \|^2_{H^{s_0+1}})
 \\
 & + \|d \rho, d \bv, dh\|_{L^1_t \dot{B}^{s_0-2}_{\infty,2}}(\|\bv\|^3_{H^{s}}+\| \rho \|^3_{H^{s}}+\| \bw \|^3_{H^{s_0}}+\| h \|^3_{H^{s_0+1}})
 \\
 &+\|\bv\|^2_{H^{s}}+\| \rho \|^2_{H^{s}}+\| \bw \|^2_{H^{s_0}}+\| h \|^2_{H^{s_0+1}}
 +\|\bv\|^3_{H^{s}}+\| \rho \|^3_{H^{s}}
 \\
 &+\| \bw \|^3_{H^{s_0}}+\| h \|^3_{H^{s_0+1}}
 +\|\bv\|^4_{H^{s}}+\| \rho \|^4_{H^{s}}+\| \bw \|^4_{H^{s_0}}+\| h \|^4_{H^{s_0+1}}.
  \end{split}
\end{equation}
By H\"older's inequality, we also get
\begin{equation}\label{R1}
  \|\underline{\bF}\|_{L^\infty_t H^{s_0-2}_x} \lesssim \| \bw \|_{H_x^{s_0}}+ (\|\bv\|_{H_x^s}+\| \rho \|_{H^{s}}+\| \bw \|_{H^{s_0}}+\| h \|_{H^{s_0+1}})^2.
\end{equation}
Inserting \eqref{R} and \eqref{R1} into \eqref{J2} and using \eqref{401}-\eqref{403}, we have
\begin{equation}\label{Ij2}
\begin{split}
  \mathrm{I} \lesssim &\ \epsilon^2_2.
\end{split}
\end{equation}
For $\mathrm{J}$, we separate it as
\begin{equation*}
  \mathrm{J} \leq \sum_{\sigma \in \{0,s_0-2 \}}(|J^1|+ |J^2|+|J^3|+|J^4|+|J^5|+|J^6|+|J^7|+|J^8|),
\end{equation*}
where
\begin{equation*}
  \begin{split}
  J_1= &   \int^2_{-2} \int_{\mathbb{R}^3}  (\partial^i+\partial^i \phi \partial_{x_3}) \Lambda_{x'}^{\sigma}(\widetilde{L\mathcal{G}}) \cdot  \Lambda_{x'}^{\sigma} \widetilde{(L\mathrm{curl}\mathrm{curl}\bw_i)}dxd\tau ,
  \\
  J_2= &   \int^2_{-2} \int_{\mathbb{R}^3} [\Lambda_{x'}^{\sigma}, \partial^i+\partial^i \phi \partial_{x_3}](\widetilde{L\mathcal{G}}) \cdot \Lambda_{x'}^{\sigma} \widetilde{(L\mathrm{curl}\mathrm{curl}\bw_i)} dxd\tau ,
  \\
  J_3= &   \int^2_{-2} \int_{\mathbb{R}^3} (\partial^i+\partial^i \phi \partial_{x_3}) \Lambda_{x'}^{\sigma} \widetilde{L \big( 2 \partial_n v^a \partial^n w_a \big)} \cdot \Lambda_{x'}^{\sigma} \widetilde{L \big(-2 \partial_a \rho \partial_i w^a \big)}  dxd\tau ,
  \\
   J_4= &   \int^2_{-2} \int_{\mathbb{R}^3} [\Lambda_{x'}^{\sigma}, \partial^i+\partial^i \phi \partial_{x_3}]  \widetilde{L \big( 2  \partial_n v^a \partial^n w_a \big)} \cdot \Lambda_{x'}^{\sigma} \widetilde{L \big(-2 \partial_a \rho \partial_i w^a \big)}  dxd\tau
   \\
  J_5= &   \int^2_{-2} \int_{\mathbb{R}^3} (\partial^i+\partial^i \phi \partial_{x_3}) \Lambda_{x'}^{\sigma} \widetilde{L \big( 2  \partial_n v^a \partial^n w_a \big)} \cdot \Lambda_{x'}^{\sigma} \widetilde{L \big(2\mathrm{e}^{-\rho} \epsilon_{ijk}\partial^j v_m \partial^m\partial^k h \big)}  dxd\tau ,
  \\
   J_6= &   \int^2_{-2} \int_{\mathbb{R}^3} [\Lambda_{x'}^{\sigma}, \partial^i+\partial^i \phi \partial_{x_3}]  \widetilde{L \big( 2  \partial_n v^a \partial^n w_a \big)} \cdot \Lambda_{x'}^{\sigma} \widetilde{L \big( 2\mathrm{e}^{-\rho} \epsilon_{ijk}\partial^j v_m \partial^m\partial^k h \big)}  dxd\tau,
   \\
  J_7= &   \int^2_{-2} \int_{\mathbb{R}^3} (\partial^i+\partial^i \phi \partial_{x_3}) \Lambda_{x'}^{\sigma} \widetilde{L \big( 2  \partial_n v^a \partial^n w_a \big)} \cdot \Lambda_{x'}^{\sigma} \widetilde{L \big(-\mathrm{e}^{-\rho} \epsilon_{ijk}\partial^k h  \Delta v^j \big)}  dxd\tau ,
  \\
   J_8= &   \int^2_{-2} \int_{\mathbb{R}^3} [\Lambda_{x'}^{\sigma}, \partial^i+\partial^i \phi \partial_{x_3}]  \widetilde{L \big( 2  \partial_n v^a \partial^n w_a \big)} \cdot \Lambda_{x'}^{\sigma} \widetilde{L \big( -\mathrm{e}^{-\rho} \epsilon_{ijk}\partial^k h  \Delta v^j \big)}  dxd\tau.
  \end{split}
  \end{equation*}
Let us bound the above terms one by one. For $J_1$, we have
\begin{equation*}
\begin{split}
  J_1= &  \int^2_{-2} \int_{\mathbb{R}^3}  (\partial^i+\partial^i \phi \partial_{x_3}) \Lambda_{x'}^{\sigma}(\widetilde{L\mathcal{G}}) \cdot  \Lambda_{x'}^{\sigma} \widetilde{(L\mathrm{curl}\mathrm{curl}\bw_i )}dxd\tau
  \\
  =& -\int^2_{-2} \int_{\mathbb{R}^3}  \Lambda_{x'}^{\sigma}(\widetilde{L\mathcal{G}}) \cdot  (\partial^i+\partial^i \phi \partial_{x_3}) \big\{ \Lambda_{x'}^{\sigma} \widetilde{(L\mathrm{curl}\mathrm{curl}\bw_i)} \big\} dxd\tau
  \\
  =& -\int^2_{-2} \int_{\mathbb{R}^3}  \Lambda_{x'}^{\sigma}(\widetilde{L\mathcal{G}}) \cdot  [\partial^i+\partial^i \phi \partial_{x_3}, \Lambda_{x'}^{\sigma}]  \widetilde{(L\mathrm{curl}\mathrm{curl}\bw_i)}  dxd\tau
  \\
  & + \int^2_{-2} \int_{\mathbb{R}^3}  \Lambda_{x'}^{\sigma}(\widetilde{L\mathcal{G}}) \cdot  \Lambda_{x'}^{\sigma}  \big\{ (\partial^i+\partial^i \phi \partial_{x_3}) \widetilde{(L\mathrm{curl}\mathrm{curl}\bw_i)} \big\} dxd\tau .
\end{split}
\end{equation*}
Using the fact
\begin{equation*}
\begin{split}
  (\partial^i+\partial^i \phi \partial_{x_3}) \widetilde{(L\mathrm{curl}\mathrm{curl}\bw_i)}&= (\partial^i L\mathrm{curl}\mathrm{curl}\bw_i)(x_1,x_2,x_3+\phi(t,x'))
  \\
  &= \{L(\mathrm{div} \mathrm{curl}\mathrm{curl}\bw_i)\}(x_1,x_2,x_3+\phi(t,x')) =0,
\end{split}
\end{equation*}
then
\begin{equation*}
\begin{split}
  J_1= &  -\int^2_{-2} \int_{\mathbb{R}^3}  \Lambda_{x'}^{\sigma}(\widetilde{L\mathcal{G}}) \cdot  [\partial^i+\partial^i \phi \partial_{x_3}, \Lambda_{x'}^{\sigma}]  \widetilde{(L\mathrm{curl} \mathrm{curl}\bw_i)}  dxd\tau.
\end{split}
\end{equation*}
For $\sigma \in \{0,s_0-2 \}$, by H\"older's inequality and commutator estimates in Lemma \ref{ce}, we have
\begin{equation}\label{J1}
\begin{split}
  \textstyle{\sum_{\sigma \in \{0,s_0-2 \}}}(|J_1|+|J_2|)= &  \| L\mathcal{G} \|_{{H}_x^{s_0-2}} \|\partial \phi\|_{L^2_t L^\infty_x} \|L\mathrm{curl}\mathrm{curl}\bw\|_{{H}_x^{s_0-2}}
  \\
  \lesssim & \|\partial \phi\|_{L^2_t L^\infty_x}(\|\bv\|^3_{H^s_x}+ \|\bw\|^3_{H^{s_0}_x}+\|\rho\|^3_{{H}_x^{s}}).
\end{split}
\end{equation}
For $J_3$, we use the Plancherel formula in $\mathbb{R}^3$ which gives
\begin{equation}\label{J36}
\begin{split}
  J_3= &\int^2_{-2} \int_{\mathbb{R}^3} \partial^i \Lambda_{x'}^{\sigma} \widetilde{L \big( 2  \partial_n v^a \partial^n w_a \big)} \cdot  \Lambda_{x'}^{\sigma} \widetilde{L \big(-2 \partial_a \rho \partial_i w^a \big)}  dxd\tau
  \\
  & + \int^2_{-2} \int_{\mathbb{R}^3}   \partial_{x_3} \Lambda_{x'}^{\sigma} \widetilde{L \big( 2 \partial_n v^a \partial^n w_a \big)} \cdot \partial^i \phi  \Lambda_{x'}^{\sigma} \widetilde{L \big(-2 \partial_a \rho \partial_i w^a \big)}  dxd\tau
  \\
  = &\int^2_{-2} \int_{\mathbb{R}^3} \Lambda_x^{-\frac{1}{2}}\partial^i \Lambda_{x'}^{\sigma} \widetilde{L \big( 2 \partial_n v^a \partial^n w_a \big)} \cdot \Lambda_x^{\frac{1}{2}} \Lambda_{x'}^{\sigma} \widetilde{L \big(-2 \partial_a \rho \partial_i w^a \big)}  dxd\tau
  \\
  & + \int^2_{-2} \int_{\mathbb{R}^3}   \Lambda_x^{-\frac{1}{2}} \partial_{x_3} \Lambda_{x'}^{\sigma} \widetilde{L \big( 2 \partial_n v^a \partial^n w_a \big)} \cdot \Lambda_x^{\frac{1}{2}} \left( \partial^i \phi  \Lambda_{x'}^{\sigma} \widetilde{ L \big(-2 \partial_a \rho \partial_i w^a \big)} \right)  dxd\tau.
\end{split}
\end{equation}
For $\sigma \in \{0,s_0-2 \}$, by H\"older's inequality, we can obtain
\begin{equation*}
\begin{split}
  & \textstyle{\sum_{\sigma \in \{0,s_0-2 \}}}|J_3|
  \\
  = & \|  \partial \bv \partial \bw  \|_{L^\infty_t H^{s_0-\frac{3}{2}}_x}\|  \partial \rho \partial \bw  \|_{L^\infty_t H^{s_0-\frac{3}{2}}_x}+\|  \partial \phi  \|_{L^2_t C^{\frac{1}{2}}_x} \|  \partial \bv \partial \bw  \|_{L^\infty_t H^{s_0-\frac{3}{2}}_x}\|  \partial \rho \partial \bw  \|_{L^\infty_t H^{s_0-\frac{3}{2}}_x}
  \\
  \lesssim & \ \| \rho \|_{H^{s_0}} \| \bv \|_{H^{s_0}}\|\bw\|_{H^{s_0}}(1+ \vert\kern-0.25ex\vert\kern-0.25ex\vert d\phi-dt\vert\kern-0.25ex\vert\kern-0.25ex\vert_{s_0,2,\Sigma})
\end{split}
\end{equation*}
By using \eqref{401}-\eqref{403}, we get
\begin{equation}\label{J3}
\begin{split}
  \textstyle{\sum_{\sigma \in \{0,s_0-2 \}}}|J_3| \lesssim & \ \epsilon^2_2.
\end{split}
\end{equation}
For $\sigma \in \{0,s_0-2 \}$, by H\"older's inequality, Lemma \ref{ce}, and \eqref{401}-\eqref{403}, we have
\begin{equation}\label{J4}
\begin{split}
  \textstyle{\sum_{\sigma \in \{0,s_0-2 \}}}|J_4| \lesssim \vert\kern-0.25ex\vert\kern-0.25ex\vert d\phi-dt\vert\kern-0.25ex\vert\kern-0.25ex\vert_{s_0,2,\Sigma}(\| \rho \|^4_{H^{s_0}}+ \| \bv \|^4_{H^{s}}+\|\bw\|^4_{H^{s_0}})\lesssim \epsilon^2_2.
\end{split}
\end{equation}
Calculate
\begin{equation}\label{j10}
  \begin{split}
  &(\partial^i+\partial^i \phi \partial_{x_3}) \Lambda_{x'}^{\sigma} \widetilde{L \big( 2  \partial_n v^a \partial^n w_a \big)} \cdot \Lambda_{x'}^{\sigma} \widetilde{L \big(2\mathrm{e}^{-\rho} \epsilon_{ijk}\partial^j v_m \partial^m\partial^k h \big)}
  \\
  =& (\partial^i+\partial^i \phi \partial_{x_3}) \left\{ \Lambda_{x'}^{\sigma} \widetilde{L \big( 2 \partial_n v^a \partial^n w_a \big)} \cdot \Lambda_{x'}^{\sigma} \widetilde{L \big(2\mathrm{e}^{-\rho} \epsilon_{ijk}\partial^j v_m \partial^m\partial^k h \big)} \right\}
  \\
  & \ - \Lambda_{x'}^{\sigma} \widetilde{L \big( 2 \partial_n v^a \partial^n w_a \big)} \cdot \Lambda_{x'}^{\sigma} (\partial^i+\partial^i \phi \partial_{x_3})\widetilde{L \big(2\mathrm{e}^{-\rho} \epsilon_{ijk}\partial^j v_m \partial^m\partial^k h \big)}
  \\
  & \ + \Lambda_{x'}^{\sigma} \widetilde{L \big( 2  \partial_n v^a \partial^n w_a \big)} \cdot [\Lambda_{x'}^{\sigma},\partial^i+\partial^i \phi \partial_{x_3}]\widetilde{L \big(2\mathrm{e}^{-\rho} \epsilon_{ijk}\partial^j v_m \partial^m\partial^k h \big)}
  \\
  =& (\partial^i+\partial^i \phi \partial_{x_3}) \left\{ \Lambda_{x'}^{\sigma} \widetilde{L \big( 2  \partial_n v^a \partial^n w_a \big)} \cdot \Lambda_{x'}^{\sigma} \widetilde{L \big(2\mathrm{e}^{-\rho} \epsilon_{ijk}\partial^j v_m \partial^m\partial^k h \big)} \right\}
  \\
  & \ - \Lambda_{x'}^{\sigma} \widetilde{L \big( 2  \partial_n v^a \partial^n w_a \big)} \cdot \Lambda_{x'}^{\sigma} \widetilde{\partial^i L \big(2\mathrm{e}^{-\rho} \epsilon_{ijk}\partial^j v_m \partial^m\partial^k h \big)}
  \\
  & \ + \Lambda_{x'}^{\sigma} \widetilde{L \big( 2  \partial_n v^a \partial^n w_a \big)} \cdot [\Lambda_{x'}^{\sigma},\partial^i+\partial^i \phi \partial_{x_3}]\widetilde{L \big(2\mathrm{e}^{-\rho} \epsilon_{ijk}\partial^j v_m \partial^m\partial^k h \big)}.
  \end{split}
\end{equation}
Taking derivatives, we have
\begin{equation*}
\begin{split}
  \partial^i L \big(2\mathrm{e}^{-\rho} \epsilon_{ijk}\partial^j v_m \partial^m\partial^k h \big)=  & L \big(-2\mathrm{e}^{-\rho}  \epsilon_{ijk}\partial^i \rho \partial^j v_m \partial^m\partial^k h \big)
  + L \big(2\mathrm{e}^{-\rho} \epsilon_{ijk}\partial^j v_m \partial^i\partial^m\partial^k h \big),
\end{split}
\end{equation*}
so \eqref{j10} yields
\begin{equation}\label{j02}
  \begin{split}
  &(\partial^i+\partial^i \phi \partial_{x_3}) \Lambda_{x'}^{\sigma} \widetilde{L \big( 2 \partial_n v^a \partial^n w_a \big)} \cdot \Lambda_{x'}^{\sigma} \widetilde{L \big(2\mathrm{e}^{-\rho} \epsilon_{ijk}\partial^j v_m \partial^m\partial^k h \big)}
  \\
  =& (\partial^i+\partial^i \phi \partial_{x_3}) \{ \Lambda_{x'}^{\sigma} \widetilde{L \big( 2\partial_n v^a \partial^n w_a \big)} \cdot \Lambda_{x'}^{\sigma} \widetilde{L \big(2\mathrm{e}^{-\rho} \epsilon_{ijk}\partial^j v_m \partial^m\partial^k h \big)} \}
  \\
  & \ - \Lambda_{x'}^{\sigma} \widetilde{L \big( 2 \partial_n v^a \partial^n w_a \big)} \cdot \Lambda_{x'}^{\sigma} \widetilde{ L \big(2\mathrm{e}^{-\rho} \epsilon_{ijk}\partial^j v_m \partial^i \partial^m\partial^k h \big)}
  \\
  & \ + \Lambda_{x'}^{\sigma} \widetilde{L \big( 2  \partial_n v^a \partial^n w_a \big)} \cdot \Lambda_{x'}^{\sigma} \widetilde{ L \big(2\mathrm{e}^{-\rho} \epsilon_{ijk}\partial^i \rho \partial^j v_m  \partial^m \partial^k h \big)}
  \\
  & \ + \Lambda_{x'}^{\sigma} \widetilde{L \big( 2  \partial_n v^a \partial^n w_a \big)} \cdot [\Lambda_{x'}^{\sigma},\partial^i+\partial^i \phi \partial_{x_3}]\widetilde{L \big(2\mathrm{e}^{-\rho} \epsilon_{ijk}\partial^j v_m \partial^m\partial^k h \big)}.
  \end{split}
\end{equation}
Here we use the fact that $2\mathrm{e}^{-\rho} \epsilon_{ijk}\partial^i\partial^j v_m \partial^m\partial^k h=0$.

Inserting \eqref{j02} in $J_5$, and using H\"older's inequality, \eqref{401}-\eqref{403}, we  obtain
\begin{equation}\label{J5}
  \begin{split}
  \textstyle{\sum_{\sigma \in \{0,s_0-2 \}}}|J_5| \lesssim & (\|d\bv\|_{L^2_t L^\infty_x}+ \|d\bv\|_{L^2_t \dot{B}^{s_0-2}_{\infty,2}})(\| \rho \|^3_{H^{s_0}}+ \| \bv \|^3_{H^{s}}+\|\bw\|^3_{H^{s_0}}+\|h\|^3_{H^{s_0+1}})
   \\
   &+(1+\|d\phi-dt\|_{s_0,2,\Sigma})(\| \rho \|^4_{H^{s_0}}+ \| \bv \|^4_{H^{s}}+\|\bw\|^4_{H^{s_0}}+\|h\|^4_{H^{s_0+1}}) \\
  &+(1+\|d\phi-dt\|_{s_0,2,\Sigma})(\| \rho \|^5_{H^{s_0}}+ \| \bv \|^5_{H^{s}}+\|\bw\|^5_{H^{s_0}}+\|h\|^5_{H^{s_0+1}})
\\
  \lesssim & \epsilon^2_2.
  \end{split}
\end{equation}
For $J_7$, we use a similar idea on handling $J_5$. Calculate
\begin{equation}\label{j07}
\begin{split}
    &(\partial^i+\partial^i \phi \partial_{x_3}) \Lambda_{x'}^{\sigma} \widetilde{L \big( 2  \partial_n v^a \partial^n w_a \big)} \cdot \Lambda_{x'}^{\sigma} \widetilde{L \big(-\mathrm{e}^{-\rho} \epsilon_{ijk}\partial^k h  \Delta v^j \big)}
    \\
    =&(\partial^i+\partial^i \phi \partial_{x_3}) \left\{ \Lambda_{x'}^{\sigma} \widetilde{L \big( 2  \partial_n v^a \partial^n w_a \big)} \cdot \Lambda_{x'}^{\sigma} \widetilde{L \big(-\mathrm{e}^{-\rho} \epsilon_{ijk}\partial^k h  \Delta v^j \big)} \right\}
    \\
    &- \Lambda_{x'}^{\sigma} \widetilde{L \big( 2  \partial_n v^a \partial^n w_a \big)} \cdot \Lambda_{x'}^{\sigma} (\partial^i+\partial^i \phi \partial_{x_3}) \widetilde{L \big(-\mathrm{e}^{-\rho} \epsilon_{ijk}\partial^k h  \Delta v^j \big)}
    \\
    &+ \Lambda_{x'}^{\sigma} \widetilde{L \big( 2  \partial_n v^a \partial^n w_a \big)} \cdot [\Lambda_{x'}^{\sigma}, \partial^i+\partial^i \phi \partial_{x_3}] \widetilde{L \big(-\mathrm{e}^{-\rho} \epsilon_{ijk}\partial^k h  \Delta v^j \big)}
    \\
    =&(\partial^i+\partial^i \phi \partial_{x_3}) \left\{ \Lambda_{x'}^{\sigma} \widetilde{L \big( 2  \partial_n v^a \partial^n w_a \big)} \cdot \Lambda_{x'}^{\sigma} \widetilde{L \big(-\mathrm{e}^{-\rho} \epsilon_{ijk}\partial^k h  \Delta v^j \big)} \right\}
    \\
    &- \Lambda_{x'}^{\sigma} \widetilde{L \big( 2  \partial_n v^a \partial^n w_a \big)} \cdot \Lambda_{x'}^{\sigma} \widetilde{L \{ \partial^i \big(-\mathrm{e}^{-\rho} \epsilon_{ijk}\partial^k h  \Delta v^j \big) \} }
    \\
    &+ \Lambda_{x'}^{\sigma} \widetilde{L \big( 2 \partial_n v^a \partial^n w_a \big)} \cdot [\Lambda_{x'}^{\sigma}, \partial^i+\partial^i \phi \partial_{x_3}] \widetilde{L \big(-\mathrm{e}^{-\rho} \epsilon_{ijk}\partial^k h  \Delta v^j \big)}
    \\
    =&(\partial^i+\partial^i \phi \partial_{x_3}) \left\{ \Lambda_{x'}^{\sigma} \widetilde{L \big( 2  \partial_n v^a \partial^n w_a \big)} \cdot \Lambda_{x'}^{\sigma} \widetilde{L \big(-\mathrm{e}^{-\rho} \epsilon_{ijk}\partial^k h  \Delta v^j \big)} \right\}
    \\
    &+ \Lambda_{x'}^{\sigma} \widetilde{L \big( 2  \partial_n v^a \partial^n w_a \big)} \cdot \Lambda_{x'}^{\sigma} \widetilde{L \{ \big(-2\mathrm{e}^{-\rho} \partial^i \rho \epsilon_{ijk}\partial^k h  \Delta v^j+\mathrm{e}^{-\rho}  \partial^k h \Delta ( \mathrm{e}^{\rho}  \bw_k) \big) \} }
    \\
    &+ \Lambda_{x'}^{\sigma} \widetilde{L \big( 2  \partial_n v^a \partial^n w_a \big)} \cdot [\Lambda_{x'}^{\sigma}, \partial^i+\partial^i \phi \partial_{x_3}] \widetilde{L \big(-\mathrm{e}^{-\rho} \epsilon_{ijk}\partial^k h  \Delta v^j \big)},
\end{split}
\end{equation}
where we use the fact $\mathrm{e}^{-\rho} \epsilon_{ijk}\partial^i(\partial^k h  \Delta v^j)=\mathrm{e}^{-\rho}  \partial^k h \Delta ( \mathrm{e}^{\rho}  \bw_k)$. Inserting \eqref{j07} to $J_7$, we get
\begin{equation}\label{J7}
  \begin{split}
  \textstyle{\sum_{\sigma \in \{0,s_0-2 \}}}|J_7| \lesssim & (\|(d\bv,dh)\|_{L^2_t L^\infty_x}+ \|(d\bv,dh)\|_{L^2_t \dot{B}^{s_0-2}_{\infty,2}})(\| \rho,\bv \|^3_{H^{s}}+\|\bw\|^3_{H^{s_0}}+\|h\|^3_{H^{s_0+1}})
   \\
   &+(1+\|d\phi-dt\|_{s_0,2,\Sigma})(\| \rho,\bv \|^4_{H^{s}}+\|\bw\|^4_{H^{s_0}}+\|h\|^4_{H^{s_0+1}}) \\
  \lesssim & \epsilon^2_2.
  \end{split}
\end{equation}
By Lemma \ref{ce} and H\"older's inequality, we have
\begin{equation}\label{J6}
  \begin{split}
  \textstyle{\sum_{\sigma \in \{0,s_0-2 \}}}(|J_6|+|J_8|) \lesssim & \|d\partial \phi\|_{L^1_t L^\infty_x}(\| \rho \|^4_{H^{s_0}}+ \| \bv \|^4_{H^{s}}+\|\bw\|^4_{H^{s_0}}+\|h\|^4_{H^{s_0+1}})
  \\
   \lesssim  & \|d\phi-dt\|_{s_0,2,\Sigma}(\| \rho \|^4_{H^{s_0}}+ \| \bv \|^4_{H^{s}}+\|\bw\|^4_{H^{s_0}}+\|h\|^4_{H^{s_0+1}}) \\
  \lesssim & \epsilon^2_2.
  \end{split}
\end{equation}
Combining \eqref{J1}, \eqref{J3}, \eqref{J4}, \eqref{J5}, \eqref{J7}, and \eqref{J6}, we know
\begin{equation}\label{Jj}
  \mathrm{J}\lesssim  \epsilon^2_2.
\end{equation}
For $\bar{\mathrm{J}}$, we calculate
\begin{equation*}
  \begin{split}
  \bar{\mathrm{J}} \leq &\sum_{\sigma \in \{0,s_0-2 \}}\left| \int^2_{-2} \int_{\mathbb{R}^3}  \Lambda_{x'}^{\sigma} \left\{\mathbf{T}\widetilde{\{L(-\mathrm{e}^{-\rho} \epsilon^{ijk} \partial_k h \Delta v_j)\}}\right\} \cdot \Lambda_{x'}^{\sigma} \widetilde{L\underline{F}_i}dxd\tau    \right|
  \\
  \leq &\sum_{\sigma \in \{0,s_0-2 \}}\left| \int^2_{-2} \int_{\mathbb{R}^3}   [\mathbf{T},\Lambda_{x'}^{\sigma}]\left\{\widetilde{L(-\mathrm{e}^{-\rho} \epsilon^{ijk} \partial_k h \Delta v_j)}\right\} \cdot \Lambda_{x'}^{\sigma} \widetilde{L\underline{F}_i}dxd\tau    \right|
  \\
  & + \sum_{\sigma \in \{0,s_0-2 \}}\left| \int^2_{-2} \int_{\mathbb{R}^3}   \mathbf{T}\left\{\Lambda_{x'}^{\sigma}\widetilde{\{L(-\mathrm{e}^{-\rho} \epsilon^{ijk} \partial_k h \Delta v_j)\}}\right\} \cdot \Lambda_{x'}^{\sigma} \widetilde{L\underline{F}_i}dxd\tau    \right|
  \\
  =&\bar{\mathrm{J}}_1+\bar{\mathrm{J}}_2,
  \end{split}
\end{equation*}
where
\begin{equation*}
  \begin{split}
  \bar{\mathrm{J}}_1= &\sum_{\sigma \in \{0,s_0-2 \}}\left| \int^2_{-2} \int_{\mathbb{R}^3}   [\mathbf{T},\Lambda_{x'}^{\sigma}]\left\{\widetilde{L(-\mathrm{e}^{-\rho} \epsilon^{ijk} \partial_k h \Delta v_j)}\right\} \cdot \Lambda_{x'}^{\sigma} \widetilde{L\underline{F}_i}dxd\tau    \right|,
  \\
  \bar{\mathrm{J}}_2 =&  \sum_{\sigma \in \{0,s_0-2 \}}\left| \int^2_{-2} \int_{\mathbb{R}^3}   \mathbf{T}\left\{\Lambda_{x'}^{\sigma}\widetilde{\{L(-\mathrm{e}^{-\rho} \epsilon^{ijk} \partial_k h \Delta v_j)\}}\right\} \cdot \Lambda_{x'}^{\sigma} \widetilde{L\underline{F}_i}dxd\tau    \right|.
  \end{split}
\end{equation*}
By Lemma \ref{ce} and \eqref{401}-\eqref{403}, we have
\begin{equation}\label{J11}
  \begin{split}
  \bar{\mathrm{J}}_1 \lesssim & \sum_{\sigma \in \{0,s_0-2 \}} \int^{2}_{-2} \| \partial \bv \|_{L^2_t L^\infty_x} \|\mathrm{e}^{-\rho} \partial h \Delta \bv\|_{\dot{H}_x^\sigma} \|\underline{\bF}\|_{\dot{H}_x^\sigma}d\tau
  \\
  \lesssim & \| \partial \bv \|_{L^2_t L^\infty_x }\| \partial \bv \|_{L^2_t\dot{B}^{s_0-2}_{\infty,2}}(\|\bv\|^3_{H_x^s}+\| \rho \|^3_{H^{s}}+\| \bw \|^3_{H^{s_0}}+\| h \|^3_{H^{s_0+1}})
  \\
  &+ \| \partial \bv \|_{L^2_t L^\infty_x}\| \partial \bv \|_{L^2_t\dot{B}^{s_0-2}_{\infty,2}}(\|\bv\|^4_{H_x^s}+\| \rho \|^4_{H^{s}}+\| \bw \|^4_{H^{s_0}}+\| h \|^4_{H^{s_0+1}})
  \\
  \lesssim & \epsilon^2_2.
  \end{split}
\end{equation}
For $\bar{\mathrm{J}}_2$, we have
\begin{equation*}
  \begin{split}
  \bar{\mathrm{J}}_2 
  =& \bar{\mathrm{J}}_{21}+\bar{\mathrm{J}}_{22}+\bar{\mathrm{J}}_{23},
  \end{split}
\end{equation*}
where
\begin{equation*}
  \bar{\mathrm{J}}_{21}=\sum_{\sigma \in \{0,s_0-2 \}}  | \int^2_{-2} \int_{\mathbb{R}^3} \frac{d}{dt}\left\{\Lambda_{x'}^{\sigma}\widetilde{\{L(-\mathrm{e}^{-\rho} \epsilon^{ijk} \partial_k h \Delta v_j)\}} \cdot \Lambda_{x'}^{\sigma} \widetilde{L\underline{F}_i}\right\} dxd\tau  |,\ \ \ \ \ \
\end{equation*}
\begin{equation*}
  \bar{\mathrm{J}}_{22}=\sum_{\sigma \in \{0,s_0-2 \}} | \int^2_{-2} \int_{\mathbb{R}^3} (\bv \cdot \nabla) \left\{\Lambda_{x'}^{\sigma}\widetilde{\{L(-\mathrm{e}^{-\rho} \epsilon^{ijk} \partial_k h \Delta v_j)\}} \cdot \Lambda_{x'}^{\sigma} \widetilde{L\underline{F}_i}\right\} dxd\tau    |,
\end{equation*}
\begin{equation*}
  \bar{\mathrm{J}}_{23}=\sum_{\sigma \in \{0,s_0-2 \}}  | \int^2_{-2} \int_{\mathbb{R}^3}  \Lambda_{x'}^{\sigma}\widetilde{\{L(-\mathrm{e}^{-\rho} \epsilon^{ijk} \partial_k h \Delta v_j)\}} \cdot \mathbf{T}\{\Lambda_{x'}^{\sigma} \widetilde{L\underline{F}_i}\} dxd\tau  |. \ \ \ \ \ \ \ \ \ \
\end{equation*}
For $\bar{\mathrm{J}}_{21}$, we calculate that
\begin{equation}\label{J21}
\begin{split}
 \bar{\mathrm{J}}_{21}=&\sup_{t \in [-2,2]} (\|\bv\|^3_{H_x^s}+\| \rho \|^3_{H^{s}}+\| \bw \|^3_{H^{s_0}}+\| h \|^3_{H^{s_0+1}})(t)
 \\
 &+(\|\bv\|^4_{H_x^s}+\| \rho \|^4_{H^{s}}+\| \bw \|^4_{H^{s_0}}+\| h \|^4_{H^{s_0+1}})(t)
 \\
 \lesssim & \epsilon^2_2.
\end{split}
\end{equation}
By Lemma \ref{ce}, we can estimate $\bar{\mathrm{J}}_{22}$ by
\begin{equation}\label{J22}
\begin{split}
 \bar{\mathrm{J}}_{22}=&\| \partial \bv \|_{L^2_t L^\infty_x }\| \partial \bv \|_{L^2_t \dot{B}^{s_0-2}_{\infty,2} }(\|\bv\|^2_{H_x^s}+\| \rho \|^2_{H^{s}}+\| \bw \|^2_{H^{s_0}}
 \\
 &+\| h \|^2_{H^{s_0+1}} +\|\bv\|^3_{H_x^s}+\| \rho \|^3_{H^{s}}+\| \bw \|^3_{H^{s_0}}+\| h \|^3_{H^{s_0+1}})
 \\
 \lesssim & \epsilon^2_2.
\end{split}
\end{equation}
We then bound $\bar{\mathrm{J}}_{23}$ by
\begin{equation}\label{J23}
\begin{split}
  \bar{\mathrm{J}}_{23}=& \sum_{\sigma \in \{0,s_0-2 \}}  | \int^2_{-2} \int_{\mathbb{R}^3}  \Lambda_{x'}^{\sigma}\widetilde{\{L(-\mathrm{e}^{-\rho} \epsilon^{ijk} \partial_k h \Delta v_j)\}} \cdot  [\mathbf{T} , \Lambda_{x'}^{\sigma}] (\widetilde{L\underline{F}_i}) dxd\tau  |
  \\
  & + \sum_{\sigma \in \{0,s_0-2 \}}  | \int^2_{-2} \int_{\mathbb{R}^3}  \Lambda_{x'}^{\sigma}\widetilde{\{L(-\mathrm{e}^{-\rho} \epsilon^{ijk} \partial_k h \Delta v_j)\}} \cdot \Lambda_{x'}^{\sigma} \{ \mathbf{T} (\widetilde{L\underline{F}_i}) \} dxd\tau  
  \\
  \leq & \bar{\mathrm{J}}_{231}+\bar{\mathrm{J}}_{232}+\bar{\mathrm{J}}_{233}+\bar{\mathrm{J}}_{234}.
\end{split}
\end{equation}
Here we set
\begin{equation*}
  \begin{split}
 \bar{\mathrm{J}}_{231} = & \sum_{\sigma \in \{0,s_0-2 \}}  | \int^2_{-2} \int_{\mathbb{R}^3}  \Lambda_{x'}^{\sigma}\widetilde{\{L(-\mathrm{e}^{-\rho} \epsilon^{ijk} \partial_k h \Delta v_j)\}} \cdot  [\mathbf{T} , \Lambda_{x'}^{\sigma}] (\widetilde{L\underline{F}_i}) dxd\tau  |,
  \\
  \bar{\mathrm{J}}_{232}=&  \sum_{\sigma \in \{0,s_0-2 \}}  | \int^2_{-2} \int_{\mathbb{R}^3}  \Lambda_{x'}^{\sigma}\widetilde{\{L(-\mathrm{e}^{-\rho} \epsilon^{ijk} \partial_k h \Delta v_j)\}} \cdot \Lambda_{x'}^{\sigma} \{ \partial_i \widetilde{\mathcal{G}}+ K_i  \} dxd\tau  |,
  \\
  \bar{\mathrm{J}}_{233}=&  \sum_{\sigma \in \{0,s_0-2 \}}  | \int^2_{-2} \int_{\mathbb{R}^3}  \Lambda_{x'}^{\sigma}\widetilde{\{L(-\mathrm{e}^{-\rho} \epsilon^{ijk} \partial_k h \Delta v_j)\}} \cdot [\Lambda_{x'}^{\sigma}, \mathbf{T}] \widetilde{\{ L(-\mathrm{e}^{-\rho} \epsilon^{ijk} \partial_k h \Delta v_j) \}} dxd\tau  |,
  \\
  \bar{\mathrm{J}}_{234}= & \sum_{\sigma \in \{0,s_0-2 \}}  | \int^2_{-2} \int_{\mathbb{R}^3}  \Lambda_{x'}^{\sigma}\widetilde{\{L(-\mathrm{e}^{-2\rho} \epsilon^{ijk} \partial_k h \Delta v_j)\}} \cdot \mathbf{T} \widetilde{\Lambda_{x'}^{\sigma} \{ L(-\mathrm{e}^{-2\rho} \epsilon^{ijk} \partial_k h \Delta v_j) \}} dxd\tau  |.
  \end{split}
\end{equation*}
By Lemma \ref{ce} and H\"older's inequality, we get
\begin{equation}\label{J231}
\begin{split}
  \bar{\mathrm{J}}_{231}+\bar{\mathrm{J}}_{233} \lesssim & \| \partial \bv \|^2_{L^2_t \dot{B}^{s_0-2}_{\infty,2} }(\|\bv\|^3_{H_x^s}+\| \rho \|^3_{H^{s}}+\| \bw \|^3_{H^{s_0}}
 \\
 &+\| h \|^3_{H^{s_0+1}} +\|\bv\|^4_{H_x^s}+\| \rho \|^4_{H^{s}}+\| \bw \|^4_{H^{s_0}}+\| h \|^4_{H^{s_0+1}}).
\end{split}
\end{equation}
\begin{equation}\label{J230}
\begin{split}
  \bar{\mathrm{J}}_{232}=&  \sum_{\sigma \in \{0,s_0-2 \}}  | \int^2_{-2} \int_{\mathbb{R}^3}  \Lambda_{x'}^{\sigma}\widetilde{\{L(-\mathrm{e}^{-\rho} \epsilon^{ijk} \partial_k h \Delta v_j)\}} \cdot \Lambda_{x'}^{\sigma} \{ \partial_i \widetilde{\mathcal{G}}+ \widetilde{K}_i  \} dxd\tau  |
  \\
  =& \sum_{\sigma \in \{0,s_0-2 \}}  | \int^2_{-2} \int_{\mathbb{R}^3}  \Lambda_{x'}^{\sigma}\widetilde{\{L(-\mathrm{e}^{-\rho} \epsilon^{ijk} \partial_k h \Delta v_j)\}} \cdot \Lambda_{x'}^{\sigma}  \widetilde{K}_i   dxd\tau  |
  \\
  & + \sum_{\sigma \in \{0,s_0-2 \}}  | \int^2_{-2} \int_{\mathbb{R}^3}  \Lambda_{x'}^{\sigma}\partial_i \widetilde{\{L(-\mathrm{e}^{-\rho} \epsilon^{ijk} \partial_k h \Delta v_j)\}} \cdot \Lambda_{x'}^{\sigma}   \widetilde{\mathcal{G}}   dxd\tau  | .
\end{split}
\end{equation}
By using $-\mathrm{e}^{-\rho} \epsilon^{ijk}\partial_i( \partial_k h \Delta v_j)=\mathrm{e}^{-\rho}\partial_k h \Delta ( \mathrm{e}^\rho \bw^k)$,  \eqref{J230} yields
\begin{equation}\label{J232}
\begin{split}
  \bar{\mathrm{J}}_{232}
  =& \sum_{\sigma \in \{0,s_0-2 \}}  | \int^2_{-2} \int_{\mathbb{R}^3}  \Lambda_{x'}^{\sigma}\widetilde{\{L(-\mathrm{e}^{-\rho} \epsilon^{ijk} \partial_k h \Delta v_j)\}} \cdot \Lambda_{x'}^{\sigma}  \widetilde{K}_i   dxd\tau  |
  \\
  & + \sum_{\sigma \in \{0,s_0-2 \}}  | \int^2_{-2} \int_{\mathbb{R}^3}  \Lambda_{x'}^{\sigma} \widetilde{\{L(2\mathrm{e}^{-\rho}\partial_i \rho \epsilon^{ijk} \partial_k h \Delta v_j)\}} \cdot \Lambda_{x'}^{\sigma}   \widetilde{G}   dxd\tau  |
  \\
  & + \sum_{\sigma \in \{0,s_0-2 \}}  | \int^2_{-2} \int_{\mathbb{R}^3}  \Lambda_{x'}^{\sigma} \widetilde{\{L(-\mathrm{e}^{-\rho}  \partial_k h \Delta (\mathrm{e}^{\rho} \bw^k)\}} \cdot \Lambda_{x'}^{\sigma}   \widetilde{G}   dxd\tau  |
  \\
  \lesssim &\sum_{\sigma \in \{0,s_0-2 \}}  \| \partial h \Delta \bv \|_{L^2_t \dot{H}^\sigma_{x}}\|\mathbf{K}\|_{L^2_t \dot{H}^\sigma_{x}}+\| \partial \rho \partial h \Delta \bv \|_{L^1_t \dot{H}^\sigma_{x}}\|G\|_{L^\infty_t \dot{H}^\sigma_{x}}
  \\
  &\quad + (\| \partial h \Delta \bw \|_{L^1_t \dot{H}^\sigma_{x}}+ \| \partial h \partial^2 \rho \bw \|_{L^1_t \dot{H}^\sigma_{x}} )\|G\|_{L^\infty_t \dot{H}^\sigma_{x}}
  \\
  \lesssim & (\| d \bv \|^2_{L^2_t \dot{B}^{s_0-2}_{\infty,2}}+\| d \rho \|^2_{L^2_t \dot{B}^{s_0-2}_{\infty,2}}+\| d h \|^2_{L^2_t \dot{B}^{s_0-2}_{\infty,2}})\{ \|\bv\|^3_{H_x^s}+\| \rho \|^3_{H^{s}}
  \\
  & +\| \bw \|^3_{H^{s_0}}+\| h \|^3_{H^{s_0+1}}+\|\bv\|^4_{H_x^s}+\| \rho \|^4_{H^{s}}+\| \bw \|^4_{H^{s_0}}+\| h \|^4_{H^{s_0+1}} \}.
\end{split}
\end{equation}
Integrating by parts on $\bar{\mathrm{J}}_{234}$, we obtain
\begin{equation}\label{J234}
\begin{split}
  \bar{\mathrm{J}}_{234}= & \frac12 \sum_{\sigma \in \{0,s_0-2 \}}  | \int^2_{-2} \int_{\mathbb{R}^3}  \mathbf{T}| \Lambda_{x'}^{\sigma}\widetilde{\{L(-\mathrm{e}^{-\rho} \epsilon^{ijk} \partial_k h \Delta v_j)\}} |^2 dxd\tau
  \\
  \leq & \sum_{\sigma \in \{0,s_0-2 \}} \|\mathrm{e}^{-\rho} \partial h \Delta \bv\|^2_{\dot{H}^{\sigma}_x}(1+ \|d\bv\|_{L^1_t L^\infty_x})
  \\
  \lesssim & (\| h \|^2_{H^{s_0+1}}+\|\bv\|^2_{H_x^s})(1+ \|d\bv\|_{L^1_t L^\infty_x})(1+ \|\rho\|^2_{H_x^s}).
\end{split}
\end{equation}
By using \eqref{J231}, \eqref{J232}, \eqref{J234}, and \eqref{401}-\eqref{403}, we prove that
\begin{equation}\label{J2}
  \bar{\mathrm{J}}_{2}
  \lesssim \epsilon^2_2.
\end{equation}
The estimate \eqref{J2} adding to \eqref{J11}, we have
\begin{equation}\label{bJ}
  \bar{\mathrm{J}} \lesssim \epsilon^2_2.
\end{equation}
Substituting \eqref{Ij2}, \eqref{Jj}, \eqref{bJ} to \eqref{L0} and using \eqref{401}-\eqref{403}, \eqref{L0} yields
\begin{equation}\label{L01}
\begin{split}
  \vert\kern-0.25ex\vert\kern-0.25ex\vert L\underline{\bF}\vert\kern-0.25ex\vert\kern-0.25ex\vert^2_{s_0-2,2,\Sigma} \lesssim  \epsilon^2_2.
  \end{split}
\end{equation}
For
\begin{equation*}
  L\underline{F}^i=L(\mathrm{curl} \mathrm{curl} \bw^i) +L(\epsilon^{ijk} \partial_j \rho \cdot \mathrm{curl}\bw_k) - 2L( \partial^a {\rho} \partial^i w_a)+2L( \mathrm{e}^{-\rho} \epsilon^{ijk} \partial_j v^m  \partial_m\partial_k h).
\end{equation*}
we then get
\begin{equation}\label{Rr}
\begin{split}
  \vert\kern-0.25ex\vert\kern-0.25ex\vert L\underline{F}^i\vert\kern-0.25ex\vert\kern-0.25ex\vert_{s_0-2,2,\Sigma} \geq & \vert\kern-0.25ex\vert\kern-0.25ex\vert L\mathrm{curl}\mathrm{curl}\bw^i\vert\kern-0.25ex\vert\kern-0.25ex\vert_{s_0-2,2,\Sigma}- 2 \vert\kern-0.25ex\vert\kern-0.25ex\vert L(\partial^a {\rho} \partial^i w_a) \vert\kern-0.25ex\vert\kern-0.25ex\vert_{s_0-2,2,\Sigma}
  \\
  & -2\vert\kern-0.25ex\vert\kern-0.25ex\vert L( \mathrm{e}^{-\rho} \epsilon^{ijk} \partial_j v^m  \partial_m\partial_k h)\vert\kern-0.25ex\vert\kern-0.25ex\vert_{s_0-2,2,\Sigma}- \vert\kern-0.25ex\vert\kern-0.25ex\vert L(\epsilon^{ijk} \partial_j \rho \cdot \mathrm{curl}\bw_k) \vert\kern-0.25ex\vert\kern-0.25ex\vert.
\end{split}
\end{equation}
By trace theorem and elliptic estimate, we have
\begin{equation}\label{RR}
\begin{split}
 \vert\kern-0.25ex\vert\kern-0.25ex\vert L\left( \partial_a \rho \partial^i w^a \right)\vert\kern-0.25ex\vert\kern-0.25ex\vert_{s_0-2,2,\Sigma}
\leq &  C\| L\left( \partial_a \rho \partial^i w^a \right)\|_{H^{s_0-\frac{3}{2}+(s-s_0)}_x}
\\
\leq &  C\| \partial_a \rho \partial^i w^a \|_{H^{s_0-\frac{3}{2}+(s-s_0)}_x}
\\
\leq &   C\|\rho \|_{H_x^{s}}\| \bw \|_{H_x^{s_0}} \lesssim \epsilon^2_2.
\end{split}
\end{equation}
The same technique tells us
\begin{equation}\label{RR1}
\begin{split}
& \vert\kern-0.25ex\vert\kern-0.25ex\vert L(\epsilon^{ijk} \partial_j \rho \cdot \mathrm{curl}\bw_k) \vert\kern-0.25ex\vert\kern-0.25ex\vert+ \vert\kern-0.25ex\vert\kern-0.25ex\vert  L( \mathrm{e}^{-\rho} \epsilon^{ijk} \partial_j v^m  \partial_m\partial_k h) \vert\kern-0.25ex\vert\kern-0.25ex\vert_{s_0-2,2,\Sigma}
\\
\leq &   C( \|\bv \|_{H_x^{s}}\| h \|_{H_x^{s_0+1}} +\|\rho \|_{H_x^{s}}\| \bw \|_{H_x^{s_0}}) \lesssim \epsilon^2_2.
\end{split}
\end{equation}
Substituting \eqref{Rr}, \eqref{RR}, and \eqref{RR1} to \eqref{L01}, and using \eqref{OM1}, we therefore get
\begin{equation}\label{PO}
  \vert\kern-0.25ex\vert\kern-0.25ex\vert \partial \mathrm{curl} \bw \vert\kern-0.25ex\vert\kern-0.25ex\vert_{s_0-2,2,\Sigma}= \vert\kern-0.25ex\vert\kern-0.25ex\vert L ( \mathrm{curl}  \mathrm{curl} \bw )\vert\kern-0.25ex\vert\kern-0.25ex\vert_{s_0-2,2,\Sigma}\lesssim \ \epsilon^2_2 \lesssim \ \epsilon_2.
\end{equation}
Due to
\begin{equation*}
  \partial_{x'} \mathrm{curl}  \mathrm{curl} \bw = \partial \mathrm{curl} \bw \cdot (0, \partial_{x'}\phi)^{\mathrm{T}},
\end{equation*}
by H\"older inequality and product estimates in Lemma \ref{cj}, we can deduce the following estimate
\begin{equation}\label{OH}
\begin{split}
  \|\partial_{x'} \mathrm{curl} \bw \|_{L^2_t H^{s_0-2}_{x'}(\Sigma)} &\lesssim \| \partial \mathrm{curl} \bw  \cdot (0, \partial_{x'}\phi)^{\mathrm{T}}\|_{L^2_t H^{s_0-2}_{x'}(\Sigma)}
  \\
  &\lesssim \| \partial \mathrm{curl} \bw  \|_{L^2_t H^{s_0-2}_{x'}(\Sigma)} \| (0, \partial_{x'}\phi)^{\mathrm{T}}\|_{L^\infty_t H^{s_0-1}_{x'}(\Sigma)}
  \\
  &\lesssim \| \partial \mathrm{curl} \bw \|_{L^2_t H^{s_0-2}_{x'}(\Sigma)} \| \partial_{x'}\phi \|_{L^\infty_t H^{s_0-1}_{x'}(\Sigma)}
  \\
  &\lesssim \| \partial \mathrm{curl} \bw  \|_{L^2_t H^{s_0-2}_{x'}(\Sigma)} \| \partial_{x'}\phi \|_{s_0, 2, \Sigma} \lesssim \epsilon^2_2.
\end{split}
\end{equation}
Above, we use the trace theorem
$\| \partial_{x'}\phi \|_{L^\infty_t H^{s_0-1}_{x'}(\Sigma)} \lesssim  \vert\kern-0.25ex\vert\kern-0.25ex\vert \partial_{x'}\phi \vert\kern-0.25ex\vert\kern-0.25ex\vert_{s_0, 2, \Sigma}\lesssim  \vert\kern-0.25ex\vert\kern-0.25ex\vert d\phi-dt \vert\kern-0.25ex\vert\kern-0.25ex\vert_{s_0, 2, \Sigma}$.
Through \eqref{OM0} and \eqref{OH}, we can see
\begin{equation}\label{O1}
  \| \mathrm{curl} \bw  \|_{L^2_t H^{s_0-1}_{x'}(\Sigma)} \lesssim \epsilon_2 .
\end{equation}
\textbf{Step 2: $\| \partial_t \mathrm{curl} \bw  \|_{L^2_t H^{s_0-2}_{x'}(\Sigma)}$}. 
By using \eqref{W1} and Sobolev imbeddings, we also have
\begin{equation}\label{OR1}
\begin{split}
  \| \partial_t \mathrm{curl} \bw  \|_{L^2_t H^{s_0-2}_{x'}(\Sigma)}  & \leq    \|  (\bv \cdot \nabla) \mathrm{curl} \bw  \|_{L^2_t H^{s_0-2}_{x'}(\Sigma)}+ \|\partial \bv \cdot \partial \mathrm{curl} \bw  \|_{L^2_t H^{s_0-2}_{x'}(\Sigma)}
  \\
  & \leq \| (\bv \cdot \nabla) \mathrm{curl} \bw  \|_{L^2_t H^{s_0-2}_{x'}(\Sigma)} + \| \partial \bv \cdot \partial \bw \|_{L^2_t H^{s_0-2}_{x'}(\Sigma)}
  \\
  & \leq \| \bv \|_{L^\infty_t H^{s_0-\frac12}_{x'}(\Sigma)} \| \partial \mathrm{curl} \bw  \|_{L^2_t H^{s_0-2}_{x'}(\Sigma)} + \| \partial \bv \cdot \partial \bw \|_{L^\infty_t H^{s-\frac{3}{2}}_{x}},
  \\
  & \leq \| \bv \|_{L^\infty_t H^{s}_{x}} \| \partial \mathrm{curl} \bw  \|_{L^2_t H^{s_0-2}_{x'}(\Sigma)} + \| \partial \bv \|_{L^\infty_t H^{s-1}_x}\| \partial \bw \|_{L^\infty_t H^{1}_{x}}.
\end{split}
\end{equation}
Due to \eqref{O1}, \eqref{OR1}, and \eqref{401}-\eqref{403}, we have proved \eqref{te201}.
\end{proof}

\begin{Lemma}\label{te21}
Suppose that $(\bv, \rho, h, \bw) \in \mathcal{H}$. 
Then
\begin{equation}\label{te211}
\begin{split}
 \vert\kern-0.25ex\vert\kern-0.25ex\vert  \Delta h  \vert\kern-0.25ex\vert\kern-0.25ex\vert_{s_0-1,2,\Sigma} \lesssim  \epsilon_2.
   \end{split}
\end{equation}
\end{Lemma}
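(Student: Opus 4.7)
The proof closely parallels that of Lemma \ref{te20}, with the transport equations \eqref{Hh1} and \eqref{Hh2} of Lemma \ref{PWh} playing the role that \eqref{W1} and \eqref{W2} played for $\bw$. The starting point is the pointwise identity
\begin{equation*}
\Delta h = \mathrm{e}^\rho \partial_i H^i + \partial_i\rho\,\partial^i h.
\end{equation*}
The second summand is a product of two first-order quantities, so by Lemma \ref{te2} and the trace theorem $\vert\kern-0.25ex\vert\kern-0.25ex\vert\partial_i\rho\,\partial^i h\vert\kern-0.25ex\vert\kern-0.25ex\vert_{s_0-1,2,\Sigma}\lesssim \|\rho\|_{L^\infty_t H^s_x}\|h\|_{L^\infty_t H^{s_0+1}_x}\lesssim \epsilon_2^2$. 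The remaining task is to bound $\vert\kern-0.25ex\vert\kern-0.25ex\vert \mathrm{e}^\rho\partial_i H^i\vert\kern-0.25ex\vert\kern-0.25ex\vert_{s_0-1,2,\Sigma}$, which I split in the spirit of Step 1 of Lemma \ref{te20} into a tangential spatial estimate on $\Sigma$ and a time-derivative estimate obtained from the transport equation itself.

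For the $L^2$ part I would pull \eqref{Hh1} back via $x_3\mapsto x_3-\phi(t,x')$, multiply by $\widetilde{\mathrm{e}^\rho\partial_i H^i}$ and integrate over $[-2,2]\times\mathbb{R}^3$; the resulting energy inequality is driven by $\|\partial\bv\|_{L^2_tL^\infty_x}$, $\|\partial d\phi\|_{L^2_tL^\infty_x}$, and the right-hand side $\|-2\partial_iv^j\partial_j\partial^ih+\epsilon^{ijm}\mathrm{e}^\rho w_m\partial^i\rho\partial_jh\|_{L^1_tL^2_x}$, each of which is $O(\epsilon_2^2)$ by \eqref{402}--\eqref{403}, \eqref{504} and \eqref{5021}. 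For the top-order tangential piece $\|\Lambda_{x'}^{s_0-2}\partial_k(\mathrm{e}^\rho\partial_i H^i)\|_{L^2_tL^2_{x'}(\Sigma)}$ I would apply $\Lambda_{x'}^{s_0-2}$ to the pulled-back form of \eqref{Hh2}, multiply by $\Lambda_{x'}^{s_0-2}\partial_k\widetilde{(\mathrm{e}^\rho\partial_i H^i)}$, and integrate. A pleasant simplification compared to Lemma \ref{te20} is that $\mathrm{e}^\rho\partial_i H^i$ is already a scalar, so no Hodge decomposition and no auxiliary operator $L=\partial(-\Delta)^{-1}\mathrm{curl}$ is needed here; the commutators produced by $\Lambda_{x'}^{s_0-2}$ against the transport operator and against the $\phi$-terms are of exactly the same type as in \eqref{Q1}--\eqref{Q6} and are estimated by Lemma \ref{ce}, Lemma \ref{LPE} together with the a~priori bounds \eqref{402}--\eqref{403}. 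The lower-order driving term $Y^k$ of \eqref{Hh3} is a sum of products of at most three factors of derivatives of $(\bv,\rho,h,\bw)$, and is $O(\epsilon_2^2)$ in $L^1_tH^{s_0-2}_x$ by Lemmas \ref{lpe}, \ref{wql}, and \ref{cj}. Conversion of tangential derivatives on $\Sigma$ into the full $H^{s_0-1}_{x'}$ norm is performed via the same product/trace manipulation as in \eqref{OH}, which only costs a factor of $1+\vert\kern-0.25ex\vert\kern-0.25ex\vert d\phi-dt\vert\kern-0.25ex\vert\kern-0.25ex\vert_{s_0,2,\Sigma}\lesssim 1+\epsilon_1$.

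The hard part, exactly as in the energy estimate of $h$ in Theorem \ref{ve}, is the top-order inhomogeneity $-2\partial^k(\partial_iv^j)\partial_j\partial^ih$ in \eqref{Hh2}, which carries two derivatives on $\bv$ and cannot be absorbed by a direct Sobolev product estimate. I plan to treat it by the null-form manipulation already used in \eqref{I08}--\eqref{I90}: integrate by parts in space to throw $\partial^k$ onto $\partial_k(\mathrm{e}^\rho\partial_iH^i)$, then write $\partial^kv^j = \tfrac12(\partial^kv^j-\partial^jv^k) + \tfrac12(\partial^kv^j+\partial^jv^k)$ and replace the antisymmetric piece by $\mathrm{e}^\rho\epsilon^{lkj}w_l$ (producing a harmless cubic vorticity term), while on the symmetric piece exploit $\mathbf{T}\rho=-\mathrm{div}\bv$ to trade the offending spatial derivative for a $\mathbf{T}\rho$ and then integrate by parts in $t$ along $\Sigma$ as in \eqref{eq06}--\eqref{eq09}. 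The boundary and volume integrals thereby produced are controlled by $\vert\kern-0.25ex\vert\kern-0.25ex\vert d\phi-dt\vert\kern-0.25ex\vert\kern-0.25ex\vert_{s_0,2,\Sigma}\lesssim\epsilon_1$ together with the bounds \eqref{402}--\eqref{403} and \eqref{e017}, yielding an $O(\epsilon_2^2)$ contribution. Finally, the time-derivative estimate $\|\partial_t(\mathrm{e}^\rho\partial_iH^i)\|_{L^2_tH^{s_0-2}_{x'}(\Sigma)}$ is obtained by writing $\partial_t = \mathbf{T}-\bv\cdot\nabla$ exactly as in \eqref{OR1} and combining the just-established tangential spatial bound with the $L^2_tH^{s_0-2}_x$ control of the right-hand side of \eqref{Hh1}. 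Summing the contributions gives \eqref{te211}.
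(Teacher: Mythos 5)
Your proposal matches the paper's proof essentially step by step: the same decomposition $\Delta h=\mathrm{e}^\rho\partial_iH^i+\partial_i\rho\,\partial^ih$, the same use of the transport equation \eqref{Hh2} in flattened coordinates with $\Lambda_{x'}^{\sigma}$ ($\sigma\in\{0,s_0-2\}$), the same identification of $-2\partial^k(\partial_iv^j)\partial_j\partial^ih$ as the hard term handled by the null-structure/integration-by-parts argument of \eqref{I08}--\eqref{I411} (this is exactly the paper's term $\mathrm{Q_5}$), the same conversion of tangential derivatives via \eqref{OH}, and the same treatment of $\partial_t$ via \eqref{Hh1}. Your observation that no Hodge decomposition / Riesz operator $L$ is needed (in contrast to Lemma \ref{te20}) because $\mathrm{e}^\rho\partial_iH^i$ is scalar is correct and implicit in the paper's proof.
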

\begin{proof}
Let us recall \eqref{Hh2}. By changing of coordinates $x_3 \rightarrow x_3-\phi(t,x')$, the equation \eqref{Hh2} becomes to
\begin{equation}\label{th0}
\begin{split}
  \partial_t \left\{ \widetilde{ \partial^k( \mathrm{e}^{\rho} \partial_i H^i } ) \right\}+ (\tilde{\bv}\cdot \nabla) \left\{ \widetilde{ \partial^k( \mathrm{e}^{\rho} \partial_i H^i } ) \right\}
   =&\widetilde{ -2\partial^k(\partial_i v^j) \partial_j  \partial^i h}+\widetilde{Y^k}
   \\
   &-(\partial_t \phi+ \tilde{v}^j \partial_j \phi) \partial_3 \left\{ \widetilde{ \partial^k( \mathrm{e}^{\rho} \partial_i H^i } ) \right\}.
\end{split}
\end{equation}
Here we consider $\partial^k ( \mathrm{e}^{\rho} \partial_i H^i )$ as a whole. Taking derivatives $\Lambda^{\sigma}_{x'}$($\sigma \in \{0,s_0-2\}$) on \eqref{th0}, then multiplying $\Lambda^{\sigma}_{x'}\left\{ \widetilde{ \partial^k( \mathrm{e}^{\rho} \partial_i H^i } ) \right\}$ and integrating it on $[-2,2]\times \mathbb{R}^3$, we can obtain
\begin{equation}\label{th1}
\begin{split}
 & \vert\kern-0.25ex\vert\kern-0.25ex\vert  \partial^k ( \mathrm{e}^{\rho} \partial_i H^i ) \vert\kern-0.25ex\vert\kern-0.25ex\vert^2_{s_0-2,2,\Sigma} \leq \mathrm{Q_1}+\mathrm{Q_2}+\mathrm{Q_3}+\mathrm{Q_4}+\mathrm{Q_5}.
\end{split}
\end{equation}
where we denote
\begin{equation*}
\begin{split}
 \mathrm{Q_1}=&
\sum_{ \sigma \in \{0,s_0-2\} }  \int^2_{-2} \int_{\mathbb{R}^3} |\mathrm{div}\tilde{\bv}|\cdot |\Lambda^{\sigma}_{x'} \widetilde{ \partial^k( \mathrm{e}^{\rho} \partial_i H^i } |^2dxd\tau ,
\\
 \mathrm{Q_2}=&
\sum_{ \sigma \in \{0,s_0-2\} }  | \int^2_{-2} \int_{\mathbb{R}^3} \widetilde{Y^k} \cdot \Lambda^{\sigma}_{x'} \widetilde{ \partial^k( \mathrm{e}^{\rho} \partial_i H^i } ) dxd\tau | ,
\\
 \mathrm{Q_3}=& \sum_{ \sigma \in \{0,s_0-2\} }| \int^2_{-2} \int_{\mathbb{R}^3}  [\Lambda^{\sigma}_{x'}, \tilde{\bv}\cdot \nabla] \{ \widetilde{ \partial^k( \mathrm{e}^{\rho} \partial_i H^i } ) \} \cdot \Lambda^{\sigma}_{x'} \widetilde{ \partial^k( \mathrm{e}^{\rho} \partial_i H^i } ) dxd\tau |,
\\
\mathrm{Q_4}=& \sum_{ \sigma \in \{0,s_0-2\} }| \int^2_{-2} \int_{\mathbb{R}^3} (\partial_t \phi+ \tilde{v}^j \partial_j \phi) \partial_3 \{ \widetilde{ \partial^k( \mathrm{e}^{\rho} \partial_i H^i } ) \} \cdot \Lambda^{\sigma}_{x'} \widetilde{ \partial^k( \mathrm{e}^{\rho} \partial_i H^i } ) dxd\tau |,
\\
\mathrm{Q_5}=& \sum_{ \sigma \in \{0,s_0-2\} }| \int^2_{-2} \int_{\mathbb{R}^3} \widetilde{ -2\partial^k(\partial_i v^j) \partial_j  \partial^i h} \cdot \Lambda^{\sigma}_{x'} \widetilde{ \partial^k( \mathrm{e}^{\rho} \partial_i H^i }) dxd\tau |.
\end{split}
\end{equation*}
By H\"older's inequality, we can bound $\mathrm{Q_1}$ by
\begin{equation}\label{eQ1}
 \mathrm{Q_1} \lesssim \|d\bv\|_{L^2_t L^\infty_x} (\| h \|^2_{H^{s_0+1}}+\|\rho\|^2_{H_x^s}\| h \|^2_{H^{s_0+1}}) \lesssim \epsilon^2_2.
\end{equation}
By H\"older's inequality and Lemma \ref{lpe}, we can bound $\mathrm{Q_2}$ by
\begin{equation}\label{eQ2}
\begin{split}
 \mathrm{Q_2} \lesssim  & \sum_{ \sigma \in \{0,s_0-2\} } \| \bY \|_{L^1_t \dot{H}^{\sigma}_x} \|\mathrm{e}^{\rho} \partial_i H^i\|_{\dot{H}^{1+\sigma}_x}
 \\
 \lesssim  & (\|d\bv\|_{L^2_t L^\infty_x} + \|d\bv\|_{L^2_t \dot{B}^{s_0-2}_{\infty,2}}) (\| \bv \|^2_{H^{s}}+\|\rho\|^2_{H_x^s}+\|\bw\|^2_{H_x^{s_0}}+\| h \|^2_{H^{s_0+1}})
 \\
 & + (\|d\bv\|_{L^2_t L^\infty_x} + \|d\bv\|_{L^2_t \dot{B}^{s_0-2}_{\infty,2}}) (\| \bv \|^3_{H^{s}}+\|\rho\|^3_{H_x^s}+\|\bw\|^3_{H_x^{s_0}}+\| h \|^3_{H^{s_0+1}})
 \\
 & + (\|d\bv\|_{L^2_t L^\infty_x} + \|d\bv\|_{L^2_t \dot{B}^{s_0-2}_{\infty,2}})  (\| \bv \|^4_{H^{s}}+\|\rho\|^4_{H_x^s}+\|\bw\|^4_{H_x^{s_0}}+\| h \|^4_{H^{s_0+1}})
 \\
 &+ (\|d\bv\|_{L^2_t L^\infty_x} + \|d\bv\|_{L^2_t \dot{B}^{s_0-2}_{\infty,2}})  (\| \bv \|^5_{H^{s}}+\|\rho\|^5_{H_x^s}+\|\bw\|^5_{H_x^{s_0}}+\| h \|^5_{H^{s_0+1}})
 \\
\lesssim & \epsilon^2_2,
\end{split}
\end{equation}
where $\bY=(Y^1,Y^2,Y^3)$. By Lemma \ref{ce}, we have
\begin{equation}\label{eQ3}
\begin{split}
\mathrm{Q_3} \lesssim &\|d\bv\|_{L^2_t L^\infty_x } (\| h \|^2_{H^{s_0+1}}+\|\rho\|^4_{H_x^s}+\| h \|^4_{H^{s_0+1}})
 \\
 \lesssim & \epsilon^2_2.
\end{split}
\end{equation}
Integrating $\mathrm{Q_4}$ by parts, we have
\begin{equation}\label{eQ4}
 \mathrm{Q_4} \lesssim \|d\bv\|_{L^2_t L^\infty_x}\|d\phi\|_{s_0,2,\Sigma} (\| h \|^2_{H^{s_0+1}}+\|\rho\|^2_{H_x^s}\| h \|^2_{H^{s_0+1}}) \lesssim \epsilon^2_2,
\end{equation}
where we use the fact that $\phi$ is independent with $x_3$. For $\mathrm{Q_5}$, we use the similar idea on estimating $\tilde{I}_4$, i.e. \eqref{I08}-\eqref{I411}, we then conclude that
\begin{equation}\label{eQ5}
\begin{split}
 \mathrm{Q_5} \leq & C\| \rho \|_{{H}^{2}_{x}} \| h \|_{{H}^{s_0+\frac12}_{x}}\| h \|_{{H}^{s_0+1}_{x}}+C\| h \|_{{H}^{\frac52+}_{x}}\| \rho \|_{{H}^{2}_{x}}\| \rho \|_{{H}^{s_0}_{x}} \| h \|_{{H}^{s_0+\frac12}}
\\
& + C\int^t_0 \|\partial \bv \|_{L^\infty_x} \| h \|^2_{H^{\frac52+}}\| \rho \|^2_{H^{s_0}} d\tau
 + C\int^t_0   ( \| \bv \|^2_{{H}^{2}_{x}} +  \| \rho \|^2_{{H}^{2}_{x}}) \| h \|^2_{{H}^{s_0+1}_{x}}  d\tau
\\
&+ C\int^t_0 \|\partial \bv \|_{\dot{B}^{s_0-2}_{\infty,2}}( \|h\|^2_{H^{s_0+1}_x}+ \|\bw\|^2_{H^{s_0}_x}+ \|\rho\|^2_{H^{s}_x}+ \|\bv\|^2_{H^{s}_x})d\tau
  \\
  &+C\int^t_0 \|(\partial \bv, \partial \rho, \partial h) \|_{\dot{B}^{s_0-2}_{\infty,2}}\|( h,\rho,\bv)\|_{H^{s_0}_x}( \|h\|^2_{H^{s_0+1}_x}+ \|\bw\|^2_{H^{s_0}_x}+ \| (\rho,\bv) \|^2_{H^{s}_x})d\tau
  \\
  &+C\int^t_0 \|(\partial \bv, \partial \rho, \partial h) \|_{\dot{B}^{s_0-2}_{\infty,2}} \|( h,\rho,\bv) \|^2_{H^{s_0}_x}( \|h\|^2_{H^{s_0+1}_x}+ \|\bw\|^2_{H^{s_0}_x}+ \|(\rho, \bv)\|^2_{H^{s}_x})d\tau
  \\
  & +C\int^t_0 \|(\partial \bv, \partial \rho, \partial h) \|_{\dot{B}^{s_0-2}_{\infty,2}} \|( h,\rho, \bv)\|^3_{H^{s_0}_x}( \|h\|^2_{H^{s_0+1}_x}+ \|\bw\|^2_{H^{s_0}_x}+ \|(\rho,\bv)\|^2_{H^{s}_x})d\tau.
 \\
  \lesssim & \epsilon^2_2.
\end{split}
\end{equation}
Here we use \eqref{Mt} and the assumptions \eqref{401}-\eqref{403}. Gathering \eqref{eQ1}, \eqref{eQ2}, $\cdots$, \eqref{eQ5}, yields
\begin{equation*}
  \vert\kern-0.25ex\vert\kern-0.25ex\vert  \partial^k ( \mathrm{e}^{\rho} \partial_i H^i ) \vert\kern-0.25ex\vert\kern-0.25ex\vert_{s_0-2,2,\Sigma}  \lesssim \epsilon_2, \quad k=1,2,3.
\end{equation*}
Due to
\begin{equation*}
  \partial_{x'} ( \mathrm{e}^{\rho} \partial_i H^i ) = \partial ( \mathrm{e}^{\rho} \partial_i H^i ) \cdot (0, \partial_{x'}\phi)^{\mathrm{T}},
\end{equation*}
by H\"older's inequality and product estimates in Lemma \ref{cj}, we can deduce the following estimate
\begin{equation}\label{th02}
  \vert\kern-0.25ex\vert\kern-0.25ex\vert \partial_{x'} ( \mathrm{e}^{\rho} \partial_i H^i ) \vert\kern-0.25ex\vert\kern-0.25ex\vert_{s_0-2,2,\Sigma} \lesssim \epsilon_2.
\end{equation}
Utilizing \eqref{Hh1} and product estimates, we have
\begin{equation}\label{th03}
\begin{split}
  \vert\kern-0.25ex\vert\kern-0.25ex\vert \partial_{t} ( \mathrm{e}^{\rho} \partial_i H^i ) \vert\kern-0.25ex\vert\kern-0.25ex\vert_{s_0-2,2,\Sigma} =&
  \vert\kern-0.25ex\vert\kern-0.25ex\vert \bv \cdot \nabla ( \mathrm{e}^{\rho} \partial_i H^i ) \vert\kern-0.25ex\vert\kern-0.25ex\vert_{s_0-2,2,\Sigma}
  + \vert\kern-0.25ex\vert\kern-0.25ex\vert  \partial \bv \partial^2 h \vert\kern-0.25ex\vert\kern-0.25ex\vert_{s_0-2,2,\Sigma}
  + \vert\kern-0.25ex\vert\kern-0.25ex\vert  \bw \partial \rho \partial h \vert\kern-0.25ex\vert\kern-0.25ex\vert_{s_0-2,2,\Sigma}
  \\
  \lesssim & \vert\kern-0.25ex\vert\kern-0.25ex\vert \bv \vert\kern-0.25ex\vert\kern-0.25ex\vert_{s_0,2,\Sigma} \vert\kern-0.25ex\vert\kern-0.25ex\vert \nabla ( \mathrm{e}^{\rho} \partial_i H^i ) \vert\kern-0.25ex\vert\kern-0.25ex\vert_{s_0-2,2,\Sigma}
  + \vert\kern-0.25ex\vert\kern-0.25ex\vert   \bv \vert\kern-0.25ex\vert\kern-0.25ex\vert_{s_0,2,\Sigma} \vert\kern-0.25ex\vert\kern-0.25ex\vert   h \vert\kern-0.25ex\vert\kern-0.25ex\vert_{s_0+1,2,\Sigma}
  \\
  & + \vert\kern-0.25ex\vert\kern-0.25ex\vert  \bw  \vert\kern-0.25ex\vert\kern-0.25ex\vert_{s_0,2,\Sigma} \vert\kern-0.25ex\vert\kern-0.25ex\vert  \rho \vert\kern-0.25ex\vert\kern-0.25ex\vert_{s_0,2,\Sigma} \vert\kern-0.25ex\vert\kern-0.25ex\vert   h \vert\kern-0.25ex\vert\kern-0.25ex\vert_{s_0+1,2,\Sigma}.
\end{split}
\end{equation}
Using \eqref{401}-\eqref{403},  \eqref{th03} yields
\begin{equation}\label{th04}
  \frac12 \vert\kern-0.25ex\vert\kern-0.25ex\vert \partial_{t} ( \mathrm{e}^{\rho} \partial_i H^i ) \vert\kern-0.25ex\vert\kern-0.25ex\vert_{s_0-2,2,\Sigma} \lesssim \epsilon_2.
\end{equation}
Combining \eqref{th02} and \eqref{th04}, we have
\begin{equation}\label{th05}
  \vert\kern-0.25ex\vert\kern-0.25ex\vert  \mathrm{e}^{\rho} \partial_i H^i  \vert\kern-0.25ex\vert\kern-0.25ex\vert_{s_0-1,2,\Sigma} \lesssim \epsilon_2.
\end{equation}
By calculating $\mathrm{e}^{\rho} \partial_i H^i=\Delta h - \partial_i \rho \partial^i h$, we can infer that
\begin{equation*}
\begin{split}
\vert\kern-0.25ex\vert\kern-0.25ex\vert  \Delta h \vert\kern-0.25ex\vert\kern-0.25ex\vert_{s_0-1,2,\Sigma}\leq & \vert\kern-0.25ex\vert\kern-0.25ex\vert  \mathrm{e}^{\rho} \partial_i H^i  \vert\kern-0.25ex\vert\kern-0.25ex\vert_{s_0-1,2,\Sigma}+\vert\kern-0.25ex\vert\kern-0.25ex\vert  \partial_i \rho \partial^i h  \vert\kern-0.25ex\vert\kern-0.25ex\vert_{s_0-1,2,\Sigma}
\\
\leq & \vert\kern-0.25ex\vert\kern-0.25ex\vert  \mathrm{e}^{\rho} \partial_i H^i  \vert\kern-0.25ex\vert\kern-0.25ex\vert_{s_0-1,2,\Sigma}+\vert\kern-0.25ex\vert\kern-0.25ex\vert  \partial_i \rho  \vert\kern-0.25ex\vert\kern-0.25ex\vert_{s_0-1,2,\Sigma}\vert\kern-0.25ex\vert\kern-0.25ex\vert   \partial^i h  \vert\kern-0.25ex\vert\kern-0.25ex\vert_{s_0-1,2,\Sigma}
\\
\lesssim & \epsilon_2.
\end{split}
\end{equation*}
Here we use that $H_{x'}^{s_0-1}(\Sigma)$ is a algebra for $s_0>2$.
\end{proof}

\begin{Lemma}\label{fre}
{Let $\bU=(p(\rho), \bv, h)^{\mathrm{T}}$ satisfy the assumption in Lemma \ref{te1}. Then}
\begin{equation}\label{508}
  \vert\kern-0.25ex\vert\kern-0.25ex\vert  2^j(\bU-S_j \bU), d S_j \bU, 2^{-j} d \partial_x S_j \bU \vert\kern-0.25ex\vert\kern-0.25ex\vert_{s_0-1,2,\Sigma} \lesssim \epsilon_2. 
\end{equation}
\end{Lemma}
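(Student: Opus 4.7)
The plan is to reduce each of the three quantities appearing in \eqref{508} to Corollary~\ref{vte}. Since $\bU=(p(\rho),\bv,h)^{\mathrm{T}}$ with $p$ smooth, standard composition estimates in the $\vert\kern-0.25ex\vert\kern-0.25ex\vert\cdot\vert\kern-0.25ex\vert\kern-0.25ex\vert_{s_0,2,\Sigma}$ norm give
\[
\vert\kern-0.25ex\vert\kern-0.25ex\vert \bU \vert\kern-0.25ex\vert\kern-0.25ex\vert_{s_0,2,\Sigma} \lesssim \vert\kern-0.25ex\vert\kern-0.25ex\vert \bv,\rho,h \vert\kern-0.25ex\vert\kern-0.25ex\vert_{s_0,2,\Sigma} \lesssim \epsilon_2.
\]
The three terms to estimate all sit at scaling level $s_0$: the derivative deficit of $s_0-1$ is compensated by the factor $2^j$ (or by $2^{-j}$ combined with an extra $\partial_x$), so by a standard frequency-envelope principle each should be controlled by the $s_0$-level norm of $\bU$ uniformly in $j$.

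For $\vert\kern-0.25ex\vert\kern-0.25ex\vert 2^j(\bU-S_j\bU)\vert\kern-0.25ex\vert\kern-0.25ex\vert_{s_0-1,2,\Sigma}$, I would write $\bU-S_j\bU=\sum_{k\geq j}\Delta_k\bU$ and exploit $2^j\Delta_k\bU=2^{j-k}\cdot 2^k\Delta_k\bU$. After the flattening change of coordinates $x_3\mapsto x_3-\phi(t,x')$ used throughout Section~\ref{sec6}, Bernstein's inequality in the tangential variables $x'$ identifies the factor $2^k$ with a tangential derivative (up to terms involving $\partial\phi$, which are harmless since $\vert\kern-0.25ex\vert\kern-0.25ex\vert d\phi-dt\vert\kern-0.25ex\vert\kern-0.25ex\vert_{s_0,2,\Sigma}\leq 2\epsilon_1$ is small). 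Almost orthogonality and the resulting geometric decay in $k-j$ sum to $\vert\kern-0.25ex\vert\kern-0.25ex\vert\partial\bU\vert\kern-0.25ex\vert\kern-0.25ex\vert_{s_0-1,2,\Sigma}\lesssim \epsilon_2$. For $\vert\kern-0.25ex\vert\kern-0.25ex\vert dS_j\bU\vert\kern-0.25ex\vert\kern-0.25ex\vert_{s_0-1,2,\Sigma}$, the spatial component is controlled directly by $L^2$-boundedness of $S_j$ through $\vert\kern-0.25ex\vert\kern-0.25ex\vert\partial\bU\vert\kern-0.25ex\vert\kern-0.25ex\vert_{s_0-1,2,\Sigma}$, while the time derivative $\partial_t S_j\bU=-S_j((A^0)^{-1}\sum_i A^i\partial_i\bU)$ is reduced to the spatial case via the hyperbolic system \eqref{505}, with the $[S_j,(A^0)^{-1}A^i]$ commutators absorbed by Lemma~\ref{Miao} and product estimates. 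Finally, $\vert\kern-0.25ex\vert\kern-0.25ex\vert 2^{-j}d\partial_x S_j\bU\vert\kern-0.25ex\vert\kern-0.25ex\vert_{s_0-1,2,\Sigma}$ is handled identically to the previous term after using Bernstein in the form $2^{-j}\partial_x S_j=O(1)$ on frequency-localized pieces.

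The main obstacle is the mismatch between the ambient Littlewood--Paley projections $S_j,\Delta_k$, acting on all of $\mathbb{R}^3$, and the trace norm $\vert\kern-0.25ex\vert\kern-0.25ex\vert\cdot\vert\kern-0.25ex\vert\kern-0.25ex\vert_{s_0-1,2,\Sigma}$ built from tangential coordinates $x'$ on the curved hypersurface $\Sigma$. After the flattening change of variables, the projections no longer commute with restriction, but the resulting commutators involve $\phi$ only through $d\phi-dt$, which is small by the bootstrap hypothesis. Lemma~\ref{Miao}, Lemma~\ref{ce}, and the control $\|\partial d\phi\|_{L^2_t\dot{B}^{s_0-2}_{\infty,2}}\lesssim \vert\kern-0.25ex\vert\kern-0.25ex\vert d\phi-dt\vert\kern-0.25ex\vert\kern-0.25ex\vert_{s_0,2,\Sigma}$ absorb these error terms into the leading estimate, and the dyadic sums close with constants independent of $j$.
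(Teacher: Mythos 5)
Your sketch takes a genuinely different route from the paper, and the route has a real gap exactly at the point you flag as ``the main obstacle'' --- and then wave away.

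The paper's proof does not try to transfer Littlewood--Paley decay or Bernstein's inequality to the hypersurface. Instead it observes that for any order-zero multiplier $\Delta_0$ on $\mathbb{R}^3$ (in particular $S_j$, $2^{-j}\partial_x S_j$, or $2^j\Delta_m$ for $m\ge j$), the function $\Delta_0 \bU$ satisfies the same symmetric hyperbolic system with a commutator source $-[\Delta_0,A^\alpha(\bU)]\partial_{x_\alpha}\bU$. One then re-applies the characteristic energy estimate of Lemma~\ref{te1} to $\Delta_0\bU$ and absorbs the commutator via Kato--Ponce-type bounds, landing on
\[
\vert\kern-0.25ex\vert\kern-0.25ex\vert \Delta_0\bU \vert\kern-0.25ex\vert\kern-0.25ex\vert^2_{s_0,2,\Sigma}\lesssim \|d\bU\|_{L^2_t L^\infty_x}\,\|\Delta_0\bU\|^2_{L^\infty_t H^{s_0}_x}.
\]
The right-hand side lives on full time slices, where $\{\Delta_m\bU\}_m$ \emph{are} almost orthogonal, so summing over $m\ge j$ is immediate. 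The whole point of this maneuver is to avoid ever having to understand the frequency content of $\Delta_k\bU$ restricted to $\Sigma$.

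Your approach instead tries to argue directly on the hypersurface: write $\bU-S_j\bU=\sum_{k\ge j}\Delta_k\bU$, invoke ``Bernstein's inequality in the tangential variables $x'$ after the flattening change of coordinates'', claim almost orthogonality of the pieces in the $\vert\kern-0.25ex\vert\kern-0.25ex\vert\cdot\vert\kern-0.25ex\vert\kern-0.25ex\vert_{s_0-1,2,\Sigma}$ norm, and sum. This does not go through. The operators $S_j$, $\Delta_k$ are Fourier multipliers in all three spatial variables; after restriction to $\Sigma=\{x_3=\phi(t,x')\}$ they do not correspond to Fourier multipliers in $(t,x')$, even when $\phi$ is very smooth or identically zero: the trace of a function localized at $|\xi|\sim 2^k$ onto a hyperplane is supported at $|\xi'|\lesssim 2^k$ but is not concentrated near $|\xi'|\sim 2^k$, so there is no almost orthogonality of $\{\Delta_k\bU\big|_\Sigma\}_k$ in $L^2(\Sigma)$. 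The commutators you propose to control via $\vert\kern-0.25ex\vert\kern-0.25ex\vert d\phi-dt\vert\kern-0.25ex\vert\kern-0.25ex\vert_{s_0,2,\Sigma}$ being small are not the relevant obstruction --- the obstruction persists when $\phi\equiv 0$, and is geometric rather than perturbative. The correct repair, which is the content of the paper's proof, is to route the estimate through the bulk time-slice norms via Lemma~\ref{te1} rather than attempt an intrinsic tangential Littlewood--Paley argument.
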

\begin{proof}
Let $\Delta_0$ be a standard multiplier of order $0$ on $\mathbb{R}^3$, such that $\Delta_0$ is additionally bounded on $L^\infty(\mathbb{R}^3)$. Clearly,
\begin{equation*}
  A^0(\bU)(\Delta_0U)_t+ \sum^3_{i=1}A^i(\bU)(\Delta_0 \bU)_{x_i}= -[\Delta_0, A^\alpha(\bU)]\partial_{x_\alpha}\bU.
\end{equation*}
Due to Lemma \ref{te1}, we have
\begin{equation}\label{60}
  \vert\kern-0.25ex\vert\kern-0.25ex\vert \Delta_0U\vert\kern-0.25ex\vert\kern-0.25ex\vert^2_{s_0,2,\Sigma} \lesssim \|d \bU \|_{L^2_t L^{\infty}_x}\| \Delta_0 \bU\|^2_{L^{\infty}_tH_x^{s}}+\| [\Delta_0, A^\alpha(\bU)]\partial_{x_\alpha}\bU \|_{L^{2}_tH_x^{s_0}}\| \Delta_0 \bU\|_{L^{\infty}_tH_x^{s_0}}.
\end{equation}
By commutator estimates, we obtain
\begin{equation*}
  \| [\Delta_0, A^\alpha(\bU)]\partial_{x_\alpha}\bU \|_{L^{2}_tH_x^{s_0}} \lesssim \|d \bU \|_{L^2_t L^{\infty}_x}\| \Delta_0 \bU\|_{L^{\infty}_tH_x^{s_0}}.
\end{equation*}
Hence, \eqref{60} yields
\begin{equation}\label{60e}
  \vert\kern-0.25ex\vert\kern-0.25ex\vert \Delta_0 \bU \vert\kern-0.25ex\vert\kern-0.25ex\vert^2_{s_0,2,\Sigma} \lesssim \|d \bU \|_{L^2_t L^{\infty}_x}\| \Delta_0 \bU\|^2_{L^{\infty}_tH_x^{s_0}}.
\end{equation}
To control the norm of $2^j(\bU-S_j \bU)$, we write
\begin{equation*}
  2^j(\bU-S_j \bU)= 2^j \sum_{m\geq j}\Delta_m \bU,
\end{equation*}
where $\Delta_m \bU$ satisfies the above conditions for $\Delta_0 \bU$. Applying \eqref{60e} and replacing $s_0$ to $s_0-1$, we get
\begin{equation*}
\begin{split}
   \vert\kern-0.25ex\vert\kern-0.25ex\vert 2^j(\bU-S_j \bU) \vert\kern-0.25ex\vert\kern-0.25ex\vert^2_{s_0-1,2,\Sigma}
  \lesssim & \sum_{m\geq j} \vert\kern-0.25ex\vert\kern-0.25ex\vert 2^m \Delta_m \bU \vert\kern-0.25ex\vert\kern-0.25ex\vert^2_{s_0-1,2,\Sigma}
  \\
  \lesssim & \|d \bU \|_{L^2_t L^{\infty}_x} \sum_{m\geq j} (2^m\|\Delta_m \bU\|^2_{L^{\infty}_tH_x^{s_0}})
  \\
   \lesssim & \|d \bU \|_{L^2_t L^{\infty}_x} \| \bU \|^2_{L^{\infty}_tH_x^{s_0}} \lesssim \epsilon^2_2.
\end{split}
\end{equation*}
Taking square of it, we can see
\begin{equation*}
   \vert\kern-0.25ex\vert\kern-0.25ex\vert 2^j(\bU-S_j \bU) \vert\kern-0.25ex\vert\kern-0.25ex\vert_{s_0-1,2,\Sigma}
  \lesssim  \epsilon_2.
\end{equation*}
Finally, applying \eqref{60} to $\Delta_0=S_{j}$ and $\Delta_0=2^{-j}\partial_x S_{j}$ shows that
\begin{equation*}
  \vert\kern-0.25ex\vert\kern-0.25ex\vert d S_{j}\bU\vert\kern-0.25ex\vert\kern-0.25ex\vert_{s_0-1,2,\Sigma} +\vert\kern-0.25ex\vert\kern-0.25ex\vert 2^{-j} d \partial_x S_{j}\bU \vert\kern-0.25ex\vert\kern-0.25ex\vert_{s_0-1,2,\Sigma} \lesssim \epsilon_2.
\end{equation*}
Therefore, the proof of Lemma \ref{fre} is completed.
\end{proof}
We are now ready to prove Proposition \ref{r1}.
\medskip\begin{proof}[Proof of Proposition \ref{r1}]
Note $(\bv, \rho, h, \bw) \in \mathcal{H}$. Using Lemma \ref{fre}, it is sufficient to verify that
\begin{equation*}
\begin{split}
  \vert\kern-0.25ex\vert\kern-0.25ex\vert {\mathbf{g}}^{\alpha \beta}-\mathbf{m}^{\alpha \beta}\vert\kern-0.25ex\vert\kern-0.25ex\vert_{s_0,2,\Sigma_{\theta,r}} \lesssim \epsilon_2.
\end{split}
\end{equation*}
By Lemma \ref{te1} and \eqref{5021}-\eqref{504}, we have
\begin{equation*}
  \sup_{\theta,r}\vert\kern-0.25ex\vert\kern-0.25ex\vert \bv \vert\kern-0.25ex\vert\kern-0.25ex\vert_{s_0,2,\Sigma_{\theta,r}}+ \sup_{\theta,r}\vert\kern-0.25ex\vert\kern-0.25ex\vert \rho \vert\kern-0.25ex\vert\kern-0.25ex\vert_{s_0,2,\Sigma_{\theta,r}} \lesssim \epsilon_2.
\end{equation*}
Noting the expression of $\mathbf{g}$, by Lemma \ref{te2}, we derive that
\begin{equation*}
\begin{split}
  \vert\kern-0.25ex\vert\kern-0.25ex\vert  {\mathbf{g}}^{\alpha \beta}-\mathbf{m}^{\alpha \beta}\vert\kern-0.25ex\vert\kern-0.25ex\vert _{s_0,2,\Sigma_{\theta,r}} & \lesssim \vert\kern-0.25ex\vert\kern-0.25ex\vert \bv\vert\kern-0.25ex\vert\kern-0.25ex\vert_{s_0,2,\Sigma_{\theta,r}}+\vert\kern-0.25ex\vert\kern-0.25ex\vert \bv \cdot \bv\vert\kern-0.25ex\vert\kern-0.25ex\vert_{s_0,2,\Sigma_{\theta,r}}+\vert\kern-0.25ex\vert\kern-0.25ex\vert c_s^2-c_s^2(0)\vert\kern-0.25ex\vert\kern-0.25ex\vert_{s_0,2,\Sigma_{\theta,r}}
  \\
  & \lesssim \epsilon_2.
\end{split}
\end{equation*}
Thus, the conclusion of Proposition \ref{r1} holds.
\end{proof}
To prove Proposition \ref{r2}, let us introduce a null frame of $\Sigma$ and give estimates of the coefficients of connection.
\subsection{The null frame}
We introduce a null frame along $\Sigma$ as follows. Let
\begin{equation*}
  V=(dr)^*,
\end{equation*}
where $r$ is the defining function of the foliation $\Sigma$, and where $*$ denotes the identification of covectors and vectors induced by $\mathbf{g}$. Then $V$ is the null geodesic flow field tangent to $\Sigma$. Let
\begin{equation}\label{600}
  \sigma=dt(V), \qquad l=\sigma^{-1} V.
\end{equation}
Thus $l$ is the g-normal field to $\Sigma$ normalized so that $dt(l)=1$, hence
\begin{equation}\label{601}
  l=\left< dt,dx_3-d\phi\right>^{-1}_{\mathbf{g}} \left( dx_3-d \phi \right)^*,
\end{equation}
so the coefficients $l^j$ are smooth functions of $v, \rho$ and $d \phi$. Conversely,
\begin{equation}\label{602}
 dx_3-d \phi =\left< l,\partial_{x_3}\right>^{-1}_{\mathbf{g}} l^*,
\end{equation}
so that $d \phi$ is a smooth function of $\bv, \rho, h$ and the coefficients of $l$.

Next we introduce the vector fields $e_a, a=1,2$ tangent to the fixed-time slice $\Sigma^t$ of $\Sigma$. We do this by applying Gram-Schmidt orthogonalization in the metric $\mathbf{g}$ to the $\Sigma^t$-tangent vector fields $\partial_{x_a}+ \partial_{x_a} \phi \partial_{x_3}, a=1, 2$.

Finally, we let
\begin{equation*}
  \underline{l}=l+2\partial_t.
\end{equation*}
It follows that $\{l, \underline{l}, e_1, e_2 \}$ form a null frame in the sense that
\begin{align*}
  & \left<l, \underline{l} \right>_{\mathbf{g}} =2, \qquad \qquad \ \left< e_a, e_b\right>_{\mathbf{g}}=\delta_{ab} \ (a,b=1,2),
  \\
  & \left<l, l \right>_{\mathbf{g}} =\left<\underline{l}, \underline{l} \right>_{\mathbf{g}}=0, \quad \left<l, e_a \right>_{\mathbf{g}}=\left<\underline{l}, e_a \right>_{\mathbf{g}}=0 \ (a=1,2).
\end{align*}
The coefficients of each of the fields is a smooth function of $(\bv,\rho,h)$ and $d \phi$, and by assumption we also have the pointwise bound
\begin{equation*}
  | e_1 - \partial_{x_1} | +| e_2 - \partial_{x_2} | + | l- (\partial_t+\partial_{x_3}) | + | \underline{l} - (-\partial_t+\partial_{x_3})|  \lesssim \epsilon_1.
\end{equation*}
After that, we can state the following corollary concerning to the decomposition of curvature tensor.
\begin{Lemma}\label{LLQ}\cite{ST}
Suppose $f$ satisfies
$$\mathbf{g}^{\alpha \beta} \partial^2_{\alpha \beta}f=F.$$
Let $(t,x',\phi(t,x'))$ denote the parametrization of $\Sigma$, and for $0 \leq \alpha, \beta \leq 2$, let $/\kern-0.55em \partial_\alpha$ denote differentiation along $\Sigma$ in the induced coordinates. Then, for $0 \leq \alpha, \beta \leq 2$, one can write
\begin{equation*}
  /\kern-0.55em \partial_\alpha /\kern-0.55em \partial_\beta (f|_{\Sigma}) = l(f_2)+ f_1,
\end{equation*}
where
\begin{equation*}
  \| f_2 \|_{L^2_t H^{s_0-1}_{x'}(\Sigma)}+\| f_1 \|_{L^1_t H^{s_0-1}_{x'}(\Sigma)} \lesssim \|df\|_{L^\infty_t H_x^{s_0-1}}+ \|df\|_{L^2_t L_x^\infty}+ \| F\|_{L^2_t H^{s_0-1}_x}+ \| F\|_{L^1_t H^{s_0-1}_{x'}(\Sigma)}.
\end{equation*}
\end{Lemma}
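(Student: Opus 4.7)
The plan is to proceed via a null-frame decomposition and a case analysis that distinguishes whether the null generator $l$ appears in the second tangential derivative or not. Since $\{l, e_1, e_2\}$ spans $T\Sigma$ by the construction in the paper, each coordinate tangential derivative can be written as $/\kern-0.55em \partial_\alpha = c_\alpha^l \, l + \sum_a c_\alpha^a e_a$, where the coefficients $c_\alpha^X$ are smooth functions of $\bv$, $\rho$ and $d\phi$. Expanding,
\begin{equation*}
/\kern-0.55em \partial_\alpha /\kern-0.55em \partial_\beta f \;=\; \sum_{X,Y\in\{l,e_1,e_2\}} c_\alpha^X c_\beta^Y \, XY f \;+\; \sum_{Y} /\kern-0.55em \partial_\alpha(c_\beta^Y)\, Yf.
\end{equation*}
The coefficient-derivative term $/\kern-0.55em \partial_\alpha(c_\beta^Y)\,Yf$ is lower order: we will absorb it into $f_1$, using the characteristic-surface estimates $\vert\kern-0.25ex\vert\kern-0.25ex\vert d\phi-dt\vert\kern-0.25ex\vert\kern-0.25ex\vert_{s_0,2,\Sigma}\lesssim \epsilon_2$ from Proposition \ref{r2} (which is available as stated earlier) together with Lemma \ref{te2} to obtain the product bound on the tangential Sobolev space.

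For the principal frame terms $XYf$ there are three cases. If $X=l$, then $XYf = l(Yf)$ plus the commutator $[l,c_\beta^Y]Yf$; we set $f_2 \supset Yf$ and the commutator joins $f_1$. If $Y=l$ but $X=e_a$, we write $e_a l f = l(e_af) + [e_a,l]f$, again setting $f_2 \supset e_a f$ and putting $[e_a,l]f$ into $f_1$; here $[e_a,l]$ is a tangential vector field whose coefficients are first-order in the null connection, and the connection estimate (coming from $G \le 2\epsilon_1$ and Proposition \ref{r1}) places it in $L^2_t L^\infty$, so $[e_a,l]f \in L^1_t H^{s_0-1}_{x'}(\Sigma)$ by Hölder's inequality and a product estimate. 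The pure angular case $XY = e_a e_b$ is the essential one and requires the wave equation: expanding $\square_\mathbf{g}$ in the null frame with $\langle l,\underline l\rangle_\mathbf{g}=2$, $\langle e_a,e_b\rangle_\mathbf{g}=\delta_{ab}$ gives
\begin{equation*}
\square_\mathbf{g} f \;=\; l\underline l \,f \;+\; /\kern-0.55em \Delta f \;+\; \text{(connection)}\cdot df,
\end{equation*}
whence $/\kern-0.55em \Delta f = F - l(\underline l f) + (\text{conn})\cdot df$. This extracts an $l$ with $f_2 \supset -\underline l f$ from the trace of the angular Laplacian. The trace-free angular Hessian $e_ae_b f - \tfrac12 \delta_{ab}/\kern-0.55em \Delta f$ is then controlled on each slice $\Sigma^t$ via elliptic (Hodge) estimates on $\Sigma^t$ by $/\kern-0.55em \Delta f$ plus lower-order frame derivatives, the commutators $[e_a,e_b]$ producing only first-order Ricci-type terms.

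The main obstacle is the trace appearing through $\underline l f|_\Sigma$, since $\underline l = l + 2\partial_t$ is transverse to $\Sigma$. Because $l$ is tangential, $l f$ is intrinsic and bounded in $L^2_tH^{s_0-1}_{x'}(\Sigma)$ by first-order tangential estimates once the Strichartz input $\|df\|_{L^2_t L^\infty_x}$ is invoked to control the commutators with $c_\alpha^X$. The transverse piece $\partial_t f$ on $\Sigma$ must be estimated as a trace; the right bound is $\|\partial_t f|_\Sigma\|_{L^2_t H^{s_0-1}_{x'}(\Sigma)} \lesssim \|df\|_{L^\infty_t H^{s_0-1}_x} + \|F\|_{L^2_t H^{s_0-1}_x}$, and is obtained not by naive spacelike trace (which would cost $\tfrac12$-derivative) but by the characteristic energy identity applied to $\partial f$ along $\Sigma$ with source $\partial F$, exactly as carried out in Lemma \ref{te1}, Corollary \ref{vte}, Lemma \ref{te20} and Lemma \ref{te21}. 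The $F$ contribution coming from $/\kern-0.55em \Delta f = F + \cdots$ is placed into $f_1$, using the $L^2_t H^{s_0-1}_x$ and $L^1_t H^{s_0-1}_{x'}(\Sigma)$ norms of $F$ listed on the right-hand side of the lemma. Collecting all terms and using Proposition \ref{r1} to bound the metric and frame coefficients, and Proposition \ref{r2} for the regularity of $d\phi$, yields the desired decomposition with the stated norm bound.
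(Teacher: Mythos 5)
The paper does not prove this lemma; it is quoted directly from Smith--Tataru (\cite{ST}), so there is no in-paper proof to compare against. That said, your proposed argument has the right skeleton but contains a genuine gap at the step you yourself flag as ``the essential one.''

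Your null-frame expansion $/\kern-0.55em\partial_\alpha = c^l_\alpha\, l + c^a_\alpha\, e_a$ and the treatment of the $ll$, $le_a$, $e_a l$ blocks (pull the $l$ forward, absorb commutators into $f_1$, use the characteristic energy machinery of Lemmas \ref{te1}--\ref{te21} for the traces) are sound and in the spirit of \cite{ST}. The problem is the pure angular block $e_a e_b f$. The wave equation gives you only the trace: $/\kern-0.55em\Delta f = -\,l(\underline{l}f) + F + (\text{conn})\cdot df$, which yields $f_2\supset -\underline{l}f$ and places $F$ into $f_1$ — that part is fine. But for the trace-free part $e_ae_bf - \tfrac12\delta_{ab}/\kern-0.55em\Delta f$ you invoke ``elliptic (Hodge) estimates on $\Sigma^t$'' and claim the error is ``only first-order Ricci-type.'' This is not correct: the Bochner identity on a 2D slice $\Sigma^t$ reads $\int|\hat\nabla^2 f|^2 = \tfrac12\int|/\kern-0.55em\Delta f|^2 - \int K|\nabla f|^2$, where $K$ is the Gauss curvature of $\Sigma^t$. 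Via the Gauss equation, $K$ involves $\partial^2\mathbf{g}$, i.e.\ a \emph{second}-order quantity in the metric that is not bounded by the right-hand side of the lemma ($\|df\|$ and $\|F\|$ norms only). Moreover, an elliptic bound gives a \emph{norm} estimate for $\hat\nabla^2 f$ in terms of $/\kern-0.55em\Delta f$, not a \emph{structural decomposition} $\hat\nabla^2 f = l(\tilde f_2)+\tilde f_1$: since $/\kern-0.55em\Delta f$ itself contains the term $l(\underline{l}f)$, naively bounding $\|\hat\nabla^2 f\|_{L^1_t H^{s_0-1}_{x'}(\Sigma)}$ by $\|/\kern-0.55em\Delta f\|_{L^1_t H^{s_0-1}_{x'}(\Sigma)}$ would require controlling a second tangential derivative of $f$, which is precisely what you are trying to establish. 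Finally, when the lemma is applied to $f=\mathbf{g}_{\mu\nu}$ in Corollary \ref{Rfenjie}, the curvature term is $K\cdot\partial\mathbf{g}\sim(\partial^2\mathbf{g})(\partial\mathbf{g})$, which makes the elliptic argument self-referential.

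What actually rescues the argument in \cite{ST}, and in the only place the decomposition is used here (the Raychaudhuri equation $l(\chi_{ab})=\langle R(l,e_a)l,e_b\rangle - \chi^2 - \dots$ in Lemma \ref{chi}), is that the needed curvature component has \emph{two} $l$'s. Expanding $\tfrac12[\partial^2_{\alpha\mu}g_{\beta\nu}+\partial^2_{\beta\nu}g_{\alpha\mu}-\partial^2_{\beta\mu}g_{\alpha\nu}-\partial^2_{\alpha\nu}g_{\beta\mu}]$ against $l^\alpha e_a^\beta l^\mu e_b^\nu$, three of the four terms carry at least one $l$-contraction on the derivative indices and decompose as you describe; the only candidate pure angular Hessian is $e_a^\beta e_b^\nu\,\partial_\beta\partial_\nu(g_{\alpha\mu})\,l^\alpha l^\mu$, and that equals $e_a e_b\bigl(g(l,l)\bigr)$ modulo first-order terms — which vanishes because $g(l,l)=0$. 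So the pure angular obstruction never actually arises, and no elliptic estimate on $\Sigma^t$ is needed. You should either restate the lemma for the specific contraction $l^\alpha l^\mu e^\beta e^\nu(\partial^2 g)_{\alpha\beta\mu\nu}$ that is used, or supply a rigorous argument for the trace-free angular Hessian that closes the circularity (e.g.\ via an absorption argument using the smallness coming from the bootstrap hypothesis $G\leq 2\epsilon_1$).
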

\begin{corollary}\label{Rfenjie}
Let $R$ be the Riemann curvature tensor of the metric ${\mathbf{g}}$. Let $e_0=l$. Then for any $0 \leq a, b, c,d \leq 2$, we can write
\begin{equation}\label{603}
  \left< R(e_a, e_b)e_c, e_d \right>_{\mathbf{g}}|_{\Sigma}=l(f_2)+f_1,
\end{equation}
where $|f_1|\lesssim |\mathrm{curl} \bw|+ |d \mathbf{g} |^2+ |\Delta h|$ and $|f_2| \lesssim |d {\mathbf{g}}|$. Moreover, the characteristic energy estimates
\begin{equation}\label{604}
  \|f_2\|_{L^2_t H^{s_0-1}_{x'}(\Sigma)}+\|f_1\|_{L^1_t H^{s_0-1}_{x'}(\Sigma)} \lesssim \epsilon_2,
\end{equation}
holds. Additionally, for any $t \in [-2,2]$,
\begin{equation}\label{605}
  \|f_2(t,\cdot)\|_{C^\delta_{x'}(\Sigma^t)} \lesssim \|d \mathbf{g}\|_{C^\delta_x(\mathbb{R}^3)}.
\end{equation}
\end{corollary}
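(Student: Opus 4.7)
The plan is to exploit the fact that the Riemann tensor in local coordinates has the schematic form
\[
R_{\alpha\beta\gamma\delta} \;=\; \tfrac12\bigl(\partial^2_{\alpha\gamma}\mathbf{g}_{\beta\delta} - \partial^2_{\alpha\delta}\mathbf{g}_{\beta\gamma} - \partial^2_{\beta\gamma}\mathbf{g}_{\alpha\delta} + \partial^2_{\beta\delta}\mathbf{g}_{\alpha\gamma}\bigr) \;+\; \mathcal{N}(\mathbf{g},\mathbf{g}^{-1},\partial\mathbf{g}),
\]
where $\mathcal{N}$ is a quadratic polynomial in $\partial \mathbf{g}$ with smooth coefficients depending on $\mathbf{g}$. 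After contracting with the frame vectors $e_a,e_b,e_c,e_d$ for $0\le a,b,c,d\le 2$---all of which are tangent to $\Sigma$ since $e_0=l$ is the null generator and $e_1,e_2$ are tangent to $\Sigma^t$---the second-derivative piece is converted into a sum of \emph{tangential} second derivatives $/\kern-0.55em \partial_\mu /\kern-0.55em \partial_\nu (\mathbf{g}_{..}|_{\Sigma})$, times smooth frame coefficients, modulo lower-order terms of the form $e_a(e_b^\mu)\,\partial_\mu \mathbf{g}$ which are already of type $|d\mathbf{g}|^2$. This is the crucial point: restricting indices to $\{0,1,2\}$ means no $\underline{l}$-derivative (which would be transversal and uncontrolled) ever appears.

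Next, since $\mathbf{g}$ is a smooth function of $(\bv,\rho,h)$ in the region where the cutoff is $1$, the chain rule gives
\[
\square_{\mathbf{g}}\mathbf{g}^{\alpha\beta} \;=\; (\partial_{v^i}\mathbf{g}^{\alpha\beta})\,\square_{g} v^i + (\partial_\rho \mathbf{g}^{\alpha\beta})\,\square_{g}\rho + (\partial_h \mathbf{g}^{\alpha\beta})\,\square_{g} h \;+\; \mathcal{N}'(\mathbf{g},d\mathbf{g}),
\]
where $\mathcal{N}'$ is quadratic in $d\mathbf{g}$. Substituting the three wave equations from \eqref{fc1}, namely $\square_g v^i=-\mathrm{e}^{\rho}c_s^2\,\mathrm{curl}\bw^i+Q^i$, $\square_g\rho=-\tfrac1\gamma c_s^2\Delta h+D$, and $\square_g h=c_s^2\Delta h+E$, yields $\square_{\mathbf{g}}\mathbf{g}^{\alpha\beta}=F^{\alpha\beta}$ with the pointwise bound $|F^{\alpha\beta}|\lesssim |\mathrm{curl}\bw|+|\Delta h|+|d\mathbf{g}|^2$. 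I then apply Lemma \ref{LLQ} to each component $\mathbf{g}^{\alpha\beta}$, which produces a decomposition $/\kern-0.55em \partial_\mu /\kern-0.55em \partial_\nu(\mathbf{g}|_{\Sigma})=l(f_2^{\alpha\beta,\mu\nu})+f_1^{\alpha\beta,\mu\nu}$ with $f_2\lesssim |d\mathbf{g}|$ and with $f_1$ bounded pointwise by the right-hand side of $|F^{\alpha\beta}|$ together with $|d\mathbf{g}|$-terms from the frame error. Recombining with the contracted frame coefficients then gives the desired decomposition $\langle R(e_a,e_b)e_c,e_d\rangle|_\Sigma=l(f_2)+f_1$ with the claimed pointwise size estimates.

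For the norm bound \eqref{604}, the Lemma \ref{LLQ} estimate requires controlling $\|F^{\alpha\beta}\|_{L^2_t H^{s_0-1}_x}+\|F^{\alpha\beta}\|_{L^1_t H^{s_0-1}_{x'}(\Sigma)}$. The term $\mathrm{curl}\bw$ is handled via Lemma \ref{te20}, the term $\Delta h$ via Lemma \ref{te21}, and the $|d\mathbf{g}|^2$ piece via the algebra property of $H^{s_0-1}_{x'}(\Sigma^t)$ (valid since $s_0-1>1$, the dimension of $\Sigma^t$) combined with the characteristic-surface regularity of $d\mathbf{g}$ supplied by Lemma \ref{fre}. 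The ``$df$'' and ``$F$'' remainders from Lemma \ref{LLQ} reduce under the substitution $f\mapsto \mathbf{g}^{\alpha\beta}$ to norms already controlled by $\vert\kern-0.25ex\vert\kern-0.25ex\vert \bv,\rho,h\vert\kern-0.25ex\vert\kern-0.25ex\vert_{s_0,\infty}$ and the Strichartz bound \eqref{5021}, all of which are $\lesssim\epsilon_2$. Finally, \eqref{605} is immediate since $f_2\lesssim |d\mathbf{g}|$, so its $C^\delta_{x'}$-norm on a fixed-time slice is dominated by $\|d\mathbf{g}(t,\cdot)\|_{C^\delta_x}$ using the smoothness of the frame and chain rule.

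The main obstacle will be making the bookkeeping precise enough to verify that the schematic identity $R=\partial^2\mathbf{g}+(\partial\mathbf{g})^2$ contracted in four tangential frame slots produces \emph{only} tangential second derivatives of $\mathbf{g}^{\alpha\beta}$---if a non-tangential $\underline{l}$-derivative crept in, Lemma \ref{LLQ} would not apply and the whole argument would collapse. This is a well-known cancellation rooted in the symmetries $R_{abcd}=-R_{bacd}=-R_{abdc}$ and the fact that $l$ and $e_A$ together span the tangent space of $\Sigma$; it must be checked carefully but relies on no deep structure beyond linear algebra in the null frame. A secondary technical point is estimating $|d\mathbf{g}|^2$ in the mixed space-time norm $L^1_t H^{s_0-1}_{x'}(\Sigma)$, which requires pairing Lemma \ref{fre}'s $\vert\kern-0.25ex\vert\kern-0.25ex\vert d\mathbf{g}\vert\kern-0.25ex\vert\kern-0.25ex\vert_{s_0-1,2,\Sigma}\lesssim\epsilon_2$ bound with the $L^2_tL^\infty_x$-Strichartz estimate on $d\mathbf{g}$, via the trace/algebra property; the Sobolev index $s_0>2$ is exactly what makes this product estimate admissible.
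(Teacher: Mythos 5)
Your proposal is essentially the paper's own argument: schematically decompose $R$ into $\partial^2\mathbf{g}$ plus terms quadratic in $d\mathbf{g}$, contract with the tangential null frame $\{l,e_1,e_2\}$, peel off the frame-derivative error $e_a(e_c^\mu)\partial_\mu\mathbf{g}$ (controlled by Proposition \ref{r1} and the frame bounds \eqref{LLL}), apply Lemma \ref{LLQ} to the tangential second derivatives of $\mathbf{g}$, and then use the chain rule and the wave equations \eqref{fc1} to reduce $\square_{\mathbf{g}}\mathbf{g}_{\mu\nu}$ to $\mathrm{curl}\bw$, $\Delta h$, and $(d\mathbf{g})^2$, which are handled by Lemma \ref{te20}, Lemma \ref{te21}, Corollary \ref{vte}, and the Strichartz bound. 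One small clarification: the tangentiality you flag as the ``main obstacle'' is not a cancellation requiring the antisymmetries of $R$ at all --- in the coordinate expression for $R_{\alpha\beta\mu\nu}$, each of the two derivative indices in every $\partial^2\mathbf{g}$ term is contracted against one of $e_a^\alpha,e_b^\beta,e_c^\mu,e_d^\nu$, all tangential to $\Sigma$, so the resulting second derivatives are automatically tangential by bare contraction; the paper invokes no symmetry argument here.
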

\begin{proof}
According to the expression of curvature tensor, we have
\begin{equation*}
  \left< R(e_a, e_b)e_c, e_d \right>_{\mathbf{g}}= R_{\alpha \beta \mu \nu}e^\alpha_a e^\beta_b e_c^\mu e_d^\nu,
\end{equation*}
where
\begin{equation*}
  R_{\alpha \beta \mu \nu}= \frac12 \left[ \partial^2_{\alpha \mu} \mathbf{g}_{\beta \nu}+\partial^2_{\beta \nu} \mathbf{g}_{\alpha \mu}-\partial^2_{\beta \mu} \mathbf{g}_{\alpha \nu}-\partial^2_{\alpha \nu} \mathbf{g}_{\beta \mu} \right]+ Q(\mathbf{g}^{\alpha \beta}, d \mathbf{g}_{\alpha \beta}),
\end{equation*}
where $Q$ is a sum of products of coefficients of $\mathbf{g}^{\alpha \beta} $ with quadratic forms in $d \mathbf{g}_{\alpha \beta}$. By using Proposition \ref{r1}, then $Q$ satisfies the bound required of $f_1$. It is sufficient to consider
\begin{equation*}
   \frac12 e^\alpha_a e^\beta_b e_c^\mu e_d^\nu  \left[ \partial^2_{\alpha \mu} \mathbf{g}_{\beta \nu}+\partial^2_{\beta \nu} \mathbf{g}_{\alpha \mu}-\partial^2_{\beta \mu} \mathbf{g}_{\alpha \nu}-\partial^2_{\alpha \nu} \mathbf{g}_{\beta \mu} \right].
\end{equation*}
We therefore look at the term $ e^\alpha_a e_c^\mu \partial^2_{\alpha \mu} \mathbf{g}_{\beta \nu} $, which is typical. By \eqref{503}, Proposition \ref{r1}, and Lemma \ref{te3}, we get
\begin{equation}\label{LLL}
  \vert\kern-0.25ex\vert\kern-0.25ex\vert  l^\alpha - \delta^{\alpha 0} \vert\kern-0.25ex\vert\kern-0.25ex\vert _{s_0,2,\Sigma} +\vert\kern-0.25ex\vert\kern-0.25ex\vert  \underline{l}^\alpha + \delta^{\alpha 0}-2\delta^{\alpha n} \vert\kern-0.25ex\vert\kern-0.25ex\vert _{s_0,2,\Sigma} + \vert\kern-0.25ex\vert\kern-0.25ex\vert  e^\alpha_a- \delta^{\alpha a} \vert\kern-0.25ex\vert\kern-0.25ex\vert _{s_0,2,\Sigma} \lesssim \epsilon_1.
\end{equation}
By \eqref{LLL} and Proposition \ref{r1}, the term $ e_a (e_c^\mu) \partial_{ \mu} \mathbf{g}_{\beta \nu}$ satisfies the bound required of $f_1$, so we consider $e_a(e_c(\mathbf{g}_{\beta \nu}))$. Finally, since the coefficients of $e_c$ in the basis $/\kern-0.55em \partial_\alpha$ have tangential derivatives bounded in $L^2_tH^{s_0-1}_{x'}(\Sigma)$, we are reduced by Lemma \ref{LLQ} to verifying that
\begin{equation*}
 \| \mathbf{g}^{\alpha \beta } \partial^2_{\alpha \beta} \mathbf{g}_{\mu \nu} \|_{L^1_t H^{s_0-1}_{x'}(\Sigma)} \lesssim \epsilon_2.
\end{equation*}
Note
\begin{equation*}
  \mathbf{g}^{\alpha \beta } \partial^2_{\alpha \beta} \mathbf{g}_{\mu \nu}= \square_{\mathbf{g}} \mathbf{g}_{\mu \nu}.
\end{equation*}
By Lemma \ref{te20}, Lemma \ref{te21}, and Corollary \ref{vte}, we have
\begin{equation*}
\begin{split}
\| \square_{\mathbf{g}} \mathbf{g}_{\mu \nu}\|_{L^1_t H^{s_0-1}_{x'}(\Sigma)}
 \lesssim & \ \| \square_{\mathbf{g}} \bv\|_{L^2_t H^{s_0-1}_{x'}(\Sigma)}+\| \square_{\mathbf{g}} \rho \|_{L^2_t H^{s_0-1}_{x'}(\Sigma)}+\| \square_{\mathbf{g}} h \|_{L^2_t H^{s_0-1}_{x'}(\Sigma)}
 \\
 \lesssim & \ \| \mathrm{curl} \bw\|_{L^2_t H^{s_0-1}_{x'}(\Sigma)}+ \| (d{\mathbf{g}})^2\|_{L^1_t H^{s_0-1}_{x'}(\Sigma)}+ \| \Delta h\|_{L^1_t H^{s_0-1}_{x'}(\Sigma)}
 \\
 \lesssim & \ \| \mathrm{curl} \bw\|_{L^2_t H^{s_0-1}_{x'}(\Sigma)}+ \| d{\mathbf{g}} \|_{L^2_t L^\infty_x }\| d{\mathbf{g}}\|_{L^2_t H^{s_0-1}_{x'}(\Sigma)}+ \| \Delta h\|_{L^1_t H^{s_0-1}_{x'}(\Sigma)}
 \\
 \lesssim & \ \| \mathrm{curl} \bw\|_{L^2_t H^{s_0-1}_{x'}(\Sigma)}+ \| d\bv, d\rho \|_{L^2_t L^\infty_x }\| d\bv, d\rho\|_{L^2_t H^{s_0-1}_{x'}(\Sigma)}+ \| \Delta h\|_{L^2_t H^{s_0-1}_{x'}(\Sigma)}
 \\
 \lesssim & \ \epsilon_2.
\end{split}
\end{equation*}
Above, $\mathrm{curl} \bw, \Delta h$ and $(d{\mathbf{g}})^2$ are included in $f_1$. We hence complete the proof of Corollary \ref{Rfenjie}.
\end{proof}

\subsection{Estimate of the connection coefficients}
Define
\begin{equation*}
  \chi_{ab} = \left<D_{e_a}l,e_b \right>_{\mathbf{g}}, \qquad l(\ln \sigma)=\frac{1}{2}\left<D_{l}\underline{l},l \right>_{\mathbf{g}}, \quad \mu_{0ab} = \left<D_{l}e_a,e_b \right>_{\mathbf{g}}.
\end{equation*}
For $\sigma$, we set the initial data $\sigma=1$ at the time $-2$. Thanks to Proposition \ref{r1}, we have
\begin{equation}\label{606}
  \|\chi_{ab}\|_{L^2_t H^{s_0-1}_{x'}(\Sigma)} + \| l(\ln \sigma)\|_{L^2_t H^{s_0-1}_{x'}(\Sigma)} + \|\mu_{0ab}\|_{L^2_t H^{s_0-1}_{x'}(\Sigma)} \lesssim \epsilon_1.
\end{equation}
In a similar way, if we expand $l=l^\alpha /\kern-0.55em \partial_\alpha$ in the tangent frame $\partial_t, \partial_{x'}$ on $\Sigma$, then
\begin{equation}\label{607}
  l^0=1, \quad \|l^1\|_{s_0,2,\Sigma} \lesssim \epsilon_1.
\end{equation}
\begin{Lemma}\label{chi}
Let $\chi$ be defined as before. Then
\begin{equation}\label{608}
  \|\chi_{ab}\|_{L^2_t H^{s_0-1}_{x'}(\Sigma)} \lesssim \epsilon_2.
\end{equation}
Furthermore, for any $t \in [-2,2]$,
\begin{equation}\label{609}
\| \chi_{ab} \|_{C^{\delta}_{x'}(\Sigma^t)} \lesssim \epsilon_2+ \|d \mathbf{g} \|_{C^{\delta}_{x}(\mathbb{R}^3)}.
\end{equation}
\end{Lemma}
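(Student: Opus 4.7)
The plan is to derive a transport equation for $\chi_{ab}$ along the null generator $l$, absorb the worst curvature contribution using the decomposition from Corollary \ref{Rfenjie}, and then integrate along the flow of $l$ inside $\Sigma$.

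First I will write down the null structure equation for the second fundamental form of $\Sigma$. A direct computation using $D_ll = (l\ln\sigma)l$ together with the symmetry of the connection yields
\begin{equation*}
 l(\chi_{ab}) + \chi_{ac}\chi_{b}{}^{c} - (l\ln\sigma)\chi_{ab}
 = \langle R(e_a,l)l,e_b\rangle_{\mathbf{g}} + \mu_{0a}{}^{c}\chi_{cb} + \mu_{0b}{}^{c}\chi_{ac},
\end{equation*}
where the last two terms arise from commuting $l$ past the tangential frame $e_a,e_b$. By Corollary \ref{Rfenjie} the curvature term decomposes as $\langle R(e_a,l)l,e_b\rangle_{\mathbf{g}}|_{\Sigma} = l(f_2) + f_1$ with
\begin{equation*}
 \|f_2\|_{L^2_t H^{s_0-1}_{x'}(\Sigma)} + \|f_1\|_{L^1_t H^{s_0-1}_{x'}(\Sigma)} \lesssim \epsilon_2,
 \qquad \|f_2(t,\cdot)\|_{C^\delta_{x'}(\Sigma^t)} \lesssim \|d\mathbf{g}\|_{C^\delta_x}.
\end{equation*}
Setting $\tilde\chi_{ab} := \chi_{ab} - f_2$ turns the equation into a transport equation of the schematic form
\begin{equation*}
 l(\tilde\chi_{ab}) = f_1 + (l\ln\sigma)(\tilde\chi + f_2) - (\tilde\chi+f_2)\cdot(\tilde\chi+f_2) + \mu_{0}\cdot(\tilde\chi+f_2).
\end{equation*}

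Next I integrate this along the null geodesics generating $\Sigma$, which in the $(t,x')$ parametrization amounts to the first-order operator $l^0\partial_t + l^1\cdot\partial_{x'}$ with $l^0=1$ and $\|l^1\|_{s_0,2,\Sigma}\lesssim \epsilon_1$ by \eqref{607}. At $t=-2$ the frame is flat, so $\chi_{ab}|_{t=-2}=0$ and $f_2|_{t=-2}$ is controlled by the initial data of $\mathbf{g}$. A standard energy argument in $H^{s_0-1}_{x'}(\Sigma^t)$ (exactly of the type carried out in the proof of Lemma \ref{te3}, using the commutator estimate of Lemma \ref{ce} and the fact that $H^{s_0-1}_{x'}$ is an algebra because $s_0-1>1$) gives
\begin{equation*}
 \|\tilde\chi_{ab}(t,\cdot)\|_{H^{s_0-1}_{x'}(\Sigma^t)}
 \lesssim \|f_1\|_{L^1_t H^{s_0-1}_{x'}(\Sigma)}
 + \bigl(\|l\ln\sigma\|_{L^2_t H^{s_0-1}_{x'}} + \|\mu_0\|_{L^2_t H^{s_0-1}_{x'}}\bigr)\|\tilde\chi+f_2\|_{L^2_t H^{s_0-1}_{x'}}
 + \|\tilde\chi+f_2\|_{L^2_t H^{s_0-1}_{x'}}^2 .
\end{equation*}
Since \eqref{606} already gives $\|l\ln\sigma\|_{L^2_tH^{s_0-1}_{x'}}+\|\mu_0\|_{L^2_tH^{s_0-1}_{x'}}\lesssim\epsilon_1$ and $\|f_2\|_{L^2_tH^{s_0-1}_{x'}}\lesssim\epsilon_2$, a Gronwall-type absorption together with the smallness $\epsilon_1\ll 1$ closes the bootstrap and yields $\|\tilde\chi\|_{L^\infty_tH^{s_0-1}_{x'}(\Sigma)}\lesssim \epsilon_2$. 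Since $t\in[-2,2]$ is bounded, this upgrades to $\|\tilde\chi\|_{L^2_tH^{s_0-1}_{x'}(\Sigma)}\lesssim\epsilon_2$, and adding back $f_2$ gives \eqref{608}.

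For the pointwise bound \eqref{609}, I integrate the same transport equation in $C^\delta_{x'}(\Sigma^t)$. Here the $C^\delta$ norm of $l(f_2)$ is controlled via \eqref{605} by $\|d\mathbf{g}\|_{C^\delta_x}$, the $L^1_t$-in-time integral of $f_1$ is fine because $f_1\in L^1_t H^{s_0-1}_{x'}$ embeds into $L^1_t C^\delta_{x'}$ (since $s_0-1>1+\delta$ for small $\delta$), and the quadratic term $\chi\cdot\chi$ and the connection-coefficient products are handled by the algebra property of $C^\delta_{x'}\cap H^{s_0-1}_{x'}$ together with the smallness $\|\chi\|_{L^2_t H^{s_0-1}_{x'}}\lesssim\epsilon_2$ just established. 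This yields \eqref{609}.

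The main obstacle is the quadratic term $\chi_{ac}\chi_{b}{}^{c}$ in the Raychaudhuri equation, since at regularity $H^{s_0-1}_{x'}(\Sigma)$ it is only borderline controlled; closing the estimate requires a genuine bootstrap (rather than a one-shot integration) and uses in a crucial way the smallness parameters $\epsilon_1,\epsilon_2$ from \eqref{a0} so that the quadratic feedback can be absorbed into the linear term.
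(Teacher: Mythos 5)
Your proposal follows essentially the same route as the paper: it derives the Raychaudhuri-type transport equation for $\chi_{ab}$, applies Corollary \ref{Rfenjie} to split the curvature term as $l(f_2)+f_1$, sets $\tilde\chi=\chi-f_2$, and integrates along $l$ in $H^{s_0-1}_{x'}(\Sigma)$ using the algebra property and Kato--Ponce commutator bounds, then repeats in $C^\delta_{x'}$ for \eqref{609}. One small correction to your closing remark: the quadratic term $\chi\cdot\chi$ is not borderline here and no fresh bootstrap inside the lemma is needed --- the bound $\|\chi\|_{L^2_tH^{s_0-1}_{x'}(\Sigma)}\lesssim\epsilon_1$ is already available from \eqref{606} (a consequence of Proposition \ref{r1} and the bootstrap hypothesis $G\le 2\epsilon_1$ at the level of Proposition \ref{r2}), so the paper controls $\|\chi\cdot\chi\|_{L^1_tH^{s_0-1}_{x'}}\lesssim\epsilon_1^2\ll\epsilon_2$ by a one-shot integration; since $s_0-1>1$, $H^{s_0-1}_{x'}(\Sigma^t)\subset\mathbb{R}^2$ is an algebra and the product estimate is straightforward rather than borderline. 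Your variant with a Gronwall/bootstrap absorption would also close, but it is not what the paper does and is unnecessary given \eqref{606}.
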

\begin{proof}
The well-known transport equation for $\chi$ along null hypersurfaces (see references \cite{KR2} and \cite{ST}) can be described as
\begin{equation*}
  l(\chi_{ab})=\left< R(l,e_a)l, e_b \right>_{\mathbf{g}}-\chi_{ac}\chi_{cb}-l(\ln \sigma)\chi_{ab}+\mu_{0ab} \chi_{cb}+ \mu_{0bc}\chi_{ac}.
\end{equation*}
Due to Corollary \ref{Rfenjie}, we can write the above equation in the following
\begin{equation}\label{610}
  l(\chi_{ab}-f_2)=f_1-\chi_{ac}\chi_{cb}-l(\ln \sigma)\chi_{ab}+\mu_{0ab} \chi_{cb}+ \mu_{0bc}\chi_{ac},
\end{equation}
where
\begin{equation}\label{611}
  \|f_2\|_{L^2_t H^{s_0-1}_{x'}(\Sigma)}+\|f_1\|_{L^1_t H^{s_0-1}_{x'}(\Sigma)} \lesssim \epsilon_2,
\end{equation}
and for any $t \in [0,T]$,
\begin{equation}\label{612}
  \|f_2(t,\cdot)\|_{C^\delta_{x'}(\Sigma^t)} \lesssim \|d \mathbf{g}\|_{C^\delta_x(\mathbb{R}^3)}.
\end{equation}
Let $\Lambda_{x'}^{s_0-1}$ be the fractional derivative operator in the $x'$ variables. Set
\begin{equation*}
  G=f_1-\chi_{ac}\chi_{cb}-l(\ln \sigma)\chi_{ab}+\mu_{0ab} \chi_{cb}+ \mu_{0bc}\chi_{ac}.
\end{equation*}
We have
\begin{equation}\label{613}
  \begin{split}
 & \|\Lambda_{x'}^{s_0-1}(\chi_{ab}-f_2)(t,\cdot) \|_{L^2_{x'}(\Sigma^t)}
  \\
  \lesssim \ &\| [\Lambda_{x'}^{s_0-1},l](\chi_{ab}-f_2) \|_{L^1_tL^2_{x'}(\Sigma^t)}+ \| \Lambda_{x'}^{s_0-1}G \|_{L^1_tL^2_{x'}(\Sigma^t)}.
  \end{split}
\end{equation}
A direct calculation shows that
\begin{equation}\label{614}
  \begin{split}
 \| \Lambda_{x'}^{s-1}G \|_{L^1_tL^2_{x'}(\Sigma^t)} &\lesssim \|f_1\|_{L^1_tH^{s_0-1}_{x'}(\Sigma^t)}+ \|\chi\|^2_{L^2_tH^{s_0-1}_{x'}(\Sigma^t)}
  \\
  & \quad + \|\chi\|_{L^2_tH^{s_0-1}_{x'}(\Sigma^t)}\cdot\|l(\ln \sigma)\|_{L^2_tH^{s_0-1}_{x'}(\Sigma^t)}
  \\
  & \quad + \|\mu\|_{L^2_tH^{s_0-1}_{x'}(\Sigma^t)}\cdot\|\chi\|_{L^2_tH^{s_0-1}_{x'}(\Sigma^t)},
  \end{split}
\end{equation}
where we use the fact that $H^{s_0-1}_{x'}(\Sigma^t)$ is an algebra for $s_0>2$.

We next bound
\begin{align*}
  \| [\Lambda_{x'}^{s_0-1},l](\chi_{ab}-f_2) \|_{L^2_{x'}(\Sigma^t)} &\leq \| /\kern-0.55em \partial_{\alpha} l^{\alpha} (\chi_{ab}-f_2)(t,\cdot) \|_{H^{s_0-1}_{x'}(\Sigma^t)}
  \\
  & \quad \ + \|[\Lambda_{x'}^{s_0-1} /\kern-0.55em \partial_{\alpha}, l^{\alpha}](\chi-f_2)(t,\cdot) \|_{L^{2}_{x'}(\Sigma^t)}.
\end{align*}
By Kato-Ponce commutator estimate and Sobolev embeddings, the above is bounded by
\begin{equation}\label{615}
  \|l^1(t,\cdot)\|_{H^{s_0-1}_{x'}(\Sigma^t)} \| \Lambda^{s_0-1}(\chi_{ab}-f_2)(t,\cdot) \|_{L^{2}_{x'}(\Sigma^t)} .
\end{equation}
Gathering \eqref{606}, \eqref{607}, \eqref{611} and \eqref{613}-\eqref{615} together, we thus prove that
\begin{equation*}
  \sup_t \|(\chi_{ab}-f_2)(t,\cdot)\|_{H^{s_0-1}_{x'}(\Sigma^t)}  \lesssim \epsilon_2.
\end{equation*}
From \eqref{610}, we see that
\begin{equation}\label{616}
\begin{split}
  \| \chi_{ab}-f_2\|_{C^{\delta}_{x'}} & \lesssim \| f_1 \|_{L^1_tC^{\delta}_{x'}}+ \|\chi_{ac}\chi_{cb}\|_{L^1_tC^{\delta}_{x'} }+\|l(\ln \sigma)\chi_{ab}\|_{L^1_tC^{\delta}_{x'}}
  \\
  & \quad + \|\mu_{0ab} \chi_{cb}\|_{L^1_tC^{\delta}_{x'} }+\|\mu_{0bc}\chi_{ac}\|_{L^1_tC^{\delta}_{x'} }.
  \end{split}
\end{equation}
By Sobolev imbedding $H^{s_0-1}(\mathbb{R}^2)\hookrightarrow C^{\delta}(\mathbb{R}^2), \delta \in (0, s_0-2)$ and Gronwall's inequality, we derive that
\begin{equation*}
\| \chi_{ab} \|_{C^{\delta}_{x'}(\Sigma^t)} \lesssim \epsilon_2+ \|d \mathbf{g}\|_{C^{\delta}_{x}(\mathbb{R}^3)}.
\end{equation*}
\end{proof}
\subsection{The proof of Proposition \ref{r2}}
Note
\begin{equation*}
  G(\bv, \rho, h)= \vert\kern-0.25ex\vert\kern-0.25ex\vert d\phi(t,x')-dt\vert\kern-0.25ex\vert\kern-0.25ex\vert_{s_0,2, \Sigma}.
\end{equation*}
Using \eqref{602} and the estimate of $\vert\kern-0.25ex\vert\kern-0.25ex\vert \mathbf{g}-\mathbf{m} \vert\kern-0.25ex\vert\kern-0.25ex\vert_{s_0,2,\Sigma}$ in Proposition \ref{r1}, then the estimate \eqref{G} follows from the bound
\begin{equation*}
  \vert\kern-0.25ex\vert\kern-0.25ex\vert l-(\partial_t-\partial_{x_3})\vert\kern-0.25ex\vert\kern-0.25ex\vert_{s_0,2,\Sigma} \lesssim \epsilon_2,
\end{equation*}
where it is understood that one takes the norm of the coefficients of $l-(\partial_t-\partial_{x_3})$ in the standard frame on $\mathbb{R}^{3+1}$. The geodesic equation, together with the bound for Christoffel symbols $\|\Gamma^\alpha_{\beta \gamma}\|_{L^2_t L^\infty_x} \lesssim \|d {\mathbf{g}} \|_{L^2_t L^\infty_x}\lesssim \epsilon_2$, imply that
\begin{equation*}
  \|l-(\partial_t-\partial_{x_3})\|_{L^\infty_{t,x}} \lesssim \epsilon_2,
\end{equation*}
so it is sufficient to bound the tangential derivatives of the coefficients of $l-(\partial_t-\partial_{x_3})$ in the norm $L^2_t H^{s_0-1}_{x'}(\Sigma)$. By using Proposition \ref{r1}, we can estimate the Christoffel symbols
\begin{equation*}
  \|\Gamma^\alpha_{\beta \gamma} \|_{L^2_t H^{s_0-1}_{x'}(\Sigma^t)} \lesssim \epsilon_2.
\end{equation*}
Note that $H^{s_0-1}_{x'}(\Sigma^t)$ is a algebra if $s_0>2$. We then deduce that
\begin{equation*}
  \|\Gamma^\alpha_{\beta \gamma} e_1^\beta l^\gamma\|_{L^2_t H^{s_0-1}_{x'}(\Sigma^t)} \lesssim \epsilon_2.
\end{equation*}
We are now in a position to establish the following bound,
\begin{equation*}
  \| \left< D_{e_a}l, e_b \right>\|_{L^2_t H^{s_0-1}_{x'}(\Sigma^t)}+ \| \left< D_{e_a}l, \underline{l} \right>\|_{L^2_t H^{s_0-1}_{x'}(\Sigma^t)}+\|\left< D_{l}l, \underline{l} \right>\|_{L^2_t H^{s_0-1}_{x'}(\Sigma^t)} \lesssim \epsilon_2.
\end{equation*}
The first term is $\chi_{ab}$, which has been bounded by Lemma \ref{chi}. For the second term, noting
\begin{equation*}
  \left< D_{e_a}l, \underline{l} \right>=\left< D_{e_a}l, 2\partial_t \right>=-2\left< D_{e_a}\partial_t,l \right>,
\end{equation*}
then it can be bounded via Proposition \ref{r1}. Similarly, we can control the last term through Proposition \ref{r1}. At this stage, it remains for us to show that
\begin{equation*}
  \| d \phi(t,x')-dt \|_{C^{1,\delta}_{x'}(\mathbb{R}^2)}  \lesssim \epsilon_2+ \| d\mathbf{g}(t,\cdot)\|_{C^{\delta}_x(\mathbb{R}^3)}.
\end{equation*}
To do that, it is sufficient to show that
\begin{equation*}
  \|l(t,\cdot)-(\partial_t-\partial_{x_3})\|_{C^{1,\delta}_{x'}(\mathbb{R}^2)} \lesssim \epsilon_2+ \| d \mathbf{g} (t,\cdot)\|_{C^{\delta}_x(\mathbb{R}^3)}.
\end{equation*}
The coefficients of $e_1$ are small in $C^{\delta}_{x'}(\Sigma^t)$ perturbations of their constant coefficient analogs, so it is sufficient to show that
\begin{equation*}
 \|\left< D_{e_1}l, e_1 \right>(t,\cdot)\|_{C^{\delta}_{x'}(\Sigma^t)}
 +\|\left< D_{e_1}l, \underline{l} \right>(t,\cdot)\|_{C^{\delta}_{x'}(\Sigma^t)}  \lesssim \epsilon_2+ \| d\mathbf{g}(t,\cdot)\|_{C^{\delta}_x(\mathbb{R}^3)}.
\end{equation*}
Above, the first term is bounded by Lemma \ref{chi}, and the second term can be estimated by using
\begin{equation*}
  \|\left< D_{e_1}\partial_t, l \right>(t,\cdot)\|_{C^{\delta}_{x'}(\Sigma^t)} \lesssim  \| d\mathbf{g}(t,\cdot)\|_{C^{\delta}_x(\mathbb{R}^3)}.
\end{equation*}
Consequently, we complete the proof of Proposition \ref{r2}.

\section{Strichartz estimates}\label{SEs}
In this part, let us prove the Strichartz estimates of linear wave equations including in Proposition \ref{p4}. To state it clearly, let us introduce it as follows.
\begin{proposition}\label{r5}
Suppose that $(\bv,\rho, h, \bw) \in \mathcal{{H}}$ and $G(\bv,\rho, h)\leq 2 \epsilon_1$. For each $1 \leq r \leq s+1$, and for each $t_0 \in [-2,2]$, then the linear, homogeneous equation
\begin{equation*}
	\begin{cases}
	& \square_{\mathbf{g}} f=0,\qquad \quad [-2,2]\times \mathbb{R}^3,
	\\
	&f(t,x)|_{t=t_0}=f_0, \quad \partial_t f(t,x)|_{t=t_0}=f_1,
	\end{cases}
	\end{equation*}
is well-posed on the time-interval $[-2,2]$ if the initial data $(f_0, f_1)\in H_x^r \times H_x^{r-1}$. Moreover, the solution satisfies, respectively, the energy estimates
\begin{equation*}
  \| f\|_{L^\infty_{[-2,2]}H_x^r}+ \| \partial_t f\|_{L^\infty_{[-2,2]}H_x^{r-1}} \leq C \big( \|f_0\|_{H_x^r}+ \|f_1\|_{H_x^{r-1}} \big),
\end{equation*}
and the Strichartz estimates\footnote{here the constant $C$ is universal if the solutions $(\bv,\rho,h,\bw) \in \mathcal{{H}}$.}
\begin{equation}\label{SL}
  \| \left<\partial \right>^k f\|_{L^2_{[-2,2]}L^\infty_x} \leq C \big( \|f_0\|_{H_x^r}+ \|f_1\|_{H_x^{r-1}} \big), \quad k<r-1.
\end{equation}
\end{proposition}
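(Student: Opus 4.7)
The plan is to establish Proposition \ref{r5} in the standard Smith--Tataru paradigm, exploiting the characteristic regularity secured by Proposition \ref{r2} and Corollary \ref{Rfenjie}. First I would dispense with the energy estimate by a routine weighted $L^2$ argument: apply $\langle\partial\rangle^{r-1}$ to the equation, commute through $\square_{\mathbf{g}}$ producing errors controlled by $\|d\mathbf{g}\|_{L^2_t L^\infty_x} \lesssim \epsilon_2$ (which follows from the definition of $\mathcal{H}$ and Proposition \ref{r1}), and close by Gr\"onwall. The Strichartz bound \eqref{SL} is the substantive assertion; by Littlewood--Paley decomposition and standard reductions it suffices to prove the dyadic estimate
\begin{equation*}
 \|P_\lambda f\|_{L^2_{[-2,2]} L^\infty_x} \lesssim \lambda^{r-1-k} \bigl( \|P_\lambda f_0\|_{H^r_x} + \|P_\lambda f_1\|_{H^{r-1}_x}\bigr)
\end{equation*}
for each dyadic frequency $\lambda = 2^j \geq 1$, with the loss $\lambda^{-(r-1-k)}$ being summable in $j$ when $k < r-1$.

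Next I would rescale the frequency-$\lambda$ piece to unit frequency on a time slab of length $\lambda^{-\sigma}$ for some $\sigma \in (0,1)$ (semiclassical time) and, as in \cite{ST,Sm}, produce a parametrix on this short slab by superposition of wave packets concentrated along the null geodesics of $\mathbf{g}$. The construction requires: (i) a foliation $\{\Sigma_{\theta,r}\}$ of characteristic hypersurfaces whose graphing functions $\phi_{\theta,r}$ have enough regularity that the associated Hamiltonian flow is well defined and one can make a stationary phase / coherent-state decomposition; (ii) $L^2$-orthogonality of the wave-packet frame; (iii) pointwise bounds on each packet matching the dispersive decay rate of a constant-coefficient wave equation. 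Ingredient (i) is precisely what Proposition \ref{r2} delivers through the bounds $G(\bv,\rho,h)\lesssim\epsilon_2$ and $\|d\phi_{\theta,r}-dt\|_{C^{1,\delta}_{x'}} \lesssim \epsilon_2 + \|d\mathbf{g}\|_{C^\delta_x}$; ingredients (ii) and (iii) are structural and depend on the null-frame geometry controlled by Lemma \ref{chi} and the curvature decomposition \eqref{603}--\eqref{605} of Corollary \ref{Rfenjie}. Having built the parametrix on the short slab with an $L^2 \to L^2_t L^\infty_x$ bound loss of $\lambda^{\sigma/2 + \delta_*}$, I then iterate it across $\lambda^{\sigma}$ many such slabs using the energy estimate to glue, producing the full $[-2,2]$-Strichartz bound on the frequency piece with a total loss of $\lambda^{\sigma + \delta_*}$ derivatives; choosing $\sigma$ and $\delta_*$ small relative to $s_0-2$ (permitted by \eqref{a0}--\eqref{a1}) converts this into the acceptable $\lambda^{r-1-k}$ for any $k<r-1$.

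The main obstacle is the actual construction of the wave-packet parametrix and the verification that its error, measured in $L^2_x$ at the end of the semiclassical slab, is bounded by $\epsilon_2$ or better so that the iteration closes. What makes this nontrivial here (as opposed to \cite{ST}) is that the metric $\mathbf{g}$ has Sobolev regularity $H^{s_0+1/2}$ via Lemma \ref{te2} rather than $H^{s}$, because the vorticity--entropy terms force the foliation regularity to be controlled at the level $s_0>2$, not $s$. Consequently the approximate-solution error must be tracked at this lower threshold, which is why Corollary \ref{Rfenjie} (producing the Ricci decomposition with $f_2 \in L^2_tH^{s_0-1}_{x'}$, $f_1 \in L^1_tH^{s_0-1}_{x'}$) and Lemma \ref{chi} (the $H^{s_0-1}_{x'}$ bound for the null second fundamental form) are exactly tuned to fit the construction. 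The detailed wave-packet construction, together with a $T T^*$ argument for the short-time Strichartz bound and the final summation, is deferred to Section \ref{Ap}; here I would package the conclusion as a consequence of that construction once the geometric inputs provided by Propositions \ref{r1}, \ref{r2}, Corollary \ref{Rfenjie}, and Lemma \ref{chi} are in place.
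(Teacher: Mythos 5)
Your skeleton gets the geometric inputs right (Propositions \ref{r1}--\ref{r2}, Corollary \ref{Rfenjie}, Lemma \ref{chi} are indeed exactly what is needed), but the dispersive core of your plan diverges from the paper's in a way that introduces a genuine gap.

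You propose to localize to frequency $\lambda$, work on \emph{semiclassical} time slabs of length $\lambda^{-\sigma}$ with $\sigma\in(0,1)$, and then iterate the short-time Strichartz bound over $\lambda^{\sigma}$ many slabs, accepting a total loss of $\lambda^{\sigma/2+\delta_*}$ derivatives. This is the Bahouri--Chemin/Tataru strategy, and it produces a \emph{fixed positive power} loss. After dyadic summation the condition it yields is $k<r-1-\sigma/2-\delta_*$, not $k<r-1$. Your remark that ``$\sigma$ and $\delta_*$ can be chosen small relative to $s_0-2$'' does not repair this: the slab length $\lambda^{-\sigma}$ is forced from \emph{below} by the requirement that the parametrix error accumulated over the slab be $O(1)$ for a metric of the available regularity, so $\sigma$ cannot be sent to zero. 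In other words, the semiclassical approach has a structural loss exponent that cannot be dialed down to absorb all $k<r-1$.

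What the paper actually does in Section \ref{Ap} is the Smith--Tataru/Wolff full-interval construction: truncate the coefficients to $\mathbf{g}_\lambda=S_{<\lambda}\mathbf{g}$ (Proposition \ref{AA1}), build wave packets that are coherent along the characteristic hypersurfaces $\Sigma_{\theta,r}$ over the \emph{entire} slab $[-2,2]$ (not a $\lambda^{-\sigma}$ sub-slab), and prove the $L^2_tL^\infty_x$ bound by an overlap/counting argument. It is precisely the well-posedness of $\Sigma_{\theta,r}$ on the full interval, delivered by the $G(\bv,\rho,h)\lesssim\epsilon_2$ bound from Proposition \ref{r2}, that removes the need for any semiclassical re-slabbing. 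The combinatorial step, Proposition \ref{szt} with the overlap estimate Corollary \ref{corl}, yields
\begin{equation*}
\|f\|_{L^2_{[-2,2]}L^\infty_x}\lesssim \epsilon_0^{-1/2}(\ln\lambda)^{1/2},
\end{equation*}
that is, only a \emph{logarithmic} loss. After matching the initial data (Proposition \ref{szi}) and absorbing the parametrix error by a Duhamel iteration (the $\mathbf{M}F$-construction), this log is absorbed by the strict inequality $k<r-1$ upon dyadic summation; a fixed power loss would not be. To repair your plan you would need to replace the $\lambda^{-\sigma}$ slabbing and iteration by the full-interval wave-packet parametrix together with the counting argument of Proposition \ref{szt}; without that, the proposal proves a strictly weaker statement than \eqref{SL}.
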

We will postpone the proof of Proposition \ref{r5} to Section \ref{Ap}. Based on Proposition \ref{r5} and Lemma \ref{LD}, we have the following result.
\begin{proposition}\label{r3}
	Suppose that $(\bv,\rho, h, \bw) \in \mathcal{{H}}$ and $G(\bv,\rho, h)\leq 2 \epsilon_1$.
	For any $1 \leq r \leq s+1$, and for each $t_0 \in [-2,2]$, the linear, non-homogeneous equation
	\begin{equation*}
		\begin{cases}
			& \square_{\mathbf{g}} f=\mathbf{T}F, \qquad (t,x) \in (t_0,2]\times \mathbb{R}^3,
			\\
			&f(t,x)|_{t=t_0}=f_0 \in H_x^r(\mathbb{R}^3),
			\\
			&\partial_t f(t,x)|_{t=t_0}=f_1-F(t_0,\cdot) \in H_x^{r-1}(\mathbb{R}^3),
		\end{cases}
	\end{equation*}
	admits a solution $f \in C([-2,2],H_x^r) \times C^1([-2,2],H_x^{r-1})$ and the following estimates holds:
	\begin{equation}\label{lw0}
		\begin{split}
			\| f\|_{L_{[-2,2]}^\infty H_x^r}+ \|\partial_t f\|_{L_{[-2,2]}^\infty H_x^{r-1}} \lesssim & \|f_0\|_{H_x^r}+ \|f_1\|_{H_x^{r-1}}
			 +\| F \|_{L^1_{[-2,2]}H_x^r}.
		\end{split}
	\end{equation}
	Additionally, the following estimates hold, provided $k<r-1$,
	\begin{equation}\label{lw1}
		\| \left<\partial \right>^k f\|_{L^2_{[-2,2]}L^\infty_x} \lesssim \|f_0\|_{H_x^r}+ \|f_1\|_{H_x^{r-1}}+\| F \|_{L^1_{[-2,2]}H_x^r},
	\end{equation}
and the similar estimates also hold when we replace $\left<\partial \right>^k$ by $\left<\partial \right>^{k-1}d$.
\end{proposition}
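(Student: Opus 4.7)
\textbf{Proof proposal for Proposition \ref{r3}.} The plan is to decompose $f$ into a homogeneous piece handled by Proposition \ref{r5} and a Duhamel-type piece constructed via Lemma \ref{LD}, then extract both energy and Strichartz estimates from each piece via Minkowski's inequality. Specifically, I would write $f = f^{(1)} + V$, where $f^{(1)}$ solves the homogeneous problem $\square_{\mathbf{g}} f^{(1)} = 0$ on $[-2,2]\times\mathbb R^3$ with $f^{(1)}(t_0,\cdot)=f_0$ and $\partial_t f^{(1)}(t_0,\cdot)=f_1$, and $V$ is a forced solution with zero position data and velocity $-F(t_0,\cdot)$ designed so that $\square_{\mathbf{g}} V = \mathbf{T}F$. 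Adding the two data sets recovers the prescribed data $(f_0, f_1-F(t_0,\cdot))$.

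For $f^{(1)}$, Proposition \ref{r5} immediately gives the energy bound and Strichartz bound by $\|f_0\|_{H^r_x}+\|f_1\|_{H^{r-1}_x}$, together with the variant where $\langle\partial\rangle^k$ is replaced by $\langle\partial\rangle^{k-1}d$.

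For $V$, I would invoke Lemma \ref{LD} as follows. For each $\tau \in [t_0, 2]$ let $f(t,x;\tau)$ solve
\begin{equation*}
\square_{\mathbf{g}} f(\cdot,\cdot;\tau)=0, \qquad f(\tau,\cdot;\tau)=0, \qquad \mathbf{T}f(\tau,\cdot;\tau)=-F(\tau,\cdot),
\end{equation*}
and set $V(t,x)=\int_{t_0}^{t} f(t,x;\tau)\,d\tau$. By Lemma \ref{LD} (taking the $B\equiv 0$ case), $V$ solves $\square_{\mathbf{g}} V = \mathbf{T}F$ with $V(t_0,\cdot)=0$ and $\mathbf{T}V(t_0,\cdot)=-F(t_0,\cdot)$; since $\nabla V(t_0,\cdot)=0$, this is equivalent to $\partial_t V(t_0,\cdot)=-F(t_0,\cdot)$, as needed. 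Because $f(\tau,\cdot;\tau)=0$, we have $\partial_t f(\tau,\cdot;\tau)=\mathbf{T}f(\tau,\cdot;\tau)=-F(\tau,\cdot)$, so Proposition \ref{r5} applied to each slice yields
\begin{equation*}
\|f(\cdot,\cdot;\tau)\|_{L^\infty_{[-2,2]} H^r_x} + \|\partial_t f(\cdot,\cdot;\tau)\|_{L^\infty_{[-2,2]} H^{r-1}_x} + \|\langle\partial\rangle^k f(\cdot,\cdot;\tau)\|_{L^2_{[-2,2]} L^\infty_x} \lesssim \|F(\tau,\cdot)\|_{H^{r-1}_x}.
\end{equation*}
Differentiating $V$ the boundary contribution at $t=\tau$ vanishes since $f(\tau,\cdot;\tau)=0$, so $\partial_t V=\int_{t_0}^t \partial_t f(t,x;\tau)\,d\tau$, and Minkowski's inequality in $\tau$ bounds every norm of $V$ by the corresponding $\tau$-integral, giving $\|V\|_{L^\infty H^r}+\|\partial_t V\|_{L^\infty H^{r-1}}+\|\langle\partial\rangle^k V\|_{L^2_t L^\infty_x}\lesssim \|F\|_{L^1_t H^{r-1}_x}\leq \|F\|_{L^1_t H^r_x}$. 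The $\langle\partial\rangle^{k-1}d$ variant follows identically.

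The argument is essentially structural: the main care is the bookkeeping of initial data under the $\mathbf{T}\leftrightarrow \partial_t$ translation at $t=t_0$, together with the placement of the $-F(t_0,\cdot)$ term needed to match the prescribed velocity. The real work, namely the homogeneous estimate \eqref{SL} for the acoustic metric $\mathbf{g}$, is postponed to Proposition \ref{r5}; here the only obstacle is being meticulous in invoking Lemma \ref{LD} with the correct sign conventions and noting that the boundary term in differentiating the Duhamel integral drops out because the position data for each slice vanishes.
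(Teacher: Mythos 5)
Your decomposition ($f=f^{(1)}+V$, a homogeneous piece plus a Duhamel piece) is exactly the paper's $f=V+Q$, and the plan of invoking Lemma \ref{LD} for the forced piece, Proposition \ref{r5} per slice, and Minkowski in $\tau$ is the right one. However, there is a genuine transposition error in your application of Lemma \ref{LD}. The lemma requires slice data $(f,\mathbf{T}f)|_{t=\tau}=-(F,B)(\tau,x)$; in the $B\equiv 0$ case this means $f(\tau,\cdot;\tau)=-F(\tau,\cdot)$ and $\mathbf{T}f(\tau,\cdot;\tau)=0$, i.e.\ the \emph{position} slot carries $-F$ and the $\mathbf{T}$-derivative slot carries $0$. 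You took the reverse, $f(\tau,\cdot;\tau)=0$ and $\mathbf{T}f(\tau,\cdot;\tau)=-F(\tau,\cdot)$. Rerunning the proof of Lemma \ref{LD} with your data: the boundary term in $\mathbf{T}V$ is $f(t,x;t)=0$, so it drops, but the boundary term in $\mathbf{T}\mathbf{T}V$ is $\mathbf{T}f(t,x;t)=-F(t,x)$, so
\begin{equation*}
\square_{\mathbf{g}} V=-\mathbf{T}\mathbf{T}V+c_s^2\Delta V = F + \int_{t_0}^t \square_{\mathbf{g}} f(t,x;\tau)\,d\tau = F,
\end{equation*}
not the required $\mathbf{T}F$.

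The symptom is already visible in your claimed bound: you get $\|F\|_{L^1_t H^{r-1}_x}$ because the only nonzero slice datum lands in the $H^{r-1}$ slot of Proposition \ref{r5}, but the proposition's statement has $\|F\|_{L^1_t H^r_x}$, precisely because the nonzero slice datum must sit in the $H^r$ slot. Replacing the slice data by $(-F(\tau,\cdot),0)$ fixes both: you then solve $\square_{\mathbf{g}} V=\mathbf{T}F$ and Proposition \ref{r5} per slice yields the bound $\|F(\tau,\cdot)\|_{H^r_x}$, hence $\|F\|_{L^1_t H^r_x}$ after Minkowski. Note that with the corrected data $f(t,x;t)=-F(t,x)\neq 0$, so the boundary contribution from differentiating the upper limit of the $\tau$-integral no longer drops out and must be tracked explicitly rather than discarded.
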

\medskip\begin{proof}
	Let $V$ satisfy the the linear, homogeneous equation
	\begin{equation}\label{Vf}
		\begin{cases}
			& \square_{\mathbf{g}} V=0,
			\\
			&V(t,x)|_{t=t_0}=f_0, \quad \partial_t V(t,x)|_{t=t_0}=f_1,
		\end{cases}
	\end{equation}
	Let $Q$ satisfy the the linear, non-homogeneous equation
	\begin{equation}\label{Qf}
		\begin{cases}
			& \square_{\mathbf{g}} Q=\mathbf{T}F,
			\\
			&Q(t,x)|_{t=t_0}=0, \quad \partial_t Q(t,x)|_{t=t_0}=-F(t_0,x),
		\end{cases}
	\end{equation}
By superposition principle for linear wave equation, and referring \eqref{Vf} and \eqref{Qf}, then $f= V+Q$ satisfies
\begin{equation*}
	\begin{cases}
		& \square_{\mathbf{g}} f=\mathbf{T}F,
		\\
		&(f, \partial_t f)|_{t=t_0}=(f_0, f_1-F(t_0,x)).
	\end{cases}
\end{equation*}
Using Proposition \ref{r5}, it follows
\begin{equation}\label{Vf1}
	\| V\|_{L^\infty_tH_x^r}+ \| \partial_t V\|_{L^\infty_tH_x^{r-1}} \leq C \big( \|f_0\|_{H_x^r}+ \|f_1\|_{H_x^{r-1}} \big),
\end{equation}
and
\begin{equation}\label{Vf2}
	\| \left<\partial \right>^k V\|_{L^2_{t}L^\infty_x} \leq C \big( \|f_0\|_{H_x^r}+ \|f_1\|_{H_x^{r-1}} \big), \quad k<r-1.
\end{equation}
To estimate $Q$, we need to rewrite \eqref{Qf}. Using $\mathbf{T}=\partial_t+\bv \cdot \nabla$, $Q(t_0,x)=0$, and $\partial_t Q(t_0,x)=-F(t_0,x)$, it yields
\begin{equation}\label{Qf1}
\mathbf{T}Q(t,x)|_{t=t_0}=\partial_tQ(t_0,x)+(\bv \cdot \nabla)Q(t_0,x)= -F(t_0,x).
\end{equation}
Combining \eqref{Qf} and \eqref{Qf1}, we obtain
	\begin{equation}\label{Qf2}
		\begin{cases}
			& \square_{\mathbf{g}} Q=\mathbf{T}F,
			\\
			&Q(t,x)|_{t=t_0}=0, \quad \mathbf{T}Q(t,x)|_{t=t_0}=-F(t_0,x).
		\end{cases}
	\end{equation}
Using Lemma \ref{LD} and Proposition \ref{r5} again, we can prove
	\begin{equation}\label{QE}
		\| Q\|_{L^\infty_t H_x^r}+ \| \partial_t Q\|_{L^\infty_t H_x^{r-1}} \leq C \|F\|_{L^1_{[-2,2]}H_x^{r}},
	\end{equation}
	and
	\begin{equation}\label{SQ}
		\| \left<\partial \right>^k Q\|_{L^2_{t}L^\infty_x} \leq C \|F\|_{L^1_{[-2,2]}H_x^{r}}, \quad k<r-1.
	\end{equation}
Adding \eqref{Vf1} and \eqref{QE}, we get \eqref{lw0}. Adding \eqref{Vf2} and \eqref{SQ}, we get \eqref{lw1}. At this stage, we have finished the proof of Proposition \ref{r3}.
\end{proof}

\begin{proposition}\label{r6}
Suppose $(\bv, \rho, h, \bw) \in \mathcal{H}$ and $G(\bv, \rho, h)\leq 2 \epsilon_1$. Let $2<s_0<s\leq\frac52$. Let $\bv_{+}$ be defined in \eqref{dvc}. Then for $k<s-1$, we have
\begin{equation}\label{fgh}
\begin{split}
	 \|\left< \partial \right>^{k} (\rho+\frac{1}{\gamma}h), \left< \partial \right>^{k-1} \bv_{+}\|_{L^2_{[-2,2]} \dot{B}^{0}_{\infty, 2}}
	\lesssim  & \| \rho, \bv, h\|_{L^\infty_{[-2,2]} H^s}
	\\
	&+\| h\|_{L^2_{[-2,2]} H^{\frac{5}{2}+}}
	 +\| \bw \|_{L^2_{[-2,2]} H^{\frac{3}{2}+}},
\end{split}
\end{equation}
and the similar estimates hold if we replace $\left< \partial \right>^{k} $ with $\left< \partial \right>^{k-1} d$.
\end{proposition}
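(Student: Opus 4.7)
The starting point is the pair of favorable wave equations supplied by Lemmas \ref{crh} and \ref{wte1}, namely
\begin{equation*}
\square_g\bigl(\rho + \tfrac{1}{\gamma}h\bigr) = D + \tfrac{1}{\gamma}E, \qquad \square_g \bv_{+} = \mathbf{T}(\mathbf{T}\bv_{-}) + \mathbf{Q}.
\end{equation*}
The decisive feature is that the worst source for $\bv_{+}$, namely $\mathbf{T}\mathbf{T}\bv_{-}$, is already in divergence-in-$\mathbf{T}$ form, hence amenable to Proposition \ref{r3}; this is why $\bv_{+}$ can be estimated at one derivative lower than $\rho+\tfrac{1}{\gamma}h$ on the left-hand side of \eqref{fgh}. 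My plan is to frequency-localise via $\Delta_j$ and commute:
\begin{equation*}
\begin{split}
\square_g\Delta_j\bigl(\rho+\tfrac{1}{\gamma}h\bigr) &= \Delta_j\bigl(D+\tfrac{1}{\gamma}E\bigr) + [\square_g,\Delta_j]\bigl(\rho+\tfrac{1}{\gamma}h\bigr),\\
\square_g\Delta_j\bv_{+} &= \mathbf{T}\bigl(\Delta_j\mathbf{T}\bv_{-}\bigr) + [\Delta_j,\mathbf{T}]\mathbf{T}\bv_{-} + \Delta_j\mathbf{Q} + [\square_g,\Delta_j]\bv_{+}.
\end{split}
\end{equation*}

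Next, for any $k<s-1$ I would choose $r=k+1+\alpha$ (resp.\ $r=k+\alpha$) with $\alpha>0$ small so that the Strichartz hypothesis $k<r-1$ of Proposition \ref{r5} is satisfied. Applying Proposition \ref{r3} to the $\mathbf{T}F$ part for $\bv_+$ with $F=\Delta_j\mathbf{T}\bv_{-}$, and combining it via Duhamel with Proposition \ref{r5} for the remaining regular sources of both equations, yields, schematically,
\begin{equation*}
\begin{split}
\|\langle\partial\rangle^{k}\Delta_j(\rho+\tfrac{1}{\gamma}h)\|_{L^2L^\infty} &+ \|\langle\partial\rangle^{k-1}\Delta_j\bv_{+}\|_{L^2L^\infty} \\
\lesssim\ &\|\Delta_j(\rho,\bv,h)(0)\|_{H^{r}} + \|\Delta_j\partial_t(\rho,\bv,h)(0)\|_{H^{r-1}} \\
& +\|\Delta_j(D,E,\mathbf{Q})\|_{L^1 H^{r-1}} + \|\Delta_j\mathbf{T}\bv_{-}\|_{L^1 H^{r}} \\
& +\|[\Delta_j,\mathbf{T}]\mathbf{T}\bv_{-}\|_{L^1 H^{r-1}} + \|[\square_g,\Delta_j]\bigl(\bv_{+},\rho+\tfrac{1}{\gamma}h\bigr)\|_{L^1 H^{r-1}}.
\end{split}
\end{equation*}
Squaring and summing in $\ell^2_j$ for $j\geq 0$ then reconstructs the homogeneous Besov $\dot B^{0}_{\infty,2}$ norm on the left-hand side.

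The terms on the right are handled as follows. Lemma \ref{yux} controls $\|\mathbf{Q},D,E\|_{H^{s-1}}$ by $\|d\bv,d\rho,dh\|_{L^\infty_x}$ times the energy quantities appearing in \eqref{fgh}, and integrating in time and exploiting the Strichartz part of the bootstrap $\mathcal{H}$ produces the $L^1_t H^{r-1}$ norms required. The quantity $\|\mathbf{T}\bv_{-}\|_{L^2 H^{s}}$ is furnished directly by \eqref{eta} of Lemma \ref{yux}; the explicit factors $\|\bw\|_{H^{3/2+}}$ and $\|h\|_{H^{5/2+}}$ produced by \eqref{eta} are exactly those that appear in the RHS of \eqref{fgh}. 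The commutator $[\Delta_j,\mathbf{T}]\mathbf{T}\bv_{-}$ is then estimated by Lemma \ref{YR}, while $[\square_g,\Delta_j]$ is estimated by Lemma \ref{yx}; in both cases the $\ell^2$-summability in $j$ is built into the conclusion, so the sums convert cleanly into factors of $\|d(\bv,\rho)\|_{L^2 L^\infty}\|d(\bv,\rho,h,\bv_{+})\|_{L^\infty H^{s-1}}$, both of which are already bounded through $\mathcal{H}$ and the preceding energy theorem. The final statement (with $\langle\partial\rangle^{k-1}d$ in place of $\langle\partial\rangle^{k}$) follows from the same argument after trading one $\langle\partial\rangle$ for $d$ at each step.

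The principal obstacle is the commutator $[\square_g,\Delta_j]$ at the critical regularity $r\approx k+1$: here the metric $\mathbf{g}$ is only as regular as $\bv,\rho\in L^\infty H^{s}\cap L^2 C^{\delta}$, so naive product estimates lose derivatives and destroy the $\ell^2_j$-summability. The remedy is to apply Lemma \ref{yx} with the indices $(s_1,s_2)$ tuned to the target pair $(r-1,s-1)$, which converts the commutator into an expression of the form $\|d\mathbf{g}\|_{L^2 L^\infty}\|df\|_{L^2 H^{s-1}}+\|df\|_{L^2 L^\infty}\|d\mathbf{g}\|_{L^\infty H^{s-1}}$; both factors are then bounded exactly by the Strichartz and energy norms controlled by $\mathcal{H}$. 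A parallel delicacy arises in bounding $\|\Delta_j\mathbf{T}\bv_{-}\|_{L^1 H^{r}}$, since $\bv_{-}$ is defined by the non-local elliptic relation $-\Delta\bv_{-}=e^{\rho}\operatorname{curl}\bw$; one must invoke \eqref{eta}--\eqref{eee} together with the Sobolev embedding $\dot H^{s_0+1}\hookrightarrow L^\infty$ to keep $\bv_{-}$ at the required $H^{s_0+1}$-level and so close the estimate within the stated RHS of \eqref{fgh}.
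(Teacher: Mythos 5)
Your proposal is correct and follows essentially the same route as the paper: frequency-localise with $\Delta_j$, match the initial data from \eqref{fc0}, feed the homogeneous piece to Proposition \ref{r5} and the $\mathbf{T}F$ piece to Proposition \ref{r3}, then square and sum in $\ell^2_j$ to recover the $\dot B^0_{\infty,2}$ norm, closing with Lemmas \ref{YR}, \ref{yx} and \ref{yux}.

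Two minor remarks on how your account differs from the paper's. First, the paper fixes $r=s$ for all admissible $k<s-1$, whereas you take $r=k+1+\alpha$; both satisfy $k<r-1\leq s$ in Proposition \ref{r5}/\ref{r3}, and since $r\leq s$ you can always promote the resulting $H^r$ norms to the $H^s$ norms appearing in \eqref{fgh}, so the two choices are interchangeable. Second, the index tuning you state for Lemma \ref{yx} — ``$(s_1,s_2)=(r-1,s-1)$'' — does not literally reproduce the needed quantity $\{\|[\square_g,\Delta_j]f\|_{H^{s-1}}\}_{\ell^2_j}$: with those indices the output weight is $2^{(r-2)j}$ and the space is $\dot H^{s-r}$, neither of which matches. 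The choice that does match is $(s_1,s_2)=(1,s)$, giving zero weight and $\dot H^{s-1}$, which is exactly what the paper uses in \eqref{fh01}–\eqref{fh02}. This is an imprecision in the narration rather than a gap — the commutator bound you want is precisely what Lemma \ref{yx} delivers — but the choice of indices should be corrected. You also correctly identify that $\mathbf{T}\bv_{-}$ is handled through \eqref{eta} together with the elliptic estimate \eqref{eee}, producing exactly the $\|\bw\|_{L^2_tH^{3/2+}}$ and $\|h\|_{L^2_tH^{5/2+}}$ factors on the right side of \eqref{fgh}.
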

\begin{proof}
Let us recall that $\rho+\frac{1}{\gamma}h$ and $\bv_{+}$ satisfy the equation
\begin{equation}\label{fcr}
\begin{cases}
 & \square_{\mathbf{g}} (\rho+\frac{1}{\gamma}h)= D+ \frac{1}{\gamma}E,
\\
&\square_{\mathbf{g}} \bv_{+}=\mathbf{T}\mathbf{T} \bv_{-}+ \bQ.
\end{cases}
\end{equation}
Using Lemma \ref{yux}, we get
\begin{equation}\label{mlo}
  \|D, E, \bQ\|_{L^1_tH_x^{s-1}}  \lesssim  \|d \bv, d\rho, dh\|_{L^1_t L_x^\infty} \|d \bv, d\rho, dh\|_{L^\infty_tH_x^{s-1}}.
\end{equation}
Operating $\Delta_j$ on \eqref{fcr} and matching its initial data, then $\Delta_j \rho$ satisfies the following Cauchy problems
\begin{equation}\label{pq0}
\begin{cases}
 &\square_{\mathbf{g}} \Delta_j (\rho+\frac{1}{\gamma}h)= \Delta_j ( D+ \frac{1}{\gamma}E )+ [\square_{\mathbf{g}}, \Delta_j](\rho+\frac{1}{\gamma}h),
 \\
  & \Delta_j (\rho+\frac{1}{\gamma}h)|_{t=0}= \Delta_j \rho_0+ \frac{1}{\gamma}\Delta_j h_0,
  \\
  & \Delta_j \mathbf{T}(\rho+\frac{1}{\gamma}h)|_{t=0}= - \Delta_j \mathrm{div}\bv_0,
\end{cases}
\end{equation}
and
\begin{equation}\label{pq1}
\begin{cases}
& \square_{\mathbf{g}} \Delta_j v^i_{+}=
\mathbf{T}\{\Delta_j (\mathbf{T} v_{-}^i)\}+ \Delta_j Q^i+ [\square_{\mathbf{g}}, \Delta_j]v^i_{+}+[\Delta_j, \mathbf{T}](\mathbf{T} v_{-}^i),
\\
& \Delta_j v^i_{+}|_{t=0}=\Delta_j (v^i_0-v^i_{-0}), \\
& \Delta_j (\mathbf{T}v^i_{+})|_{t=0}=-\Delta_j (c^2_s(0)\partial^i \rho_0+ \frac{1}{\gamma}c^2_s(0)\partial^i h_0)- \Delta_j (\mathbf{T}v^i_{-0}).
\end{cases}
\end{equation}
Above, we use the \eqref{fc0} such that
\begin{equation*}
\begin{split}
  & \Delta_j \mathbf{T}(\rho_0+\frac{1}{\gamma}h_0)=- \Delta_j \mathrm{div}\bv_0, \quad v^i_{+0}=v^i_0-v^i_{-0},
  \\
  & \mathbf{T}v^i_{+0}=-c^2_s(0)\partial^i \rho_0- \frac{1}{\gamma}c^2_s(0)\partial^i h_0- \mathbf{T} v^i_{-0}.
\end{split}
\end{equation*}
By using the Strichartz estimate in Proposition \ref{r5} and Proposition \ref{r3} (taking $r=s$ and $k<s-1$), we obtain
\begin{equation}\label{ise1}
\begin{split}
  & \| \left< \partial \right>^k \Delta_j (\rho+\frac{1}{\gamma}h) \|_{L^2_{[-2,2]} L^\infty_x} + \| \left< \partial \right>^k \Delta_j \bv_{+}\|_{L^2_{[-2,2]} L^\infty_x}
  \\
  \lesssim  & \|\Delta_j \rho_0 \|_{H^{s}}+\|\Delta_j \bv_0 \|_{H^{s}}+\|\Delta_j h_0 \|_{H^{s}}  + \| \Delta_j \bv_{-0} \|_{H^{s}}
  \\
  &+ \|\Delta_j \mathbf{T} \bv_{-}\|_{L^1_{[-2,2]}H_x^{s-1}}+ \| [\square_{\mathbf{g}}, \Delta_j]\rho \|_{L^1_{[-2,2]} H_x^{s-1}}+ \| \Delta_j E \|_{L^1_{[-2,2]} H_x^{s-1}}
  \\
  &+ \| \Delta_j \bQ \|_{L^1_{[-2,2]} H_x^{s-1}}
    + \|[\square_{\mathbf{g}}, \Delta_j]\bv_{+}\|_{L^1_{[-2,2]} H_x^{s-1}}
  + \| [\Delta_j, \mathbf{T}](\mathbf{T} \bv_{-}) \|_{L^1_{[-2,2]} H_x^{s-1}}.
\end{split}
\end{equation}
Taking square of \eqref{ise1} and summing it over $j\geq 1$, we get
\begin{equation}\label{fgh8}
\begin{split}
  \|\left< \partial \right>^k (\rho+\frac{1}{\gamma}h), \left< \partial \right>^k \bv_{+}\|_{L^2_t \dot{B}^{0}_{\infty, 2}} \lesssim & \| \rho_0\|_{H_x^s}+\| \bv_0\|_{H_x^s}+\| h_0\|_{H_x^s}+\| \bv_{-0}\|_{H_x^s}
  \\
  &+\{ \| [\square_{\mathbf{g}}, \Delta_j]\rho \|_{L^1_{[-2,2]} H_x^{s-1}} \}_{l^2_j}
   + \|E,\bQ \|_{L^1_{[-2,2]} H_x^{s-1}}
   \\
   & + \{ \|[\square_{\mathbf{g}}, \Delta_j]\bv_{+}\|_{L^1_{[-2,2]} H_x^{s-1}}  \}_{l^2_j}
  \\
  & + \{ \| [\Delta_j, \mathbf{T}](\mathbf{T} \bv_{-}) \|_{L^1_{[-2,2]} H_x^{s-1}} \}_{l^2_j}.
\end{split}
\end{equation}
By Lemma \ref{YR}, we can see
\begin{equation}\label{fh00}
	\begin{split}
			\{ \| [\Delta_j, \mathbf{T}](\mathbf{T} \bv_{-}) \|_{L^1_{[-2,2]} H_x^{s-1}} \}_{l^2_j} \lesssim & \|\partial \bv \|_{L^1_{[-2,2]}L^\infty_x} \|\mathbf{T} \bv_{-}\|_{L^\infty_{[-2,2]}H^{s-1}_x}
		\\& + \|\bv\|_{L^\infty_{[-2,2]} H^s_x} \|\mathbf{T} \bv_{-}\|_{L^1_{[-2,2]} H^1_x}
	\end{split}
\end{equation}
By Lemma \ref{yx}, we also get
\begin{equation}\label{fh01}
\begin{split}
 \{ \| [\square_{\mathbf{g}}, \Delta_j]\rho \|_{L^1_{[-2,2]} H_x^{s-1}} \}_{l^2_j} \lesssim & \|d \rho \|_{L^1_{[-2,2]}L^\infty_x} \|d\bv, d\rho, dh\|_{L^\infty_{[-2,2]}H^{s-1}_x}
 \\& +  \|d\bv, d \rho, dh\|_{L^1_{[-2,2]}L^\infty_x} \|d\bv, d\rho, dh\|_{L^\infty_{[-2,2]}H^{s-1}_x}.
\end{split}
\end{equation}
Using Lemma \ref{yx} again, it follows
\begin{equation}\label{fh02}
	\begin{split}
		\{ \| [\square_{\mathbf{g}}, \Delta_j]\bv_{+} \|_{L^1_{[-2,2]} H_x^{s-1}} \}_{l^2_j} \lesssim & \|d \bv_{+} \|_{L^1_{[-2,2]}L^\infty_x} \|d\bv, d\rho, dh\|_{L^\infty_{[-2,2]}H^{s-1}_x}
		\\& +  \|d\bv, d \rho, dh\|_{L^2_{[-2,2]}L^\infty_x} \|d\bv_{+}\|_{L^2_{[-2,2]}H^{s-1}_x}.
	\end{split}
\end{equation}
By \eqref{YYE}, it yields
\begin{equation}\label{fh03}
	\begin{split}
		\|E,\bQ \|_{L^1_{[-2,2]} H_x^{s-1}} \lesssim & \|d \rho \|_{L^2_{[-2,2]}L^\infty_x} ( \|\bv, \rho\|_{L^\infty_{[-2,2]}H^{s}_x} + \|\bw\|_{L^2_{[-2,2]}H^{\frac32+}_x} + \|h\|_{L^2_{[-2,2]}H^{\frac52+}_x}).
	\end{split}
\end{equation}
Using \eqref{eta} and $(\bv,\rho,h,\bw)\in \mathcal{{H}}$(please refer \eqref{401}-\eqref{403}), we have
\begin{equation}\label{fh04}
	\|\mathbf{T}\bv_{-}\|_{L^2_{[-2,2]}H^{s-1}_x} \lesssim \|\bv, \rho, h\|_{L^\infty_{[-2,2]}H^{s}_x}+ \|\bw\|_{L^2_{[-2,2]}H^{\frac32+}_x} + \|h\|_{L^2_{[-2,2]}H^{\frac52+}_x}.
\end{equation}
Using \eqref{fh04}, we have
\begin{equation}\label{fh05}
	\begin{split}
		\|d\bv_{+}\|_{L^2_{[-2,2]}H^{s-1}_x} \leq & \|d\bv_{-}\|_{L^2_{[-2,2]}H^{s-1}_x}+ \|d\bv\|_{L^2_{[-2,2]}H^{s-1}_x}
		\\
		\leq & \|\mathbf{T}\bv_{-}\|_{L^2_{[-2,2]}H^{s-1}_x}+\|(\bv \cdot \nabla)\bv_{-}\|_{L^2_{[-2,2]}H^{s-1}_x} + \|d\bv\|_{L^2_{[-2,2]}H^{s-1}_x}
		\\
		\lesssim  & \|\bv, \rho, h\|_{L^\infty_{[-2,2]}H^{s}_x}+ \|\bw\|_{L^2_{[-2,2]}H^{\frac32+}_x} + \|h\|_{L^2_{[-2,2]}H^{\frac52+}_x}.
	\end{split}
\end{equation}
Inserting \eqref{fh00}-\eqref{fh05} in \eqref{fgh8}, \eqref{fgh8} yields
\begin{equation}\label{fh06}
	\begin{split}
		\|\left< \partial \right>^k (\rho+\frac{1}{\gamma}h), \left< \partial \right>^k \bv_{+}\|_{L^2_{[-2,2]} \dot{B}^{0}_{\infty, 2}} \lesssim & \| \rho\|_{L^\infty_{[-2,2]} H_x^s}+\| \bv\|_{L^\infty_{[-2,2]} H_x^s}+\| h\|_{L^\infty_{[-2,2]} H_x^s}
		\\
	& + \|\bw\|_{L^2_{[-2,2]}H^{\frac32+}_x} + \|h\|_{L^2_{[-2,2]}H^{\frac52+}_x} .
	\end{split}
\end{equation}
So we have proved \eqref{fgh}.
\end{proof}

\begin{remark}
We expect that Proposition \ref{r6} yields a Strichartz estimate of solutions with  sharp regularity of the velocity, density, entropy, and vorticity.
\end{remark}
\begin{proposition}\label{r4}
Suppose $(\bv, \rho, h, \bw) \in \mathcal{H}$ and $G(\bv, \rho, h)\leq 2 \epsilon_1$. Let $2<s_0<s<\frac52$ and $\delta_*\in [0,s-2)$. Let $\bv_{+}$ be defined in \eqref{dvc}. Then the following Strichartz estimates and energy estimates hold
\begin{equation}\label{strr}
\|d \bv, d \rho, dh, \partial \bv_{+}\|_{L^2_t C^{\delta_*}_x}+\|\partial \bv_{+}, d \rho, d\bv, dh\|_{L^2_t \dot{B}^{s_0-2}_{\infty,2}} \leq \epsilon_2,
\end{equation}
and the energy estimates
\begin{equation}\label{eef}
\| \bv\|_{L^\infty_tH_x^s} +\|\rho\|_{L^\infty_tH_x^s}+\| h \|_{L^\infty_tH_x^{s_0+1}}+\| \bw\|_{L^\infty_tH_x^{s_0}}\leq \epsilon_2,
\end{equation}
hold.
\end{proposition}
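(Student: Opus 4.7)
\medskip\noindent\textbf{Proof proposal for Proposition \ref{r4}.}
The plan is to combine the conditional Strichartz bounds proved in Proposition \ref{r6} with Sobolev embeddings for the regular pieces ($h$ and $\bv_{-}$), and then close the loop through the energy identity of Theorem \ref{ve}, exploiting the strict hierarchy $\epsilon_3\ll\epsilon_2\ll\epsilon_1\ll 1$ from \eqref{a0}.

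First, since $(\bv,\rho,h,\bw)\in\mathcal{H}$ and $G(\bv,\rho,h)\le 2\epsilon_1$, Propositions \ref{r1}--\ref{r2} furnish enough regularity of the characteristic foliation to apply Proposition \ref{r5} (and hence Proposition \ref{r3} and Proposition \ref{r6}) on $[-2,2]$. Applying Proposition \ref{r6} with $k=s_0-1<s-1$ to the good unknowns $\rho+\gamma^{-1}h$ and $\bv_{+}$, we obtain
\[
\bigl\|\langle\partial\rangle^{s_0-1}(\rho+\tfrac1\gamma h),\ \langle\partial\rangle^{s_0-2}d\bv_{+}\bigr\|_{L^2_tL^\infty_x}+\bigl\|d(\rho+\tfrac1\gamma h),\,\partial\bv_{+}\bigr\|_{L^2_t\dot B^{s_0-2}_{\infty,2}} \lesssim \mathcal{N},
\]
where $\mathcal{N}=\|\bv,\rho,h\|_{L^\infty_tH^s}+\|h\|_{L^2_tH^{5/2+}}+\|\bw\|_{L^2_tH^{3/2+}}$.

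Next I would split off the smooth/elliptic pieces. Since $h\in L^\infty_tH^{s_0+1}_x$ with $s_0+1>3$, Sobolev embedding yields $\|dh\|_{L^2_tC^{s_0-2-}_x}+\|dh\|_{L^2_t\dot B^{s_0-2}_{\infty,2}}\lesssim \|h\|_{L^\infty_tH^{s_0+1}_x}$. For $\bv_{-}$, inequality \eqref{eee} of Lemma \ref{yux} gives $\|\bv_{-}\|_{H^{s_0+1}}\lesssim(1+\|\rho\|_{H^{s_0}})\|\bw\|_{H^{s_0}}$, and then the identical Sobolev embedding controls $\|d\bv_{-}\|_{L^2_tC^{s_0-2-}_x\cap L^2_t\dot B^{s_0-2}_{\infty,2}}$. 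Writing $\bv=\bv_{+}+\bv_{-}$ and $\rho=(\rho+\gamma^{-1}h)-\gamma^{-1}h$, and choosing $s_0$ so close to $s$ that $s_0-2>\delta_*$ (permitted since $\delta_*<s-2$) so that $\dot B^{s_0-2}_{\infty,2}\hookrightarrow C^{\delta_*}$, these two inputs combine to give
\[
\|d\bv,d\rho,dh,\partial\bv_{+}\|_{L^2_tC^{\delta_*}_x}+\|d\bv,d\rho,dh,\partial\bv_{+}\|_{L^2_t\dot B^{s_0-2}_{\infty,2}}\ \lesssim\ \mathcal{N}'
\]
where $\mathcal{N}'=\|\bv,\rho\|_{L^\infty_tH^s_x}+\|h\|_{L^\infty_tH^{s_0+1}_x}+\|\bw\|_{L^\infty_tH^{s_0}_x}$.

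Now I would close the bootstrap using Theorem \ref{ve}. The bootstrap hypothesis \eqref{403} gives $\int_{-2}^2(\|d\bv,d\rho,dh\|_{L^\infty_x}+\|\partial\bv,\partial\rho,\partial h\|_{\dot B^{s_0-2}_{\infty,2}})\,d\tau\le 2\epsilon_2\ll 1$, while the initial data satisfies $E_0\lesssim\epsilon_3$ by \eqref{401}. Plugging into the estimate of Theorem \ref{ve} gives
\[
\|\bv,\rho\|_{L^\infty_tH^s_x}+\|h\|_{L^\infty_tH^{s_0+1}_x}+\|\bw\|_{L^\infty_tH^{s_0}_x}\ \lesssim\ \epsilon_3\exp(C\epsilon_3\cdot e^{C\epsilon_2})\ \lesssim\ \epsilon_3.
\]
Thus $\mathcal{N}'\lesssim\epsilon_3$, which immediately gives \eqref{eef} with $\ll\epsilon_2$ to spare, and substituting back into the Strichartz inequality of the preceding step yields \eqref{strr}, again with room to spare because $\epsilon_3\ll\epsilon_2$.

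\medskip\noindent\textbf{Main obstacle.} The analytical content is already packaged in Proposition \ref{r6} and Theorem \ref{ve}; the delicate point is purely bookkeeping with the hierarchy \eqref{a0}. One must verify that the constants in Proposition \ref{r6} are genuinely universal (independent of $(\bv,\rho,h,\bw)\in\mathcal{H}$), so that the final bounds are $C\epsilon_3\le\epsilon_2$ rather than $C\epsilon_2$ (which would not improve the bootstrap). A secondary technical point is ensuring that the parameter $s_0\in(2,s)$ can be fixed close enough to $s$ for the Besov embedding $\dot B^{s_0-2}_{\infty,2}\hookrightarrow C^{\delta_*}$ to cover the entire stated range $\delta_*\in[0,s-2)$; this requires treating $s_0$ as an adjustable parameter introduced in \eqref{a1} rather than a fixed constant, which is consistent with the quantifier order in Proposition \ref{p1}.
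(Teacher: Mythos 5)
Your overall structure matches the paper's: apply Proposition \ref{r6} to the good unknowns $\rho+\gamma^{-1}h$ and $\bv_{+}$, treat the regular pieces $h$ and $\bv_{-}$ by Sobolev embedding, and close the bootstrap via Theorem \ref{ve} using the hierarchy $\epsilon_3\ll\epsilon_2\ll\epsilon_1$ together with the hypotheses \eqref{401}--\eqref{403}. The energy bookkeeping is the right idea.

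There is, however, a genuine gap in how you obtain the $C^{\delta_*}$ estimate. You derive it from the $\dot B^{s_0-2}_{\infty,2}$ bound by the embedding $\dot B^{s_0-2}_{\infty,2}\hookrightarrow C^{\delta_*}$, which only holds for $\delta_*\le s_0-2$; but the statement requires the full range $\delta_*\in[0,s-2)$, and $s-2>s_0-2$. Your proposed remedy---moving $s_0$ closer to $s$---is not available: $s_0$ is the fixed Sobolev index of the vorticity in the definition of $\mathcal H$ and in \eqref{401}, and the conclusion \eqref{eef} asserts control of $\bw$ in $H^{s_0}$ at exactly that index, so raising $s_0$ would demand stronger data you do not have, while lowering it only makes the embedding worse. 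The paper sidesteps the issue by invoking Proposition \ref{r6} a second time with the fractional order chosen to match the H\"older exponent (roughly $k-1=\delta_*$, admissible since $k<s-1\Leftrightarrow\delta_*<s-2$), getting the $C^{\delta_*}$ bound directly for all $\delta_*<s-2$ without ever passing through the $\dot B^{s_0-2}_{\infty,2}$ norm; this is precisely the step \eqref{epp1}. A related but fixable slippage appears in your Sobolev step: you claim $\|dh\|_{L^2_t C^{s_0-2-}_x}$, and $s_0-2<s-2$, so this also falls short as written; the paper instead uses $H^{3/2+\delta_*+}\hookrightarrow C^{\delta_*}$ (cf.\ \eqref{epp2}, \eqref{epp4}) and verifies $3/2+\delta_*<s_0$, which holds uniformly over $\delta_*<s-2$ because $s-s_0<\tfrac12$ for $2<s_0<s\le\tfrac52$.
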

\begin{proof}
If we take $k=s_0-1$ in Proposition \ref{r6}, we then obtain
\begin{equation}\label{epp0}
\begin{split}
\|d(\rho+\frac{1}{\gamma}h), d\bv_{+} \|_{L^2_{[-2,2]} \dot{B}^{s_0-2}_{\infty, 2}}
\lesssim  & \| \rho, \bv, h\|_{L^\infty_{[-2,2]} H^s}+\| h\|_{L^2_{[-2,2]} H^{\frac{5}{2}+}}
\\
& +\| \bw \|_{L^2_{[-2,2]} H^{\frac{3}{2}+}}+\| \bw \|_{L^\infty_{[-2,2]} H^{\frac{1}{2}+} },
\end{split} 		
\end{equation}
Using Proposition \ref{r6} again, for $\delta_* \in [0,s-2)$, we shall get
\begin{equation}\label{epp1}
	\begin{split}
		\|d(\rho+\frac{1}{\gamma}h), d\bv_{+} \|_{L^2_{[-2,2]}C^{\delta_*}_x}
		\lesssim  & \| \rho, \bv, h\|_{L^\infty_{[-2,2]} H^s}+\| h\|_{L^2_{[-2,2]} H^{\frac{5}{2}+}}
		\\
		& +\| \bw \|_{L^2_{[-2,2]} H^{\frac{3}{2}+}},
	\end{split} 		
\end{equation}
By Sobolev's imbeddings, we have
\begin{equation}\label{epp2}
	\begin{split}
		\|\nabla h\|_{L^2_{[-2,2]} C^{\delta_*}_x}
		\lesssim  & \| h\|_{L^2_{[-2,2]} H^{\frac{5}{2}+\delta+}}.
	\end{split} 		
\end{equation}
Using \eqref{epp2} and \eqref{fc0}, it follows
\begin{equation}\label{epp3}
	\begin{split}
		\|\partial_t h\|_{L^2_{[-2,2]} C^{\delta_*}_x}
		\lesssim  & \| h\|_{L^2_{[-2,2]} H_x^{\frac{5}{2}+\delta_*+}}+ \| \bv \|_{L^\infty_{[-2,2]} H_x^{s}}.
	\end{split} 		
\end{equation}
Similarly, we also get
\begin{equation}\label{epp4}
	\begin{split}
		\|\nabla \bv_{-}\|_{L^2_{[-2,2]} C^{\delta_*}_x}
		\lesssim  & \| \bw\|_{L^2_{[-2,2]} H^{\frac{3}{2}+\delta_*+}}.
	\end{split} 		
\end{equation}
Using \eqref{epp4} and \eqref{fh04}, it follows
\begin{equation}\label{epp5}
	\begin{split}
		\|\partial_t \bv_{-}\|_{L^2_{[-2,2]} C^{\delta_*}_x}
		\lesssim  & \|\bv, \rho, h\|_{L^\infty_{[-2,2]}H^{s}_x}+ \|\bw\|_{L^2_{[-2,2]}H^{\frac32+\delta_*+}_x} + \|h\|_{L^2_{[-2,2]}H^{\frac52+\delta_*+}_x}.
	\end{split} 		
\end{equation}
As a result, using \eqref{epp1}-\eqref{epp5}, we get
\begin{equation}\label{epp6}
	\begin{split}
		\|d\rho, dh, d\bv \|_{L^2_{[-2,2]} C^{\delta_*}_x}
		\lesssim  & \| \rho, \bv, h\|_{L^\infty_{[-2,2]} H_x^s}+\| h\|_{L^2_{[-2,2]} H_x^{\frac{5}{2}+\delta_*+}}
		+\| \bw \|_{L^2_{[-2,2]} H_x^{\frac{3}{2}+\delta_*+}}
		\\
		\lesssim  & \| \rho, \bv, h\|_{L^\infty_{[-2,2]} H_x^s}+\| h\|_{L^2_{[-2,2]} H_x^{\frac{5}{2}+\delta_*+(s_0-\frac32-\delta_*)}}
		\\
		& \ +\| \bw \|_{L^2_{[-2,2]} H_x^{\frac{3}{2}+\delta_*+(s_0-\frac32-\delta_*)}}
		\\
		\lesssim  & \| \rho, \bv, h\|_{L^\infty_{[-2,2]} H_x^s}+\| h\|_{L^2_{[-2,2]} H_x^{1+s_0}}    +\| \bw \|_{L^2_{[-2,2]} H_x^{s_0}}.
	\end{split} 		
\end{equation}
Above, we use the fact that $\delta*<s-2\leq \frac12$ and $s_0-\frac32-\delta_*>s_0-2>0$.

Similarly, using \eqref{epp0} and Sobolev's imbeddings, we have
\begin{equation}\label{epp7}
	\begin{split}
		\|d\rho,dh, d\bv \|_{L^2_{[-2,2]} \dot{B}^{s_0-2}_{\infty, 2}}
		\lesssim  & \| \rho, \bv, h\|_{L^\infty_{[-2,2]} H_x^s}+\| h\|_{L^2_{[-2,2]} H_x^{s_0+\frac{1}{2}}}   +\| \bw \|_{L^2_{[-2,2]} H_x^{s_0-\frac{1}{2}}}
		\\
		\lesssim  & \| \rho, \bv, h\|_{L^\infty_{[-2,2]} H_x^s}+\| h\|_{L^2_{[-2,2]} H_x^{1+s_0}}   +\| \bw \|_{L^2_{[-2,2]} H_x^{s_0}}.
	\end{split} 		
\end{equation}
By using Theorem \ref{ve} and $(\bv,\rho,h,\bw)$ \eqref{401}-\eqref{403}, it follows that
\begin{equation}\label{epp8}
	\begin{split}
		& \|\rho, \bv \|_{L^\infty_{[-2,2]} H_x^s}+ \|h \|_{L^\infty_{[-2,2]} H_x^{s_0+1}}+ \| \bw \|_{L^\infty_{[-2,2]} H_x^{s_0}}
		\\
		\leq  & C (\epsilon_3+ \epsilon^2_3)\exp \left\{ C\|d\rho,dh, d\bv \|_{L^2_{[-2,2]} \dot{B}^{s_0-2}_{\infty, 2}} \exp(C\|d\rho,dh, d\bv \|_{L^2_{[-2,2]} \dot{B}^{s_0-2}_{\infty, 2}}) \right\}
		\\
		\leq & C\epsilon_3 \mathrm{e}^{C\epsilon_1 \mathrm{e}^{C\epsilon_1} }
		\\
		\leq & \epsilon_2.
	\end{split} 		
\end{equation}
Therefore, by \eqref{epp8} and \eqref{epp7}, it turns out
\begin{equation}\label{epp9}
	\begin{split}
		\|d\rho,dh, d\bv \|_{L^2_{[-2,2]} \dot{B}^{s_0-2}_{\infty, 2}}
		\leq  & C\| \rho, \bv, h\|_{L^\infty_{[-2,2]} H^s}+C\| h\|_{L^2_{[-2,2]} H^{\frac{5}{2}+}}
		\\
		& +C\| \bw \|_{L^2_{[-2,2]} H^{\frac{3}{2}+}}+C\| \bw \|_{L^\infty_{[-2,2]} H^{\frac{1}{2}+} }
		\\
		\leq & C\epsilon_3 \mathrm{e}^{C\epsilon_1 \mathrm{e}^{C\epsilon_1} }
		\\
		\leq & \epsilon_2.
	\end{split} 		
\end{equation}
Using \eqref{epp8} and \eqref{epp6} we prove that
\begin{equation}\label{epp10}
	\begin{split}
		\|d\rho, dh, d\bv \|_{L^2_{[-2,2]} C^{\delta_*}_x}
		\lesssim   \epsilon_2.
	\end{split} 		
\end{equation}
Gathering \eqref{epp1}, \eqref{epp8}, \eqref{epp9} and \eqref{epp10}, we have finished this proof.
\end{proof}
\section{The proof of Proposition \ref{r5}}\label{Ap}
In this part, we will give a proof of Proposition \ref{r5} by using Smith-Tataru's idea in \cite{ST}. The method relies on a wave-packet construction for the nontruncated metric $\mathbf{g}$. We mention that the use of wave-packets is introduced by Smith \cite{Sm}, and developed by Wolff \cite{Wo} and Smith-Tataru \cite{ST}. Following this idea, so we can represent approximate solutions to the linear equation as a square summable superposition of wave packets, and the wave packets are localized in phase space. As a result, we can obtain Strichartz estimates of linear wave equations with sufficient regularity of the foliations. Therefore, it's quite different with Wang's paper \cite{WQEuler} in establishing Strichartz estimates, where Wang \cite{WQEuler} use the vector field approach and work with foliations on null cones.

In this part, we set $(\bv,\rho, h, \bw) \in \mathcal{{H}}$ and $G(\bv,\rho,h)\leq 2 \epsilon_1$, where $\mathcal{{H}}$ and $G$ are defined in \eqref{401}-\eqref{403} and \eqref{500} respectively. Then $\|d \mathbf{g}\|_{L^2_t L^\infty_x}:=\|d \mathbf{g}\|_{L^2_{[-2,2]} L^\infty_x} \leq \epsilon_0$ and $\|d \mathbf{g}\|_{L^2_t C^\delta_x}:=\|d \mathbf{g}\|_{L^2_{[-2,2]} C^\delta_x} \leq \epsilon_0$, where $\epsilon_0$ is stated in \eqref{a0}.

We divide the proof of Proposition \ref{r5} into several subparts. The first step is to reduce the problem in a phase space.
\subsection{A reduction to the phase decomposition}
Let $P_\lambda$ be a Littlewood-Paley operator with frequency $\{\xi\in\mathbb{R}^3: \frac{\lambda}{8}\leq |\xi|\leq 8\lambda\}$, where $\lambda\geq 1$. Given such a frequency scale $\lambda$, we localize the coefficients of the acoustic metric in frequency. More precisely, we define the smoothed coefficients
\begin{equation*}
	\mathbf{g}_{\lambda}= \sum_{\lambda'<\lambda}P_{\lambda'} \mathbf{g}.
\end{equation*}
We can state the following result:
\begin{proposition}\label{AA1}
	Suppose that $(\bv,\rho, h, \bw) \in \mathcal{{H}}$ and $G(\bv,\rho,h)\leq 2 \epsilon_1$. Suppose $f$ satisfy
	\begin{equation}\label{linearA}
		\begin{cases}
			\square_{\mathbf{g}} f=0, \quad (t,\bx)\in [-2,2]\times \mathbb{R}^3,\\
			f|_{t=t_0}=f_0, \quad \partial_t f|_{t=t_0}=f_1.
		\end{cases}
	\end{equation}
	Then for each $(f_0,f_1) \in H^1 \times L^2$ there exists a function $f_{\lambda} \in C^\infty([-2,2]\times \mathbb{R}^3)$, with
	\begin{equation*}
		\mathrm{supp} \widehat{f_\lambda(t,\cdot)} \subseteq \{ \xi: \frac{\lambda}{8} \leq |\xi| \leq 8\lambda \},
	\end{equation*}
	such that
	\begin{equation}\label{Yee}
		\begin{cases}
			& \| \square_{\mathbf{g}_\lambda} f_{\lambda} \|_{L^1_{[-2,2]} L^2_x} \lesssim \epsilon_0 (\| f_0\|_{H^1}+\| f_1 \|_{L^2} ),
			\\
			& f_\lambda(t_0)=P_\lambda f_0, \quad \partial_t f_{\lambda} (t_0)=P_{\lambda} f_1.
		\end{cases}
	\end{equation}
	Additionally, if $r>1$, then the following Strichartz estimates holds:
	\begin{equation}\label{Ase}
		\| f_{\lambda} \|_{L^2_{[-2,2]} L^\infty_x} \lesssim \epsilon_0^{-\frac{1}{2}} \lambda^{r-1} ( \| f_0 \|_{H^1} + \| f_1 \|_{L^2} ).
	\end{equation}
\end{proposition}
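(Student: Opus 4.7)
The plan is to construct $f_\lambda$ as a wave-packet parametrix for $\square_{\mathbf{g}_\lambda}$, following the Smith--Tataru scheme. The hypothesis $G(\bv,\rho,h)\leq 2\epsilon_1$, together with Proposition \ref{r2}, Lemma \ref{chi}, and Corollary \ref{Rfenjie}, is exactly what is needed to coherently track tubes of dimension $\lambda^{-1/2}\times\lambda^{-1/2}\times\lambda^{-1}$ along the null geodesic flow of $\mathbf{g}_\lambda$ over the $O(1)$ time slab $[-2,2]$.

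First I would perform a phase-space decomposition of the frequency-$\lambda$ localized initial data. For each $\theta\in \mathbb{S}^2$ discretized at angular scale $\lambda^{-1/2}$ and each affine tube $T$ along $\theta$ at spatial scale $\lambda^{-1/2}$, I introduce a pair of packets $\psi_T^{\pm}$ concentrated on $T$, with frequency support in a dual $\lambda^{1/2}\times\lambda^{1/2}\times\lambda$ box, where $\pm$ corresponds to the two $\mathbf{g}_\lambda$-null directions through $T$. A standard phase-space inversion produces coefficients $c_T^{\pm}$ with
\begin{equation*}
\textstyle\sum_T\bigl(|c_T^+|^2+|c_T^-|^2\bigr)\lesssim \|P_\lambda f_0\|_{L^2}^2+\lambda^{-2}\|P_\lambda f_1\|_{L^2}^2,
\end{equation*}
such that $f_\lambda:=\sum_T\bigl(c_T^+\psi_T^++c_T^-\psi_T^-\bigr)$ matches $(P_\lambda f_0, P_\lambda f_1)$ at $t_0$, each $\psi_T^{\pm}$ being propagated by transport along the $\mathbf{g}_\lambda$-null geodesic containing the axis of $T$. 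The construction of this flow requires only the $C^{1,\delta}_{x'}$-bound on $d\phi_{\theta,r}-dt$ supplied by \eqref{502}.

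Second, for the approximation error $\|\square_{\mathbf{g}_\lambda}f_\lambda\|_{L^1_tL^2_x}$, the packet-level error $\square_{\mathbf{g}_\lambda}\psi_T^{\pm}$ is driven by the second fundamental form $\chi_{ab}$ of the characteristic hypersurface $\Sigma_{\theta,r}$ containing the tube together with curvature components $\langle R(e_a,e_b)e_c,e_d\rangle$. The curvature decomposition $l(f_2)+f_1$ of Corollary \ref{Rfenjie} lets me integrate the $f_2$ contribution by parts against the transport direction with an $O(\lambda^{-1/2})$ gain, while $f_1$ is absorbed directly in $L^1_tH^{s_0-1}_{x'}(\Sigma)$. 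Combined with $\|\chi_{ab}\|_{L^2_tH^{s_0-1}_{x'}(\Sigma)}\lesssim \epsilon_2$ from Lemma \ref{chi} and almost-orthogonality among tubes, Cauchy--Schwarz against $\|\{c_T^{\pm}\}\|_{\ell^2}$ gives the desired $\epsilon_0$-smallness via the scale hierarchy \eqref{a0}. For the Strichartz bound \eqref{Ase}, the pointwise bound $\|\psi_T^{\pm}\|_{L^\infty_{t,x}}\lesssim \lambda^{3/4}$ on the tube together with the $L^1_t$-length $\lambda^{-1/2}$ gives $\|\psi_T^{\pm}\|_{L^2_tL^\infty_x}\lesssim \lambda^{1/4}$; a standard tube-incidence count then yields $\|f_\lambda\|_{L^2_tL^\infty_x}\lesssim \lambda^{3/4}\|\{c_T^{\pm}\}\|_{\ell^2}$, which after Bernstein and interpolation between the $H^1$ and $L^2$ data norms produces \eqref{Ase}, with the loss $\epsilon_0^{-1/2}$ traceable to the smoothing scale in $\mathbf{g}\mapsto\mathbf{g}_\lambda$.

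The main obstacle will be the second step: the $L^2_t H^{s_0-1}_{x'}(\Sigma)$-regularity of $\chi$ just barely suffices to propagate the packet profile without $\lambda$-loss over the $O(1)$ time interval. The precise accounting requires rescaling $(t,x)\mapsto(\lambda^{-1/2}t,\lambda^{-1/2}x)$ to send the tube to unit scale, and then combining the rescaled curvature decomposition with an iteration on dyadic time subintervals, so that the tangential Sobolev estimates along $\Sigma$ can be invoked at the packet scale without losing regularity. This is where the strict inequality $s_0>2$ is essential, via the borderline embedding $H^{s_0-1}(\mathbb{R}^2)\hookrightarrow C^\delta(\mathbb{R}^2)$; any weakening would lose control of the tube geometry at the scale $\lambda^{-1/2}$.
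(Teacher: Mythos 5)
Your architecture is the right one (Smith--Tataru wave packets, characteristic hypersurfaces $\Sigma_{\theta,r}$, second fundamental form and curvature decomposition, overlap/incidence counting), but the \emph{scale} at which you build the packets is wrong, and this is precisely what produces the $\epsilon_0$-factors in \eqref{Yee} and \eqref{Ase}. You tile with tubes of transverse width $\lambda^{-1/2}$ and angular separation $\lambda^{-1/2}$. The paper instead uses the semiclassical scale: a normalized packet is $\epsilon_0^{1/2}\lambda^{-1}T_\lambda(u\zeta)$ with $\zeta=\zeta_0\bigl((\epsilon_0\lambda)^{1/2}(x'_\omega-\gamma_\omega(t))\bigr)$, the angular separation is $\epsilon_0^{1/2}\lambda^{-1/2}$, and the spatial tiling of $\mathbb{R}^3$ uses rectangles of size $(8\lambda)^{-1}\times(4\epsilon_0\lambda)^{-1/2}\times(4\epsilon_0\lambda)^{-1/2}$. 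Without the factor $\epsilon_0$ in the width, the error estimate does not close: in Proposition~\ref{np}, the amplitude functions $\psi_m$ are controlled in the scaled norm $H^{s_0-1}_a$ with $a=(\epsilon_0\lambda)^{-1/2}$, and it is exactly the pairing of the scale $a$ with $\|d\mathbf{g}\|_{L^2_tL^\infty_x}\lesssim\epsilon_0$ and $\|d\mathbf{g}\|_{L^2_tC^\delta_x}\lesssim\epsilon_0$ that yields $\|\psi_m\|_{L^2_tH^{s_0-1}_a}\lesssim\epsilon_0\lambda^{1-m}$, hence $\|\square_{\mathbf{g}_\lambda}P_\lambda f\|_{L^1_tL^2_x}\lesssim\epsilon_0(\sum a_{\omega,j}^2)^{1/2}$ via Lemma~\ref{SWP} and Proposition~\ref{szy}. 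With $\lambda^{-1/2}$-width tubes this would only be $O(1)$. Likewise on the Strichartz side, the overlap bound Corollary~\ref{corl} gives $N_\lambda(P_1,P_2)\lesssim\epsilon_0^{-1}|t_1-t_2|^{-1}$ precisely because of the $(\epsilon_0\lambda)^{-1/2}$ tube width, and it is this $\epsilon_0^{-1}$ that becomes $\epsilon_0^{-1/2}$ after the square root in Proposition~\ref{szt}. Your attribution of the $\epsilon_0^{-1/2}$ loss to the frequency truncation $\mathbf{g}\mapsto\mathbf{g}_\lambda=S_{<\lambda}\mathbf{g}$ is not correct: that smoothing lives at scale $\lambda^{-1}$ and carries no $\epsilon_0$.

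A second, lesser point: you propose to integrate the $f_2$ piece of the curvature decomposition by parts against the transport direction inside the error estimate. In the paper the curvature decomposition of Corollary~\ref{Rfenjie} and the $\chi$ bounds of Lemma~\ref{chi} are consumed \emph{upstream}, in Section~\ref{sec6}, to establish the $H^{s_0}_{x'}(\Sigma)$-regularity of $d\phi_{\theta,r}$ and the $C^{1,\delta}_{x'}$ control \eqref{502}; the error $\square_{\mathbf{g}_\lambda}$ of a packet is then computed directly via Leibniz on $u\zeta$ (Proposition~\ref{np}), with the coefficients absorbed into the $\psi_m$'s using those regularity bounds. This is more a presentational difference than a gap, but it affects where and how you invoke $s_0>2$. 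Your closing remark about $H^{s_0-1}(\mathbb{R}^2)\hookrightarrow C^\delta$ being borderline is correct, and the rescaling intuition is sound, provided you replace $\lambda^{-1/2}$ by $(\epsilon_0\lambda)^{-1/2}$ throughout. If you make that replacement and rework the overlap count à la Corollary~\ref{corl}, the rest of your outline should go through.
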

\begin{remark}\label{rel}
	In fact, Proposition \ref{AA1} tells us we can find a good approximate solution $f_\lambda$ for the problem
	\begin{equation*}
		\begin{cases}
			\square_{\mathbf{g}_\lambda} f_\lambda=0, \quad (t,x)\in [-2,2]\times \mathbb{R}^3,\\
			(f_\lambda, \partial_t f_\lambda)|_{t=t_0}=(P_\lambda f_0, P_\lambda f_1).
		\end{cases}
	\end{equation*}
	For $\epsilon_0 \lambda\leq 1$, the result in Proposition \ref{AA1} is almost trivial. Indeed, we can set $f_\lambda=P_\lambda f$, where $f$ is the exact solution of
	\begin{equation}\label{linearD}
		\begin{cases}
			\square_{\mathbf{g}_\lambda} f=0, \quad (t,x)\in [-2,2]\times \mathbb{R}^3,\\
			(f, \partial_t f)|_{t=t_0}=(P_\lambda f_0, P_\lambda f_1).
		\end{cases}
	\end{equation}
	Using energy estimates for \eqref{linearD}, we can obtain
	\begin{equation}\label{eete}
		\|df\|_{L^\infty_{[-2,2]} L^2_x} \lesssim \| P_\lambda f_0\|_{H^1}+\|P_\lambda f_1\|_{L^2}.
	\end{equation}
	Moreover, for $\mathbf{g}^{00}_\lambda=-1$, so we have
	\begin{equation*}
		\begin{split}
			\| \square_{\mathbf{g}_\lambda} f_\lambda \|_{L^2_{[-2,2]} L^2_x} 
			\lesssim & 
			\| [ \mathbf{g}^{\alpha i}_\lambda, \partial_\alpha P_\lambda ] \partial_i f \|_{ L^2_{[-2,2]} L^2_x } 
			+ \| P_\lambda (\partial_\alpha \mathbf{g}^{\alpha i}_\lambda ) \partial_i f \|_{ L^2_{[-2,2]} L^2_x }
			\\
			\lesssim & \| d \mathbf{g}_\lambda \|_{L^2_{[-2,2]} L^\infty_x} \| d {f} \|_{L^\infty_{[-2,2]} L^2_x}
			\\
			\lesssim & \epsilon_0 ( \| P_\lambda f_0\|_{H^1}+\|P_\lambda f_1\|_{L^2} ).
		\end{split}
	\end{equation*}
	The Strichartz estimate \eqref{Ase} follows from Sobolev imbeddings and \eqref{eete}. Hence, we will restrict to establishing Proposition \ref{AA1} in the case that
	\begin{equation*}
		\epsilon_0 \lambda \gg 1.
	\end{equation*}
\end{remark}
\subsection{The proof of Proposition \ref{AA1}}
To prove Proposition \ref{AA1}, we construct an approximate solutions to \eqref{linearA} by using wave packets, and a wave packet has a finer spatial localization than a plane wave solutions. We divide the proof into two parts, namely \eqref{Yee} and \eqref{Ase}. The conclusion \eqref{Yee} is proved in Proposition \ref{szy} and Proposition \ref{szi}, and the conclusion \eqref{Ase} is obtained by Proposition \eqref{szt}.

We introduce a spatially localized mollifier $T_\lambda$ by
\begin{equation*}
	T_\lambda f = \chi_\lambda * f, \quad \chi_\lambda=\lambda^3 \chi(\lambda^{-1} y),
\end{equation*}
where $\chi \in C^\infty_0(\mathbb{R}^3)$ is supported in the ball $|x| \leq \frac{1}{32}$, and has integral $1$. By choosing $\chi$ appropriately, any function $u$ with frequency support contained in $\{\xi:|\xi|\leq 4\lambda\}$ can be factored $u=T_\lambda \widetilde{u}$, where $\| \widetilde{u} \|_{L^2_x} \approx \| u \|_{L^2_x}$.

Let us introduce the wave packets which is related to $\mathbf{g}$ and the null hypersurfaces $\Sigma$.
\begin{definition}[\cite{ST}]
	Let the hypersurface $\Sigma_{\omega,r}$ and the geodesic $\gamma$ be defined in Section \ref{sec6}. A normalized wave packet around $\gamma$ is a function $f$ of the form
	\begin{equation*}
		f=\epsilon_0^{\frac12} \lambda^{-1} T_\lambda(u \zeta),
	\end{equation*}
	where
	\begin{equation*}
		u(t,x)=\delta(x_{\omega}-\phi_{\omega,r}(t,x'_\omega)), \quad \zeta=\zeta_0( (\epsilon_0 \lambda)^{\frac12}(x'_\omega-\gamma_\omega(t))  ).
	\end{equation*}
	Here, $\zeta_0$ is a smooth function supported in the set $|x'| \leq 1$, with uniform bounds on its derivatives $|\partial^\alpha_{x'} \zeta_0(x')| \leq c_\alpha$.
\end{definition}
We give two notations here. We denote $L(u,\zeta)$ to denote a translation invariant bilinear operator of the form
\begin{equation*}
	L(u,\zeta)(x)=\int K(y,z)u(x+y)\zeta(x+z)dydz,
\end{equation*}
where $K(y,z)$ is a finite measure. If $X$ is a Sobolev spaces, we then denote $X_a$ the same space but with the norm obtained by dimensionless rescaling by $a$,
\begin{equation*}
	\| u \|_{X_a}=\| u(a \cdot)\|_{X}.
\end{equation*}
Since $2(s_0-1)>2$, then for $a<1$ we have $\| u \|_{H^{s_0-1}_a(\mathbb{R}^{2})} \lesssim \| u \|_{H^{s_0-1}(\mathbb{R}^{2})}$. In the following, let us introduce what we can get when we take the operator $\square_{\mathbf{g}_\lambda}$ on wave packets.
\subsubsection{A normalized wave packet}
\begin{proposition}\label{np}
	Let $f$ be a normalized packet. Then there is another normalized wave packet $\tilde{f}$, and functions $\phi_m(t,x'_\omega), m=0,1,2$, so that
	\begin{equation}\label{np1}
		\square_{\mathbf{g}_\lambda} P_\lambda f= L(d \mathbf{g}, d \tilde{P}_\lambda \tilde{f})+ \epsilon_0^{\frac12}\lambda^{-1}P_{\lambda}T_{\lambda}\sum_{m=0,1,2}
		\psi_m\delta^{(m)}(x'_\omega-\phi_{\omega,r}),
	\end{equation}
	where the functions $\psi_m=\psi_m(t,x'_\omega)$ satisfy the scaled Sobolev estimates
	\begin{equation}\label{np2}
		\| \psi_m\|_{L^2_t H^{s_0-1}_{a,x'_\omega}} \lesssim \epsilon_0 \lambda^{1-m}, \quad m=0,1,2, \quad a=(\epsilon_0 \lambda)^{-\frac12}.
	\end{equation}
\end{proposition}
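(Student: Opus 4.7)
The plan is to split $\square_{\mathbf{g}_\lambda}P_\lambda f$ into the action of $\square_{\mathbf{g}_\lambda}$ on the underlying conormal ansatz $u\zeta$ (modulated by $\epsilon_0^{1/2}\lambda^{-1}P_\lambda T_\lambda$) plus a lower-order commutator piece. First I would write
\begin{equation*}
\square_{\mathbf{g}_\lambda}P_\lambda f
= \epsilon_0^{1/2}\lambda^{-1}P_\lambda T_\lambda\,\square_{\mathbf{g}_\lambda}(u\zeta)
+ \epsilon_0^{1/2}\lambda^{-1}\,[\square_{\mathbf{g}_\lambda},\,P_\lambda T_\lambda](u\zeta),
\end{equation*}
and handle each piece separately. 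The commutator $[\square_{\mathbf{g}_\lambda},P_\lambda T_\lambda]$ is effectively first order with coefficients given by $d\mathbf{g}_\lambda=S_{<\lambda}d\mathbf{g}$; after absorbing $S_{<\lambda}$ into the bilinear kernel and relabeling the surviving wave packet as $\tilde f$ with its frequency projector $\tilde P_\lambda$, this piece is exactly of the form $L(d\mathbf{g}, d\tilde P_\lambda \tilde f)$ in \eqref{np1}.

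Next I would compute $\square_{\mathbf{g}_\lambda}(u\zeta)$ explicitly. Using $\partial_\alpha u=(\delta^\alpha_\omega-\partial_\alpha\phi_{\omega,r})\delta'(x_\omega-\phi_{\omega,r})$ and the product rule, I obtain
\begin{equation*}
\square_{\mathbf{g}_\lambda}(u\zeta)=\psi_2\,\delta''(x_\omega-\phi_{\omega,r})+\psi_1\,\delta'(x_\omega-\phi_{\omega,r})+\psi_0\,\delta(x_\omega-\phi_{\omega,r}),
\end{equation*}
with
\begin{align*}
\psi_2 &= \zeta\cdot\mathbf{g}_\lambda^{\alpha\beta}(\delta^\alpha_\omega-\partial_\alpha\phi_{\omega,r})(\delta^\beta_\omega-\partial_\beta\phi_{\omega,r}),\\
\psi_1 &= -\zeta\cdot\mathbf{g}_\lambda^{\alpha\beta}\partial^2_{\alpha\beta}\phi_{\omega,r}+2\,\mathbf{g}_\lambda^{\alpha\beta}(\delta^\alpha_\omega-\partial_\alpha\phi_{\omega,r})\partial_\beta\zeta,\\
\psi_0 &= \mathbf{g}_\lambda^{\alpha\beta}\partial^2_{\alpha\beta}\zeta.
\end{align*}
The decisive cancellation is that $\Sigma_{\omega,r}$ is characteristic for $\mathbf{g}$, so the eikonal identity $\mathbf{g}^{\alpha\beta}(\delta^\alpha_\omega-\partial_\alpha\phi_{\omega,r})(\delta^\beta_\omega-\partial_\beta\phi_{\omega,r})=0$ lets me replace $\psi_2$ by $\zeta\cdot(\mathbf{g}_\lambda-\mathbf{g})^{\alpha\beta}(\delta^\alpha_\omega-\partial_\alpha\phi_{\omega,r})(\delta^\beta_\omega-\partial_\beta\phi_{\omega,r})$, trading one power of $\lambda$ for the smallness of $\mathbf{g}-\mathbf{g}_\lambda$.

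It remains to verify the scaled Sobolev bounds \eqref{np2}. For $\psi_0$ the estimate is immediate since every $x'_\omega$-derivative on $\zeta$ costs one factor of $a^{-1}=(\epsilon_0\lambda)^{1/2}$, exactly the scaling built into $H^{s_0-1}_{a,x'_\omega}$, so $\|\psi_0\|_{L^2_tH^{s_0-1}_{a,x'_\omega}}\lesssim\epsilon_0\lambda$. For $\psi_2$ the main input is the Littlewood--Paley estimate supplied by Proposition \ref{r1}, namely $\vert\kern-0.25ex\vert\kern-0.25ex\vert 2^j(\mathbf{g}^{\alpha\beta}-S_j\mathbf{g}^{\alpha\beta})\vert\kern-0.25ex\vert\kern-0.25ex\vert_{s_0-1,2,\Sigma_{\omega,r}}\lesssim\epsilon_2$, which after summing the frequency pieces $\geq\lambda$ and restricting to the support of $\zeta$ yields the required bound $\epsilon_0\lambda^{-1}$. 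For $\psi_1$ the difficult contribution is $\mathbf{g}_\lambda^{\alpha\beta}\partial^2_{\alpha\beta}\phi_{\omega,r}\cdot\zeta$; splitting it as $\mathbf{g}^{\alpha\beta}\partial^2_{\alpha\beta}\phi_{\omega,r}+(\mathbf{g}_\lambda-\mathbf{g})^{\alpha\beta}\partial^2_{\alpha\beta}\phi_{\omega,r}$, the first term encodes the trace of the second fundamental form of $\Sigma_{\omega,r}$ up to $d\mathbf{g}$-quadratic corrections and is controlled through the connection-coefficient bounds of Lemma \ref{chi}, while the second is handled by pairing the Littlewood--Paley decay of $\mathbf{g}-\mathbf{g}_\lambda$ with the tangential $H^{s_0}$-bound for $\phi_{\omega,r}$ supplied by Proposition \ref{r2}.

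The hard part will be reconciling the tangential regularity of $\phi_{\omega,r}$ and of $\mathbf{g}-\mathbf{g}_\lambda$ along $\Sigma_{\omega,r}$, available only as $L^2_t H^{s_0-1}_{x'_\omega}$ bounds from Section \ref{sec6}, with the anisotropic wave-packet localization on a tube of transverse radius $a=(\epsilon_0\lambda)^{-1/2}$ substantially larger than the wavelength $\lambda^{-1}$. The matching will rely on Sobolev embedding on the two-dimensional slice (where the hypothesis $s_0>2$ enters through $H^{s_0-1}\hookrightarrow C^\delta$) together with the observation that only $x'_\omega$-frequencies up to $a^{-1}$ contribute to the scaled norm, so the full $\lambda$ of the packet is compensated only by the derivatives that fall on $\zeta$. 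This asymmetry between the intrinsic regularity of $\Sigma_{\omega,r}$ and the phase-space scale of the packet is the technical heart of the wave-packet construction.
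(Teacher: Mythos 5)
Your proposal follows the same route as the paper: splitting $\square_{\mathbf{g}_\lambda}P_\lambda f$ into the commutator piece $[\square_{\mathbf{g}_\lambda},P_\lambda T_\lambda]$ (absorbed into $L(d\mathbf{g},d\tilde P_\lambda\tilde f)$) plus $P_\lambda T_\lambda\square_{\mathbf{g}_\lambda}(u\zeta)$, expanding the latter by Leibniz, exploiting the eikonal cancellation $\mathbf{g}^{\alpha\beta}\nu_\alpha\nu_\beta=0$ to trade $\mathbf{g}_\lambda$ for $\mathbf{g}_\lambda-\mathbf{g}$ in the $\delta''$-coefficient, and then closing via the Littlewood--Paley bound from Proposition \ref{r1}, the foliation regularity $G(\bv,\rho,h)\lesssim\epsilon_2$ from Proposition \ref{r2}, and the scaled-Sobolev observation $\|\cdot\|_{H^{s_0-1}_a}\lesssim\|\cdot\|_{H^{s_0-1}}$ for $a<1$ (which is where $s_0>2$ enters). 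Your $\psi_m$ formulas omit the lower-order $\partial_3\mathbf{g}_\lambda$-on-coefficient pieces that the paper records explicitly, but this is only schematic bookkeeping and does not affect the estimates.
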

\begin{proof}
	For brevity, we consider the case $\omega=(0,0,1)$. Then $x_\omega=x_3$, and $\bx'_\omega=\bx'$. We write
	\begin{equation}\label{js0}
		\square_{\mathbf{g}_\lambda} P_\lambda f
		= \lambda^{-1} ( [\square_{\mathbf{g}_\lambda}, P_\lambda T_\lambda]+P_\lambda T_\lambda \square_{\mathbf{g}_\lambda} )(uh).
	\end{equation}
	For the first term in \eqref{js0}, noting $\mathbf{g}\lambda$ supported at frequency $\leq \frac \lambda8$, then we can write
	\begin{equation*}
		[\square_{\mathbf{g}_\lambda}, P_\lambda T_\lambda]=[\square_{\mathbf{g}_\lambda}, P_\lambda T_\lambda]\tilde{P}_\lambda \tilde{T}_\lambda
	\end{equation*}
	for some multipliers $\tilde{P}_\lambda, \tilde{T}_\lambda$ which have the same properties as $P_\lambda, T_\lambda$. Therefore, by using the kernel bounds for $P_\lambda T_\lambda$, we conclude that
	\begin{equation*}
		[\square_{\mathbf{g}_\lambda}, P_\lambda T_\lambda]f=L(d\mathbf{g}, df).
	\end{equation*}
	For the second term in \eqref{js0}, we use the Leibniz rule
	\begin{equation}\label{js1}
		\square_{\mathbf{g}_\lambda}(u\zeta)=\zeta \square_{\mathbf{g}_\lambda} u+(\mathbf{g}^{\alpha \beta}_\lambda+\mathbf{g}^{\beta \alpha}_\lambda)\partial_\alpha u \partial_\beta \zeta + u \square_{\mathbf{g}_\lambda} \zeta.
	\end{equation}
	We let $\nu$ denote the conormal vector field along $\Sigma$, $\nu=dx_3-d\phi(t,\bx')$. In the following, we take the Greek indices $0 \leq \alpha, \beta \leq 2$.

	For the first term in \eqref{js1}, by calculation, we have
	\begin{equation*}
		\begin{split}
			\mathbf{g}_{\lambda}^{\alpha \beta} \partial_{\alpha \beta} u =& \mathbf{g}_{\lambda}^{\alpha \beta}(t,x',\phi)\nu_\alpha \nu_\beta \delta^{(2)}_{x_3-\phi}
			-2 (\partial_3 \mathbf{g}_{\lambda}^{\alpha \beta})(t,\bx',\phi) \nu_\alpha \nu_\beta \delta^{(1)}_{x_3-\phi}
			\\
			&+(\partial^2_3 \mathbf{g}_{\lambda}^{\alpha \beta})(t,\bx',\phi) \nu_\alpha \nu_\beta \delta^{(0)}_{x_3-\phi}- \mathbf{g}_{\lambda}^{\alpha \beta})(t,\bx',\phi) \partial_{\alpha \beta}\phi \delta^{(1)}_{x_3-\phi}
			\\
			&+ \partial_3\mathbf{g}_{\lambda}^{\alpha \beta})(t,\bx',\phi) \partial_{\alpha \beta}\phi \delta^{(0)}_{x_3-\phi}.
		\end{split}
	\end{equation*}
	Here, $\delta^{(m)}_{x_3-\phi}=(\partial^m \delta)(x_3-\phi) $. By Leibniz rule, we can take
	\begin{equation*}
		\begin{split}
			\psi_0&=\zeta \big\{ (\partial^2_3 \mathbf{g}_{\lambda}^{\alpha \beta})(t,\bx',\phi)\nu_\alpha \nu_\beta+ (\partial_3 \mathbf{g}_{\lambda}^{\alpha \beta})(t,\bx',\phi)\partial_{\alpha \beta}\phi  \big\},
			\\
			\psi_1&=\zeta \big\{ 2(\partial_3 \mathbf{g}_{\lambda}^{\alpha \beta})(t,\bx',\phi)\nu_\alpha \nu_\beta-  \mathbf{g}_{\lambda}^{\alpha \beta}(t,\bx',\phi)\partial_{\alpha \beta}\phi \big\},
			\\
			\psi_2&=\zeta( \mathbf{g}_{\lambda}^{\alpha \beta}-\mathbf{g}^{\alpha \beta})\nu_\alpha \nu_\beta,
		\end{split}
	\end{equation*}
	By using \eqref{G}, Proposition \ref{r1}, and Corollary \ref{vte}, we can conclude that this settings of $\psi_0, \psi_1,$ and $\psi_2$ satisfy the estimates \eqref{np2}. For the second term in \eqref{js0}, we have
	\begin{equation*}
		\begin{split}
			(\mathbf{g}^{\alpha \beta}_\lambda+\mathbf{g}^{\beta \alpha}_\lambda)\partial_\alpha u \partial_\beta \zeta=& \frac12\nu_\alpha (\mathbf{g}^{\alpha \beta}_\lambda+\mathbf{g}^{\beta \alpha}_\lambda)(t,\bx',\phi)\partial_\beta \zeta \delta^{(1)}_{x_3-\phi}
			\\
			&- \frac12\nu_\alpha \partial_3 (\mathbf{g}^{\alpha \beta}_\lambda+\mathbf{g}^{\beta \alpha}_\lambda)(t,\bx',\phi)\partial_\beta \zeta \delta^{(0)}_{x_3-\phi}.
		\end{split}
	\end{equation*}
	We then take
	\begin{equation*}
		\psi_0= \frac12\nu_\alpha \partial_3 (\mathbf{g}^{\alpha \beta}_\lambda+\mathbf{g}^{\beta \alpha}_\lambda)(t,\bx',\phi)\partial_\beta \zeta, \quad \psi_1= \frac12\nu_\alpha (\mathbf{g}^{\alpha \beta}_\lambda+\mathbf{g}^{\beta \alpha}_\lambda)(t,\bx',\phi)\partial_\beta \zeta.
	\end{equation*}
	By using \eqref{G}, Proposition \ref{r1}, and Corollary \ref{vte}, we can conclude that this settings of $\psi_0$ and $\psi_1$ satisfy the estimates \eqref{np2}.
	For the third term in \eqref{js0}, we can take
	\begin{equation*}
		\psi_0=\mathbf{g}^{\alpha \beta}_\lambda(t,\bx',\phi)\partial_{\alpha \beta}\zeta.
	\end{equation*}
	By using \eqref{G}, Proposition \ref{r1}, and Corollary \ref{vte}, we can conclude that this settings of $\psi_0$ satisfies the estimates \eqref{np2}.
\end{proof}
Therefore, a single wave packet is not sufficient for us to construct approximate solutions to a linear wave equation, so we need to consider the superposition of wave packets.
\subsubsection{Superpositions of wave packets}\label{aFs}
Firstly, let us introduce some notations. The index $\omega$ stands for the initial orientation of the wave packet at $t=-2$, which varies over a maximal collection of approximately $\epsilon_0^{-1}\lambda$ unit vectors separated by at least $\epsilon_0^{\frac12}\lambda^{-\frac12}$. For each $\omega$, we have the orthonormal coordinate system $(x_\omega, \bx'_\omega)$ of $\mathbb{R}^2$, where $x_\omega=x \cdot \omega$, and $\bx'_\omega$ are projective along $\omega$.

We denote $\mathbb{R}^3$ by a parallel tiling of rectangles, with length $(8\lambda)^{-1}$ in the $x_{\omega}$ direction, and $(4\epsilon_0 \lambda)^{-\frac12}$ in the other directions $\bx'_{\omega}$. The index $j$ corresponds to a counting of the rectangles in this decomposition. Let $R_{\omega,j}$ denote the collection of the doubles of these rectangles, and $\Sigma_{\omega,j}$ denote the element of the $\Sigma_{\omega}$ ($\Sigma_{\omega}=\cup_{r} \Sigma_{\omega,r}$) foliation upon which $R_{\omega,j}$ is centered. Let $\gamma_{\omega,j}$ denote the null geodesic contained in $\Sigma_{\omega,j}$ which passes through the center of $R_{\omega,j}$ at time $t=-2$.

We let $T_{\omega,j}$ be the set
\begin{equation}\label{twj}
	T_{\omega,j}=\Sigma_{\omega,j} \cap \{ |\bx'_{\omega}-\gamma_{\omega,j}| \leq (\epsilon_0 \lambda)^{-\frac12}\} .
\end{equation}
By \eqref{600} and \eqref{606}, then the estimate
\begin{equation*}
	|dr_{\theta}-(\theta \cdot d \bx-dt)| \lesssim \epsilon_1,
\end{equation*}
holds pointwise uniformly on $[-2,2]\times \mathbb{R}^3$. This also implies that
\begin{equation}\label{pr0}
	| \phi_{\theta,r}(t,\bx'_{\theta})-\phi_{\theta,r'}(t,\bx'_{\theta})-(r-r')| \lesssim \epsilon_1|r-r'|.
\end{equation}
On the other hand, if we set $\bar{\rho}(t)=\| d \mathbf{g} \|_{C^\delta_x}$, then \eqref{502} tells us
\begin{equation}\label{pr1}
	\| d^2_{\bx'_{\omega}}\phi_{\omega,r}(t,\bx'_{\omega})-d^2_{\bx'_{\omega}}\phi_{\omega,r'}(t,\bx'_{\omega})\|_{L^\infty_{\bx'_{\omega}}} \lesssim \epsilon_2+ \bar{\rho}(t).
\end{equation}
By using \eqref{pr0} and \eqref{pr1}, we get
\begin{equation}\label{pr2}
	\| d_{\bx'_{\omega}}\phi_{\omega,r}(t,\bx'_{\omega})-d_{\bx'_{\omega}}\phi_{\omega,r'}(t,\bx'_{\omega})\|_{L^\infty_{\bx'_{\omega}}} \lesssim (\epsilon_2+ \bar{\rho}(t))^{\frac12}|r-r'|^{\frac12}.
\end{equation}
For $dx_{\omega}-d\phi_{\omega,r}$ is null and also $d \mathbf{g} \leq \bar{\rho}(t)$, this also implies H\"older-$\frac12$ bounds on $d\phi_{\omega,r}$. So we suppose that $(t,x)\in \Sigma_{\omega,r}$ and $(t,y)\in \Sigma_{\omega,r'}$, that $|\bx'_{\omega}-\by'_{\omega}| \leq 2(\epsilon_0 \lambda)^{-\frac12}$, and that $|r-r'|\leq 2 \lambda^{-1}$. Using \eqref{601}, we can obtain
\begin{equation*}
	|l_{\omega}(t,x)-l_{\omega}(t,y)| \lesssim \epsilon_0^{\frac12}\lambda^{-\frac12}+\epsilon_0^{-\frac12}\bar{\rho}(t)\lambda^{-\frac12}.
\end{equation*}
Due to $\dot{\gamma}_{\omega}=l_{\omega}$, and $\|\bar{\rho}\|_{L^2_t} \lesssim \epsilon_0$, so any geodesic in $\Sigma_{\omega}$ which intersects a slab $T_{\omega,j}$ should be contained in the similar slab of half the scale.

We are ready to introduce a lemma about a superpositions of wave packets from a certain fixed time.
\begin{Lemma}\label{SWP}
	Let $0<\mu < \delta$. Let a scalar function $\bar{v}(t,\bx)$ be formulated by
	\begin{equation}\label{swp}
		\bar{v}(t,\bx)=\epsilon_0^{\frac12}P_{\lambda} \sum_{\omega,j}T_\lambda(\psi^{\omega,j}\delta_{\bx_{\omega}-\phi_{\omega,j}(t,\bx'_{\omega})}).
	\end{equation}
	Set $\bar{\rho}(t)=\| d \mathbf{g} \|_{C^\delta_x}$ and $a=(\epsilon_0 \lambda)^{-\frac12}$. Then we have
	\begin{equation}\label{swp0}
		\| \bar{v}(t) \|^2_{L^2_x}\lesssim \sum_{\omega,j}\| \psi^{\omega,j}\|^2_{H^{1+\mu}_a}, \qquad \qquad \ \ \text{if} \ \ \bar{\rho}(t) \leq \epsilon_0,
	\end{equation}
	and
	\begin{equation}\label{swp1}
		\| \bar{v}(t) \|^2_{L^2_x}\lesssim \epsilon_0^{-1} \bar{\rho}(t)  \sum_{\omega,j}\| \psi^{\omega,j}\|^2_{H^{1+\mu}_a},  \qquad \text{if} \ \ \bar{\rho}(t) \geq \epsilon_0 .
	\end{equation}
	\begin{proof}
		This proof follows ideas of Smith-Tataru \cite{ST} (Lemma 8.5 and Lemma 8.6 on page 340-345). Therefore we omit the details here.
	\end{proof}
	\begin{remark}
		By \eqref{swp0} and \eqref{swp1}, we get
		\begin{equation}\label{swp3}
			\| \bar{v}(t) \|^2_{L^2_x}\lesssim \left(1+\epsilon_0^{-1} \bar{\rho}(t)\right)  \sum_{\omega,j}\| \psi^{\omega,j}\|^2_{H^{1+\mu}_a}.
		\end{equation}
	\end{remark}
\end{Lemma}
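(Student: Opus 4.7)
The plan is to expand $\|\bar{v}(t)\|^2_{L^2_x}$ as a double sum over pairs of wave packets and bound the cross-term contributions using angular and spatial near-orthogonality, along the lines of Smith--Tataru's Lemmas~8.5--8.6. Concretely, after setting $\psi_{\omega,j}^{\#} := \psi^{\omega,j}\delta_{x_{\omega}-\phi_{\omega,j}(t,x'_{\omega})}$, Plancherel gives
\begin{equation*}
\|\bar{v}(t)\|^2_{L^2_x}=\epsilon_0\sum_{\omega,j}\sum_{\omega',j'}\bigl\langle P_{\lambda}T_{\lambda}\psi_{\omega,j}^{\#},\,P_{\lambda}T_{\lambda}\psi_{\omega',j'}^{\#}\bigr\rangle,
\end{equation*}
so the task reduces to a Schur-type estimate on the interaction kernel $K((\omega,j),(\omega',j'))$ coming from the $(P_{\lambda}T_{\lambda})^{*}P_{\lambda}T_{\lambda}$ convolution applied to two conormal distributions on $\Sigma_{\omega,j}$ and $\Sigma_{\omega',j'}$.

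The first step is to handle angular separation. The multiplier $P_{\lambda}T_{\lambda}$ essentially localizes each packet to a Fourier sector of aperture $(\epsilon_0\lambda)^{1/2}/\lambda=\epsilon_0^{1/2}\lambda^{-1/2}$ around the codirection $\omega$, so when $|\omega-\omega'|\gg\epsilon_0^{1/2}\lambda^{-1/2}$ the Fourier supports are effectively disjoint and the off-diagonal contributions are negligible (paying an acceptable $\mu$-order loss that is absorbed by using the slightly stronger rescaled norm $H^{1+\mu}_a$ with $\mu<\delta$). This reduces the analysis to pairs within a single angular cap.

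Within a fixed angular cap, the second step is to exploit the spatial structure of the tubes $T_{\omega,j}$ introduced in \eqref{twj}. The leaves $\Sigma_{\omega,r}$ are separated in the $x_{\omega}$-direction by $|r-r'|$ up to a small multiplicative error by \eqref{pr0}, and the $C^{1}$ geometry of the foliation is controlled by \eqref{pr1}--\eqref{pr2}. When $\bar\rho(t)\le\epsilon_0$, the bound \eqref{pr1} says $d_{x'_{\omega}}\phi$ is nearly constant in $r$, the leaves look like approximately parallel hyperplanes on the scale $(\epsilon_0\lambda)^{-1/2}$ of a tube cross-section, and a geodesic intersecting $T_{\omega,j}$ is contained in the doubled tube by the argument below \eqref{pr2}. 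The standard $TT^{*}$ computation then shows that $T_{\omega,j}$ and $T_{\omega,j'}$ are almost disjoint; summing by Cotlar--Stein or Schur's test yields \eqref{swp0}.

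The main obstacle is the regime $\bar\rho(t)\ge\epsilon_0$, because then the leaves $\Sigma_{\omega,r}$ can bend on the tube cross-section scale and nearby tubes $T_{\omega,j}, T_{\omega,j'}$ may genuinely overlap. Here the input is the H\"older-$\tfrac12$ estimate \eqref{pr2}: the maximum number of tubes in a fixed angular cap crossing a given spatial ball is controlled by $\epsilon_0^{-1}\bar\rho(t)$, since $|d\phi_{\omega,r}-d\phi_{\omega,r'}|\lesssim (\epsilon_2+\bar\rho(t))^{1/2}|r-r'|^{1/2}$ compared with the transverse scale $(\epsilon_0\lambda)^{-1/2}$. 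Applying Schur's test with this overlap bound produces exactly the loss factor $\epsilon_0^{-1}\bar\rho(t)$ in \eqref{swp1}; the extra $\mu$-regularity in the norm $H^{1+\mu}_a$ is what closes the logarithmic losses accumulated over angular summation in this rough regime.
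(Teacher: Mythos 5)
Your sketch captures the overall skeleton of the Smith--Tataru argument that the paper cites (double-sum expansion, angular near-orthogonality of the packets' Fourier supports, near-disjointness of tubes within a fixed angular cap, and the need to weaken the estimate in the rough regime). Since the paper's proof consists only of a pointer to \cite{ST} (Lemmas~8.5--8.6), a reconstruction at this level is exactly what one would have to supply.

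However, the step that produces the factor $\epsilon_0^{-1}\bar\rho(t)$ is not correct as written. You attribute it to counting tubes that overlap a fixed spatial ball, using the H\"older-$\tfrac12$ bound \eqref{pr2}. But running that estimate gives the \emph{wrong monotonicity}: two slabs at labels $r,r'$ drift apart transversally by $\sim(\bar\rho)^{1/2}|r-r'|^{1/2}$, so they stay within $(\epsilon_0\lambda)^{-1/2}$ of each other only for $|r-r'|\lesssim(\epsilon_0\lambda\bar\rho)^{-1}$; with $r$-spacing $\lambda^{-1}$, this yields an overlap count $\sim(\epsilon_0\bar\rho)^{-1}$, which \emph{decreases} as $\bar\rho$ grows, whereas the loss factor in \eqref{swp1} increases in $\bar\rho$. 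Moreover \eqref{pr0} already shows the smeared slabs at distinct $r$-labels are spaced by $\approx|r-r'|$, so spatial slab overlap is $O(1)$ regardless of $\bar\rho$. In short, \eqref{pr2} controls \emph{separation}, not the loss factor.

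The actual source of the $\epsilon_0^{-1}\bar\rho(t)$ factor is the \emph{angular spreading of the Fourier support} driven by the curvature bound \eqref{502}. Write the packet's phase as $\xi_\omega\phi_{\omega,r}(x')+\xi'\cdot x'$ with $\xi_\omega\sim\lambda$; over the tube cross-section $(\epsilon_0\lambda)^{-1/2}$, the stationary-phase value $\xi'=-\xi_\omega\nabla\phi$ varies by $\lambda\,\|\nabla^2\phi\|\,(\epsilon_0\lambda)^{-1/2}\sim\lambda\bar\rho(\epsilon_0\lambda)^{-1/2}$, to be compared with the baseline $\xi'$-spread $(\epsilon_0\lambda)^{1/2}$ coming from $\widehat\zeta$. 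Their ratio is exactly $\bar\rho/\epsilon_0$, so when $\bar\rho\ge\epsilon_0$ each packet's Fourier transform occupies $\sim\epsilon_0^{-1}\bar\rho$ adjacent angular caps rather than one, and the Schur/Cotlar--Stein count picks up precisely that factor. (Equivalently, on a cross-section of side $(\epsilon_0\lambda)^{-1/2}$ the hypersurface $\Sigma_{\omega,r}$ deviates from its tangent plane by $\bar\rho(\epsilon_0\lambda)^{-1}>\lambda^{-1}$, so one must subdivide each tube into $\epsilon_0^{-1}\bar\rho$ sub-tubes of side $(\bar\rho\lambda)^{-1/2}$ to restore flatness at the slab thickness.) Relatedly, the role of the condition $\mu<\delta$ is to ensure that the loss one pays for the off-diagonal tails in the almost-orthogonality argument is strictly dominated by the $C^\delta$-modulus of $d\mathbf{g}$, not primarily to absorb a logarithm in angular summation. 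You should replace the tube-overlap count with the Fourier-spreading/subdivision argument before claiming the proof is complete.
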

\begin{proposition}[\cite{ST}]\label{szy}
	Let $f=\sum_{\omega,j}a_{\omega,j}f^{\omega,j}$, where $f^{\omega,j}$ are normalized wave packets supported in $T_{\omega,j}$. Then we have
	\begin{equation}\label{ese}
		\| d P_\lambda f\|_{L^\infty_t L^2_x} \lesssim (\sum_{\omega,j} a^2_{\omega,j})^{\frac12},
	\end{equation}
	and
	\begin{equation}\label{ese1}
		\| \square_{\mathbf{g}_{\lambda}} P_\lambda f \|_{L^1_t L^2_x} \lesssim \epsilon_0 (\sum_{\omega,j} a^2_{\omega,j})^{\frac12}.
	\end{equation}
\end{proposition}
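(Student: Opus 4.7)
My plan is to prove both estimates simultaneously via a bootstrap argument, using Proposition \ref{np} to reduce $\square_{\mathbf{g}_\lambda}$ applied to a single packet to manageable pieces, and using Lemma \ref{SWP} to sum the pieces in $L^2_x$. The first step is to rewrite
\begin{equation*}
P_\lambda f=\epsilon_0^{1/2}\,P_\lambda\sum_{\omega,j}T_\lambda\bigl(\psi^{\omega,j}\,\delta_{x_\omega-\phi_{\omega,j}}\bigr),
\qquad \psi^{\omega,j}(t,x'_\omega):=\lambda^{-1}a_{\omega,j}\,\zeta_0\bigl((\epsilon_0\lambda)^{1/2}(x'_\omega-\gamma_{\omega,j}(t))\bigr),
\end{equation*}
so that it has precisely the shape of $\bar v$ in \eqref{swp}. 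With the natural scale $a=(\epsilon_0\lambda)^{-1/2}$ matching the transverse profile width, the rescaling $\psi^{\omega,j}(a\cdot)$ turns the packet amplitude into a fixed profile, and consequently $\|\psi^{\omega,j}\|_{H^{1+\mu}_a}\lesssim\lambda^{-1}|a_{\omega,j}|$ for any $\mu<\delta$.

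For \eqref{ese} I would first handle the initial time $t=-2$: the packets $\{f^{\omega,j}\}$ are then supported in the disjoint doubled rectangles $R_{\omega,j}$ and are frequency-localized to $|\xi|\sim\lambda$, so they are almost orthogonal in $L^2_x$, giving $\|dP_\lambda f(-2,\cdot)\|_{L^2_x}^2\lesssim\sum_{\omega,j}a_{\omega,j}^2$ (the $\lambda$ from $d$ cancelling the $\lambda^{-1}$ in the packet normalization). To propagate to $t\in[-2,2]$, I would run the standard energy estimate for the \emph{smooth}-coefficient operator $\square_{\mathbf{g}_\lambda}$,
\begin{equation*}
\|dP_\lambda f\|_{L^\infty_{[-2,T]}L^2_x}\lesssim\|dP_\lambda f(-2,\cdot)\|_{L^2_x}+\|\square_{\mathbf{g}_\lambda}P_\lambda f\|_{L^1_{[-2,T]}L^2_x},
\end{equation*}
and close the loop using \eqref{ese1}. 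For \eqref{ese1} I would apply Proposition \ref{np} packet-by-packet and sum, splitting
\begin{equation*}
\square_{\mathbf{g}_\lambda}P_\lambda f=\sum_{\omega,j}a_{\omega,j}L(d\mathbf{g},d\tilde P_\lambda\tilde f^{\omega,j})+\epsilon_0^{1/2}\lambda^{-1}P_\lambda T_\lambda\sum_{m=0}^{2}\sum_{\omega,j}a_{\omega,j}\psi^{\omega,j}_m\,\delta^{(m)}_{x_\omega-\phi_{\omega,j}}.
\end{equation*}
The first piece is controlled by $\|d\mathbf{g}\|_{L^2_tL^\infty_x}\|d\tilde P_\lambda\tilde f\|_{L^2_tL^2_x}\lesssim\epsilon_0\bigl(\sum a_{\omega,j}^2\bigr)^{1/2}$ after invoking \eqref{ese} for the companion packet sum $\tilde f=\sum a_{\omega,j}\tilde f^{\omega,j}$. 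The second piece, after applying the pointwise-in-$t$ bound \eqref{swp3} of Lemma \ref{SWP} with the coefficient estimate \eqref{np2}, produces
\begin{equation*}
\bigl\|\cdots\bigr\|_{L^2_x}^2\lesssim\epsilon_0\lambda^{-2}\bigl(1+\epsilon_0^{-1}\bar\rho(t)\bigr)\sum_{\omega,j}\bigl(\epsilon_0\lambda\bigr)^{2(1-m)}\|\psi^{\omega,j}_m/a_{\omega,j}\|_{H^{s_0-1}_a}^{2}\cdot a_{\omega,j}^{2},
\end{equation*}
(with the dimensional factors reconciling), and integrating in $t$ together with $\|\bar\rho\|_{L^1_{[-2,2]}}\lesssim\epsilon_0$ yields the desired $\epsilon_0\bigl(\sum a^2_{\omega,j}\bigr)^{1/2}$.

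The hard part will be closing the bootstrap: the $L(d\mathbf{g},d\tilde P_\lambda\tilde f)$ term in the bound for $\square_{\mathbf{g}_\lambda}P_\lambda f$ requires the $L^\infty_tL^2_x$ control of the \emph{derivative} of the companion packet sum, i.e.\ exactly the estimate \eqref{ese} that we are trying to prove. I plan to set $M(T)=\|dP_\lambda f\|_{L^\infty_{[-2,T]}L^2_x}+\epsilon_0^{-1}\|\square_{\mathbf{g}_\lambda}P_\lambda f\|_{L^1_{[-2,T]}L^2_x}$, combine the two steps above into $M(T)\lesssim\bigl(\sum a_{\omega,j}^2\bigr)^{1/2}+\epsilon_0\,M(T)$, and absorb the $\epsilon_0\,M(T)$ term using the smallness built into $\mathcal H$. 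The remaining technical point is verifying that the coefficients $\psi^{\omega,j}_m$ arising from the sum satisfy the hypotheses of Lemma \ref{SWP} uniformly in $(\omega,j)$; this relies on the geometric control \eqref{pr0}--\eqref{pr2} of the foliation together with the characteristic energy bounds $\vert\kern-0.25ex\vert\kern-0.25ex\vert\mathbf{g}-\mathbf{m}\vert\kern-0.25ex\vert\kern-0.25ex\vert_{s_0,2,\Sigma_{\omega,r}}\lesssim\epsilon_2$ from Proposition \ref{r1}, and on the algebra property of $H^{s_0-1}_{x'}(\Sigma^t)$ used in Lemma \ref{chi}.
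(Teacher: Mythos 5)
Your overall architecture — use Proposition \ref{np} to split $\square_{\mathbf{g}_\lambda}P_\lambda f$, use Lemma \ref{SWP} for the $\delta$-layer piece, and upgrade to $L^\infty_t L^2_x$ via energy estimates — matches the paper's. But the bootstrap you set up to close the apparent circularity does not actually close, and the paper's proof shows the circularity is not there in the first place.

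The gap is in the step where you bound $L(d\mathbf{g},d\tilde P_\lambda\tilde f)$. You invoke \eqref{ese} ``for the companion packet sum $\tilde f$,'' but your bootstrap quantity $M(T)=\|dP_\lambda f\|_{L^\infty_{[-2,T]}L^2_x}+\epsilon_0^{-1}\|\square_{\mathbf{g}_\lambda}P_\lambda f\|_{L^1_{[-2,T]}L^2_x}$ controls $f$, not $\tilde f$. Proposition \ref{np} produces a \emph{different} normalized packet $\tilde f^{\omega,j}$ for each $(\omega,j)$, so $\tilde f=\sum a_{\omega,j}\tilde f^{\omega,j}$ is a new superposition. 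To bound $\|d\tilde P_\lambda\tilde f\|_{L^2_tL^2_x}$ by $\epsilon_0 M(T)$ one would have to run a parallel bootstrap for $\tilde f$, and then for the $\tilde{\tilde f}$ produced when you expand $\square_{\mathbf{g}_\lambda}$ on that, and so on — the inequality $M(T)\lesssim(\sum a^2)^{1/2}+\epsilon_0M(T)$ as written is not self-improving.

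The way to break this (which is the paper's route) is to observe that the $L^2_tL^2_x$ estimate
\[
\|dP_\lambda g\|_{L^2_{[-2,2]}L^2_x}\lesssim\Bigl(\sum_{\omega,j}a_{\omega,j}^2\Bigr)^{1/2}
\]
holds \emph{unconditionally} for \emph{any} superposition $g$ of normalized wave packets — no bootstrap, no $\square_{\mathbf{g}_\lambda}$, no energy estimate. Lemma \ref{SWP} gives, at each fixed time, $\|\partial P_\lambda g(t)\|^2_{L^2_x}\lesssim(1+\epsilon_0^{-1}\bar\rho(t))\sum a_{\omega,j}^2$ (by replacing $P_\lambda$ by $\lambda^{-1}\partial P_\lambda$ and taking $\psi^{\omega,j}=a_{\omega,j}\zeta^{\omega,j}$), and the corresponding bound for $\partial_t$ follows since $\dot\gamma,\,\partial_t\phi_{\omega,j}\in L^\infty_t$. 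Integrating in $t$ and using $\|\bar\rho\|_{L^1_{[-2,2]}}\lesssim\epsilon_0$ gives the $L^2_tL^2_x$ bound. Because this applies to $\tilde f$ exactly as to $f$, the first piece $L(d\mathbf{g},d\tilde P_\lambda\tilde f)$ is bounded by $\|d\mathbf{g}\|_{L^2_tL^\infty_x}\|d\tilde P_\lambda\tilde f\|_{L^2_tL^2_x}\lesssim\epsilon_0(\sum a^2)^{1/2}$ with no circularity, giving \eqref{ese1} directly. Then the energy estimate upgrades the $L^2_tL^2_x$ control of $dP_\lambda f$ to $L^\infty_tL^2_x$, which is \eqref{ese}. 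Your almost-orthogonality argument at $t=-2$ is a valid alternative to the paper's implicit use of a good time slice here, but it does not repair the bootstrap; the essential missing observation is that the $L^2_tL^2_x$ bound from Lemma \ref{SWP} is free.
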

\begin{proof}
	We first prove a weaker estimate comparing with \eqref{ese}
	\begin{equation}\label{ese2}
		\| d P_\lambda f\|_{L^2_t L^2_x} \lesssim (\sum_{\omega,j} a^2_{\omega,j})^{\frac12}.
	\end{equation}
	By using \eqref{swp3} and replacing $P_\lambda$ by $\lambda^{-1}\partial P_\lambda$, and $\psi^{\omega,j}=a_{\omega,j}\zeta^{\omega,j}$, we have
	\begin{equation*}
		\| \partial P_\lambda f(t)\|^2_{L^2_x} \lesssim (1+\epsilon_0^{-1}\bar{\rho}(t)) \sum_{\omega,j} a^2_{\omega,j}.
	\end{equation*}
	Due to the fact $\|\bar{\rho}\|_{L^2_t} \lesssim \epsilon_0$, we can see that
	\begin{equation}\label{ese3}
		\| \partial P_\lambda f\|^2_{L^2_t L^2_x} \lesssim \sum_{\omega,j} a^2_{\omega,j} .
	\end{equation}
	We also need to get the similar estimate for the time derivatives. We can calculate
	\begin{equation*}
		\partial_t \zeta = \dot{\gamma}(t) (\epsilon_0 \lambda)^{\frac12} \tilde{\zeta}, \quad \partial_t \delta(x_\omega-\phi_{\omega,j})=\partial_t \phi_{\omega,j} \delta^{(1)}(x_\omega-\phi_{\omega,j}).
	\end{equation*}
	For $\dot{\gamma} \in L^\infty_t$ and $\partial_t \phi_{\omega,j} \in L^\infty_t$, then we have
	\begin{equation}\label{ese4}
		\| \partial_t P_\lambda f\|^2_{L^2_t L^2_x} \lesssim \sum_{\omega,j} a^2_{\omega,j} .
	\end{equation}
	Combining \eqref{ese3} and \eqref{ese4}, we have proved \eqref{ese2}. To prove \eqref{ese1}, we use the formula \eqref{np1}. Considering the right hand of \eqref{np1}, by using \eqref{ese2}, we can bound the first term by
	\begin{equation}
		\| L(d\mathbf{g}, d \tilde{P}_{\lambda} \tilde{f}) \|_{L^1_tL^2_x} \lesssim \|d\mathbf{g}\|_{L^2_tL^\infty_x} \|d \tilde{P}_{\lambda} \tilde{f} \|_{L^2_tL^2_x} \lesssim \epsilon_0 (\sum_{\omega,j} a^2_{\omega,j})^{\frac12}.
	\end{equation}
	It only remains for us to estimate the second right term on \eqref{np1}. If we set
	\begin{equation*}
		\Phi=\epsilon_0^{\frac12} P_\lambda T_\lambda \left(\sum_{\omega,j}a_{\omega,j}\cdot \sum_{m=0,1,2}\psi^{\omega,j}_m \delta^{(m)}_{x_{\omega}-\phi_{\omega,j}} \right),
	\end{equation*}
	and use \eqref{np2}, then we have
	\begin{equation*}
		\begin{split}
\| \Phi \|^2_{L^2_x}=&(1+\bar{\rho}(t)\epsilon_0^{-1}) \sum_{\omega,j}a^2_{\omega,j} \sum_{m=0,1,2} \lambda^{m-1}\|\psi^{\omega,j}_m (t) \|^2_{H^{1+\mu}_a}
\\
\lesssim  &(1+\bar{\rho}(t)\epsilon_0^{-1}) \epsilon_0^2  \sum_{\omega,j}a^2_{\omega,j},
\end{split}
	\end{equation*}
	we therefore get
	\begin{equation*}
		\begin{split}
			 \| \Phi \|_{L^1_t L^2_x}
			\lesssim & \epsilon_0 (\sum_{\omega,j} a^2_{\omega,j})^{\frac12} \left(\int^2_{-2}[1+\bar{\rho}(t)\epsilon_0^{-1}]^{\frac12}dt \right)
			\\
			\lesssim & \epsilon_0 (\sum_{\omega,j} a^2_{\omega,j})^{\frac12} \left(\int^2_{-2}[1+\bar{\rho}(t)\epsilon_0^{-1}]dt \right)^{\frac12}
\\
			\lesssim & \epsilon_0 (\sum_{\omega,j} a^2_{\omega,j})^{\frac12} .
		\end{split}
	\end{equation*}
	Hence, the second right term on \eqref{np1} can be estimated by $\epsilon_0 (\sum_{\omega,j} a^2_{\omega,j})^{\frac12}$. So we have proved \eqref{ese1}. Using \eqref{ese2} and \eqref{ese1}, and classical energy estimates for linear wave equation, we obtain \eqref{ese}.
\end{proof}
\subsubsection{Matching the initial data}
Although we have constructed the approximate solutions using superpositions of normalized wave packets, we also need to complete this construction, i.e. matching the initial data for this type of solutions. Since the metric\footnote{Please see \eqref{boldg}.} $\mathbf{g}$ equals to the Minkowski metric for $t \in [-2,-\frac32]$, so it's natural for us to work with wave packets near $t=-2$ for the Minkowski wave operator. We refer to the construction from \cite[Proposition 8.7, page 346]{ST}
or Smith \cite[page 815, Section 4]{Sm}.
\begin{proposition}[\cite{ST}]\label{szi}
	Given any initial data $(f_0,f_1) \in H^1 \times L^2$, there exists a function of the form
	\begin{equation*}
		f=\sum_{\omega,j}a_{\omega,j}f^{\omega,j},
	\end{equation*}
	where the function $f^{\omega,j}$ are normalized wave packets, such that
	\begin{equation*}
		P_\lambda f(-2)=P_\lambda f_0, \quad \partial_t P_\lambda f(-2)=P_\lambda f_1.
	\end{equation*}
	Furthermore,
	\begin{equation*}
		\sum_{\omega,j}a_{\omega,j}^2 \lesssim \| f_0 \|^2_{H^1}+ \| f_1 \|^2_{L^2}.
	\end{equation*}
\end{proposition}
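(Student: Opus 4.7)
The plan is to exploit the fact, already built into the construction \eqref{boldg}, that $\mathbf{g} = \mathbf{m}$ on $[-2,-3/2] \times \mathbb{R}^3$, so at $t=-2$ matching the data reduces to a flat-space problem. First I would reduce to the case where $(f_0,f_1)$ are frequency-localized to $|\xi| \sim \lambda$, since $P_\lambda$ appears on both sides of the matching condition and the normalized wave packets $f^{\omega,j}$ satisfy $P_\lambda f^{\omega,j} = f^{\omega,j}$ modulo harmless Schwartz tails. Next I would perform an angular Littlewood--Paley decomposition: take a smooth partition of unity $\{q_\omega(\xi/|\xi|)\}$ on $\mathbb{S}^2$ subordinate to caps of radius $(\epsilon_0/\lambda)^{1/2}$ centred at the directions $\omega$ in our index set, and set $\widehat{f_0^\omega} = q_\omega \widehat{f_0}$, $\widehat{f_1^\omega} = q_\omega \widehat{f_1}$. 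Almost-orthogonality in $\omega$ gives $\sum_\omega (\|f_0^\omega\|_{H^1}^2 + \|f_1^\omega\|_{L^2}^2) \lesssim \|f_0\|_{H^1}^2 + \|f_1\|_{L^2}^2$, so it suffices to match each $(f_0^\omega, f_1^\omega)$ by a sum over $j$ of wave packets with orientation $\omega$, with square-summable coefficients controlled by that piece of the data.

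For a fixed $\omega$, since $\mathbf{g} = \mathbf{m}$ at $t=-2$, the null geodesic defining $\Sigma_{\omega,r}$ is just the straight line $(t,x)$ with $x \cdot \omega - t = r - 2$, so $\phi_{\omega,r}(-2,x'_\omega) = r-2$ and the normalized wave packet restricted to $t=-2$ collapses to
\[
f^{\omega,j}(-2,x) \;=\; \epsilon_0^{1/2}\lambda^{-1} T_\lambda\bigl(\delta(x_\omega - c_{\omega,j})\,\zeta_0((\epsilon_0\lambda)^{1/2}(x'_\omega - y_{\omega,j}))\bigr),
\]
which in Fourier space is a bump of size $\sim \lambda$ (in $\xi_\omega$) by $\sim (\epsilon_0 \lambda)^{1/2}$ (in $\xi'_\omega$) modulated by a Gaussian-type envelope, centred at the point of $R_{\omega,j}$ closest to $\gamma_{\omega,j}(-2)$. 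Splitting $f_0^\omega, f_1^\omega$ into outgoing/incoming pieces using the half-wave projectors $\Pi_\pm$ for the Minkowski wave operator in direction $\omega$, I would express each of $\Pi_\pm(f_0^\omega, f_1^\omega)$ as a superposition $\sum_j a^\pm_{\omega,j} f^{\omega,j}_\pm$. This is the standard fact that the translates $\{f^{\omega,j}_\pm(-2,\cdot)\}_j$ form a tight frame for the subspace of $L^2(\mathbb{R}^3)$ that is frequency-localized at scale $\lambda$ in $\xi_\omega$ and at scale $(\epsilon_0\lambda)^{1/2}$ in $\xi'_\omega$ and travelling in the $\pm \omega$ direction; the coefficients are obtained by pairing against the dual frame (essentially the same packets after a negligible correction), and the usual TT$^*$ argument yields the Bessel bound
\[
\sum_j |a^\pm_{\omega,j}|^2 \;\lesssim\; \|\Pi_\pm(f_0^\omega,f_1^\omega)\|_{H^1 \times L^2}^2 \;\lesssim\; \|f_0^\omega\|_{H^1}^2 + \|f_1^\omega\|_{L^2}^2.
\]
Summing $a_{\omega,j} = a^+_{\omega,j} + a^-_{\omega,j}$ over all $\omega$ and using the angular orthogonality from the previous step yields the required estimate.

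The step that I expect to be the main obstacle is the Bessel/frame inequality for the packets at $t=-2$. Although the underlying picture is standard (second-dyadic decomposition together with tiling by tubes of the appropriate anisotropic dimensions), one has to verify that the normalization $\epsilon_0^{1/2}\lambda^{-1}$ in the definition of $f^{\omega,j}$ is compatible with the $H^1 \times L^2$-norm on the data and that the constants do not absorb powers of $\epsilon_0$ in the wrong direction; this is precisely where the multiplier $T_\lambda$ and the $(\epsilon_0\lambda)^{1/2}$ transverse scale conspire to give a dimensionless $\ell^2$ bound. Everything else --- the angular orthogonality of the $q_\omega$'s, the outgoing/incoming splitting via $\Pi_\pm$, and the reduction by $P_\lambda$ --- follows from flat-space Fourier analysis and may be quoted directly from Smith \cite{Sm} and Smith--Tataru \cite{ST}, to which we refer for the detailed verification.
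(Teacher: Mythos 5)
Your sketch is essentially the argument the paper defers to: the paper itself gives no proof of Proposition~\ref{szi}, it only observes that $\mathbf{g}=\mathbf{m}$ on $[-2,-\tfrac32]$ and cites Smith--Tataru \cite[Prop.~8.7]{ST} and Smith \cite[\S4]{Sm} for the construction. The route you propose --- reducing by $P_\lambda$, decomposing the angular variable into caps of width $(\epsilon_0/\lambda)^{1/2}$, observing that at $t=-2$ the packets collapse to flat Minkowski packets, splitting into $\pm\omega$-outgoing pieces via half-wave projectors, and invoking a dual-frame/Bessel inequality for the anisotropic translates to obtain exact reconstruction with square-summable coefficients --- is precisely the route taken in those references; the normalization check you flag at the end (whether the factor $\epsilon_0^{1/2}\lambda^{-1}$ is compatible with the $H^1\times L^2$ scale of the data) is indeed the step that requires care and is the one carried out explicitly in \cite{ST}.
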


\begin{remark}
Considering the definition of a wave packet, together with the regularity of the $\Sigma_{\omega,r}$, then we can use the quantities $\phi_{\omega,r}$ and $\gamma_{\omega,r}$ to keep the type of $f$ for $t\in[-2,2]$.

If $t_0=-2$ in Proposition \eqref{AA1}, then the function $f$ is the approximate solution matching the initial data by using Proposition \ref{szi}.

If $t_0\in(-2,2)$, then we can use an iteration process as follows. Basically, we hope to find the exact solution $\Phi$ to the Cauchy problem
\begin{equation*}
  \begin{cases}
  & \square_{\mathbf{g}_{\lambda}} \Phi=0, \qquad (t,\bx) \in [-2,2] \times \mathbb{R}^3,
  \\
  & (\Phi, \partial_t \Phi)|_{t=t_0}=(\Phi_0,\Phi_1).
  \end{cases}
\end{equation*}
Let $\Phi_\lambda=P_{\lambda} \Phi$ be the approximate solution with the initial data $ (\Phi_\lambda, \partial_t \Phi_\lambda)|_{t=-2}=({P}_\lambda \Phi(-2), {P}_\lambda \partial_t \Phi(-2))$. Then we have
\begin{equation*}
  \square_{\mathbf{g}_{\lambda}} \Phi_\lambda= 
  [\square_{\mathbf{g}_{\lambda}},P_{\lambda}]\Phi.
\end{equation*}
The energy estimates tell us that
\begin{equation*}
  \| \square_{\mathbf{g}_{\lambda}} \Phi_\lambda \|_{L^1_t L^2_x} \lesssim \| d\mathbf{g}\|_{L^2_t L^\infty_x}\| d\Phi\|_{L^\infty_t L^2_x} \lesssim \epsilon_0 (\|\Phi_0\|_{H^1}+\|\Phi_1\|_{L^2}).
\end{equation*}
However, it does not match the data at $t=t_0$. Fortunately, we can obtain
\begin{equation*}
  \begin{split}
  & P_\lambda \Phi_0 -\Phi_\lambda (t_0)=P_\lambda \Phi_0^1, \qquad \Phi_0^1=P_\lambda \Phi_0-\Phi_\lambda (t_0),
  \\
  & P_\lambda \Phi_1 -\partial_t \Phi_\lambda (t_0)=P_\lambda \Phi_1^1, \ \quad \Phi_1^1=P_\lambda \Phi_1-\partial_t \Phi_\lambda (t_0).
  \end{split}
\end{equation*}
By energy estimates and commutator estimates, we have
\begin{equation*}
  \begin{split}
  \|\Phi_0^1\|_{H^1} + \|\Phi_1^1\|_{L^2} \lesssim & \| \square_{\mathbf{g}_{\lambda}} ( P_\lambda \Phi - \Phi) \|_{L^1_t L^2_x}
  \\
  \lesssim & \| \square_{\mathbf{g}_{\lambda}}  \Phi_\lambda \|_{L^1_t L^2_x}+ \| [ \square_{\mathbf{g}_{\lambda}} ( \Phi_\lambda] \Phi) \|_{L^1_t L^2_x}
  \\
  \lesssim & \epsilon_0 (\|\Phi_0\|_{H^1}+\|\Phi_1\|_{L^2}).
  \end{split}
\end{equation*}
Since the norm of the error is much smaller than the initial size of the data,
we can repeat this process with data $(\Phi_0^1,\Phi_1^1)$, and sum this series to
obtain a smooth function $\Phi_\lambda$ with data $(P_\lambda \Phi_0,P_\lambda \Phi_1)$ at time $t=t_0$. As a result, it can also match the given data at time $t=t_0$.
\end{remark}
\subsubsection{Overlap estimates}
Since the foliations $\Sigma_{\omega,r}$ varying with $\omega$ and $r$, so a fixed $\Sigma_{\omega,r}$ may intersect with other $\Sigma_{\omega',r'}$. As a result, we should be clear about the number of $\lambda$-slabs which contain two given points in the space-time $[-2,2]\times \mathbb{R}^3$.
\begin{corollary}[\cite{ST}]\label{corl}
	For all points $P_1=(t_1,\bx_1)$ and $P_2=(t_2,\bx_2)$ in space-time $\mathbb{R}^{+} \times \mathbb{R}^3$ , and $\epsilon_0 \lambda \geq 1$, the number $N_{\lambda}(P_1,P_2)$ of slabs of scale $\lambda$ that contain both $P_1$ and $P_2$ satisfies the bound
	\begin{equation*}
		\begin{split}
			N_{\lambda}(P_1,P_2)\lesssim & \epsilon_0^{-1} |t_1-t_2|^{-1}.
		\end{split}
	\end{equation*}
\end{corollary}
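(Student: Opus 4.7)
The plan is a direct geometric counting argument, in the spirit of Smith--Tataru. Set $T=|t_1-t_2|$, $X=|\bx_1-\bx_2|$ and let $\omega_0=(\bx_2-\bx_1)/X$; I will count the pairs $(\omega,j)$ with both $P_1,P_2$ in the thickened slab built on $\Sigma_{\omega,j}$. The strategy is to translate slab-containment into two linear constraints on the direction $\omega$, compute the solid angle of admissible $\omega$'s, and then divide by the angular area $\epsilon_0\lambda^{-1}$ allotted to each direction in the maximally separated $\omega$-collection.

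\textit{Step 1: Reduction to a Minkowski null frame.} First I would introduce the $\omega$-null frame $(l_\omega,\underline{l}_\omega,e_1^\omega,e_2^\omega)$ from Section~\ref{sec6} and decompose $P_2-P_1=\tau\, l_\omega+\mu\,\underline{l}_\omega+\nu^a e_a^\omega$. The thickened slab (scale $\lambda^{-1}$ in the conjugate-null direction and $(\epsilon_0\lambda)^{-1/2}$ in each transverse direction) forces
\[
|\mu|\lesssim\lambda^{-1},\qquad |\nu|\lesssim(\epsilon_0\lambda)^{-1/2}.
\]
Using Proposition~\ref{r1}, Proposition~\ref{r2}, the pointwise bound \eqref{LLL} giving $|l_\omega-(\partial_t+\omega\cdot\partial_x)|\lesssim\epsilon_1$, and the lateral drift estimate $|\gamma_{\omega,j}(t_2)-\gamma_{\omega,j}(t_1)|\lesssim\epsilon_1|t_2-t_1|$, I would replace the true frame by the Minkowski frame at the cost of corrections absorbed into the implicit constants (relying on $\epsilon_1\ll\epsilon_0$).

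\textit{Step 2: Angular counting.} In the Minkowski approximation, with $\theta=\angle(\omega,\omega_0)$, the constraints read
\[
|T-X\cos\theta|\lesssim\lambda^{-1},\qquad X\sin\theta\lesssim(\epsilon_0\lambda)^{-1/2}.
\]
The first either pins $\theta$ to a narrow annulus of mean radius $\theta^*=\arccos(T/X)$ and thickness $\delta\theta\sim(\lambda X\theta^*)^{-1}$, or (when $|X-T|\lesssim\lambda^{-1}$) to a disk of radius $\sim(X\lambda)^{-1/2}$; in both cases the admissible solid angle is $\lesssim(X\lambda)^{-1}$. Moreover, joint satisfiability forces $X\approx T$, so $X\gtrsim T$ in the nontrivial regime. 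Dividing by $\epsilon_0\lambda^{-1}$ gives at most $(\epsilon_0 X)^{-1}\lesssim(\epsilon_0|t_1-t_2|)^{-1}$ admissible directions. For each such $\omega$, the rectangle index $j$ is determined up to $O(1)$ by requiring its centered spatial rectangle to contain $\bx_1$ (the tiling being essentially disjoint), which completes the count.

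\textit{Main obstacle.} The delicate point is Step~1, namely verifying that the metric-dependent bending of the null geodesics $\gamma_{\omega,j}$ and of the null frame does not corrupt the Minkowski-style counting. The relevant control comes from the H\"older bound \eqref{pr2} on $d\phi_{\omega,r}$, the $L^2_t C^\delta_x$-smallness of $d\mathbf{g}$, and $\|\bar{\rho}\|_{L^2_t}\lesssim\epsilon_0$; together with \eqref{LLL} these confine the perturbative corrections to $\tau,\mu,\nu$ to quantities much smaller than the slab tolerances $\lambda^{-1}$ and $(\epsilon_0\lambda)^{-1/2}$ (after time-integration and time-averaging), so the geometric count is preserved and yields $N_\lambda(P_1,P_2)\lesssim\epsilon_0^{-1}|t_1-t_2|^{-1}$.
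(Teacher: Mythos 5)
Your proposal correctly reconstructs the Smith--Tataru overlap estimate, which the paper invokes by citation to \cite{ST} without reproducing a proof; there is thus no ``paper's own proof'' to compare against, and the review is of whether the reconstruction succeeds. It does: decompose $P_2-P_1$ in the $\omega$-null frame, read off the slab constraints $|\mu|\lesssim\lambda^{-1}$, $|\nu|\lesssim(\epsilon_0\lambda)^{-\frac12}$, pass to the Minkowski frame, and count the solid angle of admissible $\omega$'s. The annulus/disk computation $\lesssim(X\lambda)^{-1}$, division by the per-direction quota $\epsilon_0\lambda^{-1}$, and the satisfiability inequality $T\lesssim X+\lambda^{-1}$ give $N_\lambda\lesssim(\epsilon_0X)^{-1}\lesssim\epsilon_0^{-1}|t_1-t_2|^{-1}$, as claimed. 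Determination of the rectangle index $j$ up to $O(1)$ per $\omega$ is also handled correctly.

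Two points are worth tightening. First, the robustness of the count under the metric perturbation is not just that the frame deviations are ``much smaller than the slab tolerances'': the $O(\epsilon_2 T)$ drift of the conjugate-null component can exceed $\lambda^{-1}$ when $\lambda$ is large. What actually saves the argument is that the perturbation affects the gradient $\nabla_\omega f$ of the localizing function $f(\omega)=\omega\cdot(\bx_2-\bx_1)-(t_2-t_1)+\delta(\omega)$ \emph{multiplicatively} by $1+O(\epsilon_2)$ (so the band width $\lambda^{-1}/|\nabla_\omega f|$, hence the co-area solid angle $\lesssim(X\lambda)^{-1}$, is essentially unchanged), while the additive drift merely shifts where the band sits on $S^2$. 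It would be better to phrase the Step~1 reduction in these terms than to claim the corrections are dominated by $\lambda^{-1}$. Second, a small citation issue: \eqref{LLL} is the $\vert\kern-0.25ex\vert\kern-0.25ex\vert\cdot\vert\kern-0.25ex\vert\kern-0.25ex\vert_{s_0,2,\Sigma}$ bound on the frame; the pointwise estimate $|l_\omega-(\partial_t+\omega\cdot\partial_x)|\lesssim\epsilon_1$ that you actually need is the unlabeled display immediately preceding Corollary~\ref{Rfenjie}, established via the geodesic equation and the Christoffel bound $\|\Gamma\|_{L^2_tL^\infty_x}\lesssim\epsilon_2$ at the start of the proof of Proposition~\ref{r2}.
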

\subsubsection{The proof of Estimate \eqref{Ase}}
After the construction of approximate solutions, we still need to prove the key estimate \eqref{Ase}. This can be directly derived by the following result. Before that, let us define $\mathcal{T}=\cup_{\omega,j}T_{\omega,j}$, where $T_{\omega,j}$ is set in \eqref{twj}. We also denote $\chi_{{J}}$ be a smooth cut-off function, and $\chi_J=1$ on a set $J$.
\begin{proposition}\label{szt}
	Let $t \in [-2,2]$ and
	\begin{equation*}
		f={\sum}_{J \in \mathcal{T}}a_{J}\chi_{J}f_{J},
	\end{equation*}
	where $\sum_{J \in \mathcal{T}}a_{J}^2 \leq 1$ and $f_J$ are normalized wave packets in $J$. Then\footnote{We need $r>1$ in \eqref{Ase} for there is a factor $(\ln \lambda)^{\frac12}$ in \eqref{Aswr}.}
	\begin{equation}\label{Aswr}
		\|f \|_{L^2_t L^\infty_x} \lesssim \epsilon_0^{-\frac12}(\ln \lambda)^{\frac12}.
	\end{equation}
\end{proposition}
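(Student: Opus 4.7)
\textbf{Proof plan for Proposition \ref{szt}.} My approach is to combine a duality/$TT^*$ reduction with a dyadic overlap decomposition driven by Corollary~\ref{corl}. The fundamental fact to exploit is that a normalized wave packet is concentrated on a curved tube $J=T_{\omega,j}$ of spatial cross-section $(\epsilon_0\lambda)^{-1/2}\times(\epsilon_0\lambda)^{-1/2}$ around a null geodesic $\gamma_{\omega,j}$, and, once one dispatches the kernel bounds for $P_\lambda T_\lambda$ applied to $\zeta^{\omega,j}\delta_{x_\omega-\phi_{\omega,j}}$, satisfies the pointwise peak estimate $|f_J(t,x)|\lesssim \epsilon_0^{1/2}$ on $J$ (with Schwartz-tail decay off $J$). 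Any naive pointwise Cauchy-Schwarz in the index $J$, combined with the count of tubes through a single point, is too crude and yields a polynomial loss in $\lambda$; the extra integration in $t$ is what must be exploited to recover a logarithmic loss.

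First, I would dualize: test $f$ against $u(t,x)$ with $\|u\|_{L^2_tL^1_x}\le 1$ and write $\langle f,u\rangle=\sum_J a_J\langle \chi_Jf_J,u\rangle$. By Cauchy-Schwarz in $J$ and the hypothesis $\sum_J a_J^2\le 1$, it suffices to bound $\sum_J|\langle\chi_Jf_J,u\rangle|^2$, which after expanding the square becomes a bilinear sum over pairs $(J,J')$ of inner products of the form $\int_{J\cap J'} f_Jf_{J'}u(t,x)\,dt\,dx$, weighted by $\epsilon_0$ times the $(t,x)$-measure of the tube intersection.

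Next I would carry out a dyadic decomposition of the pair-sum. Split pairs $(J,J')$ by the temporal separation $\tau\sim 2^{-k}$ over which the two tubes remain geometrically close, with $k$ running from $0$ up to $O(\log_2\lambda)$. For fixed $k$, Corollary~\ref{corl} bounds the number of $\lambda$-slabs $\Sigma_{\omega,r}$ containing two given points of temporal separation $\sim 2^{-k}$ by $\epsilon_0^{-1}2^k$; combined with the transverse tube width $(\epsilon_0\lambda)^{-1/2}$ and the H\"older-$\tfrac12$ regularity of $d\phi_{\omega,r}$ in $r$ from \eqref{pr2}, this converts into a uniform count of tube-pairs $(J,J')$ whose intersection contributes at scale $2^{-k}$. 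The geometric sum over $k$ contributes a factor $\log\lambda$, and taking a square root produces the advertised $\epsilon_0^{-1/2}(\ln\lambda)^{1/2}$.

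The principal obstacle will be the passage from the slab-overlap count in Corollary~\ref{corl}, which counts characteristic hypersurfaces $\Sigma_{\omega,r}$, to a genuine tube-overlap count on the $T_{\omega,j}$. Because the $\Sigma_{\omega,r}$ are only $H^{s_0}$-regular transversally and $d\phi_{\omega,r}$ is merely H\"older-$\tfrac12$ in the $r$ parameter, one cannot treat the tubes as straight rectangles and must instead use the bounds \eqref{pr0}-\eqref{pr2} together with Lemma~\ref{chi} to control how $\gamma_{\omega,j}$ and the conormal frame twist across nearby foliations. Once this quantitative packing lemma is in place, the dyadic summation is routine and \eqref{Aswr} follows.
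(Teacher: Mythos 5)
Your overall strategy --- reduce to the overlap count of Corollary~\ref{corl} and obtain the logarithm from a harmonic sum in the temporal separation --- captures the right source of the $(\ln\lambda)^{1/2}$ loss, but the route you sketch diverges substantially from the paper's and has two genuine gaps.

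First, the algebraic setup of your bilinear expansion is not correct. After dualizing against $u\in L^2_tL^1_x$ and applying Cauchy--Schwarz in $J$, what you must bound is $\sum_J|\langle\chi_J f_J,u\rangle|^2$; this is a sum of squares of \emph{single} integrals, not a ``bilinear sum over pairs $(J,J')$ of inner products of the form $\int_{J\cap J'}f_Jf_{J'}u$.'' The object you would actually want to extract, after inserting the pointwise bound $|f_J|\lesssim\epsilon_0^{1/2}$ on $J$ and swapping sums, is the double integral $\iint N\big((t,x),(t',x')\big)\,|u(t,x)|\,|u(t',x')|\,dt\,dx\,dt'\,dx'$, where $N$ is the tube-overlap count; none of its terms is an integral over $J\cap J'$, and $u$ does not localize to tube cross-sections since only $L^1_x$ control is available. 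Second, even granting that reformulation, the Corollary~\ref{corl} bound $N\lesssim\epsilon_0^{-1}|t-t'|^{-1}$ degenerates as $|t-t'|\to 0$, so the kernel is not integrable on $[-2,2]^2$ without a cutoff at $|t-t'|\gtrsim\lambda^{-1}$. Your proposal does not say where this cutoff comes from. The paper's proof supplies it automatically by first discretizing time into $\approx\lambda$ intervals of length $\lambda^{-1}$, choosing one point $(t_j,x_j)$ per interval with $|t_i-t_j|\gtrsim\lambda^{-1}$, reducing \eqref{Aswr} to the discrete inequality $\sum_j|f(t_j,x_j)|^2\lesssim\epsilon_0^{-1}\lambda\ln\lambda$, and then running a double-counting argument: one bounds the number $K$ of ordered pairs of chosen points sharing a common slab from below via Jensen's inequality and from above via Corollary~\ref{corl} together with the harmonic sum $\sum_{i\ne j}|t_i-t_j|^{-1}\lesssim\lambda\ln\lambda\cdot M$. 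This bypasses duality and $TT^*$ entirely, which is also why it side-steps the fact that $L^2_tL^\infty_x$ is not a Hilbert space.

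Finally, the obstacle you flag --- passing from a ``slab-overlap count'' to a ``tube-overlap count'' using \eqref{pr0}--\eqref{pr2} and Lemma~\ref{chi} --- is not actually an issue: in Section~\ref{aFs} the ``$\lambda$-slabs'' of Corollary~\ref{corl} are precisely the tubes $T_{\omega,j}=\Sigma_{\omega,j}\cap\{|\bx'_\omega-\gamma_{\omega,j}|\le(\epsilon_0\lambda)^{-1/2}\}$, so the corollary already bounds the tube count directly, and no further regularity input is required at this step.
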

\begin{proof}
	This proof also follows Smith-Tataru \cite{ST} (Proposition 10.1 on page 353). Here we use slightly different factors in \eqref{Aswr}.
	
	Let us first make a partition of the time-interval $[-2,2]$. By decomposition, there exists a partition $\left\{ I_j \right\}$ of the  interval $[-2,2]$ into disjoint subintervals $I_j$ such that with the size of each $I_j$, $|I_j| \simeq \lambda^{-1}$, and the number of subintervals $j \simeq\lambda$. By mean value theorem, there exists a number $t_j$ such that
	\begin{equation}\label{Aswrq}
		\|f \|^2_{L^2_t L^\infty_x} \leq \textstyle{\sum}_j \|f \|^2_{L^2_{I_j} L^\infty_x} \leq \textstyle{\sum}_j \|f(t_j,\cdot) \|^2_{ L^\infty_x} \lambda^{-1},
	\end{equation}
	where $t_j$ is located in $I_j$, and $|t_{j+1}-t_j| \simeq \lambda^{-1}$. Let us explain the \eqref{Aswrq} as follows. For simplicity, let  $I_{j_0}=[0,\lambda^{-1}]$ and $I_{j_0+1}=[\lambda^{-1},2\lambda^{-1}]$. By mean value theorem, on $I_{j_0}$, we have
	\begin{equation*}
		\|f \|^2_{L^2_{I_{j_0}} L^\infty_x} = \|f(t_{j_0},\cdot) \|^2_{ L^\infty_x}\lambda^{-1}.
	\end{equation*}
	We also have
	\begin{equation*}
		\|f \|^2_{L^2_{I_{j_0+1}} L^\infty_x} = \|f(t_{j_0+1},\cdot) \|^2_{ L^\infty_x}\lambda^{-1}.
	\end{equation*}
	If $t_{j_0} \leq \frac12 \lambda^{-1}$ or $t_{j_0+1} \geq \frac32 \lambda^{-1}$, then $|t_{j_0+1}-t_{j_0}| \geq \frac12 \lambda^{-1}$. Otherwise, $t_{j_0} \in [\frac12 \lambda^{-1}, \lambda^{-1}] $ and $t_{j_0+1} \in [\lambda^{-1}, \frac32 \lambda^{-1}]$. In this case,
	we combine $I_{j_0}, I_{j_0+1}$ together and set $I^*_{j_0}= I_{j_0}\cup I_{j_0+1}$. On the new interval  $I^*_{j_0}$, we can see that
	\begin{equation*}
		\|f \|^2_{L^2_{I^*_{j_0}} L^\infty_x} = 2\|f(t^*_{j_0},\cdot) \|^2_{ L^\infty_x}\lambda^{-1},
	\end{equation*}
	and
	\begin{equation*}
		2\|f(t^*_{j_0},\cdot) \|^2_{ L^\infty_x} = \|f(t_{j_0},\cdot) \|^2_{ L^\infty_x}+\|f(t_{j_0+1},\cdot) \|^2_{ L^\infty_x}. 
	\end{equation*}
	Since $f$ is a contituous function, we have
	\begin{equation*}
		\|f(t^*_{j_0},\cdot) \|_{ L^\infty_x} = \left\{  \frac12 \big( \|f(t_{j_0},\cdot) \|^2_{ L^\infty_x} +\|f(t_{j_0+1},\cdot) \|^2_{ L^\infty_x} \big) \right\}^{\frac12}, \qquad t^*_{j_0} \in [\frac12\lambda^{-1}, \frac32\lambda^{-1}].
	\end{equation*}
In case $t^*_{j_0} \in [\frac12\lambda^{-1}, \frac32\lambda^{-1}]$, we set $t_{j_0}:=t^*_{j_0}$. On the next time-interval $I_{j_0+2}=[2\lambda^{-1}, 3\lambda^{-1}]$, we obtain
	\begin{equation*}
		\|f \|^2_{L^2_{I_{j_0+2}} L^\infty_x} = \|f(t_{j_0+1},\cdot) \|^2_{ L^\infty_x}\lambda^{-1}, \quad t_{j_0+1} \in [2\lambda^{-1}, 3\lambda^{-1}].
	\end{equation*}
	Hence, we have proved $|t_{j_0+1}-t_{j_0}| \geq \frac12 \lambda^{-1}$. So we can use this way to decompose $[-2,2]$. Therefore, to prove \eqref{Aswr}, and taking \eqref{Aswrq} into account, we only need to show that
	\begin{equation}\label{wee0}
		\textstyle{\sum}_j |f(t_j,x_j)|^2 \lesssim \epsilon_0^{-1}\lambda \ln \lambda,
	\end{equation}
	where $x_j$ is arbitrarily chosen. We then set the points $P_j=(t_j,x_j)$.

Since each points lies in at most $\approx \epsilon_0^{-\frac12}\lambda^{\frac12}$ slabs, so we may assume that $|a_J| \geq \epsilon_0^{\frac12}\lambda^{-\frac12}$. Then we decompose the sum $f=\sum_{J \in \mathcal{T}}a_{J}\chi_{J}f_{J}$ dyadically with respect to the size of $a_J$. 
	We next decompose the sum over $j$ via a dyadic decomposition in the numbers of slabs containing $(t_j,\bx_j)$. We may assume that we are summing over $M$ points\footnote{We also remark that $M$ is $\simeq \lambda$, for $t_j \in [-2,2]$ and $|t_{j+1}-t_j|\simeq \lambda^{-1}$.} $(t_j,\bx_j)$, each of which is contained in approximately $L$ slabs\footnote{So $L \lesssim  \epsilon_0^{-1}\lambda$ by using Corollary \ref{corl}.}. Then $|f(t_j,\bx_j)|\lesssim N^{-\frac12}L$ and
	\begin{equation}\label{wee1}
		\textstyle{\sum}_j |f(t_j,\bx_j)|^2 \lesssim L^2 M N^{-1}.
	\end{equation}
	Combining \eqref{wee0} with \eqref{wee1}, we only need to prove
	\begin{equation}\label{wee2}
		L^2 M N^{-1} \lesssim \epsilon_0^{-1}\lambda \ln \lambda.
	\end{equation}
This is a counting problem, and we shall prove \eqref{wee2} by calculating in two different ways. First consider the number $K$ of pairs $(i,j)$ for which $P_i$ and $P_j$ are contained in a common slab, counted with multiplicity. For $J \in \mathcal{T}$, we denote by $n_J$ the number of points $P_j$ contained\footnote{For $J$, if it contains $3$ points $P_1,P_2,P_3$, then we will count it $(P_1,P_2), (P_2,P_1), (P_1,P_3), (P_3,P_1),(P_2,P_3),(P_3,P_2)$ in $K$. Therefore it's $3^2$.} in $J$. Then
	\begin{equation*}
		K = \sum_{n_J \geq 2} n_J^2.
	\end{equation*}
By Jensen's inequality, we have
\begin{equation*}
		K = \sum_{n_J \geq 2} n_J^2 \gtrsim N^{-1}  (\sum_{n_J \geq 2} n_J)^2.
	\end{equation*}
	Note that $ \sum_{J \in \mathcal{T}} n_J \approx ML$. We consider it into two cases. If
	\begin{equation*}
		\sum_{n_J \geq 2} n_J \leq \sum_{n_J =1 } n_J,
	\end{equation*}
	then $N\approx ML$. In this case, combining with the fact that $L \lesssim \epsilon_0^{-1}\lambda$, then \eqref{wee2} holds. Otherwise, we have
	\begin{equation}\label{wee3}
		K \gtrsim N^{-1}M^2L^2.
	\end{equation}
	In this case, by using Corollary \ref{corl}, we obtain
	\begin{equation}\label{wee4}
		K \lesssim \epsilon_0^{-1} \sum_{1\leq i,j \leq M, i\neq j} |t_i-t_j|^{-1}. 
	\end{equation}
	The sum is maximized in the case that $t_j$ are close as possible, i.e. if the $t_j$ are consecutive multiples of $\lambda^{-1}$. Therefore,  \eqref{wee4} yields
	\begin{equation}\label{wee5}
		K \lesssim \epsilon_0^{-1} \lambda \sum_{1\leq i,j \leq M, i\neq j} |i-j|^{-1} \lesssim M \epsilon_0^{-1} \lambda \ln \lambda, 
	\end{equation}
where we use the fact that $M \simeq \lambda$. Combining \eqref{wee5} and \eqref{wee3}, we get \eqref{wee2}. So we have finished the proof of Proposition \ref{szt}.
\end{proof}

\subsection{The proof of Proposition \ref{r5}}
In this part, we will use Proposition \ref{AA1} to prove Proposition \ref{r5}.	We first prove the case of $r=1$ and $r=2$. Considering the fractional derivatives on spatial and using Kato-Ponce commutator estimates and Gagliardo-Nirenberg inequality, we divide the proof into several cases according to Proposition \ref{AA1}.
	
	\textbf{Step 1:} The case $r=1$. Using basic energy estimates for \eqref{linear}, we have
	\begin{equation*}
		\begin{split}
			\| \partial_t f \|_{L^2_x} + \|\nabla f \|_{L^2_x}  \lesssim & \ (\| f_0\|_{H^1}+ \| f_1\|_{L^2}) \exp(\int^t_0 \| d \mathbf{g} \|_{L^\infty_x} d\tau)
			\\
			\lesssim & \ \| f_0\|_{H^1}+ \| f_1\|_{L^2}.
		\end{split}
	\end{equation*}
	Then the Cauchy problem \eqref{linear} holds a unique solution $f \in C([-2,2],H_x^1)$ and $\partial_t f \in C([-2,2],L_x^2)$. It remains to show that the solution $f$ also satisfies the Strichartz estimate \eqref{SL}.

	Without loss of generality, we take $t_0=0$ in Proposition \ref{r5}. For any given initial data $(f_0,f_1) \in H^1 \times L^2$, and $t_0 \in [-2,2]$, we take a Littlewood-Paley decomposition
	\begin{equation*}
		f_0=\textstyle{\sum}_{\lambda}P_{\lambda}f_0, \qquad  f_1=\textstyle{\sum}_{\lambda}P_{\lambda}f_1,
	\end{equation*}
	and for each $\lambda$ we take the corresponding $f_{\lambda}$ as in Proposition \ref{AA1}.  If we set
	\begin{equation}\label{fstar}
		f^*=\textstyle{\sum}_{\lambda}f_{\lambda},
	\end{equation}
	then $f^*$ matches the initial data $(f_0,f_1)$ at the time $t=t_0$, and also satisfies the Strichartz estimates \eqref{Ase}. We claim that $f^*$ is also an approximate solution for $\square_{\mathbf{g}}$ in the sense that
	\begin{equation}\label{QQ}
		\| \square_{\mathbf{g}} f^* \|_{L^1_t L^2_x} \lesssim \epsilon_0(\| f_0 \|_{H^1}+\| f_1 \|_{L^2} ).
	\end{equation}
	We can derive the above bound by using the decomposition
	\begin{equation*}
		\square_{\mathbf{g}} f^*=\textstyle{\sum_{\lambda}} \square_{\mathbf{g}_\lambda} f_\lambda+ \textstyle{\sum_{\lambda}} \square_{\mathbf{g}-\mathbf{g}_\lambda} f_\lambda.
	\end{equation*}
	The first one $\textstyle{\sum_{\lambda}} \square_{\mathbf{g}_\lambda} f_\lambda$ can be controlled by Proposition \ref{AA1}. As for the second one $\textstyle{\sum_{\lambda}} \square_{\mathbf{g}-\mathbf{g}_\lambda} f_\lambda$, noting $\mathbf{g}^{00}=-1$, we then have
	\begin{equation*}
		\begin{split}
			\textstyle{\sum_{\lambda}} \square_{\mathbf{g}-\mathbf{g}_\lambda} f_\lambda= \textstyle{\sum_{\lambda}} ({\mathbf{g}-\mathbf{g}_\lambda}) \partial  df_\lambda.
		\end{split}
	\end{equation*}
	It follows from H\"older inequality that
	\begin{equation}\label{y1}
		\begin{split}
			\| \textstyle{\sum_{\lambda}} \square_{\mathbf{g}-\mathbf{g}_\lambda} f_\lambda \|_{L^2_x} \lesssim \mathrm{sup}_{\lambda} \left(\lambda \| {\mathbf{g}-\mathbf{g}_\lambda} \|_{L^\infty_x} \right) \left( \sum_{\lambda}  \| df_\lambda \|^2_{L^2_x} \right)^{\frac12}.
		\end{split}
	\end{equation}
	On the other hand, we have
	\begin{equation}\label{y2}
		\begin{split}
			\mathrm{sup}_{\lambda} \left(\lambda \| {\mathbf{g}-\mathbf{g}_\lambda} \|_{L^\infty_x} \right) \lesssim & \ \mathrm{sup}_{\lambda} \big(\lambda \sum_{\mu > \lambda}\| \mathbf{g}_\mu \|_{L^\infty_x} \big)
			\\
			\lesssim & \ \mathrm{sup}_{\lambda} \big(\lambda \sum_{\mu > \lambda}\mu^{-(1+\delta)}\|d \mathbf{g}_\mu \|_{C^\delta_x} \big)
			\\
			\lesssim & \ \|d \mathbf{g} \|_{C^\delta_x}(\textstyle{\sum_{\mu}} \mu^{-\delta}) \lesssim \ \|d \mathbf{g} \|_{C^\delta_x}.
		\end{split}
	\end{equation}
	Gathering \eqref{y1} and \eqref{y2}, we can prove that
	\begin{equation*}
		\begin{split}
			\textstyle{\sum_{\lambda}} \square_{\mathbf{g}-\mathbf{g}_\lambda} f_\lambda \lesssim \epsilon_0(\| f_0 \|_{H^1}+\| f_1 \|_{L^2} ),
		\end{split}
	\end{equation*}
where we use the fact that $\|d \mathbf{g} \|_{L^2_t C^\delta_x} \lesssim \epsilon_0$. Hence, we have proved \eqref{QQ}. But $f^*$ is not the exact solution to $\square_{\mathbf{g}} f=0$ with the Cauchy data $(f_0,f_1)$. So we seek another function $\mathbf{M}F$ satisfying
\begin{equation}\label{AQ1}
		\begin{cases}
			&\square_{\mathbf{g}} \mathbf{M}F= - \square_{\mathbf{g}} f^*,
\\
& (\mathbf{M}F, \partial_t \mathbf{M}F)=(0,0).
		\end{cases}
	\end{equation}
For given $F=- \square_{\mathbf{g}} f^* \in L^1_t L^2_x$, we now form the function
	\begin{equation*}
		F^{(1)}=\int^t_0 f^{(1)}(\tau;t,x)d\tau,
	\end{equation*}
	where $f^{(1)}(\tau;t,x)$ is the approximate solution of $\square_{\mathbf{g}} \tilde{f}=0$ and $f^{(1)}$ is formed like $f^*$(cf.  \eqref{fstar}) with the Cauchy data
	\begin{equation*}
\begin{cases}
&\square_{\mathbf{g}} f^{(1)}=0, \qquad (t,\bx) \in (\tau,2] \times \mathbb{R}^3,
\\
&		f^{(1)}(\tau,t,x)|_{t=\tau}=0, \quad \partial_t f^{(1)}(\tau,t,x)|_{t=\tau}=F(\tau,\cdot).
	\end{cases}
\end{equation*}
By calculating
	\begin{equation*}
		\begin{split}
		\partial_t F^{(1)}=& \int^t_0 \partial_t f^{(1)}(\tau;t,x) d\tau+ f^{(1)}(t;t,x)=\int^t_0 \partial_t f^{(1)}(\tau;t,x) d\tau,
		\\
		\partial_{tt} F^{(1)}=& \int^t_0 \partial_{tt} f^{(1)}(\tau;t,x) d\tau+ \partial_t f^{(1)}(\tau;t,x)=\int^t_0 \partial_{tt} f^{(1)}(t;t,x) d\tau+F(t,x),
\\
\partial_{ti} F^{(1)}=& \int^t_0 \partial_{ti} f^{(1)}(t;t,x) d\tau, \quad i=1,2,3,
\\
\partial_{ij} F^{(1)}=& \int^t_0 \partial_{ij} f^{(1)}(t;t,x) d\tau, \quad i,j=1,2,3,
          \end{split}	
\end{equation*}
we then have
	\begin{equation*}
		\square_{\mathbf{g}} F^{(1)}=\int^t_0 \square_{\mathbf{g}} f^{(1)}(\tau;t,x) d\tau+F.
	\end{equation*}
By using \eqref{QQ}, we derive that
	\begin{equation*}
		\| \square_{\mathbf{g}} F^{(1)}-F\|_{L^1_t L^2_x} \lesssim \| \square_{\mathbf{g}}f^{(1)}(\tau;t,x)\|_{L^1_t L^2_x} \lesssim \epsilon_0 \| F \|_{L^1_t L^2_x}.
	\end{equation*}
For $F^{m}, m\geq 2$, we perform an iteration as follows. Let
	\begin{equation*}
		F^{(2)}=\int^t_0 f^{(2)}(\tau;t,x)d\tau,
	\end{equation*}
	where $f^{(2)}(\tau;t,x)$ is the approximate solution of $\square_{\mathbf{g}} \tilde{f}=0$ and $f^{(2)}$ is formed like $f^*$ (cf. \eqref{fstar}) with Cauchy data
	\begin{equation*}
\begin{cases}
&\square_{\mathbf{g}} f^{(2)}=0,\qquad (t,\bx) \in (\tau,2] \times \mathbb{R}^3,
\\
&		f^{(2)}(\tau;t,x)|_{t=\tau}=0, \quad \partial_t f^{(2)}(\tau;t,x)|_{t=\tau}=F-\square_{\mathbf{g}} F^{(1)}.
\end{cases}
	\end{equation*}
Then we can prove that
	\begin{equation*}
		\|\square_{\mathbf{g}}F^{(2)}- (F-\square_{\mathbf{g}} F^{(1)}) \| \lesssim \epsilon_0 \| F-\square_{\mathbf{g}} F^{(1)} \|_{L^1_t L^2_x} \lesssim \epsilon^2_0 \| F \|_{L^1_t L^2_x}.
	\end{equation*}
We rewrite the above equality as
	\begin{equation*}
		\|\square_{\mathbf{g}}(F^{(2)}+ F^{(1)})-F \| \lesssim \epsilon_0 \| F-\square_{\mathbf{g}} F^{(1)} \|_{L^1_t L^2_x} \lesssim \epsilon^2_0 \| F \|_{L^1_t L^2_x}.
	\end{equation*}
For $m \geq 2$, we can set
	\begin{equation*}
		F^{(m)}=\int^t_0 f^{(m)}(\tau;t,x)d\tau,
	\end{equation*}
	where $f^{(m)}(\tau;t,x)$ is the approximate solution of $\square_{\mathbf{g}} \tilde{f}=0$ and $f^{(m)}$ is formed like $f^*$ (cf. \eqref{fstar}) with Cauchy data
	\begin{equation*}
\begin{cases}
&\square_{\mathbf{g}} f^{(m)}=0,\qquad (t,\bx) \in (\tau,2] \times \mathbb{R}^3,
\\
&		f^{(m)}(\tau;t,x)|_{t=\tau}=0, \quad \partial_t f^{(2)}(\tau;t,x)|_{t=\tau}=F-\square_{\mathbf{g}} (F^{(1)}+F^{(2)}+\cdots F^{(m-1)}).
\end{cases}
	\end{equation*}
we therefore obtain that
	\begin{equation}\label{AQ4}
		\|\square_{\mathbf{g}}(F^{(m)}+F^{(m-1)}+\cdots F^{(1)})- F \|_{L^1_t L^2_x} \lesssim \epsilon^m_0 \| F \|_{L^1_t L^2_x}.
	\end{equation}
By using the contraction principle, we can see that there is a limit $  \mathbf{M}F=\lim_{m\rightarrow \infty} (F^{(m)}+F^{(m-1)}+\cdots F^{(1)})$. Hence, we get
\begin{equation}\label{AQ0}
		\square_{\mathbf{g}} \mathbf{M}F = - \square_{\mathbf{g}} f^*,  \quad (\mathbf{M}F, \partial_t \mathbf{M}F)=(0,0).
	\end{equation}
Gathering \eqref{AQ1} and \eqref{AQ0}, we can write the solution $f$ in the form
	\begin{equation*}
		f= f^*+ \mathbf{M}F,
	\end{equation*}
	where $f^*$ is the approximation solution formed in \eqref{fstar} for initial data $(f_0,f_1)$ specified at time $t=0$, and it satisfies the Strichartz estimates \eqref{Ase}, and
	\begin{equation*}
		\| F\|_{L^1_t L^2_x} \lesssim \epsilon_0 ( \| f_0 \|_{H^1}+\| f_1\|_{L^2}).
	\end{equation*}
	The Strichartz estimates of $\mathbf{M}F$ now follow since they holds for each $f^{(m)}(\tau;t,x)$, $\tau \in [0,t]$. Therefore, the estimate \eqref{SL} holds.

\textbf{Step 2:} The case $r=2$. We also consider $t_0=0$. Given data $(f_0,f_1) \in H^2\times H^1$, we find a solution of the form $f=\left< \partial \right>^{-1}\bar{f}$. Then we have
\begin{equation*}
\square_{\mathbf{g}} \bar{f}=( \square_{\mathbf{g}}- \left< \partial \right> \square_{\mathbf{g}} \left< \partial \right>^{-1}) \bar{f}=[\mathbf{g}^{\alpha \beta}, \left< \partial \right>]\left< \partial \right>^{-1} \partial_{\alpha \beta} \bar{f}.
\end{equation*}
Hence, $\bar{f}$ solves
	\begin{equation*}
		\begin{cases}
			\square_{\mathbf{g}} \bar{f}= [\mathbf{g}^{\alpha \beta}, \left< \partial \right>]\left< \partial \right>^{-1} \partial_{\alpha \beta} \bar{f}, \quad (t,\bx)\in [-2,2]\times \mathbb{R}^3,
			\\
			\bar{f}|_{t=0}=\left< \partial \right>f_0, \quad \partial_t\bar{f}|_{t=0}=\left< \partial \right>f_1.
		\end{cases}
	\end{equation*}

For $\bar{F}=[\mathbf{g}^{\alpha \beta}, \left< \partial \right>]\left< \partial \right>^{-1} \partial_{\alpha \beta} \bar{f} \in L_t^1 L^2_x$, we form $\mathbf{M}\bar{F}$ as above in Step 1. So we have $\square_{\mathbf{g}} \mathbf{M}\bar{F}=\bar{F}$, and $\mathbf{M}\bar{F}$ has vanishing Cauchy data at $t_0=0$. Let $\bar{f}^*$ be the solution for the homogeneous equation
 	\begin{equation*}
		\begin{cases}
			\square_{\mathbf{g}} \bar{f}^*=0, \quad (t,\bx)\in [-2,2]\times \mathbb{R}^3,
			\\
			\bar{f}^*|_{t=0}=\left< \partial \right>f_0, \quad \partial_t\bar{f}^*|_{t=0}=\left< \partial \right>f_1.
		\end{cases}
	\end{equation*}
Then we can seek a solution $\bar{f}$ of the form $\bar{f}=\bar{f}^*+\mathbf{M}\bar{F}$, provided that
\begin{equation}\label{AWQ}
\| [\mathbf{g}^{\alpha \beta}, \left< \partial \right>]\left< \partial \right>^{-1} \partial_{\alpha \beta} \mathbf{M}\bar{F}\|_{L^1_tL^2_x} \lesssim \epsilon_0 \|\bar{F}\|_{L^1_tL^2_x}.
\end{equation}
Let us prove \eqref{AWQ}. Considering $\mathbf{g}^{00}=-1$, by using Coifman-Meyer estimate, we derive that
\begin{equation}\label{AQ2}
\|  [\mathbf{g}^{\alpha \beta}, \left< \partial \right>] \left< \partial \right>^{-1} \partial_{\alpha \beta} \mathbf{M}\bar{F} \|_{L^2_x} \lesssim \|d\mathbf{g}\|_{L^\infty_x} \| d \mathbf{M}\bar{F} \|_{L^2_x}.
\end{equation}
By using \eqref{AQ0}, we also obtain
\begin{equation}\label{AQ3}
\| d \mathbf{M}\bar{F}\|_{L^\infty_tL^2_x} \lesssim \|\bar{F}\|_{L^1_t L^2_x}.
\end{equation}
Combining \eqref{AQ2} with \eqref{AQ3}, and using $\|\|d\mathbf{g}\|_{L^2_tL^\infty_x} \lesssim \epsilon_0$, we can get \eqref{AWQ}.

Therefore, by Duhamel's principle, in the case of $r=1$ or $r=2$, we can also obtain the Strichartz estimates for the linear, non-homogeneous wave equation
	\begin{equation*}
		\begin{cases}
			\square_{\mathbf{g}} f= G, \quad (t,\bx)\in [-2,2]\times \mathbb{R}^3,
			\\
			f|_{t=0}=f_0, \quad \partial_tf|_{t=0}=f_1,
		\end{cases}
	\end{equation*}
as
	\begin{equation}\label{eAQ4}
		\| \left< \partial \right>^k f \|_{L^2_t L^\infty_x} \lesssim  \|f_0\|_{H^r}+ \| f_1 \|_{H^{r-1}}+ \|G\|_{L^1_t H_x^{r-1}} , \quad   k<r-1.
	\end{equation}
Now, we claim that the following bounds\footnote{Comparing with \eqref{eAQ4}, the bound \eqref{AQ5} is stronger, for it includes the time-derivatives of $f$. It plays an important role in the next step.} hold for $r=1$ and $r=2$,
	\begin{equation}\label{AQ5}
		\| \left< \partial \right>^k df \|_{L^2_t L^\infty_x} \lesssim  \|f_0\|_{H^r}+ \| f_1 \|_{H^{r-1}}+ \|G\|_{L^1_t H_x^{r-1}} , \quad   k<r-2.
	\end{equation}
Let us now use \eqref{eAQ4} to prove \eqref{AQ5}.

To prove \eqref{AQ5} for $r=2$, we first consider $G=0$. That is,
$\square_{\mathbf{g}} f=0, (f,\partial_t f)|_{t=0}=(f_0,f_1)\in H^2 \times H^1$. For $\mathbf{g}^{00}=-1$, we then have
\begin{equation}\label{AQ6}
\square_{\mathbf{g}} d f= d\mathbf{g}^{\alpha \beta} \partial_{\alpha \beta} f \in L^1_t L^2_x.
\end{equation}
The right hand side of \eqref{AQ6} can be derived by energy estimates and Gronwall's inequality. That is
\begin{equation*}
\|\partial_t d f\|_{L^2_x}+ \|\partial d f\|_{L^2_x} \lesssim ( \|f_0\|_{H^2}+\|f_1\|_{H^1} ) \exp\{\int^{2}_{0} \|d \mathbf{g}\|_{L^\infty_x }dt \} \lesssim  \|f_0\|_{H^2}+\|f_1\|_{H^1} . 
\end{equation*}
Then $d\mathbf{g}^{\alpha \beta} \partial_{\alpha \beta} f \in L^1_t L^2_x$ holds. Thus, \eqref{AQ5} holds for $r=2$ and $G=0$. If $G\neq 0$, we can use the Duhamel principle to handle it. Decompose $f=f^a+f^b$, where
\begin{equation}\label{aw0}
\begin{cases}
&\square_{\mathbf{g}} f^a=0,
\\
&(f^a,\partial_t f^a)|_{t=0}=(f_0,f_1) \in H^2 \times H^1,
\end{cases}
\end{equation}
and
\begin{equation}\label{aw1}
\begin{cases}
&\square_{\mathbf{g}} f^b=G,
\\
&(f^a,\partial_t f^b)|_{t=0}=(0,0).
\end{cases}
\end{equation}
So we can get the Strichartz estimates for $f^a$
\begin{equation*}
		\| \left< \partial \right>^k df^a \|_{L^2_t L^\infty_x} \lesssim  \|f_0\|_{H^r}+ \| f_1 \|_{H^{r-1}}, \quad   k<r-2.
	\end{equation*}
For $f^b$, we use the iteration process like $\mathbf{M}F$ in Step 1, and the following Strichartz estimates for $f^b$
 \begin{equation*}
		\| \left< \partial \right>^k df^b \|_{L^2_t L^\infty_x} \lesssim  \|G\|_{L^1_t H_x^{r-1}} , \quad   k<r-2.
	\end{equation*}
So the conclusion \eqref{AQ5} holds for $f=f^a+f^b$.

To prove \eqref{AQ5} for $r=1$, we note that, $\left< \partial \right>^{-1}f$ has the Cauchy data in $H^2 \times H^1$ if the Cauchy data of $f$ is in $H^1 \times L^2$, and
\begin{equation*}
  \begin{split}
  \| \square_{\mathbf{g}} \left< \partial \right>^{-1}f \|_{L^1_t H^1_x}= & \|\left< \partial \right> \square_{\mathbf{g}} \left< \partial \right>^{-1}f \|_{L^1_t L^2_x}
  \\
  \lesssim & \|[ \left< \partial \right>, \mathbf{g}^{\alpha \beta}] \left< \partial \right>^{-1}\partial_{\alpha \beta}f \|_{L^1_t L^2_x}+ \|G \|_{L^1_t L^2_x}.
  \end{split}
\end{equation*}
Noting $\mathbf{g}^{00}=-1$, then we give the following bounds by Coifman-Meyer estimate and energy estimates for $f$
\begin{equation*}
  \begin{split}
  \|[ \left< \partial \right>, \mathbf{g}^{\alpha \beta}] \left< \partial \right>^{-1}\partial_{\alpha \beta}f \|_{L^1_t L^2_x}  \lesssim & \|d\mathbf{g}\|_{L^1_t L^2_x}\|df\|_{L^\infty_t L^2_x}
  \\
  \lesssim & \epsilon_0 ( \|f_0\|_{H^1} + \|f_1\|_{L^2}).
  \end{split}
\end{equation*}
\quad \textbf{Step 3:} The case $1 < r <2$. We also set $t_0=0$. Based on the above result in Step 1 and Step 2, we also transform the initial data in $H^1 \times L^2$. Applying $\left< \partial \right>^{r-1}$ to  \eqref{linearA}, we have
	\begin{equation*}
		\square_{\mathbf{g}} \left< \partial \right>^{r-1}f=-[\square_{\mathbf{g}}, \left< \partial \right>^{r-1}]f.
	\end{equation*}
	Let $\left< \partial \right>^{r-1}f=\bar{f}$. Then $\bar{f}$ is a solution to
	\begin{equation}\label{Qd}
		\begin{cases}
			\square_{\mathbf{g}}\bar{f}=-[\square_{\mathbf{g}}, \left< \partial \right>^{r-1}]\left< \partial \right>^{1-r}\bar{f},
			\\
			(\bar{f}, \partial_t \bar{f})|_{t=0} \in H^1 \times L^2.
		\end{cases}
	\end{equation}
Considering $\mathbf{g}^{00}=-1$, let us calculate
\begin{equation}\label{Qd0}
\begin{split}
-[\square_{\mathbf{g}}, \left< \partial \right>^{r-1}]\left< \partial \right>^{1-r}\bar{f}=& [\mathbf{g}^{\alpha i}-\mathbf{m}^{\alpha i}, \left< \partial \right>^{r-1} \partial_i ] \partial_{\alpha} \left< \partial \right>^{1-r} \bar{f}
\\
& \ + \left< \partial \right>^{r-1}( \partial_i \mathbf{g}^{\alpha i} \partial_{\alpha} \left< \partial \right>^{1-r} \bar{f}).
\end{split}
\end{equation}
For $r-1 \in (0,1)$, by product estimates and a refined Kato-Ponce inequalities (please cite Theorem 1.9 in \ref{LD}), we have
	\begin{equation}\label{Qd1}
\begin{split}
		& \| [\mathbf{g}^{\alpha i}-\mathbf{m}^{\alpha i}, \left< \partial \right>^{r-1} \partial_i ] \partial_{\alpha} \left< \partial \right>^{1-r} \bar{f}  \|_{L^2_x}+ \|\left< \partial \right>^{r-1}( \partial_i \mathbf{g}^{\alpha i} \partial_{\alpha} \left< \partial \right>^{1-r} \bar{f})\|_{L^2_x}
\\
\lesssim & \| d \mathbf{g} \|_{L^\infty_x} \| d \bar{f} \|_{L^2_x}+  \| \left< \partial \right>^{r} (\mathbf{g}-\mathbf{m})\|_{L^{p_1}_x}   \|\left< \partial \right>^{1-r} d \bar{f} \|_{L^{q_1}_x},
\end{split}
	\end{equation}
where $p_1=\frac{3}{\frac{3}{2}-s+r}$ and $q_1=\frac{3}{s-r}$. By Sobolev's inequality, we get
\begin{equation}\label{Qd2}
  \| \left< \partial \right>^{r} (\mathbf{g}-\mathbf{m})\|_{L^{p_1}_x} \lesssim \| \mathbf{g}- \mathbf{m}\|_{H^{s}_x}.
\end{equation}
By Gagliardo-Nirenberg inequality, we can see that\footnote{The number $s$ and $s_0$ satisfying $2<s_0<s \leq \frac52$.}
\begin{equation}\label{Qd3}
  \|\left< \partial \right>^{1-r} d \bar{f} \|_{L^{q_1}_x} \lesssim \| d \bar{f} \|^{\frac{s-s_0}{\frac52-s_0}}_{L^{2}_x} \|\left< \partial \right>^{1-s_0} d \bar{f} \|^{1-\frac{s-s_0}{\frac52-s_0}}_{L^{\infty}_x}
\end{equation}
Considering\footnote{please see the definition of $\mathcal{H}$ and \eqref{401}-\eqref{403}}
	\begin{equation*}
		\| d \mathbf{g} \|_{L^2_t L^\infty_x} +  \| \mathbf{g}- \mathbf{m}\|_{L^\infty_tH^{s}_x} \lesssim \epsilon_0,
	\end{equation*}
and combining with \eqref{Qd0}-\eqref{Qd3}, we have
	\begin{equation}\label{eh3}
		\| [\square_{\mathbf{g}}, \left< \partial \right>^{r-1}]\left< \partial \right>^{1-r}\bar{f}\|_{L^1_t L^2_x} \lesssim \epsilon_0( \| d\bar{f}\|_{L^\infty_t L^2_x}+  \| \left< \partial \right>^{1-s_0} d \bar{f} \|_{L^2_t L^\infty_x}).
	\end{equation}
Using the discussion in case $r=1$, namely \eqref{AQ5}, we then get the Strichartz estimates for $\bar{f}$ for $\theta<-1$,
	\begin{equation}\label{eh}
		\begin{split}
			\| \left< \partial \right>^\theta d \bar{f} \|_{L^2_t L^\infty_x} \lesssim  & \  \epsilon_0( \|f_0\|_{H^r}+ \| f_1 \|_{H^{r-1}}+ \| d \bar{f} \|_{L^2_x}+  \|\left< \partial \right>^{1-s_0} d \bar{f} \|_{L^2_t L^\infty_x} ).
		\end{split}
	\end{equation}
Note $1-s_0<-1$. So we take $\theta=1-s_0$, and then we obtain
	\begin{equation}\label{eh0}
		\begin{split}
			\| \left< \partial \right>^{1-s_0} d \bar{f} \|_{L^2_t L^\infty_x} \lesssim  & \  \epsilon_0( \|f_0\|_{H^r}+ \| f_1 \|_{H^{r-1}}+ \| d \bar{f} \|_{L^2_x} ).
		\end{split}
	\end{equation}
Using \eqref{eh0},  \eqref{eh} yields
	\begin{equation}\label{eh1}
		\begin{split}
			\| \left< \partial \right>^\theta d \bar{f} \|_{L^2_t L^\infty_x} \lesssim  & \  \epsilon_0( \|f_0\|_{H^r}+ \| f_1 \|_{H^{r-1}}+ \| d \bar{f} \|_{L^2_x} ), \quad \theta <-1.
		\end{split}
	\end{equation}
By \eqref{eh0}, \eqref{eh3} and the energy estimates for $\bar{f}$, we have
	\begin{equation}\label{AQ7}
		 \| d \bar{f} \|_{L^2_x} \lesssim \| \bar{f}(0) \|_{H^1_x}+ \| \partial_t \bar{f}(0) \|_{L^2_x} \lesssim \|f_0\|_{H^r}+ \| f_1 \|_{H^{r-1}}.
	\end{equation}
Substituting $\left< \partial \right>^{r-1}f=\bar{f}$ in \eqref{eh1} and using \eqref{AQ7}, we can prove that
	\begin{equation*}
			\| \left< \partial \right>^\theta d {f} \|_{L^2_t L^\infty_x} \lesssim   \ \epsilon_0( \|f_0\|_{H^r}+ \| f_1 \|_{H^{r-1}}) , \quad   \theta<r-2.
	\end{equation*}
As a result, we also have
\begin{equation*}
			\| \left< \partial \right>^k f \|_{L^2_t L^\infty_x}
			\lesssim   \epsilon_0( \|f_0\|_{H^r}+ \| f_1 \|_{H^{r-1}} ) , \quad   k<r-1.
	\end{equation*}

\textbf{Step 4:} The case $2 < r \leq s$. We also set $t_0=0$. Based on the above result in Step 1 and Step 2, we transform the initial data in $H^1 \times L^2$. Applying $\left< \partial \right>^{r-1}$ to \eqref{linearA}, we have
	\begin{equation*}
		\square_{\mathbf{g}} \left< \partial \right>^{r-1}f=-[\square_{\mathbf{g}}, \left< \partial \right>^{r-1}]f.
	\end{equation*}
	Let $\left< \partial \right>^{r-1}f=\bar{f}$. Then $\bar{f}$ is a solution to
	\begin{equation}\label{qd}
		\begin{cases}
			\square_{\mathbf{g}}\bar{f}=-[\square_{\mathbf{g}}, \left< \partial \right>^{r-1}]\left< \partial \right>^{1-r}\bar{f},
			\\
			(\bar{f}, \partial_t \bar{f})|_{t=0} \in H^1 \times L^2.
		\end{cases}
	\end{equation}
Considering $\mathbf{g}^{00}=-1$, let us calculate
\begin{equation}\label{AQ9}
\begin{split}
-[\square_{\mathbf{g}}, \left< \partial \right>^{r-1}]\left< \partial \right>^{1-r}\bar{f}=& [\mathbf{g}^{\alpha i}-\mathbf{m}^{\alpha i}, \left< \partial \right>^{r-1} \partial_i ] \partial_{\alpha} \left< \partial \right>^{1-r} \bar{f}
\\
& \ + \left< \partial \right>^{r-1}( \partial_i \mathbf{g}^{\alpha i} \partial_{\alpha} \left< \partial \right>^{1-r} \bar{f}).
\end{split}
\end{equation}
By Kato-Ponce commutator estimates and product estimates, we have
	\begin{equation}\label{qdy}
\begin{split}
		& \| [\mathbf{g}^{\alpha i}-\mathbf{m}^{\alpha i}, \left< \partial \right>^{r-1} \partial_i ] \partial_{\alpha} \left< \partial \right>^{1-r} \bar{f}  \|_{L^2_x}+ \|\left< \partial \right>^{r-1}( \partial_i \mathbf{g}^{\alpha i} \partial_{\alpha} \left< \partial \right>^{1-r} \bar{f})\|_{L^2_x}
\\
\lesssim & \| d \mathbf{g} \|_{L^\infty_x} \| d \bar{f} \|_{L^2_x}+  \| \mathbf{g}^{\alpha \beta}- \mathbf{m}^{\alpha \beta}\|_{H^{r}_x}   \|\left< \partial \right>^{1-r} d \bar{f} \|_{L^\infty_x},
\end{split}
	\end{equation}
Considering\footnote{please see the definition of $\mathcal{H}$ and \eqref{401}-\eqref{403}}
	\begin{equation*}
		\| d \mathbf{g} \|_{L^2_t L^\infty_x} +  \| \mathbf{g}^{\alpha \beta}- \mathbf{m}^{\alpha \beta}\|_{L^\infty_tH^{r}_x} \lesssim \epsilon_0, \qquad 2 < r \leq s,
	\end{equation*}
and combining with \eqref{AQ9}, \eqref{qdy}, we have
	\begin{equation}\label{Eh3}
		\| [\square_{\mathbf{g}}, \left< \partial \right>^{r-1}]\left< \partial \right>^{1-r}\bar{f}\|_{L^1_t L^2_x} \lesssim \epsilon_0( \| d\bar{f}\|_{L^\infty_t L^2_x}+  \| \left< \partial \right>^{1-r} d \bar{f} \|_{L^2_t L^\infty_x}).
	\end{equation}
Using the discussion in case $r=1$, namely \eqref{AQ5}, we then get the Strichartz estimates for $\bar{f}$ for $\theta<-1$,
	\begin{equation}\label{Eh}
		\begin{split}
			\| \left< \partial \right>^\theta d \bar{f} \|_{L^2_t L^\infty_x} \lesssim  & \  \epsilon_0( \|f_0\|_{H^r}+ \| f_1 \|_{H^{r-1}}+ \| d \bar{f} \|_{L^2_x}+  \|\left< \partial \right>^{1-r} d \bar{f} \|_{L^2_t L^\infty_x} ).
		\end{split}
	\end{equation}
Note $r>2$ and $1-r<-1$. We then take $\theta=1-r$. So we have
	\begin{equation}\label{Eh0}
		\begin{split}
			\| \left< \partial \right>^{1-r} d \bar{f} \|_{L^2_t L^\infty_x} \lesssim  & \  \epsilon_0( \|f_0\|_{H^r}+ \| f_1 \|_{H^{r-1}}+ \| d \bar{f} \|_{L^2_x} ).
		\end{split}
	\end{equation}
Using \eqref{eh0}, \eqref{Eh} yields
	\begin{equation}\label{Eh1}
		\begin{split}
			\| \left< \partial \right>^\theta d \bar{f} \|_{L^2_t L^\infty_x} \lesssim  & \  \epsilon_0( \|f_0\|_{H^r}+ \| f_1 \|_{H^{r-1}}+ \| d \bar{f} \|_{L^2_x} ), \quad \theta <-1.
		\end{split}
	\end{equation}
By \eqref{Eh0}, \eqref{Eh3} and the energy estimates for $\bar{f}$, we have
	\begin{equation}\label{Eh4}
		 \| d \bar{f} \|_{L^2_x} \lesssim \| \bar{f}(0) \|_{H^1_x}+ \| \partial_t \bar{f}(0) \|_{L^2_x} \lesssim \|f_0\|_{H^r}+ \| f_1 \|_{H^{r-1}}.
	\end{equation}
Substituting $\left< \partial \right>^{r-1}f=\bar{f}$ in \eqref{Eh1} and using \eqref{Eh4}, we can prove that
	\begin{equation*}
			\| \left< \partial \right>^\theta d {f} \|_{L^2_t L^\infty_x} \lesssim   \ \epsilon_0( \|f_0\|_{H^r}+ \| f_1 \|_{H^{r-1}}) , \quad   \theta<r-2.
	\end{equation*}
As a result, we also have
\begin{equation*}
			\| \left< \partial \right>^k f \|_{L^2_t L^\infty_x}
			\lesssim   \epsilon_0( \|f_0\|_{H^r}+ \| f_1 \|_{H^{r-1}} ) , \quad   k<r-1.
	\end{equation*}

\textbf{Step 5:} The case $s < r \leq 3$. We also set $t_0=0$. Based on the above result in Step 1 and Step 2, we transform the initial data in $H^1 \times L^2$. Applying $\left< \partial \right>^{r-1}$ to \eqref{linearA}, we have
	\begin{equation*}
		\square_{\mathbf{g}} \left< \partial \right>^{r-1}f=-[\square_{\mathbf{g}}, \left< \partial \right>^{r-1}]f.
	\end{equation*}
	Let $\left< \partial \right>^{r-1}f=\bar{f}$. Then $\bar{f}$ is a solution to
	\begin{equation}\label{sy0}
		\begin{cases}
			\square_{\mathbf{g}}\bar{f}=-[\square_{\mathbf{g}}, \left< \partial \right>^{r-1}]\left< \partial \right>^{1-r}\bar{f},
			\\
			(\bar{f}, \partial_t \bar{f})|_{t=0} \in H^1 \times L^2.
		\end{cases}
	\end{equation}
Considering $\mathbf{g}^{00}=-1$, let us calculate
\begin{equation}\label{sy1}
\begin{split}
-[\square_{\mathbf{g}}, \left< \partial \right>^{r-1}]\left< \partial \right>^{1-r}\bar{f}=& [\mathbf{g}^{\alpha i}-\mathbf{m}^{\alpha i}, \left< \partial \right>^{r-1}  ] \partial_{\alpha} \left< \partial \right>^{1-r} \partial_i\bar{f} .
\end{split}
\end{equation}
By Kato-Ponce commutator estimates, we have
	\begin{equation}\label{sy2}
\begin{split}
		& \| [\mathbf{g}^{\alpha i}-\mathbf{m}^{\alpha i}, \left< \partial \right>^{r-1}  ] \partial_{\alpha} \left< \partial \right>^{1-r} \partial_i \bar{f}  \|_{L^2_x}
\\
\lesssim & \| d \mathbf{g} \|_{L^\infty_x} \| d \bar{f} \|_{L^2_x}+  \| \left< \partial \right>^{r-1}( \mathbf{g}- \mathbf{m})\|_{L^{p_2}_x}   \|\left< \partial \right>^{2-r} d \bar{f} \|_{L^{q_2}_x},
\end{split}
	\end{equation}
where $p_2=\frac{3}{\frac{1}{2}-s+r}$ and $q_2=\frac{3}{1+s-r}$. By Sobolev's inequality, we get
\begin{equation}\label{sy3}
  \| \left< \partial \right>^{r-1} (\mathbf{g}-\mathbf{m})\|_{L^{p_2}_x} \lesssim \| \mathbf{g}- \mathbf{m}\|_{H^{s}_x}.
\end{equation}
By Gagliardo-Nirenberg inequality, we can see that\footnote{The number $s$ and $s_0$ satisfying $2<s_0<s \leq \frac52$.}
\begin{equation}\label{sy4}
  \|\left< \partial \right>^{2-r} d \bar{f} \|_{L^{q_2}_x} \lesssim \| d \bar{f} \|^{\frac{s-2}{3-s}}_{L^{2}_x} \|\left< \partial \right>^{-\frac{s}{2}} d \bar{f} \|^{\frac{5-2s}{3-s}}_{L^{\infty}_x}
\end{equation}
Considering\footnote{please see the definition of $\mathcal{H}$ and \eqref{401}-\eqref{403}}
	\begin{equation*}
		\| d \mathbf{g} \|_{L^2_t L^\infty_x} +  \| \mathbf{g}^{\alpha \beta}- \mathbf{m}^{\alpha \beta}\|_{L^\infty_tH^{r-1}_x} \lesssim \epsilon_0, \qquad s < r \leq 3,
	\end{equation*}
and combining with \eqref{sy1}, \eqref{sy2}, we have
	\begin{equation*}
		\| [\square_{\mathbf{g}}, \left< \partial \right>^{r-1}]\left< \partial \right>^{1-r}\bar{f}\|_{L^1_t L^2_x} \lesssim \epsilon_0( \| d\bar{f}\|_{L^\infty_t L^2_x}+  \| \left< \partial \right>^{-\frac{s}{2}} d \bar{f} \|_{L^2_t L^\infty_x}).
	\end{equation*}
Using the discussion in case $r=1$, namely \eqref{AQ5}, we then get the Strichartz estimates for $\bar{f}$ for $\theta<-1$,
	\begin{equation}\label{syu4}
		\begin{split}
			\| \left< \partial \right>^\theta d \bar{f} \|_{L^2_t L^\infty_x} \lesssim  & \  \epsilon_0( \|f_0\|_{H^r}+ \| f_1 \|_{H^{r-1}}+ \| d \bar{f} \|_{L^2_x}+  \|\left< \partial \right>^{2-r} d \bar{f} \|_{L^2_t L^\infty_x} ).
		\end{split}
	\end{equation}
Note $-\frac{s}{2}<-1$. We then take $\theta=-\frac{s}{2}$. So we have
	\begin{equation}\label{sy5}
		\begin{split}
			\| \left< \partial \right>^{2-r} d \bar{f} \|_{L^2_t L^\infty_x} \lesssim  & \  \epsilon_0( \|f_0\|_{H^r}+ \| f_1 \|_{H^{r-1}}+ \| d \bar{f} \|_{L^2_x} ).
		\end{split}
	\end{equation}
Using \eqref{sy5},  \eqref{syu4} yields
	\begin{equation}\label{sy6}
		\begin{split}
			\| \left< \partial \right>^\theta d \bar{f} \|_{L^2_t L^\infty_x} \lesssim  & \  \epsilon_0( \|f_0\|_{H^r}+ \| f_1 \|_{H^{r-1}}+ \| d \bar{f} \|_{L^2_x} ), \quad \theta <-1.
		\end{split}
	\end{equation}
By \eqref{sy4}, \eqref{sy5} and the energy estimates for $\bar{f}$, we have
	\begin{equation}\label{sy7}
		 \| d \bar{f} \|_{L^2_x} \lesssim \| \bar{f}(0) \|_{H^1_x}+ \| \partial_t \bar{f}(0) \|_{L^2_x} \lesssim \|f_0\|_{H^r}+ \| f_1 \|_{H^{r-1}}.
	\end{equation}
Substituting $\left< \partial \right>^{r-1}f=\bar{f}$ in \eqref{sp6} and using \eqref{sp7}, we can prove that
	\begin{equation*}
			\| \left< \partial \right>^\theta d {f} \|_{L^2_t L^\infty_x} \lesssim   \ \epsilon_0( \|f_0\|_{H^r}+ \| f_1 \|_{H^{r-1}}) , \quad   \theta<r-2.
	\end{equation*}
As a result, we also have
\begin{equation*}
			\| \left< \partial \right>^k f \|_{L^2_t L^\infty_x}
			\lesssim   \epsilon_0( \|f_0\|_{H^r}+ \| f_1 \|_{H^{r-1}} ) , \quad   k<r-1.
	\end{equation*}

\textbf{Step 6:} The case $3 < r \leq s+1$. We also set $t_0=0$. Based on the above result in Step 1 and Step 2, we transform the initial data in $H^1 \times L^2$.  Applying $\left< \partial \right>^{r-1}$ to \eqref{linearA}, we have
	\begin{equation*}
		\square_{\mathbf{g}} \left< \partial \right>^{r-1}f=-[\square_{\mathbf{g}}, \left< \partial \right>^{r-1}]f.
	\end{equation*}
	Let $\left< \partial \right>^{r-1}f=\bar{f}$. Then $\bar{f}$ is a solution to
	\begin{equation}\label{sp0}
		\begin{cases}
			\square_{\mathbf{g}}\bar{f}=-[\square_{\mathbf{g}}, \left< \partial \right>^{r-1}]\left< \partial \right>^{1-r}\bar{f},
			\\
			(\bar{f}, \partial_t \bar{f})|_{t=0} \in H^1 \times L^2.
		\end{cases}
	\end{equation}
Considering $\mathbf{g}^{00}=-1$, let us calculate
\begin{equation}\label{sp1}
\begin{split}
-[\square_{\mathbf{g}}, \left< \partial \right>^{r-1}]\left< \partial \right>^{1-r}\bar{f}=& [\mathbf{g}^{\alpha i}-\mathbf{m}^{\alpha i}, \left< \partial \right>^{r-1}  ] \partial_{\alpha} \left< \partial \right>^{1-r} \partial_i\bar{f} .
\end{split}
\end{equation}
By Kato-Ponce commutator estimates, we have
	\begin{equation}\label{sp2}
\begin{split}
		& \| [\mathbf{g}^{\alpha i}-\mathbf{m}^{\alpha i}, \left< \partial \right>^{r-1}  ] \partial_{\alpha} \left< \partial \right>^{1-r} \partial_i \bar{f}  \|_{L^2_x}
\\
\lesssim & \| d \mathbf{g} \|_{L^\infty_x} \| d \bar{f} \|_{L^2_x}+  \| \mathbf{g}^{\alpha \beta}- \mathbf{m}^{\alpha \beta}\|_{H^{r-1}_x}   \|\left< \partial \right>^{2-r} d \bar{f} \|_{L^\infty_x},
\end{split}
	\end{equation}
Considering\footnote{please see the definition of $\mathcal{H}$ and \eqref{401}-\eqref{403}}
	\begin{equation*}
		\| d \mathbf{g} \|_{L^2_t L^\infty_x} +  \| \mathbf{g}^{\alpha \beta}- \mathbf{m}^{\alpha \beta}\|_{L^\infty_tH^{r-1}_x} \lesssim \epsilon_0, \qquad 3 < r \leq s+1,
	\end{equation*}
and combining with \eqref{sp1}, \eqref{sp2}, we have
	\begin{equation}\label{sp3}
		\| [\square_{\mathbf{g}}, \left< \partial \right>^{r-1}]\left< \partial \right>^{1-r}\bar{f}\|_{L^1_t L^2_x} \lesssim \epsilon_0( \| d\bar{f}\|_{L^\infty_t L^2_x}+  \| \left< \partial \right>^{2-r} d \bar{f} \|_{L^2_t L^\infty_x}).
	\end{equation}
Using the discussion in case $r=1$, namely \eqref{AQ5}, we then get the Strichartz estimates for $\bar{f}$ for $\theta<-1$,
	\begin{equation}\label{sp4}
		\begin{split}
			\| \left< \partial \right>^\theta d \bar{f} \|_{L^2_t L^\infty_x} \lesssim  & \  \epsilon_0( \|f_0\|_{H^r}+ \| f_1 \|_{H^{r-1}}+ \| d \bar{f} \|_{L^2_x}+  \|\left< \partial \right>^{2-r} d \bar{f} \|_{L^2_t L^\infty_x} ).
		\end{split}
	\end{equation}
Note $r>3$ and $2-r<-1$. We then take $\theta=2-r$. So we have
	\begin{equation}\label{sp5}
		\begin{split}
			\| \left< \partial \right>^{2-r} d \bar{f} \|_{L^2_t L^\infty_x} \lesssim  & \  \epsilon_0( \|f_0\|_{H^r}+ \| f_1 \|_{H^{r-1}}+ \| d \bar{f} \|_{L^2_x} ).
		\end{split}
	\end{equation}
Using \eqref{sp5}, \eqref{sp4} yields
	\begin{equation}\label{sp6}
		\begin{split}
			\| \left< \partial \right>^\theta d \bar{f} \|_{L^2_t L^\infty_x} \lesssim  & \  \epsilon_0( \|f_0\|_{H^r}+ \| f_1 \|_{H^{r-1}}+ \| d \bar{f} \|_{L^2_x} ), \quad \theta <-1.
		\end{split}
	\end{equation}
By \eqref{Eh0}, \eqref{Eh3} and the energy estimates for $\bar{f}$, we have
	\begin{equation}\label{sp7}
		 \| d \bar{f} \|_{L^2_x} \lesssim \| \bar{f}(0) \|_{H^1_x}+ \| \partial_t \bar{f}(0) \|_{L^2_x} \lesssim \|f_0\|_{H^r}+ \| f_1 \|_{H^{r-1}}.
	\end{equation}
Substituting $\left< \partial \right>^{r-1}f=\bar{f}$ in \eqref{sp6} and using \eqref{sp7}, we can prove that
	\begin{equation*}
			\| \left< \partial \right>^\theta d {f} \|_{L^2_t L^\infty_x} \lesssim   \ \epsilon_0( \|f_0\|_{H^r}+ \| f_1 \|_{H^{r-1}}) , \quad   \theta<r-2.
	\end{equation*}
As a result, we also have
\begin{equation*}
			\| \left< \partial \right>^k f \|_{L^2_t L^\infty_x}
			\lesssim   \epsilon_0( \|f_0\|_{H^r}+ \| f_1 \|_{H^{r-1}} ) , \quad   k<r-1.
	\end{equation*}
At this stage, we have finished the proof of Proposition \ref{r5}.
\section{Continuous dependence of solutions on initial data} \label{Sub}
In this part, we will study the continuous dependence of solutions of isentropic compressible Euler equations. We will discuss the problem of continuous dependence by extending a frequency envelope approach from the incompressible Euler equations \cite{Tao} to the compressible Euler system, This approach was originally devised by Tao and developed by Ifrim-Tataru for quasilinear hyperbolic systems \cite{IT1}. We should mention that there are some works considering the continuous dependence for incompressible Euler equations \cite{BS,KL} and incompressible Neo-Hookean materials \cite{AK}, and they need some additional regularity conditions for the vorticity. Different with these historical results, we are concerned with the rough solutions for a wave-transport system, and the proof crucially relies on Strichartz estimates of a linear wave equation endowed with the acoustic metric.
\begin{corollary}
	[Continuous dependence on initial data]\label{cv} Let $2<s_0<s\leq \frac{5}{2}$. Let the initial data $(\bv_{0}, \rho_{0}, h_{0}, \bw_{0})$ be stated in Theorem \ref{dingli}. Then
	\begin{equation}\label{Id}
		\| \bv_0\|_{H^s}+ \| \rho_0\|_{H^s} + \| h_0 \|_{H^{s_0+1}} + \| \bw_0 \|_{H^{s_0}} \leq M_0.
	\end{equation}
Let $\{(\bv_{0m}, \rho_{0m}, h_{0m}, \bw_{0m})\}_{m \in \mathbb{N}^+}$ be a sequence of initial data converging to $(\bv_0,\rho_0, h_0, \bw_0)$ in space $H^s \times H^s \times H^{s_0+1}\times H^{s_0}$. Then for every $m \in \mathbb{N}^+$, there exists $T>0$ such that the Cauchy problem \eqref{fc0} with the initial data $(\bv_{0m}, \rho_{0m}, h_{0m}, \bw_{0m})$ has a unique solution $(\bv_{m}, \rho_{m}, h_m, \bw_{m})$ on $[0,T]$, and the Cauchy problem \eqref{fc0} with the initial data $(\bv_{0}, \rho_{0}, h_{0}, \bw_{0})$ has a unique solution $(\bv, \rho, h, \bw)$ on $[0,T]$. Moreover, the sequence $\{(\bv_{m}, \rho_{m}, h_m, \bw_{m})\}_{m \in \mathbb{N}^+}$ converge to $(\bv,\rho,h,\bw)$ on $[0,T]$ in $\in H^s_x \times H_x^s \times H_x^{s_0+1} \times H_x^{s_0}$. 
\end{corollary}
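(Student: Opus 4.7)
The main obstruction is the classical loss of derivatives: writing down the hyperbolic system for the difference $\bU_m-\bU$ and closing energy estimates at the top level $H^s\times H^s\times H^{s_0+1}\times H^{s_0}$ produces source terms of the form $(\bU_m-\bU)\cdot\partial\bU$, where $\partial\bU\in H^{s-1}$ has one less derivative than is needed to close. One cannot therefore propagate Lipschitz dependence directly at top regularity. The strategy I would follow is the frequency-envelope regularization scheme of Tao and Ifrim--Tataru: first prove convergence in a weaker norm, then reinject the high-frequency information by constructing, for each $l\in\mathbb Z^+$, a smooth solution $(\bv^l,\rho^l,h^l,\bw^l)$ starting from the Littlewood--Paley truncated data $(P_{\le l}\bv_0,P_{\le l}\rho_0,P_{\le l}h_0,\bar\rho^{-1}e^{-P_{\le l}\rho_0}\mathrm{curl}P_{\le l}\bv_0)$, and analogously $(\bv_m^l,\rho_m^l,h_m^l,\bw_m^l)$ from the truncated $m$-th data. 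All of these are smooth by Theorem \ref{dingli}.

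\textbf{Step 1 (weak convergence).} Putting system \eqref{fc0} into the symmetric hyperbolic form of Lemma \ref{sh} and applying standard $L^2$ energy estimates to $\bU_m-\bU$ with $\bU=(p(\rho),\bv,h)^{\mathrm T}$, the uniform Strichartz bound $\|(d\bv_m,d\rho_m,dh_m,d\bv,d\rho,dh)\|_{L^2_tL^\infty_x}\le 2M_1$ from point \eqref{point:2} of Theorem \ref{dingli} yields
\begin{equation*}
\|\bU_m-\bU\|_{L^\infty_{[0,T]}L^2_x}\le C\|\bU_{0m}-\bU_0\|_{L^2}.
\end{equation*}
Interpolation against the uniform $H^s$ bound gives convergence in $C([0,T];H^{s'})$ for every $s'<s$; convergence of the vorticities in $H^{s_0-\epsilon}$ follows from $\bw_m-\bw=\bar\rho^{-1}(e^{-\rho_m}\mathrm{curl}\bv_m-e^{-\rho}\mathrm{curl}\bv)$ together with tame product estimates.

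\textbf{Step 2 (frequency-envelope bounds).} The central estimates to be established are uniform-in-$m$ bounds of the form
\begin{equation*}
\|(\bv^{l+1}-\bv^l,\rho^{l+1}-\rho^l,h^{l+1}-h^l,\bw^{l+1}-\bw^l)\|_{L^\infty_{[0,T]}(H_x^s\times H_x^s\times H_x^{s_0+1}\times H_x^{s_0})}\lesssim c_l,
\end{equation*}
where $\{c_l\}\in\ell^2$ encodes the $H^s\times H^s\times H^{s_0+1}\times H^{s_0}$-tail of $(\bv_0,\rho_0,h_0,\bw_0)$, and analogously for the $m$-indexed family. These will be obtained by combining three ingredients: (i) the \emph{linear} wave Strichartz estimate \eqref{SE1} of point \eqref{point:3} of Theorem \ref{dingli}, applied to the wave equations satisfied by $\bv_+^{l+1}-\bv_+^l$ and $(\rho+\frac1\gamma h)^{l+1}-(\rho+\frac1\gamma h)^l$ coming from Lemma \ref{crh} and Lemma \ref{wte1}; (ii) the transport/wave energy estimates of Theorem \ref{ve} applied to $\bw^{l+1}-\bw^l$ and $h^{l+1}-h^l$ via their modified transport equations \eqref{pre0}--\eqref{pre1} and \eqref{pref}; and (iii) the elliptic relation \eqref{etad} to recover $\bv_-^{l+1}-\bv_-^l$. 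For fixed $l$, classical persistence of smooth regularity additionally gives $(\bv_m^l,\rho_m^l,h_m^l,\bw_m^l)\to(\bv^l,\rho^l,h^l,\bw^l)$ in $C([0,T];H_x^s\times H_x^s\times H_x^{s_0+1}\times H_x^{s_0})$ as $m\to\infty$, because the data converge in every Sobolev norm.

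\textbf{Step 3 (synthesis) and the main difficulty.} Writing $\bv_m-\bv=(\bv_m-\bv_m^l)+(\bv_m^l-\bv^l)+(\bv^l-\bv)$, and similarly for the other components, the outer pieces are controlled uniformly in $m$ by $\sum_{k\ge l}c_k\to 0$, while the middle piece tends to zero as $m\to\infty$ for each fixed $l$; a standard diagonal argument closes the proof. The hardest step is the derivation of the envelope bound in Step 2 for the vorticity and entropy components. The top-order terms in the transport equations for $\bw^{l+1}-\bw^l$ and $h^{l+1}-h^l$ produce contributions of the shape $\partial\bv\cdot\partial^2 h$, $\partial\rho\cdot\partial\bw$ and $\partial^3 h$ that would naively lose half a derivative at the $\ell^2_l$ level. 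The resolution is to carry out on the difference the same sequence of integrations by parts, Plancherel reshufflings, and Hodge cancellations (in particular $\partial^i\,\mathrm{curl}\,\mathrm{curl}\bw_i=0$ and $\epsilon^{ijk}\partial_i\partial_k h=0$) that underlie the proof of Theorem \ref{ve}, and then to absorb the residual top-order piece using the stronger linear Strichartz bound \eqref{SE1}---which is precisely what forces the separation between \eqref{SE1} and \eqref{SSr} and makes the Strichartz statement of point \eqref{point:3} indispensable here.
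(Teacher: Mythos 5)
Your proposal is correct and follows essentially the same route as the paper's proof in Section \ref{Sub}: Step~1 is the $L^2$ energy estimate for the difference of the hyperbolic systems combined with the uniform Strichartz bound and interpolation; Step~2 introduces frequency-envelope regularizations $(\bv^l,\rho^l,h^l,\bw^l)$ and their $m$-indexed analogues, derives the difference bounds \eqref{dbsh}--\eqref{dbs1} via the hyperbolic system for $\bU^{l+1}-\bU^l$, the transport equations for $\mathrm{curl}\,\mathrm{curl}(\bw^{l+1}-\bw^l)$ and $\partial^k(\mathrm{e}^\rho\partial_i H^i)$ (using the same Hodge/Plancherel cancellations as Theorem~\ref{ve}), and the linear Strichartz estimate \eqref{SE1} for the wave equations \eqref{Ss4} satisfied by $\bv_+^{l+1}-\bv_+^l$ and $(\rho+\frac1\gamma h)^{l+1}-(\rho+\frac1\gamma h)^l$; Step~3 is the triangle-inequality plus diagonal argument. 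You correctly identify both the obstruction (derivative loss at top order) and the resolution (the strength of \eqref{SE1} over \eqref{SSr}), which is the key point the paper emphasizes.
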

Before we prove Corollary \ref{cv}, let us introduce a definition of frequency envelope, which is proposed by Tao \cite{Tao}.
\begin{definition}\label{sfe}
	We say that $\{c_k\}_{k \in \mathbb{N}^+} \in \ell^2$ is a frequency envelope for a function $f$ in $H^s$ if we have the
	following two properties:

	(1) Energy bound:
	\begin{equation}\label{f0}
		\|P_k f \|_{H^s} \lesssim c_k,
	\end{equation}
	\quad (2)  Slowly varying:
	\begin{equation}\label{f1}
		\frac{c_k}{c_j} \lesssim 2^{\delta|j-k|}, \quad j,k \in \mathbb{N}^+.
	\end{equation}
	We call such envelopes sharp, if
	\begin{equation*}
		\| f\|^2_{H^s} \approx \sum_{k \geq 0} c^2_k.
	\end{equation*}
\end{definition}
\medskip\begin{proof}[Proof of Corollary \ref{cv}]
	For $\{(\bv_{0m}, \rho_{0m},h_{0m}, \bw_{0m})\}_{m \in \mathbb{N}^+}$ is a sequence of initial data converging to $(\bv_0,\rho_0, h_0, \bw_0)$ in space $H^s \times H^s \times H^{s_0+1} \times H^{s_0}$, by using \eqref{Id}, we  obtain
	\begin{equation*}
		\| \bv_{0m}\|_{H^s}+ \| \rho_{0m}\|_{H^s}+ \| h_{0m}\|_{H^{s_0+1}} + \| \bw_{0m} \|_{H^{s_0}} \leq 100 M_0, \quad \text{for \ large} \ m,
	\end{equation*}
	By using the existence result in Theorem \ref{dingli}, there is $T>0$ such that the Cauchy problem \eqref{fc0} with the initial data $(\bv_{0m}, \rho_{0m},h_{0m}, \bw_{0m})$ has a unique solution $(\bv_{m}, \rho_{m}, h_m, \bw_{m})$ on $[0,T]$, and the Cauchy problem \eqref{fc0} with the initial data $(\bv_{0}, \rho_{0},h_0, \bw_{0})$ has a unique solution $(\bv, \rho,h, \bw)$ on $[0,T]$. We then divide the proof of convergence into three steps.

\textit{Step 1: Convergence in weaker spaces}. By $L^2_x$ energy estimates, and using the Strichartz estimates, for $m,l \in \mathbb{N}^+$, we have
	\begin{equation*}
		\begin{split}
		&	\|(\bv_m-\bv_l, \rho_m-\rho_l, h_m-h_l)(t,\cdot)\|_{L_x^{2}}+\|(\bw_m-\bw_l)(t,\cdot)\|_{L_x^{2}}
			\\
			\lesssim &\|(\bv_{0m}-\bv_{0l}, \rho_{0m}-\rho_{0l})\|_{H_x^{s}}+ \|h_{0m}-h_{0l})\|_{H_x^{s_0+1}}+\|\bw_{0m}-\bw_{0l}\|_{H_x^{s_0}}.
		\end{split}
	\end{equation*}
	This implies that, for $t\in [0,T]$,
	\begin{equation*}
		\lim_{m \rightarrow \infty}(\bv_m, \rho_m, h_m, \bw_m)  =  (\bv, \rho, h, \bw) \quad \mathrm{in} \ \ L^{2}_x.
	\end{equation*}
	By interpolation formula, we then have
	\begin{equation*}
		\|\bv_m-\bv\|_{H_x^\sigma}  \lesssim \|\bv_m-\bv\|^{1-\frac{\sigma}{s}}_{L_x^{2}} \|\bv_m-\bv\|^{\frac{\sigma}{s}}_{H_x^s}, \quad 0 \leq \sigma <s.
	\end{equation*}
	This leads to
	\begin{equation*}
		\lim_{m\rightarrow \infty}\bv_m  =  \bv \ \ \mathrm{in} \ \ H^{\sigma}_x, \ \ 0 \leq \sigma <s,
	\end{equation*}
In a similar way, we can obtain that
	\begin{equation}\label{er0}
		\begin{split}
			& \lim_{m\rightarrow \infty}\rho_m =  \rho \ \ \mathrm{in} \ \ H^{\sigma}_x, \ \ 0 \leq \sigma <s,
			\\
			& \lim_{m\rightarrow \infty}h_m =  h \ \ \mathrm{in} \ \ H^{\gamma}_x, \ \ 0 \leq \gamma <s_0+1,
			\\
			& \lim_{m\rightarrow \infty}\bw_m =  \bw \ \ \mathrm{in} \ \ H^{\theta}_x, \ \ 0 \leq \theta <s_0.
		\end{split}
	\end{equation}
It remains for us to prove the convergence with the highest derivatives of $(\bv_m,\rho_m,h_m,\bw_m)$.

	\textit{Step 2: Constructing smooth solutions}. We set $\bU_0=(v^1_{0},v^2_{0},v^3_{0},p(\rho_0),h)^{\mathrm{T}}$. By \cite{Tao,IT1}, there exists a sharp frequency envelope for $v^1_{0},v^2_{0}, v^3_{0}$ $\rho_0$ and $h_0$ respectively. Let $\{ c^{(i)}_{k} \}_{k \geq 0}$($i=1,2,3,4$) be a sharp frequency envelope respectively for $v^1_0, v^2_0, v^3_0, \rho_0$ in $ H^s$. Let $\{ c^{(5)}_{k} \}_{k \geq 0}$ be a sharp frequency envelope respectively for $h_0$ in $ H^{s_0+1}$. Let $ d^{(1)}_{k}, d^{(2)}_{k}, d^{(3)}_{k}$ be a sharp frequency envelope for $w^1_0, w^2_0$ and $w^3_0$ in $ H^{s_0}$. We choose a family of regularizations $\bU^{l}_0=(v^{1l}_0, v^{2l}_0,v^{3l}_0, p(\rho^{l}_0), h^{l}_0)^{\mathrm{T}}\in \cap^\infty_{a=0}H^a$ at frequencies $\lesssim 2^l$ where $l$ is a frequency parameter and $l \in \mathbb{N}^+$. Denote
	\begin{equation*}
		\bw^l_0=\bar{\rho}^{-1}\mathrm{e}^{-\rho^{l}_0}\mathrm{curl}\bv^l_0.
	\end{equation*}
	Then the functions $v^{1l}_0, v^{2l}_0, v^{3l}_0, \rho^{l}_0$, and $\bw^l_0=(w^{1l}_0,w^{2l}_0,w^{3l}_0)^{\mathrm{T}}$ have the following properties:

	(i)  uniform bounds
	\begin{equation}\label{pqr0}
		\begin{split}
		& \| P_k v^{il}_0 \|_{H_x^s} \lesssim c^{(i)}_k, \quad \| P_k w^{il}_0 \|_{H_x^{s_0}} \lesssim d^{(i)}_k, \qquad i=1,2,3,
		\\
		 &  \| P_k \rho^{l}_0 \|_{H_x^s} \lesssim c^{(4)}_k, \ \quad \| P_k h^{l}_0 \|_{H_x^{s_0+1}} \lesssim c^{(5)}_k,
\end{split}
\end{equation}
	
	(ii)  high frequency bounds
	\begin{equation}\label{pqr1}
		\begin{split}
		& \|   P_k v^{il}_0 \|_{H_x^{s+1}} \lesssim 2^{k}c^{(i)}_k, \quad \|P_k  w^{il}_0 \|_{H_x^{s_0+1}} \lesssim 2^{k}d^{(i)}_k, \qquad i=1,2, 3,
		\\
		&\|   P_k \rho^{l}_0 \|_{H_x^{s+1}} \lesssim 2^{k}c^{(4)}_k,   \quad \|  P_k h^{l}_0 \|_{H_x^{s_0+2}} \lesssim 2^{k}c^{(5)}_k,
\end{split}	
\end{equation}
	
	(iii)  difference bounds
	\begin{equation}\label{pqr2}
		\begin{split}
			&\|  v^{i(l+1)}_0- v^{il}_0\|_{L_x^{2}} \lesssim 2^{-sl}c^{(i)}_l, \quad  \|  w^{i(l+1)}_0- w^{il}_0\|_{L_x^{2}} \lesssim 2^{-s_0l}d^{(i)}_l, \quad i=1,2,3,
			\\
			&\| \rho^{l+1}_0- \rho^{l}_0 \|_{L_x^{2}} \lesssim 2^{-{s}l}c^{(4)}_l, \quad \quad \| h^{l+1}_0- h^{l}_0 \|_{L_x^{2}} \lesssim 2^{-(s_0+1)l}c^{(5)}_l,
		\end{split}
	\end{equation}
	
	(iv)  limit
	\begin{equation}\label{li}
		\begin{split}
			&\lim_{l \rightarrow \infty}\bv^l_0=\bv_0,  \mathrm{in}\ H^s, \qquad \quad \lim_{l \rightarrow \infty} \rho_0^l =\rho_0,  \mathrm{in}\ H^s,
			\\
			&\lim_{l\rightarrow \infty}\bw^l_0=\bw_0,   \mathrm{in}\ H^{s_0}, \qquad  \lim_{l\rightarrow \infty}h^l_0=h_0,   \mathrm{in}\ H^{s_0+1}.
		\end{split}
	\end{equation}
	We set $\bv^{l}=(v^{1l},v^{2l},v^{3l})$. Consider a Cauchy problem \eqref{CEE} with the initial data
	with the initial data
	\begin{equation*}
		\begin{split}
			& (\bv^l,\rho^l,h^l, \bw^l)|_{t=0}=(\bv^l_0,\rho^l_0,h^l_0,\bw^l_0).
		\end{split}
	\end{equation*}
	By Proposition \ref{p3}, we can obtain a family of smooth solutions $(\bv^{l}, \rho^{l}, h^{l}, \bw^l)$ on a time interval $[0,T]$ ($T$ depends only on the size of $\|\bv_0\|_{H^s}+\|\rho_0\|_{H^s}+\|h_0\|_{H^{s_0+1}}+\|\bw_0\|_{H^{s_0}}$). Furthermore, based on the estimates of \eqref{pqr0}-\eqref{li}, using Proposition \ref{p3} again, we can claim that

	$\bullet$  high frequency bounds
	\begin{equation}\label{dbsh}
		\|  \bv^{l}\|_{H_x^{s+1}}+\| \rho^{l} \|_{H_x^{s+1}}+ \| \bw^l \|_{H_x^{s_0+1}}+ \| h^l \|_{H_x^{s_0+2}} \lesssim 2^{l} M_0 ,
	\end{equation}
	
	$\bullet$  difference bounds
	\begin{equation}\label{dbs}
		\|  (\bv^{l+1}- \bv^{l}, \rho^{l+1}- \rho^{l}, h^{l+1}- h^{l}) \|_{L_x^{2}} \lesssim 2^{-sl}c_l,
	\end{equation}
	and
	\begin{equation}\label{dbs2}
		\| h^{l+1}- h^{l} \|_{\dot{H}_x^{s_0+1}} \lesssim c_l,
	\end{equation}
and
	\begin{equation}\label{dbs1}
		\| \bw^{l+1}- \bw^{l} \|_{L_x^{2}} \lesssim 2^{-(s-1)l} c_l,\quad \| \bw^{l+1}- \bw^{l} \|_{\dot{H}_x^{s_0}} \lesssim c_l,
	\end{equation}
	where $c_l=\sum^{5}_{a=1}c^{(a)}_l  + \sum^{3}_{a=1}d^{(1)}_l+d^{(2)}_l+d^{(3)}_l$. By using \eqref{p31} in Proposition \ref{p3} and combining with \eqref{pqr1}, then the estimate \eqref{dbsh} holds.
 Let us postpone to prove \eqref{dbs}-\eqref{dbs1}. By using
	\begin{equation*}
		\rho- \rho^{l}=\sum^\infty_{k=l}( \rho^{k+1}-\rho^k ),
	\end{equation*}
	and the estimate \eqref{dbs}, we can conclude
	\begin{equation}\label{ei0}
		\| \rho- \rho^{l} \|_{H_x^{s}} \lesssim c_{\geq l}.
	\end{equation}
	Here $c_{\geq l}=\sum_{j \geq l}c_j$. Similarly, by using \eqref{dbs2} and \eqref{dbs1}, we also have
	\begin{equation}\label{ei1}
		\|  \bv- \bv^{l}\|_{H_x^{s}} \lesssim c_{\geq l}, \quad \|  h- h^{l}\|_{H_x^{s_0+1}} \lesssim c_{\geq l}, \quad \| \bw- \bw^{l} \|_{H_x^{s_0}} \lesssim c_{\geq l}.
	\end{equation}
	\quad \textit{Step 3: Convergence of the highest derivatives}. For the initial data $(\bv_{0m}, \rho_{0m}, h_{0m}, \bw_{0m})$ and $\bv_{0m}=(v^1_{0m},v^2_{0m},v^3_{0m})$, let $\{ c^{(i)m}_{k}\}_{k\geq 0}$ be frequency envelopes for the initial data $v^{i}_{0m}$ in $H^s (i=1,2,3)$. Let $\{ c^{(4)m}_{k}\}_{k\geq 0}$ be frequency envelopes for the initial data $\rho_{0m}$ in $H^{s_0+1}$. Let $\{ c^{(5)m}_{k}\}_{k\geq 0}$ be frequency envelopes for the initial data $h_{0m}$ in $H^{s_0}$. Let $\{ d^{(i)m}_{k}\}_{k\geq 0}$ be frequency envelopes for the initial data $w^i_{0m}$ in $H^{s_0}(i=1,2,3)$. We choose a family of regularizations $(v^{1l}_{0m}, v^{2l}_{0m}, v^{3l}_{0m}, \rho^{l}_{0m}, h^{l}_{0m})\in \cap^\infty_{a=0}H^a$ at frequencies $\lesssim 2^l$. Denote
	\begin{equation*}
		\bw^l_{0m}=\bar{\rho}^{-1}\mathrm{e}^{-\rho^{l}_{0m}}\mathrm{curl}\bv^l_{0m}.
	\end{equation*}
	Then the functions $v^{1l}_{0m}, v^{2l}_{0m}, \rho^{l}_{0m}, h^{l}_{0m}$, and $\bw^l_{0m}$ can also have

	(1)  uniform bounds
	\begin{equation*}
		\begin{split}
		& \| P_k v^{il}_{0m} \|_{H_x^s} \lesssim c^{(i)m}_k, \quad \| P_k w^{il}_{0m} \|_{H_x^{s_0}} \lesssim d^{(i)m}_k, \quad i=1,2,3,
		\\
		&  \| P_k \rho^{l}_{0m} \|_{H_x^s} \lesssim c^{(4)m}_k, \quad \| P_k h^{l}_{0m} \|_{H_x^{s_0+1}} \lesssim c^{(5)m}_k,
	\end{split}
\end{equation*}
	
	(2)  high frequency bounds for $a\geq 0$
	\begin{equation*}
		\begin{split}
			& \|P_k   v^{i l}_{0m} \|_{H_x^{s+a}} \lesssim 2^{ak}c^{(i)}_k, \quad i=1,2, 3,
			\\
			& \|P_k   \rho^{l}_{0m} \|_{H_x^{s+a}} \lesssim 2^{ak}c^{(4)}_k, \quad  \|P_k h^{l}_{0m} \|_{H_x^{s_0+1+a}} \lesssim 2^{ak}c^{(5)}_k,
		\end{split}
	\end{equation*}
	
	(3)  difference bounds
	\begin{equation*}
		\begin{split}
			&\|  v^{i(l+1)}_{0m}- v^{il}_{0m}\|_{L_x^{2}} \lesssim 2^{-sl}c^{(i)m}_l, \quad  \|  w^{i(l+1)}_{0m}- w^{il}_{0m}\|_{L_x^{2}} \lesssim 2^{-s_0l}d^{(i)m}_l, \qquad i=1,2,3,
			\\
			&\| \rho^{l+1}_{0m}- \rho^{l}_{0m} \|_{L_x^{2}} \lesssim 2^{-sl}c^{(4)m}_l, \quad \ \ \ \| h^{i(l+1)}_{0m}- h^{il}_{0m} \|_{L_x^{2}} \lesssim 2^{-(s_0+1) l}c^{(5)m}_l,
		\end{split}
	\end{equation*}
	
	(4)  limit
	\begin{equation}\label{SSS}
		\begin{split}
			&\lim_{l\rightarrow \infty}\bv^l_{0m}=\bv_{0m},  \ \mathrm{in}\ H^s,\qquad \lim_{l\rightarrow \infty}\rho_{0m}^l= \rho_{0m},  \quad \mathrm{in}\ H^s,
			\\
			&\lim_{l\rightarrow \infty}\bw^l_{0m}=\bw_{0m},  \ \mathrm{in}\ H^{s_0}, \qquad \lim_{l\rightarrow \infty}h^l_{0m}=h_{0m},  \ \mathrm{in}\ H^{s_0+1},
		\end{split}
	\end{equation}
	By Proposition \ref{p3}, we can obtain a family of smooth solutions $(\bv_j^{l}, \rho_j^{l}, h_j^{l}, \bw_j^l)$ on a time interval\footnote{The time $T$ is uniform for $j$, because its initial data is uniformly bounded.} $[0,T]$ with the initial data $ (\bv_{0m}^{l}, \rho_{0m}^{l}, \rho_{0m}^{l}, \bw_{0m}^l)$. Let us now prove
	\begin{equation}\label{ta}
		\lim_{m\rightarrow \infty} \rho_m = \rho,  \quad \mathrm{in} \ \ H^s_x.
	\end{equation}
Inserting $\rho^{l}_m$ into $\rho_{m}-\rho$, we have
		\begin{equation}\label{cd1}
			\begin{split}
				\|\rho_{m}-\rho\|_{H_x^s}
				\leq & \|\rho^{l}_m-\rho^{l}\|_{H_x^s}+ \|\rho^{l}-\rho\|_{H_x^s}+ \|\rho^{l}_m-\rho_m\|_{H_x^s}.
			\end{split}
		\end{equation}
Due to \eqref{SSS}, we have
			\begin{equation}\label{cd4}
				\lim_{m\rightarrow \infty}\rho^l_{0m}= \rho^l_0 \quad  \mathrm{in} \ {H^\sigma_x}, \ 0\leq \sigma < \infty.
			\end{equation}
Using similar idea of the proof of \eqref{er0}, we can derive that
			\begin{equation}\label{cd5}
				\lim_{m\rightarrow \infty}\rho^l_{m}= \rho^l \quad  \mathrm{in} \ {H^\sigma_x}, \ 0 \leq \sigma < \infty.
			\end{equation}
From \eqref{cd4}, we have
			\begin{equation}\label{cd6}
				c^{(4)j}_{k} \rightarrow c^{(4)}_{k} , \ j\rightarrow \infty.
			\end{equation}
Using \eqref{cd1} and \eqref{ei0},  \eqref{cd1} yields
			\begin{equation}\label{cd3}
				\begin{split}
					\|\rho_{m}-\rho\|_{H_x^s}
					\lesssim \|\rho^{l}_m-\rho^{l}\|_{H_x^s}+ c_{\geq l}+ c^{(4)j}_{\geq l},
				\end{split}
			\end{equation}
Taking the limit of \eqref{cd3} for $j\rightarrow \infty$, by using \eqref{cd5} and \eqref{cd6}, this leads to
			\begin{equation}\label{Cd3}
				\begin{split}
					\lim_{m\rightarrow \infty }\|\rho_{m}-\rho\|_{H_x^s}
					\lesssim c_{\geq l}+ c^{(4)}_{\geq l} \lesssim c_{\geq l}.
				\end{split}
			\end{equation}
Finally, taking $l\rightarrow \infty$ for \eqref{Cd3} and using $\lim_{l\rightarrow \infty} c_{\geq l}=0$, we have
			\begin{equation}\label{cd7}
				\begin{split}
					\lim_{m\rightarrow \infty}\|\rho_{m}-\rho\|_{H_x^s}=0.
				\end{split}
			\end{equation}
Similarly, by using \eqref{dbs}-\eqref{dbs1}, we also get
			\begin{equation*}
				\lim_{m\rightarrow \infty}\|\bv_{m}-\bv\|_{H_x^s}=0, \quad \lim_{m\rightarrow \infty}\|h_{m}-h\|_{H_x^{s_0+1}}=0, \quad
			\lim_{m\rightarrow \infty}\|\bw_{m}-\bw\|_{H_x^{s_0}}=0.
			\end{equation*}
Hence, we have finished the proof of Theorem \ref{dingli3}.
		\end{proof}
It only remains for us to prove \eqref{dbs}-\eqref{dbs1}. By using the Strichartz estimates of a linear wave equation endowed with the acoustic metric, namely in Proposition \ref{p3}.

\medskip\begin{proof}[Proof of Estimates \eqref{dbs}-\eqref{dbs1}.] To prove \eqref{dbs}-\eqref{dbs1}, we also use the hyperbolic system to discuss $\bv^{l+1}-\bv^l$, $\rho^{l+1}-\rho^l$ and $h^{l+1}-h^l$. But for $\bw^{l+1}-\bw^l$, we use it's transport equations of $\bw^{l+1}$ and $\bw^l$. However, if we consider the difference terms $(\bv^{l+1}-\bv^l, \rho^{l+1}-\rho^l,h^{l+1}-h^l, \bw^{l+1}-\bw^l)$, then the original system \eqref{fc0} is destroyed or it's a disturbed system. To avoid the loss of derivatives of $\bv^{l+1}-\bv^l$ and $\rho^{l+1}-\rho^l$, the Strichartz estimates in Proposition \ref{r5} plays a key role. While, the difference $\bw^{l+1}-\bw^l$ and $h^{l+1}-h^l$  is much worse than $\bv^{l+1}-\bv^l$ for there is no dispersion.

We will prove \eqref{dbs}-\eqref{dbs1} by bootstrap arguments. At time $t=0$, by Bernstein's inequality, the data satisfies
\begin{equation}\label{lyr0}
\begin{split}
	& \|  \bv_0^{l+1}- \bv_0^{l} \|_{L_x^{2}} \leq C 2^{-sl}c_l, \qquad \ \ \|  \rho_0^{l+1}- \rho_0^{l} \|_{L_x^{2}} \leq C 2^{-sl}c_l,
\\
& \| h_0^{l+1}- h_0^{l} \|_{L_x^{2}} \leq C 2^{-(s_0+1)l} c_l,  \quad  \| \bw_0^{l+1}- \bw_0^{l} \|_{L_x^{2}} \leq C 2^{-s_0l} c_l.
\end{split}
\end{equation}
For $t \leq T$, we assume
	\begin{equation}\label{lyr1}
	\begin{split}
	& \|  (\bv^{l+1}- \bv^{l}, \rho^{l+1}- \rho^{l}, h^{l+1}- h^{l}) \|_{L_x^{2}} \leq 10C 2^{-sl}c_l,\quad \| h^{l+1}- h^{l} \|_{\dot{H}_x^{s_0+1}} \leq 10C c_l,
	\\
		& \| \bw^{l+1}- \bw^{l} \|_{L_x^{2}} \leq 10C 2^{-(s-\frac12)l} c_l,\qquad \qquad \qquad \qquad  \| \bw^{l+1}- \bw^{l} \|_{\dot{H}_x^{s_0}} \leq 10C c_l,
\end{split}
	\end{equation}
and
\begin{equation}\label{lyr2}
  \begin{split}
  & 2^l \|  \bv^{l+1}-  \bv^{l}, \rho^{l+1}-  \rho^{l}, h^{l+1}-  h^{l} \|_{L^2_{[0,t]} L^\infty_x}
  \\
  & \ + 2^l\|  \bv^{l+1}-  \bv^{l}, \rho^{l+1}-  \rho^{l}, h^{l+1}- h^{l} \|_{L^2_{[0,t]} \dot{B}^{s_0-2}_{\infty,2}}  \leq Cc_l.
\end{split}
\end{equation}
where $C$ is the constant stated in \eqref{lyr1}. We will establish the desired estimates into four steps.

\textit{Step 1: The difference of velocity and density}. Let $\bU^{l}=(\bv^l, p(\rho^l), h^l)^\mathrm{T} $. Then $\bU^{l+1}-\bU^{l}$ satisfies
			\begin{equation}\label{Fhe}
				\begin{cases}
					& A^0(\bU^{l+1}) \partial_t ( \bU^{l+1}- \bU^{l}) + A^i(\bU^{l+1}) \partial_i ( \bU^{l+1}- \bU^{l})=\Pi^l,
					\\
					& ( \bU^{l+1}-\bU^{l} )|_{t=0}= \bU_0^{l+1}-\bU_0^{l},
				\end{cases}
			\end{equation}
			where
			\begin{equation}\label{Fh}
				\Pi^l=-[A^0(\bU^{l+1})-A^0(\bU^{l}) ]\partial_t  \bU^{l}- [A^i(\bU^{l+1})-A^i(\bU^{l}) ]\partial_i  \bU^{l}.
			\end{equation}
			By energy estimates, for $a \geq 0$, we have
			\begin{equation}\label{lqe}
				\begin{split}
					\frac{d}{dt} \| \bU^{l+1}-\bU^{l} \|_{\dot{H}^a_x} \lesssim &  \| (d \bU^{l+1}, d \bU^{l}) \|_{L^\infty_x}\| \bU^{l+1}-\bU^{l} \|_{\dot{H}^a_x}
					+ \| d \bU^{l} \|_{\dot{H}^{a}_x} \|  \bU^{l+1}-  \bU^{l} \|_{L^\infty_x}
					\\
					\lesssim &  \| ( d \bU^{l+1} , d \bU^{l} )\|_{L^\infty_x} \| \bU^{l+1}-\bU^{l} \|_{\dot{H}^a_x}
					+  2^l \|  \bU^{l+1}-  \bU^{l} \|_{L^\infty_x} \| \partial \bU^{l} \|_{\dot{H}^{a-1}_x}.
				\end{split}
			\end{equation}
By using \eqref{fc0} and \eqref{lqe}, we then get
			\begin{equation}\label{fff1}
				\begin{split}
					&\| \bv^{l+1}-\bv^{l},\rho^{l+1}-\rho^{l},h^{l+1}-h^{l}\|_{L^2_x}
					\\
\lesssim &  \| \bv_0^{l+1}-\bv_0^{l}, \rho_0^{l+1}-\rho_0^{l}, h_0^{l+1}-h_0^{l} \|_{L^2_x}
					\\
					& + \int^t_0 \| (\partial \bv^{l+1}, \partial \rho^{l+1}, \partial h^{l+1}) \|_{L^\infty_x}\| \bv^{l+1}-\bv^{l},\rho^{l+1}-\rho^{l},h^{l+1}-h^{l} \|_{L^2_x} d\tau
					\\
					& + \int^t_0 \| (\partial \bv^{l}, \partial \rho^{l}, \partial h^{l}) \|_{L^\infty_x}\| \bv^{l+1}-\bv^{l},\rho^{l+1}-\rho^{l},h^{l+1}-h^{l} \|_{L^2_x} d\tau
					\\
					&
					+  \int^t_0 \| \partial \bv^{l}, \partial \rho^{l}, \partial h^{l}\|_{ L^2_x } \|  \bv^{l+1}-  \bv^{l}, \rho^{l+1}-  \rho^{l},h^{l+1}-  h^{l} \|_{ L^\infty_x} d\tau.
				\end{split}
			\end{equation}
By Gronwall's inequality, we have
	\begin{equation}\label{fff2}
		\begin{split}
		\| \bv^{l+1}-\bv^{l},\rho^{l+1}-\rho^{l} \|_{L^2_x}  \leq & \| \bv_0^{l+1}-\bv_0^{l}, \rho_0^{l+1}-\rho_0^{l}\|_{L^2_x} A(t)
 \leq  2^{-sl}c_l A(t),
	\end{split}
	\end{equation}
where
\begin{equation*}
	\begin{split}
		A(t)= & C\exp \left\{ \int^t_0 \big( \| (\partial \bv^{l}, \partial \rho^{l}, \partial h^{l}) \|_{L^\infty_x}  + 2^l \|  \bv^{l+1}-  \bv^{l}, \rho^{l+1}-  \rho^{l}, h^{l+1}- h^{l} \|_{ L^\infty_x} \big) d\tau \right\}.
	\end{split}
\end{equation*}
By using \eqref{lyr2}, for $t \in [0, \frac{1}{400(1+C_0)^2}T]$, \eqref{fff2} yields
	\begin{equation}\label{lyr3}
		\begin{split}
		\| \bv^{l+1}-\bv^{l},\rho^{l+1}-\rho^{l},h^{l+1}-h^{l} \|_{L^2_x}  \leq 4C 2^{-sl}c_l.
	\end{split}
	\end{equation}
\textit{Step 2: The difference of the specific vorticity}. Due to $\bw^l=\mathrm{e}^{-\rho^l}\mathrm{curl}\bv^{l}$, so we obtain
 \begin{equation}\label{lyr4}
   \begin{split}
  \|\bw^{l+1}- \bw^l\|_{L^2_x} \lesssim \|\partial (\bv^{l+1}- \bv^l)\|_{L^2_x} \leq 8C 2^{-(s-1)l}c_l.
\end{split}
\end{equation}
For $\| \bw^{l+1} - \bw^l \|_{\dot{H}^{s_0}}$, using elliptic estimates, we have
\begin{equation}
	\| \bw^{l+1} - \bw^l \|_{\dot{H}^{s_0}} \lesssim \| \textrm{div}(\bw^{l+1} - \bw^l) \|_{\dot{H}^{s_0-1}}+\| \textrm{curl}(\bw^{l+1} - \bw^l) \|_{\dot{H}^{s_0-1}}.
\end{equation}
Noting
\begin{equation*}
  \textrm{div}(\bw^{l+1}-\bw^l)=\partial_i (\bw^{l+1}-\bw^l) \partial^i \rho^{l+1}+\partial_i \bw^l \partial^i (\rho^{l+1}-\rho^l),
\end{equation*}
and using \eqref{gg0}, H\"older's inequality and \eqref{lyr1}, we derive that
\begin{equation}\label{divwl}
\begin{split}
	\| \textrm{div}(\bw^{l+1} - \bw^l) \|_{\dot{H}_x^{s_0-1}} \leq & \|\bw^{l+1} - \bw^l \|_{{H}_x^{\frac{3}{2}+}} \|\rho^l\|_{{H}_x^{s_0}}+\| \bw^l \|_{{H}_x^{\frac{3}{2}+}} \| \rho^{l+1}- \rho^l\|_{{H}_x^{s_0}}
\\
 \leq & 3C c_l.
\end{split}
\end{equation}
On the other hand, we also have
\begin{equation}\label{gg0}
	\begin{split}
		& \mathbf{T}^l ( \mathrm{curl}^i \mathrm{curl} \bw^{l}+F^{il}  )
		=   \partial^i \big( 2 \partial_n v^l_a \partial^n w^{al} \big) + K^{il}, \quad i=1,2,3,
	\end{split}
\end{equation}
where $K^{il}$ has the same formulations with $K^i$ by replacing $(\bv,\rho,h,\bw)$ to $(\bv^l,\rho^l,h^l,\bw^l)$, and
\begin{equation*}
	F^{il}= -\epsilon^{ijk} \partial_j \rho^l \cdot \mathrm{curl}_k\bw^l- 2\partial^a {\rho^l} \partial^i w^l_a+2 \mathrm{e}^{-\rho^l} \epsilon^{ijk} \partial_j v^{ml}  \partial_m\partial_k h^l-\mathrm{e}^{-\rho^l} \epsilon^{ijk} \partial_k h^l \Delta v^l_j.
\end{equation*}
So we have
\begin{equation}\label{divf}
	\begin{split}
	\mathrm{div} \bF^{l}=\partial_i F^{il}= & -\epsilon^{ijk} \partial_j \rho^l \cdot \partial_i \mathrm{curl}_k\bw^l- 2\partial_i \partial^a {\rho^l} \partial^i w^l_a
	\\
	& - 2\partial^a {\rho^l} \Delta w^l_a-2 \mathrm{e}^{-\rho^l}  \epsilon^{ijk} \partial \rho^l \partial_j v^{lm}  \partial_m\partial_k h^l
\\
&	+\mathrm{e}^{-\rho^l} \epsilon^{ijk} \partial_i \rho^l \partial_k h^l \Delta v^l_j-\mathrm{e}^{-\rho^l}  \partial_k h^l \Delta (\mathrm{e}^{\rho^l}w^{il}).
\end{split}
\end{equation}
Using the Hodge decomposition, we have
\begin{equation}\label{curlwl}
	\| \textrm{curl}(\bw^{l+1} - \bw^l) \|_{\dot{H}^{s_0-1}} \leq \|\textrm{curl}\textrm{curl}(\bw^{l+1} - \bw^l) \|_{\dot{H}_x^{s_0-2}} .
\end{equation}
Using \eqref{gg0}, we have
\begin{equation}\label{gg1}
	\begin{split}
		& \mathbf{T}^l \{  \mathrm{curl}^i \mathrm{curl} ( \bw^{l+1}-\bw^{l})+F^{i(l+1)}- F^{il}  \}
		\\
		=&    \partial^i \{  2 \partial_n (v^{l+1}_a-v^{l}_a) \partial^n w^{a(l+1)} \} +\partial^i \{  2 \partial_n v^l_a \partial^n (w^{a(l+1)}-w^{al} ) \}+ K^{i(l+1)}-K^{il}.
	\end{split}
\end{equation}
For simplicity, we denote
\begin{equation}\label{Gt}
	G(t)= \| \mathrm{curl}^i \mathrm{curl} ( \bw^{l+1}-\bw^{l})+F^{i(l+1)}- F^{il} \|_{\dot{H}^{s_0-2}_x}.
\end{equation}Operating $\Lambda^{s_0-2}_x$ on \eqref{gg1}, multiplying $\Lambda^{s_0-2}_x\left\{ \mathrm{curl}_i \mathrm{curl} ( \bw^{l+1}-\bw^{l})+F_i^{(l+1)}- F_i^{l}  \right\} $, and integrating it on $[0,t] \times \mathbb{R}^3$, we then obtain
\begin{equation}\label{G2t}
	\begin{split}
		G^2(t) \leq & G^2(0) + G^a+G^b+G^c,
	\end{split}
\end{equation}
where
	\begin{equation*}
		\begin{split}
			G^a=&
			| \int^t_0 \int_{\mathbb{R}^3} \Lambda^{s_0-2}_x \partial^i \{  2 \partial_n (v^{l+1}_a-v^{l}_a) \partial^n w^{a(l+1)} \}
			\cdot \Lambda^{s_0-2}_x \{  \mathrm{curl}_i \mathrm{curl} ( \bw^{l+1}-\bw^{l})+F_i^{l+1}- F_i^{l}  \} dxd\tau|,
			\\
			G^b=&  | \int^t_0 \int_{\mathbb{R}^3} \Lambda^{s_0-2}_x \partial^i \{  2 \partial_n v^l_a \partial^n (w^{a(l+1)}-w^{al} ) \}
			\cdot \Lambda^{s_0-2}_x \{  \mathrm{curl}_i \mathrm{curl} ( \bw^{l+1}-\bw^{l})+F_i^{l+1}- F_i^{l}  \} dxd\tau|,
			\\
			G^c=&  | \int^t_0 \int_{\mathbb{R}^3} \Lambda^{s_0-2}_x \{ K^{i(l+1)}-K^{il} \}
			\cdot \Lambda^{s_0-2}_x \{  \mathrm{curl}_i \mathrm{curl} ( \bw^{l+1}-\bw^{l})+F_i^{l+1}- F_i^{l} \} dxd\tau|.
		\end{split}
	\end{equation*}
Using \eqref{lyr1}, it's direct for us to get
\begin{equation}\label{G2t0}
G^2(0) \leq \frac32(Cc_l)^2.
\end{equation}
Integrating by parts for $G^a$, we have
	\begin{equation*}
		\begin{split}
			G^a\leq &
			\left| \int^t_0 \int_{\mathbb{R}^3} \Lambda^{s_0-2}_x  \{  2 \partial_n (v^{l+1}_a-v^{l}_a) \partial^n w^{a(l+1)} \}
			\cdot \Lambda^{s_0-2}_x \partial^i \{  \mathrm{curl}_i \mathrm{curl} ( \bw^{l+1}-\bw^{l})+F_i^{(l+1)}- F_i^{l}  \} dxd\tau \right|
			\\
			=&  \left| \int^t_0 \int_{\mathbb{R}^3} \Lambda^{s_0-2}_x  \{  2 \partial_n (v^{l+1}_a-v^{l}_a) \partial^n w^{a(l+1)} \}
			\cdot \Lambda^{s_0-2}_x \partial^i \{  F_i^{(l+1)}- F_i^{l}  \} dxd\tau \right|.
		\end{split}
	\end{equation*}
By H\"older's inequality and Lemma \ref{lpe} and \eqref{divf}, we have
\begin{small}
	\begin{equation*}
	\begin{split}
		 G^a
		\lesssim & \int^t_0 \| \bv^{l+1}-\bv^{l} \|_{H^{\frac32+}_x}\| \bw^{l} \|^2_{H^{s_0}_x}   \| \rho^{l+1}-\rho^{l} \|_{H^{s_0}_x} d\tau
		 \\
& +  \int^t_0 \| \bv^{l+1}-\bv^{l} \|_{H^{\frac32+}_x}\| \bw^{l} \|_{H^{s_0}_x} ( \|\partial \rho^{l} \|_{\dot{B}^{s_0-2}_{\infty,2} }+ \|\partial h^{l} \|_{\dot{B}^{s_0-2}_{\infty,2} }) \| \bw^{l+1}-\bw^{l} \|^2_{H^{s_0}_x}
		  \\
		 & +  \int^t_0 \| \bv^{l+1}-\bv^{l} \|_{H^{\frac32+}_x}\| \bw^{l} \|_{H^{s_0}_x} ( \|\partial( \rho^{l+1}-\rho^{l} ) \|_{\dot{B}^{s_0-2}_{\infty,2} }+\|\partial( h^{l+1}-h^{l} ) \|_{\dot{B}^{s_0-2}_{\infty,2} }) \|\bw^l \|_{H^{s_0}_x}d\tau
		 \\
& + \int^t_0 \|\bv^{l+1}-\bv^{l}  \|_{H^2_x}\|\bw^l \|_{H^{s_0}_x} ( \|\rho^{l+1}-\rho^{l}  \|_{H^2_x} \|\bw^l \|^2_{H^{s_0}_x}d\tau
 + \|\bw^{l+1}-\bw^{l}  \|_{H^2_x} \|\rho^l \|_{H^{s_0}_x}) d\tau
\\
& + \int^t_0 \|\rho^{l+1}-\rho^{l}  \|_{H^2_x}\|\bw^{l+1}-\bw^{l}  \|_{H^2_x} \|\bv^l \|_{H^{s_0}_x}\|\rho^l \|_{H^{s_0}_x}d\tau
	\\
&	 + \int^t_0 \| \bv^{l+1}-\bv^{l} \|_{H^{\frac32+}_x}\| \bw^{l} \|^2_{H^{s_0}_x} \|\rho^l \|_{H^{2}_x}\|h^l \|_{H^{3}_x}  \| \bv^{l+1}-\bv^{l} \|_{H^{s_0}_x} d\tau
		\\
		& +\int^t_0 \| \bv^{l+1}-\bv^{l} \|_{H^{\frac32+}_x}\| \bw^{l} \|^2_{H^{s_0}_x} \|\rho^{l+1}-\rho^l \|_{H^{2}_x}\|h^l \|_{H^{3}_x}  \| \bv^{l} \|_{H^{s_0}_x} d\tau
		 \\
& +\int^t_0 \| \bv^{l+1}-\bv^{l} \|_{H^{\frac32+}_x}\| \bw^{l} \|^2_{H^{s_0}_x} \|\rho^l \|_{H^{2}_x}\|h^{l+1}-h^l \|_{H^{3}_x}  \| \bv^{l} \|_{H^{s_0}_x} d\tau
		\\
		& +\int^t_0 \| \bv^{l+1}-\bv^{l} \|_{H^{\frac32+}_x}\| \bw^{l} \|^2_{H^{s_0}_x} \|\rho^l \|_{H^{2}_x}\|h^l \|_{H^{3}_x}  \| \bv^{l+1}-\bv^l \|_{H^{s_0}_x} d\tau.
	\end{split}
\end{equation*}
\end{small}
By using \eqref{lyr1}, for $t\in [0, \frac{1}{64(1+C)^2}T]$, we have
\begin{equation}\label{G2t1}
  G^a \leq \frac32(Cc_l)^2.
\end{equation}
Using a similar way to handle $G^a$, we can bound $G^b$ by
\begin{small}
\begin{equation*}
	\begin{split}
		G^b \lesssim & \int^t_0 \| \bv^{l+1}-\bv^{l} \|_{H^{\frac32+}_x}\| \bw^{l} \|^2_{H^{s_0}_x}   \| \rho^{l+1}-\rho^{l} \|_{H^{s_0}_x} d\tau
		 \\
& +  \int^t_0 \| \bw^{l+1}-\bw^{l} \|_{H^{\frac32+}_x}\| \bv^{l} \|_{H^{s_0}_x} ( \|\partial \rho^{l} \|_{\dot{B}^{s_0-2}_{\infty,2} }+ \|\partial h^{l} \|_{\dot{B}^{s_0-2}_{\infty,2} }) \| \bw^{l+1}-\bw^{l} \|^2_{H^{s_0}_x}
		  \\
		 & +  \int^t_0 \| \bw^{l+1}-\bw^{l} \|_{H^{\frac32+}_x}\| \bv^{l} \|_{H^{s_0}_x} ( \|\partial( \rho^{l+1}-\rho^{l} ) \|_{\dot{B}^{s_0-2}_{\infty,2} }+\|\partial( h^{l+1}-h^{l} ) \|_{\dot{B}^{s_0-2}_{\infty,2} }) \|\bw^l \|_{H^{s_0}_x}d\tau
		 \\
& + \int^t_0 \|\bw^{l+1}-\bw^{l}  \|_{H^2_x}\|\bv^l \|_{H^{s_0}_x} ( \|\rho^{l+1}-\rho^{l}  \|_{H^2_x} \|\bw^l \|^2_{H^{s_0}_x}d\tau
 + \|\bw^{l+1}-\bw^{l}  \|_{H^2_x} \|\rho^l \|_{H^{s_0}_x}) d\tau
\\
& + \int^t_0 \|\rho^{l+1}-\rho^{l}  \|_{H^2_x}\|\bw^{l+1}-\bw^{l}  \|_{H^2_x} \|\bv^l \|_{H^{s_0}_x}\|\rho^l \|_{H^{s_0}_x}d\tau
	\\
&	 + \int^t_0 \| \bw^{l+1}-\bw^{l} \|_{H^{\frac32+}_x}\| \bv^{l} \|_{H^{s_0}_x}\| \bw^{l} \|_{H^{s_0}_x} \|\rho^l \|_{H^{2}_x}\|h^l \|_{H^{3}_x}  \| \bv^{l+1}-\bv^{l} \|_{H^{s_0}_x} d\tau
		\\
		& +\int^t_0 \| \bw^{l+1}-\bw^{l} \|_{H^{\frac32+}_x}\|_{H^{s_0}_x}\| \bw^{l} \|_{H^{s_0}_x} \|\rho^{l+1}-\rho^l \|_{H^{2}_x}\|h^l \|_{H^{3}_x}  \| \bv^{l} \|_{H^{s_0}_x} d\tau
		 \\
& +\int^t_0 \| \bw^{l+1}-\bw^{l} \|_{H^{\frac32+}_x}\|_{H^{s_0}_x}\| \bw^{l} \|_{H^{s_0}_x} \|\rho^l \|_{H^{2}_x}\|h^{l+1}-h^l \|_{H^{3}_x}  \| \bv^{l} \|_{H^{s_0}_x} d\tau
		\\
		& +\int^t_0 \| \bw^{l+1}-\bw^{l} \|_{H^{\frac32+}_x}\|_{H^{s_0}_x}\| \bw^{l} \|_{H^{s_0}_x} \|\rho^l \|_{H^{2}_x}\|h^l \|_{H^{3}_x}  \| \bv^{l+1}-\bv^l \|_{H^{s_0}_x} d\tau.
	\end{split}
\end{equation*}
\end{small}
By using \eqref{lyr1}, for $t\in [0, \frac{1}{400(1+C)^2}T]$, we have
\begin{equation}\label{G2t2}
  G^b \leq \frac32(Cc_l)^2.
\end{equation}
By H\"older's inequality and Lemma \ref{lpe}, we have
\begin{equation*}
	\begin{split}
		G^c \lesssim  &  \int^t_0 ( \|\partial( \bv^{l+1}-\bv^{l} ) \|_{\dot{B}^{s_0-2}_{\infty,2}} \| \bw^l \|_{ H^{s_0}_x}+\|\partial \bv^{l} \|_{\dot{B}^{s_0-2}_{\infty,2}} \| \bw^{l+1}-\bw^l \|_{H^{s_0}_x} ) G(\tau) d\tau
		\\
		&  + \int^t_0 ( \|\partial( \rho^{l+1}-\rho^{l} ) \|_{ \dot{B}^{s_0-2}_{\infty,2}} \| h^l \|_{ H^{s_0+1}_x}+\|\partial \rho^{l} \|_{\dot{B}^{s_0-2}_{\infty,2}} \| h^{l+1}-h^l \|_{H^{s_0+1}_x} ) G(\tau)d\tau
		\\
		&  + \int^t_0 ( \|\partial( h^{l+1}-h^{l} ) \|_{\dot{B}^{s_0-2}_{\infty,2}} \| h^l \|_{ H^{s_0+1}_x}+\|\partial h^{l} \|_{ \dot{B}^{s_0-2}_{\infty,2}} \| h^{l+1}-h^l \|_{ H^{s_0+1}_x} ) G(\tau)d\tau
		\\
		&+  \int^t_0 ( \| \bv^{l+1}-\bv^l \|_{ H^{s_0}_x}\|\bv^l \|_{ H^{s_0}_x}\|h^l \|_{ H^{s_0+1}_x}
	 + \| \bv^l \|^2_{ H^{s_0}_x} \|h^{l+1}-h^l \|_{ H^{s_0+1}_x} )G(\tau)d\tau
	 	\\
	 &+  \int^t_0 ( \| \rho^{l+1}-\rho^l \|_{ H^{s_0}_x}\|h^l \|^2_{H^{s_0+1}_x}
	 + \| \rho^l \|^2_{ H^{s_0}_x}\|h^l \|_{ H^{s_0+1}_x} \|h^{l+1}-h^l \|_{ H^{s_0+1}_x} )G(\tau)d\tau
	 \\
	 &+  \int^t_0 ( \| \rho^{l+1}-\rho^l \|_{ H^{s_0}_x}\|h^l \|_{ H^{s_0+1}_x}\|\rho^l \|^2_{ H^{s_0}_x}
	 + \| \rho^l \|^2_{ H^{s_0}_x}\|h^{l+1}-h^l \|_{ H^{s_0+1}_x}  )G(\tau)d\tau
	 \\
	 & + \int^t_0 ( \| \rho^{l+1}-\rho^l \|_{ H^{s_0}_x}\|h^l \|_{ H^{s_0+1}_x}\|\rho^l \|^2_{ H^{s_0}_x}
	 + \| \rho^l \|^3_{H^{s_0}_x}\|h^{l+1}-h^l \|_{ H^{s_0+1}_x}  )G(\tau)d\tau .
	\end{split}
\end{equation*}
By using \eqref{lyr1}, for $t\in [0, \frac{1}{400(1+C)^2}T]$, we have
\begin{equation}\label{G2t3}
  G^c \leq \frac32(Cc_l)^2.
\end{equation}
Gathering \eqref{G2t}-\eqref{G2t3}, we show that
\begin{equation}\label{G2t4}
  G(t) \leq 3Cc_l.
\end{equation}
Hence, we get
\begin{equation}\label{G2t5}
\begin{split}
  \|\textrm{curl}\textrm{curl}(\bw^{l+1} - \bw^l) \|_{\dot{H}_x^{s_0-2}} \leq &G(t)+ \|\bF^{l+1}-\bF^l \|_{\dot{H}_x^{s_0-2}}
  \\
  \leq & 5Cc_l.
\end{split}
\end{equation}
By \eqref{divwl}, \eqref{curlwl} and \eqref{G2t5}, we have proved
\begin{equation}\label{G2t6}
  \|\bw^{l+1} - \bw^l \|_{\dot{H}_x^{s_0}} \leq 8Cc_l.
\end{equation}
\textit{Step 3: The difference of the highest derivatives of entropy}. Due to \eqref{lyr3}, we only need to discuss $\|h^{l+1}-h^{l}\|_{\dot{H}_x^{1+s_0}}$. Note $h^l$ satisfying \eqref{Hh2}. Hence, for $l \in \mathbb{Z}^+$, we get
\begin{equation}\label{fff3}
	\begin{split}
		(\partial_t+ \bv^l \cdot \nabla) \left\{ \partial^k( \Delta h^l- \partial^i h^l \partial_i \rho^l) \right\}=&-2\partial^k(\partial_i v^{jl}) \partial_j  \partial^i h^l+Y^{kl},
	\end{split}
\end{equation}
where
\begin{equation}\label{fff4}
	\begin{split}
		Y^{kl}=& -2\partial_i v^{lj} \partial^k  \partial^i (\partial_j h^l) + \epsilon^{ijm}\mathrm{e}^{\rho^l} w^l_m \partial^i \rho^l \partial^k\rho^l \partial_j h^l + \epsilon^{ijm}\mathrm{e}^{\rho^l} \partial^k w^l_m \partial^i \rho^l \partial_j h^l
		\\
		& + \epsilon^{ijm}\mathrm{e}^{\rho^l} w^l_m \partial_j h^l \partial_k \partial^i \rho^l   + \epsilon^{ijm}\mathrm{e}^{\rho^l} w^l_m \partial^i \rho^l \partial^k \partial_j h^l .
	\end{split}
\end{equation}
As a result, we find that
\begin{equation}\label{fff5}
	\begin{split}
		& (\partial_t+ \bv^{l+1} \cdot \nabla) \left\{ \partial^k( \Delta h^{l+1}-\Delta h^{l}- \partial^i h^{l+1} \partial_i \rho^{l+1}- \partial^i h^{l} \partial_i \rho^{l}) \right\}
		\\
		=&(\bv^{l+1}-\bv^{l})\cdot \nabla \left\{ \partial^k( \Delta h^l- \partial^i h^l \partial_i \rho^l) \right\}
		 -2\partial^k(\partial_i v^{j(l+1)}-\partial_i v^{jl}) \partial_j  \partial^i h^{l+1}
		\\
		&-2\partial^k\partial_i v^{jl} \partial_j  \partial^i (h^{l+1}-h^{l})+Y^{k(l+1)k}-Y^{kl},
	\end{split}
\end{equation}
where
\begin{equation}\label{fffa}
	\begin{split}
		Y^{lk}=& -2\partial_i v^{lj} \partial^k  \partial^i (\partial_j h^l) + \epsilon^{ijm}\mathrm{e}^{\rho^l} w^l_m \partial^i \rho^l \partial^k\rho^l \partial_j h^l + \epsilon^{ijm}\mathrm{e}^{\rho^l} \partial^k w^l_m \partial^i \rho^l \partial_j h^l
		\\
		& + \epsilon^{ijm}\mathrm{e}^{\rho^l} w^l_m \partial_j h^l \partial_k \partial^i \rho^l   + \epsilon^{ijm}\mathrm{e}^{\rho^l} w^l_m \partial^i \rho^l \partial^k \partial_j h^l .
	\end{split}
\end{equation}
For simplicity, we denote
\begin{equation}\label{BT}
	B(t)=\| \Delta h^{l+1}-\Delta h^{l}- \partial^i h^{l+1} \partial_i \rho^{l+1}- \partial^i h^{l} \partial_i \rho^{l} \|_{\dot{H}_x^{s_0-1}}.
\end{equation}
Operating $\Lambda^{s_0-2}_x$ on \eqref{fff5}, and multiplying $\Lambda^{s_0-2}_x\left\{ \partial^k( \Delta h^{l+1}-\Delta h^{l}- \partial^i h^{l+1} \partial_i \rho^{l+1}- \partial^i h^{l} \partial_i \rho^{l}) \right\} $, and integrating it on $[0,t] \times \mathbb{R}^3$, we then obtain
\begin{equation}\label{fff6}
	\begin{split}
		B^2(t)
		\leq &  B^2(0) +C\int^t_0 \|\partial \bv^{l+1}\|_{L^\infty_x}B^2(\tau) d\tau + V_1+ V_2+ V_3+ V_4,
	\end{split}
\end{equation}
where
\begin{equation*}
	\begin{split}
	V_1=	&  C\big|\int^t_0 \int_{\mathbb{R}^3} \Lambda^{s_0-2}_x\left\{ (\bv^{l+1}-\bv^{l})\cdot \nabla \partial^k( \Delta h^l- \partial^i h^l \partial_i \rho^l) \right\}
	\\
& \quad \cdot \Lambda^{s_0-2}_x\left\{ \partial_k( \Delta h^{l+1}-\Delta h^{l}- \partial^i h^{l+1} \partial_i \rho^{l+1}- \partial^i h^{l} \partial_i \rho^{l}) \right\}  dx d\tau \big|
		\\
	V_2=	& C\big| \int^t_0 \int_{\mathbb{R}^3} \Lambda^{s_0-2}_x \left\{ \partial^k(\partial_i v^{j(l+1)}-\partial_i v^{jl}) \partial_j  \partial^i h^{l+1} \right\}
	\\
	& \quad  \cdot \Lambda^{s_0-2}_x\left\{ \partial_k( \Delta h^{l+1}-\Delta h^{l}- \partial^i h^{l+1} \partial_i \rho^{l+1}- \partial^i h^{l} \partial_i \rho^{l}) \right\}  dxd\tau \big|
		\\
	V_3=	& C \big| \int^t_0 \int_{\mathbb{R}^3} \Lambda^{s_0-2}_x \left\{ \partial^k\partial_i v^{jl} \partial_j  \partial^i (h^{l+1}-h^{l}) \right\}
	\\
	& \quad \cdot \Lambda^{s_0-2}_x\left\{ \partial_k( \Delta h^{l+1}-\Delta h^{l}- \partial^i h^{l+1} \partial_i \rho^{l+1}- \partial^i h^{l} \partial_i \rho^{l}) \right\}  dxd\tau \big|
		\\
	V_4=	& C \big| \int^t_0  \int_{\mathbb{R}^3} \Lambda^{s_0-2}_x(Y^{k(l+1)}-Y^{kl})
	\\
	& \quad \cdot \Lambda^{s_0-2}_x\left\{ \partial_k( \Delta h^{l+1}-\Delta h^{l}- \partial^i h^{l+1} \partial_i \rho^{l+1}- \partial^i h^{l} \partial_i \rho^{l}) \right\}  dxd\tau \big|.
	\end{split}
\end{equation*}
We will estimate the right terms in \eqref{fff6} one by one. By Lemma \ref{lpe}, Bernstein's inequality and \eqref{lyr1}, we can bound $V_1$ as
\begin{equation}\label{V2t0}
	\begin{split}
		V_1 \leq	&  C\int^t_0 2^l\| \bv^{l+1}-\bv^{l} \|_{\dot{B}^{s_0-2}_{\infty,2}}\cdot \| \Delta h^l- \partial^i h^l \partial_i \rho^l\|_{L^\infty_t \dot{H}^{s_0-1}_x } \cdot B(\tau)d\tau
\\
& \leq \frac32 (Cc_l)^2.
	\end{split}
\end{equation}
Using Lemma \ref{lpe}, the Bernstein inequality and \eqref{lyr1} again, we have
\begin{equation}\label{V2t1}
	\begin{split}
		V_3 \leq	&  C\int^t_0 \| \partial \bv^l \|_{\dot{B}^{s_0-2}_{\infty,2}}\cdot 2^l\|  h^{l+1}- h^l \|_{ \dot{H}^{s_0}_x } \cdot B(\tau) d\tau
\\
& \leq \frac32 (Cc_l)^2.
	\end{split}
\end{equation}
For $V_4$, by using \eqref{fffa}, Lemma \ref{lpe}, and Lemma \ref{wql}, we can derive that
\begin{equation*}
	\begin{split}
		 V_4  \leq	&  C\int^t_0 \| \partial \bv^l \|_{\dot{B}^{s_0-2}_{\infty,2}}\cdot \|  h^{l+1}- h^l \|_{ \dot{H}^{s_0+1}_x } \cdot B(\tau)d\tau
		\\
		& + C\int^t_0 \| \partial ( \bv^{l+1} - \bv^{l}) \|_{\dot{B}^{s_0-2}_{\infty,2}}\cdot \|  h^l \|_{ \dot{H}^{s_0+1}_x } \cdot B(\tau)d\tau
		\\
		& + C\int^t_0 \|  \bw^{l+1} - \bw^{l} \|_{H_x^{s_0}}\cdot \|  \rho^l \|_{ {H}^{s}_x }(1+\|  \rho^l \|_{ {H}^{s}_x }) \|  h^l \|_{ {H}^{s_0+1}_x } \cdot B(\tau)d\tau
		\\
		& + C\int^t_0 \| \bw^{l} \|_{ H_x^{s_0}}\cdot \| \rho^{l+1}- \rho^l \|_{ {H}^{s}_x }(1+\|  \rho^l \|_{ {H}^{s}_x }) \|  h^l \|_{{H}^{s_0+1}_x } \cdot B(\tau)d\tau
		\\
		& + C\int^t_0 \|  \bw^{l} \|_{ H_x^{s_0}}\cdot \|  \rho^l \|_{ {H}^{s}_x }(1+\|  \rho^l \|_{ {H}^{s}_x }) \|  h^{l+1}-h^l \|_{ {H}^{s_0+1}_x } \cdot B(\tau)d\tau.
	\end{split}
\end{equation*}
By using \eqref{lyr1}, we get
\begin{equation}\label{V2t2}
	\begin{split}
		V_3  \leq \frac32 (Cc_l)^2.
	\end{split}
\end{equation}
For $V_2$, let us integrate it by parts
\begin{equation*}
	\begin{split}
		 V_2 =	& C\big| \int^t_0 \int_{\mathbb{R}^3} \Lambda^{s_0-2}_x \left\{ \partial^k(\partial_i v^{j(l+1)}-\partial_i v^{jl}) \partial_j  \partial^i h^{l+1} \right\}
			\\
			& \quad  \cdot \Lambda^{s_0-2}_x\left\{ \partial_k( \Delta h^{l+1}-\Delta h^{l}- \partial^i h^{l+1} \partial_i \rho^{l+1}- \partial^i h^{l} \partial_i \rho^{l}) \right\}  dxd\tau \big|
		\\
		= &  C\big| \int^t_0 \int_{\mathbb{R}^3} \Lambda^{s_0-2}_x \left\{ \partial^k(\Delta v^{j(l+1)}-\Delta v^{jl}) \partial_j   h^{l+1} \right\}
		\\
		& \quad  \cdot \Lambda^{s_0-2}_x\left\{ \partial_k( \Delta h^{l+1}-\Delta h^{l}- \partial^i h^{l+1} \partial_i \rho^{l+1}- \partial^i h^{l} \partial_i \rho^{l}) \right\}  dxd\tau \big|
		\\
		&+ C\big| \int^t_0 \int_{\mathbb{R}^3} \Lambda^{s_0-2}_x \left\{ \partial^k(\partial_i v^{j(l+1)}-\partial_i v^{jl}) \partial_j   h^{l+1} \right\}
		\\
		& \quad  \cdot \Lambda^{s_0-2}_x\left\{ \partial^i \partial_k( \Delta h^{l+1}-\Delta h^{l}- \partial^i h^{l+1} \partial_i \rho^{l+1}- \partial^i h^{l} \partial_i \rho^{l}) \right\}  dxd\tau \big|
		\\
		= &  C\big| \int^t_0 \int_{\mathbb{R}^3} \Lambda^{s_0-2}_x \left\{ \partial^k(\Delta v^{j(l+1)}-\Delta v^{jl}) \partial_j   h^{l+1} \right\}
		\\
		& \quad  \cdot \Lambda^{s_0-2}_x\left\{ \partial_k( \Delta h^{l+1}-\Delta h^{l}- \partial^i h^{l+1} \partial_i \rho^{l+1}- \partial^i h^{l} \partial_i \rho^{l}) \right\}  dxd\tau \big|
		\\
		&+ C\big| \int^t_0 \int_{\mathbb{R}^3} \Lambda^{s_0-2}_x \left\{ \partial_i(\Delta v^{j(l+1)}-\Delta v^{jl}) \partial_j   h^{l+1} \right\}
		\\
		& \qquad  \cdot \Lambda^{s_0-2}_x\left\{ \partial^i ( \Delta h^{l+1}-\Delta h^{l}- \partial^i h^{l+1} \partial_i \rho^{l+1}- \partial^i h^{l} \partial_i \rho^{l}) \right\}  dxd\tau \big|
		\\
		=& V_2^A + V_2^B .
	\end{split}
\end{equation*}
Here
\begin{equation*}
	\begin{split}
		V_2^A=& C\big| \int^t_0 \int_{\mathbb{R}^3} \Lambda^{s_0-2}_x \left\{ \partial^k(\Delta v^{j(l+1)}-\Delta v^{jl}) \partial_j   h^{l+1} \right\}
		\\
		& \quad  \cdot \Lambda^{s_0-2}_x\left\{ \partial_k( \Delta h^{l+1}-\Delta h^{l}- \partial^i h^{l+1} \partial_i \rho^{l+1}- \partial^i h^{l} \partial_i \rho^{l}) \right\}  dxd\tau \big|,
		\\
		V_2^B=& C\big| \int^t_0 \int_{\mathbb{R}^3} \Lambda^{s_0-2}_x \left\{ \partial_i(\Delta v^{j(l+1)}-\Delta v^{jl}) \partial_j   h^{l+1} \right\}
		\\
		& \qquad  \cdot \Lambda^{s_0-2}_x\left\{ \partial^i ( \Delta h^{l+1}-\Delta h^{l}- \partial^i h^{l+1} \partial_i \rho^{l+1}- \partial^i h^{l} \partial_i \rho^{l}) \right\}  dxd\tau \big|.
	\end{split}
\end{equation*}
For $V_2^A$ and $V_2^B$, there is a derivative loss, so we use the Hodge decomposition and \eqref{fc0} to transfer the derivatives. By
Equation \eqref{Hod}, we have
\begin{equation*}
	\Delta  v^{jl}= -\mathbf{T}^l (\partial^j {\rho}^l)+ \partial^j v^{kl} \partial_k {\rho}^l+ \mathrm{curl}^j \big( \mathrm{e}^{{\rho}^l}\bw^l \big),
\end{equation*}
where $\mathbf{T}^l=\partial_t  + \bv^l \cdot \nabla$. As a result, we get
\begin{equation}\label{ffg0}
\begin{split}
	\Delta  (v^{j(l+1)}-v^{jl})= &-\mathbf{T}^{l+1} (\partial^j {\rho}^{l+1}-\partial^j {\rho}^{l})+(\bv^{l}-\bv^{l+1})\cdot \nabla \partial^j {\rho}^{l}
	\\
	& + \mathrm{curl}^j \big( \mathrm{e}^{{\rho}^{l+1}}(\bw^{l+1}-\bw^l) \big)
	\ -\epsilon^{jmk}\partial_m (\mathrm{e}^{{\rho}^{l+1}}-\mathrm{e}^{{\rho}^{l}})\bw^{l}_k
	\\
	& + \partial^j v^{k(l+1)} \partial_k {\rho}^{l+1}-\partial^j v^{kl} \partial_k {\rho}^l .
\end{split}
\end{equation}
Substituting \eqref{ffg0} in $V^A_2$, gives
\begin{equation}\label{V2A}
	 V^A_2 \leq  V^{A1}_2+V^{A2}_2+V^{A3}_2+V^{A4}_2,
\end{equation}
where
\begin{equation*}
	\begin{split}
		 V^{A1}_2  =& C\big| \int^t_0 \int_{\mathbb{R}^3} \Lambda^{s_0-2}_x \left\{ \partial^k\big(\mathbf{T}^{l+1} (\partial^j {\rho}^{l+1}-\partial^j {\rho}^{l})\big) \partial_j   h^{l+1} \right\}
		\\
		& \qquad  \cdot \Lambda^{s_0-2}_x\left\{ \partial_k( \Delta h^{l+1}-\Delta h^{l}- \partial^i h^{l+1} \partial_i \rho^{l+1}- \partial^i h^{l} \partial_i \rho^{l}) \right\}  dxd\tau \big|,
		\\
		V^{A2}_2  =& C\big| \int^t_0 \int_{\mathbb{R}^3} \Lambda^{s_0-2}_x \left\{ \partial^k\big((\bv^{l}-\bv^{l+1})\cdot \nabla \partial^j {\rho}^{l} \big) \partial_j   h^{l+1} \right\}
		\\
		& \qquad  \cdot \Lambda^{s_0-2}_x \left\{ \partial_k( \Delta h^{l+1}-\Delta h^{l}- \partial^i h^{l+1} \partial_i \rho^{l+1}- \partial^i h^{l} \partial_i \rho^{l}) \right\}  dxd\tau \big|,
		\\
		V^{A3}_2  =	&  C\big| \int^t_0 \int_{\mathbb{R}^3} \Lambda^{s_0-2}_x  \left\{ \partial^k  \big( \mathrm{curl}^j ( \mathrm{e}^{{\rho}^{l+1}}(\bw^{l+1}-\bw^l) ) -\epsilon^{jmk}\partial_m (\mathrm{e}^{{\rho}^{l+1}}-\mathrm{e}^{{\rho}^{l}})\bw^{l}_k \big) \right\}
		\\
		& \qquad  \cdot \Lambda^{s_0-2}_x \left\{ \partial_k( \Delta h^{l+1}-\Delta h^{l}- \partial^i h^{l+1} \partial_i \rho^{l+1}- \partial^i h^{l} \partial_i \rho^{l}) \right\}  dxd\tau \big|,
		\\
			V^{A4}_2  =& C\big| \int^t_0 \int_{\mathbb{R}^3} \Lambda^{s_0-2}_x \left\{ \partial^k (  \partial^j v^{k(l+1)} \partial_k {\rho}^{l+1}-\partial^j v^{kl} \partial_k {\rho}^l ) \right\}
		\\
		& \qquad  \cdot \Lambda^{s_0-2}_x\left\{ \partial_k( \Delta h^{l+1}-\Delta h^{l}- \partial^i h^{l+1} \partial_i \rho^{l+1}- \partial^i h^{l} \partial_i \rho^{l}) \right\}  dxd\tau \big|.
	\end{split}
\end{equation*}
By Lemma \ref{lpe} and H\"older's inequality, we can estimate $V^{A1}_2, V^{A2}_2, V^{A3}_2$ and $V^{A4}_2$ as follows,
\begin{equation}\label{V2A2}
	\begin{split}
		V^{A2}_2  \leq & C \int^t_0  \| \partial( \bv^{l+1}-\bv^{l})\|_{\dot{B}^{s_0-2}_{\infty,2}} \| {\rho}^{l} \|_{H^{s_0}_x} \|  h^{l+1} \|_{H^{s_0+1}_x} B(\tau) d\tau,
\\
		V^{A3}_2  \leq	&  C \int^t_0 \|  \bw^{l+1}-\bw^{l}\|_{H^{s_0}_x}  B(\tau)d\tau
		  + C \int^t_0 \| \partial( \rho^{l+1}-\rho^{l})\|_{\dot{B}^{s_0-2}_{\infty,2}} \| \bw \|_{H^{s_0}_x}  B(\tau)d\tau,
\\
		V^{A4}_2  \leq &  C \int^t_0  \{ \| \partial \rho^{l+1}\|_{\dot{B}^{s_0-2}_{\infty,2}} \| \bv^{l+1}-\bv^{l} \|_{H^{s_0}_x}  +   \| \partial \bv^{l}\|_{\dot{B}^{s_0-2}_{\infty,2}} \| \rho^{l+1}-\rho^{l} \|_{H^{s_0}_x} \}  B(\tau)d\tau.
	\end{split}
\end{equation}
For $V^{A1}_2$, by commutator rule, we write it as
\begin{equation*}
	\begin{split}
		V^{A1}_2=& C\big| \int^t_0 \int_{\mathbb{R}^3} \Lambda^{s_0-2}_x \left\{\mathbf{T}^{l+1} \big( \partial^k (\partial^j {\rho}^{l+1}-\partial^j {\rho}^{l})\big) \partial_j   h^{l+1} \right\}
		\\
		& \qquad  \cdot \Lambda^{s_0-2}_x\left\{ \partial_k( \Delta h^{l+1}-\Delta h^{l}- \partial^i h^{l+1} \partial_i \rho^{l+1}- \partial^i h^{l} \partial_i \rho^{l}) \right\}  dxd\tau \big|
		\\
		& +C\big| \int^t_0 \int_{\mathbb{R}^3} \Lambda^{s_0-2}_x \left\{ [\partial^k, \mathbf{T}^{l+1}]  (\partial^j {\rho}^{l+1}-\partial^j {\rho}^{l}) \partial_j   h^{l+1} \right\}
		\\
		& \qquad  \cdot \Lambda^{s_0-2}_x\left\{ \partial_k( \Delta h^{l+1}-\Delta h^{l}- \partial^i h^{l+1} \partial_i \rho^{l+1}- \partial^i h^{l} \partial_i \rho^{l}) \right\}  dxd\tau \big|.
	\end{split}
\end{equation*}
By a further calculation, we have
\begin{equation}\label{VA12}
	\begin{split}
		V^{A1}_2
		\leq & V^{A11}_2+V^{A12}_2+V^{A13}_2+V^{A14}_2,
	\end{split}
\end{equation}
where we set
\begin{equation*}
	\begin{split}
		V^{A11}_2
		= & C\big| \int^t_0 \int_{\mathbb{R}^3} \mathbf{T}^{l+1} \left\{ \Lambda^{s_0-2}_x  \big( \partial^k (\partial^j {\rho}^{l+1}-\partial^j {\rho}^{l})\big) \partial_j   h^{l+1} \right\}
		\\
	& \qquad  \cdot \Lambda^{s_0-2}_x \left\{ \partial_k( \Delta h^{l+1}-\Delta h^{l}- \partial^i h^{l+1} \partial_i \rho^{l+1}- \partial^i h^{l} \partial_i \rho^{l}) \right\}  dxd\tau \big|,
	\\
		V^{A12}_2
	= &  C\big| \int^t_0 \int_{\mathbb{R}^3}   \Lambda^{s_0-2}_x  \left\{ \big( \partial^k (\partial^j {\rho}^{l+1}-\partial^j {\rho}^{l})\big) \mathbf{T}^{l+1}(\partial_j   h^{l+1}) \right\}
	\\
	& \qquad  \cdot \Lambda^{s_0-2}_x \left\{ \partial_k( \Delta h^{l+1}-\Delta h^{l}- \partial^i h^{l+1} \partial_i \rho^{l+1}- \partial^i h^{l} \partial_i \rho^{l}) \right\}  dxd\tau \big|,
		\\
			V^{A13}_2
		= & C\big| \int^t_0 \int_{\mathbb{R}^3} [\Lambda^{s_0-2}_x, \mathbf{T}^{l+1}] \left\{   \big( \partial^k (\partial^j {\rho}^{l+1}-\partial^j {\rho}^{l})\big) \partial_j   h^{l+1} \right\}
		\\
		& \qquad  \cdot \Lambda^{s_0-2}_x\left\{ \partial_k( \Delta h^{l+1}-\Delta h^{l}- \partial^i h^{l+1} \partial_i \rho^{l+1}- \partial^i h^{l} \partial_i \rho^{l}) \right\}  dxd\tau \big|,
		\\
		V^{A14}_2
	=	& C \big| \int^t_0 \int_{\mathbb{R}^3} \Lambda^{s_0-2}_x \left\{ [\partial^k, \mathbf{T}^{l+1}]  (\partial^j {\rho}^{l+1}-\partial^j {\rho}^{l}) \partial_j   h^{l+1} \right\}
		\\
		& \qquad  \cdot \Lambda^{s_0-2}_x\left\{ \partial_k( \Delta h^{l+1}-\Delta h^{l}- \partial^i h^{l+1} \partial_i \rho^{l+1}- \partial^i h^{l} \partial_i \rho^{l}) \right\}  dxd\tau \big|.
	\end{split}
\end{equation*}
Note $\mathbf{T}^{l+1}(\partial_j   h^{l+1})=\partial \bv^{l+1} \partial  h^{l+1}$. By Lemma \ref{lpe}, Lemma \ref{ce} and H\"older's inequality, we derive that
\begin{equation}\label{VA122}
	\begin{split}
		V^{A12}_2+V^{A13}_2+V^{A14}_2
		\leq &  C \int^t_0 \| {\rho}^{l+1}- {\rho}^{l} \|_{H_x^{s_0}} \| \partial \bv\|_{\dot{B}_{\infty,2}^{s_0-2} } \| h^{l+1} \|_{H_x^{s_0} } B(\tau)d\tau.
	\end{split}
\end{equation}
For $V^{A121}_2$, we calculate it directly
\begin{equation*}
	\begin{split}
			V^{A11}_2
		= & C\big| \int^t_0 \int_{\mathbb{R}^3} \mathbf{T}^{l+1} \big\{ \Lambda^{s_0-2}_x  \big( \partial^k (\partial^j {\rho}^{l+1}-\partial^j {\rho}^{l})\big) \partial_j   h^{l+1}
		\\
		& \qquad \cdot  \Lambda^{s_0-2}_x \big( \partial_k( \Delta h^{l+1}-\Delta h^{l}- \partial^i h^{l+1} \partial_i \rho^{l+1}- \partial^i h^{l} \partial_i \rho^{l}) \big) \big\}  dxd\tau \big|
		\\
		& + C\big| \int^t_0 \int_{\mathbb{R}^3}  \left\{ \Lambda^{s_0-2}_x  \big( \partial^k (\partial^j {\rho}^{l+1}-\partial^j {\rho}^{l})\big) \partial_j   h^{l+1} \right\}
		\\
		& \qquad \cdot \mathbf{T}^{l+1} \left\{ \Lambda^{s_0-2}_x  \partial_k( \Delta h^{l+1}-\Delta h^{l}- \partial^i h^{l+1} \partial_i \rho^{l+1}- \partial^i h^{l} \partial_i \rho^{l}) \right\}  dxd\tau \big|
		\\
		= & C\big| \int^t_0 \int_{\mathbb{R}^3} \mathbf{T}^{l+1} \big\{ \Lambda^{s_0-2}_x  \big( \partial^k (\partial^j {\rho}^{l+1}-\partial^j {\rho}^{l})\big) \partial_j   h^{l+1}
		\\
		& \qquad \cdot  \Lambda^{s_0-2}_x \big( \partial_k( \Delta h^{l+1}-\Delta h^{l}- \partial^i h^{l+1} \partial_i \rho^{l+1}- \partial^i h^{l} \partial_i \rho^{l}) \big) \big\}  dxd\tau \big|
		\\
		& + C\big| \int^t_0 \int_{\mathbb{R}^3}  \left\{ \Lambda^{s_0-2}_x  \big( \partial^k (\partial^j {\rho}^{l+1}-\partial^j {\rho}^{l})\big) \partial_j   h^{l+1} \right\}
		\\
		& \qquad \cdot   \Lambda^{s_0-2}_x \mathbf{T}^{l+1} \left\{ \partial_k( \Delta h^{l+1}-\Delta h^{l}- \partial^i h^{l+1} \partial_i \rho^{l+1}- \partial^i h^{l} \partial_i \rho^{l}) \right\}  dxd\tau \big|
		\\
		& + C\big| \int^t_0 \int_{\mathbb{R}^3}  \left\{ \Lambda^{s_0-2}_x  \big( \partial^k (\partial^j {\rho}^{l+1}-\partial^j {\rho}^{l})\big) \partial_j   h^{l+1} \right\}
		\\
		& \qquad \cdot   [\Lambda^{s_0-2}_x, \mathbf{T}^{l+1} ] \left\{ \partial_k( \Delta h^{l+1}-\Delta h^{l}- \partial^i h^{l+1} \partial_i \rho^{l+1}- \partial^i h^{l} \partial_i \rho^{l}) \right\}  dxd\tau \big|
	\end{split}
\end{equation*}
Using \eqref{fff5}, Lemma \ref{lpe}, Lemma \ref{ce}, and H\"older's inequality, we obtain
\begin{equation}\label{VA112}
	\begin{split}
		V^{A11}_2 \leq & C \| {\rho}_0^{l+1}- {\rho}_0^{l} \|_{H_x^{s_0}}  \| h_0^{l+1} \|_{ L^\infty_tH_x^{s_0+1} } B(0)
		\\
		& + C \int^t_0\| {\rho}^{l+1}- {\rho}^{l} \|_{H_x^{s_0}} \| \partial \bv\|_{\dot{B}_{\infty,2}^{s_0-2} } \| h^{l+1} \|_{H_x^{s_0} } B(\tau)d\tau.
	\end{split}
\end{equation}
By using \eqref{lyr1} and \eqref{V2A}-\eqref{VA112}, for $t\in [0,\frac{1}{100(1+C)^2}T]$, we have
\begin{equation}\label{V2t3}
  V_2 \leq  \frac32(Cc_l)^2.
\end{equation}
By using \eqref{lyr1} again and \eqref{fff6}, \eqref{V2t0},\eqref{V2t1},\eqref{V2t2} and \eqref{V2t3}, we can see that
\begin{equation*}
 B(t) \leq 3Cc_l, \qquad t\in [0,\frac{1}{100(1+C)^2}T].
\end{equation*}
Hence, we can get
\begin{equation}\label{B2t}
 \| h^{l+1}-h^{l}\|_{\dot{H}_x^{s_0+1}} \leq  B(t)+ \| \partial ( h^{l+1}-h^l) \partial \rho^{l+1}\|_{\dot{H}_x^{s_0-1}}+ \| \partial h^{l} \partial ( \rho^{l+1}- \rho^{l}) \|_{\dot{H}_x^{s_0-1}}
 \leq  6Cc_l.
\end{equation}
\textit{Step 4: Strichartz estimates of the difference}. Let us now use the wave equation \eqref{fc} for $(\bv^{l}, \rho^{l}, h^{l},\bw^{l})$. That is,
			\begin{equation*}
				\begin{cases}
					& \square_{g^{l}} v_{+}^{il}= \mathbf{T}^l\mathbf{T}^l v^{il}_{-}+Q^{il},
					\\
					& \square_{g^l} (\rho^{l}+\frac{1}{\gamma} h^l)={D}^{l}+\frac{1}{\gamma}{E}^{l},
				\end{cases}
			\end{equation*}
			Above, $g^l, \bv^l_{+}, Q^{il}, E^{l}$ and ${D}^l$ have the same formulation with $g, Q^{i}, E$ and ${D}$ in \eqref{Met},\eqref{etad},\eqref{DDi} if we replace $(\bv,\rho,h,\bw)$ to $(\bv^l,\rho^l,h^l,\bw^l)$. Thus, the quantities $\bv^{l+1}-\bv^{l}$ and  $\rho^{l+1}-\rho^{l}$ satisfies
			\begin{equation}\label{Ss4}
				\begin{cases}
					& \square_{g^{l+1}} (\bv_{+}^{l+1}-\bv_{+}^{l})= \mathbf{T}^{l+1}(\mathbf{T}^{l+1}\bv^{l+1}_{-}-\mathbf{T}^{l}\bv^{l}_{-})
					-( g^{l+1}_{\alpha i}-g^l_{\alpha i}) \partial^{i \alpha} \bv_{+}^{l}+\bF^l,
					\\
					& \square_{g^{l+1}} (\rho^{l+1}+\frac{1}{\gamma} h^{l+1}-\rho^{l}-\frac{1}{\gamma} h^l)={G}^{l+1}-G^l-( g^{l+1}_{\alpha i}-g^l_{\alpha i}) \partial^{i \alpha} (\rho^{l}+\frac{1}{\gamma} h^l),
				\end{cases}
			\end{equation}
where $\bF^l=\bQ^{l+1}-\bQ^{l}+( \mathbf{T}^{l+1}-\mathbf{T}^{l}) \mathbf{T}^{l} \bv_{-}^l$ and $G^l={D}^{l}+\frac{1}{\gamma}{E}^{l}$. Let us now estimate the right hand side of \eqref{Ss4} one by one. Calculate $\mathbf{T}^{l+1}-\mathbf{T}^{l}=( \bv^{l+1}- \bv^{l}) \cdot \nabla$. By product estimates and \eqref{fc0}, we have
			\begin{equation}\label{Fh0}
				\begin{split}
					\|  \bF^{l} \|_{H^{s-1}_x} \leq & \|\bQ^{l+1}-\bQ^{l} \|_{H^{s-1}_x}+ \|( \bv^{l+1}- \bv^{l}) \cdot \nabla (\mathbf{T}^{l} \bv_{-}^l) \|_{H^{s-1}_x}
					\\
					\lesssim & \|(d\rho^l, d\bv^l,dh^l)\|_{L^\infty_x}
					\| (d\bv^{l+1}-d\bv^{l},d\rho^{l+1}-d\rho^{l},dh^{l+1}-dh^{l}) \|_{H^{s-1}_x}
					\\
					& +\|(d\rho^{l+1},d\bv^{l+1},dh^{l+1})\|_{L^\infty_x}
					\| (d\bv^{l+1}-d\bv^{l},d\rho^{l+1}-d\rho^{l},dh^{l+1}-dh^{l}) \|_{H^{s-1}_x}
					\\
					& + 2^l \| \bv^{l+1}- \bv^{l} \|_{L^\infty_x} \| \mathbf{T}^{l} \bv_{-}^l \|_{H^{s-1}_x}.
					\\
					\leq &  \|(\partial \rho^l, \partial \bv^l,\partial h^l)\|_{L^\infty_x}
					\| \partial(\bv^{l+1}-\bv^{l},\rho^{l+1}-\rho^{l},h^{l+1}-h^{l}) \|_{H^{s-1}_x}
					\\
					& +\|\partial(\rho^{l+1},\bv^{l+1},h^{l+1})\|_{L^\infty_x}
					\| \partial(\bv^{l+1}-\bv^{l},\rho^{l+1}-\rho^{l},h^{l+1}-h^{l}) \|_{H^{s-1}_x}
					\\
					&+ 2^l \| \bv^{l+1}- \bv^{l} \|_{L^\infty_x}  \| \bw^{l+1}-\bw^{l} \|_{H^{s-1}_x}  .
				\end{split}
			\end{equation}
		Using $\Delta \bv^l_{-}= \mathrm{e}^{\rho^l}\mathrm{curl}\bw^{l}$, and recalling the proof of \eqref{eta}, we find out
	\begin{equation}\label{Fha}
		\begin{split}
			& \| \mathbf{T}^{l+1}\bv^{l+1}_{-}-\mathbf{T}^{l}\bv^{l}_{-}\|_{H^{s}_x}
\\
\lesssim & \| \partial \bv^{l+1}  \partial \bw^{l+1}-\partial \bv^{l}  \partial \bw^{l} \|_{H_x^{s-2}}
			 + \| \partial^2 \bv^{l+1}  \partial \bv^{l+1}_{-}- \partial^2 \bv^{l}  \partial \bv^{l}_{-} \|_{H_x^{s-2}}
			\\
			& + \| \partial^2 \bv^{l+1}_{-}  \partial \bv^{l+1}- \partial^2 \bv^{l}_{-}  \partial \bv^{l} \|_{H_x^{s-2}}
			 + \| \partial^2 \rho^{l+1}  \partial h^{l+1} - \partial^2 \rho^{l}  \partial h^{l}\|_{H_x^{s-2}}
			\\
			& + \| \partial h^{l+1}  \partial \rho^{l+1}  \partial h^{l+1}-\partial h^{l}  \partial \rho^{l}  \partial h^{l} \|_{H_x^{s-2}}
			+\| \partial \rho^{l+1}  \partial \rho^{l+1}  \partial h^{l+1}-\partial \rho^{l}  \partial \rho^{l}  \partial h^{l} \|_{H_x^{s-2}}
			\\
			& +  \| \partial \rho^{l+1}   \partial^2 h^{l+1}- \partial \rho^{l}  \partial^2 h^{l} \|_{H_x^{s-2}}.
		\end{split}
	\end{equation}
By product estimates, \eqref{Fha} yields
	\begin{equation}\label{Fhb}
	\begin{split}
		& \| \mathbf{T}^{l+1}\bv^{l+1}_{-}-\mathbf{T}^{l}\bv^{l}_{-}\|_{H^{s}_x}
		\\
		\lesssim
		 & \| \partial (\bv^{l+1} - \bv^{l}) \|_{H_x^{s-1}} \| \partial \bw^{l} \|_{H_x^{\frac{1}{2}+}} + \| \partial \bv^{l} \|_{H_x^{s-1}} \| \partial ( \bw^{l+1}-\bw^{l} ) \|_{H_x^{\frac{1}{2}+}}
		 \\
		 & + \| \partial^2 ( \bv^{l+1} - \bv^{l}) \|_{H_x^{s-2}} \| \partial \bv^{l}_{-} \|_{H_x^{\frac{3}{2}+}}
		  + \| \partial^2 \bv^{l} \|_{H_x^{s-2}} \| \partial (\bv^{l+1}_{-}- \bv^{l}_{-} )\|_{H_x^{\frac{3}{2}+}}
		 \\
		 & + \| \partial ( \bv^{l+1} - \bv^{l})\|_{H_x^{s-1}} \| \partial^2 \bv^{l}_{-} \|_{H_x^{\frac{1}{2}+}}+\| \partial \bv^{l} \|_{H_x^{s-1}} \| \partial^2 ( \bv^{l+1}_{-} - \bv^{l}_{-} )\|_{H_x^{\frac{1}{2}+}}
		\\
		& + (\| \partial ( h^{l+1} - h^{l} ) \|_{H_x^{s-1}}+\| \partial ( \rho^{l+1}- \partial \rho^{l}) \|_{H_x^{s-1}}) \| \partial \rho^{l} \|_{H_x^{s-1}} \| \partial h^{l} \|_{H_x^{s-1}}
		\\
		& + (\| \partial h^{l} \|_{H_x^{s-1}}+\| \partial \rho^{l} \|_{H_x^{s-1}}) \| \partial ( \rho^{l+1}- \rho^{l} ) \|_{H_x^{s-1}} \| \partial h^{l} \|_{H_x^{s-1}}
			\\
		& + (\| \partial h^{l} \|_{H_x^{s-1}}+\| \partial \rho^{l} \|_{H_x^{s-1}}) \| \partial \rho^{l} \|_{H_x^{s-1}} \| \partial ( h^{l+1} - h^{l}) \|_{H_x^{s-1}}
		\\
		&+  \| \partial ( \rho^{l+1} - \rho^{l})  \|_{H_x^{s-1}} \| \partial^2 h^{l} \|_{H_x^{\frac{1}{2}+}}
		+  \| \partial \rho^{l}  \|_{H_x^{s-1}} \| \partial^2 ( h^{l+1}- h^{l}) \|_{H_x^{\frac{1}{2}+}}
		\\
		& + \| \partial^2 ( \rho^{l+1} - \rho^{l} )\|_{H_x^{s-2}} \| \partial h^{l} \|_{H_x^{\frac{3}{2}+}}
		+ \| \partial^2 \rho^{l}  \|_{H_x^{s-2}} \| \partial ( h^{l+1} - h^{l} )\|_{H_x^{\frac{3}{2}+}}.
	\end{split}
\end{equation}
By product estimates, Bernstein's inequalities and \eqref{fc0}, we obtain
			\begin{equation}\label{Fh2}
				\begin{split}
					\| ( g^{l+1}_{\alpha i}-g^l_{\alpha i}) \partial^{i \alpha} \bv^{l} \|_{H^{s-1}_x}
					\lesssim &   \|\partial d\bv^l \|_{L^\infty_x}  \| (\bv^{l+1}-\bv^{l},\rho^{l+1}-\rho^{l},h^{l+1}-h^{l}) \|_{H^{s-1}_x}
					\\
					& +   \| (\bv^{l+1}-\bv^{l}, \rho^{l+1}-\rho^{l}, h^{l+1}-h^{l}) \|_{L^\infty_x} \| \partial d\bv^{l} \|_{H^{s-1}_x}
					\\
					\leq &   \| \partial \bv^l \|_{L^\infty_x}\cdot 2^l  \| (\bv^{l+1}-\bv^{l},\rho^{l+1}-\rho^{l},h^{l+1}-h^{l}) \|_{H^{s-1}_x}
					\\
					& +   2^l\| (\bv^{l+1}-\bv^{l}, \rho^{l+1}-\rho^{l},h^{l+1}-h^{l}) \|_{L^\infty_x} \| \partial \bv^l \|_{H^{s-1}_x}.
				\end{split}
			\end{equation}
Similarly, we also deduce
			\begin{equation}\label{Fh3}
				\begin{split}
				& \| ( g^{l+1}_{\alpha i}-g^l_{\alpha i}) \partial^{i \alpha} (\rho^{l}+\frac{1}{\gamma} h^l) \|_{H^{s-1}_x}
				\\
					\lesssim &   \|\partial (d\rho^l,dh^l) \|_{L^\infty_x}  \| (\bv^{l+1}-\bv^{l},\rho^{l+1}-\rho^{l},h^{l+1}-h^{l}) \|_{H^{s-1}_x}
					\\
					& +   \| (\bv^{l+1}-\bv^{l}, \rho^{l+1}-\rho^{l},h^{l+1}-h^{l}) \|_{L^\infty_x} \| \partial ( d\rho^{l},dh^l )\|_{H^{s-1}_x}
					\\
					\leq &  \| \partial \rho^l,\partial h^l \|_{L^\infty_x}  \cdot 2^l\| (\bv^{l+1}-\bv^{l},\rho^{l+1}-\rho^{l},h^{l+1}-h^{l}) \|_{H^{s-1}_x}
					\\
					& +    2^l\| (\bv^{l+1}-\bv^{l}, \rho^{l+1}-\rho^{l},h^{l+1}-h^{l}) \|_{L^\infty_x} \|\partial (\rho^l, h^l) \|_{H^{s-1}_x},
				\end{split}
			\end{equation}
and
			\begin{equation}\label{Fh4}
				\begin{split}
				\| G^{l+1}- G^{l} \|_{H^{s-1}_x} \leq & C\|(\partial \rho^l,\partial \bv^l,\partial h^l)\|_{L^\infty_x}
				  \| \partial(\bv^{l+1}-\bv^{l},\rho^{l+1}-\rho^{l},h^{l+1}-h^{l}) \|_{H^{s-1}_x}.
			\end{split}
		\end{equation}
To the system \eqref{Ss4}, by using Proposition \ref{r5}, Lemma \ref{LD} and using \eqref{lyr1} and \eqref{Fh0}-\eqref{Fh4}, we can derive that
			\begin{equation}\label{ff2}
				\begin{split}
					& 2^l \| (\bv_{+}^{l+1}-\bv_{+}^{l}, \rho^{l+1}+\frac{1}{\gamma}h^{l+1}-\rho^{l}-\frac{1}{\gamma}h^{l}) \|_{L^2_{[0,\frac{1}{100(1+C)^2}T]} L^\infty_x}
					\\
					& \quad +2^l\|(\bv_{+}^{l+1}-\bv_{+}^{l}, \rho^{l+1}+\frac{1}{\gamma}h^{l+1}-\rho^{l}-\frac{1}{\gamma}h^{l}) \|_{L^2_{[0,\frac{1}{100(1+C)^2}T]} \dot{B}^{s_0-2}_{\infty,2}}
					\\
					\leq & Cc^2_l \leq \frac{C}{2}c_l,
				\end{split}
			\end{equation}
where we use the fact $c_l\rightarrow 0, l\rightarrow \infty$. Moreover, by Sobolev imbeddings, we also have
		\begin{equation}\label{B3t}
			\begin{split}
			& 2^l\| (\bv_{-}^{l+1}-\bv_{-}^{l},h^{l+1}-h^{l}) \|_{L^2_t L^{\infty}_x} + 2^l \| \partial (\bv_{-}^{l+1}-\bv_{-}^{l},h^{l+1}-h^{l}) \|_{L^2_t \dot{B}^{s_0-2}_{\infty,2}}
			\\
			\lesssim & \| \bw^{l+1}-\bw^{l}  \|_{L^2_t H^{s_0-\frac12}_x} + \|  h^{l+1}-h^{l}  \|_{L^2_t H^{s_0+\frac12}_x}
\\
\leq & C2^{-\frac{s}{2(s_0+1)}l}Cc_l
\leq  \frac14 Cc_l.
		\end{split}
	\end{equation}
Due to \eqref{ff2} and \eqref{B3t}, we
			\begin{equation}\label{B4t}
				\begin{split}
	& 2^l \| (\bv^{l+1}-\bv^{l}, \rho^{l+1}-\rho^{l}, h^{l+1}-h^{l}) \|_{L^2_{[0,\frac{1}{100(1+C)^2}T]}L^\infty_x}
\\
& \ + 2^l \| (\bv^{l+1}-\bv^{l}, \rho^{l+1}-\rho^{l}, h^{l+1}-h^{l}) \|_{L^2_{[0,\frac{1}{100(1+C)^2}T]} \dot{B}^{s_0-2}_{\infty,2}}
					\leq \frac{3C}{4}c_l.
				\end{split}
			\end{equation}
Therefore, by gathering \eqref{lyr3}, \eqref{lyr4}, \eqref{G2t6}, \eqref{B2t} and \eqref{B4t}, we have proved \eqref{dbs}-\eqref{dbs2}.
		\end{proof}
\begin{remark}
	The estimate \eqref{B4t} is essential for us to prove the continuous dependence. If we consider the classical solutions, then we could use Sobolev imbeddings to handle \eqref{Ss4}. However, for rough solutions, it is necessary to use equation \eqref{Ss4} and the Strichartz estimates \eqref{SE1} of the linear wave equation \eqref{linear} to establish \eqref{B4t}. We also mention that the continuous dependence of solutions of \eqref{CEE}-\eqref{id} has not been proved in \cite{WQEuler} and \cite{DLS}.
\end{remark}	
	
\section{Proof of Theorem \ref{dingli3}}\label{Sec8}
In this part, our goal is to prove Theorem \ref{dingli3}. We first reduce the proof to the case of smooth initial data with  bounded frequency,  that is, using Proposition \ref{DDL2} to prove Theorem \ref{dingli3}. Then we use Proposition \ref{p1} to obtain Proposition \ref{DDL3}. Finally, we use Proposition \ref{DDL3} and develop a ``semiclassical analysis" approach to prove Proposition \ref{DDL2}.

\subsection{Proof of Theorem \ref{dingli3}}\label{keypra}
To prove Theorem \ref{dingli3}, we will prove the well-posedness of solutions as a limit of a sequence. To find this sequence, let us introduce a sequence of initial data $(\bv_{0j},\rho_{0j})$ with frequency support such that
\begin{equation}\label{Dss0}
	\bv_{0j}=P_{\leq j} \bv_0, \quad \rho_{0j}=P_{\leq j} \rho_0,
\end{equation}
where $P_{\leq j}=\textstyle{\sum}_{k\leq j}P_k$, $P_j$ be a Littlewood-Paley operator with frequency $\{\xi\in\mathbb{R}^3: 2^{j-3}\leq |\xi|\leq 2^{j+3}, j\in\mathbb{Z}\}$, and $(\bv_0,\rho_0)$ is stated as \eqref{ids} in Theorem \ref{dingli3}. Following \eqref{pw11}, so we define
\begin{equation}\label{Qss0}
	\bw_{0j}=\mathrm{e}^{-\rho_{0j}}\mathrm{curl}\bv_{0j}.
\end{equation}
Using \eqref{ids}, so we have
\begin{equation}\label{Ess0}
	\begin{split}
		\|\bw_{0j}\|_{H^2}=& \| \mathrm{e}^{\rho_{0j}}\mathrm{curl}\bv_{0j} \|_{H^2}
		\\
		\leq  & \| \mathrm{e}^{\rho_{0j}}P_{\leq j}(\mathrm{e}^{\rho_0}\bw_{0}) \|_{H^2}
		\\
		\leq & C(\|\mathrm{e}^{\rho_0}\bw_{0} \|_{H^2}+ \|\rho_{0}\|_{H^2}\|\bw_{0}\|_{H^2})
		\\
		\leq & C(M_*+M^2_*).
	\end{split}
\end{equation}
Adding \eqref{Dss0} and \eqref{Ess0}, we can see
\begin{equation}\label{pu0}
	E(0):=\|\bv_{0j}\|_{H^s}+ \|\rho_{0j}\|_{H^s}+ \| \bw_{0j} \|_{H^2} \leq C(M_*+M^2_*).
\end{equation}
Using \eqref{HEw}, \eqref{Dss0} and \eqref{Qss0}, we get
\begin{equation}\label{pu00}
	| \bv_{0j}, \rho_{0j} | \leq C_0, \quad c_s|_{t=0}\geq c_0>0.
\end{equation}

Before we give a proof of Theorem \ref{dingli3}, let us now introduce Proposition \ref{DDL2}.
\begin{proposition}\label{DDL2}
Let $s$ and \eqref{HEw}-\eqref{chuzhi3} be stated in Theorem \ref{dingli3}. Let $(\bv_{0j}, \rho_{0j}, \bw_{0j})$ be stated in \eqref{Dss0} and \eqref{Qss0}. For each $j\geq 1$, consider Cauchy problem \eqref{fc0s} with the initial data $(\bv_{0j}, \rho_{0j}, \bw_{0j})$. Then for all $j \geq 1$, there exists two positive constants $T^{*}>0$ and $\bar{M}_{2}>0$ ($T^*$ and $\bar{M}_{2}$ only depends on ${M}_{*}$ and $s$) such that \eqref{fc0s} has a unique solution $(\bv_{j},\rho_{j})\in C([0,T^*];H_x^s)\cap C^1([0,T^*];H_x^{s-1})$, $\bw_{j}\in C([0,T^*];H_x^2) \cap C^1([0,T^*];H_x^1)$. To be precise,

\begin{enumerate}
	\item  the solution $\bv_j, \rho_j$ and $\bw_j$ satisfy the energy estimates
\begin{equation}\label{Duu0}
  \|\bv_j\|_{L^\infty_{[0,T^*]}H_x^{\sstar}}+\|\rho_j\|_{L^\infty_{[0,T^*]}H_x^{\sstar}}+ \|\bw_j\|_{L^\infty_{[0,T^*]}H_x^{2}} \leq \bar{M}_{2},
\end{equation}
and
\begin{equation}\label{Duu1}
  \|\partial_t \bv_j, \partial_t\rho_j\|_{L^\infty_{[0,T^*]}H_x^{\sstar-1}}+ \|\partial_t \bw_j\|_{L^\infty_{[0,T^*]}H_x^{1}} \leq \bar{M}_{2},
\end{equation}
and
\begin{equation}\label{Duu00}
	\|\bv_j, \rho_j\|_{L^\infty_{[0,T^*]\times \mathbb{R}^3}} \leq 2+C_0.
\end{equation}

\item the solution $\bv_j$ and $\rho_j$ satisfy the Strichartz estimate
\begin{equation}\label{Duu2}
  \|d\bv_j, d\rho_j\|_{L^2_{[0,T^*]}L_x^\infty} \leq \bar{M}_{2}.
\end{equation}

\item for $\frac{s}{2} \leq r \leq 3$, consider the following linear wave equation
\begin{equation}\label{Duu21}
	\begin{cases}
		\square_{{g}_j} f_j=0, \qquad [0,T^*]\times \mathbb{R}^3,
		\\
		(f_j,\partial_t f_j)|_{t=0}=(f_{0j},f_{1j}),
	\end{cases}
\end{equation}
where $(f_{0j},f_{1j})=(P_{\leq j}f_0,P_{\leq j}f_1)$ and $(f_0,f_1)\in H_x^r \times H^{r-1}_x$. Then there is a unique solution $f_j$ on $[0,T^*]\times \mathbb{R}^3$. Moreover, for $a\leq r-\frac{s}{2}$, we have
\begin{equation}\label{Duu22}
	\begin{split}
		&\|\left< \partial \right>^{a-1} d{f}_j\|_{L^2_{[0,T^*]} L^\infty_x}
		\leq  \bar{M}_4(\|{f}_0\|_{{H}_x^r}+ \|{f}_1 \|_{{H}_x^{r-1}}),
		\\
		&\|{f}_j\|_{L^\infty_{[0,T^*]} H^{r}_x}+ \|\partial_t {f}_j\|_{L^\infty_{[0,T^*]} H^{r-1}_x} \leq  \bar{M}_4 (\| {f}_0\|_{H_x^r}+ \| {f}_1\|_{H_x^{r-1}}),
	\end{split}
\end{equation}
where $\bar{M}_4$ is a universal constant depends on $C_0, c_0, M_*, s$.
\end{enumerate}
\end{proposition}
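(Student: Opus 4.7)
My plan follows the strategy outlined in the overview of Theorem \ref{dingli3}: reduce to a small-data problem at each frequency scale, obtain solutions on short intervals that shrink with $j$, and then iteratively extend these intervals via a bootstrap that exploits high-frequency decay.

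First, I would apply the finite-speed-of-propagation/scaling/localization reduction developed in Section \ref{Sec4} to each mollified Cauchy datum $(\bv_{0j},\rho_{0j},\bw_{0j})$. Note that $\|\bw_{0j}\|_{H^{2+r}}\lesssim 2^{jr}$ by Bernstein, so after the appropriate physical rescaling the data become small, but the time interval $[0,T^*_j]$ on which the rescaled data satisfy the smallness hypothesis of Proposition \ref{p1} depends on $j$: a calculation analogous to \eqref{s1}--\eqref{s3} yields $T^*_j\simeq 2^{-\delta_1 j}[E(0)]^{-1}$ with $\delta_1=(s-2)/10$. On this short time interval, Proposition \ref{p1} combined with Proposition \ref{r4} (whose underlying Strichartz estimate \eqref{preb} I would package as Proposition \ref{DDL3}) produces a smooth solution $(\bv_j,\rho_j,\bw_j)$ satisfying the higher-order Strichartz estimate \eqref{prec} on $[0,T^*_j]$ for any $a<s-1$. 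Bernstein localization at frequency $2^k$ with $k\geq j$ then gives the $k$-decayed bound \eqref{prec0}.

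Second, to control the ``low-frequency leftover'', I would study the differences $\bv_{m+1}-\bv_m$ and $\rho_{m+1}-\rho_m$ through the wave equation they satisfy (as in the proof of continuous dependence in Section \ref{Sub}), invoking the Strichartz estimates of Proposition \ref{r5} for the metric $g_m$. This yields the frequency-localized difference bound \eqref{prec00}, with the crucial double decay factor $2^{-\delta_1 k}2^{-6\delta_1 m}$. Decomposing
\begin{equation*}
d\bv_j=P_{\geq j}d\bv_j+\textstyle\sum_{k=1}^{j-1}\textstyle\sum_{m=k}^{j-1}P_k(d\bv_{m+1}-d\bv_m)+\textstyle\sum_{k=1}^{j-1}P_kd\bv_k
\end{equation*}
and summing the three pieces with the decay factors then closes the Strichartz bound on $[0,T^*_j]$ with a constant independent of $j$.

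Third, to extend from $[0,T^*_j]$ to the uniform interval $[0,T^*_{N_0}]$ with $N_0$ depending only on $M_*$, I would run the bootstrap inductively: extend from $[0,T^*_\ell]$ to $[0,T^*_{\ell-1}]$ for $\ell=j,j-1,\ldots,N_0+1$, at each step invoking the energy estimate from Theorem \ref{TT2} together with the Strichartz bound just established. The growth factor per extension is balanced against the frequency-decay factors from \eqref{prec0}--\eqref{prec00}, so the full sum of $\sim j-N_0$ extensions remains finite uniformly in $j$. Setting $T^*=2^{-\delta_1 N_0}[E(0)]^{-1}$ produces the uniform bounds \eqref{Duu0}--\eqref{Duu2}. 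The linear estimate \eqref{Duu22} is obtained by the same bootstrap, now applied to \eqref{Duu21} with metric $g_j$, whose coefficients already satisfy uniform bounds by the previous step, and using Proposition \ref{r5} on each short subinterval.

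The main obstacle is the bookkeeping in the bootstrap extension: one must verify that the growth of energy and Strichartz norms across each iteration (controlled by $\int \|d\bv_j,d\rho_j\|_{L^\infty_x}d\tau$ via Theorem \ref{TT2}) is strictly dominated by the gain $2^{-\delta_1}$ from each extension, so that the geometric series closes. This is where the choice $\delta_1=(s-2)/10$ and the precise exponents in \eqref{prec0}--\eqref{prec00} must be tuned; conceptually, the scheme is an induction, but executing the arithmetic so that the bound remains $O(1)$ rather than blowing up polynomially in $j$ is the delicate part.
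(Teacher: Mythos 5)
Your proposal follows the paper's argument step by step: rescaling to a $j$-dependent short interval $[0,T^*_j]$ with $T^*_j\simeq 2^{-\delta_1 j}[E(0)]^{-1}$ so that Proposition \ref{DDL3} applies (Subsection \ref{esess}), the high/low-frequency split giving \eqref{prec0}--\eqref{prec00} via Bernstein and the difference wave/transport equations (Subsection \ref{esest}), the inductive extension from $[0,T^*_j]$ to $[0,T^*_{N_0}]$ balancing the energy growth from Theorem \ref{TT2} against the geometric decay factors (Subsection \ref{finalk}), and the analogous bootstrap for the linear wave equation \eqref{Duu21} (Subsection \ref{finalq}). This is essentially the paper's own proof, and you have also correctly identified the closing of the geometric series across the $\sim j-N_0$ extension steps as the delicate point requiring the precise choice of $\delta_1$ and the exponents in \eqref{prec0}--\eqref{prec00}.
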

Based on Proposition \ref{DDL2}, we are now ready to prove Theorem \ref{dingli3}.
\medskip\begin{proof}[Proof of Theorem \ref{dingli3} by Proposition \ref{DDL2}] We will divide the proof into four steps, The following Step 1 and Step 2 is for Point 1 and Point 2 in Theorem \ref{dingli3}. Step 3 is for giving a proof to Point 3. At last, we prove Point 4 in Step 4.

\textit{Step 1: Obtaining a solution of Theorem \ref{dingli3} in a weaker spaces by taking a limit.}
If we set $\bU_j=(p(\rho_j), \bv_j)^{\mathrm{T}}$, by Lemma \ref{sh} and \ref{W0}, for $j \in \mathbb{N}^{+}$ we have
\begin{equation*}
	\begin{split}
		A^0 (\bU_j) \partial_t \bU_j+ A^i (\bU_j) \partial_i \bU_j=0,
		\\
		\partial_t \bw_j + (\bv_j \cdot \nabla)\bw_j=(\bw_j \cdot \nabla)\bv_j.
	\end{split}
\end{equation*}
Then for any $j, l \in \mathbb{N}^{+}$, we have
\begin{equation}\label{ur}
	\begin{split}
		A^0 (\bU_j) \partial_t (\bU_j-\bU_l)+ A^i (\bU_j) \partial_i (\bU_j-\bU_l)=&-\{ A^\alpha (\bU_j)-A^\alpha (\bU_l)\} \partial_\alpha \bU_l,
		\\
		\partial_t (\bw_j-\bw_l) + (\bv_j \cdot \nabla)(\bw_j-\bw_l)=&-\{(\bv_j-\bv_l) \cdot \nabla\}\bw_l
		\\
		& \ + \{(\bw_j-\bw_l) \cdot \nabla)\}\bv_j
		\\
		& \ + (\bw_l \cdot \nabla) (\bv_j-\bv_l).
	\end{split}
\end{equation}
Using \eqref{Duu0}-\eqref{Duu2}, we can show that
\begin{equation*}
	\| \bU_{j}-\bU_{l}\|_{L^\infty_{[0, T^*]}H^1_x}+\| \bw_{j}-\bw_{l}\|_{L^\infty_{[0, T^*]}H^1_x} \leq C_{\bar{M}_2} (\| \bv_{0j}-\bv_{0l}\|_{H^{\sstar}}+ \|\rho_{0j}-\rho_{0l}\|_{H^{\sstar}}+ \| \bw_{0j}-\bw_{0l}  \|_{H^{2}}).
\end{equation*}
Here $C_{\bar{M}_2}$ is a constant depending on $\bar{M}_2$. By Lemma \ref{jh0}, so we get
\begin{equation*}
	\| \bv_{j}-\bv_{l},\rho_{j}-\rho_{l}\|_{L^\infty_{[0, T^*]}H^1_x}+\| \bw_{j}-\bw_{l}\|_{L^\infty_{[0, T^*]}H^1_x} \leq C_{\bar{M}_2} (\| \bv_{0j}-\bv_{0l}\|_{H^{\sstar}}+ \|\rho_{0j}-\rho_{0l}\|_{H^{\sstar}}+ \| \bw_{0j}-\bw_{0l}  \|_{H^{2}}).
\end{equation*}
Using \eqref{fc0s} and \eqref{W0} again, we also have
\begin{equation*}
	\|\partial_t ( \bU_{j}-\bU_{l} )\|_{L^\infty_{[0, T^*]}L^2_x}+\|\partial_t (  \bw_{j}-\bw_{l} )\|_{L^\infty_{[0, T^*]}L^2_x} \leq C_{C_*} (\| \bv_{0j}-\bv_{0l}\|_{H^{\sstar}}+ \|\rho_{0j}-\rho_{0l}\|_{H^{\sstar}}+ \| \bw_{0j}-\bw_{0l}  \|_{H^{2}}).
\end{equation*}
So the sequence $\{(\bv_{j}, \rho_{j}, \bw_{j})\}_{j\in \mathbb{N}^+}$ is a Cauchy sequence in $H^1_x$, and $\{(\partial_t \bv_{j}, \partial_t\rho_{j}, \partial_t\bw_{j})\}_{j\in \mathbb{N}^+}$ is a Cauchy sequence in $L^2_x$. So there is a limit $( \bv, \rho, \bw)$ such that
\begin{equation}\label{rwe}
	\begin{split}
		(\bv_{j}, \rho_{j}, \bw_{j})\rightarrow & ( \bv, \rho, \bw) \quad \quad \ \ \quad \text{in} \ \ H_x^{1} \times H_x^{1} \times H_x^{1},
		\\
		(\partial_t \bv_{j}, \partial_t\rho_{j}, \partial_t\bw_{j})\rightarrow & ( \partial_t\bv, \partial_t\rho, \partial_t\bw) \quad \text{in} \ \ L_x^{2} \times L_x^{2} \times L_x^{2}.
	\end{split}
\end{equation}
By using \eqref{Duu0} and \eqref{Duu1}, then a subsequence of $\{(\bv_{j}, \rho_{j}, \bw_{j})\}_{j\in \mathbb{N}^+}$ and $\{(\partial_t \bv_{j}, \partial_t\rho_{j}, \partial_t\bw_{j})\}_{j\in \mathbb{N}^+}$ are weakly convergent. Therefore, if $m\rightarrow \infty$, then
\begin{equation}\label{rwe0}
	\begin{split}
		(\bv_{j_m}, \rho_{j_m}, \bw_{j_m})\rightharpoonup & ( \bv, \rho, \bw) \quad \qquad \ \ \text{in} \ \ H_x^{\sstar} \times H_x^{\sstar} \times H_x^{2},
		\\
		(\partial_t \bv_{j_m}, \partial_t\rho_{j_m}, \partial_t\bw_{j_m})\rightharpoonup & ( \partial_t\bv, \partial_t\rho, \partial_t\bw) \quad \text{in} \ \ H_x^{\sstar-1} \times H_x^{\sstar-1} \times H_x^{1}.
	\end{split}
\end{equation}
By \eqref{rwe}, \eqref{rwe0} and interpolation formula, we can obtain
\begin{equation}\label{rwe1}
	(\bv_{j_m}, \rho_{j_m})\rightarrow( \bv, \rho) \quad \text{in} \ \ H_x^{a}(0 \leq a <\sstar), \quad \bw_{j_m} \rightarrow \bw \quad \text{in} \ \ H_x^{b} (0\leq b<2).
\end{equation}
By \eqref{rwe1} and \eqref{fc0s}, \eqref{fc1s} we also have
\begin{equation}\label{rwe2}
	\begin{split}
		& (\partial_t\bv_{j_m}, \partial_t\rho_{j_m})\rightarrow(\partial_t \bv, \partial_t \rho) \ \qquad \text{in} \ \ H_x^{\alpha}(0 \leq \alpha <\sstar-1),
		\\
		& \partial_t\bw_{j_m} \rightarrow \partial_t\bw \qquad \qquad \qquad \qquad \text{in} \ \ H_x^{\beta} (0 \leq \beta<1).
	\end{split}
\end{equation}
The weak convergence \eqref{rwe0} and the strong convergence \eqref{rwe1}-\eqref{rwe2} makes us easy to check that $(\bv, \rho, \bw)$ is a strong solution of \eqref{fc1s} with the initial data $(\bv, \rho,\bw)|_{t=0}=(\bv_0,\rho_0,\bw_0)$,
and for $1\leq a<\sstar, 1 \leq b<2$,
\begin{equation}\label{rwe4}
	\begin{split}
		& (\bv,\rho)\in C([0,T^*];H^a),
		\qquad \qquad \quad \bw \in C([0,T^*];H^b),
		\\
		&(\partial_t \bv,\partial_t \rho)\in C([0,T^*];H_x^{a-1}),
		\qquad \partial_t\bw \in C([0,T^*];H_x^{b-1}),
		\\
		& (\bv,\rho)\in L^\infty([0,T^*];H_x^{\sstar}), \qquad \qquad \bw \in L^\infty([0,T^*];H_x^2),
		\\
		& (\partial_t\bv,\partial_t\rho)\in L^\infty([0,T^*];H_x^{\sstar-1}), \ \quad \partial_t\bw \in L^\infty([0,T^*];H_x^1).
	\end{split}
\end{equation}
Furthermore, by using \eqref{Duu0}, \eqref{Duu1}, we obtain
\begin{equation*}
	\begin{split}
		& \mathcal{E}(t)= \|\bv\|_{L^\infty_{[0,T^*]}H_x^{\sstar}}+ \|\rho\|_{L^\infty_{[0,T_*]}H_x^{\sstar}}+ \|\bw\|_{L^\infty_{[0,T^*]}H_x^{2}} \leq \bar{M}_2,
	\end{split}
\end{equation*}
and
\begin{equation*}
	\begin{split}
		& \|d \bv\|_{L^2_{[0,T^*]}L_x^\infty}+\| d \rho\|_{L^2_{[0,T^*]}L_x^\infty}
		\leq \bar{M}_2,
		\\
		& \|\partial_t \bv\|_{L^\infty_{[0,T^*]}H_x^{\sstar-1}}+ \|\partial_t \rho\|_{L^\infty_{[0,T^*]}H_x^{\sstar-1}}+ \|\partial_t \bw\|_{L^\infty_{[0,T^*]}H_x^{1}} \leq \bar{M}_2.
	\end{split}
\end{equation*}
It also remains for us that the convergence \eqref{rwe1} and \eqref{rwe2} should also hold in the highest derivatives. To prove it, let us use a similar idea in discussing the continuous dependence in Section \ref{Sub}.

\textit{Step 2: Strong convergence of the limit in the highest derivatives.} Denote
\begin{equation}\label{Dss23}
	\begin{split}
		e^{(a)}_{j} = & \| v^a_{0(j+1)} - v^a_{0j} \|_{L^2_x}, \quad a=1,2,3,
		\\
		d^{(a)}_{j} = & \| w^a_{0(j+1)} - w^a_{0j} \|_{L^2_x},\quad a=1,2,3,
		\\
		e^{(4)}_{j} = & \| \rho_{0(j+1)} - \rho_{0j} \|_{L^2_x}.
	\end{split}
\end{equation}
Set
\begin{equation}\label{Dss24}
	\begin{split}
		e_{j} = \textstyle{\sum}_{a=1}^4 e^{(a)}_{j}+\textstyle{\sum}_{a=1}^3 d^{(a)}_{j}.
	\end{split}
\end{equation}
Therefore, by using \eqref{Dss23} and \eqref{Dss24}, we can deduce that
\begin{equation}\label{Dss25}
	\begin{split}
		\lim_{j \rightarrow \infty} e_{j} = 0.
	\end{split}
\end{equation}
Note \eqref{Dss0} and \eqref{Qss0}. So we have these properties:

(i)  uniform bounds
\begin{equation}\label{mp0}
	\begin{split}
		& \| \bv_{0j},\rho_{0j} \|_{H^{\sstar}} \lesssim M_*+M^2_*, \quad \| \bw_{0j} \|_{H^{2}} \lesssim M_*+M^2_*,
	\end{split}
\end{equation}

(ii)  high frequency bounds
\begin{equation}\label{mp1}
	\begin{split}
		& \|   \bv_{0j},\rho_{0j} \|_{H^{\sstar+1}}+\|  \bw_{0j} \|_{H^{3}} \lesssim 2^{j}(M_*+M^2_*),
	\end{split}	
\end{equation}

(iii)  difference bounds
\begin{equation}\label{mp2}
	\begin{split}
		&\|  \bv_{0(j+1)}- \bv_{0j},\rho_{0(j+1)}- \rho_{0j}\|_{L_x^{2}} \lesssim 2^{-\sstar j}e_j,
		\\
		&\|  \bw_{0(j+1)}- \bw_{0j}\|_{L_x^{2}} \lesssim 2^{-2j}e_j,
	\end{split}
\end{equation}

(iv)  limit
\begin{equation}\label{mp3}
	\lim_{l \rightarrow \infty}( \bv_{0j}, \rho_{0j}, \bw_{0j})=(\bv_0, \rho_0, \bw_0) \quad  \mathrm{in} \quad H^{\sstar} \times H^{\sstar}\times H^{2}.
\end{equation}
For solutions $(\bv_j,\rho_j, \bw_j)$, we claim that

$\bullet$  uniform bounds
\begin{equation}\label{ebs0}
	\| \bv_{j} \|_{L^\infty_{[0,T^*]}H_x^{\sstar}}+\|\rho_{j} \|_{L^\infty_{[0,T^*]}H_x^{\sstar}}+\| \bw_{j}\|_{L^\infty_{[0,T^*]}H_x^{2}} \lesssim \bar{M}_2,
\end{equation}

$\bullet$  higher-order norms
\begin{equation}\label{ebs1}
	\|  \bv_{j} \|_{L^\infty_{[0,T^*]}H_x^{\sstar+1}}+\| \rho_{j} \|_{L^\infty_{[0,T^*]}H_x^{\sstar+1}}+\| \bw_{j}  \|_{L^\infty_{[0,T^*]}H_x^{3}} \lesssim 2^{j}\bar{M}_2,
\end{equation}

$\bullet$  difference bounds
\begin{equation}\label{ebs2}
	\|  (\bv_{j+1}- \bv_{j}, \rho_{j+1}- \rho_{j}) \|_{L^\infty_{[0,T^*]} L_x^{2}} \lesssim 2^{-\sstar j}e_j,
\end{equation}
and
\begin{equation}\label{ebs00}
	\| \bw_{j+1}- \bw_{j} \|_{L^\infty_{[0,T^*]} L_x^{2}} \lesssim 2^{-(\sstar-\frac12)j} e_j,
\end{equation}
and
\begin{equation}\label{ebs3}
	\| \bw_{j+1}- \bw_{j} \|_{L^\infty_{[0,T^*]} \dot{H}_x^{2}} \lesssim e_j,
\end{equation}

$\bullet$  Strichartz estimates of the difference
\begin{equation}\label{ebs4}
	2^j\| \bv_{j+1}- \bv_{j}, \rho_{j+1}- \rho_{j}\|_{L^2_{[0,T^*]} L_x^{\infty}} \lesssim e_j.
\end{equation}
By using \eqref{rwe}, \eqref{ebs2} and \eqref{ebs3}, we get
\begin{equation*}
	\|\bv_j-\bv\|_{H_x^{\sstar}}\lesssim \textstyle{\sum}_{l \geq j}e_{l}.
\end{equation*}
Hence, combining with \eqref{Dss25}, we have
\begin{equation}\label{ebs5}
	\lim_{j\rightarrow \infty}\|\bv_j-\bv\|_{H_x^{\sstar}}\lesssim  \lim_{j\rightarrow \infty} \textstyle{\sum}_{l \geq j}e_{l}=0.
\end{equation}
Similarly, using \eqref{ebs2} and \eqref{ebs3}, so we also conclude that
\begin{equation}\label{ebs6}
	\begin{split}
		\lim_{j\rightarrow \infty}\|\rho_j-\rho\|_{H_x^{\sstar}}\lesssim  \lim_{j\rightarrow \infty} \textstyle{\sum}_{l \geq j}e_{l}=0,
		\\
		\lim_{j\rightarrow \infty}\|\bw_j-\bw\|_{H_x^{2}}\lesssim  \lim_{j\rightarrow \infty} \textstyle{\sum}_{l \geq j}e_{l}=0.
	\end{split}
\end{equation}
Combining \eqref{ebs5} and \eqref{ebs6}, we have proved the strong convergence $\lim_{j\rightarrow \infty}(\bv_j,\rho_j, \bw_j)=(\bv,\rho,\bw)$ in $H_x^{\sstar} \times H_x^{\sstar} \times H_x^{2}$. In a similar way, we can show that $\lim_{j\rightarrow \infty}(\partial_t \bv_j,\partial_t\rho_j, \partial_t\bw_j)=(\partial_t\bv,\partial_t\rho,\partial_t\bw)$ in $H_x^{\sstar-1} \times H_x^{\sstar-1} \times H_x^{1}$. Therefore, we conclude that $(\bv, \rho, \bw)$ is a strong solution of \eqref{fc1s} with the initial data $(\bv, \rho,\bw)|_{t=0}=(\bv_0,\rho_0,\bw_0)$. Moreover,
\begin{equation}\label{key1}
	\begin{split}
		& (\bv,\rho)\in C([0,T^*];H^{\sstar}),
		\qquad \qquad \quad \bw \in C([0,T^*];H^2),
		\\
		&(\partial_t \bv,\partial_t \rho)\in C([0,T^*];H_x^{\sstar-1}),
		\qquad \partial_t\bw \in C([0,T^*];H_x^{1}).
	\end{split}
\end{equation}

It now remains for us to prove \eqref{ebs0}-\eqref{ebs4}. By using \eqref{Duu0}, we get \eqref{ebs0}. By Corollary \ref{hes} , \eqref{Duu0}, \eqref{Duu2} and \eqref{mp1}, we can obtain
\begin{equation*}
	\begin{split}
		\|  \bv_{j} \|_{L^\infty_{[0,T^*]}H_x^{\sstar+1}}+\| \rho_{j} \|_{L^\infty_{[0,T^*]}H_x^{\sstar+1}}+\| \bw_{j}  \|_{L^\infty_{[0,T^*]}H_x^{3}} \lesssim  2^j \bar{M}_2.
	\end{split}
\end{equation*}
So we have proved \eqref{ebs1}.

It remains for us to prove \eqref{ebs2}-\eqref{ebs4}. We will use bootstrap arguments to conclude the proof. By assuming \eqref{ebs4}, if we can prove
\begin{equation}\label{ebs7}
	2^j\| \bv_{j+1}- \bv_{j}, \rho_{j+1}- \rho_{j}\|_{L^2_{[0,T^*]} L_x^{\infty}} \lesssim 2^{-\delta_1 j}e_j,
\end{equation}
then \eqref{ebs7} holds. Let us now prove \eqref{ebs2}-\eqref{ebs3}, \eqref{ebs7} if we have \eqref{ebs7}. Taking $h=0$ in \eqref{ff2} and using \eqref{ebs7}, \eqref{Duu0}, we get \eqref{ebs2}. By using \eqref{ur}, we get
\begin{equation}\label{Dss26}
	\begin{split}
		\partial_t (\bw_j-\bw_l) + (\bv_j \cdot \nabla)(\bw_j-\bw_l)=&-\{(\bv_j-\bv_l) \cdot \nabla\}\bw_l
	 + \{(\bw_j-\bw_l) \cdot \nabla)\}\bv_j
		\\
		& \ + (\bw_l \cdot \nabla) (\bv_j-\bv_l)
		\\
		=&-\{(\bv_j-\bv_l) \cdot \nabla\}\bw_l
		+ \{(\bw_j-\bw_l) \cdot \nabla)\}\bv_j
		\\
		& \ + \nabla\cdot \left\{ \bw_l (\bv_j-\bv_l)  \right\}+  (\bv_j-\bv_l) \textrm{div}\bw_l.
	\end{split}
\end{equation}
Multiplying \eqref{Dss26} with $\bw_{m+1}- \bw_{m}$ and integrating on $[0,T^*]\times \mathbb{R}^3$, and using $\textrm{div}\bw_l=\bw^l \cdot \nabla \rho^l$, we obtain
\begin{equation*}
	\begin{split}
		\| \bw_{m+1}- \bw_{m} \|^2_{L^\infty_{[0,T_*]} L^2_x} \leq  & \| (\bw_{m+1}- \bw_{m})(0) \|^2_{L^2_x}
		\\
		& + C\int^{T_*}_0  \| \nabla \bv_{m+1}, \nabla \rho_{m+1}\|_{L^\infty_x}\| \bw_{m+1}- \bw_{m} \|^2_{L^2_x}d\tau
		\\
		& + C\int^{T_*}_0 \| \bv_{m+1}-\bv_{m} \|_{H^{\frac12}_x}\| \bw_{m+1}- \bw_{m} \|_{L^2_x}\|\nabla \bw_{m} \|_{H^1_x}d\tau
		\\
		&+ C \int^{T_*}_0 \| \rho_{m+1}-\rho_{m} \|_{H^{\frac12}_x}\| \bw_{m+1}- \bw_{m} \|_{L^2_x}\|\nabla \bw_{m} \|_{H^1_x}d\tau
		\\
		&+ C \int^{T_*}_0 \| \rho_{m+1}-\rho_{m} \|_{L^2_x}\| \nabla ( \rho_{m+1}- \rho_{m}) \|_{L^2_x}\|\bw_{m} \|^2_{L^\infty_x}d\tau.
	\end{split}
\end{equation*}
This combining with \eqref{Duu0}, \eqref{Duu1} \eqref{ebs0}, \eqref{ebs2}, then \eqref{ebs00} holds.

Using \eqref{divwl}, and seeing \eqref{ebs0} and \eqref{ebs2}, we have
\begin{equation}\label{ebs8}
	\begin{split}
		\| \textrm{div}(\bw_{j+1} - \bw_j) \|_{\dot{H}_x^{1}} \leq & \|\bw_{j+1} - \bw_j \|_{{H}_x^{\frac{3}{2}+}} \|\rho_j\|_{{H}_x^{2}}+\| \bw_j \|_{{H}_x^{\frac{3}{2}+}} \| \rho_{j+1}- \rho_j\|_{{H}_x^{2}}
		\\
		\lesssim & 2^{-\delta_1 j}e_j\lesssim  e_j.
	\end{split}
\end{equation}
Using \eqref{DB1} and \eqref{DB2}, we obtain
\begin{equation}\label{ecs0}
	\begin{split}
		& \mathbf{T} ( \mathrm{curl} \mathrm{curl} \bw_j^i+\Omega_j^i  )
		=   \partial^i \big( 2 \partial_n v_{aj} \partial^n w_j^a \big) + \Gamma_j^i,
	\end{split}
\end{equation}
where
\begin{align} \label{eq:OM}
\Omega_j^i=-\epsilon^{ilk} \partial_l \rho_j \cdot \mathrm{curl}_k \bw_j- 2\partial^a {\rho_j} \delta_{ab}\partial^i w_j^b
\end{align}
where $\mathrm{curl}_k \bw_j$ the $k$:th component of $\mathrm{curl} \bw_j$, and $\Gamma_j^i$ is defined in the same way as above, but with $\bv, \rho, \bw, \mathrm{curl}\bw$ replaced by $\bv_j, \rho_j, \bw_j, \mathrm{curl}\bw_j$ in $\Gamma^i$ (cf.  \eqref{DB3}).
For simplicity, we denote
\begin{equation}\label{ecs2}
	Z(t)= \| \mathrm{curl}^i \mathrm{curl} ( \bw_{j+1}-\bw_j)+\Omega^{i}_{j+1}- \Omega^{i}_{j} \|_{L^{2}_x}.
\end{equation}
Multiplying $ \mathrm{curl}_i \mathrm{curl} ( \bw_{j+1}-\bw_{j})+\Omega_{i(j+1)}- \Omega_{ij} $, and integrating it on $[0,t] \times \mathbb{R}^3$($t \leq T^*$), we then obtain
\begin{equation}\label{ecs3}
	\begin{split}
		Z^2(t) \leq & Z^2(0) + Z^a+Z^b+Z^c,
	\end{split}
\end{equation}
where
\begin{equation*}
	\begin{split}
		Z^a=&
		| \int^t_0 \int_{\mathbb{R}^3} \partial^i \{  2 \partial_n (v_{a(j+1)}-v_{aj}) \partial^n w^{a}_{j+1} \}
		\cdot  \{  \mathrm{curl}_i \mathrm{curl} ( \bw_{j+1}-\bw_{j})+\Omega_{i(j+1)}- \Omega_{ij}  \} dxd\tau|,
		\\
		Z^b=&  | \int^t_0 \int_{\mathbb{R}^3} \partial^i \{  2 \partial_n v_{aj} \partial^n (w^{a}_{j+1}-w^{a}_{j} ) \}
		\cdot \{  \mathrm{curl}_i \mathrm{curl} ( \bw_{j+1}-\bw_{j})+\Omega_{i(j+1)}- \Omega_{ij}  \} dxd\tau|,
		\\
		Z^c=&  | \int^t_0 \int_{\mathbb{R}^3} ( \Gamma^{i}_{j+1}-\Gamma^{i}_{j} )
		\cdot \{  \mathrm{curl}_i \mathrm{curl} ( \bw_{j+1}-\bw_{j})+\Omega_{i(j+1)}- \Omega_{ij} \} dxd\tau|.
	\end{split}
\end{equation*}
Using \eqref{mp2}, it's direct for us to get
\begin{equation}\label{ecs4}
	G^2(0) \lesssim e^2_j.
\end{equation}
Integrating by parts in $Z^a$, we have
\begin{equation*}
	\begin{split}
		Z^a\leq &
		\left| \int^t_0 \int_{\mathbb{R}^3} \{  2 \partial_n (v_{a(j+1)}-v_{aj}) \partial^n w^{a}_{j+1} \}
		\cdot \partial^i \{  \mathrm{curl}_i \mathrm{curl} ( \bw_{j+1}-\bw_{j})+\Omega_{i(j+1)}- \Omega_{ij}  \} dxd\tau \right|
		\\
		=&  \left| \int^t_0 \int_{\mathbb{R}^3} \{  2 \partial_n (v_{a(j+1)}-v_{aj}) \partial^n w^{a}_{j+1} \}
		\cdot \partial^i \{  \Omega_{i(j+1)}- \Omega_{ij} \} dxd\tau \right|.
	\end{split}
\end{equation*}
By H\"older's inequality, Lemma \ref{lpe}, and \eqref{divf}, we have
\begin{equation*}
	\begin{split}
		Z^a
		\lesssim & \int^t_0 \| \partial(\bv_{j+1}-\bv_{j}) \|_{L^\infty_x}\| \bw_{j} \|^2_{H^{2}_x}   \| \rho_{j+1}-\rho_{j} \|_{H^{2}_x} d\tau
		\\
		& +  \int^t_0 \| \partial(\bv_{j+1}-\bv_{j}) \|_{L^\infty_x}\| \bw_{j} \|_{H^{2}_x}\| \rho_{j} \|_{H^{2}_x}   \| \bw_{j+1}-\bw_{j} \|_{H^{\frac32+}_x} d\tau
		\\
		&+ \int^t_0 \| \partial(\bv_{j+1}-\bv_{j}) \|_{L^\infty_x}\| \bw_{j} \|^2_{H^{2}_x} \| \rho_{j+1}-\rho_{j} \|_{H^{\frac32+}_x} d\tau
		\\ &		  +   \int^t_0 \| \bv_{j+1}-\bv_{j} \|_{H^{\frac32+}_x}\| \bw_{j} \|_{H^{2}_x} \| \rho_{j} \|_{H^{2}_x} \| \bw_{j+1}-\bw_{j} \|_{H^{2}_x} d\tau
	\end{split}
\end{equation*}
By using \eqref{ebs0}, \eqref{ebs2}, \eqref{ebs00}, \eqref{ebs4}, for $t\in [0, T_*]$, we have
\begin{equation}\label{ecs5}
	Z^a \lesssim 2^{-2\delta_1j}e^2_j.
\end{equation}
Using a similar way to handle $Z^a$, we can bound $Z^b$ by
\begin{equation*}
	\begin{split}
		Z^b
		\lesssim & \int^t_0 \| \partial\bv_{j}\|_{L^\infty_x}\| \bw_{j} \|_{H^{2}_x} \| \bw_{j+1}-\bw_{j} \|_{H^{\frac32+}_x}  \| \rho_{j+1}-\rho_{j} \|_{H^{2}_x} d\tau
		\\
		& +  \int^t_0 \| \partial \bv_{j} \|_{L^\infty_x}\| \bw_{j+1}-\bw_{j} \|_{H^{2}_x}\| \rho_{j} \|_{H^{2}_x}   \| \bw_{j+1}-\bw_{j} \|_{H^{\frac32+}_x} d\tau
		\\
		&+ \int^t_0 \| \partial(\rho_{j+1}-\rho_{j}) \|_{L^\infty_x} \| \bw_{j} \|_{H^{2}_x} \| \bv_{j} \|_{H^{2}_x} \| \bw_{j+1}-\bw_{j} \|_{H^{\frac32+}_x} d\tau
		\\ &		  +   \int^t_0 \|\partial \bv_{j}\|_{L^\infty_x}\| \bw_{j+1}-\bw_j \|_{H^{2}_x} \| \rho_{j} \|_{H^{2}_x} \| \bw_{j+1}-\bw_{j} \|_{H^{\frac32+}_x} d\tau.
	\end{split}
\end{equation*}
By \eqref{Duu0}, \eqref{ebs0}, \eqref{ebs2}, \eqref{ebs00}, \eqref{ebs4}, for $t\in [0, T^*]$, we have
\begin{equation}\label{ecs6}
	Z^b \lesssim 2^{-2\delta_1j}e^2_j.
\end{equation}
By H\"older's inequality and Lemma \ref{lpe}, we have
\begin{equation*}
	\begin{split}
		Z^c \lesssim  &  \int^t_0 ( \|\partial( \bv_{j+1}-\bv_{j} ) \|_{L^\infty_x} \| \bw_j \|_{ H^{2}_x}+\|\partial \bv_{j} \|_{L^\infty_x} \| \bw_{j+1}-\bw_j \|_{H^{2}_x} ) Z(\tau) d\tau.
	\end{split}
\end{equation*}
By using \eqref{lyr1}, for $t\in [0, T^*]$, we have
\begin{equation}\label{ecs7}
	Z^c \leq e^2_j.
\end{equation}
Gathering \eqref{ecs4}-\eqref{ecs7}, we show that
\begin{equation}\label{ecs8}
	Z^2(t) \lesssim e^2_j.
\end{equation}
Hence, by using \eqref{ebs2} and \eqref{ebs00}, for $t \in [0,T^*]$, we get
\begin{equation}\label{ecs9}
	\begin{split}
		\|\textrm{curl}\textrm{curl}(\bw^{l+1} - \bw^l) \|_{L_x^{2}} \leq & G(t)+ \|\Omega_{j+1}-\Omega_j \|_{L_x^{2}}
		\\
		\leq & G(t)+ \|\rho_{j+1}-\rho_j \|_{H_x^{\frac32+}}\|\bw_j \|_{H_x^{2}}+ \|\bw_{j+1}-\bw_j \|_{H_x^{\frac32+}}\|\rho_j \|_{H_x^{2}}
		\\
		\lesssim  & e_j.
	\end{split}
\end{equation}
Combining \eqref{ebs8} and \eqref{ecs9}, we have proved \eqref{ebs3}. By using \eqref{Duu21}-\eqref{Duu22} and taking $r=\frac{s}{2}=1+5\delta_{1}$, we can bound \eqref{ebs7} by
\begin{equation*}
	\begin{split}
		&2^j\| \bv_{j+1}- \bv_{j}, \rho_{j+1}- \rho_{j}\|_{L^2_{[0,T^*]} L_x^{\infty}}
		\\
		\leq & C2^j ( \| \bv_{j+1}- \bv_{j}, \rho_{j+1}- \rho_{j}\|_{L^\infty_{[0,T^*]} H_x^{1+5\delta_{1}}} + \| \bw_{j+1}- \bw_{j} \|_{L^\infty_{[0,T^*]} H_x^{\frac12+5\delta_1}})
		\\
		\leq & C2^{-3\delta_1 j} e_j.
	\end{split}
\end{equation*}
This proves \eqref{ebs4}.

\textit{Step 3: Obtaining a solution of linear wave by taking a limit.}
Reapting the proof in Step 1 and Step 2, we shall also find
\begin{equation}\label{key2}
	\begin{split}
	\lim_{j\rightarrow \infty} \|f_j-f\|_{H^r_x}=0, \quad \lim_{j\rightarrow \infty} \|\partial_t f_j- \partial_t f\|_{H^{r-1}_x}=0.
	\end{split}
\end{equation}
So taking $j\rightarrow \infty$, we have
	\begin{equation*}
	\begin{cases}
		& \square_g f=0, \qquad (t,x) \in (t_0,T^*]\times \mathbb{R}^3,
		\\
		&(f, \partial_t f)|_{t=0}=(f_0,f_1) \in H^r\times H^{r-1}.
	\end{cases}
\end{equation*}
Using \eqref{Duu22}, it follows
\begin{equation}\label{key3}
	\begin{split}
		&\|\left< \partial \right>^{a-1} d{f}\|_{L^2_{[0,T^*]} L^\infty_x}
		\leq  \bar{M}_4 (\|{f}_0\|_{{H}_x^r}+ \|{f}_1 \|_{{H}_x^{r-1}}),
		\\
		&\|{f}\|_{L^\infty_{[0,T^*]} H^{r}_x}+ \|\partial_t {f}\|_{L^\infty_{[0,T^*]} H^{r-1}_x} \leq  \bar{M}_4 (\| {f}_0\|_{H_x^r}+ \| {f}_1\|_{H_x^{r-1}}).
	\end{split}
\end{equation}
\textit{Step 4: Continuous dependence.}
Based on \eqref{key3}, we are able to prove the continuous dependence of solutions on the initial data, which can follow Section \ref{Sub}.

At this stage, we have finished the proof of Theorem \ref{dingli3}.
\end{proof}

It remains for us to prove Proposition \ref{DDL2}. Our idea is to prove Proposition \ref{DDL2} via small solutions, and using scaling technique and Strichartz estimates to obtain the solutions satisfying \eqref{Duu0}-\eqref{Duu2}. We next turn to introduce Proposition \ref{DDL3}, which is about small solutions.
\subsection{Proposition \ref{DDL3} for small data}
Let us now introduce Proposition \ref{DDL3} for small data, where the initial regularity of the vorticity \eqref{DP30} is better than \eqref{chuzhi3}.
\begin{proposition}\label{DDL3}
Let $\epsilon_2$ and $\epsilon_3$ be stated in \eqref{a0}. Let\footnote{We take $\delta_1$ be a fixed number.} $\delta_1=\frac{\sstar-2}{10}>0$. For each small, smooth initial data $(\bv_0, \rho_0, \bw_0)$ supported in $B(0,c+2)$ which satisfies\footnote{To obtain the smallness norm of $\| \bw_0\|_{H^{2+\delta_1}}$, the price we pay is that the time interval is smaller, please see \eqref{DTJ}.}
\begin{equation}\label{DP30}
	\begin{split}
	&\|(\bv_0, \rho_0) \|_{H^{\sstar}} + \| \bw_0\|_{H^{2+\delta_1}}  \leq \epsilon_3,
	\end{split}
	\end{equation}
there exists a smooth solution $(\bv, \rho, \bw)$ to \eqref{fc1s} on $[-2,2] \times \mathbb{R}^3$ satisfying
\begin{equation}\label{DP31}
 \|(\bv, \rho)\|_{L^\infty_{[-2,2]}H_x^{\sstar}}+\|\bw\|_{L^\infty_{[-2,2]}H_x^{2+\delta_1}} \leq \epsilon_2.
\end{equation}
Furthermore, the solution satisfies the properties

$\mathrm{(1)}$ dispersive estimate for $\bv$, $\rho$, and $\bv_+$
	\begin{equation}\label{DP32}
	\|d \bv, d \rho\|_{L^2_{[-2,2]} C^{\delta_*}_x}+\|d \bv_+, d {\rho}, d \bv\|_{L^2_{[-2,2]} L^\infty_x} \leq \epsilon_2,
	\end{equation}
	with
\begin{equation}\label{eq:d*def}
\delta_*\in (0, \sstar-2).
\end{equation}

$\mathrm{(2)}$ Let $f$ satisfy equation \eqref{Linear}\footnote{Here the acoustic metric is better than in Proposition \ref{DDL2}, for we improve the regularity of vorticity to $2+\delta_1$, and $\delta_1$ is a fixed number.}. For each $1 \leq r \leq s+1$, the Cauchy problem \eqref{linear} is well-posed in $H^r \times H^{r-1}$, and the following estimate holds\footnote{This Strichartz estimates is stronger than \eqref{Duu2}, for the regularity of the vorticity is different between Proposition \ref{DDL2} and Proposition \ref{DDL3}.}:
	\begin{equation}\label{DP33}
	\|\left< \partial \right>^k f\|_{L^2_{[-2,2]} L^\infty_x} \lesssim  \| f_0\|_{H^r}+ \| f_1\|_{H^{r-1}},\quad \ k<r-1,
	\end{equation}
and the same estimates hold with $\left< \partial \right>^k$ replaced by $\left< \partial \right>^{k-1}d$.
\end{proposition}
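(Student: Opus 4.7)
The plan is to obtain Proposition \ref{DDL3} as a direct specialization of Proposition \ref{p1} to the isentropic setting $h\equiv 0$, with the Sobolev index $s_0$ taken to be $2+\delta_1$. Concretely, since $\delta_1=(\sstar-2)/10$ and $2<\sstar\leq 5/2$, the choice $s_0:=2+\delta_1$ satisfies $2<s_0<\sstar$, so $(s_0,\sstar)$ is an admissible pair for Proposition \ref{p1}. Under the hypothesis \eqref{DP30}, the smooth compactly supported data $(\bv_0,\rho_0,\bw_0,h_0\equiv 0)$ satisfies
\[
\|\bv_0\|_{H^\sstar}+\|\rho_0\|_{H^\sstar}+\|\bw_0\|_{H^{s_0}}+\|h_0\|_{H^{s_0+1}}\leq \epsilon_3,
\]
so Proposition \ref{p1} directly furnishes a smooth solution $(\bv,\rho,h,\bw)$ of \eqref{fc1} satisfying \eqref{s402}, \eqref{s403}, and the linear wave Strichartz estimates \eqref{304} for each $1\leq r\leq \sstar+1$.

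Next I would verify that $h\equiv 0$ propagates: since $\mathbf{T}h=0$ in \eqref{fc0}, the vanishing initial entropy forces $h\equiv 0$ throughout the lifespan, whence the remaining triple $(\bv,\rho,\bw)$ solves \eqref{fc1s}. To convert the Besov-type dispersive bound in \eqref{s403} into the $L^\infty$ bound in \eqref{DP32}, I invoke the embedding $\dot B^{s_0-2}_{\infty,2}\hookrightarrow L^\infty$ which holds because $s_0>2$; combined with the $C^{\delta_*}_x$ estimate from \eqref{s403} (valid for any $\delta_*\in(0,\sstar-2)$, matching \eqref{eq:d*def}), this yields \eqref{DP32}. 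The energy estimate \eqref{DP31} is just \eqref{s402} restricted to $(\bv,\rho,\bw)$, and the linear wave statement is exactly Point (3) of Proposition \ref{p1}, which delivers \eqref{DP33}.

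The only nontrivial step is to reconcile the time interval $[-1,1]$ appearing in Proposition \ref{p1} with the $[-2,2]$ required here. Inspecting the construction in Section \ref{Sec4}, the localized metric $\mathbf{g}$ in \eqref{boldg} already equals the Minkowski metric for $t\in[-2,-3/2]$, so the solution produced by the bootstrap of Proposition \ref{p4} actually lives on $[-2,2]$; the restriction to $[-1,1]$ in the statement of Proposition \ref{p1} is merely cosmetic. Alternatively, a trivial rescaling $(t,x)\mapsto(t/2,x/2)$ of \eqref{fc0s} preserves the smallness hypothesis (after shrinking $\epsilon_3$ by a universal factor) and extends the lifespan to $[-2,2]$.

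The anticipated main obstacle, though ultimately routine, is ensuring that the isentropic constraint $h\equiv 0$ is compatible with every step of the bootstrap underlying Proposition \ref{p1}: the characteristic energy estimates of Section \ref{sec6}, the null frame structure for $\mathrm{curl}\bw$ in Lemma \ref{te20}, and the wave-packet parametrix construction in Section \ref{Ap} all involve $h$ through the entropy-dependent terms in $\bQ$, $D$, $E$, and $K^i$. However, each of these contributions vanishes identically when $h=0$ (see \eqref{DDi}, \eqref{rF}, \eqref{Hh3}), so the reduction is clean and no new estimate is required. The improved regularity $\bw_0\in H^{2+\delta_1}$ (stronger than the generic $s_0>2$ requirement) plays no qualitative role in the proof of Proposition \ref{p1} itself, but its quantitative advantage will be exploited downstream when this proposition is combined with the scaling-and-localization argument to prove Proposition \ref{DDL2}.
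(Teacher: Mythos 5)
Your proposal is correct and follows exactly the paper's own route: the paper proves Proposition~\ref{DDL3} in one line by setting $h=0$ in Proposition~\ref{p1} and taking $s=\sstar$, $s_0=2+\delta_1$ (noting $2<2+\delta_1<\sstar$). Your additional verifications — that $h\equiv 0$ propagates under $\mathbf{T}h=0$, that $\dot B^{s_0-2}_{\infty,2}\hookrightarrow L^\infty$ for $s_0>2$ with the paper's (inhomogeneous) Besov convention so \eqref{s403} yields \eqref{DP32}, and that the $[-1,1]$ versus $[-2,2]$ discrepancy in the statement of Proposition~\ref{p1} is cosmetic because the bootstrap in Proposition~\ref{p4} and the metric truncation \eqref{boldg} are both set up on $[-2,2]$ — are all accurate and merely spell out what the paper leaves implicit.
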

\begin{proof}
Setting $h=0$ in Proposition \ref{p1} and replacing the regularity exponent $s$ and $s_0$ to $s=\sstar, s_0=2+\delta_1$, then we can directly obtain the conclusion in Proposition \ref{DDL3}.
\end{proof}
\subsection{Proof of Proposition \ref{DDL2} by using Proposition \ref{DDL3}}\label{keypro}
In Proposition \ref{DDL3}, the corresponding statement considers the small, supported data. While, the initial data is large in Proposition \ref{DDL2}, and regularity of the vorticity in Proposition \ref{DDL2} is weaker than Proposition \ref{DDL3}. So we divide the proof into three subparts. Firstly, we can only get a solution of Proposition \ref{DDL2} on a short time interval, which is presented in subsection \ref{esess}. Then we are able to get some good Strichartz estimates for low-frequency and mid-frequency of solutions on these short time intervals by paying a loss of derivatives, which is proved in subsection \ref{esest}. After that, we can extend these solutions on a regular time-interval in subsection \ref{finalk}. At the same time, the solutions of linear wave can also be extended to this regular time-interval, which is obtained in \ref{finalq}.
\subsubsection{Energy estimates and Strichartz estimates on a short time-interval.} \label{esess}
Firstly, by using a scaling method, we can reduce the initial data in Proposition \ref{DDL2} to be small. Considering \eqref{Dss0}, take space-time scaling
\begin{equation*}
	\begin{split}
		& \widetilde{\bv}_{0j}={\bv}_{0j}(Tt,Tx), \quad \widetilde{\rho}_{0j}=\rho_{0j}(Tt,Tx).
	\end{split}
\end{equation*}
Referring to \eqref{pw11}, we set
\begin{equation*}
		\widetilde{\bw}_{0j}=\mathrm{e}^{\widetilde{\rho}_{0j}}\mathrm{curl} \widetilde{\bv}_{0j},
\end{equation*}
then we have the bounds
\begin{equation}\label{Dss2}
	\begin{split}
		 \| \widetilde{\bv}_{0j} \|_{\dot{H}^{\sstar}} +  \| \widetilde{\rho}_{0j} \|_{\dot{H}^{\sstar}} \leq  & T^{\sstar-\frac32}\| (\| {\bv}_{0j} \|_{\dot{H}^{\sstar}}+ {\rho}_{0j} \|_{\dot{H}^{\sstar}}),
		\\
		\| \widetilde{\bw}_{0j}\|_{\dot{H}^{2+\delta_1}} =& \| \mathrm{e}^{\widetilde{\rho}_{0j}}\mathrm{curl} \widetilde{\bv}_{0j} \|_{\dot{H}^{2+\delta_1}}
		\\
		\leq & T^{\frac32+\delta_{1}} \|\mathrm{e}^{{\rho}_{0j}}\mathrm{curl} {\bv}_{0j} \|_{\dot{H}^{2+\delta_1}}
		\\
		= & T^{\frac32+\delta_{1}} \|{\bw}_{0j} \|_{\dot{H}^{2+\delta_1}}
		\\
		\leq & T^{\frac32+\delta_{1}} 2^{\delta_{1} j }\|{\bw}_{0j} \|_{\dot{H}^{2}}.
	\end{split}
\end{equation}
Set
\begin{equation}\label{DTJ}
	T^*_j:=2^{-\delta_1j}[E(0)]^{-1}.
\end{equation}
Taking $T$ in \eqref{Dss2} as $T^*_j$,  \eqref{Dss2} yields
\begin{equation}\label{pp7}
	\begin{split}
		& \| \widetilde{\bv}_{0j} \|_{\dot{H}^{\sstar}}+\| \widetilde{\rho}_{0j}\|_{\dot{H}^{\sstar}} \leq 2^{-\delta_1 j({\frac12+10\delta_{1}})}[E(0)]^{-(\frac12+10\delta_{1})},
		\\
		& \| \widetilde{\bw}_{0j}\|_{\dot{H}^{2+\delta_1}}  \leq  2^{-(\frac{\delta_1}{2}+\delta_{1}^2)j} [E(0)]^{-(\frac12+\delta_{1})}.
	\end{split}
\end{equation}
Note $\| \widetilde{\bv}_{0j}\|_{L^\infty}\leq  \|{\bv}_{0j}\|_{L^\infty}$ and $\| \widetilde{\rho}_{0j}\|_{L^\infty}  \leq  \|{\rho}_{0j}\|_{L^\infty} $. By \eqref{pu00}, we have
\begin{equation}\label{pp60}
	\begin{split}
		\| \widetilde{\bv}_{0j}\|_{L^\infty}+ \| \widetilde{\rho}_{0j}\|_{L^\infty}  \leq  \|{\bv}_{0j}\|_{L^\infty} + \|{\rho}_{0j}\|_{L^\infty} \leq  C_0.
	\end{split}
\end{equation}
hold. For a small parameter $\epsilon_3$ stated in Proposition \ref{DDL3}, we can choose\footnote{Recall that $\delta_{1}=\frac{s-2}{10}$, $E(0)=C(M_*+M_*^2)$. So $N_0$ depends on $s, C_0, c_0$ and $M_*$.} $N_0=N_0(\delta_{1},E(0))$ such that
\begin{equation}\label{pp8}
\begin{split}
	& 2^{-\delta_1 N_0}(1+E^6(0))\ll \epsilon_3,
	\\
	& 2^{-\delta_{1} N_0 } (1+C^3_*)\{ 1+ \frac{C_*}{E(0)}(1+ E^3(0))^{-1} \} \leq 1,
	\\
	& C2^{-\frac{\delta_1}{2}N_0} (1+E^3(0))E^{-\frac12}(0)  [\frac{1}{3}(1-2^{-\delta_{1}})]^{-2} \leq 2.
\end{split}	
\end{equation}
Above, $C_*=C_*(\delta_{1}, E(0))$ is denoted by
\begin{equation}\label{Cstar}
	\begin{split}
		C_*=C(E(0)+E^3(0))\exp\{  2(1+E^3(0)) \textrm{e}^2 \} .
	\end{split}	
\end{equation}
As a result, for $j \geq N_0$, we have
\begin{equation}\label{Dss3}
	\begin{split}
		\| \widetilde{\bv}_{0j} \|_{\dot{H}^{\sstar}}+\| \widetilde{\rho}_{0j}\|_{\dot{H}^{\sstar}}+\| \widetilde{\bw}_{0j}\|_{\dot{H}^{2+\delta_1}}  \leq  \epsilon_3.
	\end{split}
\end{equation}
For \eqref{Dss3} is about the homogeneous norm, so we need to use physical localization to get the same bound for in-homogeneous norm. Utilize the standard physical localization as in Section 4. Since the speed of propagation of \eqref{fc1s} is finite, we may set $c$ be the largest speed of propagation of \eqref{fc1s}. Then the solution in a unit cylinder $[-1,1]\times B(\by,1)$ is uniquely determined by the initial data in the ball $B(\by,1+c)$. Hence
it is natural to truncate the initial data in a slightly larger region. Set $\chi$ be a smooth function supported in $B(0,c+2)$,  and which equals $1$ in $B(0,c+1)$. For given $\by \in \mathbb{R}^3$, we define the localized initial data for the velocity and density near $\by$:
\begin{equation}\label{yyy0}
	\begin{split}
		\bar{\bv}_{0j}(\bx)=&\chi(\bx-\by)\left( \widetilde{\bv}_{0j}(\bx)- \widetilde{\bv}_{0j}(\by)\right),
		\\
		\bar{\rho}_{0j}(\bx)=&\chi(\bx-\by)\left( \widetilde{\rho}_{0j}(\bx)-\widetilde{\rho}_{0j}(\by)\right).
	\end{split}
\end{equation}
Referring to \eqref{pw11}, we set
\begin{equation}\label{yyy1}
	\bar{\bw}_{0j}=
	\mathrm{e}^{-\bar{\rho}_{0j}}\mathrm{curl}\bar{\bv}_{0j}.
\end{equation}
By \eqref{Dss3}, \eqref{yyy0} and \eqref{yyy1}, so we have
\begin{equation}\label{sS4}
	\begin{split}
		\| \bar{\bv}_{0j} \|_{{H}^{\sstar}}+\| \bar{\rho}_{0j}\|_{{H}^{\sstar}}+\| \bar{\bw}_{0j} \|_{{H}^{2+\delta_1}}  \leq  & C(\| \widetilde{\bv}_{0j} \|_{\dot{H}^{\sstar}}+\| \widetilde{\rho}_{0j}\|_{\dot{H}^{\sstar}}+\| \widetilde{\bw}_{0j} \|_{\dot{H}^{2+\delta_1}})
		\\
		\leq & \epsilon_3.
	\end{split}
\end{equation}
For every $j$, by Proposition \ref{DDL3}, there is a smooth solution $(\bar{\bv}_j, \bar{\rho}_j, \bar{\bw}_j)$ on $[-2,2]\times \mathbb{R}^3$ satisfying
\begin{equation}\label{yz0}
	\begin{cases}
		\square_{\widetilde{g}_j} \bar{v}_j^i=-\mathrm{e}^{\bar{\rho}_j+\tilde{\rho}_{0j}(y)}\widetilde{c}_s^2 \mathrm{curl} \bar{\bw}^i+\widetilde{\mathcal{Q}}^i,
		\\
		\square_{\widetilde{g}_j} \bar{\rho}_j=\widetilde{\mathcal{D}},
		\\
		\widetilde{\mathbf{T}} \bar{\bw}_j=\bar{\bw}_j\cdot \nabla \bar{\bv}_j,
		\\
		(\bar{\bv}_j, \bar{\rho}_j, \bar{\bw}_j)|_{t=0}=(\bar{\bv}_{0j}, \bar{\rho}_{0j}, \bar{\bw}_{0j}).
	\end{cases}
\end{equation}
Above, the quantities $\widetilde{c}^2_s$ and $\widetilde{{g}_j}$ are defined by
\begin{equation}\label{QRY}
	\begin{split}
		\widetilde{c}^2_s:&= \frac{dp}{d{\rho}}(\bar{\rho}_j+\tilde{\rho}_{0j}(y)),
		\\
		\widetilde{g}_j:&=g({\bar\bv}_j+\tilde{\bv}_{0j}(y), {\bar\rho}_j+\tilde{\rho}_{0j}(y)),
		\\
		\widetilde{\mathbf{T}}:&=\partial_t+ ({\bar\bv}_j+\tilde{\bv}_{0j}(y))\cdot \nabla,
	\end{split}
\end{equation}
and
\begin{equation*}
	\begin{split}
		\widetilde{\mathcal{Q}}^i=& \widetilde{\mathcal{Q}}^{i\alpha\beta }_{j} \partial_\alpha \bar{\rho} \partial_\beta \bar{v}^j+ \widetilde{\mathcal{Q}}^{\alpha\beta }_{2j} \partial_\alpha \bar{v}^i \partial_\beta \bar{v}^j,
		\\
		\widetilde{\mathcal{D}}=& \widetilde{\mathcal{D}}^{\alpha\beta }_{j} \partial_\alpha \bar{\rho} \partial_\beta \bar{v}^j+ \widetilde{\mathcal{D}}^{\alpha\beta }_{2} \partial_\alpha \bar{\rho} \partial_\beta \bar{\rho}+ \widetilde{\mathcal{D}}^{i\alpha\beta }_{3j} \partial_\alpha \bar{v}_i \partial_\beta \bar{v}^j,
	\end{split}
\end{equation*}
and $\widetilde{\mathcal{Q}}^{i\alpha\beta }_{j}$, $\widetilde{\mathcal{Q}}^{\alpha\beta}_{2j}$, $\widetilde{\mathcal{D}}^{\alpha\beta }_{j}$, $\widetilde{\mathcal{D}}^{\alpha\beta}_{2}$, and $\widetilde{\mathcal{D}}^{i\alpha\beta}_{3j}$ have the same formulations with $\mathcal{Q}^{i\alpha\beta }_{j}$, $\mathcal{Q}^{\alpha\beta}_{2j}$, $\mathcal{D}^{\alpha\beta }_{j}$, $\mathcal{D}^{\alpha\beta}_{2}$,   $\mathcal{D}^{i\alpha\beta}_{3j}$ by replacing $({\bv}, {\rho}, {h})$ to $(\bar{\bv}_j+\tilde{\bv}_0(\by), \bar{\rho}_j+\tilde{\rho}_0(\by) )$.

Using Proposition \ref{DDL3} again, on $[-2,2]\times \mathbb{R}^3$, we find that
\begin{equation}\label{Dsee0}
	\|\bar{\bv}_j\|_{L^\infty_{[-2,2]}H_x^{\sstar}}+ \|\bar{\rho}_j\|_{L^\infty_{[-2,2]}H_x^{\sstar}}+ \| \bar{\bw}_j \|_{L^\infty_{[-2,2]}H_x^{2+\delta_1}} \leq \epsilon_2,
\end{equation}
and
\begin{equation}\label{Dsee1}
	\|d\bar{\bv}_j, d\bar{\rho}_j \|_{L^2_{[-2,2]}C^{\delta_*}_x} \leq \epsilon_2.
\end{equation}
for $\delta_*$ as in \eqref{eq:d*def}. Furthermore, using \eqref{DP33} in Proposition \ref{DDL3}, for $1\leq r_1 \leq s+1$, the linear equation
\begin{equation}\label{Dsee2}
	\begin{cases}
		&\square_{ \widetilde{g}_j } F=0,\quad [-2,2]\times \mathbb{R}^3,
		\\
		&{F}(t_0,x)={F}_0, \ \partial_t {F}(t_0,x)={F}_1, \quad t_0 \in [-2,2],
	\end{cases}
\end{equation}
admits a solution ${F} \in C([-2,2],H_x^{r_1})\times C^1([-2,2],H_x^{r_1-1})$ and for $k<r_1-1$, the following estimate holds\footnote{For all $j$, the initial norm of the initial data \eqref{sS4} is uniformly controlled by the same small parameter $\epsilon_3$, and the regularity of the initial data \eqref{sS4} only depends on $\sstar$, so the constant in \eqref{3s1} is uniform for all $\widetilde{{g}}$ depending on $j$.}:
\begin{equation}\label{3s1}
	\begin{split}
	\|\left< \partial \right>^{k-1}d{F}\|_{L^2_{[-2,2]} L^\infty_x}+ \|\left< \partial \right>^{k-1}d{F}\|_{L^2_{[-2,2]} L^\infty_x} \leq  & C(\| {F}_0\|_{H_x^r}+ \| {F}_1\|_{H_x^{r-1}} ).
\end{split}
\end{equation}
Here $\widetilde{g}_j$ is given by \eqref{QRY}.

Note \eqref{yz0}. Then the function $(\bar{\bv}_j+\widetilde{\bv}_{0j}(\by), \bar{\rho}_j+\widetilde{\rho}_{0j}(\by), \mathrm{e}^{-\tilde{\rho}_{0j}(\by)} \bar{\bw}_j)$ is also a solution of the following system
\begin{equation}\label{yz1}
	\begin{cases}
		\square_{\widetilde{g}_j} ( \bar{\bv}_j+\widetilde{\bv}_{0j}(\by) )=-\mathrm{e}^{\bar{\rho}_j+\tilde{\rho}_{0j}(\by)}\widetilde{c}_s^2 \mathrm{curl} (\mathrm{e}^{-\tilde{\rho}_{0j}(\by)} \bar{\bw}_j)+\widetilde{\mathcal{Q}}^i,
		\\
		\square_{\widetilde{g}_j} ( \bar{\rho}_j+\widetilde{\rho}_{0j}(\by) )=\widetilde{\mathcal{D}},
		\\
		\widetilde{\mathbf{T}} ( \mathrm{e}^{-\tilde{\rho}_{0j}(\by)} \bar{\bw}_j)=\mathrm{e}^{-\tilde{\rho}_{0j}(\by)}\bar{\bw}_j \cdot \nabla ( \bar{\bv}_j+\widetilde{\bv}_{0j}(\by) ),
		\\
		(\bar{\bv}_j+\widetilde{\bv}_{0j}(y), \bar{\rho}_j+\widetilde{\rho}_{0j}(y), \mathrm{e}^{-\tilde{\rho}_{0j}(y)} \bar{\bw}_j )|_{t=0}=(\widetilde{\bv}_{0j}, \widetilde{\rho}_{0j}, \widetilde{\bw}_{0j}).
	\end{cases}
\end{equation}
Consider the restrictions, for $\by\in \mathbb{R}^3$,
\begin{equation}\label{Dsee3}
	\left( \bar{\bv}_j+\widetilde{\bv}_{0j} (\by) \right)|_{\mathrm{K}^y},
	\quad (\bar{\rho}_j+\widetilde{\rho}_{0j} (\by) )|_{\mathrm{K}^y},
	\\
	\quad \bar{\bw}_j|_{\mathrm{K}^y},
\end{equation}
where $\mathrm{K}^y=\left\{ (t,\bx): ct+|\bx-\by| \leq c+1, |t| <1 \right\}$, then the restrictions \eqref{Dsee3} solve \eqref{CEE}-\eqref{id} on $\mathrm{K}^y$. By  finite speed of propagation, a smooth solution $(\bar{\bv}_j+\widetilde{\bv}_{0j}(\by), \bar{\rho}_j+ \widetilde{\rho}_{0j}(\by), \mathrm{e}^{-\widetilde{\rho}_{0j}(y)} \bar{\bw}_j)$ solves \eqref{fc1} on $\mathrm{K}^y$. Therefore, if we set
\begin{equation}\label{Dsee4}
	\begin{split}
		\widetilde{\bv}_j(t,\bx)  &=\textstyle{\sum}_{\by \in 3^{-\frac12} \mathbb{Z}^3 }\psi(\bx-\by) (\bar{\bv}_j+\widetilde{\bv}_{0j}(\by)),
		\\
		\widetilde{\rho}_j(t,\bx)  &=\textstyle{\sum}_{\by \in 3^{-\frac12} \mathbb{Z}^3}\psi(\bx-\by) ( \bar{\rho}_j+ \widetilde{\rho}_{0j}(\by)),
		\\
		\widetilde{\bw}_j(t,\bx)  &=\mathrm{e}^{-\widetilde{\rho}_j}\mathrm{curl}\widetilde{\bv}_j,
	\end{split}
\end{equation}
then $(\widetilde{\bv}_j,\widetilde{\rho}_j,\widetilde{\bw}_j)$ is a smooth solution of \eqref{fc1} on $[-1,1]\times \mathbb{R}^3$ with the initial data $(\widetilde{\bv}_j, \widetilde{\rho}_j, \widetilde{\bw}_j )|_{t=0}=(\widetilde{\bv}_{0j}, \widetilde{\rho}_{0j}, \widetilde{\bw}_{0j})$, where $\psi$ is supported in the unit ball such that
\begin{equation*}
	\textstyle{\sum}_{\by \in 3^{-\frac12} \mathbb{Z}^3 } \psi(\bx-\by)=1.
\end{equation*}
On the other hand, we recall that the initial data $(\widetilde{\bv}_{0j}, \widetilde{\rho}_{0j}, \widetilde{\bw}_{0j})$ is a scaling of $(\bv_{0j}, \rho_{0j}, \bw_{0j})$ with the space-time scale $T^*_j$, and the system \eqref{fc1s} is scaling-invariant. Then, the function
\begin{equation*}
	({\bv}_{j}, {\rho}_{j}, {\bw}_{j})=(\widetilde{\bv}_{j},\widetilde{\rho}_{j}, \widetilde{\bw}_{j}) ((T^*_j)^{-1}t,(T^*_j)^{-1}x),
\end{equation*}
is a solution of \eqref{fc1s} on the space-time $[0,T^*_j]\times \mathbb{R}^3$($T^*_j$ is stated in \eqref{DTJ}), and it has the initial data
\begin{equation*}
	({\bv}_{j}, {\rho}_{j}, {\bw}_{j})|_{t=0}=({\bv}_{0j}, {\rho}_{0j}, {\bw}_{0j}).
\end{equation*}
Referring \eqref{Dsee4} and using \eqref{Dsee2}-\eqref{3s1}, we can see
\begin{equation}\label{Dsee5}
	\begin{split}
		 \|d\widetilde{\bv}_j, d\widetilde{\rho}_j\|_{L^2_{[0,1]}C^{\delta_*}_x}
		\leq & \sup_{\by \in 3^{-\frac12} \mathbb{Z}^3}  \|d\bar{\bv}_j, d\bar{\rho}_j\|_{L^2_{[0,1]}{C^{\delta_*}_x}}
		\\
		\leq & C(\|\bar{\bv}_{0j}\|_{H_x^s}+ \|\bar{\rho}_{0j}\|_{H_x^s}+ \| \bar{\bw}_{0j} \|_{H_x^{2}}).
	\end{split}
\end{equation}
By changing of coordinates $(t,\bx)\rightarrow ((T_j^*)^{-1}t,(T_j^*)^{-1}\bx)$ for each $j\geq 1$, we get
\begin{equation}\label{Dsee6}
	\begin{split}
		\|d{\bv}_j, d{\rho}_j\|_{L^2_{[0,T^*_j]}C^{\delta_*}_x}
		\leq & (T^*_j)^{-(\frac12+{\delta_*})}\|d\widetilde{\bv}_j, d\widetilde{\rho}_j\|_{L^2_{[0,1]}C^{\delta_*}_x}.
	\end{split}
\end{equation}
Using \eqref{Dsee5} and \eqref{Dsee6}, and \eqref{DTJ}, and ${\delta_*} \in (0,s-2)$, it follows
\begin{equation}\label{Dsee7}
	\begin{split}
		\|d{\bv}_j, d{\rho}_j\|_{L^2_{[0,T^*_j]}C^{\delta_*}_x}
	\leq	&C (T^*_j)^{-(\frac12+{\delta_*})}(\|\bar{\bv}_{0j}\|_{H_x^s}+ \|\bar{\rho}_{0j}\|_{H_x^s}+ \| \bar{\bw}_{0j} \|_{H_x^{2}})
		\\
		\leq & C (T^*_j)^{-(\frac12+{\delta_*})} \{  (T^*_j)^{s-\frac32}\|{\bv}_{0j}, {\rho}_{0j}\|_{\dot{H}_x^s}
		+ (T^*_j)^{\frac32}\| {\bw}_{0j} \|_{\dot{H}_x^{2}} \}
		\\
		\leq & C (\|{\bv}_{0j},{\rho}_{0j}\|_{H_x^s}+ \| {\bw}_{0j} \|_{H_x^{2}}).
	\end{split}
\end{equation}
Combining \eqref{Dsee7} and \eqref{pu0}, we obtain
\begin{equation}\label{yz4}
	\begin{split}
		\|d{\bv}_j, d{\rho}_j\|_{L^2_{[0,T^*_j]}C^{\delta_*}_x}
		\leq & C (1+E(0)).
	\end{split}
\end{equation}
Set
\begin{equation}\label{Etr}
	E(T^*_{j})=\|\bv_{j}\|_{L^\infty_{[0,T^*_{j}]} H^{\sstar}_x}+\|\rho_{j}\|_{L^\infty_{[0,T^*_{j}]} H^{\sstar}_x}+
	\|\bw_{j}\|_{L^\infty_{[0,T^*_{j}]} H^{2}_x}.
\end{equation}
By using \eqref{yz4} and Theorem \ref{TT2}, we have\footnote{Note that $\| \bw_j \|_{L^\infty_{[0,T^*_{j}]}H_x^{2+}}$ is not uniform bounded. But $\| \bar \bw_j \|_{L^\infty_{[-2,2]}H_x^{2+}} \lesssim \epsilon_2$ for all $j \geq 1$.}
\begin{equation}\label{AMM3}
	E(T^*_j) \leq C( E(0)+E^3(0) )\mathrm{e}^{C(1+E(0))}.
\end{equation}
We also expect some estimates for the homogeneous linear wave equation on a short time interval $[0,T^*_j]$. Following the proof of \eqref{po10} and \eqref{po13} for Equation \eqref{po3}, we deduce that for $1\leq r_1 \leq s+1$, then
\begin{equation}\label{ru03}
	\begin{cases}
		\square_{{g}_j} F=0, \quad [0,T^*_j]\times \mathbb{R}^3,
		\\
		(F,F_t)|_{t=0}=(F_0,F_1)\in H_x^{r_1} \times H^{r_1-1}_x,
	\end{cases}
\end{equation}
has a unique solution on $[0,T^*_j]$. Moreover, for $a_1<r_1-1$, we have
\begin{equation}\label{ru04}
	\begin{split}
		\|\left< \partial \right>^{a_1-1} dF\|_{L^2_{[0,T^*_j]} L^\infty_x}
		\leq & C(\|{F}_0\|_{{H}_x^{r_1}}+ \| {F}_1 \|_{{H}_x^{r_1-1}}),
	\end{split}
\end{equation}
and
\begin{equation}\label{ru05}
	\begin{split}
		\|{F}\|_{L^\infty_{[0,T^*_j]} H^{r_1}_x}+ \|\partial_t {F}\|_{L^\infty_{[0,T_j]} H^{r_1-1}_x} \leq  C(\| {F}_0\|_{H_x^{r_1}}+ \| {F}_1\|_{H_x^{r_1-1}}).
	\end{split}
\end{equation}
Although there is a uniform bound for $\bv_{j}, \rho_{j}$ and $\bw_{j}$,  the size of the time interval $[0,T^*_{j}]$ is not uniform. To construct a solution as a limit of these smooth solutions, we have to extend these solutions on a regular time-interval.

\subsubsection{A loss of Strichartz estimates on a short time-interval.} \label{esest}
To see the energy estimates and Strichartz estimates of the sequence $({\bv}_{j}, {\rho}_{j}, {\bw}_{j})$ very clearly, let us give some precise analysis on $({\bv}_j,{\rho}_j,{\bw}_j)$, for there is a strong Strichartz estimate \eqref{yz4} and \eqref{ru03}-\eqref{ru05}.
Next, we will discuss it into two cases\footnote{Here, we mainly inspired by Tataru \cite{T1}, Bahouri-Chemin \cite{BC2}, and Ai-Ifrim-Tataru \cite{AIT}. Of course, our work based on the property of Strichartz estimates in Proposition \ref{r6} and careful analysis on vorticity. Moreover, we eventually conclude it by induction method.}, the high frequency and low frequency for $({\bv}_j, {\rho}_j)$.

\textit{Case 1: High frequency($k \geq j$).} Using \eqref{yz4}, we get
\begin{equation}\label{yz6}
	\begin{split}
		& \| d \bv_j, d \rho_j \|_{L^2_{[0,T^*_j]}C^a_x} \leq C(1+E(0)), \quad  a \in [0,s-2).
	\end{split}
\end{equation}
For $k \geq j$, by Bernstein inequality and \eqref{yz6}, we have
\begin{equation}\label{yz9}
	\begin{split}
		\| P_{k} d \bv_j, P_{k} d \rho_j \|_{L^2_{[0,T^*_j]}L^\infty_x}
		\leq & 2^{-ka}\| P_k d \bv_j, P_k d \rho_j \|_{L^2_{[0,T^*_j]}C^a_x}
		\\
		\leq & C2^{-ja} \| d \bv_j, d \rho_j \|_{L^2_{[0,T^*_j]}C^a_x}
		\\
		\leq &  C2^{-ja} (1+E(0)).
	\end{split}
\end{equation}
Taking $a=9\delta_1$ in \eqref{yz9}, so we obtain
\begin{equation}\label{yz10}
	\begin{split}
		\| P_{k} d \bv_j, P_{k} d \rho_j \|_{L^2_{[0,T^*_j]}L^\infty_x}
		\leq & C   2^{-\delta_{1}k} \cdot (1+E^3(0)) 2^{-7\delta_{1}j},, \quad k \geq j,
	\end{split}
\end{equation}
\textit{Case 2: Low frequency($k<j$).} In this case, it's a little different from $k \geq j$. Fortunately, there is some good estimates for difference terms $P_k(d{\rho}_{m+1}-d{\rho}_m)$ and $P_k(d{\bv}_{m+1}-d{\bv}_{m})$. Following \eqref{dvc} and \eqref{etad}, we set
\begin{equation}\label{etad0}
	-\Delta \bv_{-m}=\mathrm{e}^{\rho_m}\mathrm{curl} \bw_m, \quad \bv_m=\bv_{+m}+ \bv_{-m}.
\end{equation}
We will obtain some good estimates of $P_k(d{\bv}_{m+1}-d{\bv}_m)$ and $P_k(d{\rho}_{m+1}-d{\rho}_m)$ by using a Strichartz estimates \eqref{ru04} of \eqref{ru03}. The good estimates is from a loss of derivatives of Strichartz estimates.

By Bernstein inequality, we get
\begin{equation}\label{yz12}
	\begin{split}
		\|{\bv}_{0(m+1)}-{\bv}_{0m}\|_{L^2}
		\lesssim & 2^{-sm} \|{\bv}_{0(m+1)}-{\bv}_{0m}\|_{\dot{H}^s}.
	\end{split}	
\end{equation}
Similarly, we obtain
\begin{equation}\label{yz13}
	\begin{split}
		& \|{\rho}_{0(m+1)}-{\rho}_{0m}\|_{L^2} \lesssim   2^{-sm}\|{\bv}_{0(m+1)}-{\bv}_{0m}\|_{\dot{H}^s},
		\\
		& \|{\bw}_{0(m+1)}-{\bw}_{0m}\|_{L^2} \lesssim   2^{-2m}\|{\bw}_{0(m+1)}-{\bw}_{0m}\|_{\dot{H}^2}.
	\end{split}	
\end{equation}
We claim that
\begin{align}\label{yu0}
	& \|\bv_{m+1}-\bv_{m}, {\rho}_{m+1}-{\rho}_{m}\|_{L^\infty_{ [0,T^*_{m+1}] } L^2_x} \leq 2^{-(\sstar- \delta_1)m}E(0),
	\\\label{yu1}
	& \|\bw_{m+1}-\bw_{m}\|_{L^\infty_{ [0,T^*_{m+1}]  } L^2_x} \leq 2^{- (\sstar-\frac12- \delta_1)m}E(0).
\end{align}
Let ${\bU}_{m}=(\bv_{m},p(\rho_{m}))^\mathrm{T} $. Then ${\bU}_{m+1}-{\bU}_{m}$ satisfies
\begin{equation}\label{yz14}
	\begin{cases}
		& A^0({\bU}_{m+1}) \partial_t ( {\bU}_{m+1}- {\bU}_{m}) + A^i({\bU}_{m+1}) \partial_i ( {\bU}_{m=1}- {\bU}_{m})=\Pi_m,
		\\
		& ( {\bU}_{m+1}-{\bU}_{m} )|_{t=0}= {\bU}_{0(m+1)}-{\bU}_{0m},
	\end{cases}
\end{equation}
where
\begin{equation}\label{Fhz}
	\Pi_m=-[A^0({\bU}_{m+1})-A^0({\bU}_{m}) ]\partial_t  {\bU}_{m}- [A^i({\bU}_{m+1})-A^i({\bU}_{m}) ]\partial_i  {\bU}_{m}.
\end{equation}
By \eqref{Fhz}, we can see
\begin{equation}\label{Fhz0}
	|\Pi_m|\lesssim |{\bU}_{m+1} - {\bU}_{m}| \cdot |  d{\bU}_{m} |.
\end{equation}
Multiplying ${\bU}_{m+1}- {\bU}_{m}$ on \eqref{yz14} and integrating it on $\times \mathbb{R}^3$, we have
\begin{equation}\label{yz15}
	\frac{d}{dt}	\|{\bU}_{m+1}-{\bU}_{m} \|^2_{L^2_x} \lesssim   \| (d {\bU}_{m+1}, d{\bU}_{m}) \|_{L^\infty_x}\| {\bU}_{m+1}-{\bU}_{m} \|^2_{L^2_x}.
\end{equation}
Integrating \eqref{yz15} on $[0,T^*_{m+1}]$ and using Gronwall's inequality, \eqref{yz4}, \eqref{yz12}, and \eqref{yz13}, it yields
\begin{equation*}
	\begin{split}
		\| {\bU}_{m+1}-{\bU}_{m} \|_{L^\infty_{[0,T^*_{m+1} ]}L^2_x} \lesssim &  \| {\bU}_{0(m+1)}-{\bU}_{0m} \|_{L^2_x} \leq 2^{-(\sstar- \delta_1)m}E(0).
	\end{split}
\end{equation*}
Then we get
\begin{equation*}
	\begin{split}
		\| {\bv}_{m+1}-{\bv}_{m}, {\rho}_{m+1}-{\rho}_{m} \|_{L^\infty_{[0,T^*_{m+1}]}L^2_x} \leq 2^{-(\sstar- \delta_1)m}E(0).
	\end{split}
\end{equation*}
So we have proved \eqref{yu0}. To prove \eqref{yu1}, let us consider the transport equation of ${\bw}_{m+1} - {\bw}_m$, that is,
\begin{equation}\label{AMM7}
	\begin{split}
		&\partial_t ({\bw}_{m+1}- {\bw}_{m}) + ({\bv}_{m+1} \cdot \nabla) ({\bw}_{m+1}- {\bw}_{m})
		\\
		=& -({\bv}_{m+1}-{\bv}_{m}) \cdot \nabla {\bw}_{m}+ ({\bw}_{m+1}- {\bw}_{m})\cdot \nabla {\bv}_{m+1}+ {\bw}_{m} \cdot \nabla ({\bv}_{m+1}-{\bv}_{m}).
	\end{split}
\end{equation}
Multiplying with ${\bw}_{m+1}- {\bw}_{m}$ on \eqref{AMM7} and integrating it on $[0,t]\times \mathbb{R}^3$, we derive
\begin{equation}\label{AMM8}
	\begin{split}
		\|{\bw}_{m+1}- {\bw}_{m} \|^2_{L^2_x} \leq  & \| {\bw}_{0(m+1)}- {\bw}_{0m} \|^2_{L^2_x}+ C\int^t_{0}  \| \nabla {\bv}_{m+1}\|_{L^\infty_x}\| {\bw}_{m+1}- {\bw}_{m} \|^2_{L^2_x}d\tau
		\\
		& + C\int^t_{0} \| {\bv}_{m+1}-{\bv}_{m} \|_{L^3_x}\| {\bw}_{m+1}-{\bw}_{m} \|_{L^2_x}\|\nabla {\bw}_{m} \|_{L^6_x}d\tau
		\\
		&+ C| \int^t_{0} \int_{\mathbb{R}^3}({\bw}_{m} \cdot \nabla) ({\bv}_{m+1}-{\bv}_{m}) \cdot ({\bw}_{m+1}- {\bw}_{m}) dx d\tau |.
	\end{split}
\end{equation}
By calculating
\begin{equation*}
	\begin{split}
		& \int^t_0 \int_{\mathbb{R}^3} ({\bw}_{m} \cdot \nabla) ({\bv}_{m+1}-{\bv}_{m}) \cdot ({\bw}_{m+1}- {\bw}_{m}) dx d\tau
		\\
		= & \int^t_0 \int_{\mathbb{R}^3} {w}^a_{m} \delta_{ai}\partial^i  ({v}^j_{m+1}-{v}^j_{m}) \delta_{bj}({w}^b_{m+1}- {w}^b_{m}) dx d\tau
		\\
		= & \int^t_0 \int_{\mathbb{R}^3} {w}^a_{m} \delta_{ai}   \{ \partial^i({v}^j_{m+1}-{v}^j_{m})- \partial^j({v}^i_{m+1}-{v}^i_{m}) \} \delta_{bj}({w}^b_{m+1}- {w}^b_{m}) dx d\tau
		\\
		& + \int^t_0 \int_{\mathbb{R}^3} {w}^a_{m} \delta_{ai}  \partial^j({v}^i_{m+1}-{v}^i_{m}) \delta_{bj}({w}^b_{m+1}- {w}^b_{m}) dx d\tau
		\\
		= & \int^t_0 \int_{\mathbb{R}^3} {w}^a_{m} \delta_{ai}  \epsilon^{ij}_{\ \ k} (\mathrm{e}^{{\rho}_{m+1}} {w}^k_{m+1} - \mathrm{e}^{{\rho}_{m}} {w}^k_{m})  \delta_{bj}({w}^b_{m+1}- {w}^b_{m}) dx d\tau
		\\
		& + \int^t_0 \int_{\mathbb{R}^3} {w}^a_{m} \delta_{ai}  \partial^j({v}^i_{m+1}-{v}^i_{m}) \delta_{bj}({w}^b_{m+1}- {w}^b_{m}) dx d\tau,
	\end{split}
\end{equation*}
and
\begin{equation*}
	\begin{split}
		& \int^t_0 \int_{\mathbb{R}^3} {w}^a_{m} \delta_{ai}  \partial^j({v}^i_{m+1}-{v}^i_{m}) \delta_{bj}({w}^b_{m+1}- {w}^b_{m}) dx d\tau
		\\
		=& - \int^t_0 \int_{\mathbb{R}^3} \partial^j {w}^a_{m} \delta_{ai}  ({v}^i_{m+1}-{v}^i_{m}) \delta_{bj}({w}^b_{m+1}- {w}^b_{m}) dx d\tau
		\\
		& - \int^t_0 \int_{\mathbb{R}^3} {w}^a_{m} \delta_{ai}  ({v}^i_{m+1}-{v}^i_{m}) (\mathrm{div} {\bw}_{m+1}- \mathrm{div} {\bw}_{m}) dx d\tau
		\\
		=& - \int^t_0 \int_{\mathbb{R}^3} \partial^j {w}^a_{m} \delta_{ai}  ({v}^i_{m+1}-{v}^i_{m}) \delta_{bj}({w}^b_{m+1}- {w}^b_{m}) dx d\tau
		\\
		& - \int^t_0 \int_{\mathbb{R}^3}  {w}^a_{m} \delta_{ai}  ({v}^i_{m+1}-{v}^i_{m}) ({\bw}_{m+1} \cdot \nabla {\rho}_{m+1} - {\bw}_{m} \cdot \nabla{\rho}_{m}) dx d\tau,
	\end{split}
\end{equation*}
then we get
\begin{equation}\label{AMM9}
	\begin{split}
		& \int^t_0 \int_{\mathbb{R}^3} ({\bw}_{m} \cdot \nabla) ({\bv}_{m+1}-{\bv}_{m}) \cdot ({\bw}_{m+1}- {\bw}_{m}) dx d\tau
		\\
		= & \int^t_0 \int_{\mathbb{R}^3} {w}^a_{m} \delta_{ai}  \epsilon^{ij}_{\ \ k} (\mathrm{e}^{{\rho}_{m+1}} {w}^k_{m+1} - \mathrm{e}^{{\rho}_{m}} {w}^k_{m})  \delta_{bj}({w}^b_{m+1}- {w}^b_{m}) dx d\tau
		\\
		& - \int^t_0 \int_{\mathbb{R}^3} \partial^j{w}^a_{m} \delta_{ai}  ({v}^i_{m+1}-{v}^i_{m}) \delta_{bj}({w}^b_{m+1}- {w}^b_{m}) dx d\tau
		\\
		& - \int^t_0 \int_{\mathbb{R}^3} {w}^a_{m} \delta_{ai}  ({v}^i_{m+1}-{v}^i_{m}) ({\bw}_{m+1}- {\bw}_{m}) \cdot \nabla {\rho}_{m+1}  dx d\tau.
		\\
		& + \int^t_0 \int_{\mathbb{R}^3} {w}^a_{m} \delta_{ai}  ({v}^i_{m+1}-{v}^i_{m})  {\bw}_{m} \cdot \nabla ({\rho}_{m+1} -  {\rho}_{m}) dx d\tau.
	\end{split}
\end{equation}
Inserting \eqref{AMM9} in \eqref{AMM8},  \eqref{AMM8} yields
\begin{equation}\label{M04}
	\begin{split}
		\| {\bw}_{m+1}- {\bw}_{m} \|^2_{L^2_x} \leq  & \| {\bw}_{0(m+1)} - {\bw}_{0m} \|^2_{L^2_x}
		\\
		& + C\int^t_0  \| \partial {\bv}_{m+1}, \partial {\rho}_{m+1}\|_{L^\infty_x}\| {\bw}_{m+1}- {\bw}_{m} \|^2_{L^2_x}d\tau
		\\
		& + C\int^t_0 \| {\bv}_{m+1}-{\bv}_{m} \|_{H^{\frac12}_x}\| {\bw}_{m+1}- {\bw}_{m} \|_{L^2_x}\|\partial {\bw}_{m} \|_{H^1_x}d\tau
		\\
		&+ C \int^t_0 \| {\rho}_{m+1}-{\rho}_{m} \|_{H^{\frac12}_x}\| {\bw}_{m+1}- {\bw}_{m} \|_{L^2_x}\|\partial {\bw}_{m} \|_{H^1_x}d\tau
		\\
		&+ C \int^t_0 \| {\rho}_{m+1}-{\rho}_{m} \|_{L^2_x}\| \partial ({\rho}_{m+1}- {\rho}_{m}) \|_{L^2_x}\|{\bw}_{m} \|^2_{L^\infty_x}d\tau.
	\end{split}
\end{equation}
Seeing \eqref{M04} and using \eqref{yz4}, and \eqref{yu0}, we can see that
\begin{equation*}
	\|{\bw}_{m+1}-{\bw}_{m}\|_{L^\infty_{ [0, T^*_{m+1}] } L^2_x} \leq 2^{- (\sstar-\frac12- \delta_1)m}(1+E^2(0)).
\end{equation*}
So we have finished the proof of \eqref{yu1}. On the other hand, using Strichartz estimates \eqref{ru04}, then
\begin{equation}\label{see80}
	\begin{split}
		& \|\partial({\rho}_{m+1}-{\rho}_{m}) \|_{L^2_{[0,T^*_{m+1}]} C^{\delta_{1}}_x}+\|\partial ({\bv}_{+(m+1)}-{\bv}_{+m}) \|_{L^2_{[0,T^*_{m+1}]} C^{\delta_{1}}_x}
		\\
		\leq  &  C\| {\rho}_{m+1}-{\rho}_{m}, {\bv}_{m+1}-{\bv}_{m} \|_{L^\infty_{[0,T^*_{m+1}]} H^{2+2\delta_{1}}_x}+\| {\bw}_{m+1}-{\bw}_{m} \|_{L^2_{[0,T^*_{m+1}]} H^{\frac32+2\delta_{1}}_x}.
	\end{split}
\end{equation}
Noting $s=2+10\delta_{1}$ and using \eqref{yu0}-\eqref{yu1}, we can bound \eqref{see80} by
\begin{equation}\label{see99}
	\begin{split}
		& \|\partial({\rho}_{m+1}-{\rho}_{m}) \|_{L^2_{[0,T^*_{m+1}]} C^{\delta_{1}}_x}+\|\partial ({\bv}_{+(m+1)}-{\bv}_{+m}) \|_{L^2_{[0,T^*_{m+1}]} C^{\delta_{1}}_x}
		\leq   2^{-7\delta_1m} E(0).
	\end{split}
\end{equation}
By \eqref{yu1} and Sobolev imbeddings, we shall get
\begin{equation}\label{siq}
	\begin{split}
		\|\partial ({\bv}_{-(m+1)}-{\bv}_{-m}) \|_{L^2_{[0,T^*_{m+1}]} L^\infty_x}
		\leq & \| \partial ({\bv}_{-(m+1)}-{\bv}_{-m}) \|_{L^2_{[0,T^*_{m+1}]} L^\infty_x}
		\\
		\leq  & C \|{\bw}_{m+1}- {\bw}_{m} \|_{L^\infty_{[0,T^*_{m+1}]} H^{\frac32+\delta_{1}}_x}
		\\
		\leq & C (1+E^2(0)) 2^{-8\delta_{1} m} .
	\end{split}
\end{equation}
If we add \eqref{see99} and \eqref{siq}, then we have
\begin{equation}\label{siw}
	\begin{split}
		& \|\partial( {\rho}_{m+1}-{\rho}_{m} ) \|_{L^2_{[0,T^*_{m+1}]} C^{\delta_{1}}_x}+\| \partial ({\bv}_{m+1}-{\bv}_{m}) \|_{L^2_{[0,T^*_{m+1}]} C^{\delta_{1}}_x}\leq  C (1+E^2(0)) 2^{-6\delta_{1} m}.
	\end{split}
\end{equation}
By using \eqref{fc0s} and \eqref{siw}, we shall obtain
\begin{equation}\label{sie}
	\begin{split}
		& \|\partial_t({\rho}_{m+1}-{\rho}_{m})\|_{L^2_{[0,T^*_{m+1}]} C^{\delta_{1}}_x}+\| \partial_t({\bv}_{m+1}-{\bv}_{m}) \|_{L^2_{[0,T^*_{m+1}]} C^{\delta_{1}}_x}
		\\
		\leq  & \|\partial({\rho}_{m+1}-{\rho}_{m}) , \partial ({\bv}_{m+1}-{\bv}_{m}) \|_{L^2_{[0,T^*_{m+1}]} C^{\delta_{1}}_x} \cdot (1+ \|\bv_m, \rho_m\|_{L^\infty_{[0,T^*_{m+1}]} H^s_x} )
			\\
			\leq & C (1+E^3(0)) 2^{-6\delta_{1} m}.
	\end{split}
\end{equation}
Due to \eqref{siw} and \eqref{sie}, for $k<j$, it yields
\begin{equation}\label{Sia}
		 \|P_k d({\rho}_{m+1}-{\rho}_{m}), P_k d ({\bv}_{m+1}-{\bv}_{m}) \|_{L^2_{[0,T^*_{m+1}]} L^\infty_x}
		\leq   2^{-\delta_1k}\cdot C (1+E^3(0)) 2^{-6\delta_1m},
\end{equation}
and
\begin{equation}\label{kz4}
	\|P_k d({\rho}_{m+1}-{\rho}_{m}), P_k d ({\bv}_{m+1}-{\bv}_{m}) \|_{L^1_{[0,T^*_{m+1}]} L^\infty_x}
	\leq   2^{-\delta_1k} \cdot C (1+E^3(0)) 2^{-6\delta_1m}.
\end{equation}

\subsubsection{Uniform energy and Strichartz estimates on a regular time-interval $[0,T^*_{N_0}]$.}\label{finalk}
Recall $T_{N_0}^*=[E(0)]^{-1}2^{-\delta_{1}N_0}$, which is stated in \eqref{DTJ}. Recall  \eqref{yz10} and \eqref{Sia}. Therefore, when $k\geq j$ and $j\geq N_0$, using \eqref{yz10} and H\"older's inequality, we have
\begin{equation}\label{kf01}
	\begin{split}
		\| P_{k} d \bv_j, P_{k} d \rho_j \|_{L^1_{[0,T^*_j]}L^\infty_x} \leq & (T^*_j)^{\frac12}\| P_{k} d \bv_j, P_{k} d \rho_j \|_{L^2_{[0,T^*_j]}L^\infty_x}
		\\
		\leq & (T^*_{N_0})^{\frac12} \cdot C  (1+E^3(0))2^{-\delta_{1}k} 2^{-7\delta_1j}.
	\end{split}
\end{equation}
When $k< j$ and $m\geq N_0$, using \eqref{Sia} and H\"older's inequality, we also see
\begin{equation}\label{kf02}
	\|P_k d({\rho}_{m+1}-{\rho}_{m}), P_k d({\bv}_{m+1}-{\bv}_{m}) \|_{L^1_{[0,T^*_{m+1}]} L^\infty_x}
	\leq    (T^*_{N_0})^{\frac12} \cdot C  (1+E^3(0))2^{-\delta_{1}k} 2^{-6\delta_1m}.
\end{equation}
On the other side, when $k\geq j$ and $j< N_0$, using \eqref{yz10}, we have
\begin{equation}\label{kf03}
	\begin{split}
		\| P_{k} d \bv_j, P_{k} d \rho_j \|_{L^1_{[0,T^*_{N_0}]}L^\infty_x} \leq & (T^*_{N_0})^{\frac12} \| P_{k} d \bv_j, P_{k} d \rho_j \|_{L^2_{[0,T^*_{N_0}]}L^\infty_x}
		\\
		\leq & (T^*_{N_0})^{\frac12} \| P_{k} d \bv_j, P_{k} d \rho_j \|_{L^2_{[0,T^*_j]}L^\infty_x}
		\\
		\leq & (T^*_{N_0})^{\frac12} \cdot C  (1+E^3(0))2^{-\delta_{1}k} 2^{-7\delta_1j}.
	\end{split}
\end{equation}
When $k< j$ and $m< N_0$, using \eqref{Sia} and H\"older's inequality, we also see
\begin{equation}\label{kf04}
	\begin{split}
	& \|P_k d({\rho}_{m+1}-{\rho}_{m}), P_k d({\bv}_{m+1}-{\bv}_{m}) \|_{L^1_{[0,T^*_{N_0}]} L^\infty_x}
	\\
	\leq & (T^*_{N_0})^{\frac12} \|P_k d({\rho}_{m+1}-{\rho}_{m}), P_k d({\bv}_{m+1}-{\bv}_{m}) \|_{L^2_{[0,T^*_{N_0}]} L^\infty_x}
	\\
	\leq & (T^*_{N_0})^{\frac12} \|P_k d({\rho}_{m+1}-{\rho}_{m}), P_k d({\bv}_{m+1}-{\bv}_{m}) \|_{L^2_{[0,T^*_{m+1}]} L^\infty_x}
	\\
	\leq    & (T^*_{N_0})^{\frac12} \cdot C  (1+E^3(0))2^{-\delta_{1}k} 2^{-6\delta_1m}.
	\end{split}
\end{equation}
Due to a different time-interval for the sequency  $(\bv_j,\rho_j,\bw_j)$, we will discuss the solutions $(\bv_j,\rho_j,\bw_j)$ if  $j \leq N_0$ or  $j \geq N_0+1$ as follows.

\textit{Case 1: $j \leq N_0$.} In this case, $(\bv_j, \rho_j, \bw_j)$ exists on $[0,T_j^*]$, and $[0,T^*_{N_0}]\subseteq [0,T^*_{j}]$. So we don't need to extend solutions $(\bv_j, \rho_j, \bw_j)$ if  $j \leq N_0$. Using \eqref{yz4} and H\"older's inequality, we get
\begin{equation*}
	\| d\bv_j, d\rho_j\|_{L^1_{[0,T^*_{N_0}]}L^\infty_x} \leq (T^*_{N_0})^{\frac12} \| d\bv_j, d\rho_j\|_{L^2_{[0,T^*_{N_0}]}L^\infty_x} \leq C(T^*_{N_0})^{\frac12} (1+E(0)).
\end{equation*}
By \eqref{pp8}, this yields
\begin{equation}\label{ky0}
	\| d\bv_j, d\rho_j\|_{L^1_{[0,T^*_{N_0}]}L^\infty_x} \leq 2.
\end{equation}
By the Newton-Leibniz formula and \eqref{pu00}, it follows that
\begin{equation}\label{ky1}
	\|\bv_j, \rho_j\|_{L^\infty_{[0,T^*_{N_0}] \times \mathbb{R}^3}}\leq |\bv_{0j}, \rho_{0j}|+ \| d\bv_j, d\rho_j\|_{L^1_{[0,T^*_{N_0}]}L^\infty_x} \leq 2+C_0.
\end{equation}
Using Theorem \ref{ve}, we get
\begin{equation}\label{ky2}
	E(T^*_{N_0}) \leq C(E(0)+E^3(0))\exp\{  2(1+E^3(0)) \textrm{e}^2 \} .
\end{equation}
\textit{Case 2: $j \geq N_0+1$.} In this case, we have to extend the time interval $I_1=[0,T^*_j]$ to $[0,T^*_{N_0}]$. So we will use \eqref{yz10}, \eqref{Sia}, and Theorem \ref{TT2} to calculate the energy at time $T_j^*$. Then we start at $T_j^*$ and get a new time-interval.

To be simple, we set
\begin{equation}\label{ti1}
	I_1=[0,T^*_j]=[t_0,t_1], \quad \quad |I_1|=[E(0)]^{-1}2^{-\delta_{1} j}.
\end{equation}
By frequency decomposition, we get
\begin{equation}\label{kz03}
	\begin{split}
		d \bv_j= & \textstyle{\sum}^{\infty}_{k=j} d \bv_j+ \textstyle{\sum}^{j-1}_{k=1}P_k d \bv_j
		\\
		=& P_{\geq j}d \bv_j+ \textstyle{\sum}^{j-1}_{k=1} \textstyle{\sum}_{m=k}^{j-1} P_k  (d\bv_{m+1}-d\bv_m)+ \textstyle{\sum}^{j-1}_{k=1}P_k d \bv_k .
	\end{split}
\end{equation}
Similarly, we also have
\begin{equation}\label{kz04}
	\begin{split}
		d \rho_j= & P_{\geq j}d \rho_j+ \textstyle{\sum}^{j-1}_{k=1} \textstyle{\sum}_{m=k}^{j-1} P_k  (d \rho_{m+1}-d \rho_m)+ \textstyle{\sum}^{j-1}_{k=1}P_k d \rho_k.
	\end{split}
\end{equation}
When $k\geq j$, using \eqref{kf01} and \eqref{kf03}, we have\footnote{In the case $j\geq N_0$ we use \eqref{kf01}. In the case $j < N_0$, we take $T^*_j = T^*_{N_0}$ and use \eqref{kf03} to give a bound on $[0,T^*_{N_0}]$.}
\begin{equation}\label{kz01}
	\begin{split}
		\| P_{k} d \bv_j, P_{k} d \rho_j \|_{L^1_{[0, T^*_j]}L^\infty_x} \leq   &   (T^*_{N_0})^{\frac12} \cdot C(1+E^3(0)) 2^{-\delta_{1}k} 2^{-7\delta_{1}j}.
	\end{split}
\end{equation}
When $k< j$, using \eqref{kf02} and \eqref{kf04}, we also see\footnote{In the case $m\geq N_0$ we use \eqref{kf02}. In the case $m < N_0$, we take $T^*_{m+1} = T^*_{N_0}$ and use \eqref{kf04} to give a bound on $[0,T^*_{N_0}]$.}
\begin{equation}\label{kz02}
  \|P_k d({\rho}_{m+1}-{\rho}_{m}), P_k d({\bv}_{m+1}-{\bv}_{m}) \|_{L^1_{[0,T^*_{m+1}]} L^\infty_x}
\leq    (T^*_{N_0})^{\frac12} \cdot C  (1+E^3(0))2^{-\delta_{1}k} 2^{-6\delta_1m}.
\end{equation}
We will use  and \eqref{kz01}-\eqref{kz02} to give a precise analysis of \eqref{kz03}-\eqref{kz04} and get some new time-intervals, and then we try to extend $\rho_j$ from $[0,T^*_j]$ to $[0,T^*_{N_0}]$. Our strategy is as follows. In step 1, we extend it from $[0,T^*_j]$ to $[0,T^*_{j-1}]$. In step 2, we use induction methods to conclude these estimates and also extend it to $[0,T_{N_0}^*]$.

\textbf{Step 1: Extending $[0,T^*_j]$ to $[0,T^*_{j-1}] \ (j \geq N_0+1)$.} To start, referring \eqref{DTJ}, so we need to calculate $E(T^*_j)$ for obtaining a length of a new time-interval. Then we shall calculate $\|d \bv_j, d \rho_j\|_{L^1_{[0,T^*_j]}L^\infty_x}$. Using \eqref{kz03} and \eqref{kz04}, we derive that
\begin{equation*}
	\begin{split}
 \|d \bv_j, d \rho_j\|_{L^1_{[0,T^*_j]} L^\infty_x }
\leq & 	\|P_{\geq j}d\bv_j, P_{\geq j}d \rho_j\|_{L^1_{[0,T^*_j]} L^\infty_x }  + \textstyle{\sum}^{j-1}_{k=1} \|P_k d\bv_k, P_k d\rho_k\|_{L^1_{[0,T^*_j]}L^\infty_x}\\
& + \textstyle{\sum}^{j-1}_{k=1} \textstyle{\sum}_{m=k}^{j-1}  \|P_k  (d\bv_{m+1}-d\bv_m), P_k  (d\rho_{m+1}-d\rho_m)\|_{L^1_{[0,T^*_j]}L^\infty_x}
\\
\leq & 	\|P_{\geq j}d\bv_j, P_{\geq j}d \rho_j\|_{L^1_{[0,T^*_j]}L^\infty_x} + \textstyle{\sum}^{j-1}_{k=1} \|P_k d\bv_k, P_k d\rho_k\|_{L^1_{[0,T^*_k]}L^\infty_x}\\
& + \textstyle{\sum}^{j-1}_{k=1} \textstyle{\sum}_{m=k}^{j-1}\| P_k  (d\bv_{m+1}-d\bv_m), P_k  (d\rho_{m+1}-d\rho_m)\|_{L^1_{[0,T^*_{m+1}]}L^\infty_x}
\\
\leq & C(1+E^3(0))(T^*_{N_0})^{\frac12}  \textstyle{\sum}_{k=j}^{\infty} 2^{-\delta_{1} k} 2^{-7\delta_{1} j}
\\
& + C(1+E^3(0))(T^*_{N_0})^{\frac12} \textstyle{\sum}^{j-1}_{k=1} 2^{-\delta_{1} k} 2^{-7\delta_{1} k}
\\
& +  C(1+E^3(0))(T^*_{N_0})^{\frac12} \textstyle{\sum}^{j-1}_{k=1} \textstyle{\sum}_{m=k}^{j-1} 2^{-\delta_{1} k} 2^{-6\delta_{1} m}
\\
\leq & C(1+E^3(0))(T^*_{N_0})^{\frac12} [\frac{1}{3}(1-2^{-\delta_{1}})]^{-2}.
	\end{split}
\end{equation*}
So we get
\begin{equation}
	\begin{split}\label{kz05}
		 \|d \bv_j, d \rho_j\|_{L^1_{[0,T^*_j]}L^\infty_x}
		\leq  C(1+E^3(0))(T^*_{N_0})^{\frac12} [\frac{1}{3}(1-2^{-\delta_{1}})]^{-2}.
	\end{split}
\end{equation}
By using \eqref{kz05} and \eqref{pu00}, we get
\begin{equation}
	\begin{split}\label{kp05}
	\| \bv_j,  \rho_j\|_{L^\infty_{[0,T^*_j]}L^\infty_x} \leq \| \bv_{0j},  \rho_{0j}\|+	\|d \bv_j, d \rho_j\|_{L^1_{[0,T^*_j]}L^\infty_x}
		\leq  C_0+2.
	\end{split}
\end{equation}
By \eqref{kz05} and \eqref{pp8} and Theorem \ref{TT2}, we have
\begin{equation}\label{kz06}
	\begin{split}
	E(T^*_{j}) \leq C(E(0)+E^3(0))\exp\{  2\textrm{e}^2(1+E^3(0))  \} =C_*.
	\end{split}
\end{equation}
Above, $C_*$ is stated in \eqref{Cstar}. Starting at the time $T^*_j$, seeing \eqref{DTJ} and \eqref{kz06}, we shall get an extending time-interval with a length of $(C_*)^{-1}2^{-\delta_{1} j}$. But, if $T^*_j + (C_*)^{-1}2^{-\delta_{1} j} \geq T^*_{j-1}$, so we have finished this step. Else, we need to extend it again.

$\mathit{Case 1: T^*_j + (C_*)^{-1}2^{-\delta_{1} j} \geq T^*_{j-1} }$, then we get a new interval
\begin{equation}\label{deI2}
	I_2=[T^*_j, T^*_{j-1}], \quad |I_2|= (2^{\delta_{1}}-1) E(0)^{-1}2^{-\delta_{1} j}.
\end{equation}
Moreover, referring \eqref{kz01} and \eqref{kz02}, we can deduce
\begin{equation}\label{kz07}
	\| P_{k} d \bv_j, P_{k} d \rho_j \|_{L^1_{[T^*_j, T^*_{j-1}]}L^\infty_x}
	\leq  (T^*_{N_0})^{\frac12}  \cdot C  (1+C^3_*) 2^{-\delta_{1}k} 2^{-7\delta_{1}j}, \quad k \geq j,
\end{equation}
and $k<j$,
\begin{equation}\label{kz08}
	\|P_k (d{\rho}_{j}-d{\rho}_{j-1}), P_k (d{\bv}_{j}-d{\bv}_{j-1}) \|_{L^1_{[T^*_j, T^*_{j-1}]} L^\infty_x}
	\leq   (T^*_{N_0})^{\frac12}  \cdot C  (1+C^3_*) 2^{-\delta_{1}k} 2^{-6\delta_1(j-1)}.
\end{equation}
Using \eqref{pp8} and $j \geq N_0+1$, \eqref{kz07} and \eqref{kz08} yields
\begin{equation}\label{kz09}
	\| P_{k} d \bv_j, P_{k} d \rho_j \|_{L^1_{[T^*_j, T^*_{j-1}]}L^\infty_x}
	\leq  (T^*_{N_0})^{\frac12}  \cdot C  (1+E^3(0)) 2^{-\delta_{1}k} 2^{-6\delta_{1}j}, \quad k \geq j,
\end{equation}
and $k<j$,
\begin{equation}\label{kz10}
	\|P_k (d{\rho}_{j}-d{\rho}_{j-1}), P_k (d{\bv}_{j}-d{\bv}_{j-1}) \|_{L^1_{[T^*_j, T^*_{j-1}]} L^\infty_x}
	\leq    (T^*_{N_0})^{\frac12}  \cdot C   (1+E^3(0)) 2^{-\delta_{1}k} 2^{-5\delta_1(j-1)}.
\end{equation}
Therefore, we could get the following estimate
\begin{equation}\label{kz11}
	\begin{split}
		& \|d \bv_j, d\rho_j\|_{L^1_{[0,T^*_{j-1}]} L^\infty_x}
		\\
		\leq & 	\|P_{\geq j}d\bv_j, P_{\geq j}d \rho_j\|_{L^1_{[0,T^*_{j-1}]} L^\infty_x}  + \textstyle{\sum}^{j-1}_{k=1} \|P_k d\bv_k, P_k d\rho_k\|_{L^1_{[0,T^*_{j-1}]} L^\infty_x}
		\\
		& + \textstyle{\sum}^{j-1}_{k=1} \|P_k  (d\bv_{j}-d\bv_{j-1}), P_k  (d\rho_{j}-d\rho_{j-1})\|_{L^1_{[0,T^*_{j-1}]} L^\infty_x}
		\\
		& + \textstyle{\sum}^{j-2}_{k=1} \textstyle{\sum}_{m=k}^{j-2}  \|P_k  (d\bv_{m+1}-d\bv_m), P_k  (d\rho_{m+1}-d\rho_m)\|_{L^1_{[0,T^*_{j-1}]} L^\infty_x}.
	\end{split}
\end{equation}
Due to \eqref{kz01} and \eqref{kz09}, it yields
\begin{equation}\label{kz12}
	\begin{split}
	\|P_{\geq j}d\bv_j, P_{\geq j}d\rho_j\|_{L^1_{[0,T^*_{j-1}]} L^\infty_x}
	\leq & C   (1+E^3(0)) (T^*_{N_0})^{\frac12}  \textstyle{\sum}^{\infty}_{k=j}2^{-\delta_{1}k} (2^{-7\delta_1 j}+ 2^{-6\delta_{1} j})
	\\
	\leq & C   (1+E^3(0)) (T^*_{N_0})^{\frac12}  \textstyle{\sum}^{\infty}_{k=j} 2^{-\delta_{1}k} 2^{-6\delta_{1} j}\times 2.
	\end{split}
\end{equation}
Due to \eqref{kz02} and \eqref{kz10}, it yields
\begin{equation}\label{kz13}
	\begin{split}
		& \textstyle{\sum}^{j-1}_{k=1} \|P_k  (d\bv_{j}-d\bv_{j-1}), P_k  (d\rho_{j}-d\rho_{j-1})\|_{L^1_{[0,T^*_{j-1}]} L^\infty_x}
		\\
		\leq & C   (1+E^3(0)) (T^*_{N_0})^{\frac12}  \textstyle{\sum}^{j-1}_{k=1} 2^{-\delta_{1}k} (2^{-6\delta_1 (j-1)}+ 2^{-5\delta_{1} (j-1)})
		\\
		\leq & C   (1+E^3(0)) (T^*_{N_0})^{\frac12}  \textstyle{\sum}^{j-1}_{k=1} 2^{-\delta_{1}k} 2^{-5\delta_{1} (j-1)} \times 2.
	\end{split}
\end{equation}
Inserting \eqref{kz12}-\eqref{kz13} into \eqref{kz11}, we derive that
\begin{equation}\label{kz14}
	\begin{split}
		 \|d \bv_j, d \rho_j\|_{L^1_{[0,T^*_{j-1}]} L^\infty_x}
		\leq & 	C(1+E^3(0)) (T^*_{N_0})^{\frac12}   \textstyle{\sum}_{k=j}^{\infty} 2^{-\delta_{1} k} 2^{-6\delta_{1} j}\times 2
		\\
		& + C   (1+E^3(0)) (T^*_{N_0})^{\frac12}  \textstyle{\sum}^{j-1}_{k=1} 2^{-\delta_{1}k} 2^{-5\delta_{1} (j-1)} \times 2
		\\
		& + C(1+E^3(0)) (T^*_{N_0})^{\frac12}  \textstyle{\sum}^{j-1}_{k=1} 2^{-\delta_{1} k} 2^{-7\delta_{1} k}
		\\
		& +  C(1+E^3(0)) (T^*_{N_0})^{\frac12}  \textstyle{\sum}^{j-2}_{k=1} \textstyle{\sum}_{m=k}^{j-2} 2^{-\delta_{1} k} 2^{-6\delta_{1} m}
		\\
		\leq & 	C(1+E^3(0))(T^*_{N_0})^{\frac12}  [\frac13(1-2^{-\delta_{1}})]^{-2}.
	\end{split}
\end{equation}
By \eqref{kz14}, \eqref{pp8} and Theorem \ref{TT2}, we also prove
\begin{equation}\label{kz15}
	\begin{split}
		&|\bv_j, \rho_j| \leq 2+C_0,
		\\
		& E(T^*_{j-1}) \leq C_*.
	\end{split}
\end{equation}
$\mathit{Case 2: T^*_j + (C_*)^{-1}2^{-\delta_{1} j} < T^*_{j-1} }$. In this situation, we will record
\begin{equation*}
	I_2=[T^*_j, t_2], \quad |I_2| = (C_*)^{-1}2^{-\delta_{1} j}.
\end{equation*}
Referring \eqref{kz01} and \eqref{kz02}, we also deduce that
\begin{equation}\label{kz16}
	\| P_{k} d \bv_j, P_{k} d \rho_j \|_{L^1_{I_2}L^\infty_x}
	\leq  (T^*_{N_0})^{\frac12} \cdot C  (1+C^3_*) 2^{-\delta_{1}k} 2^{-7\delta_{1}j}, \quad k \geq j,
\end{equation}
\begin{equation}\label{kz17}
	\|P_k (d{\rho}_{j}-d{\rho}_{j-1}), P_k  (d{\bv}_{j}-d{\bv}_{j-1}) \|_{L^1_{I_2} L^\infty_x}
	\leq    (T^*_{N_0})^{\frac12} \cdot C  (1+C^3_*) 2^{-\delta_{1}k} 2^{-6\delta_1(j-1)}, \quad k<j.
\end{equation}
Similarly, we can also get
\begin{equation*}
	\begin{split}
		 \|d\bv_j, d\rho_j\|_{L^1_{ I_1 \cup I_2 } L^\infty_x }
		\leq & 	\|P_{\geq j}d\bv_j, P_{\geq j}d \rho_j\|_{L^1_{I_1 \cup I_2} L^\infty_x }   + \textstyle{\sum}^{j-1}_{k=1} \|P_k d\bv_k, P_k d\rho_k\|_{L^1_{[0,T^*_k]} L^\infty_x }
		\\
		 & + \textstyle{\sum}^{j-1}_{k=1} \|P_k  (d\bv_{j}-d\bv_{j-1}), P_k  (d\rho_{j}-d\rho_{j-1})\|_{L^1_{I_1 \cup I_2 } L^\infty_x }
		\\
		& + \textstyle{\sum}^{j-2}_{k=1} \textstyle{\sum}_{m=k}^{j-2}  \|P_k  (d\bv_{m+1}-d\bv_m), P_k  (d\rho_{m+1}-d\rho_m)\|_{L^1_{I_1 \cup I_2} L^\infty_x } .
\end{split}
\end{equation*}
Noting $ I_1 \cup I_2 \subseteq T^*_k$ if $k \leq j-1$, then it follows that
\begin{equation}\label{kz11A}
	\begin{split}
		 & \|d \bv_j, d \rho_j\|_{L^1_{ I_1 \cup I_2 } L^\infty_x }
		\\
			\leq & 	\|P_{\geq j}d\bv_j, P_{\geq j}d \rho_j\|_{L^1_{I_1 \cup I_2} L^\infty_x }   + \textstyle{\sum}^{j-1}_{k=1} \|P_k d\bv_k, P_k d\rho_k\|_{L^1_{I_1 \cup I_2 } L^\infty_x }
		\\
		 & + \textstyle{\sum}^{j-1}_{k=1} \|P_k  (d \bv_{j}-d \bv_{j-1}), P_k  (d\rho_{j}-d\rho_{j-1})\|_{L^1_{I_1 \cup I_2 } L^\infty_x }
		\\
		& + \textstyle{\sum}^{j-2}_{k=1} \textstyle{\sum}_{m=k}^{j-2}  \|P_k  (d\bv_{m+1}-d\bv_m), P_k  (d\rho_{m+1}-d\rho_m)\|_{L^1_{[0,T^*_{m+1}]} L^\infty_x }
\end{split}
\end{equation}
Inserting \eqref{kz01}, \eqref{kz02} and \eqref{kz16} and \eqref{kz17} to \eqref{kz11A}, we have
\begin{equation}\label{kz11a}
	\begin{split}		
		 \|d \bv_j, d \rho_j\|_{L^1_{ I_1 \cup I_2 } L^\infty_x }
		\leq & 	C(1+E^3(0)) (T^*_{N_0})^{\frac12} \textstyle{\sum}_{k=j}^{\infty} 2^{-\delta_{1} k} 2^{-6\delta_{1} j}\times 2
		\\
		& + C   (1+E^3(0)) (T^*_{N_0})^{\frac12} \textstyle{\sum}^{j-1}_{k=1} 2^{-\delta_{1}k} 2^{-5\delta_{1} (j-1)} \times 2
		\\
		& + C(1+E^3(0)) (T^*_{N_0})^{\frac12} \textstyle{\sum}^{j-1}_{k=1} 2^{-\delta_{1} k} 2^{-7\delta_{1} k}
		\\
		& +  C(1+E^3(0)) (T^*_{N_0})^{\frac12} \textstyle{\sum}^{j-2}_{k=1} \textstyle{\sum}_{m=k}^{j-2} 2^{-\delta_{1} k} 2^{-6\delta_{1} m}
		\\
		\leq & 	C(1+E^3(0))(T^*_{N_0})^{\frac12} [\frac13(1-2^{-\delta_{1}})]^{-2}.
	\end{split}
\end{equation}
By \eqref{kz11a} and Theorem \ref{TT2}, we also prove
\begin{equation}\label{kz15f}
	\begin{split}
		& |\bv_j,\rho_j| \leq 2+C_0,
		\\
		& E(t_2) \leq C_*.
	\end{split}
\end{equation}
Therefore, we can repeat the process with a length with $(C_*)^{-1}2^{-\delta_{1} j}$ till extending it to $T^*_{j-1}$. Moreover, on every new time-interval with $(C_*)^{-1}2^{-\delta_{1} j}$ \eqref{kz16} and \eqref{kz17} hold. Set
\begin{equation}\label{times1}
	X_1= \frac{T^*_{j-1}-T^*_j}{C^{-1}_*2^{-\delta_{1} j}}= (2^{\delta_{1}}-1)C_* E^{-1}(0).
\end{equation}
So we need a maximum of $X_1$-times to reach the time $T^*_{j-1}$ both in case 2(it's also adapt to case 1 for calculating the times). As a result, we shall calculate
\begin{equation*}
	\begin{split}
		 \|d \bv_j, d \rho_j\|_{L^1_{[0,T^*_{j-1}]} L^\infty_x}
		\leq & 	\|P_{\geq j}d\bv_j, P_{\geq j}d \rho_j\|_{L^1_{[0,T^*_{j-1}]} L^\infty_x}
		+ \textstyle{\sum}^{j-1}_{k=1} \|P_k d\bv_k, P_k d\rho_k\|_{L^1_{[0,T^*_{k}]} L^\infty_x}
		\\
		+ & \textstyle{\sum}^{j-1}_{k=1} \|P_k  (d\bv_{j}-d\bv_{j-1}), P_k  (d\rho_{j}-d\rho_{j-1})\|_{L^1_{[0,T^*_{j-1}]} L^\infty_x}.
		\\
		& + \textstyle{\sum}^{j-2}_{k=1} \textstyle{\sum}_{m=k}^{j-2}  \|P_k  (d\bv_{m+1}-d\bv_m), P_k  (d\rho_{m+1}-d\rho_m)\|_{L^1_{[0,T^*_{m}]} L^\infty_x}
\end{split}
\end{equation*}	
Due to \eqref{kz01}, \eqref{kz02}, \eqref{kz16}, \eqref{kz17}, and \eqref{pp8}, we obtain
\begin{equation}\label{kzqt}
	\begin{split}
		& \|d \bv_j, d \rho_j\|_{L^1_{[0,T^*_{j-1}]} L^\infty_x}
\\
		\leq & C(1+E^3(0)) (T^*_{N_0})^{\frac12} \textstyle{\sum}_{k=j}^{\infty} 2^{-\delta_{1} k} 2^{-6\delta_{1} j}\times (2^{\delta_{1}}-1)C_* E^{-1}(0)
		\\
		& + C   (1+E^3(0)) (T^*_{N_0})^{\frac12} \textstyle{\sum}^{j-1}_{k=1} 2^{-\delta_{1}k} 2^{-5\delta_{1} (j-1)} \times (2^{\delta_{1}}-1)C_* E^{-1}(0)
		\\
		& + C(1+E^3(0))(T^*_{N_0})^{\frac12} \textstyle{\sum}^{j-1}_{k=1} 2^{-\delta_{1} k} 2^{-7\delta_{1} k}
		\\
		& +  C(1+E^3(0))(T^*_{N_0})^{\frac12} \textstyle{\sum}^{j-2}_{k=1} \textstyle{\sum}_{m=k}^{j-2} 2^{-\delta_{1} k} 2^{-6\delta_{1} m}
		\\
		\leq & 	C(1+E^3(0))(T^*_{N_0})^{\frac12}[\frac13(1-2^{-\delta_{1}})]^{-2}.
	\end{split}
\end{equation}
Therefore, by \eqref{kzqt} and Theorem \eqref{TT2}, we can see that
\begin{equation}\label{kz270}
	\begin{split}
		& \| \bv_j, \rho_j \|_{L^\infty_{ [0,T^*_{j-1}]\times \mathbb{R}^3}} \leq 2+C_0,
		\\
	& E(T^*_{j-1}) \leq C_*.
	\end{split}
\end{equation}
At this moment, both in case 1 or case 2, seeing from \eqref{kz270}, \eqref{kzqt}, \eqref{kz14}, \eqref{kz15}, through a maximum of $X_1=(2^{\delta_{1}}-1)C_* E^{-1}(0)$ times with each length $C_*^{-1}2^{-\delta_{1}j}$ or $(2^{\delta_{1}}-1)E(0)^{-1}2^{-\delta_{1} j}$, we shall extend the solutions $(\bv_j,\rho_j,\bw_j)$ from $[0,T^*_j]$ to $[0,T^*_{j-1}]$, and
\begin{equation}\label{kz27}
\begin{split}
 & \| \bv_j, \rho_j \|_{L^\infty_{ [0,T^*_{j-1}]\times \mathbb{R}^3}} \leq 2+C_0,, \quad E(T^*_{j-1}) \leq C_*,
\\
& \|d \bv_j, d \rho_j\|_{L^1_{[0,T^*_{j-1}]} L^\infty_x}
\leq  	C(1+E^3(0))(T^*_{N_0})^{\frac12}[\frac13(1-2^{-\delta_{1}})]^{-2}.
\end{split}		
\end{equation}
As a result, we also have extended the solutions $(\bv_m,\rho_m,\bw_m)$ ($m\in[N_0, j-1]$) from $[0,T^*_m]$ to $[0,T^*_{m-1}]$. Moreover, referring \eqref{kz01} and \eqref{kz02}, \eqref{kz27}, and \eqref{times1}, we get
\begin{equation}\label{kz29}
\begin{split}
	& \| P_{k} d \bv_m, P_{k} d \rho_m \|_{L^1_{[0,T^*_{m-1}]}L^\infty_x}
	\\
	\leq  & \| P_{k} d\bv_m, P_{k} d \rho_m \|_{L^1_{[0,T^*_{m}]}L^\infty_x}
	\\
	& +\| P_{k} d \bv_m, P_{k} d \rho_m \|_{L^1_{[T^*_{m},T^*_{m-1}]}L^\infty_x}
	\\
	\leq  & C(1+E^3(0))(T^*_{N_0})^{\frac12} 2^{-\delta_{1}k} 2^{-7\delta_{1}m} + C(1+C^2_*) 2^{-\delta_{1}k} 2^{-7\delta_{1}m} \times (2^{\delta_{1}}-1)C_* E^{-1}(0)
	\\
	\leq  & C(1+E^3(0))(T^*_{N_0})^{\frac12} 2^{-\delta_{1}k} 2^{-7\delta_{1}m} + C(1+E^2(0)) 2^{-\delta_{1}k} 2^{-6\delta_{1}m} \times (2^{\delta_{1}}-1).
\end{split}	
\end{equation}
For $k \geq m\geq N_0+1$, using \eqref{pp8},  \eqref{kz29} yields
\begin{equation}\label{kz31}
	\begin{split}
		 \| P_{k} d \bv_m, P_{k} d \rho_m \|_{L^1_{[0,T^*_{m-1}]}L^\infty_x}
		\leq & C(1+E^3(0))(T^*_{N_0})^{\frac12} 2^{-\delta_{1}k} 2^{-5\delta_{1}m} \times 2^{\delta_{1}},
	\end{split}	
\end{equation}
Similarly, if $m\geq N_0+1$, using \eqref{pp8}, then the following estimate holds for $k<j$
\begin{equation}\label{kz34}
\begin{split}
& \|P_k (d{\rho}_{m}-d{\rho}_{m-1}), P_k (d{\bv}_{m}-d{\bv}_{m-1}) \|_{L^1_{[0,T^*_{m-1}]} L^\infty_x}
\\
\leq    &  \|P_k (d{\rho}_{m}-d{\rho}_{m-1}), P_k  (d{\bv}_{m}-d{\bv}_{m-1}) \|_{L^1_{[0,T^*_{m}]} L^\infty_x}
\\
& +\|P_k (d {\rho}_{m}-d {\rho}_{m-1}), P_k (d {\bv}_{m}-d {\bv}_{m-1}) \|_{L^1_{[T^*_m,T^*_{m-1}]} L^\infty_x}
\\
 \leq & C  (1+E(0)^3)(T^*_{N_0})^{\frac12} 2^{-\delta_{1}k} 2^{-6\delta_1(m-1)}
 \\
 & + C  (1+C^3_*) 2^{-\delta_{1}k} 2^{-6\delta_1(m-1)} \times (2^{\delta_{1}}-1)C_* E^{-1}(0)
\\
\leq & C(1+E^3(0))(T^*_{N_0})^{\frac12} 2^{-\delta_{1}k} 2^{-5\delta_{1}(m-1)} \times 2^{\delta_{1}}.
\end{split}	
\end{equation}
\quad \textbf{Step 2: Extending time interval $[0,T^*_j]$ to $[0,T^*_{N_0}]$}. Based the above analysis in Step 1, we can give a induction by achieving the goal. We assume the solutions $(\bv_j, \rho_j, \bw_j)$ can be extended from $[0,T^*_j]$ to $[0,T^*_{j-l}]$ through a maximal $X_l$ times and
\begin{equation}\label{kz41}
	\begin{split}
		X_l=& \frac{T_{j-l}^*- T_{j}^*}{C^{-1}_* 2^{-\delta_{1} j}}
		\\
		=& \frac{E(0)^{-1}( 2^{-\delta_{1}(j-l)} - 2^{-\delta_{1}j} )}{C^{-1}_* 2^{-\delta_{1} j}}
		\\
		=& \frac{C_*}{E(0)} (2^{\delta_{1}l}-1).
	\end{split}	
\end{equation}
Moreover, the following bounds
\begin{equation}\label{kz42}
	\begin{split}
		\| P_{k} d \bv_j, P_{k} d \rho_j \|_{L^1_{[0,T^*_{j-l}]}L^\infty_x}
		\leq & C(1+E^3(0))(T^*_{N_0})^{\frac12} 2^{-\delta_{1}k} 2^{-5\delta_{1}j} \times 2^{\delta_{1}l}, \qquad k \geq j,
	\end{split}	
\end{equation}
and
\begin{equation}\label{kz43}
	\begin{split}
		& \|P_k (d{\rho}_{m+1}-d{\rho}_{m}), P_k (d{\bv}_{m+1}-d{\bv}_{m}) \|_{L^1_{[0,T^*_{m-l}]} L^\infty_x}
		\\
		\leq  & C(1+E^3(0))(T^*_{N_0})^{\frac12} 2^{-\delta_{1}k} 2^{-5\delta_{1}m } \times 2^{\delta_{1}l}, \quad k<j,
	\end{split}	
\end{equation}
and
\begin{equation}\label{kz44}
	\|d \bv_j, d \rho_j\|_{L^1_{[0,T^*_{j-l}]} L^\infty_x}
	\leq  	C(1+E^3(0))(T^*_{N_0})^{\frac12}[\frac13(1-2^{-\delta_{1}})]^{-2}.
\end{equation}
and
\begin{equation}\label{kz45}
	|\bv_j, \rho_j| \leq 2+C_0, \quad E(T^*_{j-l}) \leq C_*.
\end{equation}
In the following, we will check these estimates \eqref{kz41}-\eqref{kz45} hold when $l=1$, and then also holds when we replace $l$ by $l+1$.

Using \eqref{kz27}, \eqref{kz31}, \eqref{times1}, and \eqref{kz34}, then \eqref{kz42}-\eqref{kz45} hold by taking $l=1$. Let us now check it for $l+1$. In this case, it implies that  $T^*_{j-l} \leq T^*_{N_0}$. Therefore, $j-l\geq N_0+1$ should hold. Starting at the time $T^*_{j-l}$, seeing \eqref{DTJ} and \eqref{kz45}, we shall get an extending time-interval of $(\bv_j,\rho_j,\bw_j)$ with a length of $(C_*)^{-1}2^{-\delta_{1} j}$. So we can go to the case 2 in step 1, and the length every new time-interval is $C_*^{-1} 2^{-\delta_{1} j}$. So the times is
\begin{equation}\label{kz46}
	X= \frac{T^*_{j-(l+1)}-T^*_{j-l}}{C^{-1}_*2^{-\delta_{1} j}}= 2^{\delta_{1} l}(2^{\delta_{1}}-1)C_* E^{-1}(0).
\end{equation}
Thus, we can deduce that
\begin{equation}\label{kz40}
	X_{l+1}=X_l+X= (2^{\delta_{1}(l+1)}-1)C_* E^{-1}(0).
\end{equation}
Moreover, for $k \geq j$, we have
\begin{equation}\label{kz48}
	\begin{split}
		\| P_{k} d \bv_j, P_{k} d\rho_j \|_{L^1_{[0, T^*_{j-(l+1)}]}L^\infty_x}\leq & \| P_{k} d \bv_j, P_{k} d\rho_j \|_{L^1_{[0,T^*_{j-l}]}L^\infty_x}
		\\
		& +	\| P_{k} d \bv_j, P_{k} d\rho_j \|_{L^1_{[T^*_{j-l}, T^*_{j-(l+1)}]}L^\infty_x}.
	\end{split}
\end{equation}
Using \eqref{kz01} and \eqref{kz45}
\begin{equation}\label{kz49}
\begin{split}
		& \| P_{k} d \bv_j, P_{k} d\rho_j \|_{L^1_{[T^*_{j-l}, T^*_{j-(l+1)}]}L^\infty_x}
	\\
	\leq   & C  (1+C^3_*)(T^*_{N_0})^{\frac12} 2^{-\delta_{1}k} 2^{-7\delta_{1}j}\times 2^{\delta_{1} l}(2^{\delta_{1}}-1)C_* E^{-1}(0), \quad k\geq j.
\end{split}
\end{equation}
Due to \eqref{pp8}, we can see
\begin{equation*}
	\begin{split}
	 (1+C^3_*) C_* E^{-1}(0)(1+E^3(0))^{-1}2^{-\delta_{1} N_0} \leq 1.
	\end{split}
\end{equation*}
Hence, from \eqref{kz49} we have
\begin{equation}\label{kz50}
	\begin{split}
		& \| P_{k} d \bv_j, P_{k} d\rho_j \|_{L^1_{[T^*_{j-l}, T^*_{j-(l+1)}]}L^\infty_x}
		\\
		\leq   & C  (1+E^3(0)) (T^*_{N_0})^{\frac12} 2^{-\delta_{1}k} 2^{-5\delta_{1}j}\times 2^{\delta_{1} l}(2^{\delta_{1}}-1), \quad k\geq j.
	\end{split}
\end{equation}
Using \eqref{kz42} and \eqref{kz50}, so we get
\begin{equation}\label{kz51}
	\begin{split}
		\| P_{k} d \bv_j, P_{k} d \rho_j \|_{L^1_{[0,T^*_{j-(l+1)}]}L^\infty_x}
		\leq & C(1+E^3(0)) (T^*_{N_0})^{\frac12} 2^{-\delta_{1}k} 2^{-5\delta_{1}j} \times 2^{\delta_{1}(l+1)}, \qquad k \geq j.
	\end{split}	
\end{equation}
If $k<j$, then we have
\begin{equation}\label{kz52}
	\begin{split}
		& \|P_k (d{\rho}_{m+1}-d{\rho}_{m}), P_k (d{\bv}_{m+1}-d{\bv}_{m}) \|_{L^1_{[0,T^*_{m-(l+1)}]} L^\infty_x}
		\\
		\leq  & \|P_k (d{\rho}_{m+1}-d{\rho}_{m}), P_k (d{\bv}_{m+1}-d{\bv}_{m}) \|_{L^1_{[0,T^*_{m-l}]} L^\infty_x}
		\\
		& + \|P_k (d{\rho}_{m+1}-d{\rho}_{m}), P_k (d{\bv}_{m+1}-d{\bv}_{m}) \|_{L^1_{[T^*_{m-l},T^*_{m-(l+1)}]} L^\infty_x}.
	\end{split}	
\end{equation}
When we extend the solutions $(\bv_j, \rho_j, \bw_j)$ from $[0,T^*_{j-l}]$ to $[0,T^*_{j-(l+1)}]$, then the solutions $(\bv_m, \rho_m, \bw_m)$ is also extended from $[0,T^*_{m-l}]$ to $[0,T^*_{m-(l+1)}]$. Seeing \eqref{kz02} and \eqref{kz45}, we can obtain
\begin{equation}\label{kz53}
	\begin{split}
		& \|P_k (d{\rho}_{m+1}-d{\rho}_{m}), P_k (d{\bv}_{m+1}-d{\bv}_{m}) \|_{L^1_{[T^*_{m-l},T^*_{m-(l+1)}]} L^\infty_x}
		\\
		\leq  & C(1+C_*^3)(T^*_{N_0})^{\frac12} 2^{-\delta_{1}k} 2^{-6\delta_{1}m } \times 2^{\delta_{1} l}(2^{\delta_{1}}-1)C_* E^{-1}(0).
	\end{split}	
\end{equation}
Hence, for $m-l \geq N_0+1$, using \eqref{pp8}, it yields
\begin{equation*}
	 (1+C_*^3)  C_* E^{-1}(0) (1+E^3(0))^{-1} 2^{-\delta_{1}N_0 } \leq 1.
\end{equation*}
So \eqref{kz53} becomes
\begin{equation}\label{kz54}
	\begin{split}
		& \|P_k (d{\rho}_{m+1}-d{\rho}_{m}), P_k (d{\bv}_{m+1}-d{\bv}_{m}) \|_{L^1_{[T^*_{m-l},T^*_{m-(l+1)}]} L^\infty_x}
		\\
		\leq  & C(1+E^3(0))(T^*_{N_0})^{\frac12} 2^{-\delta_{1}k} 2^{-5\delta_{1}m } \times 2^{\delta_{1} l}(2^{\delta_{1}}-1).
	\end{split}	
\end{equation}
Inserting \eqref{kz43} and \eqref{kz54} to \eqref{kz52}, we have
\begin{equation}\label{kz55}
	\begin{split}
		& \|P_k (d{\rho}_{m+1}-d{\rho}_{m}), P_k (d{\bv}_{m+1}-d{\bv}_{m}) \|_{L^1_{[0,T^*_{m-(l+1)}]} L^\infty_x}
		\\
		\leq  & C(1+E^3(0))(T^*_{N_0})^{\frac12} 2^{-\delta_{1}k} 2^{-5\delta_{1}m } \times 2^{\delta_{1}(l+1)}, \quad k<j,
	\end{split}	
\end{equation}
Let us now bound the following Strichartz estimate
\begin{equation}\label{kz56}
	\begin{split}
		& \|d \bv_j, d \rho_j\|_{L^1_{[0,T^*_{j-(l+1)}]} L^\infty_x}
		\\
		\leq & \|P_{\geq j}d\bv_j, P_{\geq j}d \rho_j\|_{L^1_{[0,T^*_{j-(l+1)}]} L^\infty_x}
		\\
		& + \textstyle{\sum}^{j-1}_{k=j-l}\textstyle{\sum}^{j-1}_{m=k} \|P_k  (d\bv_{m+1}-d\bv_{m}), P_k  (d\rho_{m+1}-d\rho_{m})\|_{L^1_{[0,T^*_{j-(l+1)}]} L^\infty_x}
		\\
		& + \textstyle{\sum}^{j-1}_{k=1} \|P_k d\bv_k, P_k d\rho_k\|_{L^1_{[0,T^*_{j-(l+1)}]} L^\infty_x}
		\\
		=& 	\|P_{\geq j}d\bv_j, P_{\geq j}d \rho_j\|_{L^1_{[0,T^*_{j-(l+1)}]} L^\infty_x}
		\\
		& + \textstyle{\sum}^{j-1}_{k=j-l} \|P_k d\bv_k, P_k d\rho_k\|_{L^1_{[0,T^*_{j-(l+1)}]} L^\infty_x}
		+ \textstyle{\sum}^{j-(l+1)}_{k=1} \|P_k d\bv_k, P_k d\rho_k\|_{L^1_{[0,T^*_{j-(l+1)}]} L^\infty_x}
		\\
		& + \textstyle{\sum}^{j-1}_{k=1} \textstyle{\sum}^{j-1}_{m=j-(l+1)} \|P_k  (d\bv_{m+1}-d\bv_{m}), P_k  (d\rho_{m+1}-d\rho_{m})\|_{L^1_{[0,T^*_{j-(l+1)}]} L^\infty_x}
		\\
		& + \textstyle{\sum}^{j-(l+2)}_{k=1} \textstyle{\sum}_{m=k}^{j-(l+2)}  \|P_k  (d\bv_{m+1}-d\bv_m), P_k  (d\rho_{m+1}-d\rho_m)\|_{L^1_{[0,T^*_{j-(l+1)}]} L^\infty_x}
		\\
		=& \Theta_1+  \Theta_2+ \Theta_3+ \Theta_4+ \Theta_5,
	\end{split}
\end{equation}
where
\begin{equation}\label{Theta}
	\begin{split}
		\Theta_1= &  \|P_{\geq j}d\bv_j, P_{\geq j}d \rho_j\|_{L^1_{[0,T^*_{j-(l+1)}]} L^\infty_x} ,
		\\
		\Theta_2= & \textstyle{\sum}^{j-(l+2)}_{k=1}\textstyle{\sum}^{j-(l+2)}_{m=k} \|P_k  (d\bv_{m+1}-d\bv_{m}), P_k  (d\rho_{m+1}-d\rho_{m})\|_{L^1_{[0,T^*_{j-(l+1)}]} L^\infty_x},
		\\
		\Theta_3= & \textstyle{\sum}^{j-1}_{k=1}\textstyle{\sum}^{j-1}_{m=j-(l+1)} \|P_k  (d\bv_{m+1}-d\bv_{m}), P_k  (d\rho_{m+1}-d\rho_{m})\|_{L^1_{[0,T^*_{j-(l+1)}]} L^\infty_x},
		\\
		\Theta_4 =& \textstyle{\sum}^{j-(l+1)}_{k=1}	\|P_{k}d\bv_k, P_{k}d\rho_k\|_{L^1_{[0,T^*_{j-(l+1)}]} L^\infty_x},
		\\
		\Theta_5 =& \textstyle{\sum}^{j-1}_{k=j-l}	\|P_{k}d\bv_k, P_{k}d\rho_k\|_{L^1_{[0,T^*_{j-(l+1)}]} L^\infty_x}.
	\end{split}
\end{equation}
To get the estimate on time-interval $[0,T^*_{j-(l+1)}]$ for $\Theta_1, \Theta_2, \Theta_3, \Theta_4$ and $\Theta_5$, we should note that
there is no growth for $\Theta_2$ and $\Theta_4$ in this extending process. For example, considering $\Theta_2$, the existing time-interval of $P_k  (d\bv_{m+1}-d\bv_{m})$ is actually $[0,T^*_{m+1}]$, and $[0,T^*_{j-(l+1)}] \subseteq [0,T^*_{m+1}]$ if $m \geq j-(l+2)$. So we can use the initial bounds \eqref{kz01} and \eqref{kz02} to handle $\Theta_2$ and $\Theta_4$. While, considering $\Theta_1, \Theta_3$, and $\Theta_5$, we need to calculate the growth in the Strichartz estimate. Based on this idea, let us give a precise analysis on \eqref{Theta}.

According to \eqref{kz51}, we can estimate $\Theta_1$ as
\begin{equation}\label{kz57}
	\begin{split}
		\Theta_1 \leq & C(1+E^3(0))(T^*_{N_0})^{\frac12} \textstyle{\sum}^{\infty}_{k=j} 2^{-\delta_{1}k} 2^{-5\delta_{1}j} \times 2^{\delta_{1}(l+1)}.
	\end{split}
\end{equation}
Due to \eqref{kz01}, we have
\begin{equation}\label{kz58}
	\begin{split}
	\Theta_2\leq &	\textstyle{\sum}^{j-(l+2)}_{k=1}\textstyle{\sum}^{j-(l+2)}_{m=k} \|P_k  (d\bv_{m+1}-d\bv_{m}), P_k  (d\rho_{m+1}-d\rho_{m})\|_{L^1_{[0,T^*_{m+1}]} L^\infty_x},
	\\
	\leq &  C(1+E^3(0))(T^*_{N_0})^{\frac12} \textstyle{\sum}^{j-(l+2)}_{k=1}\textstyle{\sum}^{j-(l+2)}_{m=k} 2^{-\delta_{1}k} 2^{-6\delta_{1}m}.
	\end{split}	
\end{equation}
For $1 \leq k \leq j-1$ and $m \leq j-1$, using \eqref{kz55}, it follows
\begin{equation}\label{kz59}
\begin{split}
	\Theta_3\leq & \textstyle{\sum}^{j-1}_{k=1}\textstyle{\sum}^{j-1}_{m=j-(l+1)} \|P_k  (d\bv_{m+1}-d\bv_{m}), P_k  (d\rho_{m+1}-d\rho_{m})\|_{L^1_{[0,T^*_{m-(l+1)}]} L^\infty_x}
	\\
	\leq &  C  (1+E^3(0))(T^*_{N_0})^{\frac12} \textstyle{\sum}^{j-1}_{k=1}\textstyle{\sum}^{j-1}_{m=j-(l+1)}  2^{-\delta_{1}k} 2^{-5\delta_{1}m} \times 2^{\delta_{1}(l+1)}.
\end{split}
\end{equation}
Due to \eqref{kz01}, we can see
\begin{equation}\label{kz60}
	\begin{split}
		\Theta_4 \leq & \textstyle{\sum}^{j-(l+1)}_{k=1}	\|P_{k}d\bv_k, P_{k}d\rho_k\|_{L^1_{[0,T^*_{k}]} L^\infty_x}
		\\
		\leq & C  (1+E^3(0))(T^*_{N_0})^{\frac12} \textstyle{\sum}^{j-(l+1)}_{k=1} 2^{-\delta_{1}k} 2^{-7\delta_{1}k}.
	\end{split}
\end{equation}
If $j-l \leq k\leq j-1$, then $k+l+1-j \leq l$. By using \eqref{kz42}, we can estimate
\begin{equation}\label{kz61}
	\begin{split}
	\Theta_5 =& \textstyle{\sum}^{j-1}_{k=j-l}	\|P_{k}d\bv_k, P_{k}d\rho_k\|_{L^1_{[0,T^*_{j-(l+1)}]} L^\infty_x}
	\\
	= & \textstyle{\sum}^{j-1}_{k=j-l}	\|P_{k}d\bv_k, P_{k}d\rho_k\|_{L^1_{[0,T^*_{k-(k+l+1-j)}]} L^\infty_x}
	\\
	\leq	& C  (1+E^3(0))(T^*_{N_0})^{\frac12} \textstyle{\sum}^{j-1}_{k=j-l} 2^{-\delta_{1}k} 2^{-5\delta_{1}j}2^{\delta_{1}(k+l+1-j)}.
	\end{split}
\end{equation}
Inserting \eqref{kz57}-\eqref{kz61} to \eqref{kz56}, it follows
\begin{equation}\label{kz62}
	\begin{split}
		& \|d \bv_j, d \rho_j\|_{L^1_{[0,T^*_{j-(l+1)}]} L^\infty_x}
		\\
		\leq & C(1+E^3(0)) (T^*_{N_0})^{\frac12} \textstyle{\sum}^{\infty}_{k=j} 2^{-\delta_{1}k} 2^{-5\delta_{1}j} \times 2^{\delta_{1}(l+1)}
		\\
		& + C(1+E^3(0))(T^*_{N_0})^{\frac12} \textstyle{\sum}^{j-(l+2)}_{k=1}\textstyle{\sum}^{j-(l+2)}_{m=k} 2^{-\delta_{1}k} 2^{-6\delta_{1}m}
		\\
		&+C  (1+E^3(0))(T^*_{N_0})^{\frac12} \textstyle{\sum}^{j-1}_{k=1}\textstyle{\sum}^{j-1}_{m=j-(l+1)}  2^{-\delta_{1}k} 2^{-5\delta_{1}m} \times 2^{\delta_{1}(l+1)}
		\\
		& + C  (1+E^3(0))(T^*_{N_0})^{\frac12} \textstyle{\sum}^{j-(l+1)}_{k=1} 2^{-\delta_{1}k} 2^{-7\delta_{1}k}
		\\
		& + C  (1+E^3(0))(T^*_{N_0})^{\frac12} \textstyle{\sum}^{j-1}_{k=j-l} 2^{-\delta_{1}k} 2^{-5\delta_{1}k}2^{\delta_{1}(k+l+1-j)}
	\end{split}
\end{equation}
In the case of $j-l \geq N_0+1$ and $j\geq N_0+1$, \eqref{kz62} yields
\begin{equation}\label{kz63}
	\begin{split}
		& \|d \bv_j, d \rho_j\|_{L^1_{[0,T^*_{j-(l+1)}]} L^\infty_x}
		\\
		\leq & C(1+E^3(0))(T^*_{N_0})^{\frac12}(1-2^{-\delta_{1}})^{-2} \big\{
		2^{-\delta_{1}j} 2^{\delta_{1}(l+1)}+ 2^{-6\delta_{1}}
		\\
		& \quad +2^{-5\delta_{1}[j-(l+1)]}  2^{\delta_{1}(l+1)}+ 2^{-6\delta_{1}}+ 2^{-5\delta_{1}(j-l)}2^{\delta_{1}(l+1-j)}
		\big\}
		\\
		\leq & C(1+E^3(0))(T^*_{N_0})^{\frac12}(1-2^{-\delta_{1}})^{-2} \left\{
		2^{-6\delta_{1}N_0} + 2^{-6\delta_{1}} +	2^{-5\delta_{1}N_0} + 2^{-6\delta_{1}}+ 	2^{-6\delta_{1}N_0} 	\right\}
		\\
		\leq & C(1+E^3(0))(T^*_{N_0})^{\frac12}[\frac13(1-2^{-\delta_{1}})]^{-2}.
	\end{split}
\end{equation}
By using \eqref{kz63}, \eqref{pu00}, \eqref{pp8}, and Theorem \ref{TT2}, we have proved
\begin{equation}\label{kz64}
	\begin{split}
		|\bv_j, \rho_j|\leq 2+C_0, \quad  E(T^*_{j-(l+1)}) \leq C_*.
	\end{split}
\end{equation}
Gathering \eqref{kz40}, \eqref{kz51}, \eqref{kz55}, \eqref{kz63} and \eqref{kz64}, we know that \eqref{kz41}-\eqref{kz45} holding for $l+1$. So our induction hold \eqref{kz41}-\eqref{kz45} for $l=1$ to $l=j-N_0$. Therefore, we can extend the solutions $(\bv_j,\rho_j,\bw_j)$ from $[0,T^*_j]$ to $[0,T^*_{N_0}]$ when $j \geq N_0$. Let us denote
\begin{equation}\label{Tstar}
	T^*=T^*_{N_0}=[E(0)]^{-1}2^{-\delta_{1} N_0}.
\end{equation}
Setting $l=j-N_0$ in \eqref{kz44}-\eqref{kz45}, we therefore get
\begin{equation}\label{kz65}
	\begin{split}
	& E(T^*) \leq  C_*, \quad \|\bv_j, \rho_j\|_{L^\infty_{[0,T^*]} L^\infty_x} \leq 2+C_0,
	\\
	& \|d \bv_j, d \rho_j\|_{L^1_{[0,T^*]} L^\infty_x}
	\leq  C(1+E^3(0))(T^*_{N_0})^{\frac12}[\frac13(1-2^{-\delta_{1}})]^{-2},
	\end{split}
\end{equation}
where $N_0$ and $C_*$ depends on $C_0, c_0, s, M_*$. It is stated in \eqref{pp8} and \eqref{Cstar}. In this process, we can also conclude
\begin{equation}\label{kz66}
	\begin{split}
		\|d \bv_j, d \rho_j\|_{L^2_{[0,T^*]} L^\infty_x}
		\leq & C(1+E^3(0))[\frac13(1-2^{-\delta_{1}})]^{-2}.
	\end{split}
\end{equation}
\subsubsection{Strichartz estimates of linear wave equation on time-interval $[0,T^*_{N_0}]$.}\label{finalq}
We still expect the behavior of a linear wave equation endowed with $g_j=g(\bv_j,\rho_j)$. So we claim a theorem as follows
\begin{proposition}\label{rut}
	For $\frac{s}{2} \leq r \leq 3$, there is a solution $f_j$ on $[0,T^*_{N_0}]\times \mathbb{R}^3$ satisfying the following linear wave equation
	\begin{equation}\label{ru01}
		\begin{cases}
			\square_{{g}_j} f_j=0,
			\\
			(f_j,\partial_t f_j)|_{t=0}=(f_{0j},f_{1j}),
		\end{cases}
	\end{equation}
where $(f_{0j},f_{1j})=(P_{\leq j}f_0,P_{\leq j}f_1)$ and $(f_0,f_1)\in H_x^r \times H^{r-1}_x$. Moreover, for $a\leq r-\frac{s}{2}$, we have
	\begin{equation}\label{ru02}
		\begin{split}
			&\|\left< \partial \right>^{a-1} d{f}_j\|_{L^2_{[0,T^*_{N_0}]} L^\infty_x}
			\leq  C_{M_*}(\|{f}_0\|_{{H}_x^r}+ \|{f}_1 \|_{{H}_x^{r-1}}),
			\\
			&\|{f}_j\|_{L^\infty_{[0,T^*_{N_0}]} H^{r}_x}+ \|\partial_t {f}_j\|_{L^\infty_{[0,T^*_{N_0}]} H^{r-1}_x} \leq  C_{M_*}(\| {f}_0\|_{H_x^r}+ \| {f}_1\|_{H_x^{r-1}}).
		\end{split}
	\end{equation}
\end{proposition}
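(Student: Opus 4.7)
The plan is to mimic the extension argument from Subsection \ref{finalk}, now applied to the linear wave equation, with the frequency envelope hierarchy keyed to the already constructed sequence $\{g_k\}_{k\geq 1}$ of acoustic metrics. First, I would set up an auxiliary family. For each $k\geq 1$, let $F_{j,k}$ denote the solution to $\square_{g_k} F_{j,k} = 0$ with data $(f_{0j},f_{1j})$ at $t=0$; note $F_{j,j}=f_j$. On the short interval $[0,T^*_k]$, the scaling/localization argument of Subsection \ref{keypro} reduces the Cauchy problem to a small-data problem governed by a metric $\widetilde g_k$ satisfying the hypotheses of Proposition \ref{DDL3}. Applying \eqref{DP33} after undoing the space-time scaling by $T^*_k$ yields, for $a<r-1$,
\begin{equation*}
\|\langle\partial\rangle^{a-1}dF_{j,k}\|_{L^2_{[0,T^*_k]}L^\infty_x}\leq C(T^*_k)^{-\mu(a,r)}\bigl(\|f_0\|_{H^r}+\|f_1\|_{H^{r-1}}\bigr),
\end{equation*}
with an exponent $\mu$ that, for the choice $a\leq r-\tfrac{s}{2}$, produces a favourable factor $2^{-5\delta_1 k}(1+E^3(0))$ analogous to \eqref{yz10}. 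This is the linear counterpart of the short-time Strichartz estimate \eqref{yz4}.

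Next, I would establish difference estimates for $F_{j,k+1}-F_{j,k}$, paralleling \eqref{yu0}--\eqref{siw}. Since
\begin{equation*}
\square_{g_{k+1}}(F_{j,k+1}-F_{j,k})=(g_{k+1}^{\alpha\beta}-g_k^{\alpha\beta})\partial_{\alpha\beta}F_{j,k},
\end{equation*}
Duhamel's principle combined with the closeness bounds \eqref{ebs2}--\eqref{ebs3} (which give $\|g_{k+1}-g_k\|$ small in the appropriate norm with decay $e_k\cdot 2^{-\delta_1 k}$) and with Proposition \ref{DDL3} applied to the metric $g_{k+1}$ yields, for $k<j$,
\begin{equation*}
\|P_k d(F_{j,m+1}-F_{j,m})\|_{L^2_{[0,T^*_{m+1}]}L^\infty_x}\leq C\,2^{-\delta_1 k}2^{-6\delta_1 m}\bigl(\|f_0\|_{H^r}+\|f_1\|_{H^{r-1}}\bigr).
\end{equation*}
The frequency decomposition
\begin{equation*}
df_j=P_{\geq j}dF_{j,j}+\sum_{k=1}^{j-1}\sum_{m=k}^{j-1}P_k d(F_{j,m+1}-F_{j,m})+\sum_{k=1}^{j-1}P_k dF_{j,k}
\end{equation*}
then expresses $df_j$ as a sum of pieces each controlled by a geometric series in $\delta_1$.

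Finally, I would run the same induction as in Subsection \ref{finalk}: extend the Strichartz bound from $[0,T^*_{j-l}]$ to $[0,T^*_{j-(l+1)}]$ by concatenating roughly $(2^{\delta_1}-1)C_*E(0)^{-1}2^{\delta_1 l}$ sub-intervals of length $C_*^{-1}2^{-\delta_1 j}$, on each of which the already-established energy bound $E(T^*_{j-l})\leq C_*$ for $(\bv_j,\rho_j,\bw_j)$ (cf. \eqref{kz45}) allows one to re-invoke Proposition \ref{DDL3} as a perturbation problem for $g_j$ after further scaling and localization. The choice of $N_0$ in \eqref{pp8} is precisely calibrated so that the growth factor $2^{\delta_1 l}$ is dominated by the decay $2^{-5\delta_1 m}$ in the frequency-difference bounds, summing to a universal constant $\bar M_4$ depending only on $C_0,c_0,s,M_*$. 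The energy estimate in \eqref{ru02} then follows by standard energy estimates for $\square_{g_j}f_j=0$ combined with Gronwall, using $\|dg_j\|_{L^1_{[0,T^*_{N_0}]}L^\infty_x}\lesssim \|d\bv_j,d\rho_j\|_{L^1_{[0,T^*_{N_0}]}L^\infty_x}\leq 2$ from \eqref{kz65}.

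The main obstacle is the precise bookkeeping in the induction: the loss of derivatives $a\leq r-\tfrac{s}{2}$ must be matched against the $2^{-\delta_1 k}$ decay in the frequency envelope, and the growth $2^{\delta_1 l}$ accumulated along the iteration must be absorbed by the $2^{-5\delta_1 m}$ decay in the difference estimates. This is the same balancing act carried out in \eqref{kz41}--\eqref{kz63} for the nonlinear variables; the crucial fact that makes it go through is that the Strichartz constant in Proposition \ref{DDL3} is universal once the small-data hypothesis is in force, so the universal character of the estimate is preserved through each extension step.
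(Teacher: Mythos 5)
Your auxiliary family $F_{j,k}$ — defined as the solution to $\square_{g_k}F_{j,k}=0$ with \emph{fixed} data $(f_{0j},f_{1j})$ — differs in a crucial way from the paper's choice, which uses $f_m$ solving $\square_{g_m}f_m=0$ with \emph{truncated} data $(f_{0m},f_{1m})=(P_{\leq m}f_0,P_{\leq m}f_1)$. This is not merely a cosmetic change: it is where your argument breaks.

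In the difference estimate, the Duhamel source term is $(g_{m+1}-g_m)\partial_{\alpha\beta}F_{j,m}$, and to close the nonhomogeneous Strichartz bound one must control $\|\partial^2 F_{j,m}\|_{H^{r-1}_x}\lesssim \|dF_{j,m}\|_{H^r_x}$. The propagated energy bound costs one derivative relative to the data, so this is controlled by $\|F_{j,m}(0)\|_{H^{r+1}_x}+\|\partial_t F_{j,m}(0)\|_{H^{r}_x}$. In your construction the data is $(f_{0j},f_{1j})$, supported at frequencies $\lesssim 2^j$, so Bernstein gives only $\|\partial^2 F_{j,m}\|_{H^{r-1}_x}\lesssim 2^{j}(\|f_0\|_{H^r}+\|f_1\|_{H^{r-1}})$. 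The metric closeness bound $\|g_{m+1}-g_m\|_{L^1L^\infty}\lesssim 2^{-m}2^{-7\delta_1 m}(1+E^3(0))$ (as in \eqref{ru11}) then yields $2^j\|g_{m+1}-g_m\|_{L^1L^\infty}\lesssim 2^{j-m}2^{-7\delta_1 m}$, which for fixed $j$ grows like $2^{j-m}$ as $m$ decreases toward $k$; the sum $\sum_{k=1}^{j-1}\sum_{m=k}^{j-1}$ diverges with $j$. Your claimed bound $C\,2^{-\delta_1 k}2^{-6\delta_1 m}(\|f_0\|_{H^r}+\|f_1\|_{H^{r-1}})$ therefore does not follow. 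The zero initial data for $F_{j,m+1}-F_{j,m}$ removes one term, but the dominant contribution — the forcing — is made strictly worse than in the paper.

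The paper's choice of data $(f_{0m},f_{1m})$ at frequency $\lesssim 2^m$ is what makes the derivative loss factor $2^m$ rather than $2^j$ (see \eqref{ru10}), so that $2^m\|g_{m+1}-g_m\|_{L^1L^\infty}\lesssim 2^{-7\delta_1 m}$ and the geometric summation goes through. It also produces the additional, favorably decaying contribution $\|f_{0(m+1)}-f_{0m}\|_{H^{r-\frac92\delta_1}}\lesssim 2^{-\frac92\delta_1 m}\|f_0\|_{H^r}$ to the Duhamel formula, since $f_{0(m+1)}-f_{0m}$ is concentrated at frequency $\sim 2^m$. Without these two features the decomposition does not reduce to a summable series. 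To repair your argument you would need to replace $(f_{0j},f_{1j})$ by $(f_{0m},f_{1m})$ in the definition of the auxiliary sequence — at which point you are running the paper's proof. The remainder of your plan (short-time Strichartz via scaling/localization and Proposition \ref{DDL3}, induction as in Subsection \ref{finalk}, the calibration of $N_0$ in \eqref{pp8}, and the Gronwall-based energy estimate using \eqref{kz65}) is sound and matches the paper's scheme.
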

\begin{proof}[Proof of Proposition \ref{rut}.] Our proof also relies a Strichartz estimates on a short time-interval. Then a loss of derivatives are then obtained by summing up the short time estimates with respect to these time intervals.

Using \eqref{ru01}, \eqref{ru03} and \eqref{ru04}, we have
\begin{equation}\label{ru06}
	\begin{split}
		\| \left< \partial \right>^{a-1+\frac92\delta_{1}} df_j\|_{L^2_{[0,T^*_j]} L^\infty_x} \leq & C( \| f_{0j} \|_{H^r_x} + \| f_{1j} \|_{H^{r-1}_x}  )
		\\
		\leq & C ( \| f_{0} \|_{H^r_x} + \| f_{1} \|_{H^{r-1}_x}  ),
	\end{split}
\end{equation}
where we use
\begin{equation}\label{ru060}
	a-1+\frac92\delta_{1} \leq r-\frac{s}{2}+\frac92\delta_{1} <r-1.
\end{equation}
By Bernstein inequality, for $k\geq j$, we shall obtain
\begin{equation}\label{ru07}
	\begin{split}
		\| \left< \partial \right>^{a-1} P_k df_j\|_{L^2_{[0,T^*_j]} L^\infty_x}
		=& C2^{-\frac92\delta_{1} k}\| \left< \partial \right>^{a-1+\frac92\delta_{1}} P_k df_j\|_{L^2_{[0,T^*_j]} L^\infty_x}
		\\
		\leq & C2^{-\frac12\delta_{1} k} 2^{-4\delta_{1} j} \| \left< \partial \right>^{a-1+\frac92\delta_{1}} df_j\|_{L^2_{[0,T^*_j]} L^\infty_x}.
	\end{split}
\end{equation}
Combining \eqref{ru06} and \eqref{ru07}, for $a \leq r-\frac{s}{2}$, we get
\begin{equation}\label{ru070}
	\begin{split}
		\| \left< \partial \right>^{a-1} P_k df_j\|_{L^2_{[0,T^*_j]} L^\infty_x}
		\leq & C2^{-\frac12\delta_{1} k} 2^{-4\delta_{1} j} ( \| f_{0} \|_{H^r_x} + \| f_{1} \|_{H^{r-1}_x}  ), \quad k\geq j.
	\end{split}
\end{equation}
On the other hand, for any integer $m\geq 1$, we also have
\begin{equation*}
	\begin{cases}
		\square_{{g}_m} f_m=0, \quad [0,T^*_{m}]\times \mathbb{R}^3,
		\\
		(f_m,\partial_t f_m)|_{t=0}=(f_{0m},f_{1m}),
	\end{cases}
\end{equation*}
and
\begin{equation*}
	\begin{cases}
		\square_{{g}_{m+1}} f_{m+1}=0, \quad [0,T^*_{m+1}]\times \mathbb{R}^3,
		\\
		(f_{m+1},\partial_t f_{m+1})|_{t=0}=(f_{0(m+1)},f_{1(m+1)}).
	\end{cases}
\end{equation*}
So the difference term $f_{m+1}-f_m$ satisfies
\begin{equation}\label{ru08}
	\begin{cases}
		\square_{{g}_{m+1}} (f_{m+1}-f_m)=({g}^{\alpha i}_{m+1}-{g}^{\alpha i}_{m} )\partial_{\alpha i} f_m, \quad [0,T^*_{m+1}]\times \mathbb{R}^3,
		\\
		(f_{m+1}-f_m,\partial_t (f_{m+1}-f_m))|_{t=0}=(f_{0(m+1)}-f_{0m},f_{1(m+1)}-f_{1m}).
	\end{cases}
\end{equation}
By Duhamel's principle, \eqref{ru04} and \eqref{ru060}, it yields
\begin{equation}\label{ru09}
	\begin{split}
		& \| \left< \partial \right>^{a-1} P_k (d f_{m+1}-df_m)\|_{L^2_{[0,T^*_{m+1}]} L^\infty_x}
		\\
		\leq & C\| f_{0(m+1)}-f_{0m}\|_{H_x^{r-\frac92\delta_{1}}} + C\|f_{1(m+1)}-f_{1m}\|_{H_x^{r-1-\frac92\delta_{1}}}
		\\
		& \ + C\|({g}_{m+1}-{g}_{m}) \cdot\nabla d f_m\|_{L^1_{[0,T^*_{m+1}]} H^{r}_x}
		\\
		\leq & C2^{-\frac92\delta_{1} m}( \| f_{0} \|_{H^r_x} + \| f_{1} \|_{H^{r-1}_x}  ) + C \| {g}_{m+1}-{g}_{m}\|_{L^1_{[0,T^*_{m+1}]} L^\infty_x} \| \partial d f_m\|_{L^\infty_{[0,T^*_{m+1}]} H^{r-1}_x} .
	\end{split}
\end{equation}
By using the energy estimates
\begin{equation}\label{ru10}
	\begin{split}
		\| \partial d f_m\|_{L^\infty_{[0,T^*_{m+1}]} H^{r-1}_x} \leq & C\| d f_m\|_{L^\infty_{[0,T_{m+1}]} H^{r}_x}
		\\
		\leq & C(\|f_{0m}\|_{H^{r+1}_x}+\|f_{1m}\|_{H^{r}_x})
		\\
		\leq & C2^m (\|f_{0m}\|_{H^r_x}+\|f_{1m}\|_{H^{r-1}_x}) .
	\end{split}
\end{equation}
By using Strichartz estimates \eqref{ru04} and Lemma \ref{LD} again, we shall prove that
\begin{equation}\label{ru11}
	\begin{split}
		& 2^m \| {g}_{m+1}-{g}_{m}\|_{L^1_{[0,T^*_{m+1}]} L^\infty_x}
		\\
		\leq & 2^m (T^*_{m+1})^{\frac12}\| {\bv}_{m+1}-{\bv}_{m},  {\rho}_{m+1}-{\rho}_{m}\|_{L^2_{[0,T^*_{m+1}]} L^\infty_x}
		\\
		\leq & C2^m ( \| {\bv}_{m+1}-{\bv}_{m},  {\rho}_{m+1}-{\rho}_{m}\|_{L^\infty_{[0,T^*_{m+1}]} H^{1+\delta_{1}}_x} + \| {\bw}_{m+1}-{\bw}_{m}\|_{L^2_{[0,T^*_{m+1}]} H^{\frac12+\delta_{1}}_x})
		\\
		\leq & C2^{-7\delta_{1}m } (1+E^3(0)).
	\end{split}
\end{equation}
Above, we use \eqref{yu0}, \eqref{yu1}, and \eqref{DTJ}.

Due to \eqref{ru10} and \eqref{ru11}, for $k<j$, and $k\leq m$,  \eqref{ru09} yields
\begin{equation}\label{ru12}
	\begin{split}
		\| \left< \partial \right>^{a-1} P_k (df_{m+1}-df_m)\|_{L^2_{[0,T^*_{m+1}]} L^\infty_x}
		\leq  C2^{-\frac12\delta_{1} k}2^{-4\delta_{1} m} \| (f_{0},f_1) \|_{H^r_x\times H^{r-1}_x}  (1+E^3(0)).
	\end{split}
\end{equation}

In the following part, let us consider the energy estimates. Taking the operator $\left< \partial \right>^{r-1}$ on \eqref{ru01}, then we get
	\begin{equation}\label{ru15}
	\begin{cases}
		\square_{{g}_j} \left< \partial \right>^{r-1}f_j=[ \square_{{g}_j} ,\left< \partial \right>^{r-1}] f_j,
		\\
		(\left< \partial \right>^{r-1}f_j,\partial_t \left< \partial \right>^{r-1}f_j)|_{t=0}=(\left< \partial \right>^{r-1}f_{0j},\left< \partial \right>^{r-1}f_{1j}).
	\end{cases}
\end{equation}
To estimate $[ \square_{{g}_j} ,\left< \partial \right>^{r-1}] f_j$, we will divide it into two cases $\frac{s}{2}<r\leq s$ and $s<r\leq 3$.

\textit{Case 1: $\frac{s}{2}<r\leq s$.} For $\frac{s}{2}<r\leq s$, note that
\begin{equation}\label{ru14}
		\begin{split}
	 [ \square_{{g}_j} ,\left< \partial \right>^{r-1}] f_j
	=& [{g}^{\alpha i}_j-\mathbf{m}^{\alpha i}, \left< \partial \right>^{r-1} \partial_i ] \partial_{\alpha}f_j + \left< \partial \right>^{r-1}( \partial_i g_j \partial_{\alpha}f_j )
	\\
	 = &[{g}_j-\mathbf{m}, \left< \partial \right>^{r-1} \nabla] df_j + \left< \partial \right>^{r-1}( \nabla g_j df).
	\end{split}
	\end{equation}
By \eqref{ru14} and Kato-Ponce estimates, we have\footnote{If $r=s$, then $L^{\frac{3}{s-r}}_x=L^\infty_x$.}
\begin{equation}\label{ru16}
	\| [ \square_{{g}_j} ,\left< \partial \right>^{r-1}] f_j \|_{L^2_x} \leq C ( \|dg_j\|_{L^\infty_x} \|d f_j \|_{H^{r-1}_x} + \|\left< \partial \right>^{r} (g_j-\mathbf{m}) \|_{L^{\frac{3}{\frac32-s+r}}_x} \| df_j \|_{L^{\frac{3}{s-r}}_x} )
\end{equation}
By Sobolev's inequality, it follows
\begin{equation}\label{ru17}
\|\left< \partial \right>^{r} (g_j-\mathbf{m}) \|_{L^{\frac{3}{\frac32-s+r}}_x} \leq C\|g_j-\mathbf{m}\|_{H^s_x}.
\end{equation}
By Gagliardo-Nirenberg inequality and Young's inequality, we can get
\begin{equation}\label{ru18}
	\begin{split}
	\| df_j \|_{L^{\frac{3}{s-r}}_x} \leq & C\|\left< \partial \right>^{r-1} df_j \|^{\frac{s-2}{3-s}}_{L^{2}_x} \|\left< \partial \right>^{r-\frac{s}{2}-1} df_j \|^{\frac{5-2s}{3-s}}_{L^{\infty}_x}
	\\
	\leq & C( \|df_j \|_{H^{r-1}_x}+ \|\left< \partial \right>^{r-\frac{s}{2}-1} df_j \|_{L^{\infty}_x})
	\end{split}
\end{equation}
\textit{Case 2: $s<r\leq 3$.} For $s<r\leq 3$, using Kato-Ponce estimates, we have
\begin{equation}\label{ru150}
	\begin{split}
		&  \| [ \square_{{g}_j} ,\left< \partial \right>^{r-1}] f_j \|_{L^2_x}
		\\
	= & \| {g}^{\alpha i}_j-\mathbf{m}^{\alpha i},\left< \partial \right>^{r-1}] \partial_{\alpha i}f_j \|_{L^2_x}
	\\
	\leq & C ( \|dg_j\|_{L^\infty_x} \|d f_j \|_{H^{r-1}_x} + \|\left< \partial \right>^{r-1} (g_j-\mathbf{m}) \|_{L^{\frac{3}{\frac12-s+r}}_x} \|\nabla df_j \|_{L^{\frac{3}{1+s-r}}_x} ) .
	\end{split}
\end{equation}
By Sobolev's inequality, it follows
\begin{equation}\label{ru151}
	\|\left< \partial \right>^{r-1} (g_j-\mathbf{m}) \|_{L^{\frac{3}{\frac12-s+r}}_x} \leq C\|g_j-\mathbf{m}\|_{H^s_x}.
\end{equation}
By Gagliardo-Nirenberg inequality and Young's inequality, we can get
\begin{equation}\label{ru152}
	\begin{split}
		\| \nabla df_j \|_{L^{\frac{3}{1+s-r}}_x} \leq & C\|\left< \partial \right>^{r-1} df_j \|^{\frac{s-2}{3-s}}_{L^{2}_x} \|\left< \partial \right>^{r-1-\frac{s}{2}} df_j \|^{\frac{5-2s}{3-s}}_{L^{\infty}_x}
		\\
		\leq & C( \|df_j \|_{H^{r-1}_x}+ \|\left< \partial \right>^{r-\frac{s}{2}-1} df_j \|_{L^{\infty}_x})
	\end{split}
\end{equation}
For $\frac{s}{2}<r \leq 3$, gathering \eqref{ru16}-\eqref{ru18} with \eqref{ru150}- \eqref{ru152}, so we have
\begin{equation}\label{ru19}
\begin{split}
	\| [ \square_{{g}_j} ,\left< \partial \right>^{r-1}] f_j \|_{L^2_x}\|d f_j \|_{H^{r-1}_x} \leq & C  \|dg_j\|_{L^\infty_x} \|d f_j \|^2_{H^{r-1}_x}
	+ C  \| g_j-\mathbf{m} \|_{H^s_x}\|d f_j \|^2_{H^{r-1}_x}
	\\
	& + C  \|g_j-\mathbf{m}\|_{H^s_x} \|\left< \partial \right>^{r-\frac{s}{2}-1} df_j \|_{L^{\infty}_x} \|d f_j \|_{H^{r-1}_x}
	\\
	\leq & C  ( \|dg_j\|_{L^\infty_x}+ \|g_j-\mathbf{m}\|_{H^s_x}+ \|\left< \partial \right>^{r-\frac{s}{2}-1} df_j \|_{L^{\infty}_x} ) \|d f_j \|^2_{H^{r-1}_x}
	\\
	&+ C  \|g_j-\mathbf{m}\|^2_{H^s_x}  \|\left< \partial \right>^{r-\frac{s}{2}-1} df_j \|_{L^{\infty}_x} .
\end{split}	
\end{equation}
By \eqref{ru15} and \eqref{ru19}, we get
\begin{equation}\label{ru20}
	\begin{split}
		\frac{d}{dt}\|d f_j \|^2_{H^{r-1}_x}
		\leq & C  (\|dg_j\|_{L^\infty_x}+ \|g_j-\mathbf{m}\|_{H^s_x}+ \|\left< \partial \right>^{r-\frac{s}{2}-1} df_j \|_{L^{\infty}_x} ) \|d f_j \|^2_{H^{r-1}_x}
		\\
		&+ C  \|g_j-\mathbf{m}\|^2_{H^s_x}  \|\left< \partial \right>^{r-\frac{s}{2}-1} df_j \|_{L^{\infty}_x} .
	\end{split}	
\end{equation}
Using Gronwall's inequality, we get
\begin{equation}\label{ru21}
	\begin{split}
		\|d f_j(t) \|^2_{H^{r-1}_x}
		\leq & C  (\|d f_j(0) \|^2_{H^{r-1}_x} + C \int^t_0 \|g_j-\mathbf{m}\|^2_{H^s_x}  \|\left< \partial \right>^{r-\frac{s}{2}-1} df_j \|_{L^{\infty}_x}d\tau)
		\\
		& \qquad \cdot\exp\{\int^t_0  \|dg_j\|_{L^\infty_x}+ \|g_j-\mathbf{m}\|_{H^s_x}+ \|\left< \partial \right>^{r-\frac{s}{2}-1} df_j \|_{L^{\infty}_x} d\tau \} .
	\end{split}	
\end{equation}
Note
\begin{equation}\label{ru22}
	\begin{split}
		\|f_j(t) \|_{L^{2}_x}
		\leq & \| f_j(0) \|_{L^2_x} + \int^t_0 \| \partial_t f_j \|_{L^2_x} d\tau.
	\end{split}	
\end{equation}
By \eqref{ru21}, \eqref{ru22} and \eqref{kz65}, if $t\in[0,T^*_{N_0}]$, then it follows
\begin{equation}\label{ru23}
	\begin{split}
		\| f_j(t) \|^2_{H^{r}_x} + \| d f_j(t) \|^2_{H^{r-1}_x}
		\leq & C  \textrm{e}^{C_*}(\|f_0 \|^2_{H^{r}_x}+\|f_1 \|^2_{H^{r-1}_x} + C_*)
		\\
		& \quad \cdot\exp\{\int^t_0   \|\left< \partial \right>^{r-\frac{s}{2}-1} df_j \|_{L^{\infty}_x} d\tau \cdot \textrm{e}^{\int^t_0   \|\left< \partial \right>^{r-\frac{s}{2}-1} df_j \|_{L^{\infty}_x} d\tau}   \} .
	\end{split}	
\end{equation}
Based on \eqref{ru070}, \eqref{ru12},  and \eqref{ru23}, following the extending method in subsection \ref{esest}, for $a\leq r-\frac{s}{2}$ we shall obtain
\begin{equation}\label{ru234}
	\begin{split}
		\| \left< \partial \right>^{a-1} P_k df_j\|_{L^2_{[0,T^*_{N_0}]} L^\infty_x}
		\leq & C2^{-\frac12\delta_{1} k} 2^{-4\delta_{1} j} ( \| f_{0} \|_{H^r_x} + \| f_{1} \|_{H^{r-1}_x}  ) \times \{ 2^{\delta_{1} (j-N_0)} \}^{\frac12}
		\\
		\leq & C2^{-\frac12\delta_{1}N_0} 2^{-\frac12\delta_{1} k} 2^{-2\delta_{1} j} ( \| f_{0} \|_{H^r_x} + \| f_{1} \|_{H^{r-1}_x}  ), \quad k \geq j,
	\end{split}
\end{equation}
and
\begin{equation}\label{ru25}
	\begin{split}
		& \| \left< \partial \right>^{a-1} P_k (df_{m+1}-df_m)\|_{L^2_{[0,T^*_{N_0}]} L^\infty_x}
		\\
		\leq & C2^{-\frac12\delta_{1} k}2^{-3\delta_{1} m}( \| f_{0} \|_{H^r_x} + \| f_{1} \|_{H^{r-1}_x}  )(1+E^3(0)) \times \{ 2^{\delta_{1} (m+1-N_0)} \}^{\frac12}
		\\
		\leq & C2^{-\frac12\delta_{1} k}2^{-2\delta_{1} m}2^{-\frac12\delta_{1}N_0}( \| f_{0} \|_{H^r_x} + \| f_{1} \|_{H^{r-1}_x}  ) (1+E^3(0)), \quad k<j.
	\end{split}
\end{equation}
By phase decomposition, we have
\begin{equation*}
	df_j= P_{\geq j} df_j+ \textstyle{\sum}^{j-1}_{k=1} \textstyle{\sum}^{j-1}_{m=k} P_k (df_{m+1}-df_m)+ \textstyle{\sum}^{j-1}_{k=1} P_k df_k.
\end{equation*}
By using \eqref{ru234} and \eqref{ru25}, we get
\begin{equation}\label{ru230}
	\begin{split}
		 \| \left< \partial \right>^{a-1} df_{j} \|_{L^2_{[0,T^*_{N_0}]} L^\infty_x}
		\leq & C( \| f_{0} \|_{H^r_x} + \| f_{1} \|_{H^{r-1}_x}  )(1+E^3(0))[\frac13(1-2^{-\delta_{1}})]^{-2}.
	\end{split}
\end{equation}
Using \eqref{ru23} and \eqref{ru230}, we get
\begin{equation}\label{ru26}
	\begin{split}
		 \|f_j\|_{L^\infty_{[0,T^*_{N_0}]} H^{r}_x}+ \|d f_j\|_{L^\infty_{[0,T^*_{N_0}]} H^{r-1}_x }
		\leq & A_* \exp ( B_* \mathrm{e}^{B_*} ),
	\end{split}	
\end{equation}
where $A_{*}=C  \textrm{e}^{C_*}(\|f_0 \|_{H^{r}_x}+\|f_1 \|_{H^{r-1}_x} + C_*)$, $B_*=C(\| f_{0} \|_{H^r_x} + \| f_{1} \|_{H^{r-1}_x})(1+E^3(0))[\frac13(1-2^{-\delta_{1}})]^{-2}$ and $C_*$ is stated in \eqref{Cstar}. Therefore, the conclusion \eqref{ru02} hold.
\end{proof}

\section{Proof of Theorem \ref{dingli2}}\label{Sec9}
In this part, our goal is to prove Theorem \ref{dingli2}. We first reduce the proof to the case of initial data with frequency support, that is, using Proposition \ref{DL2} to prove Theorem \ref{dingli2}. Then we use Proposition \ref{p1} to obtain Proposition \ref{DDL3}. Finally, we use Proposition \ref{DDL3} and develop a method to prove Proposition \ref{DDL2}.

\subsection{Proof of Theorem \ref{dingli2}}\label{keyc} To prove Theorem \ref{dingli2}, we will prove the well-posedness of solutions as a limit of a sequence. To find this sequence, let us introduce a sequence of initial data $(\bv_{0j},\rho_{0j}, h_{0j})$ with frequency support such that
\begin{equation}\label{dss0}
	(\bv_{0j}, \rho_{0j}, h_{0j})=(P_{\leq j} \bv_0, P_{\leq j} \rho_0, P_{\leq j} h_0).
\end{equation}
Above, the data $(\bv_{0}, \rho_{0}, h_{0})$ is stated as \eqref{chuzhi2} in Theorem \ref{dingli2}. Following \eqref{pw11}, we define
\begin{equation}\label{dss1}
 \bw_{0j}=\mathrm{e}^{-\rho_{0j}}\mathrm{curl}\bv_{0j}.
\end{equation}
Set
\begin{equation}\label{qu0}
	\mathbb{E}(0)=	\| \bv_{0j} \|_{H^{\frac52}}+ \| \rho_{0j} \|_{H^{\frac52}} + \| \bw_{0j} \|_{H^{\frac52}}+  \| h_{0j} \|_{H^{\frac52}}.
\end{equation}
Using \eqref{chuzhi2} and \eqref{dss0} and \eqref{dss1}, we therefore get
\begin{equation}\label{qu01}
	\mathbb{E}(0) \leq C(\bar{M}_0+ \bar{M}^2_0).
\end{equation}
Before we give a proof of Theorem \ref{dingli2}, let us now introduce Proposition \ref{DL2}.
\begin{proposition}\label{DL2}
	Let $\epsilon$ and \eqref{HEw}-\eqref{chuzhi2} be stated in Theorem \ref{dingli2}. Let $(\bv_{0j}, \rho_{0j},h_{0j}, \bw_{0j})$ be stated in \eqref{dss0} and \eqref{dss1}. For each $j\geq 1$, consider Cauchy problem \eqref{fc0} with the initial data $(\bv_{0j}, \rho_{0j}, h_{0j},\bw_{0j})$. Then for all $j \geq 1$, there exists two positive constants $\bar{T}>0$ and $\bar{M}_{1}>0$ ($\bar{T}$ and $\bar{M}_{1}$ only depending on $\bar{M}_{0}$ and $\epsilon$) such that \eqref{fc0} has a unique solution $(\bv_j,\rho_j) \in C([0,\bar{T}],H_x^{\frac52})$, $h_j \in C([0,\bar{T}],H_x^{\frac52+\epsilon})$, and $\bw_j \in C([0,\bar{T}],H_x^{\frac32+\epsilon})$. To be precise,

	$\mathrm{(1)}$ the solution $\bv_j, \rho_j, h_j$ and $\bw_j$ satisfy the energy estimates
	\begin{equation}\label{uu0}
		\|\bv_j,\rho_j\|_{L^\infty_{[0,\bar{T}]}H_x^{\frac52}}+ \|h_j\|_{L^\infty_{[0,\bar{T}]}H_x^{\frac52+\epsilon}}+ \|\bw_j\|_{L^\infty_{[0,\bar{T}]}H_x^{\frac32+\epsilon}}+ \|\bv_j,\rho_j, h_j\|_{L^\infty_{[0,\bar{T}]\times \mathbb{R}^3}} \leq \bar{M}_{1},
	\end{equation}
	and
	\begin{equation}\label{uu1}
	\|\partial_t \bv_j\|_{L^\infty_{[0,\bar{T}]}H_x^{\frac32}}+\|\partial_t \rho_j\|_{L^\infty_{[0,\bar{T}]}H_x^{\frac32}}+ \|\partial_t h_j\|_{L^\infty_{[0,\bar{T}]}H_x^{\frac32+\epsilon}}+ \|\partial_t \bw_j\|_{L^\infty_{[0,\bar{T}]}H_x^{\frac12+\epsilon}} \leq \bar{M}_{1},
	\end{equation}

	$\mathrm{(2)}$ the solution $\bv_j$, $\rho_j$, and $h_j$ satisfy the Strichartz estimate
	\begin{equation}\label{uu2}
		\|d\bv_j, d\rho_j, dh_j\|_{L^2_{[0,\bar{T}]}L_x^\infty} \leq \bar{M}_{1}.
	\end{equation}

	$\mathrm{(3)}$ for $\frac{7}{6} \leq r < \frac72$, consider the following linear wave equation
	\begin{equation}\label{uu21}
		\begin{cases}
			\square_{{g}_j} f_j=0, \qquad [0,\bar{T}]\times \mathbb{R}^3,
			\\
			(f_j,\partial_t f_j)|_{t=0}=(f_{0j},f_{1j}),
		\end{cases}
	\end{equation}
	where $(f_{0j},f_{1j})=(P_{\leq j}f_0,P_{\leq j}f_1)$ and $(f_0,f_1)\in H_x^r \times H^{r-1}_x$. Then there is a unique solution $f_j$ on $[0,\bar{T}]\times \mathbb{R}^3$. Moreover, for $a\leq r-\frac{7}{6}$, we have
	\begin{equation}\label{uu22}
		\begin{split}
			&\|\left< \partial \right>^{a-1} d{f}_j\|_{L^2_{[0,\bar{T}]} L^\infty_x}
			\leq  \bar{M}_3(\|{f}_0\|_{{H}_x^r}+ \|{f}_1 \|_{{H}_x^{r-1}}),
			\\
			&\|{f}_j\|_{L^\infty_{[0,\bar{T}]} H^{r}_x}+ \|\partial_t {f}_j\|_{L^\infty_{[0,\bar{T}]} H^{r-1}_x} \leq  \bar{M}_3(\| {f}_0\|_{H_x^r}+ \| {f}_1\|_{H_x^{r-1}}),
		\end{split}
	\end{equation}
	where $ {\bar{M}_3}$ is a universal constant depending on $C_0, c_0, \bar{M}_0$ and $\epsilon$.
\end{proposition}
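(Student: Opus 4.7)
The plan is to adapt the strategy of Proposition \ref{DDL2} to include the entropy equation. The four ingredients are: (i) the energy inequality of Theorem \ref{vve}, which controls the full norm of $(\bv_j,\rho_j,h_j,\bw_j)$ in $H^{5/2}\times H^{5/2}\times H^{5/2+\epsilon}\times H^{3/2+\epsilon}$ in terms of $\|d\bv_j,d\rho_j,dh_j\|_{L^1_tL^\infty_x}$; (ii) a small, smooth-data result for the full system obtained by specializing Proposition \ref{p1} with $s=5/2$ and $s_0=2+\delta_1$ for some fixed $\delta_1>0$, which yields both existence on $[-2,2]$ and the Strichartz estimate \eqref{preb}; (iii) a short-time Strichartz estimate obtained after a parabolic rescaling; and (iv) a bootstrap/induction that extends the short-time bounds to a uniform interval $[0,\bar T]$.

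First I would rescale the frequency-truncated data by $T_j = 2^{-j/(3+\epsilon)}[\mathbb E(0)]^{-1}$. This scaling is dictated by the fact that $\|\bw_{0j}\|_{\dot H^{2+\delta_1}}\lesssim 2^{j/2}$ and $\|h_{0j}\|_{\dot H^{5/2+\epsilon}}\lesssim 2^{j/2}$, so after rescaling all the relevant norms fall below $\epsilon_3$. Combined with the physical-localization argument of Section \ref{Sec4}, this yields a smooth solution on $[0,T_j]$ satisfying the high-order Strichartz estimate $\|d\bv_j,d\rho_j,dh_j\|_{L^2_{[0,T_j]}C^a_x}\leq C T_j^{-(1/2+a)}(1+\mathbb E(0))$ for $a<3/2$ (cf.~\eqref{prec04}). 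Specializing $a=1/2-\epsilon/40$ in a frequency-localized version and applying Bernstein's inequality produces the high-frequency bound \eqref{prec05} for $k\geq j$; the difference estimates \eqref{prec06} for $k<j$ then follow from the linear wave Strichartz bound of Proposition \ref{DDL3} applied to the system satisfied by $(\bv_{m+1}-\bv_m,\rho_{m+1}-\rho_m,h_{m+1}-h_m)$, combined with the sharp energy estimates on the initial differences $\bv_{0(m+1)}-\bv_{0m}$, etc.

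The second stage is a descending induction on $l=1,\dots,j-N_1$ that extends the solution from $[0,T_j]$ to $[0,T_{j-l}]$, where $N_1=N_1(C_0,c_0,\epsilon,\bar M_0)$ is chosen so that (a) all the relevant geometric series in the frequency indices sum to a universal constant and (b) the energy growth controlled by Theorem \ref{vve} stays bounded. At each step, the extended interval $[T_{j-l},T_{j-(l+1)}]$ is decomposed into at most $2^{\delta_1 l}C_*\mathbb E(0)^{-1}$ sub-intervals of length $\approx C_*^{-1}2^{-j/(3+\epsilon)}$ on each of which the initial small-data estimate applies, and the cumulative Strichartz and energy bounds propagate thanks to the decay exponents $\epsilon/10$, $\epsilon/40$, $7\epsilon/10$ appearing in \eqref{prec05}-\eqref{prec06}. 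Setting $\bar T = T_{N_1}$ then yields \eqref{uu0}-\eqref{uu2}. The linear wave estimate \eqref{uu22} is produced by the same mechanism: on $[0,T_j]$ one applies the linear wave Strichartz bound \eqref{DP33} from Proposition \ref{DDL3} to obtain frequency-localized decay for $\|\langle\partial\rangle^{a-1}P_k df_j\|_{L^2L^\infty_x}$, while the difference estimates for $f_{m+1}-f_m$ come from Duhamel's principle and the commutator $[\square_{g_j},P_k]$ controlled by $d(g_{m+1}-g_m)$; summing over sub-intervals produces the desired loss of $a\leq r-7/6$ derivatives.

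The main obstacle is that the vorticity regularity $H^{3/2+\epsilon}$ is strictly below the $H^{2+\delta_1}$ threshold required by Proposition \ref{p1}, which forces the scaling-plus-bootstrap structure above. The numerological constraint $\epsilon<1/9$ is dictated by the requirement that the exponents $\epsilon/10$, $\epsilon/40$, $7\epsilon/10$ and $1/(3+\epsilon)$ be compatible with the bootstrap, so that the geometric series in $k$ and $m$ converge even after multiplying by the number of sub-intervals $2^{\delta_1 l}$. A secondary difficulty is that the constant in Theorem \ref{vve} depends exponentially on $\int_0^t(\|d\bv_j,d\rho_j,dh_j\|_{L^\infty_x}+\|d\bv_j,d\rho_j,dh_j\|_{\dot B^{1/2+\epsilon}_{\infty,2}})d\tau$, so the bootstrap must be closed before this integral can exceed a universal threshold; this is arranged by fixing $N_1$ sufficiently large at the outset, mirroring condition \eqref{pp8} in the isentropic case. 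Uniqueness of the limit solution follows from Corollary \ref{cor3}, and continuous dependence is established by the frequency-envelope argument of Section \ref{Sub} adapted to include the entropy equation.
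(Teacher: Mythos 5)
Your proposal follows essentially the same route as the paper's proof in Sections~\ref{keyc}--\ref{keyf}: rescale by $T_j=2^{-j/(3+\epsilon)}[\mathbb E(0)]^{-1}$ so that Proposition~\ref{DL3} (the specialization of Proposition~\ref{p1} with $s=5/2$, $s_0=2+\epsilon$) applies after localization, obtain short-time Strichartz bounds and frequency-localized decay estimates in $k$ and $j$, and then close a descending induction to extend to a uniform interval $\bar T=T_{N_1}$, with the linear-wave Strichartz bound of \eqref{uu22} produced by the same sum-over-subintervals mechanism. One small slip: the short-time Strichartz estimate from Proposition~\ref{DL3} holds only for H\"older exponent $a\in[0,1/2)$ (since $\delta_*\in(0,s-2)$ with $s=5/2$), not $a<3/2$ as you wrote; this does not affect your argument because your specialization $a=1/2-\epsilon/40$ lies in the correct range.
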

Based on Proposition \ref{DL2}, we are now ready to prove Theorem \ref{dingli2}.
\medskip\begin{proof}[Proof of Theorem \ref{dingli2} by using Proposition \ref{DL2}]
	Using \eqref{uu0}-\eqref{uu1}, there is a limit for a subsequence of $\{(\bv_j, \rho_j, h_j)\}_{j \in \mathbb{Z}}$, i.e.
	\begin{equation*}
		\lim_{n\rightarrow \infty} (\bv_{j_n}, \rho_{j_n}, h_{j_n}, \bw_{j_n})=(\bv, \rho, h, \bw) \quad \text{in} \ H_x^{\frac52} \times H_x^{\frac52} \times H_x^{\frac52+\epsilon} \times H_x^{\frac32+\epsilon}.
	\end{equation*}
	Thus, $(\bv, \rho, h, \bw)$ is a solution to \eqref{fc0}-\eqref{id} and
	\begin{equation*}
		(\bv, \rho, h, \bw)|_{t=0}=(\bv_0,\rho_0,h_0, \bw_0),
	\end{equation*}
Moreover, it satisfies
	\begin{equation*}
	\|\bv,\rho\|_{L^\infty_{[0,\bar{T}]}H_x^{\frac52}}+ \|\bw\|_{L^\infty_{[0,\bar{T}]}H_x^{\frac32+\epsilon}}+ \|h\|_{L^\infty_{[0,\bar{T}]}H_x^{\frac52+\epsilon}}+ \|\bv, \rho,h\|_{L^\infty_{ [0,\bar{T}] \times \mathbb{R}^3}} \leq \bar{M}_1,
\end{equation*}
and
\begin{equation*}
	\|\partial_t \bv\|_{L^\infty_{[0,\bar{T}]}H_x^{\frac32}}+\|\partial_t\rho\|_{L^\infty_{[0,\bar{T}]}H_x^{\frac32}}+ \|\partial_t \bw\|_{L^\infty_{[0,\bar{T}]}H_x^{\frac12+\epsilon}}+ \|\partial_t h\|_{L^\infty_{[0,\bar{T}]}H_x^{\frac32+\epsilon}} \leq \bar{M}_1,
\end{equation*}	
and
\begin{equation*}
	\|d\bv, d\rho, dh\|_{L^2_{[0,\bar{T}]}L_x^\infty} \leq \bar{M}_1.
\end{equation*}
Similarly, there is a limit for a subsequence of $\{f_j\}_{j \in \mathbb{Z}}$,
\begin{equation*}
	\lim_{n\rightarrow \infty} (f_{j_n}, \partial_t f_{j_n}) =(f, f_t) \quad \text{in} \ H_x^{r} \times  H_x^{r-1}.
\end{equation*}
Using \eqref{uu21} and \eqref{uu22} and taking $j\rightarrow \infty$, the limit $f$ is a solution of the following Cauchy problem
	\begin{equation*}
	\begin{cases}
		\square_{{g}} f=0, \qquad [0,\bar{T}]\times \mathbb{R}^3,
		\\
		(f,\partial_t f)|_{t=0}=(f_{0},f_{1}).
	\end{cases}
\end{equation*}
Furthermore, by \eqref{uu22}, $f$ has the following estimates
\begin{equation*}
	\begin{split}
		&\|\left< \partial \right>^{a-1} d{f}\|_{L^2_{[0,\bar{T}]} L^\infty_x}
		\leq  C_{\bar{M}_0}(\|{f}_0\|_{{H}_x^r}+ \|{f}_1 \|_{{H}_x^{r-1}}),
		\\
		&\|{f}\|_{L^\infty_{[0,\bar{T}]} H^{r}_x}+ \|\partial_t {f}\|_{L^\infty_{[0,\bar{T}]} H^{r-1}_x} \leq  C_{\bar{M}_0}(\| {f}_0\|_{H_x^r}+ \| {f}_1\|_{H_x^{r-1}}).
	\end{split}
\end{equation*}
Using a similar idea in Section \ref{Sub}, we can also prove the continuous dependence. So we have finished the proof of Theorem \ref{dingli2}.
\end{proof}
It remains for us to prove Proposition \ref{DL2}. Our idea is to prove Proposition \ref{DL2} via small solutions, and using scaling technique and Strichartz estimates to obtain the solutions satisfying \eqref{uu0}-\eqref{uu2}. We next turn to introduce Proposition \ref{DL3}, which is about small solutions.
\subsection{Proposition \ref{DL3} for small data}
Our object is to prove Proposition \ref{DL2} via small solutions, and using scaling technique and Strichartz estimates to obtain the solutions satisfying \eqref{uu0}-\eqref{uu2}. Let us now state Proposition \ref{DL3} considering small data.
\begin{proposition}\label{DL3}
	Let $\epsilon_2$ and $\epsilon_3$ be stated in \eqref{a0}. Let $0<\epsilon<\frac{1}{9}$. For each small, smooth initial data $(\bv_0, \rho_0, h_0, \bw_0)$ which satisfies
	\begin{equation}\label{P30}
		\begin{split}
			&\|(\bv_0, \rho_0) \|_{H^{\frac52}} + \| \bw_0\|_{H^{2+\epsilon}}+ \| h_0\|_{H^{3+\epsilon}}  \leq \epsilon_3,
		\end{split}
	\end{equation}
	there exists a unique smooth solution $(\bv, \rho, h, \bw)$ to \eqref{fc1} on $[-2,2] \times \mathbb{R}^3$ satisfying
	\begin{equation}\label{P31}
		\|(\bv, \rho)\|_{L^\infty_{[-2,2]}H_x^{\frac52}}+\|\bw\|_{L^\infty_{[-2,2]}H_x^{2+\epsilon}}+\|h\|_{L^\infty_{[-2,2]}H_x^{3+\epsilon}} \leq \epsilon_2.
	\end{equation}
	Furthermore, the solution satisfies the properties

	$\mathrm{(1)}$ dispersive estimate for $\bv$, $\rho$, and $h$
	\begin{equation}\label{P32}
		\|d \bv, d \rho, dh\|_{L^2_{[-2,2]} C^a_{x}} \leq \epsilon_2, \quad a\in [0,\frac12).
	\end{equation}

	$\mathrm{(2)}$ Let $f$ satisfy equation \eqref{linear}. For each $1 \leq r \leq \frac52+1$, the Cauchy problem \eqref{linear} is well-posed in $H_x^r \times H_x^{r-1}$, and the following estimate holds:
	\begin{equation}\label{P33}
		\|\left< \partial \right>^k f\|_{L^2_{[-2,2]} L^\infty_x} \lesssim  \| f_0\|_{H_x^r}+ \| f_1\|_{H_x^{r-1}},\quad  k<r-1,
	\end{equation}
and the similar estimates hold if we replace $\left< \partial \right>^k$ by $\left< \partial \right>^{k-1} d$.
\end{proposition}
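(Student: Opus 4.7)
\bigskip

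\noindent\textbf{Proof proposal for Proposition \ref{DL3}.}

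The plan is to obtain Proposition \ref{DL3} as an essentially direct corollary of Proposition \ref{p1}, by specializing the regularity parameters $(s, s_0)$ in that proposition to the values matching the hypotheses here. Indeed, Proposition \ref{p1} produces a smooth solution on $[-1,1]\times\mathbb{R}^3$ to the full system \eqref{fc1} (including the entropy transport) under the hypothesis $\|\bv_0\|_{H^s}+\|\rho_0\|_{H^s}+\|\bw_0\|_{H^{s_0}}+\|h_0\|_{H^{s_0+1}}\leq \epsilon_3$ with $2<s_0<s\leq \tfrac{5}{2}$, together with the Strichartz bound $\|d\bv,d\rho,dh\|_{L^2_tC^{\delta_*}_x}\leq \epsilon_2$ for any $\delta_*\in[0,s-2)$ and the linear Strichartz estimate \eqref{304} for $1\le r\le s+1$ and $k<r-1$. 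My choice is
\[
s=\tfrac{5}{2},\qquad s_0=2+\epsilon,
\]
which is admissible because the assumption $0<\epsilon<\tfrac{1}{9}$ gives in particular $2<s_0<s$. With this choice, $H^{s_0}=H^{2+\epsilon}$ and $H^{s_0+1}=H^{3+\epsilon}$, so \eqref{P30} is exactly the smallness hypothesis of Proposition \ref{p1}.

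First I would verify that each of the three conclusions of Proposition \ref{DL3} is then contained in Proposition \ref{p1}. The energy bound \eqref{P31} is the statement $\|\bv,\rho\|_{L^\infty_tH^s_x}+\|\bw\|_{L^\infty_tH^{s_0}_x}+\|h\|_{L^\infty_tH^{s_0+1}_x}\leq \epsilon_2$ of \eqref{s402} with the above $(s,s_0)$. The dispersive estimate \eqref{P32} is exactly $\|d\bv,d\rho,dh\|_{L^2_tC^{\delta_*}_x}\leq \epsilon_2$ of \eqref{s403} for any $\delta_*\in[0,s-2)=[0,\tfrac12)$, which is what \eqref{P32} asks with $a\in[0,\tfrac12)$. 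The linear Strichartz estimate \eqref{P33} is \eqref{304} with the same range $1\le r\le s+1=\tfrac72$ and $k<r-1$, and the companion estimate with $\langle\partial\rangle^k$ replaced by $\langle\partial\rangle^{k-1}d$ is likewise stated in Proposition \ref{p1}. Uniqueness on the time interval is already provided by Corollary \ref{cor} applied with these parameters.

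Second, I would extend the time interval from $[-1,1]$ (as given in Proposition \ref{p1}) to $[-2,2]$ (as required in Proposition \ref{DL3}): this is a harmless issue because the cutoff metric $\mathbf{g}$ in \eqref{boldg} is defined on $[-2,2]\times\mathbb{R}^3$ and coincides with the Minkowski metric for $t\le -\tfrac32$, so the finite speed of propagation and the standard compactness/continuation argument used throughout Section \ref{Sec4}--\ref{ABA} extend the smooth localized solution to $[-2,2]\times\mathbb{R}^3$ without any change in the constants, provided the data are supported in a fixed ball (as is produced by the localization step used in reducing Theorem \ref{dingli} to Proposition \ref{p1}). The only subtle point—arguably the single substantive one—is bookkeeping of the constants: I need to confirm that the universal constants appearing in \eqref{s402}--\eqref{304} are independent of the particular $s_0\in(2,\tfrac{5}{2})$ chosen, so that fixing $s_0=2+\epsilon$ yields a constant depending only on $\epsilon$ as required by Proposition \ref{DL3}. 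Tracking the constants through Section \ref{sec6}--Section \ref{Ap}, the dependence enters only via the exponents in the Besov and Sobolev embeddings used in the characteristic energy estimates (Lemmas \ref{te20}, \ref{te21}) and in the product estimates of Section \ref{sec:preliminaries}; each of these constants degrades continuously in $s_0$ as $s_0\downarrow 2$, so for any fixed $\epsilon>0$ the resulting constant is finite.

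In summary, no new analysis is required: Proposition \ref{DL3} is the specialization of Proposition \ref{p1} at $s=\tfrac{5}{2}$, $s_0=2+\epsilon$, together with the trivial extension in the time variable and the remark on uniform dependence of constants. The ``hard part'', if any, is simply noting that $\epsilon<\tfrac{1}{9}$ comfortably guarantees $s_0\in(2,s)$ and that the quantitative estimates of Proposition \ref{p1} carry over with constants depending only on $C_0,c_0,\epsilon$; there is no independent analytic difficulty beyond what has already been carried out in Sections \ref{Sec4}--\ref{Ap}.
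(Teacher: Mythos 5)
Your proposal is correct and is exactly the paper's own proof: the paper simply observes that taking $s=\tfrac52$, $s_0=2+\epsilon$ in Proposition \ref{p1} yields Proposition \ref{DL3}. Your additional remarks on extending $[-1,1]$ to $[-2,2]$ via finite speed of propagation and on uniformity of constants in $s_0$ are reasonable elaborations of what the paper leaves implicit, but they do not change the essential argument.
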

\medskip\begin{proof}[Proof of Proposition \ref{DL3}]
Taking $s=\frac52, s_0=2+\epsilon$ in Proposition \ref{p1}, then Proposition \ref{DL3} holds.
\end{proof}
Based on Proposition \ref{DL3}, we are now ready to give a proof of Proposition \ref{DL2}.
\subsection{Proof of Proposition \ref{DL2}}
\medskip\begin{proof}[Proof of Proposition \ref{DL2} by using Proposition \ref{DL3}]
	In Proposition \ref{DL3}, the corresponding statement is considering the small, supported data. While, the initial data is large in Proposition \ref{DL2}. Firstly, using a scaling method, we can reduce the initial sequence $\{(\bv_{0j}, \rho_{0j}, \bw_{0j}, h_{0j})\}$ to be small. Taking scaling
\begin{equation*}
		\begin{split}
			& \widetilde{\bv}_{0j}={\bv}_{0j}(Tt,Tx), \quad \widetilde{\rho}_{0j}=\rho_{0j}(Tt,Tx),  \quad \widetilde{h}_{0j}={h}_{0j}(Tt,Tx),
		\end{split}
	\end{equation*}
and setting
\begin{equation*}
	\widetilde{\bw}_{0j}=\mathrm{e}^{-\widetilde{\rho}_{0j}} \mathrm{curl} \widetilde{\bv}_{0j},
\end{equation*}
we then have
	\begin{equation}\label{qu02}
		\begin{split}
			& \| \widetilde{\bv}_{0j} \|_{\dot{H}^{\frac52}} \leq T \| {\bv}_{0j} \|_{\dot{H}^{\frac52}} \leq T\cdot \mathbb{E}(0),
			\\
			&
			\| \widetilde{\rho}_{0j}\|_{\dot{H}^{\frac52}} \leq T \| {\rho}_{0j} \|_{\dot{H}^{\frac52}} \leq T \cdot \mathbb{E}(0),
			\\
			& \| \widetilde{h}_{0j}\|_{\dot{H}^{3+\epsilon}} \leq T^{\frac32+\epsilon} \| {h}_{0j}\|_{\dot{H}^{3+\epsilon}} \leq T^{\frac{3+\epsilon}{2}}2^{\frac{j}{2}}\cdot T^{\frac{\epsilon}{2}} \mathbb{E}(0) ,
			\\
			& \| \widetilde{\bw}_{0j}\|_{\dot{H}^{2+\epsilon}}\leq T^{\frac32+\epsilon}\|{\bw}_{0j}\|_{\dot{H}^{2+\epsilon}} \leq T^{\frac{3+\epsilon}{2}}2^{\frac{j}{2}}\cdot T^{\frac{\epsilon}{2}} \mathbb{E}(0).
		\end{split}
	\end{equation}
Set
	\begin{equation}\label{qu03}
	{T}_j:=2^{-\frac{1}{3+\epsilon}j} [\mathbb{E}(0)]^{-1}.
\end{equation}
Hence, if we replace $T$ with $T_j$ in \eqref{qu02}, then we have
	\begin{equation*}
		\begin{split}
			\| \widetilde{\bv}_{0j} \|_{\dot{H}^{\frac52}}+\| \widetilde{\rho}_{0j}\|_{\dot{H}^{\frac52}}+\| \widetilde{h}_{0j}\|_{\dot{H}^{3+\epsilon}}+\| \widetilde{\bw}_{0j}\|_{\dot{H}^{2+\epsilon}}  \leq  2^{-\frac{\epsilon}{2(3+\epsilon)}j}(1+\mathbb{E}(0)).
		\end{split}
	\end{equation*}
By using \eqref{HEw} and \eqref{dss0}, we also have
\begin{equation}\label{qu00}
 \| \widetilde{\bv}_{0j},\widetilde{\rho}_{0j},\widetilde{h}_{0j}\|_{L^\infty_x} \leq \| {\bv}_{0j},{\rho}_{0j},{h}_{0j}\|_{L^\infty_x} \leq C_0.
\end{equation}
	Here, for a small parameter $\epsilon_3$, we can choose ${N}_1=N_1(\epsilon, \bar{M}_0)$ such that
	\begin{equation}\label{qu04}
	\begin{split}
	2^{-\frac{\epsilon N_1}{2(3+\epsilon)}}(1+\mathbb{E}(0)) \ll \epsilon_3,
	\\
	(1+\mathbb{E}^2_*)2^{-\frac{\epsilon}{40}N_1}(1+\mathbb{E}^2(0))^{-1} \leq 1.
	\end{split}	
	\end{equation}
Above, $\mathbb{E}_*$ is denoted by
	\begin{equation}\label{qu0q}
	\mathbb{E}_*=C(\mathbb{E}(0)+\mathbb{E}^2(0) ) \exp\{C(1+\mathbb{E}^2(0))[\frac13(1-2^{-\frac{\epsilon}{40}})]^{-2} \}.
\end{equation}	
As a result, for $j \geq {N}_{1}$, we have
	\begin{equation}\label{qu05}
		\begin{split}
			\| \widetilde{\bv}_{0j} \|_{\dot{H}^{\frac52}}+\| \widetilde{\rho}_{0j}\|_{\dot{H}^{\frac52}}+\| \widetilde{h}_{0j}\|_{\dot{H}^{3+\epsilon}}+\| \widetilde{\bw}_{0j}\|_{\dot{H}^{2+\epsilon}}  \leq  \epsilon_3.
		\end{split}
	\end{equation}
For \eqref{qu05} is about the homogeneous norm, we need to use physical localization to get the same bound for in-homogeneous norm. Utilize the similar standard physical localization(page 44, Step 2). For the propagation speed of \eqref{fc1} is finite, we set $c$ be the largest speed of \eqref{fc1}. Set $\chi$ be a smooth function supported in $B(0,c+2)$, and which equals $1$ in $B(0,c+1)$. For given $\by \in \mathbb{R}^3$, we define the localized initial data for the velocity and density near $\by$:
	\begin{equation}\label{qu06}
		\begin{split}
			\bar{\bv}_{0j}(\bx)=&\chi(\bx-\by)\left( \widetilde{\bv}_{0j}(\bx)- \widetilde{\bv}_{0j}(\by)\right),
			\\
			\bar{\rho}_{0j}(\bx)=&\chi(\bx-\by)\left( \widetilde{\rho}_{0j}(\bx)-\widetilde{\rho}_{0j}(\by)\right),
			\\
			\bar{h}_{0j}(\bx)=&\chi(\bx-\by) ( \widetilde{h}_{0j}(\bx)-\widetilde{h}_{0j}(\by) ).
		\end{split}
	\end{equation}
Following \eqref{pw11}, we shall set
	\begin{equation}\label{qu07}
		\bar{\bw}_{0j}=\mathrm{e}^{-\bar{\rho}_{0j}}\mathrm{curl}\bar{\bv}_{0j}.
	\end{equation}
Based on \eqref{qu06}, \eqref{qu07} and \eqref{qu05}, it follows that
	\begin{equation}\label{ss4}
		\begin{split}
			\| \bar{\bv}_{0j} \|_{{H}^{\frac52}}+\| \bar{\rho}_{0j}\|_{{H}^{\frac52}}+\| \bar{h}_{0j} \|_{{H}^{3+\epsilon}}+\| \bar{\bw}_{0j} \|_{{H}^{2+\epsilon}}  \leq  \epsilon_3.
		\end{split}
	\end{equation}
	For each fixed $j$, using Proposition \ref{DL3}, there is a smooth solution $(\bar{\bv}_j, \bar{\rho}_j, \bar{h}_j, \bar{\bw}_j)$ on $[-2,2]\times \mathbb{R}^3$ satisfying
\begin{equation}\label{ss3}
\begin{cases}
\square_{\widetilde{g}_j} v^i=-\mathrm{e}^{\rho+\widetilde{\rho}_{0j}(y)}\widetilde{c}_s^2 \mathrm{curl} \bw^i+\widetilde{Q}^i,
\\
\square_{\widetilde{g}_j} {\rho}=-\frac{1}{\gamma}\widetilde{c}_s^2\Delta h+ \widetilde{D},
\\
\square_{\widetilde{g}_j} h=\widetilde{c}_s^2\Delta h + \widetilde{E},
\\
\widetilde{\mathbf{T}} h =0,
\\
\widetilde{\mathbf{T}} w^i=w^a \partial_a v^i+\bar{\rho}^{\gamma-1} \mathrm{e}^{(h+\widetilde{h}_{0j}(y))+(\gamma-2)(\rho+\widetilde{\rho}_{0j}(y))}\epsilon^{iab}\partial_a \rho \partial_b h,
\\
(\bv, \rho, h, \bw)|_{t=0}=(\bar{\bv}_{0j},\bar{\rho}_{0j},\bar{h}_{0j},\bar{\bw}_{0j} ).
\end{cases}
\end{equation}
Above, the quantities $\widetilde{c}^2_s$ and $\widetilde{{g}}$ are given by
\begin{equation}\label{DDEt}
\begin{split}
& \widetilde{c}^2_s= \frac{dp}{d{\rho}}({\rho}+\widetilde{\rho}_{0j}(y), {h}+\widetilde{h}_{0j}(y)),
\\
& \widetilde{g}_j=g({\bv}+\widetilde{\bv}_{0j}(y), {\rho}+\widetilde{\rho}_{0j}(y), {h}+\widetilde{h}_{0j}(y)),
\\
& \widetilde{\mathbf{T}}=\partial_t+ ({\bv}+\widetilde{\bv}_{0j}(y))\cdot \nabla,
\end{split}
\end{equation}
and
\begin{equation*}
  \begin{split}
  \widetilde{Q}^i=& \widetilde{Q}^{i\alpha\beta} \partial_\alpha \rho \partial_\beta h+\widetilde{Q}^{i\alpha\beta }_{1j} \partial_\alpha \rho \partial_\beta v^j+\widetilde{Q}^{i\alpha\beta}_{2j} \partial_\alpha h \partial_\beta v^j +\widetilde{Q}^{\alpha\beta}_{3j} \partial_\alpha v^i \partial_\beta v^j,
  \\
  \widetilde{D}=& \widetilde{D}_1^{\alpha\beta} \partial_\alpha \rho \partial_\beta h+\widetilde{D}_2^{\alpha\beta} \partial_\alpha h \partial_\beta h+\widetilde{D}_3^{\alpha\beta} \partial_\alpha \rho \partial_\beta \rho
  \\
  &+\widetilde{D}^{\alpha\beta }_{1j} \partial_\alpha \rho \partial_\beta v^j+\widetilde{D}^{\alpha\beta}_{2j} \partial_\alpha h \partial_\beta v^j+\widetilde{D}^{i\alpha\beta}_{3j} \partial_\alpha v_i \partial_\beta v^j,
  \\
  \widetilde{E}=& \widetilde{E}_1^{\alpha\beta} \partial_\alpha h \partial_\beta \rho+\widetilde{E}^{\alpha\beta }_{2} \partial_\alpha h \partial_\beta h+\widetilde{E}^{\alpha\beta}_{3j} \partial_\alpha h \partial_\beta v^j,
\end{split}
\end{equation*}
and $\widetilde{Q}^{i\alpha\beta}$, $\widetilde{Q}^{i\alpha\beta }_{1j}$, $\widetilde{Q}^{i\alpha\beta}_{2j}$,  $\widetilde{Q}^{\alpha\beta}_{3j}$, $\widetilde{D}_1^{\alpha\beta}$, $\widetilde{D}_2^{\alpha\beta}$, $\widetilde{D}_3^{\alpha\beta}$, $\widetilde{D}^{\alpha\beta }_{1j}$, $\widetilde{D}^{\alpha\beta}_{2j}$, $\widetilde{D}^{i\alpha\beta}_{3j}$, $\widetilde{E}_1^{\alpha\beta}$, $\widetilde{E}^{\alpha\beta }_{2}$, $\widetilde{E}^{\alpha\beta}_{3j}$ have the same forms with $Q^{i\alpha\beta}$, $Q^{i\alpha\beta }_{1j}$, $Q^{i\alpha\beta}_{2j}$,  $Q^{\alpha\beta}_{3j}$, $D_1^{\alpha\beta}$, $D_2^{\alpha\beta}$, $D_3^{\alpha\beta}$, $D^{\alpha\beta }_{1j}$, $D^{\alpha\beta}_{2j}$, $D^{i\alpha\beta}_{3j}$, $E_1^{\alpha\beta}$, $E^{\alpha\beta }_{2}$, $E^{\alpha\beta}_{3j}$ by replacing $({\bv}, {\rho}, {h})$ to $({\bv}+\widetilde{\bv}_{0j}(y), {\rho}+\widetilde{\rho}_{0j}(y), {h}+\widetilde{h}_{0j}(y))$. Seeing \eqref{dvc} and \eqref{etad}, let us set
	\begin{equation*}
		\bar{\bv}_{+j}=\bar{\bv}_j-\bar{\bv}_{-j}, \quad \Delta \bar{\bv}_{-j}=-\mathrm{e}^{ \bar{\rho}_j} \mathrm{curl}\bar{\bw}_j.
	\end{equation*}
	Using Proposition \ref{DL3} again, we could find
	\begin{equation}\label{see0}
		\|\bar{\bv}_j\|_{L^\infty_{[-2,2]}H_x^{\frac52}}+ \|\bar{\rho}_j\|_{L^\infty_{[-2,2]} H_x^{\frac52}}+ \| \bar{\bw}_j \|_{L^\infty_{[-2,2]} H_x^{2+\epsilon}}+ \| \bar{h}_j \|_{L^\infty_{[-2,2]} H_x^{3+\epsilon}} \leq \epsilon_2,
	\end{equation}
	and
	\begin{equation}\label{see1}
		\|d\bar{\bv}_j, d\bar{\rho}_j,d\bar{h}_j\|_{L^2_{[-2,2]}C^{a}_x} \leq \epsilon_2, \quad a \in [0,\frac12).
	\end{equation}
Furthermore, the linear equation
	\begin{equation}\label{see2}
		\begin{cases}
			&\square_{ \widetilde{{g}}_j} f=0, \qquad [-2,2]\times \mathbb{R}^3,
			\\
			&{f}(t_0,x)={f}_0, \ \partial_t {f}(t_0,x)={f}_1,\quad t_0 \in [-2,2],
		\end{cases}
	\end{equation}
	admits a solution ${f} \in C([-2,2],H_x^r)\times C^1([-2,2],H_x^{r-1})$. Moreover, for $k<r-1$, we have
\begin{equation}\label{so31}
\begin{split}
		\|\left< \partial \right>^k{f}\|_{L^2_{[-2,2]} L^\infty_x} \leq  & C(\| {f}_0\|_{H_x^r}+ \| {f}_1\|_{H_x^{r-1}}),
\end{split}
\end{equation}
and the similar estimates hold when we replace $\left< \partial \right>^k$ by $\left< \partial \right>^{k-1}d$. Here $\widetilde{g}=g(\bar{\bv}_j+\widetilde{\bv}_{0j}(y),\bar{\rho}_j+\widetilde{\rho}_{0j}(y),\bar{h}_j+\widetilde{h}_{0j}(y))$(citing \eqref{DDEt}). Note \eqref{ss3}. Then the function $(\bar{\bv}_j+\widetilde{\bv}_{0j}(\by), \bar{\rho}_j+\widetilde{\rho}_{0j}(\by), \bar{h}_j+\widetilde{h}_{0j}(\by), \mathrm{e}^{\widetilde{\rho}_{0j}(\by)}\bar{\bw}_j)$ is also a solution of the following system
\begin{equation}\label{qu08}
	\begin{cases}
		&\square_{\widetilde{g}}(\bar{v}^i_j+\widetilde{v}^i_{0j}(\by))=-\mathrm{e}^{\rho+\widetilde{\rho}_{0j}(y)}\widetilde{c}_s^2 \mathrm{curl} (\mathrm{e}^{\widetilde{\rho}_{0j}(\by)} \bw^i)+\widetilde{Q}^i,
		\\
		&\square_{\widetilde{g}} (\bar{\rho}_j+\widetilde{\rho}_{0j}(\by))=-\frac{1}{\gamma}\widetilde{c}_s^2\Delta (\bar{h}_j+\widetilde{h}_{0j}(\by))+ \widetilde{D},
		\\
		&\square_{\widetilde{g}} (\bar{h}_j+\widetilde{h}_{0j}(\by))=\widetilde{c}_s^2\Delta (\bar{h}_j+\widetilde{h}_{0j}(\by)) + \widetilde{E},
		\\
		&\widetilde{\mathbf{T}} (\bar{h}_j+\widetilde{h}_{0j}(\by)) =0,
		\\
		&\widetilde{\mathbf{T}} (\mathrm{e}^{\widetilde{\rho}_{0j}(\by)} w^i)=\widetilde{\rho}_{0j}(\by)\widetilde{w}^a \partial_a (\bar{v}^i_j+\widetilde{v}^i_{0j}(\by))
		\\
		&\qquad \qquad \qquad \quad +\bar{\rho}^{\gamma-1} \mathrm{e}^{(h+\widetilde{h}_{0j}(y))+(\gamma-2)(\rho+\widetilde{\rho}_{0j}(y))}\epsilon^{iab}\partial_a (\bar{\rho}_j+\widetilde{\rho}_{0j}(\by)) \partial_b (\bar{h}_j+\widetilde{h}_{0j}(\by)),
		\\
		&(\bar{\bv}_j+\widetilde{\bv}_{0j}(\by), \bar{\rho}_j+\widetilde{\rho}_{0j}(\by), \bar{h}_j+\widetilde{h}_{0j}(\by), \mathrm{e}^{\widetilde{\rho}_{0j}(\by)}\bar{\bw}_j))|_{t=0}
		\\
		& \qquad \qquad \qquad \qquad \qquad \qquad \qquad \qquad \qquad =(\widetilde{\bv}_{0j},\widetilde{\rho}_{0j},\widetilde{h}_{0j},\widetilde{\bw}_{0j} ) \ \text{in} \ B(\by,c+1).
	\end{cases}
\end{equation}
Consider the restrictions, for $\by\in \mathbb{R}^3$,
	\begin{equation}\label{qu09}
		\left( \bar{\bv}_j+\widetilde{\bv}_{0j} (\by) \right)|_{\mathrm{K}^y},
		\quad (\bar{\rho}_j+\widetilde{\rho}_{0j} (\by) )|_{\mathrm{K}^y},
		\\
		\quad (\bar{h}_j+\widetilde{h}_{0j} (\by) )|_{\mathrm{K}^y},
		\\
		\quad \mathrm{e}^{\widetilde{\rho}_{0j}(\by)} \bar{\bw}_j|_{\mathrm{K}^y},
	\end{equation}
	where $\mathrm{K}^y=\left\{ (t,\bx): ct+|\bx-\by| \leq c+1, |t| <1 \right\}$, then the restrictions \eqref{qu08} solve \eqref{CEE}-\eqref{id} on $\mathrm{K}^y$. By finite speed of propagation, a smooth solution $(\widetilde{\bv}_{j}, \widetilde{\rho}_j, \widetilde{h}_j,  \widetilde{\bw}_j)$ solves \eqref{fc1} in $[-1,1] \times \mathbb{R}^3$, where $\widetilde{\bv}_j, \widetilde{\rho}_j, \widetilde{h}_j$ and $\widetilde{\bw}_j$ is denoted by
	\begin{equation}\label{qu10}
		\begin{split}
			\widetilde{\bv}_j(t,\bx) \  &= \bar{\bv}_j+\widetilde{\bv}_{0j}(\by), \ \ \ (t,\bx) \in \mathrm{K}^y,
			\\
			\widetilde{\rho}_j(t,\bx) \  &=\bar{\rho}_j+ \widetilde{\rho}_{0j}(\by), \ \ \ (t,\bx) \in \mathrm{K}^y,
			\\
			\widetilde{h}_j(t,\bx) \  &=\bar{h}_j+ \widetilde{h}_{0j}(\by), \ \ \ (t,\bx) \in \mathrm{K}^y,
			\\
			\widetilde{\bw}_j(t,\bx)  &=\bar{\bw}_j, \ \ \qquad \qquad  (t,x) \in \mathrm{K}^y.
		\end{split}
	\end{equation}
Therefore, if we set
\begin{equation}\label{qu11}
	\begin{split}
		\widetilde{\bv}_j(t,\bx)  &=\textstyle{\sum_{\by \in 3^{-\frac12} \mathbb{Z}^3 }} \psi(\bx-\by) (\bar{\bv}_j+\widetilde{\bv}_{0j}(\by)),
		\\
		\widetilde{\rho}_j(t,\bx)  &=\textstyle{\sum_{\by \in 3^{-\frac12} \mathbb{Z}^3 }} \psi(\bx-\by) ( \bar{\rho}_j+ \widetilde{\rho}_{0j}(\by)),
		\\
		\widetilde{h}_j(t,\bx)  &=\textstyle{\sum_{\by \in 3^{-\frac12} \mathbb{Z}^3 }} \psi(\bx-\by) ( \bar{h}_j+ \widetilde{h}_{0j}(\by)),
		\\
		\widetilde{\bw}_j(t,\bx)  &=\mathrm{e}^{\widetilde{\rho}_j}\mathrm{curl}\widetilde{\bv}_j,
	\end{split}
\end{equation}
then $(\widetilde{\bv}_j,\widetilde{\rho}_j,\widetilde{h}_j,\widetilde{\bw}_j)$ is a smooth solution of \eqref{fc1} on $[-1,1]\times \mathbb{R}^3$ with the initial data $(\widetilde{\bv}_j, \widetilde{\rho}_j, \widetilde{h}_j,\widetilde{\bw}_j )|_{t=0}=(\widetilde{\bv}_{0j}, \widetilde{\rho}_{0j},\widetilde{h}_{0j}, \widetilde{\bw}_{0j})$, where $\psi$ is supported in the unit ball such that
\begin{equation*}
	\textstyle{\sum_{\by \in 3^{-\frac12} \mathbb{Z}^3 }} \psi(\bx-\by)=1.
\end{equation*}
	Therefore, the function $(\widetilde{\bv}_j, \widetilde{\rho}_j, \widetilde{h}_j, \widetilde{\bw}_j)$ defined in \eqref{qu11} is the solution of \eqref{fc1} according to the uniqueness of solutions, i.e. Corollary \ref{cor}. By \eqref{see0} and \eqref{see1}, we can obtain that $(\widetilde{\bv}_j, \widetilde{\rho}_j, \widetilde{h}_j, \widetilde{\bw}_j)$ satisfies \eqref{P30} and \eqref{P31}.

On the other hand, we recall that the initial data $(\widetilde{\bv}_{0j}, \widetilde{\rho}_{0j},\widetilde{h}_{0j}, \widetilde{\bw}_{0j})$ is a scaling of $(\bv_{0j}, \rho_{0j}, h_{0j}, \bw_{0j})$. As a scaling-invariant system \eqref{fc1}, then
\begin{equation*}
\begin{split}
  ({\bv}_{j}, {\rho}_{j},{h}_{j}, {\bw}_{j})= (\widetilde{\bv}_{j}, \widetilde{\rho}_{j},
   \widetilde{h}_{j}, \widetilde{\bw}_{j})((T_j)^{-1}t,(T_j)^{-1}x))
\end{split}
\end{equation*}
is also a solution of \eqref{fc1} on the time interval $[0,{T}_j]$(${T}_j$ is stated in \eqref{qu03}). Moreover, it matches the initial data
\begin{equation*}
  ({\bv}_{j}, {\rho}_{j}, {h}_{j}, {\bw}_{j})|_{t=0}=({\bv}_{0j}, {\rho}_{0j}, {h}_{0j}, {\bw}_{0j}).
\end{equation*}	
We still expect the behaviour of a linear wave equation endowed with $g_j=g(\bv_j,\rho_j,h_j)$. So let us consider the following linear wave equation
\begin{equation}\label{qu1}
	\begin{cases}
		\square_{{g}_j} f=0, \quad [0,T_j]\times \mathbb{R}^3,
		\\
		(f,f_t)|_{t=0}=(f_0,f_1)\in H_x^r \times H^{r-1}_x, \quad 1\leq r \leq \frac52+1.
	\end{cases}
\end{equation}
Following the proof of \eqref{po10} for System \eqref{po3}, then we can conclude that
\begin{equation}\label{qu9}
	\begin{split}
		\|\left< \partial \right>^{k-1} d{f}\|_{L^2_{[0,T_j]} L^\infty_x}
		\leq & CT_j^{r-1-k}(\|{f}_0\|_{\dot{H}_x^r}+ \| {f}_1 \|_{\dot{H}_x^{r-1}})
		\\
		\leq & C(\|{f}_0\|_{{H}_x^r}+ \| {f}_1 \|_{{H}_x^{r-1}}).
	\end{split}
\end{equation}
Above, we also use $k<r-1$ and $T_j=2^{-\frac{j}{3+\epsilon}}$. Following the proof of \eqref{po13} for System \eqref{po3}, then we obtain
\begin{equation}\label{bu1}
	\begin{split}
		\|{f}\|_{L^\infty_{[0,T_j]} H^{r}_x}+ \|\partial_t {f}\|_{L^\infty_{[0,T_j]} H^{r-1}_x} \leq  C(\| {f}_0\|_{H_x^r}+ \| {f}_1\|_{H_x^{r-1}}).
	\end{split}
\end{equation}
From \eqref{qu00}, \eqref{see0}, and \eqref{see1}, we shall prove
\begin{equation}\label{qu12}
	\begin{split}
 & \|d\widetilde{\bv}_j\|_{L^2_{[-2,2]}C_x^{a}}+\| d\widetilde{\rho}_j\|_{L^2_{[-2,2]}C_x^{a}}+\| d\widetilde{h}_j\|_{L^2_{[-2,2]}C_x^{a}}
 \\
 \leq  & C(\|d\bar{\bv}_j, d\bar{\rho}_j,d\bar{h}_j\|_{L^2_{[-2,2]}C_x^{a}}+ \|\widetilde{\bv}_{0j}, \widetilde{\rho}_{0j}, \widetilde{h}_{0j}\|_{L^\infty})
 \\
  \leq & C(1+ \mathbb{E}(0)), \qquad a\in[0,\frac12).
\end{split}
\end{equation}
and
\begin{equation}\label{qu13}
	\begin{split}
		& \|(\widetilde{\bv}_j, \widetilde{\rho}_j)\|_{L^\infty_{[-2,2]}H_x^{\frac52}}
		+\|\widetilde{h}_j\|_{L^\infty_{[-2,2]}H_x^{\frac52+}}+\|\widetilde{\bw}_j\|_{L^\infty_{[-2,2]}H_x^{\frac32+}}
		\\
		\leq & C(\|(\bar{\bv}_j, \bar{\rho}_j)\|_{L^\infty_{[-2,2]}H_x^{\frac52}}
		+\|\bar{h}_j\|_{L^\infty_{[-2,2]}H_x^{\frac52+}}+\|\bar{\bw}_j\|_{L^\infty_{[-2,2]}H_x^{\frac32+}} + \|\widetilde{\bv}_{0j}, \widetilde{\rho}_{0j}, \widetilde{h}_{0j}\|_{L^\infty} )
		\\
		\leq & C(1+ \mathbb{E}(0)).
	\end{split}
\end{equation}
By using \eqref{qu12} and by changing of coordinates $(t,\bx)\rightarrow ((T_j)^{-1}t, (T_j)^{-1}\bx)$, we have
\begin{equation}\label{qu15}
	\begin{split}
		\|d{\bv}_j, d{\rho}_j, dh_j\|_{L^1_{[0,T_{j}]}L^\infty_x}
		\leq & \|d\widetilde{\bv}_j, d\widetilde{\rho}_j, d\widetilde{h}_j\|_{L^1_{[0,1]}L^\infty_x}
		\\
		\leq & \|d\widetilde{\bv}_j, d\widetilde{\rho}_j, d\widetilde{h}_j\|_{L^2_{[0,1]}L^\infty_x}
		\\
		= & C(1+ \mathbb{E}(0)).
	\end{split}
\end{equation}
By using Theorem \ref{vve} and \eqref{qu15}, we obtain
\begin{equation}\label{qu16}
	\begin{split}
		 \mathbb{E}({T}_j ) = &	\|{\bv}_j, {\rho}_j\|_{L^\infty_{[0,T_{j}]}H^{\frac52}_x} +\|{h}_j, \|_{L^\infty_{[0,T_{j}]}H^{\frac52+\epsilon}_x}+\|{\bw}_j \|_{L^\infty_{[0,T_{j}]}H^{\frac32+\epsilon}_x}
		 \\ \leq & C(\mathbb{E}(0)+\mathbb{E}^2(0))\mathrm{e}^{C(1+ \mathbb{E}(0)) e^{C(1+ \mathbb{E}(0))}} .
	\end{split}
\end{equation}
The estimates \eqref{qu15} and \eqref{qu16} are about the solutions of \eqref{fc0}.
In \eqref{qu16} and \eqref{qu15}, ${T}_j$ depends on $j$.  So our next goal is to find a uniform time-interval and also the energy is uniform bounded.
\subsubsection{Strichartz estimates on a short time-interval}\label{keyd}
Considering ${T}_j=2^{-\frac{1}{3+\epsilon}j}$, then we discuss some good properties of the solutions $(\bv_j,\rho_j,h_j,\bw_j)$ in high frequency and low frequency.

\textit{Case 1: High frequency}. By space-time scaling $(t,\bx)\rightarrow ((T_j)^{-1}t,(T_j)^{-1}\bx)$, we get
\begin{equation}\label{qu17}
	\begin{split}
		& \| d \bv_j, d \rho_j, dh_j \|_{L^2_{[0,T_j]}C^a_x} \leq  \{ T_j \}^{-(\frac12+a)}  \|d \widetilde{\bv}_j, d \widetilde{\rho}_j, d \widetilde{h}_j \|_{L^2_{[0,1]}C^a_x}, \quad a \in [0,\frac12).
	\end{split}
\end{equation}
Using \eqref{qu12} again, \eqref{qu17} yields
\begin{equation}\label{qu18}
	\begin{split}
		& \| d \bv_j, d \rho_j, dh_j \|_{L^2_{[0,T_j]}C^a_x} \leq C \{ T_j \}^{-(\frac12+a)}  (1+\mathbb{E}(0)).
	\end{split}
\end{equation}
Using H\"older's inequality, it follows
\begin{equation}\label{qu19}
	\begin{split}
		& \|d \bv_j, d \rho_j, dh_j \|_{L^1_{[0,T_j]}C^a_x} \leq (T_j)^{\frac12}\| d \bv_j, d \rho_j, d h_j \|_{L^2_{[0,T_j]}C^a_x}
		\leq C \{ T_j \}^{-a} (1+\mathbb{E}(0)).
	\end{split}
\end{equation}
For $k \geq j$, by Bernstein inequality and \eqref{qu18}-\eqref{qu19}, we have
\begin{equation}\label{qu20}
	\begin{split}
		\| P_{k} d \bv_j, P_{k} d \rho_j, P_{k} d h_j \|_{L^2_{[0,T_j]}L^\infty_x}
		\leq & 2^{-ka}\| P_k d \bv_j, P_k d \rho_j \|_{L^2_{[0,T_j]}C^a_x}
		\\
		\leq & C2^{-ka} \| d \bv_j, d \rho_j,d h_j \|_{L^2_{[0,T_j]}C^a_x}
		\\
		\leq &  C2^{-ka}  \{ T_j \}^{-(\frac12+a)}  (1+\mathbb{E}(0)),
	\end{split}
\end{equation}
and
\begin{equation}\label{qu21}
	\begin{split}
		\| P_{k} d \bv_j, P_{k} d \rho_j, P_{k} d h_j  \|_{L^1_{[0,T_j]}L^\infty_x}
		\leq &  C2^{-ka}  \{ T_j \}^{-a}  (1+\mathbb{E}(0)).
	\end{split}
\end{equation}
Taking $a=\frac12-\frac{\epsilon}{40}$ in \eqref{qu20}-\eqref{qu21}, and using \eqref{qu03}, so we obtain
\begin{equation}\label{qu22}
	\begin{split}
		\| P_{k} d \bv_j, P_{k} d \rho_j, P_{k} d h_j \|_{L^2_{[0,T_j]}L^\infty_x}
		\leq & C2^{-(\frac12-\frac{\epsilon}{40})k}  [2^{\frac{1}{3+\epsilon}j}E(0)]^{1-\frac{\epsilon}{40}}(1+\mathbb{E}(0))
		\\
		\leq & C 2^{-\frac{\epsilon}{40}k} 2^{-(\frac12-\frac{\epsilon}{20})j} 2^{\frac{1-\frac{\epsilon}{40}}{3+\epsilon}j}  (1+\mathbb{E}^3(0))
		\\
		\leq & C  (1+\mathbb{E}^3(0)) \cdot 2^{-\frac{\epsilon}{40}k}  2^{-\frac{\epsilon}{10}j} 2^{-\frac{1}{2(3+\epsilon)}j},
	\end{split}
\end{equation}
and
\begin{equation}\label{qu23}
	\begin{split}
		\| P_{k} d \bv_j, P_{k} d \rho_j, P_{k} d h_j \|_{L^1_{[0,T_j]}L^\infty_x}
		\leq & C  (1+\mathbb{E}^2(0)) \cdot 2^{-\frac{\epsilon}{40}k}  2^{-\frac{\epsilon}{10}j} 2^{-\frac{1}{3+\epsilon}j}, \quad k \geq j.
	\end{split}
\end{equation}
Summing \eqref{qu23} over $k$ from $j$ to infinity, we get
\begin{equation}\label{qu24}
	\begin{split}
		\| P_{\geq j} d \bv_j, P_{\geq j} d \rho_j, P_{\geq j} d h_j \|_{L^1_{[0,T_j]}L^\infty_x}
		\leq &  C(1+\mathbb{E}^2(0)) \cdot (1-2^{-\frac{\epsilon}{40}})^{-1}  2^{-\frac{\epsilon}{10}j} 2^{-\frac{1}{3+\epsilon}j}.
	\end{split}
\end{equation}
\textit{Case 2: Low frequency}. For $k<j$, we decompose
\begin{equation}\label{qu25}
\begin{split}
P_k d{\bv}_{j}=& P_kd{\bv}_{k}+P_k(d{\bv}_{j}-d{\bv}_{k})
\\
=& P_kd{\bv}_{k}+\textstyle{\sum_{m=k}^{j-1}} P_k(d{\bv}_{m+1}-d{\bv}_{m}).
\end{split}
\end{equation}
Similarly, we have
\begin{equation}\label{qu26}
	\begin{split}
		P_k d{\rho}_j=& P_kd{\rho}_k+\textstyle{\sum_{m=k}^{j-1}} P_k( d{\rho}_{m+1} -d{\rho}_{m}),
		\\
			P_k  d{h}_j=& P_k d{h}_k+\textstyle{\sum_{m=k}^{j-1}} P_k ( d{h}_{m+1}-d{h}_m ).
	\end{split}
\end{equation}
In \eqref{qu25}-\eqref{qu26}, $P_k{\rho}_k, P_k{h}_k$ and $P_k{\bv}_{k}$ can be handled as \eqref{qu20} and \eqref{qu21}.

For any fixed integer $m$, the solutions $(\bv_{m+1}, {\rho}_{m+1}, {h}_{m+1}, \bw_{m+1})$
and $(\bv_{m}, {\rho}_{m}, {h}_{m}, \bw_{m})$ exist on the same region $[0, {T}_{m+1}] \times \mathbb{R}^3$. By using \eqref{qu1} and \eqref{qu9}, we can see
\begin{equation}\label{qu28}
	\begin{split}
		& \|d{\bv}_{m+1}-d{\bv}_{m}, d{\rho}_{m+1}-d{\rho}_{m},d{h}_{m+1}-d{h}_{m} \|_{L^2_{ [0, T_{m+1}] } L_x^\infty }
		\\
		\leq & C\|{\bv}_{m+1}-{\bv}_{m},{\rho}_{m+1}-{\rho}_{m},,{h}_{m+1}-{h}_{m}\|_{L^\infty_{ [0, T_{m+1}] } H_x^{2+\frac{1}{10}\epsilon}}
		\\
		& + C( \|{\bw}_{m+1}-{\bw}_{m}\|_{ L^2_{[0, T_{m+1}] } H_x^{\frac32+\frac{1}{10}\epsilon} }  + \|{\bw}_{m+1}-{\bw}_{m}\|_{ L^\infty_{[0, T_{m+1}] } H_x^{\frac12+\frac{1}{10}\epsilon} } )
		\\
		& +C\| {h}_{m+1}-{h}_{m}\|_{ L^2_{ [0, T_{m+1}] } H_x^{\frac52+\frac{1}{10}\epsilon} } .
	\end{split}
\end{equation}
Now let us estimate the right hand side of \eqref{qu28}. Note $(\bv_{m+1}, \rho_{m+1}, h_{m+1})$ and  $(\bv_{m}, \rho_{m}, h_{m})$ is the solution of \eqref{sq}. Let $\bU_{m}=(\bv_m, p(\rho_m), h_m)^\mathrm{T} $. Then $\bU_{m+1}-\bU_{m}$ satisfies
\begin{equation}\label{qu30}
	\begin{cases}
		& A^0(\bU_{m+1}) \partial_t ( \bU_{m+1}- \bU_{m}) + A^i(\bU_{m+1}) \partial_i ( \bU_{m+1}- \bU_{m})=\Pi_m,
		\\
		& ( \bU_{m+1}- \bU_{m} )|_{t=0}= \bU_{0(m+1)}- \bU_{0m},
	\end{cases}
\end{equation}
where
\begin{equation*}
	\Pi_m=-[A^0(\bU_{m+1})-A^0(\bU_m) ]\partial_t  \bU_m- [A^i(\bU_{m+1})-A^i(\bU_{m}) ]\partial_i  \bU_{m}.
\end{equation*}
Multiplying $\bU_{m+1}- \bU_{m} $ on \eqref{qu30} and integrating it on $\mathbb{R}^3$, we have
\begin{equation}\label{qu31}
	\begin{split}
		\frac{d}{dt} \| \bU_{m+1}- \bU_{m} \|^2_{L^2_x} \leq &  C\| (d \bU_{m+1}, d \bU_{m}) \|_{L^\infty_x}\| \bU_{m+1}-\bU_{m} \|_{L^2_x}.
	\end{split}
\end{equation}
Then Strichartz estimate \eqref{qu15} and Gronwall's inequality tell us
\begin{equation}\label{qu32}
	\begin{split}
		\| \bU_{m+1}- \bU_{m} \|^2_{L^\infty_{ [0, {T}_{m+1}] } L^2_x} \leq & C\| \bU_{0(m+1)}- \bU_{0m} \|^2_{L^2_x}
		\\
		\leq & C\|\bv_{0(m+1)}- \bv_{0m}, \rho_{0(m+1)}- \rho_{0m} \|^2_{L^2_x}
		\\
		\leq & C\{ 2^{-\frac52m} \mathbb{E}(0) \}^2.
	\end{split}
\end{equation}
By Lemma \ref{jh0} and \eqref{qu32}, then we get
\begin{equation}\label{qu33}
	\begin{split}
		& \|\bv_{m+1}-\bv_{m}, \rho_{m+1}-\rho_{m}, h_{m+1}-h_{m}\|_{L^\infty_{ [0, {T}_{m+1}] } L^2_x} \leq C\mathbb{E}(0) 2^{-\frac52m}.
	\end{split}
\end{equation}
Similarly, $\bw_{m+1} - \bw_m$ satisfies
 \begin{equation}\label{qu34}
   \begin{split}
   &\partial_t (\bw_{m+1}- \bw_{m}) + (\bv_{m+1} \cdot \nabla) (\bw_{m+1}- \bw_{m})
   \\
   =& -(\bv_{m+1}-\bv_{m}) \cdot \nabla \bw_{m}+ (\bw_{m+1}- \bw_{m})\cdot \nabla \bv_{m+1}
   \\
   & + \bw_{m} \cdot \nabla (\bv_{m+1}-\bv_{m})+ \bB_{m+1}-\bB_{m}.
 \end{split}
 \end{equation}
 Above, we set $\bB_{m}=(B^1_{m},B^2_{m},B^3_{m})$ and $B^i_{m}=\bar{\rho}^{\gamma-1} \mathrm{e}^{h_m+(\gamma-2)\rho_m}\epsilon^{iab}\partial_a \rho_m \partial_b h_m$. So we have
 \begin{equation}\label{qu35}
   \| \bB_{m+1}-\bB_{m} \|_{L^2_x} \lesssim \| \partial( \rho_{m+1}-\rho_{m})\|_{L^2_x} \| \partial h_m \|_{L^\infty_x}
                                          +\| \partial( h_{m+1}-h_{m})\|_{L^2_x} \| \partial \rho_m \|_{L^\infty_x}.
 \end{equation}
Multiplying with $\bw_{m+1}- \bw_{m}$ on \eqref{qu34}, then integrating it on $[0,T_{m+1}]\times \mathbb{R}^3$, and using \eqref{qu35}, we can obtain
 \begin{equation}\label{qu36}
 \begin{split}
   & \| \bw_{m+1}- \bw_{m} \|^2_{L^\infty_{[0,T_{m+1}]}L^2_x}
   \\ \leq  & C\| \bw_{0(m+1)}- \bw_{0m} \|^2_{L^2_x}
    +C\int^{T_{m+1}}_0 \| \bB_{m+1}-\bB_{m} \|_{L^2_x} \| \bw_{m+1}- \bw_{m} \|_{L^2_x} d\tau
   \\
   & + C\int^{T_{m+1}}_0  \| \partial \bv_{m+1}, \partial \rho_{m+1}\|_{L^\infty_x}\| \bw_{m+1}- \bw_{m} \|^2_{L^2_x}d\tau
   \\
   & + C\int^{T_{m+1}}_0 \| \bv_{m+1}-\bv_{m} \|_{L^2_x}\| \bw_{m+1}- \bw_{m} \|_{L^2_x}\|\partial \bw_{m} \|_{L^\infty_x}d\tau
   \\
   &+ C \int^{T_{m+1}}_0 \| \rho_{m+1}-\rho_{m} \|_{L^2_x}\| \bw_{m+1}- \bw_{m} \|_{L^2_x}\|\partial \bw_{m} \|_{L^\infty_x}d\tau
   \\
   &+ C \int^{T_{m+1}}_0 \| \rho_{m+1}-\rho_{m} \|_{L^2_x}\| \partial ( \rho_{m+1}- \rho_{m}) \|_{L^2_x}\|\bw_{m} \|^2_{L^\infty_x}d\tau
   \\
   &+ C \int^{T_{m+1}}_0  \| \partial (\rho_{m+1}- \rho_{m})\|_{L^2_x}\| \partial h_{m+1}, \partial h_{m}\|_{L^\infty_x} \| \bw_{m+1}- \bw_{m} \|_{L^2_x}d\tau
   \\
   &+ C \int^{T_{m+1}}_0  \| \partial (h_{m+1}- h_{m}) \|_{L^2_x}\| \partial \rho_{m+1}, \partial \rho_{m}\|_{L^\infty_x} \| \bw_{m+1}- \bw_{m} \|_{L^2_x}d\tau.
 \end{split}
 \end{equation}
Using \eqref{qu15}, \eqref{qu33}, \eqref{qu35}, \eqref{qu36} and $T_{m+1}=2^{-\frac{1}{3+\epsilon}(m+1)}$, and $\|\partial \bw_{m} \|_{L^\infty_{ [0, {T}_{m+1}] } L^\infty_x} \leq 2^m \mathbb{E}(0)$, we shall prove
 \begin{equation}\label{qu37}
 \begin{split}
   \|\bw_{m+1}-\bw_{m}\|_{L^\infty_{ [0, {T}_{m+1}] } L^2_x} \leq & C(1+\mathbb{E}^2(0)) 2^{-(\frac32+\epsilon)m}.
 \end{split}
 \end{equation}
On the other hand, we can derive
\begin{equation}\label{qu39}
	\partial_t (h_{m+1}-h_m)+ (\bv_{m+1} \cdot \nabla) ( h_{m+1}-h_m )=-( \bv_{m+1}-\bv_m )\cdot \nabla h_m.
\end{equation}
By \eqref{qu39} and it's energy estimates, we obtain
\begin{equation}\label{qu40}
	\begin{split}
		\| h_{m+1}- h_{m} \|^2_{L^\infty_{ [0, {T}_{m+1}] } L^2_x} \leq  & \| h_{0(m+1)}- h_{0m} \|^2_{L^2_x}
		+ C\int^{{T}_{m+1}}_0  \| \partial \bv_{m+1}\|_{L^\infty_x}\| h_{m+1}- h_{m} \|^2_{L^2_x}d\tau
		\\
		& + C\int^{{T}_{m+1}}_0 \| ( \bv_{m+1}-\bv_m )\|_{L^2_x} \|\partial h_m \|_{L^\infty_x}\| h_{m+1}- h_{m} \|_{L^2_x} d\tau
	\end{split}
\end{equation}
Using \eqref{qu15}, \eqref{qu33}, and $T_{m+1}=2^{-\frac{1}{3+\epsilon}(m+1)}$, we shall obtain
\begin{equation}\label{qu41}
	\begin{split}
	\| h_{m+1}- h_{m} \|^2_{L^\infty_{ [0, {T}_{m+1}] } L^2_x} \leq C(1+\mathbb{E}^2(0)) 2^{-(\frac52+\epsilon)m},
	\end{split}
\end{equation}
which is a stronger one than in \eqref{qu33} for $ h_{m+1}- h_{m} $. Combing \eqref{qu33}, \eqref{qu37}, and \eqref{qu41}, we have proved
\begin{equation}\label{qu42}
	\begin{split}
		\|\bv_{m+1}-\bv_{m}, \rho_{m+1}-\rho_{m}\|_{L^\infty_{ [0, {T}_{m+1}] } L^2_x} & \leq C(1+\mathbb{E}^2(0)) 2^{-\frac52m},
		\\
		\|h_{m+1}-h_{m}\|_{L^\infty_{ [0, {T}_{m+1}] } L^2_x} & \leq C(1+\mathbb{E}^2(0)) 2^{-(\frac52+\epsilon) m}.
		\\
		\|\bw_{m+1}-\bw_{m}\|_{L^\infty_{ [0, {T}_{m+1}] } L^2_x} & \leq C(1+\mathbb{E}^2(0)) 2^{-(\frac32+\epsilon) m}.
	\end{split}
\end{equation}
Therefore, by \eqref{qu42},  \eqref{qu28} yields
\begin{equation}\label{qu43}
	\begin{split}
		& \|d{\bv}_{m+1}-d{\bv}_{m}, d{\rho}_{m+1}-d{\rho}_{m},d{h}_{m+1}-d{h}_{m} \|_{L^2_{ [0, T_{m+1}] } L_x^\infty }
		\\
		\leq & C(1+\mathbb{E}^2(0)) 2^{-\frac{1}{2(3+\epsilon)}m} 2^{-\frac{8\epsilon}{10} m}.
	\end{split}
\end{equation}
Therefore, for $k < j$ and $k \leq m$, we get
\begin{equation}\label{qu44}
	\begin{split}
		& \|P_k( d{\bv}_{m+1}-d{\bv}_{m} ), P_k( d{\rho}_{m+1}-d{\rho}_{m}) ,P_k(d{h}_{m+1}-d{h}_{m}) \|_{L^2_{ [0, T_{m+1}] } L_x^\infty }
		\\
		\leq & C(1+\mathbb{E}^2(0)) 2^{-\frac{1}{2(3+\epsilon)}m}2^{-\frac{\epsilon}{10} k} 2^{-\frac{7\epsilon}{10} m}.
	\end{split}
\end{equation}
Then it follows
\begin{equation}\label{qu45}
	\begin{split}
		& \|P_k( d{\bv}_{m+1}-d{\bv}_{m} ), P_k( d{\rho}_{m+1}-d{\rho}_{m}) ,P_k(d{h}_{m+1}-d{h}_{m}) \|_{L^1_{ [0, T_{m+1}] } L_x^\infty }
		\\
		\leq & C(1+\mathbb{E}^2(0)) 2^{-\frac{1}{3+\epsilon}m}2^{-\frac{\epsilon}{10} k} 2^{-\frac{7\epsilon}{10} m}.
	\end{split}
\end{equation}
\subsubsection{Extending the solution from short time-interval $[0,T_{j}]$ to $[0,T_{N_1}]$}\label{keye} Our goal is to extend the time interval $I_1=[0,T_j]$ to $[0,T_{N_1}]$ based on \eqref{qu22}, \eqref{qu23}, \eqref{qu44}, \eqref{qu45} and Theorem \ref{vve}. Here $T_{N_1}=[\mathbb{E}(0)]^{-1}2^{- \frac{N_1}{3+\epsilon} }$. To be simple, we first set
\begin{equation}\label{qu46}
	I_1=[0,T_j]=[t_0,t_1], \quad \quad |I_1|=[\mathbb{E}(0)]^{-1}2^{- \frac{1}{3+\epsilon} j}.
\end{equation}
Recall \eqref{qu23} and \eqref{qu45}, we can see
\begin{equation}\label{qu47}
	\| P_{k} d \bv_j, P_{k} d \rho_j, P_{k} d h_j \|_{L^1_{[0,T_j]}L^\infty_x}
	\leq  C(1+\mathbb{E}^2(0)) 2^{-\frac{1}{3+\epsilon}j}2^{-\frac{\epsilon}{40} k} 2^{-\frac{\epsilon}{10} j}, \quad k \geq j,
\end{equation}
\begin{equation}\label{qu48}
	\begin{split}
		& \|P_k d({\rho}_{m+1}-{\rho}_{m}), P_k d({\bv}_{m+1}-{\bv}_{m}), P_k d(h_{m+1}-h_{m}) \|_{L^1_{[0,T_{m+1}]} L^\infty_x}
		\\
		\leq    & C(1+\mathbb{E}^2(0)) 2^{-\frac{1}{3+\epsilon}m}2^{-\frac{\epsilon}{40} k} 2^{-\frac{\epsilon}{10} m}, \quad k<j.
	\end{split}
\end{equation}
By frequency decomposition, we get
\begin{equation}\label{qu49}
	\begin{split}
		d \bv_j= & \textstyle{\sum}^{\infty}_{k=j} d \bv_j+ \textstyle{\sum}^{j-1}_{k=1}P_k d \bv_j
		\\
		=& P_{\geq j}d \bv_j+ \textstyle{\sum}^{j-1}_{k=1} \textstyle{\sum}_{m=k}^{j-1} P_k  (d\bv_{m+1}-d\bv_m)+ \textstyle{\sum}^{j-1}_{k=1}P_k d \bv_k .
	\end{split}
\end{equation}
Similarly, we also have
\begin{equation}\label{qu50}
	\begin{split}
		d \rho_j= & P_{\geq j}d \rho_j+ \textstyle{\sum}^{j-1}_{k=1} \textstyle{\sum}_{m=k}^{j-1} P_k  (d \rho_{m+1}-d \rho_m)+ \textstyle{\sum}^{j-1}_{k=1}P_k d \rho_k.
	\end{split}
\end{equation}
and
\begin{equation}\label{qu51}
	\begin{split}
		d h_j= & P_{\geq j}d h_j+ \textstyle{\sum}^{j-1}_{k=1} \textstyle{\sum}_{m=k}^{j-1} P_k  (d h_{m+1}-d h_m)+ \textstyle{\sum}^{j-1}_{k=1}P_k d h_k.
	\end{split}
\end{equation}
Considering \eqref{qu49}-\eqref{qu51} and \eqref{qu47}-\eqref{qu48}, we can see that each term in \eqref{qu49}-\eqref{qu51} is much different. For example, the component $P_1 d\rho_1$ in $\textstyle{\sum}^{j-1}_{k=1}P_k d\rho_k$ has time-interval $T_1=[\mathbb{E}(0)]^{-1}2^{-\frac{1}{3+\epsilon}}$, so we don't need to extend it any more. But the term $d \rho_j$ only exists on the time interval $[0, T_j]$ at present. So we need to treat \eqref{qu49}-\eqref{qu51}  in a precise way when we extend the time interval of $\rho_j$ from $[0,T_j]$ to $[0,T_{N_1}]$.

\textbf{Step 1: Extending $[0,T_j]$ to $[0,T_{j-1}] \ (j \geq N_1+1)$.} To start, referring \eqref{DTJ}, so we need to calculate $\mathbb{E}(T_j)$ for obtaining the length of a time-interval. Then we shall calculate $\|d \bv_j, d \rho_j, dh_j\|_{L^1_{[0,T_j]L^\infty_x}}$. Using \eqref{qu47} and \eqref{qu48}, we derive that
\begin{equation*}
	\begin{split}
		& \|d \bv_j, d \rho_j, dh_j\|_{L^1_{[0,T_j]} L^\infty_x }
		\\
		\leq & 	\|P_{\geq j}d\bv_j, P_{\geq j}d \rho_j, P_{\geq j}d h_j\|_{L^1_{[0,T_j]} L^\infty_x }  + \textstyle{\sum}^{j-1}_{k=1} \|P_k d\bv_k, P_k d\rho_k, , P_k dh_k\|_{L^1_{[0,T_j]}L^\infty_x}\\
		& + \textstyle{\sum}^{j-1}_{k=1} \textstyle{\sum}_{m=k}^{j-1}  \|P_k  (d\bv_{m+1}-d\bv_m), P_k  (d\rho_{m+1}-d\rho_m), P_k  (dh_{m+1}-dh_m)\|_{L^1_{[0,T_j]}L^\infty_x}
		\\
		\leq & 	\|P_{\geq j}d\bv_j, P_{\geq j}d \rho_j, P_{\geq j}d h_j\|_{L^1_{[0,T_j]}L^\infty_x} + \textstyle{\sum}^{j-1}_{k=1} \|P_k d\bv_k, P_k d\rho_k, P_k dh_k\|_{L^1_{[0,T_k]}L^\infty_x}\\
		& + \textstyle{\sum}^{j-1}_{k=1} \textstyle{\sum}_{m=k}^{j-1}\| P_k  (d\bv_{m+1}-d\bv_m), P_k  (d\rho_{m+1}-d\rho_m), P_k  (dh_{m+1}-dh_m)\|_{L^1_{[0,T_{m+1}]}L^\infty_x}
		\\
		\leq & C(1+\mathbb{E}^2(0))  \textstyle{\sum}_{k=j}^{\infty} 2^{-\frac{1}{3+\epsilon}j}2^{-\frac{\epsilon}{40} k} 2^{-\frac{\epsilon}{10} j}
		\\
		& + C(1+\mathbb{E}^2(0)) \textstyle{\sum}^{j-1}_{k=1} 2^{-\frac{1}{3+\epsilon}k}2^{-\frac{\epsilon}{40} k} 2^{-\frac{\epsilon}{10} k}
		\\
		& +  C(1+\mathbb{E}^2(0)) \textstyle{\sum}^{j-1}_{k=1} \textstyle{\sum}_{m=k}^{j-1} 2^{-\frac{1}{3+\epsilon}m}2^{-\frac{\epsilon}{40} k} 2^{-\frac{\epsilon}{10} m}
		\\
		\leq & C(1+\mathbb{E}^2(0))[\frac13(1-2^{-\frac{\epsilon}{40}})]^{-2}.
	\end{split}
\end{equation*}
So we get
\begin{equation}
	\begin{split}\label{qu52}
		\|d \bv_j, d \rho_j, dh_j\|_{L^1_{[0,T_j]}L^\infty_x}
		\leq  C(1+\mathbb{E}^2(0))[\frac13(1-2^{-\frac{\epsilon}{40}})]^{-2}.
	\end{split}
\end{equation}
By \eqref{qu52} and Theorem \ref{vve}, we have
\begin{equation}\label{qu53}
	\begin{split}
		\mathbb{E}(T_{j}) \leq C(\mathbb{E}(0)+\mathbb{E}^2(0) ) \exp\{C(1+\mathbb{E}^2(0))[\frac13(1-2^{-\frac{\epsilon}{40}})]^{-2} \}=\mathbb{E}_*.
	\end{split}
\end{equation}
Above, $\mathbb{E}_*$ is stated in \eqref{qu0q}. Starting at the time $T_j$, seeing \eqref{qu03} and \eqref{qu53}, we shall get an extending time-interval of $(\bv,\rho_j,h_j,\bw_j)$ with a length of $\mathbb{E}_*^{-1}2^{-\frac{1}{3+\epsilon} j}$. But, if $T^*_j + \mathbb{E}_*^{-1}2^{-\frac{1}{3+\epsilon} j} \geq T_{j-1}$, so we have finished this step. Else, we need to extend it again.

$\mathit{Case 1: T_j + \mathbb{E}_*^{-1}2^{-\frac{1}{3+\epsilon} j} \geq T_{j-1} }$, then we get a new interval
\begin{equation}\label{qu54}
	I_2=[T_j, T_{j-1}], \quad |I_2|= (2^{\frac{1}{3+\epsilon}}-1) \mathbb{E}(0)^{-1} 2^{-\frac{1}{3+\epsilon} j}.
\end{equation}
Moreover, referring \eqref{qu47} and \eqref{qu48}, we deduce that
\begin{equation}\label{qu55}
	\| P_{k} d \bv_j, P_{k} d \rho_j, P_{k} d h_j \|_{L^1_{[T_j, T_{j-1}]}L^\infty_x}
	\leq  C  (1+\mathbb{E}_*^2) 2^{-\frac{1}{3+\epsilon}j}2^{-\frac{\epsilon}{40} k} 2^{-\frac{\epsilon}{10} j}, \quad k \geq j,
\end{equation}
\begin{equation}\label{qu56}
	\begin{split}
		& \|P_k (d{\rho}_{j}-d{\rho}_{j-1}), P_k (d{\bv}_{j}-d{\bv}_{j-1}, P_k (d{h}_{j}-d{h}_{j-1}) \|_{L^1_{[T_j, T_{j-1}]} L^\infty_x}
		\\
		\leq    & C  (1+\mathbb{E}_*^2) 2^{-\frac{1}{3+\epsilon}(j-1)}2^{-\frac{\epsilon}{40} k} 2^{-\frac{\epsilon}{10} (j-1)}, \quad k\leq j-1.
	\end{split}
\end{equation}
Using \eqref{qu04} and $j \geq N_0+1$, \eqref{qu55} and \eqref{qu56} yields
\begin{equation}\label{qu57}
	\| P_{k} d \bv_j, P_{k} d \rho_j, P_{k} d h_j \|_{L^1_{[T_j, T_{j-1}]}L^\infty_x}
	\leq  C  (1+\mathbb{E}^2(0)) 2^{-\frac{1}{3+\epsilon}j}2^{-\frac{\epsilon}{40} k} 2^{-\frac{3\epsilon}{40} j}, \quad k \geq j,
\end{equation}
\begin{equation}\label{qu58}
\begin{split}
	& \|P_k (d{\rho}_{j}-d{\rho}_{j-1}), P_k (d{\bv}_{j}-d{\bv}_{j-1}, P_k (d{h}_{j}-d{h}_{j-1}) \|_{L^1_{[T_j, T_{j-1}]} L^\infty_x}
	\\
	\leq    & C   (1+\mathbb{E}^2(0)) 2^{-\frac{1}{3+\epsilon}(j-1)}2^{-\frac{\epsilon}{40} k} 2^{-\frac{3\epsilon}{40} (j-1)}, \quad k\leq j-1.
\end{split}	
\end{equation}
Therefore, we could obtain
\begin{equation}\label{qu59}
	\begin{split}
		& \|d \bv_j, d\rho_j, dh_j\|_{L^1_{[0,T_{j-1}]} L^\infty_x}
		\\
		\leq & 	\|P_{\geq j}d\bv_j, P_{\geq j}d \rho_j, P_{\geq j}d h_j\|_{L^1_{[0,T_{j-1}]} L^\infty_x}  + \textstyle{\sum}^{j-1}_{k=1} \|P_k d\bv_k, P_k d\rho_k, P_k dh_k\|_{L^1_{[0,T_{j-1}]} L^\infty_x}
		\\
		+ & \textstyle{\sum}^{j-1}_{k=1} \|P_k  (d\bv_{j}-d\bv_{j-1}), P_k  (d\rho_{j}-d\rho_{j-1}), P_k  (dh_{j}-dh_{j-1})\|_{L^1_{[0,T_{j-1}]} L^\infty_x}.
		\\
		& + \textstyle{\sum}^{j-2}_{k=1} \textstyle{\sum}_{m=k}^{j-2}  \|P_k  (d\bv_{m+1}-d\bv_m), P_k  (d\rho_{m+1}-d\rho_m), P_k  (dh_{m+1}-dh_m)\|_{L^1_{[0,T_{j-1}]} L^\infty_x}.
	\end{split}
\end{equation}
Due to \eqref{qu47} and \eqref{qu57}, it yields
\begin{equation}\label{qu60}
	\begin{split}
		\|P_{\geq j}d\bv_j, P_{\geq j}d\rho_j, P_{\geq j}dh_j\|_{L^1_{[0,T_{j-1}]} L^\infty_x}
		\leq & C   (1+\mathbb{E}^2(0)) \textstyle{\sum}^{\infty}_{k=j}2^{-\frac{1}{3+\epsilon}j}2^{-\frac{\epsilon}{40} k} ( 2^{-\frac{\epsilon}{10} j}+2^{-\frac{3\epsilon}{40} j})
		\\
		\leq & C   (1+\mathbb{E}^2(0)) \textstyle{\sum}^{\infty}_{k=j}2^{-\frac{1}{3+\epsilon}j}2^{-\frac{\epsilon}{40} k} 2^{-\frac{3\epsilon}{40} j} \times 2.
	\end{split}
\end{equation}
Due to \eqref{qu48} and \eqref{qu58}, it yields
\begin{equation}\label{qu61}
	\begin{split}
		& \textstyle{\sum}^{j-1}_{k=1} \|P_k  (d\bv_{j}-d\bv_{j-1}), P_k  (d\rho_{j}-d\rho_{j-1}), P_k  (dh_{j}-dh_{j-1})\|_{L^1_{[0,T^*_{j-1}]} L^\infty_x}
		\\
		\leq & C   (1+\mathbb{E}^2(0))  \textstyle{\sum}^{j-1}_{k=1} 2^{-\frac{1}{3+\epsilon}(j-1)}2^{-\frac{\epsilon}{40} k} ( 2^{-\frac{\epsilon}{10} (j-1)}+2^{-\frac{3\epsilon}{40} (j-1)})
		\\
		\leq & C   (1+\mathbb{E}^2(0)) \textstyle{\sum}^{j-1}_{k=1} 2^{-\frac{1}{3+\epsilon}(j-1)}2^{-\frac{\epsilon}{40} k} 2^{-\frac{3\epsilon}{40} (j-1)} \times 2.
	\end{split}
\end{equation}
Inserting \eqref{qu60}-\eqref{qu61} into \eqref{qu59}, we derive that
\begin{equation}\label{qu62}
	\begin{split}
		\|d \bv_j, d \rho_j, d h_j\|_{L^1_{[0,T_{j-1}]} L^\infty_x}
		\leq & 	C(1+\mathbb{E}^2(0))  \textstyle{\sum}_{k=j}^{\infty} 2^{-\frac{1}{3+\epsilon}j}2^{-\frac{\epsilon}{40} k} 2^{-\frac{3\epsilon}{40} j} \times 2
		\\
		& + C   (1+\mathbb{E}^2(0)) \textstyle{\sum}^{j-1}_{k=1} 2^{-\frac{1}{3+\epsilon}(j-1)}2^{-\frac{\epsilon}{40} k} 2^{-\frac{3\epsilon}{40} (j-1)} \times 2
		\\
		& + C(1+\mathbb{E}^2(0)) \textstyle{\sum}^{j-1}_{k=1} 2^{-\frac{1}{3+\epsilon}k}2^{-\frac{\epsilon}{40} k} 2^{-\frac{\epsilon}{10} k}
		\\
		& +  C(1+\mathbb{E}^2(0)) \textstyle{\sum}^{j-2}_{k=1} \textstyle{\sum}_{m=k}^{j-2} 2^{-\frac{1}{3+\epsilon}m}2^{-\frac{\epsilon}{40} k} 2^{-\frac{\epsilon}{10} m}
		\\
		\leq & 	C(1+\mathbb{E}^2(0)) [\frac13(1-2^{-\frac{\epsilon}{40}})]^{-2}.
	\end{split}
\end{equation}
By \eqref{qu62} and Theorem \ref{vve}, we also prove
\begin{equation}\label{qu63}
	\begin{split}
		\mathbb{E}(T_{j-1}) \leq \mathbb{E}_*.
	\end{split}
\end{equation}
$\mathit{Case 2: T_j + \mathbb{E}_*^{-1}2^{-\frac{1}{3+\epsilon} j} < T_{j-1} }$. In this situation, we will record
\begin{equation*}
	I_2=[T_j, t_2], \quad |I_2| = \mathbb{E}_*^{-1}2^{-\frac{1}{3+\epsilon} j}.
\end{equation*}
Referring \eqref{qu55} and \eqref{qu56}, we also deduce that
\begin{equation}\label{qu630}
	\| P_{k} d \bv_j, P_{k} d \rho_j, P_{k} d h_j \|_{L^1_{[T_j, T_{j-1}]}L^\infty_x}
	\leq  C  (1+\mathbb{E}_*^2) 2^{-\frac{1}{3+\epsilon}j}2^{-\frac{\epsilon}{40} k} 2^{-\frac{\epsilon}{10} j}, \quad k \geq j,
\end{equation}
and
\begin{equation}\label{qu631}
	\begin{split}
		& \|P_k (d{\rho}_{j}-d{\rho}_{j-1}), P_k (d{\bv}_{j}-d{\bv}_{j-1}, P_k (d{h}_{j}-d{h}_{j-1}) \|_{L^1_{[T_j, T_{j-1}]} L^\infty_x}
		\\
		\leq    & C  (1+\mathbb{E}_*^2) 2^{-\frac{1}{3+\epsilon}(j-1)}2^{-\frac{\epsilon}{40} k} 2^{-\frac{\epsilon}{10} (j-1)}, \quad k\leq j-1.
	\end{split}
\end{equation}
Using \eqref{qu04} and $j \geq N_1+1$,  \eqref{qu630} and \eqref{qu631} yields
\begin{equation}\label{qu64}
	\| P_{k} d \bv_j, P_{k} d \rho_j, dh_j \|_{L^1_{I_2}L^\infty_x}
	\leq  C  (1+\mathbb{E}^2(0)) 2^{-\frac{1}{3+\epsilon}j}2^{-\frac{\epsilon}{40} k} 2^{-\frac{3\epsilon}{40} j}, \quad k \geq j,
\end{equation}
and
\begin{equation}\label{qu65}
	\begin{split}
		& \|P_k (d{\rho}_{j}-d{\rho}_{j-1}), P_k  (d{\bv}_{j}-d{\bv}_{j-1}) \|_{L^1_{I_2} L^\infty_x}
		\\
	\leq    & C  (1+\mathbb{E}^2(0)) 2^{-\frac{1}{3+\epsilon}(j-1)}2^{-\frac{\epsilon}{40} k} 2^{-\frac{3\epsilon}{40} (j-1)}, \quad k<j.
	\end{split}
\end{equation}
Similarly, we can also get
\begin{equation*}
	\begin{split}
		& \|d\bv_j, d\rho_j,dh_j\|_{L^1_{ I_1 \cup I_2 } L^\infty_x }
		\\
		\leq & 	\|P_{\geq j}d\bv_j, P_{\geq j}d \rho_j, P_{\geq j}d h_j\|_{L^1_{I_1 \cup I_2} L^\infty_x }   + \textstyle{\sum}^{j-1}_{k=1} \|P_k d\bv_k, P_k d\rho_k, P_k dh_k\|_{L^1_{[0,T^*_k]} L^\infty_x }
		\\
		+ & \textstyle{\sum}^{j-1}_{k=1} \|P_k  (d\bv_{j}-d\bv_{j-1}), P_k  (d\rho_{j}-d\rho_{j-1}), P_k  (dh_{j}-dh_{j-1})\|_{L^1_{I_1 \cup I_2 } L^\infty_x } .
		\\
		& + \textstyle{\sum}^{j-2}_{k=1} \textstyle{\sum}_{m=k}^{j-2}  \|P_k  (d\bv_{m+1}-d\bv_m), P_k  (d\rho_{m+1}-d\rho_m), P_k  (dh_{m+1}-dh_m)\|_{L^1_{I_1 \cup I_2} L^\infty_x } .
	\end{split}
\end{equation*}
Noting $ I_1 \cup I_2 \subseteq T_k$ when $k \leq j-1$, then it follows that
\begin{equation*}
	\begin{split}
		& \|d \bv_j, d \rho_j, dh_j\|_{L^1_{ I_1 \cup I_2 } L^\infty_x }
		\\
		\leq & 	\|P_{\geq j}d\bv_j, P_{\geq j}d \rho_j, P_{\geq j}d h_j\|_{L^1_{I_1 \cup I_2} L^\infty_x }   + \textstyle{\sum}^{j-1}_{k=1} \|P_k d\bv_k, P_k d\rho_k, P_k dh_k\|_{L^1_{I_1 \cup I_2 } L^\infty_x }
		\\
		+ & \textstyle{\sum}^{j-1}_{k=1} \|P_k  (d \bv_{j}-d \bv_{j-1}), P_k  (d\rho_{j}-d\rho_{j-1}), P_k  (dh_{j}-dh_{j-1})\|_{L^1_{I_1 \cup I_2 } L^\infty_x } 
		\\
		& + \textstyle{\sum}^{j-2}_{k=1} \textstyle{\sum}_{m=k}^{j-2}  \|P_k  (d\bv_{m+1}-d\bv_m), P_k  (d\rho_{m+1}-d\rho_m), P_k  (dh_{m+1}-dh_m)\|_{L^1_{[0,T_{m+1}]} L^\infty_x }.
	\end{split}
\end{equation*}
Inserting \eqref{qu47}, \eqref{qu48} and \eqref{qu64} and \eqref{qu65}, we have
\begin{equation}\label{qu66}
	\begin{split}		
		& \|d \bv_j, d \rho_j, d h_j\|_{L^1_{ I_1 \cup I_2 } L^\infty_x }
		\\
		\leq & 	C(1+\mathbb{E}^2(0))  \textstyle{\sum}_{k=j}^{\infty} 2^{-\frac{1}{3+\epsilon}j}2^{-\frac{\epsilon}{40} k} 2^{-\frac{3\epsilon}{40} j} \times 2
		\\
		& + C   (1+\mathbb{E}^2(0)) \textstyle{\sum}^{j-1}_{k=1} 2^{-\frac{1}{3+\epsilon}(j-1)}2^{-\frac{\epsilon}{40} k} 2^{-\frac{3\epsilon}{40} (j-1)} \times 2
		\\
		& + C(1+\mathbb{E}^2(0)) \textstyle{\sum}^{j-1}_{k=1} 2^{-\frac{1}{3+\epsilon}k}2^{-\frac{\epsilon}{40} k} 2^{-\frac{\epsilon}{10} k}
		\\
		& +  C(1+\mathbb{E}^2(0)) \textstyle{\sum}^{j-2}_{k=1} \textstyle{\sum}_{m=k}^{j-2} 2^{-\frac{1}{3+\epsilon}(m+1)}2^{-\frac{\epsilon}{40} k} 2^{-\frac{\epsilon}{10} (m+1)}
		\\
		\leq & 	C(1+\mathbb{E}^2(0)) [\frac13(1-2^{-\frac{\epsilon}{40}})]^{-2}.
	\end{split}
\end{equation}
By \eqref{qu66} and Theorem \ref{vve}, we also prove
\begin{equation}\label{qu67}
	\begin{split}
		E(t_2) \leq \mathbb{E}_*.
	\end{split}
\end{equation}
Therefore, we can repeat the process with a length with $\mathbb{E}_*^{-1}2^{-\frac{1}{3+\epsilon} j}$ till extending it to $T_{j-1}$. Moreover, on every new time-interval with $\mathbb{E}_*^{-1}2^{-\frac{1}{3+\epsilon} j}$ \eqref{qu47} and \eqref{qu48} hold. Set
\begin{equation}\label{Times1}
	Y_1= \frac{T_{j-1}-T_j}{\mathbb{E}_*^{-1}2^{-\frac{1}{3+\epsilon} j}}= (2^{ \frac{1}{3+\epsilon} }-1) \mathbb{E}^{-1}(0) \mathbb{E}_*.
\end{equation}
So we need a maximum of $X_1$-times to reach the time $T_{j-1}$ both in case 2(it's also adapt to case 1 for calculating the times). As a result, we shall calculate
\begin{equation*}
	\begin{split}
		& \|d \bv_j, d \rho_j, d h_j\|_{L^1_{[0,T_{j-1}]} L^\infty_x}
		\\
		\leq & 	\|P_{\geq j}d\bv_j, P_{\geq j}d \rho_j, P_{\geq j}d h_j\|_{L^1_{[0,T_{j-1}]} L^\infty_x}
		+ \textstyle{\sum}^{j-1}_{k=1} \|P_k d\bv_k, P_k d\rho_k, P_k dh_k\|_{L^1_{[0,T_{k}]} L^\infty_x}
		\\
		+ & \textstyle{\sum}^{j-1}_{k=1} \|P_k  (d\bv_{j}-d\bv_{j-1}), P_k  (d\rho_{j}-d\rho_{j-1}), P_k  (dh_{j}-dh_{j-1})\|_{L^1_{[0,T_{j-1}]} L^\infty_x}
		\\
		& + \textstyle{\sum}^{j-2}_{k=1} \textstyle{\sum}_{m=k}^{j-2}  \|P_k  (d\bv_{m+1}-d\bv_m), P_k  (d\rho_{m+1}-d\rho_m), P_k  (dh_{m+1}-dh_m)\|_{L^1_{[0,T_{m+1}]} L^\infty_x}.
	\end{split}
\end{equation*}	
Due to \eqref{qu47}, \eqref{qu48} and \eqref{qu64} and \eqref{qu65}, and \eqref{qu04}, we obtain
\begin{equation}\label{qu68}
	\begin{split}
		& \|d \bv_j, d \rho_j, dh_j\|_{L^1_{[0,T_{j-1}]} L^\infty_x}
		\\
		\leq & C(1+\mathbb{E}^2(0))  \textstyle{\sum}_{k=j}^{\infty} 2^{-\frac{1}{3+\epsilon}j}2^{-\frac{\epsilon}{40} k} 2^{-\frac{3\epsilon}{40} j} \times \{ (2^{ \frac{1}{3+\epsilon} }-1) \mathbb{E}^{-1}(0) \mathbb{E}_* +1 \}
		\\
		& + C  (1+\mathbb{E}^2(0)) \textstyle{\sum}^{j-1}_{k=1} 2^{-\frac{1}{3+\epsilon}(j-1)}2^{-\frac{\epsilon}{40} k} 2^{-\frac{3\epsilon}{40} (j-1)} \times \{ (2^{ \frac{1}{3+\epsilon} }-1) \mathbb{E}^{-1}(0) \mathbb{E}_* +1 \}
		\\
		& + C (1+\mathbb{E}^2(0)) \textstyle{\sum}_{j-1}^{k=1} 2^{-\frac{1}{3+\epsilon}k}2^{-\frac{\epsilon}{40} k} 2^{-\frac{\epsilon}{10} k}
		\\
		& +  C(1+\mathbb{E}^2(0)) \textstyle{\sum}^{j-2}_{k=1} \textstyle{\sum}_{m=k}^{j-2} 2^{-\frac{1}{3+\epsilon}m}2^{-\frac{\epsilon}{40} k} 2^{-\frac{\epsilon}{10} m}
		\\
		\leq & 	C(1+\mathbb{E}^2(0)) [\frac13(1-2^{-\frac{\epsilon}{40}})]^{-2}.
	\end{split}
\end{equation}
Therefore, by \eqref{qu68} and Theorem \ref{vve}, we can see that
\begin{equation}\label{qu69}
	\begin{split}
		\mathbb{E}(T_{j-1}) \leq \mathbb{E}_*.
	\end{split}
\end{equation}
At this moment, both in case 1 or case 2, seeing from \eqref{qu68}, \eqref{qu69}, \eqref{qu62}, \eqref{qu63}, through a maximum of $X_1=(2^{ \frac{1}{3+\epsilon} }-1) \mathbb{E}^{-1}(0) \mathbb{E}_*$ times with each length $\mathbb{E}_*^{-1}2^{-\frac{1}{3+\epsilon} j}$ or $(2^{\frac{1}{3+\epsilon}}-1)\mathbb{E}(0)^{-1}2^{-\frac{1}{3+\epsilon}j}$, we shall extend the solutions $(\bv_j,\rho_j, h_j,\bw_j)$ from $[0,T_j]$ to $[0,,T_{j-1}]$, and
\begin{equation}\label{qu70}
	\begin{split}
	 \mathbb{E}(T_{j-1}) \leq & \mathbb{E}_*,
		\\
	 \|d \bv_j, d \rho_j, dh_j\|_{L^1_{[0,T_{j-1}]} L^\infty_x}
	\leq  	& C(1+\mathbb{E}^2(0)) [\frac13(1-2^{-\frac{\epsilon}{40}})]^{-2}.
	\end{split}
\end{equation}
As a result, we also have extended the solutions $(\bv_m,\rho_m,h_m,\bw_m)$ ($m\in[N_0, j-1]$) from $[0,T_m]$ to $[0,T_{m-1}]$. Moreover, referring \eqref{qu47} and \eqref{qu48}, \eqref{qu55}, \eqref{qu56}, and \eqref{Times1}, we get
\begin{equation}\label{qu71}
	\begin{split}
		& \| P_{k} d \bv_m, P_{k} d \rho_m , P_{k} d h_m\|_{L^1_{[0,T_{m-1}]}L^\infty_x}
		\\
		\leq  & \| P_{k} d\bv_m, P_{k} d \rho_m, P_{k} d h_m \|_{L^1_{[0,T_{m}]}L^\infty_x}
		+\| P_{k} d \bv_m, P_{k} d \rho_m, P_{k} d h_m \|_{L^1_{[T_{m},T_{m-1}]}L^\infty_x}
		\\
		\leq  & C(1+\mathbb{E}^2(0)) 2^{-\frac{1}{3+\epsilon}m}2^{-\frac{\epsilon}{40} k} 2^{-\frac{\epsilon}{10} m}
		\\
		& \ \ + C(1+\mathbb{E}^2_*) 2^{-\frac{1}{3+\epsilon}j}2^{-\frac{\epsilon}{40} k} 2^{-\frac{\epsilon}{10} j} \times (2^{ \frac{1}{3+\epsilon} }-1) \mathbb{E}^{-1}(0) \mathbb{E}_*.
	\end{split}	
\end{equation}
Using \eqref{qu04}, for $k \geq m$ and $m\geq N_0+1$,  \eqref{qu71} yields
\begin{equation}\label{qu72}
	\begin{split}
		\| P_{k} d \bv_m, P_{k} d \rho_m, P_{k} d h_m \|_{L^1_{[0,T_{m-1}]}L^\infty_x}
		\leq & C(1+\mathbb{E}^2(0)) 2^{-\frac{1}{3+\epsilon}j}2^{-\frac{\epsilon}{40} k} 2^{-\frac{3\epsilon}{40} j} \times 2^{ \frac{1}{3+\epsilon} }.
	\end{split}	
\end{equation}
Similarly, if $m\geq N_0+1$, the following estimate
\begin{equation}\label{qu73}
	\begin{split}
		& \|P_k (d{\rho}_{m}-d{\rho}_{m-1}), P_k (d{\bv}_{m}-d{\bv}_{m-1}), P_k (d{h}_{m}-d{h}_{m-1}) \|_{L^1_{[0,T_{m-1}]} L^\infty_x}
		\\
		\leq    &  \|P_k (d{\rho}_{m}-d{\rho}_{m-1}), P_k  (d{\bv}_{m}-d{\bv}_{m-1}), P_k  (d{h}_{m}-d{h}_{m-1}) \|_{L^1_{[0,T_{m}]} L^\infty_x}
		\\
		& +\|P_k (d {\rho}_{m}-d {\rho}_{m-1}), P_k (d {\bv}_{m}-d {\bv}_{m-1}), P_k (d {h}_{m}-d {h}_{m-1}) \|_{L^1_{[T_m,T_{m-1}]} L^\infty_x}
		\\
		\leq & C(1+\mathbb{E}^2(0)) 2^{-\frac{1}{3+\epsilon}(m-1)}2^{-\frac{\epsilon}{40} k} 2^{-\frac{\epsilon}{10} (m-1)}
		\\
		& \ \ + C(1+\mathbb{E}^2_*) 2^{-\frac{1}{3+\epsilon}(m-1)}2^{-\frac{\epsilon}{40} k} 2^{-\frac{\epsilon}{10} (m-1)} \times (2^{ \frac{1}{3+\epsilon} }-1) \mathbb{E}^{-1}(0) \mathbb{E}_*,
		\\
		\leq & C(1+\mathbb{E}^2(0)) 2^{-\frac{1}{3+\epsilon}(m-1)}2^{-\frac{\epsilon}{40} k} 2^{-\frac{3\epsilon}{40} (m-1)}\times 2^{\frac{1}{3+\epsilon}},\quad k<j,
	\end{split}	
\end{equation}
hold. Above, we also use the condition \eqref{qu04}.

\textbf{Step 2: Extending time interval $[0,T_j]$ to $[0,T_{N_1}]$}. Based the above analysis in Step 1, we can give a induction to achieve our goal. We assume the solutions $(\bv_j, \rho_j, h_j, \bw_j)$ can be extended from $[0,T_j]$ to $[0,T_{j-l}]$ through a maximal $Y_l$ times and
\begin{equation}\label{qu74}
	\begin{split}
		Y_l=& \frac{T_{j-l}- T_{j}}{\mathbb{E}^{-1}_* 2^{-\frac{1}{3+\epsilon} j}}
		\\
		=& \frac{\mathbb{E}(0)^{-1}( 2^{-\frac{1}{3+\epsilon}(j-l)} - 2^{-\frac{1}{3+\epsilon}j} )}{\mathbb{E}^{-1}_* 2^{-\frac{1}{3+\epsilon} j}}
		\\
		=& \frac{\mathbb{E}_*}{\mathbb{E}(0)} (2^{\frac{1}{3+\epsilon} l}-1).
	\end{split}	
\end{equation}
Moreover, the following bounds
\begin{equation}\label{qu75}
	\begin{split}
		\| P_{k} d \bv_j, P_{k} d \rho_j, P_{k} d h_j \|_{L^1_{[0,T_{j-l}]}L^\infty_x}
		\leq & C(1+\mathbb{E}^2(0)) 2^{-\frac{1}{3+\epsilon}j}2^{-\frac{\epsilon}{40} k} 2^{-\frac{3\epsilon}{40}j}\times 2^{\frac{1}{3+\epsilon}l}, \quad k \geq m,
	\end{split}	
\end{equation}
and
\begin{equation}\label{qu76}
	\begin{split}
		& \|P_k (d{\rho}_{m+1}-d{\rho}_{m}), P_k (d{\bv}_{m+1}-d{\bv}_{m}) , P_k (d{h}_{m+1}-d{h}_{m})\|_{L^1_{[0,T_{m-l}]} L^\infty_x}
		\\
		\leq  & C(1+\mathbb{E}^2(0)) 2^{-\frac{1}{3+\epsilon}m}2^{-\frac{\epsilon}{40} k} 2^{-\frac{3\epsilon}{40}m}\times 2^{\frac{1}{3+\epsilon}l}, \quad k<j,
	\end{split}	
\end{equation}
and
\begin{equation}\label{qu77}
	\|d \bv_j, d \rho_j, dh_j\|_{L^1_{[0,T_{j-l}]} L^\infty_x}
	\leq  	C(1+\mathbb{E}^2(0)) [\frac13(1-2^{-\frac{\delta_{\epsilon}}{40}})]^{-2},
\end{equation}
and
\begin{equation}\label{qu78}
	\mathbb{E}(T_{j-l}) \leq \mathbb{E}_*.
\end{equation}
In our step 1, seeing from  \eqref{Times1}, \eqref{qu70}, \eqref{qu72}, and \eqref{qu73}, then \eqref{qu74}-\eqref{qu78} hold when $l=1$. Next, we will prove \eqref{qu74}-\eqref{qu78} if we replace $l$ to $l+1$. If we want to extend the solution from $[0,T_{j-l}]$ to $[0,T_{j-(l+1)}]$. As a result, we have
\begin{equation}\label{qu79}
	j-l\geq N_1+1.
\end{equation}
Else, the solution $(\bv_j,\rho_j,h_j,\bw_j)$ has existed on $[0,T_{N_1}]$. so we don't need to extend it if $j-l\leq N_1$. In the following, we are extending the solution starting at the time $T_{j-l}$ based on assumptions \eqref{qu74}-\eqref{qu78}.

Seeing \eqref{qu03} and \eqref{qu78}, we shall get an extending time-interval of $(\bv,\rho_j,h_j,\bw_j)$ with a length of $(\mathbb{E}_*)^{-1}2^{-\frac{1}{3+\epsilon} j}$. So we can go to the case 2 in step 1, and the length every new time-interval is $(\mathbb{E}_*)^{-1}2^{-\frac{1}{3+\epsilon} j}$. Therefore, we need to extend it with $Y$-times to the time-interval $[0,T_{j-(l+1)}]$, where
\begin{equation}\label{qu80}
	Y= \frac{T_{j-(l+1)}-T_{j-l}}{(\mathbb{E}_*)^{-1}2^{-\frac{1}{3+\epsilon} j}}= 2^{\frac{1}{3+\epsilon} l}(2^{\frac{1}{3+\epsilon}}-1)\frac{\mathbb{E}_*}{\mathbb{E}(0)} .
\end{equation}
As a result, we extend the solutions with $Y_l+Y$-times from $[0,T_j]$ to $[0,T_{j-(l+1)}]$. Calculate
\begin{equation*}
	Y_l+Y= (2^{\frac{1}{3+\epsilon}(l+1)}-1)\frac{\mathbb{E}_*}{\mathbb{E}(0)}.
\end{equation*}
Then we shall set
\begin{equation}\label{qu81}
	Y_{l+1}=(2^{\frac{1}{3+\epsilon}(l+1)}-1)\frac{\mathbb{E}_*}{\mathbb{E}(0)}.
\end{equation}
Moreover, for $k\geq j$, we have
\begin{equation}\label{qu82}
	\begin{split}
		\| P_{k} d \bv_j, P_{k} d\rho_j, P_{k} dh_j \|_{L^1_{[0, T_{j-(l+1)}]}L^\infty_x}\leq &\| P_{k} d \bv_j, P_{k} d\rho_j, P_{k} dh_j \|_{L^1_{[0,T_{j-l}]}L^\infty_x}
		\\
		&+	\| P_{k} d \bv_j, P_{k} d\rho_j, P_{k} dh_j \|_{L^1_{[T_{j-l}, T_{j-(l+1)}]}L^\infty_x}.
	\end{split}
\end{equation}
Using \eqref{qu630} and \eqref{qu631}
\begin{equation}\label{qu83}
	\begin{split}
		& \| P_{k} d \bv_j, P_{k} d\rho_j , P_{k} dh_j \|_{L^1_{[T^*_{j-l}, T^*_{j-(l+1)}]}L^\infty_x}
		\\
		\leq   & C(1+\mathbb{E}_*^2) 2^{-\frac{1}{3+\epsilon}m}2^{-\frac{\epsilon}{40} k} 2^{-\frac{\epsilon}{10}m} \times 2^{\frac{1}{3+\epsilon} l}(2^{\frac{1}{3+\epsilon}}-1)\frac{\mathbb{E}_*}{\mathbb{E}(0)}, \quad k\geq j.
	\end{split}
\end{equation}
Due to \eqref{qu04}, we can see
\begin{equation*}
	\begin{split}
		(1+\mathbb{E}^2(0)) (1+\mathbb{E}_*^2) \frac{\mathbb{E}_*}{\mathbb{E}(0)} 2^{-\frac{\epsilon}{40} N_0} \leq 1.
	\end{split}
\end{equation*}
Hence \eqref{qu83} yields
\begin{equation}\label{qu84}
	\begin{split}
		& \| P_{k} d \bv_j, P_{k} d\rho_j, P_{k} dh_j \|_{L^1_{[T_{j-l}, T_{j-(l+1)}]}L^\infty_x}
		\\
		\leq   & C(1+\mathbb{E}^2(0)) 2^{-\frac{1}{3+\epsilon}m}2^{-\frac{\epsilon}{40} k} 2^{-\frac{3\epsilon}{40}m} \times 2^{\frac{1}{3+\epsilon} l}(2^{\frac{1}{3+\epsilon}}-1), \quad k\geq j.
	\end{split}
\end{equation}
Using \eqref{qu75} and \eqref{qu75}, for $k \geq j$, we get
\begin{equation}\label{qu85}
	\begin{split}
		\| P_{k} d \bv_j, P_{k} d \rho_j, P_{k} d h_j \|_{L^1_{[0,T_{j-(l+1)}]}L^\infty_x}
		\leq & C(1+\mathbb{E}^2(0))  2^{-\frac{1}{3+\epsilon}m} 2^{-\frac{\epsilon}{40} k} 2^{-\frac{3\epsilon}{40}m} \times 2^{\frac{1}{3+\epsilon}(l+1)}.
	\end{split}	
\end{equation}
On the other hand, if $k<j$, then we have
\begin{equation}\label{qu86}
	\begin{split}
		& \|P_k (d{\rho}_{m+1}-d{\rho}_{m}), P_k (d{\bv}_{m+1}-d{\bv}_{m}), P_k (d{h}_{m+1}-d{h}_{m}) \|_{L^1_{[0,T_{m-(l+1)}]} L^\infty_x}
		\\
		\leq  & \|P_k (d{\rho}_{m+1}-d{\rho}_{m}), P_k (d{\bv}_{m+1}-d{\bv}_{m}), P_k (d{h}_{m+1}-d{h}_{m}) \|_{L^1_{[0,T_{m-l}]} L^\infty_x}
		\\
		& + \|P_k (d{\rho}_{m+1}-d{\rho}_{m}), P_k (d{\bv}_{m+1}-d{\bv}_{m}), P_k (d{h}_{m+1}-d{h}_{m}) \|_{L^1_{[T_{m-l},T_{m-(l+1)}]} L^\infty_x}.
	\end{split}	
\end{equation}
When we extend the solutions $(\bv_j, \rho_j, h_j, \bw_j)$ from $[0,T_{j-l}]$ to $[0,T_{j-(l+1)}]$, then the solutions $(\bv_m, \rho_m, h_m, \bw_m)$ is also extended from $[0,T_{m-l}]$ to $[0,T_{m-(l+1)}]$. Seeing \eqref{qu631}, \eqref{qu73} and \eqref{qu78}, we can obtain
\begin{equation}\label{qu87}
	\begin{split}
		& \|P_k (d{\rho}_{m+1}-d{\rho}_{m}), P_k (d{\bv}_{m+1}-d{\bv}_{m}), P_k (d{h}_{m+1}-d{h}_{m}) \|_{L^1_{[T_{m-l},T_{m-(l+1)}]} L^\infty_x}
		\\
		\leq  & C(1+\mathbb{E}_*^2) 2^{-\frac{1}{3+\epsilon}m}2^{-\frac{\epsilon}{40} k} 2^{-\frac{\epsilon}{10}m} \times 2^{\frac{1}{3+\epsilon} l}(2^{\frac{1}{3+\epsilon}}-1)\frac{\mathbb{E}_*}{\mathbb{E}(0)}.
	\end{split}	
\end{equation}
Using \eqref{qu04} again, then \eqref{qu87} becomes to
\begin{equation}\label{qu88}
	\begin{split}
		& \|P_k (d{\rho}_{m+1}-d{\rho}_{m}), P_k (d{\bv}_{m+1}-d{\bv}_{m}), P_k (d{h}_{m+1}-d{h}_{m}) \|_{L^1_{[T_{m-l},T_{m-(l+1)}]} L^\infty_x}
		\\
		\leq  & C(1+\mathbb{E}^2(0)) 2^{-\frac{1}{3+\epsilon}m}2^{-\frac{\epsilon}{40} k} 2^{-\frac{3\epsilon}{40}m} \times 2^{\frac{1}{3+\epsilon} l}(2^{\frac{1}{3+\epsilon}}-1).
	\end{split}	
\end{equation}
Combing \eqref{qu76} and \eqref{qu88}, it follows
\begin{equation}\label{qu89}
	\begin{split}
		& \|P_k (d{\rho}_{m+1}-d{\rho}_{m}), P_k (d{\bv}_{m+1}-d{\bv}_{m}), P_k (d{h}_{m+1}-d{h}_{m}) \|_{L^1_{[0,T_{m-(l+1)}]} L^\infty_x}
		\\
		\leq  & C(1+\mathbb{E}^2(0)) 2^{-\frac{1}{3+\epsilon}m}2^{-\frac{\epsilon}{40} k} 2^{-\frac{3\epsilon}{40}m} \times 2^{\frac{1}{3+\epsilon} (l+1)}.
	\end{split}	
\end{equation}
We are now ready to bound the following Strichartz estimate
\begin{equation}\label{qu90}
	\begin{split}
		& \|d \bv_j, d \rho_j, d h_j\|_{L^1_{[0,T_{j-(l+1)}]} L^\infty_x}
		\\
		\leq & \|P_{\geq j}\bv\rho_j, P_{\geq j}\partial \rho_j, P_{\geq j}\partial h_j\|_{L^1_{[0,T_{j-(l+1)}]} L^\infty_x}
		\\
		& + \textstyle{\sum}^{j-1}_{k=j-l}\textstyle{\sum}^{j-1}_{m=k} \|P_k  (d\bv_{m+1}-d\bv_{m}), P_k  (d\rho_{m+1}-d\rho_{m}), P_k  (dh_{m+1}-dh_{m})\|_{L^1_{[0,T_{j-(l+1)}]} L^\infty_x}
		\\
		& + \textstyle{\sum}^{j-1}_{k=1} \|P_k d \bv_k, P_k d \rho_k, P_k d h_k\|_{L^1_{[0,T_{j-(l+1)}]} L^\infty_x}
		\\
		=& 	\|P_{\geq j}d\bv_j, P_{\geq j}d \rho_j, P_{\geq j}d h_j\|_{L^1_{[0,T^*_{j-(l+1)}]} L^\infty_x}
		\\
		& + \textstyle{\sum}^{j-1}_{k=j-l} \|P_k d\bv_k, P_k d\rho_k, P_k dh_k\|_{L^1_{[0,T_{j-(l+1)}]} L^\infty_x}
		+ \textstyle{\sum}^{j-(l+1)}_{k=1} \|P_k d\bv_k, P_k d\rho_k, P_k dh_k\|_{L^1_{[0,T_{j-(l+1)}]} L^\infty_x}
		\\
		& + \textstyle{\sum}^{j-1}_{k=1} \textstyle{\sum}^{j-1}_{m=j-(l+1)} \|P_k  (d\bv_{m+1}-d\bv_{m}), P_k  (d\rho_{m+1}-d\rho_{m}), P_k  (dh_{m+1}-dh_{m})\|_{L^1_{[0,T_{j-(l+1)}]} L^\infty_x}.
		\\
		& + \textstyle{\sum}^{j-(l+2)}_{k=1} \textstyle{\sum}_{m=k}^{j-(l+2)}  \|P_k  (d\bv_{m+1}-d\bv_m), P_k  (d\rho_{m+1}-d\rho_m), P_k  (dh_{m+1}-dh_m)\|_{L^1_{[0,T_{j-(l+1)}]} L^\infty_x}
		\\
		=& \Omega_1+  \Omega_2+ \Omega_3+ \Omega_4+ \Omega_5,
	\end{split}
\end{equation}
where
\begin{equation}\label{qu91}
	\begin{split}
		\Omega_1= &  \|P_{\geq j}d\bv_j, P_{\geq j}d \rho_j, P_{\geq j}d h_j\|_{L^1_{[0,T_{j-(l+1)}]} L^\infty_x} ,
		\\
		\Omega_2= & \textstyle{\sum}^{j-(l+2)}_{k=1}\textstyle{\sum}^{j-(l+2)}_{m=k} \|P_k  (d\bv_{m+1}-d\bv_{m}), P_k  (d\rho_{m+1}-d\rho_{m}), P_k  (dh_{m+1}-dh_{m})\|_{L^1_{[0,T_{j-(l+1)}]} L^\infty_x},
		\\
		\Omega_3= & \textstyle{\sum}^{j-1}_{k=1}\textstyle{\sum}^{j-1}_{m=j-(l+1)} \|P_k  (d\bv_{m+1}-d\bv_{m}), P_k  (d\rho_{m+1}-d\rho_{m}), P_k  (dh_{m+1}-dh_{m})\|_{L^1_{[0,T_{j-(l+1)}]} L^\infty_x},
		\\
		\Omega_4 =& \textstyle{\sum}^{j-(l+1)}_{k=1}	\|P_{k}d\bv_k, P_{k}d\rho_k, P_{k}dh_k\|_{L^1_{[0,T_{j-(l+1)}]} L^\infty_x},
		\\
		\Omega_5 =& \textstyle{\sum}^{j-1}_{k=j-l}	\|P_{k}d\bv_k, P_{k}d\rho_k, P_{k}dh_k\|_{L^1_{[0,T_{j-(l+1)}]} L^\infty_x}.
	\end{split}
\end{equation}
To get the estimate on time-interval $[0,T_{j-(l+1)}]$ for $\Omega_1, \Omega_2, \Omega_3, \Omega_4$ and $\Omega_5$, we should note that
there is no growth for $\Omega_2$ and $\Omega_4$ in this extending process. For example, considering $\Omega_2$, the existing time-interval of $P_k  (d\bv_{m+1}-d\bv_{m})$ is actually $[0,T_{m+1}]$, and $[0,T_{j-(l+1)}] \subseteq [0,T_{m+1}]$ if $m \geq j-(l+2)$. So we can use the initial bounds \eqref{qu47} and \eqref{qu48} to handle $\Omega_2$ and $\Omega_4$. While, considering $\Omega_1, \Omega_3$, and $\Omega_5$, we need to calculate the growth in the Strichartz estimate. Based on this idea, let us give a precise analysis on \eqref{qu91}.

According to \eqref{qu85}, we can estimate $\Omega_1$ as
\begin{equation}\label{qu92}
	\begin{split}
		\Omega_1 \leq & C(1+\mathbb{E}^2(0))  \textstyle{\sum}^{\infty}_{k=j} 2^{-\frac{1}{3+\epsilon}j} 2^{-\frac{\epsilon}{40} k} 2^{-\frac{3\epsilon}{40}j} \times 2^{\frac{1}{3+\epsilon}(l+1)}.
	\end{split}
\end{equation}
Due to \eqref{qu48}, we have
\begin{equation}\label{qu93}
	\begin{split}
		\Omega_2\leq &	\textstyle{\sum}^{j-(l+2)}_{k=1}\textstyle{\sum}^{j-(l+2)}_{m=k} \|P_k  (d\bv_{m+1}-d\bv_{m}), P_k  (d\rho_{m+1}-d\rho_{m}), P_k  (dh_{m+1}-dh_{m})\|_{L^1_{[0,T_{m+1}]} L^\infty_x},
		\\
		\leq &  C(1+\mathbb{E}^2(0)) \textstyle{\sum}^{j-(l+2)}_{k=1}\textstyle{\sum}^{j-(l+2)}_{m=k}   2^{-\frac{1}{3+\epsilon}m} 2^{-\frac{\epsilon}{40} k} 2^{-\frac{\epsilon}{10}m}.
	\end{split}	
\end{equation}
Using \eqref{qu89}, it follows
\begin{equation}\label{qu94}
	\begin{split}
		\Omega_3\leq & \textstyle{\sum}^{j-1}_{k=1}\textstyle{\sum}^{j-1}_{m=j-(l+1)} \|P_k  (d\bv_{m+1}-d\bv_{m}), P_k  (d\rho_{m+1}-d\rho_{m}), P_k  (dh_{m+1}-dh_{m})\|_{L^1_{[0,T_{m-(l+1)}]} L^\infty_x}
		\\
		\leq & C (1+\mathbb{E}^2(0)) \textstyle{\sum}^{j-1}_{k=1}\textstyle{\sum}^{j-1}_{m=j-(l+1)}   2^{-\frac{1}{3+\epsilon}m}2^{-\frac{\epsilon}{40} k} 2^{-\frac{3\epsilon}{40}m} \times 2^{\frac{1}{3+\epsilon} (l+1)}.
	\end{split}
\end{equation}
Above, we use $k \leq j-1< j$ and $m \leq j-1 < j$. Due to \eqref{qu47}, we can see
\begin{equation}\label{qu95}
	\begin{split}
		\Omega_4 \leq & \textstyle{\sum}^{j-(l+1)}_{k=1}	\|P_{k}d\bv_k, P_{k}d\rho_k\|_{L^1_{[0,T_{k}]} L^\infty_x}
		\\
		\leq & C(1+\mathbb{E}^2(0))  \textstyle{\sum}^{j-(l+1)}_{k=1}  2^{-\frac{1}{3+\epsilon}k} 2^{-\frac{\epsilon}{40} k} 2^{-\frac{\epsilon}{10}k}.
	\end{split}
\end{equation}
Noting $k\leq j-1 <j$ and using \eqref{kz51} again, we can estimate
\begin{equation}\label{qu96}
	\begin{split}
		\Omega_5 \leq & \textstyle{\sum}^{j-1}_{k=j-l}	\|P_{k}d\bv_k, P_{k}d\rho_k\|_{L^1_{[0,T_{j-(l+1)}]} L^\infty_x}
		\\
		\leq	& C(1+\mathbb{E}^2(0))  \textstyle{\sum}^{j-1}_{k=j-l}  2^{-\frac{1}{3+\epsilon}k} 2^{-\frac{\epsilon}{40} k} 2^{-\frac{3\epsilon}{40}k} \times 2^{\frac{1}{3+\epsilon}[(k+l+1)-j]}.
	\end{split}
\end{equation}
Inserting \eqref{qu92}-\eqref{qu96} to \eqref{qu90}, it follows
\begin{equation}\label{qu97}
	\begin{split}
		& \|d \bv_j, d \rho_j, dh_j\|_{L^1_{[0,T_{j-(l+1)}]} L^\infty_x}
		\\
		\leq & C(1+\mathbb{E}^2(0))  \textstyle{\sum}^{\infty}_{k=j} 2^{-\frac{1}{3+\epsilon}j} 2^{-\frac{\epsilon}{40} k} 2^{-\frac{3\epsilon}{40}j} \times 2^{\frac{1}{3+\epsilon}(l+1)}
		\\
		& + C(1+\mathbb{E}^2(0)) \textstyle{\sum}^{j-(l+2)}_{k=1}\textstyle{\sum}^{j-(l+2)}_{m=k}   2^{-\frac{1}{3+\epsilon}m} 2^{-\frac{\epsilon}{40} k} 2^{-\frac{\epsilon}{10}m}
		\\
		&+C (1+\mathbb{E}^2(0)) \textstyle{\sum}^{j-1}_{k=1}\textstyle{\sum}^{j-1}_{m=j-(l+1)}   2^{-\frac{1}{3+\epsilon}m}2^{-\frac{\epsilon}{40} k} 2^{-\frac{3\epsilon}{40}m} \times 2^{\frac{1}{3+\epsilon} (l+1)}
		\\
		& + C(1+\mathbb{E}^2(0))  \textstyle{\sum}^{j-(l+1)}_{k=1}  2^{-\frac{1}{3+\epsilon}k} 2^{-\frac{\epsilon}{40} k} 2^{-\frac{\epsilon}{10}k}
		\\
		& C(1+\mathbb{E}^2(0))  \textstyle{\sum}^{j-1}_{k=j-l}  2^{-\frac{1}{3+\epsilon}k} 2^{-\frac{\epsilon}{40} k} 2^{-\frac{3\epsilon}{40}k} \times 2^{\frac{1}{3+\epsilon}[(k+l+1)-j]}
	\end{split}
\end{equation}
In the case of $j-(l+1) \geq N_1$ and $j\geq N_1+1$,  \eqref{kz62} yields
\begin{equation}\label{qu98}
	\begin{split}
		\|d \bv_j, d \rho_j, dh_j\|_{L^1_{[0,T_{j-(l+1)}]} L^\infty_x} \leq & C(1+\mathbb{E}^2(0)) (1-2^{-\frac{\epsilon}{40}})^{-2} \big\{
		2^{-\frac{1}{3+\epsilon}(j-(l+1))} + 2^{-\frac{1}{3+\epsilon}}
		\\
		& \quad +2^{-\frac{1}{3+\epsilon} j} + 2^{-\frac{1}{3+\epsilon} }+2^{-\frac{1}{3+\epsilon}[j-(l+1)]}
		\big\}
		\\
		\leq & C(1+\mathbb{E}^2(0)) [\frac13(1-2^{-\frac{\epsilon}{40}})]^{-2}.
	\end{split}
\end{equation}
Due to Theorem \ref{vve} and \eqref{qu98}, we can achieve
\begin{equation}\label{qu99}
	\begin{split}
		\mathbb{E}(T_{j-(l+1)}) \leq \mathbb{E}_*.
	\end{split}
\end{equation}
Gathering \eqref{qu81}, \eqref{qu85}, \eqref{qu89}, \eqref{qu98} and \eqref{qu99}, we know that \eqref{qu74}-\eqref{qu78} holding for $l+1$. So our induction hold \eqref{qu74}-\eqref{qu78} for $l=1$ to $l=j-N_1$. Therefore, we can extend the solutions $(\bv_j,\rho_j,\bw_j)$ from $[0,T_j]$ to $[0,T_{N_1}]$ when $j \geq N_1$. Moreover, setting $l=j-N_1$ in \eqref{qu74}-\eqref{qu78}, we have
\begin{equation}\label{qu100}
	\begin{split}
		& \bar{T}=T_{N_1}=\mathbb{E}^{-1}(0)2^{-\frac{N_1}{3+\epsilon} }, \qquad \mathbb{E}(T) \leq  \mathbb{E}_*,
		\\
		& \|d \bv_j, d \rho_j, dh_j\|_{L^1_{[0,\bar{T}]} L^\infty_x}
		\leq  C(1+\mathbb{E}^2(0)) [\frac13(1-2^{-\frac{\epsilon}{40}})]^{-2},
	\end{split}
\end{equation}
where $N_1$ and $\mathbb{E}_*$ only depend on $\epsilon$ and $\bar{M}_0$, which is stated in \eqref{qu04} and \eqref{qu0q}.

By using \eqref{qu100} and Newton-Leibniz formula, we can see
\begin{equation*}
		 \|\bv_j, \rho_j, h_j\|_{L^\infty_{[0,\bar{T}]\times \mathbb{R}^3}} \leq C_0+C(1+\mathbb{E}^2(0)) [\frac13(1-2^{-\frac{\epsilon}{40}})]^{-2}.
\end{equation*}
Also, using a similar way of calculating \eqref{qu100}, we shall conclude
\begin{equation}\label{qu101}
	\begin{split}
		\|d \bv_j, d \rho_j, d \rho_j\|_{L^2_{[0,\bar{T}]} L^\infty_x}
		\leq & C(1+\mathbb{E}^2(0)) [\frac13(1-2^{-\frac{\epsilon}{40}})]^{-2}.
	\end{split}
\end{equation}
\end{proof}
\subsubsection{Strichartz estimates of linear wave equation on time-interval $[0,T_{N_1}]$}\label{keyf}
We still expect the behaviour of a linear wave equation endowed with $g_j=g(\bv_j,\rho_j,h_j)$. So we claim a theorem as follows
\begin{proposition}\label{zut}
	For $\frac{7}{6} \leq r< \frac72$ there is a solution $f_j$ on $[0,\bar{T}]\times \mathbb{R}^3$ satisfying the following linear wave equation
	\begin{equation}\label{zu01}
		\begin{cases}
			\square_{{g}_j} f_j=0,
			\\
			(f_j,\partial_t f_j)|_{t=0}=(f_{0j},f_{1j}),
		\end{cases}
	\end{equation}
	where $(f_{0j},f_{1j})=(P_{\leq j}f_0,P_{\leq j}f_1)$, and $(f_0,f_1)\in H_x^r \times H^{r-1}_x$, and $\bar{T}$ is stated in \eqref{qu100}. Moreover, for $a\leq r-\frac{7}{6}$, we have
	\begin{equation}\label{zu02}
		\begin{split}
			&\|\left< \partial \right>^{a-1} d{f}_j\|_{L^2_{[0,\bar{T}]} L^\infty_x}
			\leq  {\bar{M}_3} (\|{f}_0\|_{{H}_x^r}+ \|{f}_1 \|_{{H}_x^{r-1}}),
			\\
			&\|{f}_j\|_{L^\infty_{[0,\bar{T}]} H^{r}_x}+ \|\partial_t {f}_j\|_{L^\infty_{[0,\bar{T}]} H^{r-1}_x} \leq  {\bar{M}_3}(\| {f}_0\|_{H_x^r}+ \| {f}_1\|_{H_x^{r-1}}).
		\end{split}
	\end{equation}
\end{proposition}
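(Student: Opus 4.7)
The plan is to mirror the strategy used to prove Proposition \ref{rut} in Subsection \ref{finalq}, adapting all exponents to the present setting where the short time scale is $T_j = 2^{-j/(3+\epsilon)}\mathbb{E}(0)^{-1}$ and the Strichartz gap is $\frac{7}{6}$ rather than $\frac{s}{2}$. First I would obtain a short-time Strichartz estimate on $[0,T_j]$. By the scaling $(t,x)\mapsto (T_j^{-1}t, T_j^{-1}x)$ and physical localization as in Subsection \ref{keypro}, the localized linear problem matches the small-data framework, so Proposition \ref{DL3} \eqref{P33} applies with a small loss of derivatives $\frac{\epsilon}{20}$ strictly smaller than $r-1-a$. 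Rescaling back and using Bernstein yields, for $a \leq r - \frac{7}{6}$ and $k \geq j$,
\begin{equation*}
\|\langle\partial\rangle^{a-1} P_k df_j\|_{L^2_{[0,T_j]}L^\infty_x} \lesssim 2^{-\frac{\epsilon}{40}k}\, 2^{-\tau j}\bigl(\|f_0\|_{H^r_x}+\|f_1\|_{H^{r-1}_x}\bigr),
\end{equation*}
with some $\tau>0$ arising from combining the scaling factor with the Bernstein factor, exactly as in \eqref{ru070}.

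Second, for low frequencies $k<j$, I would use the standard telescoping $P_k df_j = P_k df_k + \sum_{m=k}^{j-1}P_k(df_{m+1}-df_m)$. The difference $f_{m+1}-f_m$ solves $\square_{g_{m+1}}(f_{m+1}-f_m) = (g_{m+1}^{\alpha\beta}-g_m^{\alpha\beta})\partial^2_{\alpha\beta}f_m$ with initial data $f_{0(m+1)}-f_{0m}$ and $f_{1(m+1)}-f_{1m}$, which are of size $\lesssim 2^{-rm}(\|f_0\|_{H^r_x}+\|f_1\|_{H^{r-1}_x})$ by Bernstein. Applying Proposition \ref{DL3} in scaled form together with Lemma \ref{LD} to the forced problem, and using the key metric difference estimate $2^m\|g_{m+1}-g_m\|_{L^1_{[0,T_{m+1}]}L^\infty_x} \lesssim 2^{-\frac{7\epsilon}{10}m}(1+\mathbb{E}^3(0))$ (proved as in \eqref{ru11} from \eqref{qu42}), yields
\begin{equation*}
\|\langle\partial\rangle^{a-1}P_k(df_{m+1}-df_m)\|_{L^2_{[0,T_{m+1}]}L^\infty_x} \lesssim 2^{-\frac{\epsilon}{40}k}\, 2^{-\tau' m}\bigl(\|f_0\|_{H^r_x}+\|f_1\|_{H^{r-1}_x}\bigr)(1+\mathbb{E}^3(0)).
\end{equation*}

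Third, I would close the energy estimate by applying $\langle\partial\rangle^{r-1}$ to \eqref{zu01} and controlling the commutator $[\square_{g_j},\langle\partial\rangle^{r-1}]f_j$. For $\frac{7}{6} \leq r \leq \frac{5}{2}$ a direct Kato--Ponce bound suffices, while for $\frac{5}{2} < r < \frac{7}{2}$ I would use Sobolev embedding and Gagliardo--Nirenberg interpolation, paralleling \eqref{ru16}--\eqref{ru23}, to bound the commutator by the Strichartz norm $\|\langle\partial\rangle^{r-\frac{7}{6}-1}df_j\|_{L^\infty_x}$ (which already appears in the inductive framework). Gronwall then yields
\begin{equation*}
\|f_j\|_{L^\infty_{[0,\bar{T}]}H^r_x}+\|\partial_t f_j\|_{L^\infty_{[0,\bar{T}]}H^{r-1}_x} \lesssim (\|f_0\|_{H^r_x}+\|f_1\|_{H^{r-1}_x})\exp(B_*\mathrm{e}^{B_*}),
\end{equation*}
with $B_*$ depending on the already-extended Strichartz norm.

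Finally, I would extend from $[0,T_j]$ to $[0,\bar{T}]=[0,T_{N_1}]$ by the very same induction as in Subsection \ref{keye}: at each elementary step of length $\mathbb{E}_*^{-1}2^{-j/(3+\epsilon)}$, the bound picks up a factor $2^{1/(3+\epsilon)}$, and the total number of elementary steps required to pass from $T_{j-l}$ to $T_{j-(l+1)}$ is $\lesssim 2^{l/(3+\epsilon)}\mathbb{E}_*/\mathbb{E}(0)$. The accumulated growth after $l=j-N_1$ iterations is therefore $2^{(j-N_1)/(3+\epsilon)}$, which is absorbed by the decay factor $2^{-\tau j}$ provided $\tau > \frac{1}{3+\epsilon}$, guaranteed by our choice $\tau = \frac{1}{2(3+\epsilon)} + \frac{\epsilon}{10}$ and by condition \eqref{qu04} on $N_1$. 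The main obstacle is keeping the loss of derivatives on the short-time Strichartz estimate strictly smaller than $r-1-a = \frac{1}{6}$: this forces the choice of intermediate exponents (namely $\frac{\epsilon}{40}$ in Bernstein and $\frac{9\epsilon}{40}$ as the admissible loss in \eqref{P33}) and is precisely what pins down the threshold $r \geq \frac{7}{6}$, analogous to the role of $\frac{s}{2}$ in Proposition \ref{rut}.
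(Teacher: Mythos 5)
Your overall strategy — short--time Strichartz on $[0,T_j]$ with a small loss, Bernstein for high frequencies, telescoping plus the forced equation for the low--frequency differences, a commutator energy estimate, and an iterative extension — is the same as the paper's. Two points, however, deserve comment.

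\emph{Energy estimate.} You import the Gagliardo--Nirenberg device from Proposition \ref{rut} and bound the commutator by the Strichartz norm $\|\langle\partial\rangle^{r-\frac{7}{6}-1}df_j\|_{L^\infty_x}$. This is unnecessary here, and the paper avoids it. In Proposition \ref{rut} the metric has Sobolev regularity $H^{\sstar}_x$ with $\sstar<\tfrac52$, so after Kato--Ponce one is left with $\|df_j\|_{L^q_x}$ where the exponent $q$ is too large to close by $H^{r-1}_x\hookrightarrow L^q_x$, forcing the Gagliardo--Nirenberg interpolation against a Strichartz term. In the present proposition the metric is in $H^{5/2}_x$, so the Sobolev embeddings work out exactly: for $\tfrac76\le r<\tfrac52$ one has $H^{r-1}(\mathbb{R}^3)\hookrightarrow L^{3/(\frac52-r)}$ and for $\tfrac52\le r<\tfrac72$ one has $H^{r-2}(\mathbb{R}^3)\hookrightarrow L^{3/(\frac72-r)}$, closing the commutator bound by $\bigl(\|dg_j\|_{L^\infty_x}+\|g_j-\mathbf{m}\|_{H^{5/2}_x}\bigr)\|df_j\|^2_{H^{r-1}_x}$ directly. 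Your route would presumably still close, but it introduces a coupling between the energy and Strichartz estimates that the paper's proof of this particular proposition does not need.

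\emph{Extension counting.} You write that the accumulated growth after $l=j-N_1$ iterations is $2^{(j-N_1)/(3+\epsilon)}$ and state the summability constraint as $\tau>\frac{1}{3+\epsilon}$, which actually contradicts your own choice $\tau=\frac{1}{2(3+\epsilon)}+\frac{\epsilon}{10}$ for small $\epsilon$. The correct accumulated factor is the square root, $2^{(j-N_1)/(2(3+\epsilon))}$, because the quantity being iterated is an $L^2$--in--time Strichartz norm: over a union of $\approx 2^{l/(3+\epsilon)}\mathbb{E}_*/\mathbb{E}(0)$ short intervals the $L^2_t L^\infty_x$ bounds add in quadrature, exactly as in the paper's \eqref{zu234} and in the analogous step \eqref{ru234} of Proposition \ref{rut}. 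With the square root, the factor $2^{-\frac{1}{2(3+\epsilon)}j}$ coming from the $T_j^{-1/2}$ rescaling exactly cancels the iteration growth, and the remaining $2^{-\frac{3\epsilon}{80}j}$ (the paper's residual from Bernstein) is what makes the final sum over $j$ converge. So the correct constraint is only $\tau>\frac{1}{2(3+\epsilon)}$, which your proposed $\tau$ does satisfy; the inconsistency is in the stated condition, not the proposed exponents.
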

\begin{proof}[Proof of Proposition \ref{zut}.] Our proof also relies a Strichartz estimates on a short time-interval. Then a loss of derivatives are then obtained by summing up the short time estimates with respect to these time intervals.

Recall \eqref{qu1}-\eqref{qu9}. For $1 \leq r_1 \leq \frac52+1$, consider the homogeneous linear wave equation on short time interval
	\begin{equation}\label{zu03}
		\begin{cases}
			\square_{{g}_j} F=0, \quad [0,T_j]\times \mathbb{R}^3,
			\\
			(F,F_t)|_{t=0}=(F_0,F_1)\in H_x^{r_1} \times H^{r_1-1}_x.
		\end{cases}
	\end{equation}
For $a_1<r_1-1$, then we can conclude that
	\begin{equation}\label{zu04}
		\begin{split}
			\|\left< \partial \right>^{a_1-1} dF\|_{L^2_{[0,T_j]} L^\infty_x}
			\leq & C(\|{F}_0\|_{{H}_x^{r_1}}+ \| {F}_1 \|_{{H}_x^{r_1-1}}),
		\end{split}
	\end{equation}
	and
	\begin{equation}\label{zu05}
		\begin{split}
			\|{F}\|_{L^\infty_{[0,T_j]} H^{r_1}_x}+ \|\partial_t {F}\|_{L^\infty_{[0,T_j]} H^{r_1-1}_x} \leq  C(\| {F}_0\|_{H_x^{r_1}}+ \| {F}_1\|_{H_x^{r_1-1}}).
		\end{split}
	\end{equation}
	Using \eqref{zu01}, \eqref{zu03} and \eqref{zu04}, we have
	\begin{equation}\label{zu06}
		\begin{split}
			\| \left< \partial \right>^{a-1+\frac{1}{2(3+\epsilon)}+\frac{\epsilon}{20}} df_j\|_{L^2_{[0,T_j]} L^\infty_x} \leq & C( \| f_{0j} \|_{H^r_x} + \| f_{1j} \|_{H^{r-1}_x}  )
			\\
			\leq &C ( \| f_{0} \|_{H^r_x} + \| f_{1} \|_{H^{r-1}_x}  ),
		\end{split}
	\end{equation}
where we use $a\leq r-\frac76$ and
\begin{equation}\label{zu060}
a+\frac{1}{2(3+\epsilon)}+\frac{\epsilon}{20} \leq r-\frac{7}{6}+ \frac{1}{2(3+\epsilon)}+\frac{\epsilon}{20}=r-1-\frac{7\epsilon}{60(3+\epsilon)}<r-1.		
\end{equation}

Due to Bernstein inequality, for $k\geq j$, we shall obtain
\begin{equation}\label{zu07}
	\begin{split}
		\| \left< \partial \right>^{a-1} P_k df_j\|_{L^2_{[0,T_j]} L^\infty_x}
		=& C2^{-(\frac{1}{2(3+\epsilon)}+\frac{\epsilon}{20}) k}\|\left< \partial \right>^{a-1+\frac{1}{2(3+\epsilon)}+\frac{\epsilon}{20}} P_k df_j \|_{L^2_{[0,T_j]} L^\infty_x}
		\\
		\leq & C2^{-\frac{1}{2(3+\epsilon)}j}2^{-\frac{\epsilon}{80} k}2^{-\frac{3\epsilon}{80} j} \| \left< \partial \right>^{a-1+\frac{1}{2(3+\epsilon)}+\frac{\epsilon}{20}} df_j\|_{L^2_{[0,T_j]} L^\infty_x}
	\end{split}
\end{equation}
Combining \eqref{zu06} with \eqref{zu07}, we get
\begin{equation}\label{zu070}
	\begin{split}
		\| \left< \partial \right>^{a-1} P_k df_j\|_{L^2_{[0,T_j]} L^\infty_x}
		\leq & C2^{-\frac{1}{2(3+\epsilon)}j}2^{-\frac{\epsilon}{80} k}2^{-\frac{3\epsilon}{80} j} ( \| f_{0} \|_{H^r_x} + \| f_{1} \|_{H^{r-1}_x}  ).
	\end{split}
\end{equation}
On the other hand, for any integer $m\geq 1$, we also have
\begin{equation*}
	\begin{cases}
		\square_{{g}_m} f_m=0, \quad [0,T_{m}]\times \mathbb{R}^3,
		\\
		(f_m,\partial_t f_m)|_{t=0}=(f_{0m},f_{1m}),
	\end{cases}
\end{equation*}
and
\begin{equation*}
	\begin{cases}
		\square_{{g}_{m+1}} f_{m+1}=0, \quad [0,T_{m+1}]\times \mathbb{R}^3,
		\\
		(f_{m+1},\partial_t f_{m+1})|_{t=0}=(f_{0(m+1)},f_{1(m+1)}).
	\end{cases}
\end{equation*}
So the difference term $f_{m+1}-f_m$ satisfies
\begin{equation}\label{zu08}
	\begin{cases}
		\square_{{g}_{m+1}} (f_{m+1}-f_m)=({g}^{\alpha i}_{m+1}-{g}^{\alpha i}_{m} )\partial_{\alpha i} f_m, \quad [0,T^*_{m+1}]\times \mathbb{R}^3,
		\\
		(f_{m+1}-f_m,\partial_t (f_{m+1}-f_m))|_{t=0}=(f_{0(m+1)}-f_{0m},f_{1(m+1)}-f_{1m}).
	\end{cases}
\end{equation}
Using \eqref{zu04}, and Duhamel's principle, and \eqref{zu060}, we get
\begin{equation}\label{zu09}
	\begin{split}
		& \| \left< \partial \right>^{a-1} P_k (d f_{m+1}-df_m)\|_{L^2_{[0,T_{m+1}]} L^\infty_x}
		\\
		\leq & C\| f_{0(m+1)}-f_{0m}\|_{H_x^{r_1}} + C\|f_{1(m+1)}-f_{1m}\|_{H_x^{r_1-1}}
		 \ + C\|({g}_{m+1}-{g}_{m}) \cdot\nabla d f_m\|_{H_x^{r_1}}
		\\
		\leq & C2^{ -\frac{1}{2(3+\epsilon)}m }2^{-\frac{\epsilon}{20}m}  \| (f_{0},f_1) \|_{H^r_x \times H^{r-1}_x}
		 + C \| {g}_{m+1}-{g}_{m}\|_{L^1_{[0,T_{m+1}]} L^\infty_x} \| \partial d f_m\|_{L^\infty_{[0,T_{m+1}]} H_x^{r_1}} .
	\end{split}
\end{equation}
where we set $r_1=r-(\frac{1}{2(3+\epsilon)}+\frac{\epsilon}{20})$. By using the energy estimates, we obtain
\begin{equation}\label{zu10}
	\begin{split}
		\| \partial d f_m\|_{L^\infty_{[0,T_{m+1}]} H_x^{r_1-1}} \leq & \|d f_m\|_{L^\infty_{[0,T_{m+1}]} H_x^{r}}
	\\
	\leq & C(\|f_{0m}\|_{H^{r+1}_x}+\|f_{1m}\|_{H^{r}_x})
		\\
		\leq & C2^m (\|f_{0m}\|_{H^r_x}+\|f_{1m}\|_{H^{r-1}_x}) .
	\end{split}
\end{equation}
By using Strichartz estimates \eqref{zu04} and Lemma \ref{LD}, and \eqref{qu24}, we shall prove that
\begin{equation}\label{zu11}
	\begin{split}
		& 2^m \| {g}_{m+1}-{g}_{m}\|_{L^1_{[0,T_{m+1}]} L^\infty_x}
		\\
		\leq & 2^m T^{\frac12}_{m+1}\| {\bv}_{m+1}-{\bv}_{m},  {\rho}_{m+1}-{\rho}_{m}\|_{L^2_{[0,T_{m+1}]} L^\infty_x}
		\\
		\leq & C2^m 2^{-\frac{1}{2(3+\epsilon)}(m+1)}( \| {\bv}_{m+1}-{\bv}_{m},  {\rho}_{m+1}-{\rho}_{m},  {h}_{m+1}-{h}_{m}\|_{L^\infty_{[0,T_{m+1}]} H^{1+\frac{\epsilon}{2}}_x}
		\\
		& \quad + \| {\bw}_{m+1}-{\bw}_{m}\|_{L^2_{[0,T_{m+1}]} H^{\frac12+\frac{\epsilon}{2}}_x}+ \| {h}_{m+1}-{h}_{m}\|_{L^2_{[0,T_{m+1}]} H^{\frac32+\frac{\epsilon}{2}}_x})
		\\
		\leq & C2^{-\frac{1}{2(3+\epsilon)}m}2^{-\frac{\epsilon}{80} m}2^{-\frac{39\epsilon}{80} m} (1+\mathbb{E}^2(0)).
	\end{split}
\end{equation}
Due to \eqref{zu10} and \eqref{zu11}, for $k<j$, and $k\leq m$, we get
\begin{equation}\label{zu12}
	\begin{split}
		& \| \left< \partial \right>^{a-1} P_k (df_{m+1}-df_m)\|_{L^2_{[0,T_{m+1}]} L^\infty_x}
		\\
		\leq & C2^{-\frac{1}{2(3+\epsilon)}m}2^{-\frac{\epsilon}{80} k}2^{-\frac{3\epsilon}{80} m} (\|f_{0}\|_{H^r_x}+\|f_{1}\|_{H^{r-1}_x})(1+\mathbb{E}^2(0)).
	\end{split}
\end{equation}
In the following part, let us consider the energy estimates. Taking the operator $\left< \partial \right>^{r-1}$ on \eqref{zu01}, then we get
\begin{equation}\label{zu15}
	\begin{cases}
		\square_{{g}_j} \left< \partial \right>^{r-1}f_j=[ \square_{{g}_j} ,\left< \partial \right>^{r-1}] f_j,
		\\
		(\left< \partial \right>^{r-1}f_j,\partial_t \left< \partial \right>^{r-1}f_j)|_{t=0}=(\left< \partial \right>^{r-1}f_{0j},\left< \partial \right>^{r-1}f_{1j}).
	\end{cases}
\end{equation}
To estimate $[ \square_{{g}_j} ,\left< \partial \right>^{r-1}] f_j$, we will divide it into two cases $\frac{7}{6}\leq r< \frac52$ and $\frac52<r\leq \frac72$.

\textit{Case 1: $\frac{7}{6}\leq r< \frac52$.} For $\frac{7}{6}\leq r\leq \frac52$, note that
\begin{equation}\label{zu14}
	\begin{split}
		[ \square_{{g}_j} ,\left< \partial \right>^{r-1}] f_j
		=& [{g}^{\alpha i}_j-\mathbf{m}^{\alpha i}, \left< \partial \right>^{r-1} \partial_i ] \partial_{\alpha}f_j + \left< \partial \right>^{r-1}( \partial_i g_j \partial_{\alpha}f_j )
		\\
		= &[{g}_j-\mathbf{m}, \left< \partial \right>^{r-1} \nabla] df_j + \left< \partial \right>^{r-1}( \nabla g_j df).
	\end{split}
\end{equation}
By \eqref{ru14} and Kato-Ponce estimates, we have\footnote{If $r=\frac52$, then $L^{\frac{3}{s-r}}_x=L^\infty_x$.}
\begin{equation}\label{zu16}
	\| [ \square_{{g}_j} ,\left< \partial \right>^{r-1}] f_j \|_{L^2_x} \leq C ( \|dg_j\|_{L^\infty_x} \|d f_j \|_{H^{r-1}_x} + \|\left< \partial \right>^{r} (g_j-\mathbf{m}) \|_{L^{\frac{3}{r-1}}_x} \| df_j \|_{L^{\frac{3}{\frac52-r}}_x} )
\end{equation}
By Sobolev's inequality, it follows
\begin{equation}\label{zu17}
	\|\left< \partial \right>^{r} (g_j-\mathbf{m}) \|_{L^{\frac{3}{r-1}}_x} \leq C\|g_j-\mathbf{m}\|_{H^{\frac52}_x}.
\end{equation}
By H\"older's inequality, we can get
\begin{equation}\label{zu18}
	\begin{split}
		\| df_j \|_{L^{\frac{3}{\frac52-r}}_x} \leq & C \|df_j \|_{H^{r-1}_x}.
	\end{split}
\end{equation}
\textit{Case 2: $\frac52\leq r < \frac72$.} For $\frac52<r< \frac72$, using Kato-Ponce estimates, we have
\begin{equation}\label{zu150}
	\begin{split}
		\| [ \square_{{g}_j} ,\left< \partial \right>^{r-1}] f_j \|_{L^2_x}
		= & \| {g}^{\alpha i}_j-\mathbf{m}^{\alpha i},\left< \partial \right>^{r-1}] \partial_{\alpha i}f_j \|_{L^2_x}
		\\
		\leq & C ( \|dg_j\|_{L^\infty_x} \|d f_j \|_{H^{r-1}_x} + \|\left< \partial \right>^{r-1} (g_j-\mathbf{m}) \|_{L^{\frac{3}{r-2}}_x} \|\nabla df_j \|_{L^{\frac{3}{\frac72-r}}_x} ) .
	\end{split}
\end{equation}
By Sobolev's inequality, it follows
\begin{equation}\label{zu151}
	\|\left< \partial \right>^{r-1} (g_j-\mathbf{m}) \|_{L^{\frac{3}{r-2}}_x} \leq C\|g_j-\mathbf{m}\|_{H^{\frac52}_x}.
\end{equation}
By H\"older's inequality, we can get
\begin{equation}\label{zu152}
	\begin{split}
		\| \nabla df_j \|_{L^{\frac{3}{\frac72-r}}_x} \leq & C \|df_j \|_{H^{r-1}_x}.
	\end{split}
\end{equation}
For $\frac{7}{6}\leq r <\frac72$, gathering \eqref{zu16}-\eqref{zu18} with \eqref{zu150}- \eqref{zu152}, so we have
\begin{equation}\label{zu19}
	\begin{split}
		\| [ \square_{{g}_j} ,\left< \partial \right>^{r-1}] f_j \|_{L^2_x}\|d f_j \|_{H^{r-1}_x} \leq & C  \|dg_j\|_{L^\infty_x} \|d f_j \|^2_{H^{r-1}_x}
		+ C  \| g_j-\mathbf{m} \|_{H^{\frac52}_x}\|d f_j \|^2_{H^{r-1}_x}
	\end{split}	
\end{equation}
By \eqref{zu15} and \eqref{zu19}, we get
\begin{equation}\label{au20}
	\begin{split}
		\frac{d}{dt}\|d f_j \|^2_{H^{r-1}_x}
		\leq & C  \|dg_j\|_{L^\infty_x} \|d f_j \|^2_{H^{r-1}_x}
		+ C  \| g_j-\mathbf{m} \|_{H^{\frac52}_x}\|d f_j \|^2_{H^{r-1}_x}
	\end{split}	
\end{equation}
Using Gronwall's inequality, we get
\begin{equation}\label{zu21}
	\begin{split}
		\|d f_j(t) \|^2_{H^{r-1}_x}
		\leq & C  \|d f_j(0) \|^2_{H^{r-1}_x} \cdot\exp\{\int^t_0  \|dg_j\|_{L^\infty_x}+ \|g_j-\mathbf{m}\|_{H^{\frac52}_x} d\tau \} .
	\end{split}	
\end{equation}
Note
\begin{equation}\label{zu22}
	\begin{split}
		\|f_j(t) \|_{L^{2}_x}
		\leq & \| f_j(0) \|_{L^2_x} + \int^t_0 \| \partial_t f_j \|_{L^2_x} d\tau.
	\end{split}	
\end{equation}
By \eqref{zu21}, \eqref{zu22} and \eqref{kz65}, if $t\in[0,\bar{T}]$, then it follows
\begin{equation}\label{zu23}
	\begin{split}
		\| f_j\|_{L^\infty_{[0,\bar{T}]} H^{r}_x} + \| d f_j \|_{L^\infty_{[0,\bar{T}]} H^{r-1}_x}
		\leq & C  (\|f_0 \|_{H^{r}_x}+\|f_1 \|_{H^{r-1}_x})\mathrm{e}^{\mathbb{E}_*},
	\end{split}	
\end{equation}
where $\mathbb{E}_*$ is stated in \eqref{qu04}.

Based on \eqref{zu070}, \eqref{zu12},  and \eqref{zu23}, following the extending method in subsection \ref{esest}, for $a\leq r-\frac{7}{6}$ we shall obtain for $k \geq j$
\begin{equation}\label{zu234}
	\begin{split}
		\| \left< \partial \right>^{a-1} P_k df_j\|_{L^2_{[0,T_{N_1}]} L^\infty_x}
		\leq & \| \left< \partial \right>^{a-1} P_k df_j\|_{L^2_{[0,T_j]} L^\infty_x} \times  2^{\frac{1}{2(3+\epsilon)}(j-N_1)}
		\\
		\leq & C2^{-\frac{1}{2(3+\epsilon)} N_1} 2^{-\frac{\epsilon}{80} k}2^{-\frac{3\epsilon}{80} j} ( \| f_{0} \|_{H^r_x} + \| f_{1} \|_{H^{r-1}_x}  ).
	\end{split}
\end{equation}
and for $k<j$
\begin{equation}\label{zu25}
	\begin{split}
		& \| \left< \partial \right>^{a-1} P_k (df_{m+1}-df_m)\|_{L^2_{[0,T_{N_1}]} L^\infty_x}
		\\
		\leq & C 2^{-\frac{\epsilon}{80} k}2^{-\frac{3\epsilon}{80} m}2^{-\frac{1}{2(3+\epsilon)}m}( \| f_{0} \|_{H^r_x} + \| f_{1} \|_{H^{r-1}_x}  )(1+ \mathbb{E}^2(0)) \times 2^{\frac{1}{2(3+\epsilon)}(m-N_1)}
		\\
		\leq & C2^{-\frac{1}{2(3+\epsilon)}N_1} 2^{-\frac{\epsilon}{80} k}2^{-\frac{3\epsilon}{80} m} ( \| f_{0} \|_{H^r_x} + \| f_{1} \|_{H^{r-1}_x}  ) (1+ \mathbb{E}^2(0)) .
	\end{split}
\end{equation}
By phase decomposition, we have
\begin{equation*}
	df_j= P_{\geq j} df_j+ \textstyle{\sum}^{j-1}_{k=1} \textstyle{\sum}^{j-1}_{m=k} P_k (df_{m+1}-df_m)+ \textstyle{\sum}^{j-1}_{k=1} P_k df_k.
\end{equation*}
By using \eqref{zu234} and \eqref{zu25}, we get
\begin{equation}\label{zu230}
	\begin{split}
		\| \left< \partial \right>^{a-1} df_{j} \|_{L^2_{[0,T_{N_1}]} L^\infty_x} \leq & C( \| f_{0} \|_{H^r_x} + \| f_{1} \|_{H^{r-1}_x}  ) (1+ \mathbb{E}^2(0))[\frac{1}{3}(1-2^{-\frac{\epsilon}{80}})]^{-2}.
	\end{split}
\end{equation}
Therefore, seeing \eqref{zu230} and \eqref{zu23}, then \eqref{zu02} hold.
\end{proof}

\section*{Acknowledgments}  The work of L. Andersson is partially supported by the National Natural Science Foundation of China, under Grant Number W2431012. H.L. Zhang would like to thank the Max-Planck Institute for Gravitational Physics (Albert Einstein Institute) for hospitality during her visits in 2018-2019, where a substantial part of the work was done. Part of this work also was done while the authors were in residence at Institute Mittag-Leffler in Djursholm, Sweden during the fall of 2019, supported by the Swedish Research Council under grant no. 2016-06596. H.L. Zhang is also supported by the Natural Science Foundation of Hunan Province (Grant No. 2021JJ40561) and the Fundamental Research Funds for the Central Universities.


\begin{thebibliography}{4}

\bibitem{AIT}

A. Ai, M. Ifrim, D. Tataru. The time-like minimal surface equation in Minkowski space: low regularity solutions, Invent. Math., 235(3), 745-891 (2024). 

\bibitem{AAR}
P.T. Allen, L. Andersson, A. Restuccia. Local well-posedness for membranes in the light
cone gauge, Comm. Math. Phys., 301, 383-410 (2011).


\bibitem{ACY}
X.L. An, H.Y. Chen, S.L. Yin. Low regularity ill-posedness and shock formation for 3D ideal compressible MHD, arXiv:2110.10647.


\bibitem{ACY1}
X.L. An, H.Y. Chen, S.L. Yin. The Cauchy problems for the 2D compressible Euler equations and ideal MHD system are ill-posed in $H^{\frac74}(\mathbb{R}^2)$, arXiv:2206.14003.



\bibitem{AM}
L. Andersson, V. Moncrief. Elliptic-hyperbolic systems and the Einstein equations, Ann. Henri Poincar\'e, 4, 1-34 (2003).

\bibitem{AK}
L. Andersson, L. Kapitanski. Cauchy Problem for Incompressible Neo-Hookean
materials, Arch. Ration. Mech. Anal., 247(2), Paper No. 21, 76 pp (2023).

\bibitem{BC1}
H. Bahouri and J.Y. Chemin. E\'quations d\'ondes quasilineaires et effet dispersif, Int. Math. Res. Notices, 21, 1141-1178 (1999).
\bibitem{BC2}
H. Bahouri and J.Y. Chemin. E\'quations d\'ondes quasilineaires et estimation de Strichartz, Amer. J. Math., 121, 1337-1377 (1999).
\bibitem{BCD}
H. Bahouri, J. Y. Chemin, and R. Danchin, Fourier analysis and nonlinear partial differential equations,
Grundlehren der Mathematischen Wissenschaften, vol. 343, Springer,
Heidelberg, 2011.

\bibitem{BS}
J. L. Bona and R. Smith. The initial-value problem for the Korteweg-de Vries equation.
Philos, Trans. Roy. Soc. London Ser. A, 278(1287):555-601 (1975).

\bibitem{BL}
J. Bourgain, D. Li. Strong ill-posedness of the incompressible Euler
equation in borderline Sobolev spaces, Invent. Math., 201:97-157 (2015).

\bibitem{BL2}
J. Bourgain, L. Dong. Strong ill-posedness of the 3D incompressible Euler equation in borderline
spaces, Int. Math. Res. Notices, 00(0), 1-110 (2019).
\bibitem{Chae}
D. Chae. On the well-posedness of the Euler equations in the Triebel-Lizorkin spaces, Comm. Pure Appl. Math., 55(5), 654-678 (2002).


\bibitem{C}
D. Christodoulou and S. Miao. Compressible flow and Euler's equations, Surveys of Modern Mathematics,
vol. 9, International Press, Somerville, MA; Higher Education Press, Beijing, 2014.

\bibitem{CLS}
D. Coutand, H. Lindblad, and S. Shkoller. A priori estimates for the free-boundary 3D compressible Euler
equations in physical vacuum, Comm. Math. Phys., 296(2), 559-587 (2010).




\bibitem{DLS}
M. Disconzi, C. Luo, G. Mazzone and J. Speck. Rough sound waves in 3D compressible Euler flow with
vorticity, Selecta Math., 28(2), Paper No. 41, 153 pages (2022).
\bibitem{EL}
B. Ettinger, H. Lindblad. A sharp counterexample to local existence
 of low regularity solutions to Einstein
 equations in wave coordinates, Ann. Math., 185, 311-330 (2017).

\bibitem{JM}
J. Jang and N. Masmoudi. Well-posedness for compressible Euler equations with physical vacuum singularity, Comm.
Pure Appl. Math. 62(10), 1327-1385 (2009).

\bibitem{HKM}
T. Hughes, T. Kato and J.E. Marsden. Well-posed quasi-linear second-order hyperbolic systems with
applications to nonlinear electrodynamics and general relativity, Arch. Rat. Mech. Anal., 63, 273-
294 (1977).


\bibitem{IT}
M. Ifrim, D. Tataru. The compressible Euler equations in a physical vacuum:a comprehensive Eulerian approach, Ann. Inst. H. Poincar\'e C Anal. Non Lin\'eaire 41(2), 405-495 (2024). 

\bibitem{IT1}
M. Ifrim, D. Tataru. Local well-posedness for quasilinear problems:a primer, Bull. Amer. Math. Soc., 60(2), 167-194 (2023). 

\bibitem{Geba}
D.G. Geba. A local well-posedness result for the quasilinear
wave equation in $\mathbb{R}^{2+1}$, Comm. Part. Diff. Equ., 29, 323-360 (2004).


\bibitem{KL}
T. Kato and C. Y. Lai. Nonlinear evolution equations and the Euler flow. J. Funct.
Anal., 56(1):15-28 (1984).


\bibitem{Kap}
L. Kapitanski. Norm estimates in besov and Lizorkin-Triebel spaces for the solutions of second-order linear hyperbolic equations, J Math Sci., 56, 2348-2389 (1991).


\bibitem{KP}
T. Kato, G. Ponce. Commutator estimates and the Euler and Navier-Stokes equations, Comm. Pure Appl. Math., 41(7), 891-907 (1988).

\bibitem{K}
S. Klainerman. A commuting vectorfield approach to Strichartz type inequalities and applications to quasilinear wave equations, Int. Math. Res. Notices, 5, 221-274 (2001).

\bibitem{KM}
S. Klainerman, A. Majda. Compressible and incompressible fluids, Commun. Pure Appl.
Math., 34, 629-651 (1982).

\bibitem{KM2}
S. Klainerman, A. Majda. Singular limits of quasilinear hyperbolic systems with large
parameters and the incompressible limit of compressible fluids, Comm. Pure Appl. Math., 34, 481-524 (1981).

\bibitem{KR2}
S. Klainerman, I. Rodnianski. Improved local well-posedness for quasilinear wave equations in dimension three, Duke Math. J., 117, 1-124(2003).
\bibitem{KR}
S. Klainerman, I. Rodnianski. Rough solutions of the Einstein vacuum equations, Ann. Math., 161, 1143-1193 (2005).

\bibitem{KR1}
S. Klainerman, I. Rodnianski and J. Szeftel. The bounded $\textrm{L}^2$ curvature conjecture, Invent. Math., 202(1), 91216 (2015).


\bibitem{LD}
D. Li.
On Kato-Ponce and fractional Leibniz,
Rev. Mat. Iberoam., 35(1), 23-100 (2019).




\bibitem{LQ}
T. Li and T. Qin. Physics and Partial Differential Equatins(second edition), Higher Education Press, Beijing, 2005.


\bibitem{L}
H. Lindblad. Counterexamples to local existence for quasilinear wave equations, Math. Res. Letters, 5(5), 605-622 (1998).
\bibitem{LS1}
J. Luk, J. Speck. Shock formation in solutions to the 2D compressible Euler equations in the presence of non-zero vorticity, Invent. Math., 214, 1-169 (2018).
\bibitem{LS2}
J. Luk, J. Speck. The hidden null structure of the compressible Euler equations and a prelude to applications, J. Hyperbolic Differ. Equ., 17(1), 1-60 (2020).
\bibitem{M}
A. Majda. Compressible fluid flow and systems of conservation laws in several space variables, Applied
Mathematical Sciences, 53. Springer, New York, 1984.

\bibitem{MP}
S. Miao, P. Yu.
On the formation of shocks for quasilinear wave equations,
Invent. Math. 207(22), 697-831 (2017).


\bibitem{MSS}
G. Mockenhaupt, A. Seeger, and C. D. Sogge, Local smoothing of Fourier integral operators and Carleson-Sj\"olin estimates, J. Amer. Math. Soc., 6, 65-130 (1993).


\bibitem{MR}
F. Merle, P. Raphael, I. Rodnianski, J. Szeftel. On the implosion of a compressible fluid I: Smooth self-similar inviscid profiles, Ann. of Math. 196(2),567-778 (2022)

\bibitem{MR1}
F. Merle, P. Raphael, I. Rodnianski, J. Szeftel.
On the implosion of a compressible fluid II: Singularity formation, Ann. of Math. 196(2), 779-889 (2022)


\bibitem{ML}
C. Miao, L. Xue, On the global well-posedness of a class of Boussinesq-Navier-Stokes systems, NoDEA Nonlinear Differential Equations Appl., 18(6), 707-735 (2011).

\bibitem{Rudin}
W. Rudin, Functional analysis, McGraw-Hill Book Co., New York, 1973, McGraw-Hill Series in Higher Mathematics.

\bibitem{S}
T.C. Sideris. Formation of singularities in three dimensional
compressible fluids, Commun. Math. Phys. 101, 475-485 (1985).

\bibitem{Sm}
H. F. Smith, A parametrix construction for wave equations with $C^{1,1}$ coefficients, Ann.
Inst. Fourier (Grenoble) 48 (1998), 797-835.

\bibitem{SS}
H. F. Smith and C. D. Sogge, On Strichartz and eigenfunction estimates for low regularity metrics, Math. Res. Lett. 1,
729-737 (1994).

\bibitem{ST}
H.F. Smith, D. Tataru. Sharp local well-posedness results for the nonlinear wave equation, Ann.
Math., 162, 291-366 (2005).

\bibitem{ST0}
H.F. Smith and D. Tataru. Sharp counterexamples for Strichartz estimates for low regularity metrics,
Mathematical Research Letters, 199-204 (2002).

\bibitem{S1}
J. Speck. Shock formation for 2D quasilinear wave systems featuring multiple speeds: Blowup for the fastest wave, with
non-trivial interactions up to the singularity, Ann. PDE, 4(1), Paper No. 6, 131 pp (2018).

\bibitem{S2}
J. Speck. A New Formulation of the 3D Compressible Euler Equations With Dynamic Entropy:Remarkable Null Structures and Regularity Properties, Arch. Ration. Mech. Anal., 234(3), 1223-1279 (2019).

\bibitem{Sog}
C. D. Sogge, On local existence for nonlinear wave equations satisfying variable coefficient null conditions,
Comm. Part. Diff. Equ. 18, 1795-1821(1993).

\bibitem{Tao}
 T. Tao. Global regularity of wave maps. II. Small energy in two dimensions, Comm. Math. Phys., 224(2), 443-544, 2001.
\bibitem{Tao2}
T. Tao. Nonlinear Dispersive
Equations Local and Global Analysis, CBMS Regional Conference Series in Mathematics, Vol. 106, Washington, DC, 2005.

\bibitem{T1}
D. Tataru. Strichartz estimates for operators with nonsmooth coefficients and the nonlinear wave
equation, Am. J. Math., 122, 349-376 (2000).
\bibitem{T2}
D. Tataru. Strichartz estimates for second order hyperbolic operators with nonsmooth coefficients II,
Am. J. Math. 123, 385-423 (2001).
\bibitem{T3}
D. Tataru. Strichartz estimates for second order hyperbolic operators with nonsmooth coefficients III.
J. Am. Math. Soc. 15, 419-442 (2002).

\bibitem{Oman}
G. Ohlmann. Ill-posedness of a quasilinear wave equation in two dimensions for data in $H^{\frac74}$, Pure Appl. Anal., 5(3), 507-540 (2023).

\bibitem{WCB}
C.B. Wang. Sharp local well-posedness for quasilinear wave equations with spherical symmetry, J. Eur. Math. Soc., 25(11), 4459-4520 (2023).

\bibitem{WQRough}
Q. Wang. Rough Solutions of Einstein vacuum equations in CMCSH gauge, Comm. Math. Phys. 328, 1275-1340 (2014).

\bibitem{WQ1}
Q. Wang. Causal geometry of rough Einstein CMCSH spacetime, J. Hyperbolic Differ. Equ. 11(3), 563-601 (2014).

\bibitem{WQ2}
Q. Wang. Improved breakdown criterion for Einstein vacuum equations in CMC gauge, Commun. Pure Appl. Math. 65(1), 21-76 (2012).


\bibitem{WQSharp}
Q. Wang. A geometric approach for sharp local well-posedness
of quasilinear wave equations, Ann. PDE, 3:12 (2017).
\bibitem{WQEuler}
Q. Wang. Rough solutions of the 3-D compressible Euler equations, Ann. of
Math., 195(2), 509-654 (2022).

\bibitem{Wo}
T. Wolff. A sharp bilinear cone restriction estimate, Ann. of Math. 153 (2001), 661-
698.



\bibitem{ZH}
D. Zha, K. Hidano. Global solutions to systems of quasilinear wave equations with low regularity data and applications,
J. Math. Pures Appl., 142(9), 146-183 (2020).

\bibitem{ZA}
H.L. Zhang, L. Andersson. On the rough solutions of 3D compressible Euler equations:an alternative proof, arXiv:2104.12299.

\bibitem{Z1}
H.L. Zhang. Local existence theory for 2D compressible Euler equations with low regularity, J. Hyperbolic Differ. Equ. 18(3), 701-728 (2021).
\bibitem{Z2}
H.L. Zhang. Low regularity solutions of two-dimensional compressible Euler equations with dynamic vorticity, Comm. Anal. Geom., 33(2), 453-529 (2025).
\bibitem{Zh}
Y. Zhou. Local existence with minimal regularity for nonlinear wave equations, Am. J. Math. 119, 671-703 (1997).
\bibitem{ZL}
Y. Zhou, Z. Lei. Global low regularity solutions of quasi-linear wave equations, Adv. Differential Equations,
13(1-2), 55-104 (2008).

\end{thebibliography}
\end{document}